\numberwithin{equation}{chapter}
    \def\HMt{%
       \setbox0=\hbox{$\widehat{\mathit{HM}}$}
       \setbox1=\hbox{$\mathit{HM}$}
       \dimen0=1.1\ht0
       \advance\dimen0 by 1.17\ht1
       \smash{\mskip2mu\raise\dimen0\rlap{%
          \begin{turn}{180}
              {$\widehat{\phantom{\mathit{HM}}}$}
           \end{turn}} \mskip-2mu    
                \mathit{HM}
    }{\vphantom{\widehat{\mathit{HM}}}}{}}
    \def\HSt{%
       \setbox0=\hbox{$\widehat{\mathit{HS}}$}
       \setbox1=\hbox{$\mathit{HS}$}
       \dimen0=1.1\ht0
       \advance\dimen0 by 1.17\ht1
       \smash{\mskip2mu\raise\dimen0\rlap{%
          \begin{turn}{180}
              {$\widehat{\phantom{\mathit{HS}}}$}
           \end{turn}} \mskip-2mu    
                \mathit{HS}
                    }{\vphantom{\widehat{\mathit{HS}}}}{}}
\newcommand{\C}{\mathbb{C}}
\newcommand{\R}{\mathbb{R}}
\newcommand{\Z}{\mathbb{Z}}
\newcommand{\F}{\mathfrak{F}}
\newcommand{\V}{\mathcal{V}}
\newcommand{\T}{\mathcal{T}}
\newcommand{\J}{\mathcal{J}}
\newcommand{\K}{\mathcal{K}}
\newcommand{\A}{\mathcal{A}}
\newcommand{\ztwo}{\mathbb{F}}
\newcommand{\spin}{\mathfrak{s}}
\newcommand{\Co}{\mathcal{C}}
\newcommand{\Bo}{\mathcal{B}}
\newcommand{\G}{\mathcal{G}}
\newcommand{\Pin}{\mathrm{Pin}(2)}
\newcommand{\CSD}{\mathcal{L}}
\newcommand{\CSd}{\rlap{$-$}\mathcal{L}}
\newcommand{\Cs}{\mathcal{C}^{\sigma}}
\newcommand{\Bs}{\mathcal{B}^{\sigma}}
\newcommand{\Ct}{\mathcal{C}^{\tau}}
\newcommand{\Bt}{\mathcal{B}^{\tau}}
\newcommand{\N}{\mathcal{N}}
\newcommand{\Sl}{\mathcal{S}}
\newcommand{\Rin}{\mathcal{R}}
\newcommand{\q}{\mathfrak{q}}
\newcommand{\acr}{\mathfrak{a}}
\newcommand{\bcr}{\mathfrak{b}}
\newcommand{\Hess}{\text{Hess}}
\newcommand{\HSb}{\overline{\mathit{HS}}}
\newcommand{\HSf}{\widehat{\mathit{HS}}}
\newcommand{\HMb}{\overline{\mathit{HM}}}
\newcommand{\HMf}{\widehat{\mathit{HM}}}
\newcommand{\gr}{\mathrm{gr}}
\newcommand{\ev}{\mathrm{ev}}
\newcommand{\Cr}{\mathfrak{C}}
\newcommand{\Lr}{\mathfrak{C}}
\theoremstyle{plain}
\newtheorem{teor}{Theorem}[section]
\newtheorem{prop}[teor]{Proposition}
\newtheorem{lemma}[teor]{Lemma}
\newtheorem{cor}[teor]{Corollary}
\newtheorem*{theorem*}{Theorem}
\theoremstyle{definition}
\newtheorem{defn}[teor]{Definition}
\theoremstyle{remark}
\newtheorem{example}[teor]{Example}
\newtheorem{remark}[teor]{Remark}
\begin{document}
\title{A Morse-Bott approach to monopole Floer homology and the Triangulation conjecture}
\author{Francesco Lin}
\address{Department of Mathematics, Massachusetts Institute of Technology} 
\email{linf@math.mit.edu}

\begin{abstract}
In the present work we generalize the construction of monopole Floer homology due to Kronheimer and Mrowka to the case of a gradient flow with Morse-Bott singularities. Focusing then on the special case of a three-manifold equipped equipped with a spin$^c$ structure which is isomorphic to its conjugate, we define the counterpart in this context of Manolescu's recent $\Pin$-equivariant Seiberg-Witten-Floer homology. In particular, we provide an alternative approach to his disproof of the celebrated Triangulation conjecture.
\end{abstract}

\makeatletter
\thispagestyle{empty}
\begin{center}
\vspace{24pt}
{\huge\bfseries\openup\medskipamount\@title\par}
\vspace{24pt}
{\LARGE\mdseries\def\and{\par\medskip}\authors\par}

\end{center}
\bigskip
\@setabstract
\vspace*{\fill}
\@setaddresses
\makeatother

\tableofcontents

\chapter*{Introduction}

The present work has two purposes. The first one is to generalize Kronheimer and Mrowka's construction of monopole Floer homology (\cite{KM}) to the case in which the gradient flow has Morse-Bott singularities. The second one is to define in this framework the counterpart of Manolescu's recent $\Pin$-equivariant Seiberg-Witten-Floer homology (\cite{Man2}), and use it to give an alternative approach to his recent breakthrough, the disproof of the longstanding Triangulation conjecture.
\\
\par
Monopole Floer homology is a set of invariants of three manifolds equipped with a spin$^c$ structure. They are obtained by studying the gradient flow equation of the Chern-Simons-Dirac functional, the Seiberg-Witten monopole equations. These equations have a natural $S^1$-symmetry. The construction of \cite{KM} deals with case in which the singularities of this flow are non degenerate, meaning that the Hessian is invertible at those points. The analogue in the finite dimensional case are the usual Morse singularities.
However, in many situations it is convenient to be able to deal with a more general kind of singularity, which is analogous to the Morse-Bott case in a finite dimensional setting. These critical points form smooth submanifolds, and the Hessian is invertible in the normal directions.
For example, these type of objects naturally arise when dealing with the Chern-Simons-Dirac functional on Seifert-fibered spaces, see \cite{MOY}.
\par
Another context in which this kind of singularities arises is the case in which the spin$^c$ structure is actually induced by a genuine spin structure. In this case, it has been known for a long time (see for example \cite{Mor}) that the problem has more symmetry due to the the quaternionic structure of the spinor bundle. By exploiting this additional data, Manolescu was able to construct his new invariants of three manifolds based on the Lie group
\begin{equation*}
\Pin=S^1\cup j\cdot S^1\subset \mathbb{H},
\end{equation*}
and disproof the longstanding Triangulation conjecture.

\vspace{0.8cm}

\textbf{The Triangulation conjecture. }The present historical discussion is taken from the nice exposition \cite{Man3}, to which we refer for more details. The Triangulation conjecture, first formulated by Kneser in \cite{Kne}, asserts that every topological manifold is homeomorphic to a simplicial complex. This was known to be true for manifolds of dimension at most three (\cite{Rad}, \cite{Moi}) and false in dimension four (\cite{AM}), while the answer was unknown in dimension at least five.
\par
One could also ask the stronger question of whether every topological manifold $M$ admits a $\textsc{pl}$-structure, i.e it is homeomorphic to a simplicial complex such that the link of each vertex is $\textsc{pl}$-homeomorphic to a sphere. This was answered by Kirby and Siebenmann in \cite{KS}. In particular, they construct a class
\begin{equation*}
\Delta(M)\in H^4(M;\mathbb{Z}/2\mathbb{Z}) 
\end{equation*}
which vanishes if an only if $M$ admits a $\textsc{pl}$-structure, and show that in each dimension at least five this obstruction is effective.
\par
The case of general simplicial complexes is more subtle, and has very deep connections with low dimensional topology. Define the homology cobordism group $\Theta^H_3$ to be the group whose elements are oriented $\textsc{pl}$-homology three spheres up to homology cobordism and for which the sum is given by connected sum. The definition makes sense in every dimension, but by a result of Kervaire \cite{Ker} it is trivial in all these other cases. On the other hand, one can show that $\Theta^H_3$ is not trivial because of the existence of a surjective map
\begin{equation*}
\mu: \Theta^H_3\rightarrow \mathbb{Z}/2\mathbb{Z},
\end{equation*}
the Rokhlin homomorphism. This is obtained by sending a homology three sphere to $\mathrm{sign}(W)/8$ where $W$ is any spin four manifold bounding it.
\par
Given a triangulation $K$ on $M$ (which we suppose to be closed and oriented), one can form the so called Sullivan-Cohen-Sato class
\begin{equation*}
c(K)=\sum_{\sigma\in K^{(n-4)}}[\mathrm{link}_K(\sigma)]\cdot \sigma \in H_{n-4}(M;\Theta_3)\cong H^4(M;\Theta_3).
\end{equation*}
The short exact sequence of groups
\begin{equation}\label{split}
0\rightarrow \mathrm{ker}\mu\rightarrow \Theta^H_3\stackrel{\mu}{\longrightarrow}\mathbb{Z}/2\mathbb{Z}\rightarrow 0
\end{equation}
induces the Bockstein exact sequence
\begin{equation*}
H^4(M;\Theta^H_3)\stackrel{\mu_*}{\longrightarrow}H^4(M;\mathbb{Z}/2\mathbb{Z})\stackrel{\delta}{\longrightarrow}H^5(M;\mathrm{ker}\mu)
\end{equation*}
and it can be shown that the image of $c(K)$ is the class $\Delta(M)$ discussed above. In particular this implies that if $M$ admits a triangulation then $\delta(\Delta(M))$ is zero. Work of Galewski-Stern \cite{Gal} and Matumoto \cite{Mat} shows that the vanishing of this element is also a sufficient condition for the existence of a triangulation. Of course, if the exact sequence (\ref{split}) splits then $\delta(\Delta(M))$ is always zero. In \cite{Gal} and \cite{Mat} it is also shown that this condition is sufficient, hence in order to disproof the Triangulation conjecture it is sufficient to show that the sequence does not split. In fact, Manolescu proved the following result.
\begin{teor}[\cite{Man2}]\label{triangulation}
There are no elements of order two and Rokhlin invariant one in the homology cobordism group. Hence the Triangulation conjecture is false in all dimensions at least five.
\end{teor}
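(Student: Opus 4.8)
The plan is to deduce this from a $\Pin(2)$-equivariant refinement of monopole Floer homology, the analogue in the Morse--Bott framework of the present paper of Manolescu's $\Pin(2)$-equivariant Seiberg--Witten--Floer homology. The introduction already reduces the statement to a group-theoretic one: by the results of Galewski--Stern and Matumoto it is enough to show that the sequence (\ref{split}) does not split, and for that it suffices to produce an obstruction to $\Theta^H_3$ having an element of order two with Rokhlin invariant one.

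First we construct the invariant. Using the Morse--Bott machinery developed in the body of the paper, to a rational homology sphere $Y$ equipped with a self-conjugate spin$^c$ structure $\spin$ --- in particular to any integral homology sphere with its unique spin structure --- one associates the three flavours $\HSb(Y,\spin)$, $\HSf(Y,\spin)$, $\HSt(Y,\spin)$, modules over $\Rin=\ztwo[v,q]/(q^{3})$ (with $\deg q=-1$, $\deg v=-4$) fitting into the usual exact triangle. When $b_1(Y)=0$ the ``bar'' flavour is computed from the reducible Seiberg--Witten solution, which --- and this is the whole reason the Morse--Bott formalism is needed --- is not a nondegenerate critical point in the $\Pin(2)$-quotient but a Morse--Bott critical locus; the outcome of the computation is that, after an overall grading shift by a rational number $\delta(Y,\spin)$ read off from the reducible, $\HSb(Y,\spin)$ is the localization of $\Rin$ at $v$, so that the image of $\HSb$ in $\HSf$ is a sum of three infinite $v$-towers. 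We then define numerical invariants $\alpha(Y,\spin),\beta(Y,\spin),\gamma(Y,\spin)$ as the normalized minimal gradings of the bottom elements of the $1$-, $q$- and $q^{2}$-towers; only the middle invariant $\beta$ will be used.

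Next we establish three properties of $\beta$. \emph{(i) Functoriality}: a spin cobordism $(W,\spin_W)$ induces $\Rin$-module maps compatible with the exact triangles, and when $W$ is a $\mathbb{Z}$-homology cobordism these maps are isomorphisms on the towers, so $\beta$ descends to a homology cobordism invariant. \emph{(ii) Duality}: orientation reversal identifies the flavours of $(-Y,\spin)$ with the $\Rin$-module duals of those of $(Y,\spin)$ up to a fixed grading shift; the induced symmetry exchanges the roles of $\alpha$ and $\gamma$ and yields $\beta(-Y,\spin)=-\beta(Y,\spin)$. \emph{(iii) Rokhlin congruence}: $\beta(Y,\spin)\equiv\mu(Y,\spin)\pmod 2$, which comes down to computing the parity of the grading $\delta(Y,\spin)$ of the reducible; for a spin structure this parity is controlled by a mod-$2$ Dirac index, equal by the Atiyah--Singer theorem to $\mathrm{sign}(W)/8\bmod 2$ for any spin four-manifold $W$ bounding $Y$, i.e.\ to the Rokhlin invariant.

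Granting all this, the theorem is immediate. If $Y\in\Theta^H_3$ satisfies $2[Y]=0$ then $[-Y]=-[Y]=[Y]$, so $Y$ is spin-homology-cobordant to $-Y$ (a $\mathbb{Z}$-homology cobordism carries a unique, and spin, spin$^c$ structure); hence $\beta(Y)=\beta(-Y)=-\beta(Y)$ by (i)--(ii), forcing $\beta(Y)=0$, and then $\mu(Y)\equiv 0\pmod 2$ by (iii). Thus no element of order two in $\Theta^H_3$ has Rokhlin invariant one, (\ref{split}) does not split, and the Triangulation conjecture is false in all dimensions $\geq 5$. The hard part will be everything feeding into the construction and into property (i): since the reducible cannot be resolved $\Pin(2)$-equivariantly, one must set up perturbations and transversality, compactness, gluing, and the structure of the compactified moduli spaces of broken trajectories entirely in the $\Pin(2)$-equivariant Morse--Bott category, and then prove that cobordisms induce maps with the stated properties in this setting. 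The secondary difficulty is the exact index computation behind (iii), which is precisely what ties the whole construction to the Rokhlin homomorphism and hence to the Triangulation conjecture.
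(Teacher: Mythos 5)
Your proposal follows essentially the same route as the paper: construct the $\Pin$-monopole Floer groups in the Morse--Bott setting, extract $\beta$ from the middle $V$-tower of the image of $i_*:\HSb\rightarrow\HSt$, establish orientation-reversal antisymmetry, the mod-$2$ Rokhlin congruence and homology cobordism invariance (the paper gets the latter from the negative-definite spin cobordism inequality applied in both directions, which is only cosmetically different from your ``isomorphism on towers'' phrasing), and then conclude via $\beta([Y])=-\beta([Y])$ when $2[Y]=0$. This matches Theorem \ref{mainbeta}, Corollary \ref{beta} and the reduction to Galewski--Stern--Matumoto given in the introduction, so the argument is correct and not materially different.
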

In general, the structure of the homology cobordism group remains a mystery. It is known that it is not finitely generated (\cite{Fur1}, \cite{FS1}), but for example it is not known whether it has torsion or not. In order to prove this theorem, Manolescu defines for each homology sphere $Y$ a $\mathbb{Z}$-valued invariant $\beta(Y)$ satisfying the following properties:
\begin{enumerate}
\item it is invariant under homology cobordism;
\item it reduces modulo two to the Rokhlin invariant $\mu$;
\item $\beta(-Y)=-\beta(Y)$.
\end{enumerate}
The theorem follows because if $2[Y]=0$ in $\Theta_3^H$, then $[Y]=[-Y]$ so
\begin{equation*}
\beta([Y])=\beta(-[Y])=-\beta([Y])
\end{equation*}
hence $\beta([Y])$ is zero, and so is the Rokhlin invariant. The invariant $\beta$ arises as an analogue in the $\Pin$-equivariant context of Fr\o yshov's invariant in Seiberg-Witten Floer homology (\cite{Fro}, \cite{KM}) and Ozsvath-Szabo's correction term in Heegaard Floer homology (\cite{OSd}). We now describe its construction in our theory. 

\vspace{0.8cm}
\textbf{An overview of $\Pin$-monopole Floer homology. }Manolescu's construction of his new invariants follows the framework of his previous work \cite{Man1}, and relies on the theory of Conley index and finite dimensional approximations of the monopole equations. In the present work we define the counterpart of these objects in Kronheimer and Mrowka's theory. We expect the two definitions to agree for rational homology spheres. Even though the construction is quite involved, the final result has some desirable features that are missing in the case of Manolescu's invariants. To each closed oriented three manifold $Y$ equipped with a self-conjugate spin$^c$ structure $\spin$ (i.e. $\spin$ is isomorphic to its conjugate $\bar{\spin}$) we associate three graded topological abelian groups called \textit{$\Pin$-monopole Floer homology groups}. These are denoted by
\begin{equation*}
\HSt_{\bullet}(Y,\spin)\quad\HSf_{\bullet}(Y,\spin)\quad\HSb_{\bullet}(Y,\spin)
\end{equation*}
where the $S$ stands for \textit{spin}, and they are pronounced ``\textit{H-S-to}", ``\textit{H-S-from}" and ``\textit{H-S-bar}" respectively.
These groups are also graded topological modules over the graded topological ring
\begin{IEEEeqnarray*}{c}
\Rin=\ztwo[[V]][Q]/(Q^3)
\end{IEEEeqnarray*}
where $V$ and $Q$ have degree respectively $-4$ and $-1$, and $\ztwo$ is the field with two elements. The three groups are related by a long exact sequence of $\Rin$-modules
\begin{equation}\label{exactsequencein}
\dots\stackrel{i_*}{\longrightarrow} \HMt_k(Y,\spin)\stackrel{j_*}{\longrightarrow} \HMf_k(Y,\spin)\stackrel{p_*}{\longrightarrow} \HMb_{k-1}(Y,\spin)\stackrel{i_*}{\longrightarrow} \HMt_{k-1}(Y,\spin)\stackrel{j_*}{\longrightarrow}\dots.
\end{equation} 
When compared to Manolescu's invariants, the groups resulting from this alternative approach have the following two important additional features:
\begin{itemize}
\item they are defined for \textit{every} three manifold, not only for rational homology spheres;
\item they are \textit{functorial} in the sense that a spin$^c$ cobordism $(X,\spin_X)$ from $(Y_0,\spin_0)$ to $(Y_1,\spin_1)$ determines a group homomorphism
\begin{equation*}
\HSt_{\bullet}(X,\spin_X):\HSt_{\bullet}(Y_0,\spin_0)\rightarrow \HSt_{\bullet}(Y_1,\spin_1),
\end{equation*}
and similarly for the other versions. In the case the spin$^c$ structure $\spin_X$ is actually a genuine spin structure, the map is in fact a homomorphism of graded ${\Rin}$-modules.
\end{itemize}
Furthermore, in forthcoming work (see for example \cite{Lin}) we will develop in this setting some computational tools that are available in monopole Floer homology but not in Manolescu's theory. For many of these developments it is useful to also take into account also not self-conjugate spin$^c$ structures. Denote by $\jmath$ the action by conjugation on the set $\mathrm{Spin}^c(Y)$ of spin$^c$ structures of $Y$, and by $[\spin]$ the equivalence class of $\spin$. We then define for $\spin\neq\bar{\spin}$ the group $\HSt_{\bullet}(Y,[\spin])$ as
\begin{equation*}
\HMt_{\bullet}(Y,\spin)=\HMt_{\bullet}(Y,\bar{\spin})
\end{equation*}
where these are canonically identified via conjugation, and the total group
\begin{equation*}
\HSt_{\bullet}(Y)=\bigoplus_{[\spin]\in \mathrm{Spin}^c(Y)/\jmath} \HSt_{\bullet}(Y,[\spin]).
\end{equation*}
This defines a functor from the category $\textsc{cob}$ of compact connected oriented three manifolds and isomorphism classes of cobordism between them to the category of topological $\ztwo[[V]]$-modules.
\\
\par
In the case of $S^3$, the $\Pin$-monopole Floer homology groups can be identified as the graded $\Rin$-modules
\begin{align*}
\HMb_{\bullet}(S^3)&= \ztwo[V^{-1},V]][Q]/(Q^3)\{-2\}\\
\HMt_{\bullet}(S^3)&= \left(\ztwo[V^{-1},V]][Q]/(Q^3)\{-2\}\right)/\Rin\cdot1\\
\HMf_{\bullet}(S^3)&= \Rin\{-1\}
\end{align*}
where the braces indicate grading shifts. In particular, the minimum degree of an element is the \textit{to} group is zero. For a general homology three sphere the \textit{bar} group is isomorphic (up to grading shift) to $\HMb_{\bullet}(S^3)$, the \textit{to} group is bounded below and the \textit{from} group is bounded above. The exact sequence (\ref{exactsequencein}) implies that the $\Rin$ module given by the image
\begin{equation*}
i_*\left(\HMb_{\bullet}(S^3)\right)\subset \HMt_{\bullet}(S^3)
\end{equation*}
decomposes as the direct sum of three copies of the $\ztwo[[V]]$-module $\ztwo[[V^{-1},V]]/\ztwo[[V]]$, which are related to each other by the action $Q$. The invariant $\beta(Y)$ used by Manolescu to prove Theorem \ref{triangulation} is then defined so that $2\beta(Y)+1$ is the minimum grading of an element in the middle tower, i.e. the one on which $Q$ acts non trivially and $Q^2$ acts trivially. For example, it is zero in the case of $S^3$.
\\
\par
The key point of Kronheimer and Mrowka's construction of monopole Floer homology is the introduction of the \textit{blow-up} of the configuration space on which the functional is defined. This is done in order to deal properly with the reducible configurations. In fact, the natural $S^1$-action on the configuration space is not free and this caused serious invariance and functoriality issues in the earlier approaches to the Floer homology of the monopole equations. These have been tackled in various ways in the last twenty years (see for example \cite{MW}, \cite{Man1} and \cite{Fro}). The approach of \cite{KM} naturally leads to consider Morse flows on manifolds where the boundary is invariant for the gradient flow (and is therefore \textit{not} Morse-Smale). This requires a definition of Morse homology for a manifold with boundary which differs from the more classical one as it has to deal with some new phenomena, in particular the existence of \textit{boundary obstructed trajectories} (see Chapter $2$ of \cite{KM} for an introduction to the finite dimensional case).
\\
\par
There are many approaches to Morse-Bott homology in literature, and for many reasons the one that fits our problem of defining the invariants the best is the one introduced by Fukaya (\cite{Fuk}) in the context of instanton Floer homology. The chain complex associated to a Morse-Bott function $f$ on a smooth manifold $X$ that he defines has underlying vector space the direct sum of (some modified version of) the singular chain complexes of the critical submanifolds
\begin{equation*}
C_*(X,f)=\bigoplus_{\mathcal{C}\in \mathrm{Crit}(f)}\tilde{C}_*(\mathcal{C})
\end{equation*} 
and the differential of a chain $\sigma$ is given by
\begin{equation*}
\partial \sigma= \tilde{\partial}\sigma+\sum_{\mathcal{C}'\in \mathrm{Crit}(f)}\sigma\times \breve{M}^+(\mathcal{C},\mathcal{C}').
\end{equation*}
Here $\tilde{\partial}$ denotes the differential in the modified chain complex $\tilde{C}_*(\mathcal{C})$ and each element in the sum is the fibered product of $\sigma$ and the compactified moduli space of trajectories $\breve{M}^+(\mathcal{C},\mathcal{C}')$ connecting $\mathcal{C}$ and $\mathcal{C}'$ (using the evaluation map on the negative end), which is a chain in $\mathcal{C}'$ via the evaluation map on the positive end. Of course $\sigma$ belongs to some appropriate class of geometric objects which is closed under fibered products with the moduli spaces (in \cite{Fuk} the author considers polyhedra with singularities in codimension at least two). Furthermore, we require some transversality conditions in order for the fibered product to be in this class. The nice feature of this construction is that it works for generic (in a suitable sense) Morse-Bott perturbations, and it can be adapted to work in our case where also boundary obstructedness phenomena come into play. A manifestation of the latter is that the compactified moduli spaces of trajectories are not in general manifolds with corners, as they have a more complicated structure both combinatorially and topologically (see Section 19.5 in \cite{KM}). Nevertheless we will construct a version of the singular chain complex of a smooth manifold (inspired by the work \cite{Lip}), and prove that it computes the usual singular homology. With this in hand, we will define the three Floer chain complexes by adapting the construction sketched above to the case where the manifold $X$ (which in our case is the moduli space of configurations) has boundary and the gradient of $f$ is tangent to it.
\\
\par
When the spin$^c$ structure is self-conjugate, the configuration space admits a natural $\Pin$-action, and the idea is to exploit this additional symmetry to construct a chain complexes computing monopole Floer homology groups with an additional chain involution induced by the action of the element $j$ in $\Pin$. The $\Pin$-monopole Floer homology groups will be then defined as the homologies of the invariant subcomplexes. In order to do this we will perturb the problem while respecting the additional symmetry. Unfortunately this class of perturbations will not allow us to achieve genericity in the sense of \cite{KM}, but nevertheless they will be generic enough so that we can arrange the singularities to be Morse-Bott and apply our approach.
\\
\par
Many of the proofs and results in the present work (and especially in the first part) are generalizations of the ones contained in \cite{KM}. In order to keep the length of the present work somehow contained, we will often rely on the work already done there, and we will only expand the details which are significantly different or which are notably interesting. We will assume that the reader has a reasonable understanding of the content of \cite{KM}, and in order to help her/him we will always give precise references of the omitted passages. We will refer to \cite{KM} as \textit{the book}.

\vspace{0.8cm}

\textbf{Plan of the work.} In Chapter $1$, we quickly review the main protagonists of the present work, namely the Chern-Simons-Dirac functional and its gradient flow equation, the Seiberg-Witten monopole equations. In particular, we briefly discuss the content of the Chapter $4-11$ in the book regarding the monopole equations, the blow-up of the configurations spaces, their completions and finally the theory of tame perturbations, which will be essential in the construction of $\Pin$-monopole Floer homology.
\par
Chapter $2$ is dedicated to the local and global analysis of Morse-Bott singularities, following Chapters $12-19$ in the book. After defining them, we will discuss the properties of the space of solutions to the monopole equations connecting two such critical submanifolds, and prove the fundamental transversality, compactness and gluing results.
\par
In Chapter $3$, following Chapters $22-25$ in the book, we show how to define monopole Floer homology when the singularities of the flow are Morse-Bott. We construct a modified version of the singular chain complex of a smooth manifold and adapt Fukaya's approach to our context. We show that the functoriality and invariance properties of the Floer groups hold in this setting, proving among the other things that the result of the new construction canonically agrees with the one of Kronheimer and Mrowka.
\par
Finally, in Chapter $4$ we focus on the case of a three manifold equipped with self-conjugate spin$^c$ structure. We show how to perturb the equations in a way which is compatible with the additional symmetry of the equations. This naturally leads to Morse-Bott singularities. The symmetry of the equations will give rise a symmetry of the chain complex defining our invariants, and exploiting this we will be able to define the $\Pin$-monopole Floer homology and prove all the properties we briefly discussed above. After providing some simple calculations (in particular for $S^1\times S^2$ and the three-torus), we will give an alternative disproof of the Triangulation conjecture.

\vspace{1.5cm}
\textbf{Acknowledgements. }The author would sincerely like to thank his advisor Tom Mrowka for introducing him to the subject, for suggesting the present problem, and for his patient help and support throughout the development of the project. Without his guidance and expertise this work would not have been possible at all. The author would also like to express his gratitude to Jonathan Bloom, especially for the interesting discussions related to the content of Chapter $3$. Finally, he would like to thank Michael Andrews, Lucas Culler, Michael Hutchings, Ciprian Manolescu, Roberto Svaldi and Umut Varolgunes for the useful conversations. This work was partially funded by NSF grants DMS-0805841 and DMS-1005288.

\chapter{Basic setup}

This chapter contains a quick discussion of the background required for the construction of Floer homology in the Seiberg-Witten setting. We start by describing the differential geometry needed to write down the monopole equations and then introduce the fundamental construction of Kronheimer and Mrowka's approach, the blow up of the moduli spaces. We then set up the functional spaces on which we will study the analytical problems, and recall the class of perturbation we allow. The material is treated in the same way as in the book and the aim of our discussion is to review the content and the notation of  Chapters $4$ to $11$. In particular, the result will be cited without proofs (for which we refer the reader to the book).

\vspace{1.5cm}
\section{The monopole equations}
In order to introduce the Seiberg-Witten equations we first have to discuss spin$^c$ structures. This can be done in many ways and we expose a very concrete one which will be the most useful in the rest of the work. Let $Y$ be an oriented closed connected riemannian $3$-manifold. A \textit{spin$^c$ structure} $\spin$ on $Y$ is given by a rank-$2$ hermitian vector bundle $S$ on $Y$ together with a map
\begin{equation*}
\rho: TX\rightarrow \mathrm{End}(S),
\end{equation*}
called \textit{Clifford multiplication}, satisfying the following properties. It is a bundle map that identifies $TX$ isometrically with the subbundle $\mathfrak{su}(S)$ of traceless skew-adjoint endomorphisms of $S$ (equipped with the metric $\frac{1}{2}\mathrm{tr}(a^*b)$) and it respects orientations, i.e. if $e_1, e_2, e_3$ is an oriented orthonormal frame then
\begin{equation*}
\rho(e_1)\rho(e_2)\rho(e_3)=1_S.
\end{equation*}
This means that at any point we can always find a basis of the fiber such that the Clifford multiplication by $\rho(e_i)$ are given by the Pauli matrix $\sigma_i$:
\begin{equation*}
\sigma_1=\left(\begin{matrix}
i & 0 \\
0 & -i
\end{matrix}\right)
\qquad \sigma_2=\left(\begin{matrix}
0 & -1 \\
1 & 0
\end{matrix}\right)
\qquad \sigma_3=\left(\begin{matrix}
0 & i \\
i & 0
\end{matrix}\right).
\end{equation*}
The action of $\rho$ is extended to cotangent vectors using the metric, and then to (complex valued) forms using the rule
\begin{equation*}
\rho(\alpha\wedge\beta)=\frac{1}{2}\left(\rho(\alpha)\rho(\beta)+(-1)^{\deg(\alpha)\deg(\beta)}\rho(\beta)\rho(\alpha)\right).
\end{equation*}
On a $3$-manifold $Y$ spin$^c$ structures always exist. In fact one can take a trivialization of the tangent bundle $TY$ and define on $\C^2\times Y$ a Clifford multiplication given globally using the Pauli matrices. Furthermore, given a spin$^c$ structure $(S_0,\rho_0)$ and an hermitian line bundle $L\rightarrow Y$ we can define a new spin$^c$ structure given by
\begin{IEEEeqnarray*}{c}
S=S_0\otimes L \\
\rho(e)=\rho_0(e)\otimes 1_L.
\end{IEEEeqnarray*}
This construction gives the space of spin$^c$ structures the structure of an affine space over the group of isomorphism classes of complex line bundles over $Y$ or, equivalently, $H^2(Y;\Z)$.
\begin{remark}
Notice that our definition depends on the prior choice of a Riemannian metric. However, we identify two spin$^c$ structures $\spin_0$ and $\spin_1$ associated to two metrics $g_0$ and $g_1$ such that there is a path of metrics $g_t$ joining  $g_0$ and $g_1$ and a corresponding continous family $\spin_t$ over $[0,1]$. Hence we think of the set of isomorphism classes of spin$^c$ structures as being associated to a smooth closed connected oriented manifold $Y$.
\end{remark}
On a $4$-manifold the story is analogous. Let $X$ be a closed oriented riemannian $4$-manifold. A spin$^c$ structure $\spin_X$ is given by a rank $4$ hermitian vector bundle $S_X$ on $X$ together with a Clifford multiplication $\rho_X$, i.e. a bundle map
\begin{equation*}
\rho_X:TX\rightarrow \mathrm{End}(S_X)
\end{equation*}
such that at each $x\in X$ we can find an oriented orthonormal frame $e_0,e_1,e_2,e_3$ with
\begin{equation*}
\rho_X(e_0)=\left(\begin{matrix}
0 & -I_2 \\
I_2 & 0
\end{matrix}\right)\qquad
 \rho_X(e_i)=\left(\begin{matrix}
0 & -\sigma_i^* \\
\sigma_i & 0
\end{matrix}\right) \quad i=1,2,3,
\end{equation*}
in some orthonormal basis of the fiber. Here $I_2$ is the $2\times 2$ identity matrix and the $\sigma_i$ are the Pauli matrices as above. Extending the Clifford multiplication to (complex) forms as in the $3$-dimensional case, we have that in the same basis
\begin{equation*}
\rho(\mathrm{vol}_x)=\left(\begin{matrix}
-I_2 & 0 \\
0 & I_2
\end{matrix}\right)
\end{equation*}
where $\mathrm{vol}=e_0\wedge e_1\wedge e_2\wedge e_3$ is the oriented volume form. Hence we get a orthogonal decomposition
\begin{equation*}
S_X=S^+\oplus S^-
\end{equation*}
respectively as the $-1$ and $+1$ eigenspaces of $\rho(\mathrm{vol})$. Then the Clifford multiplication by a tangent vector interchanges the two factors, and we have the bundle isometry
\begin{equation*}
\rho: \Lambda^+X\rightarrow \mathfrak{su}(S^+)
\end{equation*}
where $\Lambda^+X$ is the space of self-dual $2$-forms on $X$.
The existence and classification results for spin$^c$ structures on a $4$-manifold are identical to the $3$-dimensional case, even though the existence of at least one such structure is more subtle.
\\
\par
Finally we discuss the relation between spin$^c$ structures on $3$ and $4$-manifolds. Suppose $Y=\partial X$, and consider a spin$^c$ structure on $X$. Then using the outward normal vector field $\nu$ one obtains an identification
\begin{equation*}
\rho(\nu): S^+|_Y\rightarrow S^-|_Y.
\end{equation*}
Hence we recover a spin$^c$ structure on $Y$ with
\begin{align*}
S&= S^+|_Y\cong S^-|_Y\\
 \rho(v)&=\rho_X(\nu)^{-1}\rho_X(v).
\end{align*}

\vspace{0.8cm}
Fix now a spin$^c$ structure $\spin$ on a riemannian $3$-manifold $Y$. This data allows us to consider the following two objects. First, one has \textit{spinors}, i.e. sections $\Psi$ of $S$. Then one has \textit{spin$^c$ connections}, which are unitary connections $B$ on $S$ such that $\rho$ is parallel, namely for every vector field $\xi$ and every spinor $\Psi$ one has
\begin{equation*}
\nabla_B(\rho(\xi)\Psi)=\rho(\nabla \xi)\Psi+ \rho(\xi)\nabla_B(\Psi),
\end{equation*}
where $\nabla$ is the Levi-Civita connection.
Any such connection determines to a connection $B^t$ on the line bundle $\mathrm{det}(S)$, and given a base spin$^c$ connection $B_0$, all the other ones can be written as
\begin{equation*}
B=B_0+b
\end{equation*}
for some $b\in i\Omega^1(Y;\R)$.
Notice that this also implies $B^t=B_0^t+2b$. We denote the space of spin$^c$ connections by $\mathcal{A}(Y,\spin)$. We define the configuration space
\begin{equation*}
\Co(Y,\spin)=\left\{(B,\Psi)\right\}=\mathcal{A}(Y,\spin)\times \Gamma(S).
\end{equation*}
The group of automorphisms of the spin$^c$ structure
\begin{equation*}
\G=\mathrm{Map}(Y, S^1),
\end{equation*}
also called the \textit{gauge group}, acts on $\Co(Y,\spin)$ via the map
\begin{equation*}
u\cdot(B,\Psi)=(B-u^{-1}du,u\Psi).
\end{equation*}
We denote the quotient space of this action by $\Bo(Y,\spin)$, the moduli space of configurations.
\\
\par
We are interested in paths in $\Co(Y,\spin)$ satisfying the following flow equations:
\begin{equation}\label{monopoleflow}
\left\{
\begin{aligned}
\frac{d}{dt}B&=-\left(\frac{1}{2}\ast F_{B^t}+\rho^{-1}(\Psi\Psi^*)_0\right)\otimes1_S\\
\frac{d}{dt}\Psi&=-D_B\Psi.
\end{aligned}
\right.
\end{equation}
Here $(\Psi\Psi^*)_0$ denotes the traceless part of the endomorphism $\Psi\Psi^*$., and $D_B$ is the \textit{Dirac operator} associated to $B$, i.e. the composition
\begin{equation*}
\Gamma(S)\xrightarrow{\nabla_B} \Gamma(T^*X\otimes S)\xrightarrow{\rho} \Gamma(S).
\end{equation*}
The Dirac operator $D_B$ is a first order elliptic self-adjoint operator, hence it has index zero. These equations can be thought as describing the trajectories for the flow of the formal $L^2$ gradient of the Chern-Simons-Dirac functional
\begin{equation}
\CSD(B,\Psi)=-\frac{1}{8}\int_Y (B^t-B^t_0)\wedge (F_{B^t}+F_{B^t_0})+\frac{1}{2}\int_Y\langle D_B\Psi, \Psi\rangle d\mathrm{vol},
\end{equation}
where $B_0$ is some fixed spin$^c$ connection. The equations for the critical points
\begin{equation}\label{criticalpoints}
\left\{
\begin{aligned}
\frac{1}{2}\rho(F_{B^t})-(\Psi\Psi^*)_0&=0\\
D_B\Psi&=0
\end{aligned}\right.
\end{equation}
are called the \textit{Seiberg-Witten equations} or \textit{monopole equations}. The Chern-Simons-Dirac functional is not invariant under $\G$, but one has the relation
\begin{equation*}
\CSD(u\cdot(B,\Psi))-\CSD(B,\Psi)=2\pi^2([u]\cup c_1(S))[Y]
\end{equation*}
where $[u]\in H^1(Y;\Z)$ is the cohomology class corresponding to the map $u:Y\rightarrow S^1$. In particular, $\CSD$ is invariant under the identity component $\G^e$ of the gauge group, and descends to a well defined function with values in $\R/(2\pi^2\Z)$.
\\
\par
The monopole equations have a special class of solutions. We say that a configuration $(B,\Psi)$ (not necessarily a solution) is \textit{reducible} if $\Psi=0$, and \textit{irreducible} otherwise. The subset of irreducible configurations is denoted by $\Co^*(Y,\spin)$. If there is a reducible solution $(B,0)$ to the monopole equations (\ref{criticalpoints}), then the connection $B^t$ on $^\mathrm{det}(S)$ is flat, hence $c_1(S)$ is torsion. On the other hand, if the first Chern class of $S$ is torsion, there are always reducible solutions, and the space of reducible solutions up to gauge transformations can be identified with a torus
\begin{equation*}
H^1(Y;i\R)/2\pi iH^1(Y; \Z)\subset\Bo(Y,\spin).
\end{equation*}
The distinction between irreducible and reducible plays an important role because an irreducible configuration has trivial stabilizer under the action of the gauge group, while a reducible configuration has stabilizer $S^1$ consisting of constant gauge transformations.
\\
\par
The flow equations (\ref{monopoleflow}) can also be interpreted purely in $4$-dimensional terms. We can define the configuration space on a $4$-manifold $X$ as
\begin{equation*}
\Co(X,\spin)=\mathcal{A}(X,\spin)\times \Gamma(S^+)
\end{equation*}
where the first factor is the space of spin$^c$ connections on $S_X$. The gauge group $\G=\mathrm{Map}(X,S^1)$ acts on it with quotient $\Bo(X,\spin)$. A spin$^c$ structure on $Y$ naturally induces a spin$^c$ structure on the infinite cylinder $Z=\R\times Y$ as follows. The bundle $S_Z$ will be (the pull-back of) $S\oplus S$, and the Clifford multiplication $\rho_Z$ is defined as
\begin{equation*}
\rho_Z(\partial/\partial t)=\left(\begin{matrix}
0 & -I_S \\
I_S & 0
\end{matrix}\right)\qquad
\rho_Z(v)=\left(\begin{matrix}
0 & -\rho(v)^* \\
\rho(v) & 0
\end{matrix}\right)\text{ for }v\in TY.
\end{equation*}
A time-dependent spinor naturally defines a section of $S^+$, while a time dependent connection $B$ on $Y$ gives rise to a connection $A$ on $Z$ defined as
\begin{equation}\label{temporalgauge}
\nabla_A=\frac{d}{dt}+\nabla_B.
\end{equation}
In this setting, one can write the flow equations (\ref{monopoleflow}) as a gauge invariant equations for a pair $(A,\Phi)$ of the form
\begin{equation*}
\left\{
\begin{aligned}
\frac{1}{2}\rho_Z(F_{A^t}^+)&=(\Phi\Phi^*)_0\\
D_A^+\Phi&=0
\end{aligned}\right.
\end{equation*}
where $D_A^+$ is the Dirac operator arising as the composition
\begin{equation*}
\Gamma(S^+)\xrightarrow{\nabla_A} \Gamma(T^*X\otimes S^+)\xrightarrow{\rho} \Gamma(S^-).
\end{equation*}
We will also think the equations as the differential operator
\begin{IEEEeqnarray*}{c}
\F: \Co(Z,\spin)\rightarrow C^{\infty}(X; i\mathfrak{su}(S^+)\oplus S^-)\\
(A,\Phi)\mapsto\left(\frac{1}{2}\rho(F_{A^t}^+)-(\Phi\Phi^*)_0,D_A^+\Phi\right).
\end{IEEEeqnarray*}
It is important to notice that the last set of equations makes sense for a general configuration $(A,\Phi)$ on a general $4$-manifold $X$, and these are indeed the original Seiberg-Witten equations (\cite{Mor}). 
\begin{remark} In the case $X$ is compact without boundary it follows from the Atiyah-Singer index theorem that $D_A^+$ has complex index 
\begin{equation*}
\mathrm{Ind} D_A^+=\frac{1}{8}\left(c_1(S^+)^2[X]-\sigma(X)\right),
\end{equation*}
where $\sigma(X)$ is the signature of the intersection form of $X$.
\end{remark}
On the infinite cylinder $Z$ the $4$-dimensional equations make sense for every spin$^c$ connection $A$, not necessary coming from a time dependent connection $B$ on $Y$ as in equation (\ref{temporalgauge}) (which we call \textit{in temporal gauge}). In general we can always write such a connection as
\begin{equation*}
A=B+(cdt)\otimes 1_S,
\end{equation*}
where $c$ is a time dependent imaginary valued function on $Y$, and writing down the Seiberg-Witten equations in a $3$-dimensional fashion we obtain the gauge invariant equations
\begin{equation*}
\left\{
\begin{aligned}
\frac{d}{dt}B-dc&=-\left(\frac{1}{2}\ast F_{B^t}+\rho^{-1}(\Psi\Psi^*)_0\right)\otimes1_S\\
\frac{d}{dt}\Psi+c\Psi&=-D_B\Psi.
\end{aligned}\right.
\end{equation*}
Given a configuration $(A,\Phi)$ on $Z$ we will denote by $(\check{A}(t),\check{\Phi}(t))$ the path of configurations on $Y$ it determines.

\vspace{1.5cm}

\section{Blowing up the configuration spaces}
The key construction in monopole Floer homology is the blow-up of the moduli space along the reducible locus. This is done in order to be able to deal properly with reducible solutions, which are the fixed points for the action of the constant gauge transformations. In particular, our goal will be to define in an infinite dimensional setting an analogue of $S^1$-equivariant homology.
\\
\par
Consider the configuration space $\Co(X,\spin)$ on a $4$-manifold (possibly with boundary) with a fixed spin$^c$ structure. Define the blown-up moduli space as
\begin{equation*}
\Cs(X,\spin)=\left\{(A,s,\phi)\mid s\geq0\text{ and } \|\phi\|_{L^2(X)}=1\right\}\subset \mathcal{A}(X,\spin)\times\R\times \Gamma(S^+).
\end{equation*}
This comes with the natural blow-down map
\begin{align*}
\pi:\Cs(X,\spin)&\rightarrow\Co(X,\spin)\\
(A,s,\phi)&\mapsto (A,s\phi).
\end{align*}
Such a map is a bijection over the irreducible locus $\Co^*(X,\spin)$, while the fiber over a reducible configuration $(A,0)$ is a sphere $\mathbb{S}(C^{\infty}(X;S^+))$.
\par
The Seiberg-Witten equations are defined on the blow up as follows. One can see the operator $\F$ as a section on the trivial vector bundle
\begin{equation*}
\V(X,\spin)\rightarrow \Co(X,\spin)
\end{equation*}
with fiber $C^{\infty}(X; i\mathfrak{su}(S^+)\oplus S^-)$. This vector bundle can be pulled back to $\Cs(X,\spin)$ along the projection $\pi$ to get a vector bundle $\V^{\sigma}(X,\spin)$. Then one defines the section
\begin{IEEEeqnarray*}{c}
\F^{\sigma}:\Cs(X,\spin)\rightarrow \V^{\sigma}(X,\spin)\\
(A,s,\phi)\rightarrow\big(\frac{1}{2}\rho(F_{A^t}^+)-s^2(\phi\phi^*)_0, D_A^+\phi\big).
\end{IEEEeqnarray*}
We call these the \textit{Seiberg-Witten equations on the blown up configuration space}. Notice that the gauge group $\G(X)$ naturally acts on both $\Cs(X,\spin)$ and $\V^{\sigma}(X,\spin)$ and the section $\F^{\sigma}(X,\spin)$ is equivariant for this action. 
\\
\par
The blow-up of the configuration space on a $3$-manifold is defined in an analogous way as
\begin{equation*}
\Cs(Y,\spin)=\left\{(B,r,\psi)\mid r\geq0\text{ and }\|\psi\|_{L^2(Y)}=1\right\}\subset \mathcal{A}(Y,\spin)\times\R\times \Gamma(S).
\end{equation*}
On a finite cylinder $Z=I\times Y$, a configuration
\begin{equation*}
\gamma^{\sigma}=(A,s,\phi)\in \Cs(Z,\spin)
\end{equation*}
gives rise to a path of configurations in $\Cs(Y,\spin)$
\begin{equation*}
\check{\gamma}^{\sigma}(t)=(\check{A}(t), s\|\check{\phi}(t)\|_{L^2(Y)}, \check{\phi}(t)/\|\check{\phi}(t)\|_{L^2(Y)})
\end{equation*}
provided
\begin{equation*}
\|\check{\phi}(t)\|_{L^2(Y)}\neq 0\text{ for every } t\in I.
\end{equation*}
Because of a unique continuation result for solutions of the equation
\begin{equation*}
\F^{\sigma}(\gamma^{\sigma})=0
\end{equation*}
(see Chapter $7$ in the book), this is true if we restrict to this class of paths. If $A$ is also in temporal gauge, the $4$-dimensional Seiberg-Witten equations on the blow-up can be interpreted as the equations for the path $(B(t), r(t),\psi(t))$ given by
\begin{equation*}\label{blowflow}
\left\{
\begin{aligned}
\frac{d}{dt}B&=-\left(\frac{1}{2}\ast F_{B^t}+r^2\rho^{-1}(\psi\psi^*)_0\right)\otimes 1_S\\
\frac{d}{dt}r&=-\Lambda(B,r,\psi)r\IEEEyesnumber\\
\frac{d}{dt}\psi&=-D_B\psi+\Lambda(B,r,\psi)\psi
\end{aligned}\right.
\end{equation*}
where we have defined the function
\begin{equation*}
\Lambda(B,r,\psi)=\langle\psi,D_B\psi\rangle_{L^2(Y)}.
\end{equation*}
The right hand side of the equations defines a vector field of $\Cs(Y,\spin)$, the \textit{blown-up gradient of the Chern-Simons-Dirac functional}, which we denote by $(\mathrm{grad} \CSD)^{\sigma}$. This vector field is the pull-back of $\mathrm{grad}\CSD$ on the irreducible locus $\Co^*(Y,\spin)$, and is tangent to the locus $r=0$, i.e. the boundary of $\Co^{\sigma}(Y,\spin)$. Notice that $\CSD$ also pulls back to a well defined function on $\Co^{\sigma}(Y,\spin)$, but it is important to notice that $(\mathrm{grad} \CSD)^{\sigma}$ is not the gradient of this function with respect to any natural metric. It is easy to identify the critical points of $(\mathrm{grad} \CSD)^{\sigma}$. In particular they are:
\begin{itemize}
\item configurations with $r\neq0$ such that $(B,r\psi)\in\Co$ is a critical point of $\mathrm{grad}\CSD$.
\item configurations with $r=0$, such that $(B,0)\in\Co$ is a critical point of $\mathrm{grad}\CSD$ and $\psi$ is an eigenvector of $D_B$ (with eigenvalue $\Lambda(B,0,\psi))$.
\end{itemize}
\vspace{0.5cm}
\par
When dealing with configurations on a cylinder $Z=I\times Y$, there is another version of the blow-up which will be very useful when studying flow lines. This is the $\tau$-model
\begin{equation*}
\Ct(Z,\spin)\subset \mathcal \A(Z,\spin)\times C^{\infty}(I,\R)\times \Gamma(S^+)
\end{equation*}
which consists of the set of triples $(A,s,\phi)$ satisfying
\begin{itemize}
\item $s(t)\geq 0$;
\item $\|\phi(t)\|_{L^2(Y)}=1$.
\end{itemize}
An element $\gamma=(A,s,\phi)$ in $\Ct(Y,\spin)$ determines a path $\check{\gamma}=(\check{A},s,\check{\phi})$ in $\Cs(Y,\spin)$, and the correspondence is bijective if we restrict to configurations in temporal gauge. In general, the flow equations (\ref{blowflow}) can be rewritten for our path in the following gauge-invariant form
\begin{equation*}
\left\{
\begin{aligned}
\frac{d}{dt}\check{A}&=-\left(\frac{1}{2}\ast F_{\check{A}^t}+dc+r^2\rho^{-1}(\check{\psi}\check{\psi}^*)_0\right)\otimes 1_S\\
\frac{d}{dt}s&=-\Lambda(\check{A},s,\check{\psi})s\\
\frac{d}{dt}\check{\psi}&=-D_{\check{A}}\check{\psi}-c\check{\phi}+\Lambda(\check{A},s,\check{\psi})\check{\psi}
\end{aligned}
\right.
\end{equation*}
where $A=\check{A}+cdt$.
It is useful to interpret these equations in the $\tau$-model in a $4$-dimensional fashion (which also makes gauge-invariance more manifest). The equations can be written as
\begin{equation*}
\left.
\begin{aligned}
\frac{1}{2}\rho(F_{A^t}^+)-s^2(\phi\phi^*)_0&=0\\
\frac{d}{dt}s+\mathrm{Re}\langle D_A^+\phi, \rho(dt)^{-1}\phi\rangle_{L^2(Y)}s&=0\\
D_A^+\phi-\mathrm{Re}\langle D_A^+\phi, \rho(dt)^{-1}\phi\rangle_{L^2(Y)}\phi&=0.
\end{aligned}
\right\}
\end{equation*}
The left hand side on this equations determine a section $\F^{\tau}$ of the vector bundle
\begin{equation*}
\V^{\tau}(Z,\spin)\rightarrow\Ct(Z,\spin)
\end{equation*}
with fiber over $(A,s,\phi)$ the vector space
\begin{equation*}
\left\{(\eta,r,\psi)\mid\langle\mathrm{Re}\check{\phi}(t),\check{\psi}(t)\rangle_{L^2(Y)}=0\text{ for all }t\right\}\subset C^{\infty}(Z,i\mathfrak{su}(S^+))\oplus C^{\infty}(I,\R)\oplus C^{\infty}(Z,S^-).
\end{equation*}
A configuration in $\Cs(Z,\spin)$ such that the restriction to each slice $\{t\}\times Y$ is non zero gives rise to a well defined element of $\Ct(Z,\spin)$ by rescaling. A unique continuation result (see Chapter $7$ in the book) tells us that if solution of the equations is such that the spinor vanishes on a slice, then the spinor vanishes everywhere. Hence we get a bijection between the set of solutions of $\F^{\sigma}$ and $\F^{\tau}$.

\vspace{1.5cm}
\section{Completion and slices}
We now define suitable completions of our configuration spaces in order to be able to work in a Hilbert (or Banach) manifold setting which will be essential for the kind of analysis we will do.
From now on, given a vector bundle $E$ over $M$ with an inner product and a connection $\nabla$, and given integer $k$, we will denote by $L^p_k(M;E)$ the Sobolev space obtained by completing the space of sections of such a bundle with respect to the Sobolev norm
\begin{equation*}
\|f\|_{L^p_k}^p=\int_M |f|^p+|\nabla f|^p+\dots+|\nabla^k f|^p d\mathrm{vol}.
\end{equation*} 
If $M$ is closed, we can also define the fractional Sobolev spaces $L^2_k(M;E)$ as the completion of the space of sections with respect to the norm
\begin{equation*}
\|f\|_{L^2_k}=\|(1+\Delta)^{k/2}f\|_{L^2}.
\end{equation*}
This definition is equivalent to the previous one in the case $k$ is an integer.
\\
\par
We then focus in the case when $M$ is a $3$-manifold or a $4$-manifold (possibly with boundary). We will write $W$ for the bundle $S$ in the former case and for $S^+$ in the latter. Fix a smooth connection $A_0$ on $M$. We define
\begin{equation*}
\Co_k(M,\spin)=(A_0,0)+L^2_k(M; iT^*M\oplus W)=\A_k(M,\spin)\times L^2_k(M;W)
\end{equation*}
where
\begin{equation*}
\A_k(M,\spin)=A_0+L^2_k( M; i T^*M)
\end{equation*}
is the space of $L^2_k$ connections on $M$. Similarly the completion of the blown-up configuration space is defined as
\begin{IEEEeqnarray*}{c}
\Cs_k(M,\spin)=\left\{(A,s,\phi)\mid s\geq0,\|\phi\|_{L^2(X)}=1\right\}\\
\subset
\A_k(M,\spin)\times\R^{\geq0}\times \mathbb{S}(L^2_k(M;W))\end{IEEEeqnarray*}
where the sphere is still taken with respect to the $L^2$ norm. This is a smooth Hilbert manifold with boundary given by the locus $s=0$. For $2(k+1)>\mathrm{dim}M$, the gauge group
\begin{equation*}
\G_{k+1}(M)\subset L^2_k(M;\C)
\end{equation*}
is defined as the subset of functions with pointwise norm $1$. With the topology inherited from $L^2_{k+1}(M;\C)$, this is a Hilbert Lie group which acts smoothly and freely on $\Cs_k(M,\spin)$.
\par
The tangent bundle of $\Cs_k(M;\spin)\subset \A_k\times\R\times L^2_k(M;W)$ can be regarded as the bundle with fiber at $\gamma=(A_0,s_0,\phi_0)$ given by
\begin{equation*}
\{(a,s,\phi)\mid\mathrm{Re}\langle\phi_0,\phi\rangle_{L^2}=0\}\subset L^2_k(M;iT^*M)\times\R\times L^2_k(M;W).
\end{equation*}
We also define the bundles $\T^{\sigma}_j\rightarrow \Cs_k(M,\spin)$ obtained from the tangent bundle by fiberwise completion in the $L^2_j$ norm for $j\leq k$. The fiber at a point $\gamma=(A,s,\phi)\in\Cs_k(M,\spin)$ is given by
\begin{equation*}
\{(a,s,\phi)\mid\mathrm{Re}\langle\phi_0,\phi\rangle_{L^2}=0\}\subset L^2_j(M;iT^*M)\times\R\times L^2_j(M;W).
\end{equation*}
In the case of non blown-up configuration spaces, the same construction yields the product bundles
\begin{equation*}
\T_j=L^2_j(M; iT^*M\oplus W)\times \Co_k(M,\spin).
\end{equation*}
In both cases, for $j=k$ we obtain the usual tangent bundle.
\\
\par
For $2(k+1)>\mathrm{dim} M$, we introduce the quotient spaces
\begin{IEEEeqnarray*}{c}
\Bo_k(M,\spin)=\Co_k(M,\spin)/\G_{k+1}(M) \\
\Bs_k(M,\spin)=\Cs_k(M,\spin)/\G_{k+1}(M)
\end{IEEEeqnarray*}
In the latter case, the action of the gauge group is free and the quotient space is Hausdorff (see Proposition $9.3.1$ in the book). To show that it is a Hilbert manifold, we will explicitly construct slices for the gauge group action. There is for every $j\leq k$ a smooth bundle decomposition
\begin{equation*}
\T^{\sigma}_j=\J^{\sigma}_j\oplus\K^{\sigma}_j
\end{equation*}
defined as follows. The fiber $\J_{\gamma,k}^{\sigma}$ at a point $\gamma=(A_0,s_0,\phi_0)$ is the image of the derivative of the gauge group action
\begin{align}\label{devgauges}
\mathbf{d}^{\sigma}_{\gamma}: T_e\G_{k+1}(M)&=L^2_{k+1}(M,i\R)\rightarrow T_{\gamma}\Co_k^{\sigma}(M) \\ \xi&\mapsto (-d\xi,0,\xi\phi_0)\nonumber,
\end{align}
and similarly we define $\J_{\gamma,j}^{\sigma}$ for $j\leq k$ by extending the map in Sobolev spaces of lower regularity. The fiber of $\K^{\sigma}_j$ over a point $\gamma$ is defined as the subspace of $\T^{\sigma}_{\gamma,j}$ consisting of the space of triples $(a,s,\phi)$ satisfying
\begin{equation*}
\left.
\begin{aligned}
\langle a, \nu\rangle&=0 \text{ at }\partial M\\
-d^*a+is_0^2\mathrm{Re}\langle i\phi_0,\phi\rangle&=0 \\
\mathrm{Re}\langle i\phi_0,\phi\rangle_{L^2(M)}&=0.
\end{aligned}
\right\}
\end{equation*}
This decomposition can be thought as the extension to the boundary of the analogous decomposition on $\Co^*_k(M,\spin)\subset\Co_k(M,\spin)$ given by
\begin{equation*}
\T_j=\J_j\oplus \K_j.
\end{equation*}
Here as before $\J_{\gamma,j}$ at the point $\gamma=(A_0,\Phi_0)$ is the completion of the image of the derivative of the gauge group action
\begin{IEEEeqnarray*}{c}\label{devgauge}
\mathbf{d}_{\gamma}: T_e\G_{k+1}(M)=L^2_{k+1}(M,i\R)\rightarrow T_{\gamma}\Co_k(M) \IEEEyesnumber\\
\xi\mapsto (-d\xi,\xi\Phi_0)
\end{IEEEeqnarray*}
and $\K_{\gamma,k}$ is its orthogonal complement with respect to the standard $L^2$ inner product, namely
\begin{equation}\label{gaugefix}
\left\{(a,\phi)\mid -d^*a+i\mathrm{Re}\langle i\Phi_0,\phi\rangle=0 \text{ and }\langle a,\nu\rangle=0 \text{ at }\partial M\right\}.
\end{equation}
It is important to notice that the decomposition $\T^{\sigma}_j=\J^{\sigma}_j\oplus\K^{\sigma}_j$ is \textit{not} orthogonal with respect to any natural metric on the blown-up configuration space.
\\
\par
Given a configuration $\gamma\in\Cs_k(M,\spin)$ we are now ready to define the \textit{Coulomb-Neumann slice}  $\Sl^{\sigma}_{k,\gamma}$ through it. This will be a closed Hilbert submanifold containing $\gamma$ whose tangent space at $\gamma$ is $\K_k^{\sigma}$. A small open neighborhood of $\mathcal{U}\subset \Sl^{\sigma}_{k,\gamma}$ of $\gamma$ will provide a local charts for the Hilbert manifold $\Bs_k(M,\spin)$ near $[\gamma]$, via the composition
\begin{equation}\label{slice}
\Sl^{\sigma}_{k,\gamma}\hookrightarrow \Cs_k(M,\spin)\rightarrow \Bs(M,\spin)
\end{equation}
where the first map is the inclusion and the second is the quotient map, see Corollary $9.3.8$ in the book. We define $\Sl^{\sigma}_{k,\gamma}$ with $\gamma=(A_0,s_0,\phi_0)$ to be the closed Hilbert submanifold of $\Cs_k(M,\spin)$ consisting of triples $(A_0+a,s,\phi)$ satisfying
\begin{equation*}
\left.
\begin{aligned}
\langle a, \nu\rangle&=0 \text{ at }\partial M\\
-d^*a+iss_0\mathrm{Re}\langle i\phi_0,\phi\rangle&=0 \\
\mathrm{Re}\langle i\phi_0,\phi\rangle_{L^2(M)}&=0.
\end{aligned}
\right\}
\end{equation*}
In the case $\gamma$ is irreducible, one can define the affine subspace $\Sl_{k,\gamma}\subset\Co_k(M,\spin)$ given by
\begin{equation*}
\Sl_{k,\gamma}=\left\{(A_0+a,\Phi)\mid| -d^*a+i\mathrm{Re}\langle i\Phi_0,\Phi\rangle=0 \text{ and } \langle a,\nu\rangle=0\text{ at }\partial M\right\},
\end{equation*}
and $\Sl^{\sigma}_{k,\gamma}$ is simply the proper transform of this under blow-up.
\par
In the case $\gamma=(A_0, 0)$ is a reducible configuration the equations defining $\Sl_{k,\gamma}$ are simply
\begin{equation*}
\left\{
\begin{aligned}
d^*a& = 0 \\
\langle a, \nu\rangle&=0\text{ at }\partial M.
\end{aligned}
\right.
\end{equation*}
These equations define a global slice for the gauge group action. Looking for a gauge transformation of the form $u=e^{\xi}$ to put a given configuration in the slice $\Sl_{k,\gamma}$ is equivalent to solve the problem
\begin{align*}
\langle d\xi,\nu\rangle&=\langle a, \nu\rangle \text{ at }\partial M\\
\Delta\xi&=d^*a
\end{align*}
which is known to have a unique solution $\xi\in L^2_{k+1}$ such that
\begin{equation*}
\int_M\xi=0.
\end{equation*}
If we introduce the subgroup 
\begin{equation*}
\G^{\perp}_{k+1}=\left\{e^{\xi}\mid\int_M\xi=0\right\},
\end{equation*}
the action of the gauge group gives us a diffeomorphism
\begin{align*}
\G^{\perp}_{k+1}\times \K_{k,\gamma_0}&\rightarrow \Co_k \\ (e^{\xi}, (a,\phi))&\mapsto (A_0+(a-d\xi)\otimes 1, e^{\xi}\phi).
\end{align*}
The gauge group can be decomposed as $\G_{k+1}=\G^h\times \G^{\perp}_{k+1}$, where $\G^h$ is the group of the harmonic maps $M\rightarrow S^1$ sitting in the short exact sequence
\begin{equation}\label{harmonicgauge}
1\rightarrow S^1 \rightarrow \G^h\rightarrow H^1(M;\Z)\rightarrow 0.
\end{equation}
Such a sequence splits (but not canonically). In particular, up to homotopy one can identify
\begin{equation*}
\Bs=\K_{k,\gamma_0}^*/\G^h.
\end{equation*}
Such a space is a fiber bundle (by projecting the spinor away)
\begin{equation*}
(L^2_k(M;W)\setminus\{0\})/S^1\hookrightarrow \K_{k,\gamma_0}^*/\G^h\rightarrow H^1(M;i\R)/H^1(M;i\Z)
\end{equation*}
which is trivial by Kuiper's theorem on the contractibility of the unitary group of a Hilbert space. Hence it has the homotopy type of a product
\begin{equation*}\mathbb{CP}^{\infty}\times H^1(M;i\R)/H^1(M;i\Z),
\end{equation*}
and its cohomology ring (over $\Z$) is isomorphic to
\begin{equation*}
\Z[U]\otimes \Lambda^*(H_1(M;\Z)/\text{torsion}),
\end{equation*}
where $\deg U=2$. Similarly, one shows that $\Bo_k(M,\spin)$ has the homotopy type of a torus $H^1(M;i\R)/H^1(M;i\Z)$.
\\
\par
We define the Seiberg-Witten equations on the completions of the configuration spaces. In the case of a compact $4$-manifold (perhaps with boundary), we define for each $j\leq k$ the trivial vector bundle $\V_j(X,\spin)$ over $\Co_k(X,\spin)$ with fiber $L^2_j(X;i\mathfrak{su}(S^+)\oplus S^-)$, and the bundle
\begin{equation*}
\V_j^{\sigma}(X,\spin)\rightarrow \Cs_k(X,\spin)
\end{equation*}
as its pull-back under the blow-down map. The section $\F^{\sigma}$ then extends to these Sobolev completions as a section
\begin{equation*}
\F^{\sigma}: \Cs_k(X,\spin)\rightarrow \V^{\sigma}_{k-1}\subset\V^{\sigma}_j.
\end{equation*}
The gauge group acts smoothly on such bundles for $j\leq k+1$, and the section $\F^{\sigma}$ is $\G_{k+1}(X)$-equivariant for $j=k-1$. The case of a $3$-dimensional manifold is analogous. In this case we obtain the sections
\begin{align*}
\mathrm{grad}\CSD&:\Co_k(Y,\spin)\rightarrow T_{k-1}\\
(\mathrm{grad}\CSD)^{\sigma}&:\Co_k(Y,\spin)\rightarrow \T^{\sigma}_{k-1},
\end{align*}
which are both smooth.
\\
\par
We can also easily adapt this story in the case of the $\tau$ model on a cylinder $I\times Y$. We define
\begin{equation*}
\Ct_k(Z,\spin)\subset \A_k(Z,\spin)\times L^2_k(I,\R)\times L^2_k (Z; S^+)
\end{equation*}
consisting of the triples $(A,s,\phi)$ with 
\begin{itemize}
\item $s(t)\geq 0$;
\item $\|\phi(t)\|_{L^2(Y)}=1$ for all $t\in I$.
\end{itemize}
Notice that this space is not a Hilbert manifold (nor even a manifold with boundary) because of the condition $s\geq 0$, but is a closed subspace of the Hilbert manifold
\begin{equation*}
\tilde{\mathcal{C}}_k^{\tau}(Z,\spin)\subset \A_k(Z,\spin)\times L^2_k(I,\R)\times L^2_k (Z; S^+)
\end{equation*}
consisting of the triples $(A,s,\phi)$ with $\|\phi(t)\|_{L^2(Y)}=1$ for all $t\in I$. There is a natural involution of this space
\begin{equation}\label{involution}
\begin{aligned}
\mathbf{i}: \tilde{\mathcal{C}}_k^{\tau}(Z,\spin)&\rightarrow \tilde{\mathcal{C}}_k^{\tau}(Z,\spin)\\
(A,s,\phi)&\mapsto (A,-s,\phi)
\end{aligned}
\end{equation}
and the gauge group $\G_{k+1}(Z)$ acts smoothly and freely on it. The completion and slices story in this framework is essentially unchanged, and we refer the reader to Sections 9.2 and 9.4 in the book for the details.

\vspace{1.5cm}
\section{Perturbations}
As it is usual in Floer theory, we need to introduce suitable perturbations of the Seiberg-Witten equations in order to construct monopole Floer homology. The space of perturbations has to be a large enough in order to achieve transversality for the moduli spaces of solutions. On the other hand, these perturbations have to be mild enough so that the perturbed equations still have the nice properties (smoothness, unique continuation and compactness among the others) of the original ones. In this section we review the construction of the perturbations we will use, the ones arising as formal gradients of cylinder functions. We do this in quite detail as we will perform a variant of such a construction in Chapter $4$, and we start by discussing the abstract theory of perturbations.
\\.
\par
The perturbations we will deal with will arise as the formal gradient of a given gauge-invariant function $f: \Co(Y)\rightarrow \R$, i.e. a section
\begin{equation*}
\q:\Co(Y)\rightarrow \T_0
\end{equation*}
such that for every path $\gamma:[0,1]\rightarrow \Co(Y)$ we have
\begin{equation*}
f\circ \gamma(1)-f\circ\gamma(0)=\int_0^1\langle \dot{\gamma}, \q\rangle_{L^2(Y)}dt.
\end{equation*}
We will write
\begin{equation*}
\CSd=\CSD+f
\end{equation*}
for the perturbed Chern-Simons-Dirac functional, and write
\begin{equation*}\mathrm{grad}\CSd=\mathrm{grad}\CSD+\q,
\end{equation*}
which is a $\G(Y)$-invariant section of $\T_0\rightarrow \Co(Y)$.
\par
We can then pull-back such a perturbation to the cylinders $I\times Y$ in order to obtain a section
\begin{equation*}
\hat{\q}:\Co(Z)\rightarrow \V_0(Z)
\end{equation*}
as follows. From a configuration $(A,\Phi)\in\Co(Z)$ we obtain by restricting to slices a continuous path $(\check{A}(t),\check{\Phi}(t))$ in $\Co(Y)$, hence a continuous path $\q(\check{A}(t),\check{\Phi}(t))$ in $L^2(Y;iT^*Y\oplus S)$. By identifying $iT^*Y\oplus S$ with $i\mathfrak{su}(S^+)\oplus S^-$ via the Clifford multiplication on the first factor we obtain an element of $\V_0(Z)$.
\\
\par
In the following definition we recollect the analytic requirements we make on such a perturbation.
\begin{defn}\label{tamepert}
Let $k\geq 2$ an integer. We say that a perturbation
\begin{equation*}
\q:\Co(Y)\rightarrow \T_0
\end{equation*}
is $k$-\textit{tame} if it is the formal gradient of a $\G(Y)$-equivariant function on $\Co(Y)$ and it satisfies the following properties:
\begin{enumerate}
\item the associated $4$-dimensional perturbation $\hat{\q}$ defines a smooth section
\begin{equation*}
\hat{\q}:\V_k(Z)\rightarrow \Co_k(Z);
\end{equation*}
\item  for every integer $j\in[1, k]$ the section $\hat{\q}$ extends to a continuous section
\begin{equation*}
\hat{\q}:\V_j(Z)\rightarrow \Co_j(Z);
\end{equation*}
\item for every integer $j=[-k, k]$ the first derivative
\begin{equation*}\mathcal{D}\hat{\q}\in C^{\infty}\left(\Co_k(Z), \mathrm{Hom}(T\Co_k(Z),\V_k(Z))\right)
\end{equation*}
extends to a map
\begin{equation*}\mathcal{D}\hat{\q}\in C^{\infty}\left(\Co_k(Z), \mathrm{Hom}(T\Co_j(Z),\V_j(Z))\right);
\end{equation*}
\item there is a constant $m_2$ such that
\begin{equation*}
\|\q(B,\Psi)\|_{L^2}\leq m_2\left(\|\Psi\|_{L^2}+1\right)
\end{equation*}
for every $(B,\Psi)\in \Co_k(Y)$;
\item for any fixed smooth connection $A_0$, there is a real function $\mu_1$ such that the inequality
\begin{equation*}
\|\hat{\q}(A,\Phi)\|_{L^2_{1,A}}\leq\mu_1(\|(A-A_0,\Phi)\|_{L^2_{1,A_0}})
\end{equation*}
holds for all configurations $(A,\Phi)\in \Co_k(Z)$;
\item the $3$-dimensional perturbation $\q$ defines a $C^1$ section
\begin{equation*}
\q:\Co_1(Y)\rightarrow\T_0.
\end{equation*}
\end{enumerate}
We say that $\q$ is \textit{tame} if it is $k$-tame for every $k\geq 2$.
\end{defn}
\vspace{0.5cm}
\par
The perturbed equations are defined as follows. Using the inclusion $\V_k\subset\V_{k-1}$, we can define the section
\begin{equation*}\F_{\q}=\F+\hat{\q}
\end{equation*}
and the perturbed Seiberg-Witten equations on $Z$ as
\begin{equation*}
\F_{\q}(A,\Phi)=0.
\end{equation*}
In a more expanded version, we write the perturbation as $\q=(\q^0,\q^1)$ with
\begin{align*}
\q^0&\in L^2(Y;iT^*Y)\\
\q^1&\in L^2(Y;S)
\end{align*}
and the induced $4$-dimensional perturbation as $\hat{\q}=(\hat{\q}^0,\hat{\q}^1)$, where
\begin{align*}\hat{\q}^0&\in L^2(Z; i\mathfrak{su}(S^+))\\
\hat{\q}^1&\in L^2(Z;S^-).
\end{align*}
The equation $\F_{\q}(A,\Phi)=0$ can then be written as 
\begin{equation*}
\left.
\begin{aligned}
\rho_Z(F_{A^t}^+)-2(\Phi\Phi^*)_0&= -2\hat{\q}^0(A,\Phi)\\
D_A^+\Phi& =-\hat{\q}^1(A,\Phi)
\end{aligned}
\right\}
\end{equation*}
or, when interpreted as a gradient-flow equation, in the form
\begin{equation*}
\left\{
\begin{aligned}
\frac{d}{dt}B^t& = -\ast F_{B^t}-2\rho^{-1}(\Psi\Psi^*)_0-2\q^0(B,\Psi)\\
\frac{d}{dt}\Psi &= -D_B\Psi-\q^1(B,\Psi).
\end{aligned}
\right.
\end{equation*}
To write the perturbed equations on the blow-up, one notices that the section $\hat{\q}$ of $\V_k$ gives rise to a section
\begin{equation*}
\hat{\q}^{\sigma}:\Cs_k(Z)\rightarrow \V_k^{\sigma}
\end{equation*}
as follows. Gauge invariance implies that
\begin{equation*}\hat{\q}^1(A,0)=0,
\end{equation*}
hence one can define the section
\begin{align*}
\hat{\q}^{1,\sigma}&:\Cs_k(Z)\rightarrow \V_k^{1,\sigma}\\
(A,s,\phi)&\mapsto\int_0^1 (\mathcal{D}_{(A,rs\phi)}\hat{\q}^1)(\phi)dr
\end{align*}
and finally
\begin{equation*}
\hat{\q}^{\sigma}=(\hat{\q}^0, \hat{\q}^{1,\sigma}).
\end{equation*}
It is straightforward that $\hat{\q}^{\sigma}$ is a smooth section as $\hat{\q}$ is. In any case,
\begin{equation*}
\F^{\sigma}_{\q}=\F^{\sigma}+\hat{\q}^{\sigma}: \Co_k^{\sigma} \rightarrow\V_{k-1}^{\sigma}
\end{equation*}
is a smooth section and $\F^{\sigma}_{\hat{\q}}=0$ are the \textit{perturbed Seiberg-Witten equations on the blow-up}.
\\
\par
There is also a natural $3$-dimensional counterpart of the last construction, giving rise to the perturbed $3$-dimensional gradient in the blow-up. The perturbation $\q$ gives rise to a  perturbation $\q^{\sigma}$ on the blow-up $\Cs_k(Y)$, and we can write 
\begin{equation*}(\mathrm{grad}\CSd)^{\sigma}=(\mathrm{grad}\CSD)^{\sigma}+\q^{\sigma},
\end{equation*}
which is a smooth section of the vector bundle $\T_{k-1}^{\sigma}\rightarrow\Co_k^{\sigma}$.
The equations of the gradient flow of a path $(B(t), r(t), \psi(t))$ are
\begin{equation*}
\left\{
\begin{aligned}
\frac{d}{dt}B^t& = -\ast F_{B^t}-2r^2\rho^{-1}(\psi\psi^*)_0-2\q^0(B,r\psi)\\
\frac{d}{dt}r &=-\Lambda_{\q}(B,r,\psi)r\\
\frac{d}{dt}\psi& = -D_B\Psi-\tilde{\q}^1(B,\Psi)+\Lambda _{\q}(B,r,\psi)r
\end{aligned}
\right.
\end{equation*}
where $\tilde{\q}^1$ is defined similarly to $\hat{\q}^{1,\sigma}$ to be
\begin{equation*}
\tilde{\q}^1(B,r,\psi)=\int_0^1\mathcal{D}_{(B,sr\psi)}\q^1(0,\psi)ds,
\end{equation*}
and we have defined the function
\begin{equation*}
\Lambda_{\q}(B,r,\psi)=\mathrm{Re}\langle \psi, D_B\psi+\tilde{\q}^1(B,r,\psi)\rangle_{L^2}.
\end{equation*}
Writing this in coordinates we have that
\begin{equation*}
\q^{\sigma}(B,r,\psi)=\left(\q^0(B,r\psi),\quad\langle\tilde{\q}^1(B,r,\psi),\psi\rangle_{L^2(Y)}r,\quad\tilde{\q}(B,r,\psi)^{\perp}\right)
\end{equation*}
where $\perp$ denotes the orthogonal projection to the real orthogonal complement of $\psi$. We denote by $D_{B,\q}$ the operator on sections of $S$ defined as
\begin{equation*}
D_{B,\q}\psi= D_B\psi+\mathcal{D}_{(B,0)}\q^1(0,\psi).
\end{equation*}
Then a critical point $(B,r,\psi)$ of the gradient is of the form:
\begin{itemize}
\item $r\neq 0$ and $(B,r\psi)$ is a critical point of $\mathrm{grad}\CSd$;
\item $r=0$, $(B,0)$ is a critical point of $\mathrm{grad}\CSd$ and $\psi$ is an eigenvector of $D_{B,\q}$.
\end{itemize}
The story for the $\tau$ model is essentially identical, see Section 10.4 in the book.

\vspace{0.8cm}
There is a very large class of tame perturbations arising as formal gradients of \textit{cylinder functions} $f:\Co(Y)\rightarrow\R$. We recall their construction (as it will be very useful later in Chapter $4$) and their most important properties. 
\par
Given a coclosed $1$-form $c\in\Omega^1(Y;i\R)$ we can define the function
\begin{IEEEeqnarray*}{c}
r_c: \Co(Y)\rightarrow\R\\
(B_0+b\otimes 1,\Psi)\mapsto \int_Y b\wedge\ast\bar{c}=\langle b,c\rangle_Y.
\end{IEEEeqnarray*}
This is generally invariant only under the identity component $\G^e$ of the gauge group, but it is fully gauge invariant if $c$ is coexact. One also has the $\G$-invariant map
\begin{IEEEeqnarray*}{c}
\Co(Y)\rightarrow  \mathbb{T}=H^1(Y;i\R)/(2\pi i H^1(Y;\Z))\\
(B_0+b\otimes 1,\Psi)\mapsto [b_{\mathrm{harm}}]
\end{IEEEeqnarray*}
where $b_{\mathrm{harm}}$ denotes the harmonic part of the $1$-form $b$ and the brackets denote the equivalence class. Picking an integral basis $\omega_1,\dots \omega_t$ for $H^1(Y;\R)$ we can identify the torus $\mathbb{T}$ with $\R^t/(2\pi\Z^t)$ and the map can be written as
\begin{equation*}
(B,\Psi)\mapsto \left(r_{\omega_1}(B,\Psi),\dots, r_{\omega_t}(B,\Psi)\right) \quad (\mathrm{mod}\text{ }2\pi \Z^t).
\end{equation*}
Consider now a splitting $v$ of the exact sequence in equation (\ref{harmonicgauge}),
\begin{equation*}
S^1\rightarrow \G^h\stackrel{v}{\leftrightarrows} H^1(Y;\Z).
\end{equation*}
We can choose it so that
\begin{equation*}
\G^{h,o}:=v(H^1(Y;\Z))=\{u\mid u(y_0)=1\}\subset \G^h
\end{equation*}
for some chosen basepoint $y_0\in Y$. We then define the subgroup
\begin{equation*}
\G^o(Y)=\G^{h,0}\times \G^{\perp}(Y)\subset\G(Y)
\end{equation*}
which acts freely on $\Co(Y)$, and consider the \textit{based configuration space} as the quotient of the respective completions
\begin{equation*}
\Bo^o_k(Y)=\Co_k(Y)/\G^o_{k+1}(Y).
\end{equation*}
This space is a Hilbert manifold and the space $\Bo_k(Y)$ is obtained as the quotient by the remaining circle action
\begin{equation*}
(B,\Psi)\mapsto (B,e^{i\vartheta}\Psi).
\end{equation*}

Consider now the smooth bundle $\mathbb{S}$ over $\mathbb{T}\times Y$ obtained as the quotient of $H^1(Y;i\R)\times S\rightarrow H^1(Y;i\R)\times Y$ by the group $\G^{h,o}$. Any smooth section $\Upsilon$ of $\mathbb{S}$ can be lifted to a section
\begin{equation*}
\tilde{\Upsilon}:H^1(Y;i\R)\times Y\rightarrow H^1(Y;i\R)\times S
\end{equation*}
respecting the following quasi-periodicity condition. For every $\kappa\in H^1(Y;\Z)$  we have
\begin{equation*}
\tilde{\Upsilon}_{\alpha+\kappa}(y)=u(y)\tilde{\Upsilon}_{\alpha}(y),
\end{equation*}
where $u=v(\kappa)\in \G^{h,o}$ and we write $\tilde{\Upsilon}_b(y)$ instead of $\tilde{\Upsilon}(b,y)$. Hence a section $\Psi$ of $\mathbb{S}$ gives rise to the $\G^o(Y)$-equivariant map
\begin{align*}
\Upsilon^{\dagger}: \Co(Y)&\rightarrow C^{\infty}(S)\\
(B_0+b\otimes 1,\Psi)&\mapsto e^{-Gd^*b}\tilde{\Upsilon}_{b_{\mathrm{harm}}},
\end{align*}
where
\begin{equation*}
G: L^2_{k-1}(Y)\rightarrow L^2_{k+1}(Y)
\end{equation*}
is the Green's operator of $\Delta=d^*d$. In turn we define the $\G^o(Y)$-invariant map
\begin{align*}
q_{\Upsilon}&:\Co(Y)\rightarrow \C \\
(B,\Psi)&\mapsto\int_Y \langle \Psi, \Upsilon^{\dagger}(B,\Psi)\rangle=\langle\Psi,\tilde{\Upsilon}^{\dagger}\rangle_Y,
\end{align*}
which is also equivariant for the remaining $S^1$ action if we make $S^1$ act on $\C$ by complex multiplication.
\\
\par
Choose a finite collection of coclosed $1$-forms $c_1,\dots ,c_{n+t}$ with the first $n$ being coexact and the remaining $t$ representing an integral basis of $H^1(Y;\R)$, and a collection of smooth sections $\Upsilon_1,\dots ,\Upsilon_m$ of $\mathbb{S}$. This gives rise to the map
\begin{IEEEeqnarray*}{c}
p:\Co(Y)\rightarrow \R^n\times\mathbb{T}\times\C^m \\
(B,\Psi)\mapsto \left(r_{c_1}(B,\Psi),\dots, r_{c_{n+t}}(B,\Psi),q_{\Upsilon_1}(B,\Psi),\dots , q_{\Upsilon_m}(B,\Psi)\right)\quad (\mathrm{mod }2\pi \Z^t)
\end{IEEEeqnarray*}
which is $\G^o(Y)$-invariant and equivariant with respect to the remaining $S^1$ action.

\begin{defn}
We say that a gauge invariant function $f:\Co(Y)\rightarrow \R$ is a \textit{cylinder function} if it is of the form $f=g\circ p$ where:
\begin{itemize}
\item the map $p:\Co(Y)\rightarrow \R^n\times\mathbb{T}\times\C^m$ is defined as above with any choice of $n$ coclosed $1$-forms and $m$ sections of $\mathbb{S}$, $n,m\geq0$;
\item the function $g$ is an $S^1$-invariant smooth function on $\R^n\times\mathbb{T}\times\C^m$ with compact support.
\end{itemize}
\end{defn}
We summarize the two main features of cylinder functions in the following proposition (see Chapter $11$ in the book).
\begin{prop}\label{densepert}
If $f$ is a cylinder function, then its formal gradient
\begin{equation*}
\mathrm{grad}f: \Co(Y)\rightarrow \T_0
\end{equation*}
is a tame perturbation. Furthermore, for every compact subset $K$ of a finite dimensional submanifold $M\subset \Bo_k^o(Y)$, which are both $S^1$-invariant, one can find a collection of coclosed forms $c_{\mu}$, sections $\Upsilon_{\nu}$ and a neighborhood $U$ of $K$ inside $M$ such that
\begin{equation*}
p:\Bo_k^o(Y)\rightarrow\R^n\times\mathbb{T}\times \mathbb{C}^m
\end{equation*}
gives an embedding of $U$. 
\end{prop}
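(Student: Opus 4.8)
The plan is to establish the two assertions separately, following Chapter $11$ of the book.

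\emph{Tameness.} I would compute the formal gradient of $f=g\circ p$ by the chain rule, $\mathrm{grad}f=\sum_\alpha\big((\partial_\alpha g)\circ p\big)\cdot\mathrm{grad}(p_\alpha)$, so that it suffices to analyze the gradients of the two building blocks $r_c$ and $q_\Upsilon$ and to observe that multiplication by the factors $(\partial_\alpha g)\circ p$ preserves all relevant analytic properties, since $g$ has compact support and hence $g$ and all its derivatives are bounded. The gradient of $r_c$ is the constant smooth section $(c,0)$ (up to a harmless coexact term coming from the Coulomb gauge fixing), which is manifestly tame. The gradient of $q_\Upsilon$ at $(B_0+b\otimes1,\Psi)$ has spinor component essentially $\Upsilon^\dagger(B,\Psi)=e^{-Gd^*b}\tilde\Upsilon_{b_{\mathrm{harm}}}$ and one-form component obtained by differentiating $\Upsilon^\dagger$ in the variable $b$ and pairing against $\Psi$; the only nonelementary ingredients here are the finite-dimensional harmonic projection $b\mapsto b_{\mathrm{harm}}$ and the Green's operator $G$, which gains two derivatives. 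Combined with the Sobolev multiplication theorem $L^2_j\cdot L^2_j\hookrightarrow L^2_j$ (valid once $2j$ exceeds the dimension of the underlying manifold), these give conditions $(1)$--$(3)$ and $(6)$ of Definition \ref{tamepert}, while the linearity of $q_\Upsilon$ in $\Psi$ and the boundedness of the outer derivatives of $g$ give the growth bounds $(4)$ and $(5)$. The induced four-dimensional perturbation $\hat{\q}$, and its blow-up version $\hat{\q}^\sigma$, inherit all of this slicewise.

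\emph{Density/embedding.} This is equivalent to producing finitely many functions $r_{c_\mu}$, $q_{\Upsilon_\nu}$ whose joint map $p$, restricted to some neighborhood $U$ of the compact set $K$, separates points and is an immersion, all $S^1$-equivariantly; a standard argument then upgrades this to an embedding of $U$ after shrinking. By compactness of $K$ it suffices to arrange the point-separation and immersion properties near each point of $K$ with a finite collection, and then take the union of the collections over a finite subcover. Via the Coulomb--Neumann slice, a tangent vector to $\Bo_k^o(Y)$, and likewise the ``difference'' of two nearby configurations, decomposes into a harmonic one-form part, a coexact one-form part, and a spinor part. The harmonic part is detected by the $r_c$ as $c$ ranges over an integral basis of $H^1(Y;\R)$, through the associated map to $\mathbb{T}$. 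The coexact part is detected by $r_c$ for coexact $c$; since the tangent space to $M$ is finite-dimensional, finitely many such $c$ make $\{dr_c\}$ nondegenerate in the coexact directions occurring in $TM$ near $K$, and likewise finitely many separate the coexact parts of the finitely many configuration pairs one must control.

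The spinor directions are handled by the $q_\Upsilon$: the differential of $q_\Upsilon$ in a spinor direction $\psi$ at $(B,\Psi)$ is, modulo lower-order Green's-operator corrections, $\mathrm{Re}\langle\psi,\tilde\Upsilon^\dagger\rangle_Y$, and because the sections $\Upsilon^\dagger$ arising from smooth sections $\Upsilon$ of $\mathbb{S}$ form an $L^2$-dense family of spinors (compatibly with the residual $S^1$-action, since $q_\Upsilon$ is $S^1$-equivariant valued in $\C$), finitely many $\Upsilon$ separate the relevant spinor directions and spinor differences near $K$; moreover the $S^1$-orbit direction at an irreducible point is sent faithfully to an orbit direction in $\C^m$ as soon as some $q_{\Upsilon_\nu}$ is nonzero there, which can always be arranged where $\Psi\neq0$. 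I expect the main obstacle to be exactly this density statement for the family $\{\Upsilon^\dagger\}$, together with the bookkeeping needed to keep everything simultaneously $\G^o(Y)$-invariant and $S^1$-equivariant while controlling the auxiliary factor $e^{-Gd^*b}$ in $\Upsilon^\dagger$; granting these, assembling the finite collection over a finite subcover of $K$ and shrinking $U$ is routine.
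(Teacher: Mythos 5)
Your proposal is consistent with the paper, which in fact gives no proof of this proposition at all: it is stated with a pointer to Chapter $11$ of the book, and your sketch (chain rule plus tameness estimates for $r_c$ and $q_{\Upsilon}$ via the Green's operator and Sobolev multiplication; embedding via point-separation and immersion near the compact set, using density of the family $\Upsilon^{\dagger}$) follows exactly that standard route. The one step you flag as the "main obstacle" — that sections of $\mathbb{S}$ yield, through $\Upsilon^{\dagger}$, a family of spinors dense enough to separate the spinor directions equivariantly — is precisely what is established in the cited chapter, so relative to the paper's treatment there is no gap.
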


\vspace{0.3cm}
Finally, in order to apply the Sard-Smale theorem we will need to be in a Banach manifold setup. To do this, we first specify a countable collection of cylinder functions as follows. For every pair $(n,m)$, choose a countable collection of $(n+m)$-tuples $(c_1,\dots, c_n,\Upsilon_1,\dots, \Upsilon_m)$ which are dense in the $C^{\infty}$ topology in the space of all such $(n+m)$-tuples. Choose a countable collection of compact subsets $K\subset \R^n\times\mathbb{T}\times \C^m$ which is dense in the Hausdorff topology and, for each $K$, a countable collection of $S^1$-invariant functions $g_{\alpha}$ with support in $K$ which are dense (in the $C^{\infty}$ topology) in the space of $S^1$-invariant functions with support in $K$. We also require that the subset of $g_{\alpha}$ that vanish on
\begin{equation*}
K\cap(\R^n\times\mathbb{T}\times\{0\})
\end{equation*}
is dense (in the $C^{\infty}$ topology) in the space of $S^1$-invariant functions with support on $K$ and vanishing on $K\cap (\R^n\times \mathbb{T}\times\{0\})$. Combining all these choices, we obtain a countable collection of cylinder functions, and denote the corresponding set of tame perturbations by $\{\q_i\}_{i\in\mathbb{N}}$. We can then construct a Banach space of perturbations containing all the ones in this family, as stated in the next result.

\begin{prop}\label{largepert}
There exists a separable Banach space $\mathcal{P}$ and a linear map
\begin{IEEEeqnarray*}{c}
\mathfrak{D}:\mathcal{P}\rightarrow\mathcal C^0(\Co(Y),\T_0)\\
\lambda\mapsto \q_{\lambda}
\end{IEEEeqnarray*}
such that every $\q_{\lambda}$ is a tame perturbation and the image contains the family $\{\q_i\}_{i\in\mathbb{N}}$. Furthermore we have that:
\begin{itemize}
\item for a cylinder $Z=I\times Y$ and all $k\geq2$, the map
\begin{equation*}
\mathcal{P}\times \Co_k(Z)\rightarrow \V_k(Z)
\end{equation*}
given by $(\lambda,\gamma)\mapsto \hat{\q}_{\lambda}(\gamma)$ is smooth;
\item the map $\mathcal{P}\times\Co_1(Y)\rightarrow\T_1(Y)$ given by $(\lambda,\beta)\mapsto\q_{\lambda}(\beta)$ is continuous and satifies bounds
\begin{align*}
\|\q_{\lambda}(B,\Psi)\|_{L^2}&\leq \|\lambda\| m_2(\|\Psi\|_{L^2}+1) \\
\|\q_{\lambda}(B,\Psi)\|_{L^2_{1,A_0}}&\leq\|\lambda\|\mu_1\left(\|B-B_0,\Psi\|_{L^2_{1,A_0}}\right)
\end{align*}
for some constants $m_2$ and a real function $\mu_1$.
\end{itemize}
\end{prop}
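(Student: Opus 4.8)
The strategy follows the construction of Chapter~11 of the book: we realize $\mathcal{P}$ as a weighted $\ell^1$ space built on the countable family $\{\q_i\}$, with weights chosen so large that every analytic estimate in Definition~\ref{tamepert} survives passage to infinite linear combinations.

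The first step is bookkeeping. For each fixed $i$, the tameness of $\q_i$ supplies a \emph{countable} collection of finite constants and majorants: for every $k\geq 2$, every $j\in[-k,k]$, every derivative order $\ell\geq 0$ and every integer radius $R$ a bound $M_i(k,j,\ell,R)$ on the operator norm of the $\ell$-th derivative of $\hat\q_i$ (and of $\mathcal{D}\hat\q_i$) as a section over the ball of radius $R$ in $\Co_k(Z)$ with values in $\V_j$; the constant $m_2^{(i)}$ of (4); the majorant $\mu_1^{(i)}$ of (5), which we may take nondecreasing; the $C^1$-bound for the three-dimensional $\q_i$ on $\Co_1(Y)$ from (6); and the sup-norm of a compactly supported primitive $f_i=g_i\circ p_i$ with $\q_i=\mathrm{grad}f_i$. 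Enumerate all of these numbers --- taking for $\mu_1^{(i)}$ its values $\mu_1^{(i)}(R)$, $R\in\mathbb{N}$ --- as $\nu_{i,1},\nu_{i,2},\dots$, set
\[
C_i=2^{i}\Big(1+\sum_{a=1}^{i}\nu_{i,a}\Big),
\]
and let $\mathcal{P}=\{\lambda=(\lambda_i)\in\R^{\mathbb{N}}:\ \|\lambda\|_{\mathcal{P}}:=\sum_i C_i|\lambda_i|<\infty\}$. Through $\lambda\mapsto(C_i\lambda_i)_i$ this is isometric to $\ell^1(\mathbb{N})$, hence a separable Banach space. Put $\mathfrak{D}(\lambda)=\q_\lambda:=\sum_i\lambda_i\q_i$; this is linear, and $\q_i=\mathfrak{D}(e_i)$ since $C_i<\infty$, so the image contains $\{\q_i\}_{i\in\mathbb{N}}$.

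The second step is to verify that $\q_\lambda$ is tame and that the two displayed assertions hold. The elementary fact doing the work is that for any fixed $a$ one has $|\lambda_i|\,\nu_{i,a}\leq 2^{-i}\|\lambda\|_{\mathcal{P}}$ for all $i\geq a$, so each series $\sum_i|\lambda_i|\nu_{i,a}$ converges, and uniformly for $\lambda$ in bounded subsets of $\mathcal{P}$. Applied to the $M_i(k,j,\ell,R)$ this shows that $\sum_i\lambda_i\hat\q_i$ converges together with all of its $\gamma$-derivatives, uniformly on bounded subsets of $\Co_k(Z)$; hence it defines a smooth section meeting requirements (1)--(3) of Definition~\ref{tamepert}, and --- being linear in $\lambda$, so with no higher $\lambda$-derivatives to control --- the map $(\lambda,\gamma)\mapsto\hat\q_\lambda(\gamma)$ on $\mathcal{P}\times\Co_k(Z)$ is jointly smooth, which is the first bullet. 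Summing (4) over $i$ and using $m_2^{(i)}\leq C_i$ gives
\[
\|\q_\lambda(B,\Psi)\|_{L^2}\ \leq\ \sum_i|\lambda_i|\,m_2^{(i)}\big(\|\Psi\|_{L^2}+1\big)\ \leq\ \|\lambda\|_{\mathcal{P}}\big(\|\Psi\|_{L^2}+1\big),
\]
so (4) holds with a universal constant; and $\mu_1(t):=\sup_i\mu_1^{(i)}(t)/C_i$ is finite for every $t$ --- because $\mu_1^{(i)}(R)\leq C_i$ for $R\leq i$ and $\mu_1^{(i)}$ is nondecreasing --- and serves as the universal majorant in (5) and in the second displayed bound, with the $L^2_1$-continuity of $\mathcal{P}\times\Co_1(Y)\to\T_1(Y)$ and condition (6) handled identically. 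Since $f_\lambda:=\sum_i\lambda_i f_i$ converges uniformly, $\q_\lambda=\mathrm{grad}f_\lambda$ is a formal gradient, and the three-dimensional series converges uniformly on bounded sets, so $\q_\lambda\in\mathcal{C}^0(\Co(Y),\T_0)$ as required.

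The only point requiring genuine care is this organization: one must be sure that the data certifying tameness is really countable (the indices $k,j,\ell,R$ run over countable sets and (6) is a single extra condition) and that one weight sequence dominates it all at once, which is exactly what the diagonal definition of $C_i$ arranges. The remaining ingredients --- that a weighted $\ell^1$ space is separable Banach, and that a series of smooth sections converging together with all derivatives uniformly on bounded sets has smooth sum --- are standard, and we carry them out as in the book.
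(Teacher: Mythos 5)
Your construction is exactly the one the paper relies on (Floer's weighted $\ell^1$ construction from Chapter 11 of the book, which the paper cites rather than reproves): enumerate the countable family $\{\q_i\}$, pick weights $C_i$ dominating the relevant constants of $\q_i$, take $\mathcal{P}$ to be the corresponding weighted $\ell^1$ space, and set $\q_\lambda=\sum_i\lambda_i\q_i$, with the diagonal trick $|\lambda_i|\nu_{i,a}\leq 2^{-i}\|\lambda\|_{\mathcal{P}}$ giving convergence of every estimate-series. So the skeleton, the separability argument, and the way the bounds in the two bullets are obtained by summing conditions (4) and (5) are all in line with the intended proof.

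There is, however, one step that does not hold as written: you claim that ``the tameness of $\q_i$ supplies'' finite bounds $M_i(k,j,\ell,R)$ on all derivatives of $\hat\q_i$ over balls of radius $R$ in $\Co_k(Z)$. Tameness (Definition \ref{tamepert}) only asserts smoothness and extension properties in (1)--(3), quantitative bounds at order zero in (4)--(5), and a $C^1$ statement in (6); smoothness of a section over an infinite-dimensional Hilbert manifold does \emph{not} imply that its derivatives are bounded on (non-compact) balls, and one can easily build smooth functions on a Hilbert space unbounded on the unit ball. Since the whole construction hinges on these sup-bounds over an exhaustion by balls to get uniform convergence of the differentiated series (and hence smoothness of $\q_\lambda$, conditions (1)--(3) for it, and joint smoothness in $(\lambda,\gamma)$), this is a genuine gap in the justification rather than mere bookkeeping. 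The fix is the one the book actually uses: the $\q_i$ are not arbitrary tame perturbations but gradients of cylinder functions $g_i\circ p_i$, and for these the explicit estimates of Chapter 11 of the book show that $\hat\q_i$ and all of its derivatives are bounded by (nondecreasing functions of) the $L^2_k$ norm of the configuration, which is exactly what makes your constants $M_i(k,j,\ell,R)$ finite and lets the weight $C_i$ be defined. With that substitution --- derive the $M_i$'s from the cylinder-function estimates, not from abstract tameness --- your argument goes through and coincides with the paper's.
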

By a \textit{large Banach space of tame perturbations} we will mean a pair $(\mathcal{P},\mathfrak{D})$ arising from the proposition. In the rest of the work we will always assume that a large Banach space of perturbations is fixed, and we will identify it with its image (and denote it by $\mathcal{P}$), even if it is clear that the Banach space topology is not the topology as a subspace of the set of tame perturbation, for any any natural choice of the topology on the latter.

\chapter{The analysis of Morse-Bott singularities}
In this chapter we study the analytical and structural properties of the space of solutions to the Seiberg-Witten equations when the singularities of the (blown-up) gradient are \textit{Morse-Bott}. This kind of singularities is not generic, but arises quite naturally when dealing with many problems, e.g. when dealing with the equation on a Seifert-fibered space (see \cite{MOY}). The case in which we will be interested in the most is the case of a spin$^c$ structure induced by a spin structure, which will be discussed in the last chapter of the present work. The content of this chapter can be thought as a generalization of the material appearing in Chapters $12$ to $19$ in the book, to which we refer as the \textit{Morse} or \textit{classical} case. We will often make use of results proven there providing precise references.
\vspace{1.5cm}

\section{Hessians and Morse-Bott singularities}
We define the class of differential operators we will be dealing with, namely $k$-\textit{almost self-adjoint first-order elliptic operators} (abbreviated $k$-\textsc{asafoe}), see Section $12.2$ in the book. An operator $L$ is called $k$-\textsc{\textsc{asafoe}} if it can be written as
\begin{equation*}
L=L_0+h
\end{equation*}
where:
\begin{itemize}
\item $L_0$ is a first order self-adjoint elliptic differential operator with smooth coefficients, acting on sections of a vector bundle $E\rightarrow Y$;
\item $h$ is an operator on sections of $E$
\begin{equation*}
h:C^{\infty}(Y;E)\rightarrow L^2(Y;E)
\end{equation*}
which extends to a bounded map on the spaces $L^2_j(Y;E)$ for all $|j|\leq k$.
\end{itemize}
We also say that $L$ is \textsc{\textsc{asafoe}} if it is $k$-\textsc{\textsc{asafoe}} for every $k\geq0$. So $k$-\textsc{\textsc{asafoe}} operators are a class of (non necessarily symmetric) compact perturbations of first order elliptic self-adjoint operators. The main example to keep in mind is that of the perturbed Dirac operators $D_{\q,B}$, which indeed motivates the definition. This class of operators satifies very nice regularity and spectral properties, see Section $12.2$ in the book. We will call a $k$-\textsc{asafoe} operator \textit{hyperbolic} if its spectrum does not intersect the imaginary line.
\\
\par
We first define Morse-Bott singularities in the non blown-up configuration space. Recall that the irreducible part $\Co_k^*(Y)\subset\Co_k(Y)$ we have the decomposition
\begin{equation}\label{dectan}
\T_j\lvert_{\Co_k^*}=\J_j\oplus\K_j,
\end{equation}
see Section $3$ of Chapter $1$. As this decomposition is orthogonal, the vector field $\mathrm{grad}\CSd$ is a section of 
\begin{equation*}
\K_{k-1}\subset\T_{k-1},
\end{equation*}
so we can define the Hessian operator at a configuration $\alpha\in\Co^*_k(Y)$
\begin{equation*}
\Hess_{\q,\alpha}: \K_{k,\alpha}\rightarrow\K_{k-1,\alpha}
\end{equation*}
as the restriction to $\K_{k,\alpha}\subset\T_{k,\alpha}$ of the linear map
\begin{equation*}
\Pi_{\K_{k-1}}\circ\mathcal{D}_{\alpha}\left(\mathrm{grad}\CSd\right): T_{\alpha}\Co^*_k(Y)=\T_{k,\alpha}\rightarrow \K_{k-1,\alpha}
\end{equation*}
where $\Pi_{\K_{k-1}}:\T_{k-1}\rightarrow \K_{k-1}$ is the $L^2$ orthogonal projection with kernel $\J_{k-1}$. As $\alpha$ varies, one obtains a $\G_{k+1}$-equivariant smooth bundle map
\begin{equation*}
\Hess_{\q}: \K_k\rightarrow\K_{k-1}.
\end{equation*}
The operators $\Hess_{\q,\alpha}$ are symmetric, as they are the pull back of the covariant Hessian of the circled valued function $\CSd$ on $\Bo^*(Y)$. Here we identify $\K_k$ with the pull-back of the tangent bundle of $\Bo^*(Y)$, and $\K_{k-1}$ with the pull back of $[\T_{k-1}]=(\T_{k-1}/\J_{k-1})/\G_{k+1}$. It also satisfies nice spectral properties which are manifest from the following construction (see Proposition $12.3.1$ in the book). We can introduce the \textit{extended Hessian} at a configuration $\alpha$ as the map
\begin{equation*}
\widehat{\Hess}_{\q,\alpha}:\T_{k,\alpha}\oplus L^2_k(Y;i\R)\rightarrow \T_{k-1,\alpha}\oplus L^2_{k-1}(Y;i\R)
\end{equation*}
given by the matrix
\begin{equation*}
\widehat{\Hess}_{\q,\alpha}=
\begin{bmatrix}
\mathcal{D}_{\alpha} \mathrm{grad}\CSD & \mathbf{d}_{\alpha} \\
\mathbf{d}^*_{\alpha} & 0
\end{bmatrix}.
\end{equation*}
Here $\mathbf{d}_{\alpha}$ is the linearization of the gauge group action at $\alpha$, see equation (\ref{devgauge}), and $\mathbf{d}^*_{\alpha}$ is the linear operator (\ref{gaugefix})  defining $\K_{k,\alpha}$ (which coincides with the adjoint of $\mathbf{d}_{\alpha}$).
Such an operator is symmetric $k$-\textsc{\textsc{asafoe}},. This can be seen using the decomposition
\begin{equation*}
\T_{j,\alpha}= L^2_j(Y;S)\oplus L^2_j(Y;iT^*Y).
\end{equation*}
The standard Hessian appears using the decomposition
\begin{equation*}
\T_{j,\alpha}=\J_{j,\alpha}\oplus\K_{j,\alpha}
\end{equation*}
as a block in the matrix
\begin{equation*}
\widehat{\Hess}_{\q,\alpha}=
\begin{bmatrix}
0 & 0 & \mathbf{d}_{\alpha} \\
0 & \Hess_{\q,\alpha} & 0 \\
\mathbf{d}^*_{\alpha} & 0 & 0
\end{bmatrix}+ \tilde{h}
\end{equation*}
where $\tilde{h}$ is symmetric, bounded between the spaces of lower regularity and vanishes at a critical point. Hence we obtain that $\Hess_{\q,\alpha}$ has a complete orthornormal system $\{e_n\}$ of smooth eigenvectors with real eigenvalues which is dense in each $\K_{j,\alpha}$, such that there is a finite number of eigenvalues in every bounded interval. Finally $\Hess_{\q,\alpha}$ is Fredholm of index zero, hence it is surjective if and only if it is injective.
\\
\par
Suppose now that we have a smooth submanifold $[C]\subset \Bo^*_k(Y)$ (which we will always suppose to be closed, connected and finite dimensional) consisting of gauge equivalence classes critical points of $\mathrm{grad}\CSd$. A standard regularity argument implies that every $[\acr]\in[C]$ always admits a smooth representative (see Corollary $12.2.6$ in the book). In particular the set of gauge-equivalence classes of critical points is independent of the choice of $k$. Call $C\subset \Co_k^*(Y)$ the corresponding gauge-invariant critical set. Given $\alpha\in C$, define
\begin{equation*}
\T_{\alpha}C\subset \K_{k,\alpha}
\end{equation*}
to be the inverse image of $\T_{[\alpha]}[C]$ via the identification $\K_{k,\alpha}\rightarrow [T_k]$ arising from the local chart provided by the slice
\begin{equation*}
\bar{\i}:\mathcal{U}\subset \Sl_{k,\alpha}\rightarrow \Bo_k(Y).
\end{equation*}
This gives rise to a finite dimensional subbundle $\T C\subset \K_{j}\lvert_C$ over $C$ which is invariant under the action of the gauge group. We can then define the \textit{normal bundle} to $C$ as
\begin{equation*}
\N_j=\K_{j}/\T C\rightarrow C.
\end{equation*}
As $C$ consists of critical points for $\mathrm{grad}\CSd$, the space $\T_{\alpha}C$ is contained in the kernel of $\Hess_{\q,\alpha}$, and hence there is an induced map
\begin{equation*}
\Hess_{\q,\alpha}^{n}: \N_{k,\alpha}\rightarrow \N_{k-1,\alpha}
\end{equation*}
called the \textit{normal Hessian}. This is symmetric and inherits all the nice spectral properties of the Hessian discussed above. It is Fredholm of index $0$ and it defines a smooth gauge-equivariant map 
\begin{equation}\label{normalhess}
\Hess_{\q}^{\nu}: \N_{k}\rightarrow \N_{k-1}
\end{equation}
of bundles over $C$. Even though the decomposition (\ref{dectan}) does not extend smoothly to the whole configuration space, all the definitions we have provided continue to make sense without any change also for critical submanifolds $[C]$ \textit{consisting entirely of reducible configurations}. For such critical submanifolds one can define the normal Hessian as in the irreducible case \ref{normalhess}.
We are now ready to define the notion of Morse-Bott singularity.

\begin{defn}\label{Morsebott}
We say that a (closed, connected and finite dimensional) submanifold of gauge equivalence classes of critical points $[C]\subset \Bo_k(Y)$ with the property that if $[C]$ contains a reducible configuration then in consists entirely of reducible configurations is a \textit{Morse-Bott singularity} if for every $\alpha\in C$ the normal Hessian
\begin{equation*}
\Hess_{\q,\alpha}^{n}: \N_{k,\alpha}\rightarrow \N_{k-1,\alpha}
\end{equation*}
is an isomorphism. We say that $\mathrm{grad}\CSd$ and the perturbation $\q$ are \textit{Morse-Bott} if the set of gauge equivalence classes of critical points is an union of Morse-Bott singularities.
\end{defn}
For brevity, we will call a submanifold of gauge equivalence classes of critical points simply a \textit{critical submanifold}. It is an immediate consequence of the definition that for a Morse-Bott $\mathrm{grad}\CSd$ all the critical submanifolds are isolated, as in that case $\T_{\alpha}C$ is exactly the kernel of $\Hess_{\q,\alpha}$. Furthermore the requirement that the submanifold is finite dimensional is redundant as it follows from the spectral properties of the Hessian.
\par
The compactness properties of the space of solutions of the (perturbed) Seiberg-Witten equations (see Section $10.7$ in the book) imply that there will be only finitely many critical submanifolds. Finally, it is clear that Morse-Bott singularities are a generalization of non-degenerate singularities (see Section $12.1$ in the book), which are Morse-Bott singularities for which all critical submanifolds consist of points.

\vspace{0.8cm}
On the blown up configuration space, the situation is essentially analogous, with some small complications because the vector field $(\mathrm{grad}\CSd)^{\sigma}$ is not the gradient of a function in any natural way.
The Hessian
\begin{equation*}
\Hess_{\q}^{\sigma}:\K_{k}^{\sigma}\rightarrow \K_{k-1}^{\sigma}
\end{equation*}
is obtained by restricting to $\K_{k}^{\sigma}\subset\T_{k}^{\sigma}$ the smooth bundle map
\begin{IEEEeqnarray*}{c}
\T_{k}^{\sigma}\rightarrow\K_{k-1}^{\sigma} \\ x\mapsto \Pi_{\K_{k-1}^{\sigma}}\circ\mathcal{D}(\mathrm{grad}\CSd)^{\sigma}(x).
\end{IEEEeqnarray*}
Here $\mathcal{D}(\mathrm{grad}\CSd)^{\sigma}$ is the vector field obtained by differentiating $(\mathrm{grad}\CSd)^{\sigma}$ as a vector field along the submanifold
\begin{equation*}\Cs_k(Y)\subset\A_k\times\R\times L^2_k(Y;S).
\end{equation*}
As in the previous case, it is useful to work with the \textit{extended Hessian} 
\begin{equation*}
\widehat{\Hess}_{\q,\alpha}:\T_{k,\alpha}\oplus L^2_k(Y;i\R)\rightarrow \T_{k-1,\alpha}\oplus L^2_{k-1}(Y;i\R),
\end{equation*}
which will also be a protagonist when studying moduli spaces on infinite cylinders and is defined as follows. Consider the operator
\begin{equation*}
\mathbf{d}_{\acr}^{\sigma,\dagger}:\T_{k,\acr}^{\sigma}\rightarrow L_{k-1}^2(Y;i\R)
\end{equation*}
given at configuration $\acr=(B_0,s_0,\psi_0)$ by the map
\begin{equation*}
(b,s,\psi)\mapsto -d^*b+is_0^2\mathrm{Re}\langle i\psi_0,\psi\rangle+i|\psi_0|^2\mathrm{Re}(\mu_Y\langle i\psi_0,\psi\rangle).
\end{equation*}
This is the operator defining the subspace $\K^{\sigma}_{k,\acr}$ (see Section $3$ of Chapter $1$), and it is important to notice that this is \textit{not} the adjoint of the differential of the gauge group action
\begin{equation*}
\mathbf{d}_{\acr}^{\sigma}:L_{k-1}^2(Y;i\R)\rightarrow \T_{k,\alpha}^{\sigma},
\end{equation*}
defined in equation (\ref{devgauges}). Then we define the extended Hessian by the matrix
\begin{equation}\label{exthessian}
\widehat{\Hess}_{\q,\alpha}^{\sigma}=\begin{bmatrix}
\mathcal{D}_{\acr}(\mathrm{grad}\CSd)^{\sigma} & \mathbf{d}_{\alpha}^{\sigma} \\
\mathbf{d}_{\acr}^{\sigma,\dagger} & 0
\end{bmatrix}.
\end{equation}
Using the further decomposition
\begin{equation*}
\T^{\sigma}_{k,\acr}=\J^{\sigma}_{k,\acr}\oplus\K^{\sigma}_{k,\acr}
\end{equation*}
we see that the operator has the form
\begin{equation*}
\widehat{\Hess}_{\q,\alpha}^{\sigma}=\begin{bmatrix}
0 & x & \mathbf{d}_{\alpha}^{\sigma} \\
y & \Hess_{\q,\alpha}^{\sigma} & 0\\
\mathbf{d}_{\acr}^{\sigma,\dagger} & 0 & 0
\end{bmatrix},
\end{equation*}
and at a critical point $\acr$ the terms $x$ and $y$ vanish, so that the $\Hess_{\q,\acr}^{\sigma}$ is a direct summand of the extended Hessian. Notice though that the extended Hessian is not a $k$-\textsc{\textsc{asafoe}} operator, for the simple reason that it does not act on the space of sections of a fixed vector bundle over $Y$. This problem can be fixed by means of the following construction (which will be used again in the rest of the present work): we can convert an element
\begin{equation*}
(b,r,\psi)\in \T_{j,\acr}^{\sigma}
\end{equation*}
where $\acr=(B_0,r_0,\psi_0)$ to a section of the vector bundle $iT^*Y\oplus\R\oplus S$ simply by setting
\begin{equation}\label{asafoetrick}
(b,r,\psi)\mapsto (b,r, \boldsymbol\psi) \qquad \text{where} \qquad \boldsymbol\psi=\psi+r\psi_0.
\end{equation}
From this perspective we see that the extended Hessian is a $k$-\textsc{\textsc{asafoe}} operator. (See Section $12.4$ in the book for details).
\\
\par
As before, suppose we are given a smooth submanifold $[\mathfrak{C}]\subset\Bs_k(Y)$ consisting of critical points of $(\mathrm{grad}\CSd)^{\sigma}$. We will always suppose that such a submanifold is closed, connected and finite dimensional. Furthermore, we suppose that if it contains a reducible configuration, then all configurations in it are reducible. Call $\mathfrak{C}\subset\Cs_k(Y)$ the corresponding gauge-invariant critical set. As before, given $\acr\in\mathfrak{C}$, we can define
\begin{equation*}
\T_{\acr}\mathfrak{C}\subset \K_{k,\acr}
\end{equation*}
to be the inverse image of $\T_{\acr}[\mathfrak{C}]$ via the identification $\K^{\sigma}_{k,\alpha}\rightarrow [\T_k]$ provided by the local chart given by the slice
\begin{equation*}
\bar{\iota}:\mathcal{U}\subset\Sl^{\sigma}_{k,\acr}\rightarrow \Bo_k(Y).
\end{equation*}
This gives rise to a finite dimensional vector bundle
\begin{equation*}
\T\mathfrak{C}\rightarrow \mathfrak{C}
\end{equation*}
which is a subbundle of $\K^{\sigma}_j\lvert_{\Cr}$ for each $j=1,\dots k$, and we can define the normal bundle
\begin{equation*}
\N^{\sigma}_j=\K^{\sigma}_j/\T\mathfrak{C}.
\end{equation*}
Now, $\Cr$ consists of critical points of $(\mathrm{grad}\CSd)^{\sigma}$ hence $\T_{\acr}\Cr$ is contained in the kernel of $\Hess^{\sigma}_{\q,\acr}$. We can then define the \textit{normal Hessian in the blow up} as the operator
\begin{equation*}
\Hess^{\sigma,n}_{\q,\acr}:\N^{\sigma}_{\acr,k}\rightarrow \N^{\sigma}_{\acr,k-1}
\end{equation*}
which also induces a smooth gauge-equivariant map
\begin{equation*}
\Hess^{\sigma,n}_{\q}:\N^{\sigma}_{k}\rightarrow \N^{\sigma}_{k-1}
\end{equation*}
between bundles over $\Cr$.
\begin{defn}\label{Morsebotts}
We say that a (closed, connected and finite dimensional) submanifold of gauge equivalence classes of critical points $[\Cr]\subset\Bs_k(Y)$ is a \textit{Morse-Bott singularity} if the following conditions hold:
\begin{itemize}
\item $[\Cr]$ consists entirely of reducible \textit{or} irreducible configurations;
\item for every $\acr\in\Cr$ the normal Hessian
\begin{equation*}
\Hess^{\sigma,n}_{\q,\acr}:\N^{\sigma}_{\acr,k}\rightarrow \N^{\sigma}_{\acr,k-1}
\end{equation*}
is an isomorphism;
\item in the case $[\Cr]$ consists of reducible configurations, we also require the condition that its blow-down $[C]$ is also a reducible Morse-Bott critical submanifold (in the sense of Definition \ref{Morsebott}), and the restriction of the blowdown map is a fibration.
\end{itemize}
We say that $(\mathrm{grad}\CSd)^{\sigma}$ and the perturbation $\q$ are \textit{Morse-Bott} if all submanifolds of gauge equivalence classes of critical points are Morse-Bott singularities. \end{defn}

As before, for brevity we will call $[\Cr]$ a critical submanifold. Furthermore, Morse-Bott submanifolds are isolated, as $\T_{\acr}\Cr$ is exactly the kernel of $\Hess^{\sigma}_{\q,\acr}$. The blow down induces a bijective correspondence between irreducible critical points, and the condition of being a Morse-Bott singularity is preserved under blow down. The reducible case is slightly more complicated. We impose the condition that the blow down map is a fibration in order to ensure that the moduli spaces are regular in the sense of Section $3$ (see Definition \ref{regularpert} and the end of the proof in Theorem \ref{transversalitymain}). This condition is of course satisfied when the critical submanifold downstairs consists of a single point. It also implies that the corresponding eigenspaces of the Dirac operators all have the same dimension.

\begin{example}
Consider the unperturbed equations on a three manifold $Y$ with positive scalar curvature for some torsion spin$^c$ structure. Then the torus of flat connections is a Morse-Bott singularity (downstairs), because the corresponding Dirac operators do not have kernel (see Chapter $36$ in the book). For the unperturbed equation on the flat torus with the torsion spin$^c$ structure, the torus of flat connections is \textit{not} a Morse-Bott singularity. This is because the Dirac operator $D_{B_0}$ for the flat connection with trivial holonomy $B_0$ has two dimensional kernel consisting of constant section (see also Chapter $37$ in the book).
\end{example}

\begin{example}\label{seifert}
An example of Morse-Bott singularities (in the blown down configuration space) arises when studying Seifert fibered space homology spheres, as shown in \cite{MOY}. Notice that in the paper a non standard reducible connection is used instead of the Levi-Civita one, but all the definitions still apply.
\end{example}

\begin{example}
The key example we are interested in is the case of a $3$-manifold equipped with a self-conjugate spin$^c$ structure which will be extensively studied in Chapter $4$. In that case three kinds of singularities will arise:
\begin{itemize}
\item irreducible non-degenerate critical points;
\item reducible non-degenerate critical points;
\item reducible critical submanifolds diffeomorphic to $S^2$ blowing down to a single reducible configuration.
\end{itemize}
The present work is developed in order to be able to deal properly with the last kind of singularity.
\end{example}

\vspace{0.5cm}

We have the following easy lemmas.
\begin{lemma}\label{nonzerolambda}
If $[\Cr]$ is a Morse-Bott singularity consisting of reducible configurations, there is no point $[\bcr]\in[\Cr]$ such that $\Lambda_{\q}([\bcr])=0$.
\end{lemma}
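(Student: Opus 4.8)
The plan is to argue by contradiction, playing the Morse--Bott hypothesis for $[\Cr]$ against the structure of $(\mathrm{grad}\CSd)^{\sigma}$ near a reducible critical point. Rather than working with the blown-up Hessian directly --- where the $r$-direction is entangled with the remaining directions in a way that depends on the perturbation, and the bookkeeping is awkward --- it is cleaner to descend to the blow-\emph{down} $[C]$ of $[\Cr]$, where the relevant block of the Hessian is exactly the perturbed Dirac operator $D_{B,\q}$ and the Morse--Bott condition becomes the statement that this operator is invertible.

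So suppose there were a point $[\bcr]\in[\Cr]$ with $\Lambda_{\q}([\bcr])=0$, and fix a smooth representative $\bcr=(B,0,\psi)$. By the description of the critical points of $(\mathrm{grad}\CSd)^{\sigma}$ recalled in Chapter~1, $(B,0)$ is a reducible critical point of $\mathrm{grad}\CSd$ on $Y$ and $\psi$ is an eigenvector of $D_{B,\q}$ of unit $L^2$ norm whose eigenvalue is $\Lambda_{\q}([\bcr])=0$; thus $\psi$ is a non-zero element of the kernel of $D_{B,\q}\colon L^2_k(Y;S)\to L^2_{k-1}(Y;S)$. On the other hand, since $[\Cr]$ is a Morse--Bott singularity consisting of reducibles, the last bullet of Definition~\ref{Morsebotts} guarantees that its blow-down $[C]\subset\Bo_k(Y)$ is a reducible Morse--Bott critical submanifold in the sense of Definition~\ref{Morsebott}, with $[(B,0)]\in[C]$. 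In particular the normal Hessian $\Hess^{n}_{\q,(B,0)}\colon\N_{k,(B,0)}\to\N_{k-1,(B,0)}$ is an isomorphism.

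The heart of the argument is to see that this forces $D_{B,\q}$ to be an isomorphism, contradicting the previous paragraph. At a reducible configuration $(B,0)$ the equations cutting out $\K_{j,(B,0)}$ impose no constraint on the spinor, so $\K_{j,(B,0)}$ splits orthogonally as $\K^{\mathrm{re}}_{j}\oplus L^2_j(Y;S)$, where $\K^{\mathrm{re}}_j$ is the space of coclosed imaginary $1$-forms. Differentiating $\mathrm{grad}\CSd$ at $(B,0)$ one checks that the spinorial component of $\mathcal{D}_{(B,0)}(\mathrm{grad}\CSd)(b,\phi)$ is just $D_{B,\q}\phi$: the connection--spinor cross term vanishes because the $S^1$ of constant gauge transformations fixes every reducible and acts on the spinor with weight one, so $\q^1(B',0)=0$ for all connections $B'$ (the three-dimensional analogue of $\hat{\q}^1(A,0)=0$). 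Hence $\Hess_{\q,(B,0)}$ is block upper triangular for this splitting, with lower-right block $D_{B,\q}$. Since $[C]$ consists of reducibles, $\T_{(B,0)}C$ lies in the summand $\K^{\mathrm{re}}_k$, so the same block structure descends to $\Hess^{n}_{\q,(B,0)}$; composing with the projection of $\N_{k-1,(B,0)}$ onto its spinor summand and precomposing with the inclusion of the spinor summand of $\N_{k,(B,0)}$ recovers $D_{B,\q}$ on the nose. Surjectivity of $\Hess^{n}_{\q,(B,0)}$ therefore forces $D_{B,\q}$ to be surjective, and since $D_{B,\q}$ is a $k$-\textsc{asafoe} operator it is Fredholm of index zero, hence an isomorphism. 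This is the desired contradiction, so no such $[\bcr]$ exists.

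The one step that deserves care is the identification of the lower-right block of the Hessian at a reducible with $D_{B,\q}$; this is precisely the computation that produces the classification of critical points on the blow-up recalled in Chapter~1 (carried out in detail in Chapter~12 of the book), so it introduces nothing new. Everything else is bookkeeping with Definitions~\ref{Morsebott} and~\ref{Morsebotts} together with the fact, recorded above, that the normal Hessian is Fredholm of index zero.
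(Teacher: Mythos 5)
Your proof is correct, but it takes a different route from the paper. The paper's argument stays entirely in the blow-up: by the computation of the Hessian at a reducible critical point (Lemma $12.4.3$ of the book), the operator $\Hess^{\sigma}_{\q,\bcr}$ has a direct summand equal to multiplication by $\Lambda_{\q}([\bcr])$ on the $\R$-factor spanned by the $r$-direction; since $[\Cr]$ consists of reducibles, this direction is not contained in $\T_{\bcr}\Cr$, so it survives into the normal bundle, and invertibility of $\Hess^{\sigma,n}_{\q,\bcr}$ forces $\Lambda_{\q}([\bcr])\neq 0$ at once. You instead descend to the blow-down and invoke the third clause of Definition~\ref{Morsebotts} (that $[C]$ is Morse--Bott in the sense of Definition~\ref{Morsebott}): identifying the spinor block of $\Hess^{n}_{\q,(B,0)}$ with $D_{B,\q}$ via the vanishing of the cross terms at a reducible, you conclude that $D_{B,\q}$ is surjective, hence (being Fredholm of index zero) invertible, which contradicts $\psi\in\ker D_{B,\q}$ when $\Lambda_{\q}([\bcr])=0$. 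Both arguments are sound; note that they consume different parts of Definition~\ref{Morsebotts} --- the paper uses only the blow-up normal Hessian condition, while yours uses only the blow-down condition --- so the paper's version is the sharper one in the sense that it would still apply if the blow-down requirement were dropped, whereas yours trades the (short) $r$-direction computation in the blow-up for the block-structure computation downstairs, which you correctly justify via $S^1$-equivariance of the perturbation ($\q^1(B',0)=0$). One small point worth making explicit in your write-up is that the eigenvalue of $\psi$ equals $\Lambda_{\q}([\bcr])$ because the spectrum of $D_{B,\q}$ is real (it is a symmetric summand of the Hessian), so that $\Lambda_{\q}=\mathrm{Re}\langle\psi,D_{B,\q}\psi\rangle=0$ really does give $D_{B,\q}\psi=0$; with that remark added, the argument is complete.
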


\begin{proof}
This readily follows from the fact that one of the summands of the Hessian at a reducible critical point $[\bcr]$ is the multiplication map on $\R$ given by
\begin{IEEEeqnarray*}{c}
t\mapsto \Lambda_{\q}([\bcr])t,
\end{IEEEeqnarray*}
see Lemma $12.4.3$ in the book, and $\mathbb{R}$ is not in the tangent space of $[\Cr]$ because it only consists of reducibles.
\end{proof}

\begin{lemma}\label{hessian}
If $\acr\in\Cr$ is a Morse-Bott singularity, then the extended Hessian $\widehat{\Hess}^{\sigma}_{\q,\alpha}$ is Fredholm of index $0$, has real spectrum and kernel consisting exactly of $\T_{\acr}\Cr$.
\end{lemma}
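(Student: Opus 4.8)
The plan is to use that, at the critical point $\acr$, the extended Hessian is block diagonal, so that the statement splits into a ``gauge-fixing'' block, which I claim is always invertible, and the genuine Hessian block $\Hess^{\sigma}_{\q,\acr}$, which is controlled by the Morse-Bott hypothesis. First I would recall that, via the substitution of (\ref{asafoetrick}), the operator $\widehat{\Hess}^{\sigma}_{\q,\acr}$ is realized as a $k$-\textsc{asafoe} operator on sections of the fixed bundle $iT^*Y\oplus\R\oplus S\oplus i\R$ over $Y$; by the general regularity and spectral theory of such operators (Section 12.2 of the book) it is then automatically Fredholm of index $0$ as a map $L^2_k\to L^2_{k-1}$, with discrete spectrum consisting of eigenvalues of finite multiplicity.

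The remaining analytic point is that this spectrum is in fact \emph{real}, and here I would distinguish the two cases allowed by Definition \ref{Morsebotts}. If $\acr$ is irreducible, the blow-down map $\pi$ is a local diffeomorphism near $\acr$ and conjugates $\widehat{\Hess}^{\sigma}_{\q,\acr}$ to the ordinary extended Hessian $\widehat{\Hess}_{\q,\pi(\acr)}$ on $\Co_k(Y)$, which is \emph{symmetric} $k$-\textsc{asafoe} and therefore self-adjoint with real spectrum. If $\acr$ is reducible, then $(\mathrm{grad}\CSd)^{\sigma}$ is genuinely not a gradient and $\mathbf{d}^{\sigma,\dagger}_{\acr}$ is not the $L^2$-adjoint of $\mathbf{d}^{\sigma}_{\acr}$, so symmetry is not visible from the matrix (\ref{exthessian}); in this case I would invoke the analysis of Section 12.4 of the book, where it is checked that after (\ref{asafoetrick}) the operator becomes conjugate to a symmetric \textsc{asafoe} operator (its $(r,\psi)$-block being conjugate to $D_{B_0,\q}-\Lambda_{\q}(\acr)$ acting on all of $L^2_k(Y;S)$, and the connection--gauge block being the symmetric operator of the non-blown-up theory). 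I expect this reducible case to be the only genuinely delicate point of the argument; everything else is bookkeeping with the block structure.

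Finally I would identify the kernel. Since the off-diagonal entries $x$ and $y$ in (\ref{exthessian}) vanish at the critical point, $\widehat{\Hess}^{\sigma}_{\q,\acr}$ is the direct sum of the block on $\J^{\sigma}_{k,\acr}\oplus L^2_k(Y;i\R)$ built from $\mathbf{d}^{\sigma}_{\acr}$ and $\mathbf{d}^{\sigma,\dagger}_{\acr}$ and of $\Hess^{\sigma}_{\q,\acr}$ on $\K^{\sigma}_{k,\acr}$. For the first block, $\mathbf{d}^{\sigma}_{\acr}$ is an isomorphism onto $\J^{\sigma}_{\acr}$ (by the very definition of $\J^{\sigma}$ and the freeness of the gauge action), and $\mathbf{d}^{\sigma,\dagger}_{\acr}\mathbf{d}^{\sigma}_{\acr}$ equals $\Delta$ plus a nonnegative self-adjoint bounded operator which, using $\|\psi_0\|_{L^2}=1$, has trivial kernel, hence is an invertible second-order elliptic operator; a routine check with the Sobolev indices then shows this block is an isomorphism, so it contributes nothing to the kernel. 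For the second block, $\T_{\acr}\Cr\subseteq\ker\Hess^{\sigma}_{\q,\acr}$ since $\Cr$ consists of critical points; conversely, fixing a smooth splitting $\K^{\sigma}_{k,\acr}=\T_{\acr}\Cr\oplus W$ with $W\cong\N^{\sigma}_{k,\acr}$, any element of $\ker\Hess^{\sigma}_{\q,\acr}$ has its $W$-component annihilated by $\Hess^{\sigma}_{\q,\acr}$, hence with class in $\N^{\sigma}_{k-1,\acr}$ annihilated by the normal Hessian $\Hess^{\sigma,n}_{\q,\acr}$; since the latter is an isomorphism by the Morse-Bott hypothesis, that $W$-component vanishes. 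Thus $\ker\widehat{\Hess}^{\sigma}_{\q,\acr}=\T_{\acr}\Cr$, which together with the previous two paragraphs establishes the lemma.
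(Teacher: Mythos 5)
Your proof is correct and follows essentially the same route as the paper's: cite the book (Sections 12.2/12.4) for Fredholmness of index zero and reality of the spectrum, then use the block-diagonal form of $\widehat{\Hess}^{\sigma}_{\q,\acr}$ at a critical point, the invertibility of the $(\mathbf{d}^{\sigma}_{\acr},\mathbf{d}^{\sigma,\dagger}_{\acr})$ block, and the Morse-Bott hypothesis to identify the kernel with $\T_{\acr}\Cr$. The only difference is that you spell out details (the positivity argument for $\mathbf{d}^{\sigma,\dagger}_{\acr}\mathbf{d}^{\sigma}_{\acr}$ and the splitting $\K^{\sigma}_{k,\acr}=\T_{\acr}\Cr\oplus W$) that the paper simply delegates to Section 12.4 of the book.
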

\begin{proof}
The first two properties are already proven in the book, see Section $12.4$. For the last one, the operator at a critical point $\acr$ has the form
\begin{equation*}
\widehat{\Hess}_{\q,\alpha}^{\sigma}=\begin{bmatrix}
0 & 0 & \mathbf{d}_{\alpha}^{\sigma} \\
0 & \Hess_{\q,\alpha}^{\sigma} & 0\\
\mathbf{d}_{\acr}^{\sigma,\dagger} & 0 & 0
\end{bmatrix}
\end{equation*}
and the block
\begin{equation*}
\begin{bmatrix}
0 & \mathbf{d}_{\alpha}^{\sigma} \\
\mathbf{d}_{\acr}^{\sigma,\dagger} & 0
\end{bmatrix}
\end{equation*}
is invertible (see Section $12.4$ in the book also).
\end{proof}

\vspace{0.8cm}

It is clear that Morse-Bott singularies are not generic in any sense. If a perturbation $\q$ is such that $(\mathrm{grad}\CSd)^{\sigma}$ has Morse-Bott singularities, a nearby perturbation will generally have a different set of critical points. In what follows we will suppose that a Morse-Bott perturbation $\q_0$ is fixed, and we need to perturb the vector field away from the singularities in order to achieve transversality. This can be done as follows.
\par
In the based configuration space $\Bo^o_k(Y)=\Co_k(Y)/\G^o_{k+1}(Y)$ the image of the critical set is a finite collection of $S^1$-invariant submanifolds. This is because the set of critical points in the non blown-up setting is compact (see Corollary $10.7.4$ in the book), and the critical submanifolds are isolated because of the Morse-Bott condition. By Proposition \ref{densepert}, we can choose a map
\begin{equation*}
p_0:\Bo_k^o(Y)\rightarrow\R^n\times\mathbb{T}\times\C^m
\end{equation*}
defined by a collection of coclosed $1$-forms and sections of $\mathbb{S}$ such that $p_0$ is an embedding of image of the critical set. For each critical submanifold $[C]\subset \Bo_k(Y)$, let $\mathcal{O}_{[C]}\subset\Bo_k(Y)$ be an open $S^1$-invariant neighborhood of the corresponding $S^1$-orbit, chosen so that their images under $p_0$ have disjoint closures, and write
\begin{equation*}
\mathcal{O}=\bigcup_{[C]}\mathcal{O}_{[C]}\subset\Bo^o_k(Y).
\end{equation*}
We also require that each $\mathcal{O}_{[C]}$ is path connected and that the relative fundamental groups $\pi_1\left(\overline{p_0(\mathcal{O}_{[C]})}, p_0([C])\right)$ are trivial.
Consider the subset $\mathcal{P}_{\mathcal{O}}\subset\mathcal{P}$ of perturbations
\begin{equation}\label{pertequal}
\mathcal{P}_{\mathcal{O}}=\{\q\in\mathcal{P}\mid\q|_{\mathcal{O}}=\q_0|_{\mathcal{O}}\}
\end{equation}
which is a closed linear subspace of $\mathcal{P}$, hence a Banach space itself. Then there is an open neighborhood $\q_0$ in $\mathcal{P}_{\mathcal{O}}$ such that for all $\q$ in this neighborhood, the vector fields $\mathrm{grad}\CSD_{\q}$ and $(\mathrm{grad}\CSD_{\q})^{\sigma}$ have no zeroes outside $\mathcal{O}$, hence they have exactly the same zeroes as $\mathrm{grad}\CSD_{\q_0}$ and $(\mathrm{grad}\CSD_{\q_0})^{\sigma}$ (see Proposition $11.6.4$ in the book).

\begin{defn}\label{adapted}
Suppose we are given a tame Morse-Bott perturbation $\q_0$. We say that a tame perturbation $\q$ is \textit{adapted} to $\q_0$ if $\q$ agrees with $\q_0$ in a neighborhood $\mathcal{O}$ as above and the perturbed vector field $\mathrm{grad}\CSD+\q$ does not have any critical point outside of $\mathcal{O}$.
\end{defn}

The discussion above tells us that there is an open set of tame perturbations in $\mathcal{P}_{\mathcal{O}}$ adapted to a given $\q_0$ inside the Banach space of perturbation satisfying the property $(\ref{pertequal})$.
\par
Finally one should note that in general given a Morse-Bott perturbation downstairs it is not generally possible to find a close perturbation with the same critical submanifolds such that the critical submanifolds upstairs are Morse-Bott. This follows from example from the fact that in the space of operators we are considering (the space $\mathrm{Op}^{{sa}}$ in Section $12.6$ in the book) the operators with non simple spectrum form a subset of codimension three. On the other hand, the proof of the same result works for example when the reducible critical submanifolds consist of points (as in example \ref{seifert}).

\vspace{1.5cm}

\section{Moduli spaces of trajectories}
In this section, which is analogous of Chapter $13$ in the book, we provide two descriptions of the space of solutions connecting two given critical submanifolds. Each of them will be useful for different aspects of the theory in the following sections. In order to prove that these descriptions are equivalent, we need to prove some estimates for solutions of the perturbed Seiberg-Witten equations on a finite cylinder $Z=I\times Y$ which always stay in a small neighborhood of a given critical point. We study this problem in the first part of the section, and then focus in the second part on the basic properties of the solutions on the infinite cylinder. We will always assume that a perturbation $\q_0$ is chosen so that all the singularities are Morse-Bott. 
\\
\par
Let $\bcr$ be a critical point of the perturbed equations on $\Cs_k(Y)$ and let $\alpha$ be its image in $\Co_k(Y)$. Denote the corresponding translation-invariant solutions on $I\times Y$ as 
\begin{align*}
\gamma_{\acr}=(A_{\bcr},s_{\bcr},\phi_{\bcr})&\in \Ct_k(I\times Y)\\
\gamma_{\alpha}=(A_{\bcr},\Phi_{\bcr})&\in\Co_k(I\times Y).
\end{align*}
Given an element
\begin{equation*}
\gamma^{\tau}=(A,s,\phi)\in\Ct_k(I\times Y)
\end{equation*}
covering a configuration
\begin{equation*}
\gamma=(A,s\phi)=(A,\Phi)\in\C_k(I\times Y)
\end{equation*}
we can write 
\begin{align*}
A-A_{\bcr}&=b\otimes 1+(c\otimes 1)dt \\
\phi&=\phi_{\bcr}+\psi
\end{align*}
where $b$, $c$ are time dependent $1$ and $0$-forms respectively, and $\psi$ is a time dependent section of $S\rightarrow Y$ such that $\phi_{\acr}+\psi(t)$ is of unit norm for every $t\in I$.
In what follows Sobolev norm of a difference of two configurations in the $\tau$-model such as
\begin{equation*}
\|\gamma^{\tau}-\gamma_{\bcr}\|_{L^2_{k,A_{\bcr}}(I\times Y)}
\end{equation*}
is intended as the norm in the bigger affine space
\begin{equation*}L^2_k(I\times Y; iT^*Z)\times L^2_k(I;\R)\times L^2_{k,A_{\acr}}(I\times Y; S).
\end{equation*}

\vspace{0.5cm}
The result and proofs we are going to discuss are very close to the case of Morse singularities in the book, and they can be thought as a parametric version of them. Before stating them, we need to define a nice parametrization of a neighborhood of a critical point $\bcr\in\Cs_k(Y)$.

\begin{defn}
Fix $k\geq1$. Given a point $\bcr$ in a critical submanifold $\Cr$, an \textit{$L^2_k$-compatible product chart around} $\bcr$ is a pair $(\mathcal{U},\varphi)$ where
\begin{equation*}
\mathcal{U}\subset \T_{\bcr}\Cr\oplus \N^{\sigma}_{\bcr,k}\oplus \J^{\sigma}_{\bcr,k}
\end{equation*}
is an open neighborhood of $\{0\}$ and there exists a small neighborhood $U\subset\Cs_k(Y)$ of $\bcr$ and a map
\begin{equation*}
\varphi: \mathcal{U}\rightarrow U
\end{equation*}
such that the following hold:
\begin{itemize}
\item $\varphi$ is an $L^2_k$-compatible diffeomorphism, i.e. there exists $C>0$ such that
\begin{equation*}
\|v\|_{L^2_k}/C\leq \|d\varphi (v)\|_{L^2_k}\leq C\|v\|_{L^2_k}\qquad\text{for every }v\in T\mathcal{U},
\end{equation*}
and $\mathcal{D}_0\varphi=\mathrm{Id}$;
\item the restriction of $\varphi$ to $\mathcal{U}\cap (\T_{\bcr}\Cr\oplus \N^{\sigma}_{\bcr,k}\oplus\{0\})$ is a local chart for the Coulomb slice $\Sl_{k,\bcr}^{\sigma}$ through $\bcr$ such that the restriction to ${\mathcal{U}\cap (\T_{\bcr}\Cr\oplus \{0\}\oplus\{0\})}$ is a local chart for $\Cr\cap \Sl_{k,\bcr}^{\sigma}$ with $\varphi(0)=\bcr$;
\end{itemize}
\end{defn}

\begin{lemma}\label{compchart}
Every critical point $\bcr\in \Cs_k(Y)$ admits a $L^2_k$-compatible chart around it.
\end{lemma}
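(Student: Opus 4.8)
The plan is to build $\varphi$ by assembling three pieces: an adapted (submanifold) chart for the Coulomb--Neumann slice, the free gauge action, and the linear identification of $\J^\sigma_{\bcr,k}$ with $L^2_{k+1}(Y;i\R)$. Before starting I would replace $\bcr$ by a gauge equivalent smooth representative (Corollary $12.2.6$ in the book), so that in particular the spinor component $\phi_0$ of $\bcr$ is smooth and, having unit $L^2$ norm, is not identically zero.

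First I would produce the chart of the slice. By Corollary $9.3.8$ in the book, $\Sl^\sigma_{k,\bcr}$ is a closed Hilbert submanifold of $\Cs_k(Y)$ with tangent space $\K^\sigma_{k,\bcr}=\T_{\bcr}\Cr\oplus\N^\sigma_{\bcr,k}$ at $\bcr$, and the composition $\Sl^\sigma_{k,\bcr}\hookrightarrow\Cs_k(Y)\to\Bs_k(Y)$ is a diffeomorphism of a neighborhood of $\bcr$ onto a neighborhood of $[\bcr]$. Pulling back the embedded submanifold $[\Cr]$ along this local diffeomorphism shows that, near $\bcr$, the set $\Cr\cap\Sl^\sigma_{k,\bcr}$ is a finite-dimensional embedded submanifold of $\Sl^\sigma_{k,\bcr}$ whose tangent space at $\bcr$ is $\T_{\bcr}\Cr$ (this is precisely how $\T_{\bcr}\Cr$ is defined). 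After fixing a closed complement of $\T_{\bcr}\Cr$ in $\K^\sigma_{k,\bcr}$ to represent the quotient $\N^\sigma_{\bcr,k}$ as a subspace, I would invoke the existence of submanifold charts in Hilbert manifolds: take any chart of $\Sl^\sigma_{k,\bcr}$ whose differential at $\bcr$ is the identity, then compose it with a shear $(t,w)\mapsto(t,w+g(t))$ with $g(0)=0$ and $Dg(0)=0$ straightening $\Cr\cap\Sl^\sigma_{k,\bcr}$ onto $\T_{\bcr}\Cr\oplus\{0\}$. This produces an open set $\mathcal{V}\subset\T_{\bcr}\Cr\oplus\N^\sigma_{\bcr,k}$ and an $L^2_k$-compatible diffeomorphism $\psi:\mathcal{V}\to\Sl^\sigma_{k,\bcr}$ onto a neighborhood of $\bcr$ with $\psi(0)=\bcr$, $\mathcal{D}_0\psi=\mathrm{Id}$, and $\psi(\mathcal{V}\cap(\T_{\bcr}\Cr\oplus\{0\}))=\Cr\cap\Sl^\sigma_{k,\bcr}$; the last two properties are unaffected by a shear with trivial differential at the origin.

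Next I would thicken $\psi$ in the gauge directions. Since $2(k+1)>3$, the map $\xi\mapsto e^\xi$ is a diffeomorphism from a neighborhood of $0$ in $L^2_{k+1}(Y;i\R)$ onto a neighborhood of the identity in $\G_{k+1}(Y)$, and the gauge action on $\Cs_k(Y)$ is smooth and free; I would set $\tilde\varphi(t,w,\xi)=e^\xi\cdot\psi(t,w)$. Its differential at the origin is $\mathrm{Id}$ on the $\mathcal{V}$-factor and equals $\dot\xi\mapsto\mathbf{d}^\sigma_{\bcr}(\dot\xi)=(-d\dot\xi,0,\dot\xi\phi_0)$ on the $\xi$-factor; the latter is injective (if $d\dot\xi=0$ then $\dot\xi$ is constant, and $\dot\xi\phi_0=0$ then forces $\dot\xi=0$ as $\phi_0\not\equiv0$) with image the closed subspace $\J^\sigma_{\bcr,k}$, so $\mathcal{D}_0\tilde\varphi$ is a bounded isomorphism onto $\T^\sigma_{k,\bcr}=\K^\sigma_{k,\bcr}\oplus\J^\sigma_{\bcr,k}$, and after shrinking domains the inverse function theorem makes $\tilde\varphi$ a diffeomorphism onto a neighborhood $U$ of $\bcr$ in $\Cs_k(Y)$. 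Finally I would recoordinatize the third factor: $\mathbf{d}^\sigma_{\bcr}:L^2_{k+1}(Y;i\R)\to\J^\sigma_{\bcr,k}$ is a bounded linear bijection, hence an isomorphism by the open mapping theorem, and setting $\varphi(t,w,v)=\tilde\varphi(t,w,(\mathbf{d}^\sigma_{\bcr})^{-1}v)$ on the corresponding open $\mathcal{U}\subset\T_{\bcr}\Cr\oplus\N^\sigma_{\bcr,k}\oplus\J^\sigma_{\bcr,k}$ gives $\varphi(0)=\bcr$ and $\mathcal{D}_0\varphi=\mathrm{Id}$ (the third block is now $v\mapsto v$). By construction $\varphi(\cdot,\cdot,0)=\psi$ is a chart of the slice and $\varphi(\cdot,0,0)=\psi(\cdot,0)$ is a chart of $\Cr\cap\Sl^\sigma_{k,\bcr}$. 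For the $L^2_k$-compatibility one uses that the $L^2_k$ norms on all the tangent spaces of $\Cs_k(Y)$ are restrictions of one ambient Sobolev norm, so $p\mapsto\mathcal{D}_p\varphi$ and $p\mapsto(\mathcal{D}_p\varphi)^{-1}$ are continuous in operator norm; shrinking $\mathcal{U}$ then gives the required uniform constant $C$. When $\bcr$ is reducible ($s_0=0$, so $\bcr$ lies on the boundary $\{s=0\}$), the coordinate $s$ sits among the $\N^\sigma_{\bcr,k}$ directions of the slice chart and is unchanged by gauge transformations, so $\mathcal{U}$ is taken to be a neighborhood of $0$ in the half-space $\{s\geq0\}$ and $\varphi$ is a boundary chart.

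I do not expect a serious obstacle, since the construction is essentially an assembly of facts already in the book: the existence and smoothness of the Coulomb--Neumann slices, the freeness of the gauge action, and the local structure of $\Bs_k(Y)$. The one point that genuinely requires attention is arranging a single chart to satisfy the three nested compatibility conditions at once --- a diffeomorphism of $\Cs_k(Y)$ near $\bcr$ that restricts to a chart of the slice on $\{v=0\}$ and to a chart of the critical submanifold on $\{w=v=0\}$ --- which is exactly the purpose of the shear-straightened chart of the first step; the only analytic input beyond the book is the elementary observation that $\mathbf{d}^\sigma_{\bcr}$ identifies $\J^\sigma_{\bcr,k}$ with $L^2_{k+1}(Y;i\R)$.
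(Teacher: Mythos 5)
Your proposal is correct and follows essentially the same route as the paper's proof: a chart of the Coulomb--Neumann slice modelled on $\T_{\bcr}\Cr\oplus\N^{\sigma}_{\bcr,k}$, a shear (equivalently, the graph of a function $f$ with vanishing derivative at the origin, obtained in the paper via the implicit function theorem) straightening $\Cr\cap\Sl^{\sigma}_{k,\bcr}$, thickening in the gauge directions by exponentiating $(\mathbf{d}^{\sigma}_{\bcr})^{-1}$, and the inverse function theorem applied at a point where the differential is the identity. The only differences are presentational (you spell out the injectivity of $\mathbf{d}^{\sigma}_{\bcr}$, the $L^2_k$-compatibility by continuity of the derivative, and the boundary chart at reducibles, all of which are implicit in the paper).
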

\begin{proof}
Consider the map
\begin{equation*}
\Psi: \T_{\bcr}\Cr\oplus \N^{\sigma}_{\bcr,k}\rightarrow \mathcal{S}^{\sigma}_{k,\acr}
 \end{equation*}
obtained by composing the local chart from Section $3$ in Chapter $1$
\begin{equation*}
\iota: \K_{k,\bcr}^{\sigma}\rightarrow \mathcal{S}^{\sigma}_{k,\bcr}
\end{equation*}
with a linear isomorphism of Hilbert spaces
\begin{equation*}
\psi:\T_{\bcr}\Cr\oplus \N^{\sigma}_{\bcr,k}\rightarrow \K_{k,\bcr}^{\sigma}
\end{equation*}
where $\psi$ is the inclusion on the first summand and a left inverse of the quotient map
\begin{equation*}
 \K_{k,\bcr}^{\sigma}\rightarrow \N^{\sigma}_{\bcr,k}
\end{equation*}
on the second summand. The map  $\Psi$ is a diffeomorphism in a neighborhood $\mathcal{U}'$ of $\{0\}\oplus\{0\}$, and by the implicit function theorem we can describe $\Psi^{-1}(\Cr)\cap\mathcal{U}'$ in a smaller neighborhood as the graph of a smooth function
\begin{equation*}
f: \T_{\bcr}\Cr\rightarrow \N^{\sigma}_{\bcr,k}.
\end{equation*}
The origin is a critical point of $f$ because $T_{\bcr}\Cr$ is by definition tangent to $\Cr$. To obtain a parametrization of a neighborhood in $\Cs_k(Y)$ we use the action of the gauge group as follows. The differential of the gauge group action gives the isomorphism
\begin{equation*}
d^{\sigma}_{\bcr}:T_{e}\G_{k+1}(Y)\rightarrow \J^{\sigma}_{\bcr,k}
\end{equation*}
and we define the map
\begin{align*}
\varphi&:\mathcal{U}'\oplus \J^{\sigma}_{\bcr,k}\rightarrow \Cs_k(Y)\\
(v^t, v^{n}, v^j)&\mapsto \mathrm{exp}\left((d^{\sigma}_{\bcr})^{-1}(v^j)\right)\cdot \Psi(v^t, v^{n}+f(v^t)).
\end{align*}
Here we are exponentiating the element in the Lie algebra to obtain a gauge transformation. The differential of this map at the origin is the identity by definition, and this is in fact $L^2_k$-compatible product chart by applying the inverse function theorem.
\end{proof}
\begin{remark}\label{jkcompatible}
If $1\leq j\leq k$ the construction above implies that if $\bcr$ is an $L^2_k$ configuration we can construct an $L^2_k$-compatible product chart which is the restriction of an $L^2_j$-compatible product chart. This follows from the fact that the differential at the origin is the identity and the uniqueness statement in the inverse function theorem. We will use this fact in Section $5$. Indeed, one can slightly modify the construction to obtain a $L^2_k$-compatible product chart which is defined in an $L^2_j$ neighborhood of the origin, see Lemma \ref{cylinderchart}. The chart we have constructed does not have this property, but it has the desirable property of being gauge invariant is some sense that will be made precise in Remark \ref{tangind}.
\end{remark}

\vspace{0.5cm}
Given such an $L^2_k$-compatible product chart $(\mathcal{U},\varphi)$, for a configuration $(B,r,\psi)\in U=\varphi(\mathcal{U})$ one can define its \textit{normal component}
\begin{equation*}
(B,r,\psi)^{\nu}=\varphi\big( \Pi^{\nu}\circ\varphi^{-1}(B,r,\psi)\big)\in U
\end{equation*}
where
\begin{equation*}
\Pi^{\nu}: \T_{\acr}\Cr\oplus \N^{\sigma}_{\acr,k}\oplus\J_{k,\acr}^{\sigma}
\rightarrow \{0\}\oplus \N^{\sigma}_{\acr,k}\oplus\J_{k,\acr}^{\sigma}
\end{equation*}
is the projection with kernel $\T_{\acr}\Cr\oplus\{0\}\oplus\{0\}$. Furthermore, given any $L^2_1$ path
\begin{equation*}
\check{\gamma}: I\rightarrow \Cs_k(Y)
\end{equation*}
such that $\check{\gamma}(t)$ is in $U$ for every $t\in I$, we can define its normal part $\gamma^{\nu}$ to be the path defined as
\begin{equation*}
\check{\gamma}^{\nu}(t)=\check{\gamma}(t)^{\nu}\qquad\text{for }t\in I.
\end{equation*}
This is still an $L^2_1$ path. The most interesting case is that of a solution $\gamma\in \Ct_k(I\times Y)$ of the Seiberg-Witten equations on the cylinder. After a suitable gauge transformation, this gives rise to a $L^2_1$ path
\begin{equation*}
\big(\check{\gamma}(t),c(t)\big)\in \Cs_k(Y)\times L^2_k(Y,i\R),
\end{equation*}
see Corollary $10.7.3$ in the book, and in the situation above we define its \textit{normal part}, denoted by $\gamma^{\nu}$, to be the $L^2_1$ path $(\check{\gamma}^{\nu}(t),c(t)\big)$.

\par
Similarly, there is a notion of \textit{tangent component}
\begin{equation*}
(B,r,\psi)^{t}=\varphi\big(\Pi^{t}\circ\varphi^{-1}(B,r,\psi)\big)\in\Sl^{\sigma}_{k,\bcr}
\end{equation*}
where
\begin{equation*}
\Pi^{t}:\T_{\acr}\Cr\oplus \N^{\sigma}_{\acr,k}\oplus\J_{k,\acr}^{\sigma}\rightarrow \T_{\acr}\Cr\oplus\{0\}\oplus\{0\}
\end{equation*}
is the projection with kernel $\{0\}\oplus \N^{\sigma}_{\acr,k}\oplus\J_{k,\acr}^{\sigma}$, and one can define the tangent part of a $L^2_1$ path in the identical fashion as above.
\begin{remark}\label{tangind}
It follows from the construction of Lemma \ref{compchart} that we can choose the $L^2_k$-compatible product chart so that the tangent part of any two gauge equivalent configurations in the image of the chart is the same.
\end{remark}

Furthermore, the images of the trivial bundles over $\mathcal{U}$ with fiber $T_{\acr}\Cr\oplus\{0\}\oplus\{0\}$ and $\{0\}\oplus\N_{\acr,k}^{\sigma}\oplus\{0\}$, which we denote by $\T_k^t$ and $\T_k^n$ determine via fiberwise completion a smooth bundle decomposition
\begin{equation*}
\T_l\lvert_{\tilde{U}}=\T_j^t\oplus \T_j^n\oplus\J^{\sigma}_j
\end{equation*}
for every $j\leq k$ (notice that the first summand is not affected by the completion) on some neighborhood $\tilde{U}\subset U$. We will restrict our chart to this neighborhood. In particular, we can decompose the gradient of the Chern-Simons-Dirac in its tangent and normal part $(\mathrm{grad}\CSd)^{\sigma,t}$ and $(\mathrm{grad}\CSd)^{\sigma,n}$.
There is an analogous notion of $L^2_k$-compatible product chart for a Morse-Bott critical point in the non blown-up configuration space, which will be useful in what follows.

\vspace{0.8cm}

\vspace{0.8cm}
We start by studying the situation downstairs. From now on we suppose that a $L^2_1$-compatible product chart is fixed. We have the following key estimates for near constant solutions (see Proposition $13.4.4$ in the book). Recall that $k$ is an integer not less than $2$.
\begin{prop}\label{nearconstestimate}
For any Morse-Bott critical point $\beta$ there exists a gauge invariant neighborhood $U$ of $\gamma_{\beta}$ in $C_k(I\times Y)$ and a constant $C_0$ such that any solution $\gamma$ in $U$ in the Coulomb-Neumann slice $S_{k,\gamma_{\beta}}$ satisfies the estimate
\begin{equation*}
\|\gamma^{\nu}\|^2_{L^2_1(I\times Y)}\leq C_0\big(\CSd(s_1)-\CSd(s_2)\big).
\end{equation*}
\end{prop}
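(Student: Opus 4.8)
The plan is to adapt the proof of Proposition $13.4.4$ in the book --- the non-degenerate analogue --- allowing for the kernel of the Hessian along the critical submanifold. Write $I=[t_1,t_2]$, let $C$ be the critical submanifold through $\beta$, and set $s_i=\check\gamma(t_i)$. The starting point is the energy identity for trajectories (see Chapter $4$ and Section $10.7$ of the book): for a solution $\gamma$ on $I\times Y$ the drop $\CSd(s_1)-\CSd(s_2)$ equals the analytic energy, and since $\mathrm{grad}\CSd$ is $L^2$-orthogonal to the gauge directions the temporal component of a configuration in the Coulomb-Neumann slice contributes nothing, so
\[
\CSd(s_1)-\CSd(s_2)=\int_I\|\mathrm{grad}\CSd(\check\gamma(t))\|^2_{L^2(Y)}\,dt .
\]
Hence it is enough to bound $\|\gamma^\nu\|^2_{L^2_1(I\times Y)}$ by a constant times this integral.

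The local input is a coercivity estimate near $C$. Working in the $L^2_1$-compatible product chart of Lemma \ref{compchart} (in its non-blown-up version) around a point of $C$, any nearby configuration $\alpha$ has a normal part $\alpha^\nu$, and there is a constant $c>0$ with
\[
\|\alpha^\nu\|_{L^2_1(Y)}\le c\,\|\mathrm{grad}\CSd(\alpha)\|_{L^2(Y)}
\]
for all $\alpha$ in a sufficiently small neighbourhood of $C$. This holds because $\mathrm{grad}\CSd$ vanishes identically on $C$, so in the chart it equals the normal Hessian $\Hess^{n}_{\q}$ along $C$ applied to the normal coordinate, plus a remainder quadratic in the normal coordinate; by the Morse-Bott hypothesis the operators $\Hess^{n}_{\q,\alpha}$ are invertible, and since $C$ is compact and they are symmetric compact perturbations of a first-order elliptic operator they are uniformly invertible and satisfy a uniform estimate $\|u\|_{L^2_1(Y)}\le C\,\|\Hess^{n}_{\q,\alpha}u\|_{L^2(Y)}$; shrinking the neighbourhood absorbs the quadratic remainder. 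The gauge and temporal pieces of $\alpha^\nu$ are handled exactly as in Section $12.4$ and Chapter $13$ of the book, using the gauge-fixing equation that cuts out the Coulomb-Neumann slice.

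Now take the gauge-invariant neighbourhood $U$ of $\gamma_\beta$ in $C_k(I\times Y)$ small enough that every solution $\gamma\in U$ in the slice has all restrictions $\check\gamma(t)$ inside the neighbourhood of $C$ where the coercivity estimate applies. Split
\[
\|\gamma^\nu\|^2_{L^2_1(I\times Y)}=\int_I\|\gamma^\nu(t)\|^2_{L^2_1(Y)}\,dt+\|\partial_t\gamma^\nu\|^2_{L^2(I\times Y)} .
\]
The first term is controlled pointwise in $t$ by the coercivity estimate together with the energy identity. For the second, the flow equation expresses $\partial_t\gamma^\nu(t)$ as the normal component of $-\mathrm{grad}\CSd(\check\gamma(t))$ plus contributions from the non-linearity of the chart projection and from the temporal gauge term, all of which are of lower order; hence $\int_I\|\partial_t\gamma^\nu(t)\|^2_{L^2(Y)}\,dt$ is bounded by $\int_I\|\mathrm{grad}\CSd(\check\gamma(t))\|^2_{L^2(Y)}\,dt$ up to a term of the form $\varepsilon\,\|\gamma^\nu\|^2_{L^2_1(I\times Y)}$, where $\varepsilon$ shrinks with $U$. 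Combining these, $\|\gamma^\nu\|^2_{L^2_1(I\times Y)}\le C_0\,(\CSd(s_1)-\CSd(s_2))+\varepsilon\,\|\gamma^\nu\|^2_{L^2_1(I\times Y)}$, and absorbing the last term gives the estimate.

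I expect the main obstacle to be the coercivity estimate and its bookkeeping: one must verify that the normal part of $\mathrm{grad}\CSd$ controls the full $L^2_1$-norm of the normal coordinate \emph{uniformly} over the compact submanifold $C$, and that the errors introduced by passing to the product chart and by the temporal component of a solution in the Coulomb-Neumann slice are genuinely of higher order and can be absorbed. This is exactly where the Morse-Bott hypothesis --- uniform invertibility of the normal Hessians along the compact $C$ --- is used, and where the argument departs from the book, in which the full Hessian is invertible and one controls all of $\gamma-\gamma_\beta$ rather than only the normal part $\gamma^\nu$.
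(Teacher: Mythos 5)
There is a genuine gap, and it sits exactly at the point you deferred to the book. As defined in the paper via the product chart, the normal part is the projection whose kernel is only $T_{\beta}C$: for a slice it retains the gauge-direction ($\J$) component, and for a solution on the cylinder $\gamma^{\nu}$ is by definition the pair $(\check\gamma^{\nu}(t),c(t))$, so the temporal component $c$ is part of the quantity you must bound. Your central coercivity claim $\|\alpha^{\nu}\|_{L^2_1(Y)}\le c\,\|\mathrm{grad}\CSd(\alpha)\|_{L^2(Y)}$ is false for this $\alpha^{\nu}$: the gradient vanishes identically on the whole gauge orbit of $C$, so it cannot control the $\J$-component, and it says nothing about $c$. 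The correct slice-wise statement, which is the lemma the paper proves, has the gauge-fixing quantities on the right-hand side,
\begin{equation*}
\|((\beta+v)^{\nu},c)\|^2_{L^2_1(Y)}\leq C\big(\|\mathbf{d}^*_{\beta}v\|^2+\|\mathbf{d}_{\beta+v}c\|^2+\|\mathrm{grad}\CSd(\beta+v)\|^2\big),
\end{equation*}
obtained by inverting the block operator $(\mathbf{d}_{\beta}c,\Hess^{n}_{\beta}v^{n},\mathbf{d}^*_{\beta}v^{J})$. Saying the gauge and temporal pieces are "handled by the gauge-fixing equation" does not close this: membership in the four-dimensional Coulomb--Neumann slice through $\gamma_{\beta}$ does not place the slices $\check\gamma(t)$ in a three-dimensional Coulomb slice, nor does it kill $c(t)$.

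Because of this, your starting point is too weak. The bare energy identity $\CSd(s_1)-\CSd(s_2)=\int_I\|\mathrm{grad}\CSd(\check\gamma)\|^2\,dt$ is correct, but it supplies none of the quantities $\|\mathbf{d}^*_{\beta}(\check\gamma-\beta)\|^2$, $\|\mathbf{d}_{\check\gamma}c\|^2$, $\|\tfrac{d}{ds}c\|^2$ that are needed to feed the corrected coercivity estimate and to control the time derivatives of $c$ and of the $\J$-part, all of which enter $\|\gamma^{\nu}\|^2_{L^2_1(I\times Y)}$. The paper instead derives (following Lemma $13.4.4$ of the book) the refined identity in which $2(\CSd(s_1)-\CSd(s_2))$ equals $\int_I(\|\tfrac{d}{ds}\check\gamma\|^2+\|\tfrac{d}{ds}c\|^2)$ plus $\int_I(\|\mathbf{d}^*_{\beta}(\check\gamma-\beta)\|^2+\|\mathbf{d}_{\check\gamma}c\|^2+\|\mathrm{grad}\CSd(\check\gamma)\|^2)$ plus a cross term $\int_I\langle(\check\gamma-\beta)\sharp c,\tfrac{d}{ds}\check\gamma\rangle$; the middle group feeds the lemma, the first group controls the time derivatives after subtracting the tangential one via $\|\tfrac{d}{ds}\check\gamma^{t}\|\le K_0\|\check\gamma^{\nu}\|_{L^2_1(Y)}$ (which uses that $\CSd$ is constant on $C$), and the cross term is absorbed using $L^4\subset L^2_1$ on a small neighborhood. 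Your remaining ingredients (uniform invertibility of the normal Hessian over the compact $C$, absorption of small errors) do match the paper, but without the refined identity the gauge and temporal components of $\gamma^{\nu}$ are never bounded by the drop of $\CSd$, so the proof as proposed does not go through.
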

Here we are choosing the gauge invariant neighborhood to be small enough so that the normal part is well defined for a configuration in Coulomb-Neumann slice.
The proof of this result proceeds as the one in the book, the only difference being the fact that the non degeneracy of the normal Hessian gives us control only on the normal part of the configuration and not on the whole configuration. Furthermore, we cannot estimate the $L^2_{k+1}$ norm (up to gauge and on a smaller interval $I'\subset I$ as in the statement in Proposition $13.4.1$) because the normal part of a solution is generally not a solution, hence the bootstrapping argument does not apply. On the other hand, the rest of the proof works few modifications. The key observation is the following.

\begin{lemma}
Given an irreducible Morse-Bott critical point $\beta$ there is a constant $C>0$ and a neighborhood
\begin{equation*}
(\beta,0)\in U^Y\subset \Co_1(Y)\times L^2_1(Y,i\R)
\end{equation*}
such that for every $(\beta+v,c)\in U^Y$ we have
\begin{equation*}
\|((\beta+v)^{\nu},c)\|^2_{L^2_1(Y)}\leq C\big(\|\mathbf{d}^*_{\beta}v\|^2+\|\mathbf{d}_{\beta+v}c\|^2+\|\mathrm{grad}\CSd(\beta+v)\|^2\big).
\end{equation*}
In the reducible case the same results holds with the additional hypothesis $\int_Y c=0$.
\end{lemma}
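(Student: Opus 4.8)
The plan is to prove the inequality by a contradiction and compactness argument, just as in the analogous statement in the book (Lemma 13.4.3 there), with the key modification being that we control only the \emph{normal} part of $v$ rather than all of $v$. First I would observe that both sides of the claimed inequality are invariant under the action of the component of the gauge group that fixes $\beta$ (i.e.\ the stabilizer, which is trivial since $\beta$ is irreducible, and in any case under $\G^o$), so it suffices to work in a slice. Since $\beta$ is irreducible and Morse-Bott, Lemma \ref{hessian} (and the surrounding discussion of the extended Hessian) tells us that the extended Hessian $\widehat{\Hess}^\sigma_{\q,\beta}$ is Fredholm of index zero with kernel exactly $\T_\beta\Cr$, and after passing to the $L^2_1$ completion it restricts to an operator whose kernel, on the complement of the tangent directions to $\Cr$, is zero.

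**The linearized estimate.** The right-hand side of the desired inequality, evaluated at $v=0$ and linearized, is governed precisely by the operator
\begin{equation*}
v\mapsto \big(\mathbf{d}^*_\beta v,\ \mathcal{D}_\beta(\mathrm{grad}\CSd)(v)\big)
\end{equation*}
together with the term $\mathbf{d}_{\beta}c$; packaging $(v,c)$ together, this is exactly the extended Hessian $\widehat{\Hess}_{\q,\beta}$ acting on $\T_{1,\beta}\oplus L^2_1(Y;i\R)$. By Lemma \ref{hessian} this operator has kernel equal to $\T_\beta\Cr\oplus\{0\}$ (in the irreducible case; in the reducible case one restricts to $\int_Y c=0$ and again gets that the kernel is $\T_\beta\Cr$, using Lemma \ref{nonzerolambda} to rule out the $\R$-direction). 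Hence there is a constant $C$ such that for $(v,c)$ orthogonal to the kernel — equivalently, after projecting $v$ to its normal component $v^\nu$ — one has the \emph{linear} estimate
\begin{equation*}
\|(v^\nu,c)\|^2_{L^2_1}\leq C\big(\|\mathbf{d}^*_\beta v\|^2+\|\mathbf{d}_\beta c\|^2+\|\mathcal{D}_\beta(\mathrm{grad}\CSd)(v)\|^2\big).
\end{equation*}
This is a standard consequence of the closed range of a Fredholm operator together with elliptic regularity for \textsc{asafoe} operators (Section $12.2$, $12.4$ of the book).

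**Passing from linear to nonlinear.** To upgrade to the nonlinear statement, write $\mathrm{grad}\CSd(\beta+v)=\mathcal{D}_\beta(\mathrm{grad}\CSd)(v)+\mathcal{R}(v)$ where the remainder $\mathcal{R}$ is quadratic, so $\|\mathcal{R}(v)\|_{L^2_1}\leq c_1\|v\|^2_{L^2_1}$ on a bounded neighborhood (using the multiplication properties of $L^2_1$ on a $3$-manifold and the tameness of $\q$, Definition \ref{tamepert}), and similarly compare $\mathbf{d}_{\beta+v}c$ with $\mathbf{d}_\beta c$ at the cost of a term $c_2\|v\|_{L^2_1}\|c\|_{L^2_1}$. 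Substituting these into the linear estimate, the new error terms are of the form $\|v\|^2_{L^2_1}\cdot\|(v^\nu,c)\|^2_{L^2_1}$ and $\|v\|^2_{L^2_1}\|c\|^2$, which can be absorbed into the left-hand side once the neighborhood $U^Y$ is chosen small enough (so that $\|v\|_{L^2_1}$ is below a threshold). One subtlety, and what I expect to be the main technical point, is that $v^\nu$ is defined through the $L^2_1$-compatible product chart $\varphi$ rather than by a literal linear projection; so one must check that $\|v^\nu\|_{L^2_1}$ and the norm of the linear normal projection of $v$ are comparable for small $v$, which follows from $\mathcal{D}_0\varphi=\mathrm{Id}$ and the $L^2_1$-compatibility bound in the definition of the chart.

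**The reducible case.** Finally, for the reducible case the only change is that the extended Hessian at a reducible critical point has the extra block given by multiplication by $\Lambda_{\q}([\bcr])$ on the $\R$-factor (Lemma $12.4.3$ in the book); Lemma \ref{nonzerolambda} guarantees $\Lambda_{\q}([\bcr])\neq 0$, so this block is invertible and contributes no kernel, while the hypothesis $\int_Y c=0$ removes the constant gauge transformations which would otherwise lie in the kernel of $\mathbf{d}_\beta$. With these adjustments the same argument goes through verbatim, giving the stated inequality with the additional hypothesis $\int_Y c=0$.
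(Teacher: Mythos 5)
Your linear estimate and the overall reduction to the invertibility of the linearization on a complement of $T_{\beta}\Cr$ are fine, but the step \textquotedblleft passing from linear to nonlinear\textquotedblright\ has a genuine gap, and it is exactly the point where the Morse--Bott geometry bites. When you write $\mathrm{grad}\CSd(\beta+v)=\mathcal{D}_{\beta}(\mathrm{grad}\CSd)(v)+\mathcal{R}(v)$, the remainder is controlled only in terms of the \emph{full} displacement $v$, including its tangential component $v^{\tau}$ along the critical submanifold; but $v^{\tau}$ is controlled neither by the left-hand side $\|(v^{\nu},c)\|_{L^2_1}$ nor by the right-hand side of the lemma. Your claim that the error has the form $\|v\|^2_{L^2_1}\cdot\|(v^{\nu},c)\|^2_{L^2_1}$ implicitly uses $\|v\|^2_{L^2_1}\lesssim\|(v^{\nu},c)\|^2_{L^2_1}$, which is false in the Morse--Bott case: take $c=0$ and $v=v^{\tau}$ tangential of size $\epsilon$. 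Then $\mathcal{D}_{\beta}(\mathrm{grad}\CSd)(v)=0$ while $\mathrm{grad}\CSd(\beta+v^{\tau})$ is generically of size $\epsilon^{2}$ (the submanifold is curved), so $\|\mathcal{R}(v)\|^{2}\sim\epsilon^{4}$, which is comparable to the left-hand side $\|(\beta+v^{\tau})^{\nu}\|^{2}_{L^2_1}$ rather than small with respect to it; the error can therefore not be absorbed, and shrinking $U^Y$ does not help since $\epsilon$ is already as small as you like. (A secondary issue: condition $(6)$ of tameness only gives that $\q$ is $C^1$ as a section $\Co_1(Y)\to\T_0$, so the remainder is only $o(\|v\|_{L^2_1})$ measured in $L^2$, not a genuinely quadratic bound in $L^2_1$; but even granting quadratic decay the tangential problem above remains.)

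The paper sidesteps this by never Taylor-expanding the full gradient at the single point $\beta$. It considers the nonlinear map $(\beta+v,c)\mapsto\big(\mathbf{d}_{\beta+v}c,(\mathrm{grad}\CSd)^{n}(\beta+v),\mathbf{d}^*_{\beta}v\big)$, whose middle entry is the \emph{normal} part of the gradient taken in the $L^2_1$-compatible product chart. In chart coordinates this map vanishes identically along $\Cr\cap\Sl_{1,\beta}$ (not merely at $\beta$), so the tangential coordinate enters only as a parameter; since the linearization at $(\beta,0)$ is an isomorphism on $(\J_{1,\beta}\oplus\{0\}\oplus\N_{1,\beta})\oplus L^2_1(Y;i\R)$ and the map is $C^1$, the inverse function theorem gives the estimate with $\|(\mathrm{grad}\CSd)^{n}(\beta+v)\|^2$ on the right, with no spurious term quadratic in $v^{\tau}$. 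The full gradient only appears at the very end, through the (correctly oriented) comparison $\|(\mathrm{grad}\CSd)^{\nu}(\beta+v)\|^2\le C''\|\mathrm{grad}\CSd(\beta+v)\|^2$ valid in a small neighborhood. If you want to keep your expansion-based structure, you must expand around the nearest point of $\Cr$ (or, equivalently, work in the product chart and treat the tangential coordinate as a parameter), not around $\beta$ itself.
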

Notice that we need to choose first a neighborhood small enough of $\beta$ in order to have a notion of normal component.
\begin{proof}
Consider the map
\begin{IEEEeqnarray*}{c}
\Co_1(Y)\times L^2_1(Y;i\R)\rightarrow \J_0\oplus\T^n_0\oplus L^2(Y;i\R)\\
(\beta+v,c)\mapsto \big(\mathbf{d}_{\beta+v}c,(\mathrm{grad}\CSd)^{n}(\beta+v), \mathbf{d}^*_{\beta}v\big)
\end{IEEEeqnarray*}
where we use the compatible product chart to define the normal part of the gradient.
Using the identification
\begin{equation*}
\T_{1,\beta}\equiv \J_{1,\beta}\oplus T_{\beta}C\oplus\N_{1,\beta},
\end{equation*}
the linearization of this map at $(\beta,0)$ can be written as
\begin{equation*}
(v^J, v^{\tau}, v^{n}, c)\mapsto (\mathbf{d}_{\beta}c, \Hess^n_{\beta} v^{n}, \mathbf{d}^*_{\beta} v^J).
\end{equation*}
Here we use that the perturbation $\q$ is a $C^1$ section, as stated in condition $(6)$ in the Definition \ref{tamepert} in Chapter $1$.
As $\Hess^n_{\beta}:\N_{1,\beta}\rightarrow \N_{0,\beta}$ is an isomorphism and the map
\begin{equation*}
\mathbf{d}_{\beta}: L^2_1(Y;i\R)\rightarrow \J_{0,\beta}
\end{equation*}
is invertible in the irreducible case, we obtain that the linearization is an isomorphism when restricted to the subspace 
\begin{equation*}
(J_{1,\beta}\oplus\{0\}\oplus\N_{1,\beta})\oplus L^2_1(Y;i\R)\subset \T_{1,\beta}\oplus L^2_1(Y;i\R).
\end{equation*}
This, together with the $L^2_1$-compatibility of the chart, implies that the estimate
\begin{equation*}
\|((\beta+v)^{\nu},c)\|^2_{L^2_1(Y)}\leq C'\big(\|\mathbf{d}^*_{\beta}v\|^2+\|\mathbf{d}_{\beta+v}c\|^2+\|(\mathrm{grad}\CSd)^{n}(\beta+v)\|^2\big)
\end{equation*}
holds in some $L^2_1$ neighborhood of $(\beta,0)$. On the other hand, there is a neighborhood of $\beta$ and a costant $C''$ such that for $\beta+w$ in this neighborhood one has
\begin{equation}\label{normalbound}
\|(\mathrm{grad}\CSd)^{\nu}(\beta+v)\|^2\leq C''\|\mathrm{grad}\CSd(\beta+v)\|^2,
\end{equation}
hence we obtain the required estimate.
\end{proof}

\begin{proof}[Proof of Proposition \ref{nearconstestimate}]
Following the proof of Lemma $13.4.4$, we obtain an identity of the form
\begin{align*}
2\big(\CSd(s_1)-\CSd(s_2)\big)&=\\
&\begin{aligned}
=&\int_I\big( \| \frac{d}{ds}\check{\gamma}(s)\|^2+\|\frac{d}{ds}c(s)\|^2\big)ds \\
& + \int_I \big(\|\mathbf{d}^*_{\beta}(\check{\gamma}(s)-\beta)\|^2+\|\mathbf{d}_{\check{\gamma}(s)}c(s)\|^2+\|\mathrm{grad}\CSd(\check{\gamma}(s))\|^2\big)ds   \\
& +\int_I \big\langle (\check{\gamma}-\beta)\sharp c,\frac{d}{ds} \check{\gamma}\big\rangle ds
\end{aligned}
\end{align*}
where $\sharp$ is a bilinear operator involving only pointwise multiplication.
By the previous lemma (which can be applied to the restriction because $k\geq 2$) the second term on the right hand side bounds from above
\begin{equation*}
\int_I \|\big(\check{\gamma}^{\nu}(s), c(s)\big)\|^2_{L^2_1(Y)}ds.
\end{equation*}
We claim that, after possibly rescaling the neighborhood of the critical point $\beta$, we can obtain an estimate of the form
\begin{equation}\label{normtan}
\|\frac{d}{ds} \check{\gamma}^{t}(s)\|_{L^2}\leq K_0\|\check{\gamma}^{\nu}(s)\|_{L^2_1(Y)}
\end{equation}
for some $K_0>0$. From this, we see that the inequality can be rearranged (using that in dimension $4$ the $L^4$ norm is controlled by the $L^2_1$ norm) to be
\begin{equation*}
\|(\gamma-\gamma_{\beta})^{\nu}\|^2_{L^2_1(I\times Y)}\leq C' \big (\CSd(s_1)-\CSd(s_2)\big)+ K\|\gamma-\gamma_{\beta}\|_{L^2_1(I\times Y)}\|(\gamma-\gamma_{\beta})^{\nu}\|^2_{L^2_1(I\times Y)},
\end{equation*} 
hence we obtain the result by restricting to a suitably small neighborhood of $\gamma_{\beta}$.
\par
To prove the estimate (\ref{normtan}), we just notice that for a suitably small neighborhood of $\beta$ we have for every small $\varepsilon>0$ the estimate
\begin{multline*}
\|\check{\gamma}^{t}(s+\varepsilon)-\check{\gamma}^{t}(s)\|_{L^2(Y)}\leq\int_{s}^{s+\varepsilon} \|(\mathrm{grad}\CSd)^t(\check{\gamma}(s))\|_{L^2(Y)} ds\\
\leq C''\int_{s}^{s+\varepsilon} \|\mathrm{grad}\CSd(\check{\gamma}(s))\|_{L^2(Y)} ds\leq K_0\int_{s}^{s+\varepsilon} \|\check{\gamma}^{\nu}(s)\|_{L^2_1(Y)}ds.
\end{multline*}
where the last inequality comes from the fact that $\CSd$ is constant along the critical submanifold, and the fact that the gradient of $\CSd$ is $C^1$ in the $L^2_1$ topology. Then our desired inequality follows by dividing both sides by $\varepsilon$ and taking the limit for $\varepsilon$ going to zero.
\end{proof}

\vspace{1.5cm}
So far we have treated the case of configurations spaces on a finite cylinder $Z=I\times Y$. In the case of an infinite cylinder, the situation is a little different (see Section $13.1$ in the book). For any interval $I\subset\R$, one introduces the configuration space
\begin{align*}
\tilde{\Co}_{k,\mathrm{loc}}^{\tau}(I\times Y)&=\{(A,s,\phi)\mid \|\check{\phi}(t)\|_{L^2(Y)}=1\text{ for every }t\in I\}\\&
\subset \A_{k,\mathrm{loc}}(I\times Y)\times L^2_{k,\mathrm{loc}}(I;\R)\times L^2_{k,\mathrm{loc}}(I\times Y; S^+),
\end{align*}
where
\begin{equation*}
\A_{k,\mathrm{loc}}(I\times Y)=A_0+L^2_{k,\mathrm{loc}}(I\times Y; iT^*Z)
\end{equation*}
for any smooth spin$^c$ connection $A_0$. We consider on this space the topology of $L^2_k$ convergence on compact subsets (it is clear that for $I$ compact this definition coincides with the usual one). Furthermore, we define the closed subspace
\begin{equation*}
\Ct_{k,\mathrm{loc}}(I\times Y)\subset \tilde{\Co}_{k,\mathrm{loc}}^{\tau}(I\times Y)
\end{equation*}
consisting of configurations $(A,s,\phi)$ with $s\geq 0$. The appropriate gauge group to consider is $\G_{k+1,\mathrm{loc}}(I\times Y)$, the group of $L^2_{k+1,\mathrm{loc}}$ maps with values in the circle $S^1\subset \C$. The quotient spaces will be denoted by
\begin{align*}
\Bo_{k,\mathrm{loc}}^{\tau}(I\times Y)&=\Co_{k,\mathrm{loc}}^{\tau}(I\times Y)/\G_{k+1,\mathrm{loc}}(I\times Y)  \\
\tilde{\Bo}_{k,\mathrm{loc}}^{\tau}(I\times Y)&=\tilde{\Co}_{k,\mathrm{loc}}^{\tau}(I\times Y)/\G_{k+1,\mathrm{loc}}(I\times Y).
\end{align*}
\par
Suppose a perturbation $\q_0\in\mathcal{P}$ is fixed so that all critical points of $(\mathrm{grad}\CSd)^{\sigma}$ are Morse-Bott singularities. The choice of $\q_0$ determines the perturbed $4$-dimensional Seiberg-Witten equations, which is a section $\F_{\q}^{\tau}$ of the map
\begin{equation*}
\V_{k-1,\mathrm{loc}}^{\tau}(I\times Y)\rightarrow\tilde{\Co}_{k,\mathrm{loc}}^{\tau}(I\times Y).
\end{equation*}
Here the fiber at $\gamma=(A_0,s_0,\phi_0)$ is the subspace
\begin{equation*}
\V_{k-1,\mathrm{loc},\gamma}^{\tau}\subset L^2_{k-1,\mathrm{loc}}(I\times Y; i\mathfrak{su}(S^+))\oplus L^2_{k-1,\mathrm{loc}}\oplus L^2_{k-1,\mathrm{loc}}(I\times Y; S^-)
\end{equation*} 
consisting of triples $(a,s,\phi)$ with
\begin{equation*}
\mathrm{Re}\langle \check{\phi}_0(t),\check{\phi}(t)\rangle_{L^2(Y)}=0
\end{equation*}
for every $t\in I$. It is important to remark that here we do not use the language of vector bundles, as $\V_{k-1,\mathrm{loc}}^{\tau}$ is not locally trivial in any straightforward way.

\vspace{0.8cm}

If $\bcr\in\Cs_k(Y)$ is a critical point, then the corresponding translation invariant configuration $\gamma_{\bcr}$ is a solution of the equations, i.e. $\F_{\q}^{\tau}(\gamma_{\bcr})=0$, and we write $[\gamma_{\bcr}]$ for its gauge-equivalence class. We say that a configuration $[\gamma]\in\tilde{\Bo}^{\tau}_{k,\mathrm{loc}}$ is \textit{asymptotic} to $[\bcr]$ as $t\rightarrow\pm\infty$ if
\begin{equation*}
[\tau_t^*\gamma]\rightarrow [\gamma_{\bcr}]\quad\text{in }\Bt_{k,\mathrm{loc}}(Z)\text{ as }t\rightarrow\pm\infty
\end{equation*}
where
\begin{align*}
\tau_t&:Z\rightarrow Z\\
(s,y)&\mapsto(s+t,y)
\end{align*}
is the translation map. We will respectively write
\begin{equation*}
\lim_{\rightarrow}[\gamma]=[\bcr]\quad\text{and}\quad \lim_{\leftarrow}[\gamma]=[\bcr].
\end{equation*}

\begin{defn}Suppose $[\Cr_-]$ and $[\Cr_+]$ are critical submanifolds for $(\mathrm{grad}\CSd)^{\sigma}$. We write $M([\Lr_-],[\Lr_+])$ for the space of all configurations $[\gamma]$ in $\Bt_{k,\mathrm{loc}}(Z)$ which are asymptotic to a point in $[\Lr_-]$ for $t\rightarrow-\infty$, asymptotic to a point in $[\Lr_+]$ for $t\rightarrow+\infty$ and solve the perturbed Seiberg-Witten equations:
\begin{equation*}
M([\Lr_-],[\Lr_+])=\big\{[\gamma]\in\Bt_{k,\mathrm{loc}}(Z)\mid\F_{\q}^{\tau}(\gamma)=0,\lim_{\leftarrow}[\gamma]\in[\Lr_-], \lim_{\rightarrow}[\gamma]\in[\Lr_+]\big\}.
\end{equation*}
We refer to this as a \textit{moduli space of trajectories} on the cylinder $Z=\R\times Y$. We can similarly define the subset $\tilde{M}([\Lr_-],[\Lr_+])$ of the large space $\tilde{\Bo}_{k,\mathrm{loc}}^{\tau}(Z)$. Given open subsets $[\mathfrak{U}_{\pm}]\subset [\Cr_{\pm}]$ we write 
\begin{equation*}
M([\mathfrak{U}_-], [\mathfrak{U}_+])\subset M([\Lr_-],[\Lr_+])
\end{equation*}
for the subspace of configurations which are asymptotic at $\pm\infty$ to the critical points in the open set $[\mathfrak{U}_{\pm}]$.
\end{defn}

Notice that we suppressed the value of $k$ from our notation. Because of the bootstrapping properties of the Seiberg-Witten equations, the space is essentially independent of the choice of $k$. This is stated precisely in the next lemma (see Proposition $13.1.2$ in the book).
\begin{lemma}
Let $M([\Lr_-],[\Lr_+])_k$ temporarily denote the moduli space $M([\Lr_-],[\Lr_+])\subset \Bt_{k,\mathrm{loc}}(Z)$.
\begin{enumerate}
\item If $[\gamma]$ is in $M([\Lr_-],[\Lr_+])_k$, then there is a gauge representative $\gamma\in \Ct_{k,\mathrm{loc}}(Z)$ which is $C^{\infty}$ on $Z$;
\item The naturally induced bijections $M([\Lr_-],[\Lr_+])_{k_1}\rightarrow M([\Lr_-],[\Lr_+])_{k_2}$ for all $k_1,k_2\geq2$ are homeomorphisms.
\end{enumerate}
\end{lemma}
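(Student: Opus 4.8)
The plan is to follow the proof of Proposition $13.1.2$ in the book, which is the Morse case of exactly this statement; I expect it to carry over with only cosmetic changes, since the assertion is local on $Z$ and the asymptotic conditions $\lim_{\leftarrow}[\gamma]\in[\Lr_-]$, $\lim_{\rightarrow}[\gamma]\in[\Lr_+]$ play no role in the regularity of a solution over a finite subcylinder. Thus the fact that the critical sets are now submanifolds rather than isolated points is irrelevant here. Everything rests on the ellipticity of the perturbed Seiberg-Witten equations modulo gauge together with the mapping properties of $\hat{\q}$ collected in Definition \ref{tamepert}.

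For part $(1)$, I would fix $[\gamma]\in M([\Lr_-],[\Lr_+])_k$ and a compact subcylinder $Z'=[a,b]\times Y$, and first apply a gauge transformation in $\G_{k+1}(Z')$ putting $\gamma|_{Z'}$ into a local Coulomb gauge relative to a smooth configuration; the unique continuation result of Chapter $7$ in the book guarantees that $\|\check{\phi}(t)\|_{L^2(Y)}$ stays bounded away from $0$, so one remains in the $\tau$-model and its coordinates inherit the regularity of the underlying configuration. The pair consisting of $\F^{\tau}_{\q}=0$ and the gauge-fixing equation is then a first-order elliptic system, and conditions $(1)$--$(3)$ of Definition \ref{tamepert} are precisely what is needed to bootstrap: $\gamma$ improves from $L^2_k$ to $L^2_{k+1}$ on a slightly smaller subcylinder, hence to $L^2_\ell$ for all $\ell$, hence to $C^\infty$. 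Covering $Z$ by such subcylinders and patching the local gauge transformations --- whose transition maps are smooth because they intertwine smooth configurations, by the usual regularity of gauge-group elements --- should then produce a global representative $\gamma\in\Ct_{k,\mathrm{loc}}(Z)$ that is $C^\infty$ on all of $Z$.

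For part $(2)$, part $(1)$ already shows that for every $k\geq2$ the space $M([\Lr_-],[\Lr_+])_k$ consists of the same gauge-equivalence classes of smooth solutions, so the induced maps are bijections. Continuity of $M([\Lr_-],[\Lr_+])_{k_1}\to M([\Lr_-],[\Lr_+])_{k_2}$ for $k_1\geq k_2$ is immediate from the definition of the topologies ($L^2_k$-convergence on compact sets). For the inverse I would take a sequence $[\gamma_n]\to[\gamma]$ in the $L^2_{k_2}$-topology, choose gauge representatives on a compact subcylinder sitting in a fixed slice through $\gamma$ so that $\gamma_n\to\gamma$ in $L^2_{k_2}$ there, and then apply the elliptic estimate for the gauge-fixed linearization of $\F^{\tau}_{\q}$ at $\gamma$ to the differences $\gamma_n-\gamma$: since every $\gamma_n$ and $\gamma$ solves the same equation, the nonlinear remainder is controlled by the $L^2_{k_2}$ norms of the differences, which converge to $0$, so one gains a derivative, and iterating up to $k_1$ and exhausting $Z$ by compacta gives $L^2_{k_1}$-convergence on compact sets.

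I do not expect a genuine obstacle. The only points requiring care are the gauge-fixing and patching in the $\mathrm{loc}$ framework and the presence of the constraint $s\geq0$ defining $\Ct_{k,\mathrm{loc}}$ (so that one works in the larger Hilbert manifold $\tilde{\Co}^{\tau}_{k,\mathrm{loc}}(Z)$ while fixing gauge and estimating), both of which are handled exactly as in Sections $9$ and $13$ of the book. The genuinely new difficulties of the Morse-Bott theory --- non-compactness of the critical sets, fibered products over the evaluation maps, boundary-obstructed trajectories --- enter only in the later sections on the structure and compactification of the moduli spaces, not in this regularity lemma.
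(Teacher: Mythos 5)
Your proposal is correct and coincides with the paper's treatment: the paper simply invokes Proposition $13.1.2$ of the book, exactly on the grounds you identify, namely that the statement is a local regularity and bootstrapping assertion in which the Morse--Bott nature of the critical submanifolds plays no role. Your sketch of the gauge-fixing, elliptic bootstrap via the tameness conditions, and the elliptic estimate for the homeomorphism claim is the standard argument behind that cited result.
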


If $[\gamma]\in M([\Lr_-],[\Lr_+])$, then there is a corresponding (smooth) path $[\check{\gamma}]$ in $\Bs_k(Y)$ approaching $[\Lr_-]$ and $[\Lr_+]$ at the two ends. Hence we can decompose the space according to the relative homotopy class of the path
\begin{equation*}
z\in\pi_1(\Bs_k(Y),[\Lr_-],[\Lr_+])
\end{equation*}
and write
\begin{equation*}
 M([\Lr_-],[\Lr_+])=\bigcup_{z} M_z([\Lr_-],[\Lr_+]).
\end{equation*}
The set of homotopy classes is an affine space on $H^1(Y;\Z)$, the component group of the gauge group. Furthermore, we have the continuous \textit{evaluation maps}
\begin{IEEEeqnarray*}{c}
\ev_+:M([\Lr_-],[\Lr_+])\rightarrow [\Lr_+] \\
\ev_-:M([\Lr_-],[\Lr_+])\rightarrow [\Lr_-]
\end{IEEEeqnarray*}
obtained by sending a solution to its limit points at $\pm\infty$,
\begin{equation*}
\ev_{\pm}[\gamma]=\lim_{t\rightarrow \pm\infty}[\check{\gamma}(t)].
\end{equation*}
\\
\par
While the definition of the moduli space we have just given will be useful when studying compactness issues, it will not fit when dealing with transversality problems, because the spaces involved are not Banach manifolds in any natural way. Furthermore, unlike the case treated in the book, our operators are not well behaved on the $L^2_k$ spaces on the cylinder because we are dealing with Morse-Bott singularities. In particular, their linearization is not Fredholm on such spaces. We tackle the problem in the following way, see for example \cite{Don} and \cite{MMR}. Recall that for a vector bundle $E\rightarrow Z$ which is the pullback of a bundle on $Y$ the definition of the \textit{weighted Sobolev space} $L^2_{k,\delta}(Z;E)$ with weight $\delta\in\R^{>0}$. Pick a smooth function $f:\R\rightarrow\R^{>0}$ such that
\begin{equation*}
f(t)=e^{\delta|t|}\qquad\text{for }|t|>>0.
\end{equation*}
Then $L^2_{k,\delta}(Y;E)$ is the space $f^{-1}L^2_k(Z;E)$:
\begin{equation}\label{weightedsp}
s\in L^2_{k,\delta}(Z;E) \text{ if and only if } f\cdot s\in L^2_k(Z;E).
\end{equation}
In this case, the norm on $L^2_{k,\delta}(Y;E)$ is defined so that the multiplication by $f$ is an isometry $L^2_{k,\delta}\rightarrow L^2_k$. Furthermore, different functions define equivalent norms. It may be useful sometimes to use the equivalent norm 
\begin{equation}\label{weightedspequiv}
\|s\|_{k,\delta}^2=\int_Z f^2\left(|s|^2+|\nabla s|^2+\cdots +|\nabla^k s|^2\right)d\mathrm{vol}.
\end{equation}

With this in mind, the following embedding and multiplication results are straightforwardly adapted from the unweighted case (see Theorems $13.2.1$ and $13.2.2$ in the book).
\begin{prop}
There is a continuous inclusion
\begin{equation*}
L^p_{k,\delta}(Z)\hookrightarrow L^q_{l,\delta'}(Z)
\end{equation*}
for $k\geq l$, $\delta'\leq\delta$, $p\leq q$ and $(k-n/p)\geq (l-n/q)$, with the further assumption that if the last inequality is an equality $1<p\leq q<\infty$. This embedding is compact if and only if $\delta\>\delta'$.
\end{prop}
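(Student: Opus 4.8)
The plan is to deduce both statements from the corresponding unweighted results on the cylinder (Theorem $13.2.1$ in the book) by conjugating with the weight function, together with elementary facts about multiplication operators. First I would fix, once and for all, for each $\delta>0$ a smooth positive function $f_\delta$ on $Z=\R\times Y$ with $f_\delta(t,y)=e^{\delta|t|}$ for $|t|\gg0$ and $|\nabla^j f_\delta|\le C_j f_\delta$ for all $j$; the Leibniz rule then shows that the norm $\|s\|_{k,\delta}$ of (\ref{weightedspequiv}) is equivalent to $\|f_\delta s\|_{L^p_k}$, so multiplication by $f_\delta$ is a bounded isomorphism $L^p_{k,\delta}(Z)\to L^p_k(Z)$ with bounded inverse (and likewise for $(l,q,\delta')$). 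The same estimates applied to $g=f_{\delta'}/f_\delta$ — which for $\delta'\le\delta$ is smooth, bounded, with $|\nabla^j g|\le C_j$ for all $j$, and decays like $e^{-(\delta-\delta')|t|}$ at infinity — show that multiplication by $g$ is bounded on every $L^q_m(Z)$.

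With these preliminaries, the continuous inclusion follows by factoring it, for $s\in L^p_{k,\delta}(Z)$, as $s\mapsto f_\delta s\mapsto f_\delta s\mapsto g\,(f_\delta s)=f_{\delta'}s$, where the first arrow is the isomorphism $L^p_{k,\delta}\cong L^p_k$, the middle arrow is the unweighted embedding $L^p_k(Z)\hookrightarrow L^q_l(Z)$ under the stated hypotheses $k\ge l$, $p\le q$, $k-n/p\ge l-n/q$ (with $1<p\le q<\infty$ in the borderline case), and the last arrow is multiplication by $g$ followed by the isomorphism $L^q_l\cong L^q_{l,\delta'}$. Here the only input beyond the conjugation trick is the unweighted cylinder embedding, which is where the bounded geometry of $Z$ enters: one covers $Z$ by the locally finite family of boxes $[m,m+1]\times Y$, applies the local Sobolev embedding on each with uniform constants, and sums using $\ell^p(\Z)\hookrightarrow\ell^q(\Z)$.

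For the compactness statement I would prove the two implications separately. If $\delta>\delta'$: given a bounded sequence $(s_i)$ in $L^p_{k,\delta}(Z)$, its restrictions to each truncated cylinder $Z_R=[-R,R]\times Y$ are bounded in $L^p_k(Z_R)$ (the weight is bounded above and below there), so by Rellich--Kondrachov on the compact manifold-with-boundary $Z_R$ — valid since $k-n/p>l-n/q$, which holds in the cases of interest, in particular whenever $p=q=2$ and $k>l$ — and a diagonal argument over $R\to\infty$ we may pass to a subsequence converging in $L^q_{l,\mathrm{loc}}(Z)$. The exponential gain $f_{\delta'}/f_\delta=e^{-(\delta-\delta')|t|}$ on $\{|t|\ge R\}$ then bounds the $L^q_{l,\delta'}$-norm of the tail of $s_i-s_j$ by a constant times $e^{-(\delta-\delta')R}$, uniformly in $i,j$; combined with convergence on $Z_R$ this shows the subsequence is Cauchy in $L^q_{l,\delta'}(Z)$. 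Conversely, if $\delta=\delta'$ the embedding is not compact: for a fixed nonzero section $\phi$ supported in $[0,1]\times Y$ and the normalized translates $\psi_n=e^{-\delta n}\,\tau_n^*\phi$, one checks that $(\psi_n)$ is bounded in $L^p_{k,\delta}(Z)$ while $\|\psi_n\|_{L^q_{l,\delta'}(Z)}$ is bounded away from zero and $\|\psi_n-\psi_m\|_{L^q_{l,\delta'}(Z)}$ is bounded below once $|n-m|\ge2$ (disjoint supports), so no subsequence can converge.

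The only real obstacle is the tail estimate in the compactness argument — making precise that the decay of the weight ratio upgrades the merely continuous local-to-global embedding to a compact one, and recording the (mild) strictness hypothesis $k-n/p>l-n/q$ that the local Rellich step needs; everything else is a routine transcription of the arguments of the book through conjugation by $f_\delta$.
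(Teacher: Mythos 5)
Your proof is correct and follows essentially the same route the paper intends: the paper offers no detailed argument, stating only that the result is "straightforwardly adapted from the unweighted case" (Theorems $13.2.1$ and $13.2.2$ in the book), and your conjugation by the weight function $f_\delta$ is exactly the adaptation used throughout the paper (cf.\ the trick of equation (\ref{weightedtrick})), with the tail estimate and translation counterexample handling compactness in the standard way. Your remark that the local Rellich step needs strict inequality in the Sobolev indices is a fair and correctly flagged refinement of the statement as written.
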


\begin{prop}
Suppose $\delta+\delta'\geq\delta''$, $k,l\geq m$ and $1/p+1/q\geq 1/r$, with $p,q,r\in(1,\infty)$. Then the multiplication
\begin{equation*}
L^p_{k,\delta}(Z)\times L^q_{l,\delta'}(Z)\rightarrow L^r_{m,\delta''}(Z)
\end{equation*}
is continuous in any of the following three cases:
\begin{enumerate}
\item $\mathrm{(a)}$ $(k-n/p)+(l-n/q)\geq m-n/r$, and \\
$\mathrm{(b)}$ $k-n/p<0$, and \\
$\mathrm{(c)}$ $l-n/q<0$;\\
or
\item $\mathrm{(a)}$ $\mathrm{min}\{(k-n/p),(l-n/q)\}\geq m-n/r$, and  \\
$\mathrm{(b)}$ either $k-n/p>0$ or $l-n/q>0$;\\
or
\item $\mathrm{(a)}$ $\mathrm{min}\{(k-n/p),(l-n/q)\}\geq r-n/m$, and  \\
$\mathrm{(b)}$ either $k-n/p=0$ or $l-n/q=0$.
\end{enumerate}
When the map is continuous, it is a compact operator as a function of $g$ for fixed $f$ provided $l>m$ and $l-n/q>m-n/r$.
\end{prop}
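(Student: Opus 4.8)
The plan is to reduce the weighted statement to the unweighted multiplication theorem (Theorem $13.2.2$ in the book) by absorbing the weights into the two factors and then removing them again. For each weight $\delta>0$ fix, once and for all, a smooth positive function $f_\delta\colon\R\to\R^{>0}$ with $f_\delta(t)=e^{\delta|t|}$ for $|t|$ large, and regard it as a function on $Z=\R\times Y$; by the definition of the weighted norm recalled in $(\ref{weightedsp})$, multiplication by $f_\delta$ is an isometry $L^p_{k,\delta}(Z)\to L^p_k(Z)$ for all $p$ and $k$. Given $u\in L^p_{k,\delta}(Z)$ and $v\in L^q_{l,\delta'}(Z)$, write
\begin{equation*}
f_{\delta''}\cdot(uv)=g\cdot\big((f_\delta\,u)(f_{\delta'}\,v)\big),\qquad g:=\frac{f_{\delta''}}{f_\delta\,f_{\delta'}}.
\end{equation*}
The exponent hypotheses in cases $(1)$, $(2)$, $(3)$ are verbatim those of the unweighted theorem, so under any of them $(f_\delta u)(f_{\delta'}v)\in L^r_m(Z)$, with norm controlled by $\|f_\delta u\|_{L^p_k}=\|u\|_{L^p_{k,\delta}}$ and $\|f_{\delta'}v\|_{L^q_l}=\|v\|_{L^q_{l,\delta'}}$.

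The one thing that still needs checking is that multiplication by $g$ is a bounded operator on $L^r_m(Z)$. The function $g$ depends only on the $\R$-coordinate $t$; it is smooth and positive, and for $|t|$ large it equals $e^{(\delta''-\delta-\delta')|t|}$, which by the hypothesis $\delta+\delta'\ge\delta''$ is bounded (indeed non-increasing in $|t|$), as are all of its $t$-derivatives. Hence $g$ together with all its covariant derivatives on the product cylinder $Z$ is uniformly bounded, so by the Leibniz rule $\|g\,w\|_{L^r_m}\le C\,\|w\|_{L^r_m}$. The same applies to $f_{\delta''}^{-1}$, so multiplication by it is an isometry $L^r_m(Z)\to L^r_{m,\delta''}(Z)$. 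Composing the three maps yields $uv\in L^r_{m,\delta''}(Z)$ together with the bilinear estimate, i.e. continuity.

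For the compactness clause, fix $u\in L^p_{k,\delta}(Z)$ and factor the map $v\mapsto uv$ as
\begin{equation*}
L^q_{l,\delta'}(Z)\;\xrightarrow{\;\cdot f_{\delta'}\;}\;L^q_l(Z)\;\xrightarrow{\;\cdot(f_\delta u)\;}\;L^r_m(Z)\;\xrightarrow{\;\cdot\,(g\,f_{\delta''}^{-1})\;}\;L^r_{m,\delta''}(Z),
\end{equation*}
where the outer two arrows are isometries and, by the compactness part of the unweighted theorem (which holds exactly when $l>m$ and $l-n/q>m-n/r$), the middle arrow is a compact operator once the factor $f_\delta u\in L^p_k(Z)$ is fixed. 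A compact operator pre- and post-composed with bounded operators is compact, which gives the statement.

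The whole argument is essentially bookkeeping; the only place where real hypotheses enter is the uniform boundedness of the ratio $g=f_{\delta''}f_\delta^{-1}f_{\delta'}^{-1}$ and its derivatives on the noncompact manifold $Z$, which is exactly where $\delta+\delta'\ge\delta''$ is used and which is immediate since these weight functions depend on the single variable $t$. Thus there is no genuine obstacle beyond keeping the weights straight.
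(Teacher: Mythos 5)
Your reduction to the unweighted multiplication theorem via the isometries $u\mapsto f_\delta u$ and the bounded multiplier $g=f_{\delta''}f_\delta^{-1}f_{\delta'}^{-1}$ (bounded with all derivatives precisely because $\delta+\delta'\geq\delta''$) is correct, and it is exactly the ``straightforward adaptation'' of Theorem $13.2.2$ of the book that the paper invokes without writing out; the compactness clause also goes through as you say, since compact operators composed with bounded ones remain compact. The only cosmetic slip is calling the final arrow $w\mapsto g\,f_{\delta''}^{-1}w$ an isometry --- it is merely bounded --- which does not affect the argument.
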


\vspace{0.5cm}

Suppose we are given critical submanifolds $[\Lr_-]$ and $[\Lr_+]$. Our aim is to define the space of trajectories connecting these two submanifolds that converge exponentially fast up to gauge transformation. In order to do so, first choose two contractible open sets $[\mathfrak{U}_-]\subset [\Lr_-]$ and $[\mathfrak{U}_+]\subset [\Lr_+]$ that admit smooth lifts $\mathfrak{U}_-$ and $\mathfrak{U}_+$. We can choose these open sets and their lifts so that the latter are contained in the slice passing through configurations $\bcr_{\pm}\in\Cs_k(Y)$ with $[\bcr_{\pm}]\in[\mathfrak{U}_{\pm}]$. Choose then a smooth family of smooth base configurations $\{\gamma_0\}$ parametrized by
\begin{equation*}(\acr_-,\acr_+)\in\mathfrak{U}_-\times\mathfrak{U}_+
\end{equation*}
such that $\gamma_0(\acr_-,\acr_+)$ agrees near $\pm\infty$ with the translation invariant configuration $\gamma_{\acr_-}$ and $\gamma_{\acr_+}$ respectively. Choosing the lifts in the appropriate component of the gauge group orbit, we can arrange that the family $[\gamma_0]$ defines any given relative homotopy class $z\in\pi_1(\Bs_k(Y),[\Lr_-],[\Lr_+])$. As $\Ct_{k,\mathrm{loc}}(Z)$ is a subset of an affine space
\begin{equation*}
\tilde{\Co}^{\tau}_{k,\mathrm{loc}}\subset\A_{k,\mathrm{loc}}(Z)\times L^2_{k,\mathrm{loc}}(\R,\R)\times L^2_{k,\mathrm{loc}}(Z;S^+),
\end{equation*}
we can interpret the difference of two configurations as elements of the vector space
\begin{equation*}
L^2_{k,\mathrm{loc}}(Z;iT^*Z)\times L^2_{k,\mathrm{loc}}(\R,\R)\times L^2_{k,\mathrm{loc}}(Z;S^+),
\end{equation*}
and it makes sense to ask for the $L^2_{k,\delta}$ norm to be finite. Hence we can introduce the configuration space $\Ct_{k,\delta}(\mathfrak{U_-},\mathfrak{U_+})$ defined as
\begin{multline*}
\{ \gamma\in\Ct_{k,\mathrm{loc}}(Z)\mid\gamma-\gamma_0(\acr_-,\acr_+)\in L^2_{k,\delta}(Z;iT^*Z)\times L^2_{k,\delta}(\R,\R)\times L^2_{k,\delta}(Z;S^+)\\ \text{ for some }(\acr_-,\acr_+)\in \mathfrak{U}_-\times\mathfrak{U}_+\}.
\end{multline*}
It is clear that such a definition is independent of the choice of the family of base configurations $\{\gamma_0\}$. Similarly we can introduce the larger space $\tilde{\Co}^{\tau}_{k,\delta}(\mathfrak{U_-},\mathfrak{U_+})$ as a subspace of $\tilde{\Ct}_{k,\mathrm{loc}}(Z)$. We also introduce the gauge group $\G_{k+1,\delta}(Z)$ which is the subgroup of $\G_{k+1,\mathrm{loc}}(Z)$ that preserves $\Ct_{k,\delta}(\mathfrak{U}_-,\mathfrak{U}_+)$. This group has the following simple characterization, whose proof is readily adapted from the one of Lemma $13.3.1$ in the book using the multiplication theorem for weighted Sobolev spaces we discussed above.
\begin{lemma}
The group $\G_{k+1,\delta}(Z)$ is independent of $\mathfrak{U}_-$ and $\mathfrak{U}_+$ and can be described as
\begin{equation*}
\G_{k+1,\delta}(Z)=\{u:Z\rightarrow S^1\mid 1-u\in L^2_{k+1,\delta}(Z;\C)\}
\end{equation*}
Furthermore, its component group is $\Z$, the identification given by the winding number of the map $t\mapsto u(t, y_0)$ for any fixed basepoint $y_0$.
\end{lemma}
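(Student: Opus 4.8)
The plan is to adapt the proof of Lemma~13.3.1 in the book. The new feature here is that the critical submanifolds $[\Lr_-]$ and $[\Lr_+]$ are positive-dimensional, so a gauge transformation preserving $\Ct_{k,\delta}(\mathfrak{U}_-,\mathfrak{U}_+)$ could a priori shift the asymptotic value of a trajectory within $\mathfrak{U}_\pm$; ruling this out will be the one genuinely new ingredient, everything else being a weighted-Sobolev version of the argument in the book.

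First I would prove the inclusion $\{u\mid 1-u\in L^2_{k+1,\delta}(Z;\C)\}\subseteq\G_{k+1,\delta}(Z)$. Such a $u$ is continuous and tends to $1$ at both ends of $Z$, since $2(k+1)>4$. From the weighted multiplication theorem one reads off that $L^2_{k+1,\delta}$ is a Banach algebra, that $1+L^2_{k+1,\delta}$ is closed under inversion, and that $L^2_{k,\delta}$ is a module over it; hence for any $\gamma$ with $\gamma-\gamma_0(\acr_-,\acr_+)\in L^2_{k,\delta}$ the difference $u\cdot\gamma-\gamma$, whose connection component is $-u^{-1}du$ and whose spinor component is $(u-1)\phi$, again lies in $L^2_{k,\delta}$, so $u$ preserves $\Ct_{k,\delta}(\mathfrak{U}_-,\mathfrak{U}_+)$. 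The same estimates show these $u$ form a group; since this description makes no reference to $\mathfrak{U}_\pm$, the asserted independence of $\mathfrak{U}_\pm$ will follow once the reverse inclusion is established.

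For the reverse inclusion, suppose $u\in\G_{k+1,\mathrm{loc}}(Z)$ preserves $\Ct_{k,\delta}(\mathfrak{U}_-,\mathfrak{U}_+)$, and apply it to a base configuration $\gamma_0(\acr_-,\acr_+)$ with $\acr_\pm$ in the chosen lifts sitting inside the slices $\Sl^{\sigma}_{k,\bcr_\pm}$; then $u\cdot\gamma_0(\acr_-,\acr_+)-\gamma_0(\acr'_-,\acr'_+)\in L^2_{k,\delta}$ for some $\acr'_\pm\in\mathfrak{U}_\pm$. On a half-cylinder $\{t\ge T\}\times Y$, where the two base configurations are translation-invariant, the connection component gives $u^{-1}du=(A_{\acr_+}-A_{\acr'_+})+L^2_{k,\delta}$; as the translation-invariant $1$-form on the right has no $dt$-component, the $dt$-component of $u^{-1}du$ decays exponentially, so $u(t,\cdot)$ converges exponentially in $L^2(Y)$ to some $u_+\colon Y\to S^1$, and letting $t\to\infty$ in the connection and spinor components yields $u_+\cdot\acr_+=\acr'_+$. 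Here is the key point: both $\acr_+$ and $\acr'_+$ lie in the single slice $\Sl^{\sigma}_{k,\bcr_+}$, which is a local chart for $\Bs_k(Y)$ near $[\bcr_+]$, so $\acr_+=\acr'_+$; hence $u_+$ stabilizes $\acr_+$, and because the spinor of $\acr_+$ has unit $L^2$ norm this stabilizer is trivial, forcing $u_+=1$ (whence also $A_{\acr_+}-A_{\acr'_+}=0$). The same argument at $-\infty$ gives $u_-=1$; then on each half-cylinder $u^{-1}du\in L^2_{k,\delta}$ with $u\to 1$, and a weighted Poincaré estimate together with $u=e^{\log u}$ near the ends upgrades the conclusion to $1-u\in L^2_{k+1,\delta}(Z)$, the compact part being automatic. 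I expect the production of the limiting gauge transformations $u_\pm$ and the proof that they are trivial to be the main obstacle: this is precisely where the Morse-Bott structure is used — via injectivity of the slice chart and triviality of stabilizers in the blow-up — and it is exactly the step that was vacuous in the book.

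It remains to compute $\pi_0\G_{k+1,\delta}(Z)$. Since every such $u$ tends to $1$ at both ends and $Z$ deformation retracts onto $\{t\}\times Y$ while $u|_{\{t\}\times Y}$ is null-homotopic for $|t|$ large, $u|_{\{t\}\times Y}$ is null-homotopic for all $t$. Hence $u$ admits a logarithm $u=e^{\xi}$ on $Z$ precisely when the winding number $w(u)$ of $t\mapsto u(t,y_0)$ — a well-defined integer, independent of $y_0$ as $Y$ is connected, and additive in $u$ — vanishes; in that case $\xi\in L^2_{k+1,\delta}(Z;i\R)$ and $s\mapsto e^{s\xi}$ joins $u$ to the identity inside $\G_{k+1,\delta}(Z)$. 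Thus $w$ embeds the component group into $\Z$, and it is onto because a compactly supported $u$ with $t\mapsto u(t,y_0)$ of degree one realizes a generator.
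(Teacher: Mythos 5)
Your proof is correct and follows exactly the route the paper takes: the paper offers no argument beyond ``readily adapted from Lemma 13.3.1 in the book using the multiplication theorem for weighted Sobolev spaces,'' and your write-up is precisely that adaptation. The one genuinely new Morse--Bott point --- that the limiting gauge transformation carries a point of the lift $\mathfrak{U}_{\pm}$ to another point of the same lift, hence fixes it, and is then trivial by freeness of the action on the blow-up --- is identified and handled correctly (only minor wording: $u$ always admits a logarithm on $Z$; what $w(u)=0$ buys is a logarithm decaying at both ends, which is what you in fact use).
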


We can then define the quotient spaces
\begin{IEEEeqnarray*}{c}
\Bt_{k,\delta,z}([\mathfrak{U}_-],[\mathfrak{U}_+])=\Ct_{,\delta}k(\mathfrak{U}_-,\mathfrak{U}_+)/\G_{k+1,\delta}(Z) \\
\tilde{\Bo}^{\tau}_{k,\delta,z}([\mathfrak{U}_-],[\mathfrak{U}_+])=\tilde{\Co}^{\tau}_{k,\delta}(\mathfrak{U}_-,\mathfrak{U}_+)/\G_{k+1,\delta}(Z) 
\end{IEEEeqnarray*}
where we have chosen the lifts $\mathfrak{U}_-$ and $\mathfrak{U}_+$ so that a path connecting them projects to a path in the class $z$. To within a canonical identification, $\Bt_{k,\delta,z}([\mathfrak{U}_-],[\mathfrak{U}_+])$ is independent of the choice of these smooth lifts. Also, if $[\mathfrak{U}_-']\subset[\mathfrak{U}_-]$ and $[\mathfrak{U}_+']\subset[\mathfrak{U}_+]$ we have a natural inclusion
\begin{equation*}
\Bt_{k,\delta,z}([\mathfrak{U}_-]',[\mathfrak{U}_+'])\hookrightarrow \Bt_{k,\delta,z}([\mathfrak{U}_-],[\mathfrak{U}_+]),
\end{equation*}
and an analogous one for the larger spaces with tildes. Using these identifications we can define the spaces of configurations 
\begin{IEEEeqnarray*}{c}
\Bt_{k,\delta,z}([\Lr_-],[\Lr_+])=\coprod \Bt_{k,\delta,z}([\mathfrak{U}_-],[\mathfrak{U}_+])/\sim \\
\tilde{\Bo}^{\tau}_{k,\delta,z}([\Lr_-],[\Lr_+])=\coprod \tilde{\Bo}^{\tau}_{k,\delta,z}([\mathfrak{U}_-],[\mathfrak{U}_+])/\sim
\end{IEEEeqnarray*}
and their union over all relative homotopy classes
\begin{IEEEeqnarray*}{c}
\Bt_{k,\delta}([\Lr_-],[\Lr_+]) =\bigcup_z\Bt_{k,\delta,z}([\Lr_-],[\Lr_+]) \\
\tilde{\Bo}^{\tau}_{k,\delta}([\Lr_-],[\Lr_+])=\bigcup_z\tilde{\Bo}^{\tau}_{k,\delta,z}([\Lr_-],[\Lr_+]).
\end{IEEEeqnarray*}

\vspace{0.8cm}
The key result of the section is the following.
\begin{teor}\label{modsame}
For a fixed tame perturbation $\q_0$ such that all singularities are Morse-Bott there exists a $\delta>0$ with this property. For any two critical submanifolds $[\Lr_-]$ and $[\Lr_+]$ and any $\gamma\in \Ct_{k,\mathrm{loc}}(Z)$ representing an element $[\gamma]\in M_z([\Lr_-],[\Lr_+])$, and let $\bcr_{\pm}$ be suitable lifts of $\ev_{\pm}([\gamma])$ as above. Then there is a gauge transformation $u\in \G_{k+1,\mathrm{loc}}$ such that $u\cdot \gamma\in \Ct_{k,\delta}(\bcr_-, \bcr_+)$, and if $u'$ is another such a gauge transformation, then $u^{-1}u'\in\G_{k+1}$. The resulting bijection is a homeomorphism
\begin{equation*}
M_z([\Lr_-],[\Lr_+])\rightarrow \left\{[\gamma]\in\Bt_{k,z,\delta}([\Lr_-],[\Lr_+])\mid\F_{\q}^{\tau}(\gamma)=0\right\}.
\end{equation*}
Finally, if the statement holds for $\delta$, then it holds for every $0<\delta'<\delta$.
\end{teor}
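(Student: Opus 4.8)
This is the Morse-Bott analogue of the identification theorem for nondegenerate critical points in Chapter~13 of the book, and the plan is to follow the same scheme; the one genuinely new feature is that the exponential-decay estimate must be split into a statement about the directions normal to the critical submanifolds and one about the directions tangent to them. First I would fix the weight $\delta$: by Lemma~\ref{hessian}, at any point $\acr$ of any critical submanifold the extended Hessian $\widehat{\Hess}^{\sigma}_{\q,\acr}$ is Fredholm of index zero, has real spectrum, and has kernel exactly $\T_{\acr}\Cr$; since there are only finitely many critical submanifolds and the nonzero part of each spectrum is bounded away from $0$, I can choose $\delta>0$ strictly less than the absolute value of every nonzero eigenvalue that occurs, and small enough for the constants below.

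The core of the argument is to prove that every $[\gamma]\in M_z([\Lr_-],[\Lr_+])$ converges, up to gauge, to its limiting critical points exponentially at rate $\delta$. Working near $+\infty$ (the case $-\infty$ being symmetric): the asymptotic condition means $[\tau_t^*\gamma]\to[\gamma_{\bcr_+}]$ in $\Bt_{k,\mathrm{loc}}$, so for $T\gg 0$ the path $[\check\gamma(t)]$ stays in an arbitrarily small neighborhood of $[\Lr_+]$, and after passing to the Coulomb--Neumann slice through a lift $\bcr_+$ of $\ev_+([\gamma])$, Proposition~\ref{nearconstestimate} gives on each $[T,T+1]$
\begin{equation*}
\|\gamma^{\nu}\|^2_{L^2_1([T,T+1]\times Y)}\leq C_0\big(\CSd(\check\gamma(T))-\CSd(\check\gamma(T+1))\big).
\end{equation*}
Next I would use that nondegeneracy of the normal Hessian forces, near $[\Lr_+]$, a quadratic estimate $\CSd(\check\gamma(t))-\CSd([\Lr_+])\leq C\|\mathrm{grad}\CSd(\check\gamma(t))\|^2$ of exactly the type of (\ref{normalbound}); combined with $\tfrac{d}{dt}\CSd(\check\gamma(t))=-\|\mathrm{grad}\CSd(\check\gamma(t))\|^2$ this is a differential inequality yielding $\CSd(\check\gamma(t))-\CSd([\Lr_+])\leq C_1 e^{-2\delta t}$ (the Morse-Bott form of Proposition~13.5.2 in the book; in the reducible case one argues with $\Lambda_{\q}$ alongside $\CSd$, as in the blown-up near-constant estimates). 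Substituting back shows $\|\gamma^{\nu}\|_{L^2_1([T,T+1]\times Y)}=O(e^{-\delta T})$, and then estimate~(\ref{normtan}) gives $\|\tfrac{d}{dt}\check\gamma^{t}\|_{L^2([T,T+1]\times Y)}=O(e^{-\delta T})$ as well, so the tangent part $\check\gamma^{t}(t)$ converges in $L^2(Y)$, exponentially at rate $\delta$, to a single critical point, which I take to be $\bcr_+$. A weighted elliptic bootstrap on the unit cylinders $[T,T+1]\times Y$, using the weighted embedding and multiplication theorems stated above to absorb the nonlinear terms, then upgrades $L^2$-exponential decay to $L^2_{k,\delta}$-decay of $\gamma-\gamma_0(\bcr_-,\bcr_+)$.

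With the decay in hand I would finish as in the book. Applying the gauge transformation that puts $\gamma$ in temporal gauge near the ends and in the chosen slices produces an element of $\Ct_{k,\delta}(\bcr_-,\bcr_+)$, hence a well-defined map $M_z([\Lr_-],[\Lr_+])\to\{[\gamma]\in\Bt_{k,z,\delta}([\Lr_-],[\Lr_+])\mid\F_{\q}^{\tau}(\gamma)=0\}$; the inverse is tautological, since an element of $\Bt_{k,z,\delta}$ is a solution which is $L^2_{k,\delta}$-close to a configuration asymptotic to $[\Lr_\pm]$ and hence itself lies in $M_z$. The ambiguity of the gauge transformation is controlled exactly as in Lemma~13.3.1 of the book: a transformation $w$ relating two such normalizations preserves $\Ct_{k,\delta}(\bcr_-,\bcr_+)$ and is asymptotic to $1$ at both ends, so $w\in\G_{k+1}$ by the characterization of the weighted gauge group; continuity in both directions is immediate because in both source and target the topology is $L^2_k$-convergence on compacta together with tail control that the decay estimate makes uniform. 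The last assertion is then easy: for $0<\delta'<\delta$ one has $\Ct_{k,\delta}(\bcr_-,\bcr_+)\subset\Ct_{k,\delta'}(\bcr_-,\bcr_+)$, while any solution lying in $\Bt_{k,z,\delta'}$ is asymptotic to $[\Lr_\pm]$, hence lies in $M_z$, hence by the case already proved actually decays at rate $\delta$; so the two solution sets agree and the $\delta'$-bijection is the same map.

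\textbf{The main obstacle} is the exponential-decay step. Unlike the nondegenerate case, the spectral gap of the Hessian controls only the normal component of a trajectory, so one must argue separately that the tangent component converges \emph{at all} to a single point of the critical submanifold and then that it does so exponentially — this is precisely the role of estimate~(\ref{normtan}) bounding $\tfrac{d}{dt}\check\gamma^{t}$ by $\check\gamma^{\nu}$. Arranging that the constants coming from Proposition~\ref{nearconstestimate}, the quadratic estimate near the critical set, and (\ref{normtan}) are all compatible with a single rate $\delta$ that then survives the bootstrap to $L^2_{k,\delta}$ is where the estimates have to be carried out with care.
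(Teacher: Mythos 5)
Your proposal is correct and follows essentially the same route as the paper: the near-constant estimate of Proposition~\ref{nearconstestimate} on unit cylinders, exponential decay of $\CSd$ via the quadratic estimate and differential inequality (the paper's Lemma~\ref{expdecay}), the bound~(\ref{normtan}) on the derivative of the tangent part to pin down and exponentially control the limit on the critical submanifold, a bootstrap to $L^2_{k,\delta}$, and gluing of the slice-wise gauge transformations as in the book. The one point you only gesture at is the blown-up reducible limit, where the decay downstairs is not enough and one must separately prove exponential decay of $\Lambda_{\q}$ along the trajectory (the paper's Lemma~\ref{expint}), which requires its own differential-inequality argument.
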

Before proving the theorem, we discuss a key auxiliary result (see Proposition $13.5.1$ in the book).
\begin{lemma}\label{expdecay}
For every solution $\gamma^{\tau}\in \Ct_{k,\mathrm{loc}}$ of the perturbed Seiberg-Witten equations on $[0,\infty)\times Y$ asymptotic to a Morse-Bott critical point $[\bcr]\in\Cs_k(Y)$ there exists a $t_0$ such that for all $t\geq t_0$
\begin{equation*}
\CSd(\gamma^{\tau}(t))-\CSd(\bcr)\leq Ce^{-\delta t}
\end{equation*}
where $C=\CSd(\gamma^{\tau}(t_0))-\CSd(\bcr)$. Furthermore, if a tame perturbation $\q_0$ such that all singularities are Morse-Bott is fixed, then such a $\delta>0$ can be chosen uniformly for all critical points.
\end{lemma}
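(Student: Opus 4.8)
The plan is to reduce the statement to a multiplicative decay estimate $g(t+1)\leq\theta\,g(t)$ for some fixed $\theta<1$, where $g(t):=\CSd(\gamma^{\tau}(t))-\CSd(\bcr)$, and then iterate. First I would record three elementary facts: $g$ is non-increasing, since $\CSd$ decreases along the perturbed blown-up gradient flow; $g(t)\to 0$ as $t\to\infty$, since $[\gamma^{\tau}]$ is asymptotic to $[\bcr]$ and $\CSd$ is continuous and constant along the critical submanifold $\Cr$; and hence $g(t)\geq 0$ for all $t$. Next I would fix $t_0$ large enough that for $t\geq t_0$ the path $\gamma^{\tau}(t)$ stays in the gauge-invariant neighborhood of $\gamma_{\bcr}$ furnished by (the blown-up form of) Proposition~\ref{nearconstestimate}, and, after a gauge transformation putting the restricted solution in the Coulomb--Neumann slice, inside the domain of a fixed $L^2_k$-compatible product chart around $\bcr$ (Lemma~\ref{compchart}); through that chart one has the tangent and normal components $v^{t}(t),v^{\nu}(t)$ of $\gamma^{\tau}(t)$.

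Then, for $t\geq t_0$, I would combine two inputs on the interval $[t,t+2]$. On one hand, Proposition~\ref{nearconstestimate} (in its blown-up form, and in the reducible version when $\bcr$ is reducible) gives
\begin{equation*}
\|\gamma^{\nu}\|^2_{L^2_1([t,t+2]\times Y)}\ \leq\ C_0\,\big(g(t)-g(t+2)\big).
\end{equation*}
On the other hand, since $\CSd$ is constant along $\Cr$ and its gradient vanishes there, Taylor-expanding $\CSd$ in the normal directions about the nearby critical point in $\Cr$ (the one singled out by the tangent component of $\gamma^{\tau}(t+1)$ in the chart) gives $g(t+1)=\tfrac12\langle\Hess^{\sigma,n}_{\q}\,v^{\nu}(t+1),v^{\nu}(t+1)\rangle_{L^2(Y)}+O(\|v^{\nu}(t+1)\|^3)$, so in particular $g(t+1)\leq C\,\|v^{\nu}(t+1)\|^2_{L^2_{1/2}(Y)}$; together with the standard restriction (trace) estimate $\|v^{\nu}(t+1)\|_{L^2_{1/2}(Y)}\leq c\,\|\gamma^{\nu}\|_{L^2_1([t,t+2]\times Y)}$ this yields $g(t+1)\leq c'\,\|\gamma^{\nu}\|^2_{L^2_1([t,t+2]\times Y)}$. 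Combining the two and using $g(t+2)\leq g(t+1)$ (monotonicity), I get $g(t+1)\leq c'C_0\big(g(t)-g(t+1)\big)$, i.e.\ $g(t+1)\leq\theta\,g(t)$ with $\theta=c'C_0/(1+c'C_0)<1$.

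Iterating gives $g(t_0+n)\leq\theta^{\,n}g(t_0)$ for every integer $n\geq0$, and interpolating between consecutive integers using that $g$ is non-increasing turns this into $\CSd(\gamma^{\tau}(t))-\CSd(\bcr)\leq C\,e^{-\delta(t-t_0)}$ for $t\geq t_0$, with $\delta=-\log\theta>0$; the routine bookkeeping then gives the stated form. For the uniformity clause I would note that the set of critical points is compact (Corollary~$10.7.4$ of the book) and the Morse-Bott singularities are isolated (Definition~\ref{Morsebotts} and the remark following it), so there are only finitely many critical submanifolds; each produces a positive $\delta$ by the above, and one takes the minimum. The step I expect to be the main obstacle is the comparison between $g$ and the normal part of the configuration, and in particular the reducible case: there $\CSd$ alone does not control $v^{\nu}$ --- the spinor-sphere direction enters only in terms of order $r^2|\psi|^2$ and is thus invisible to the quadratic part of $\CSd$ --- so one must run the Taylor/near-constant argument for the combination of $\CSd$ and $\Lambda_{\q}$ that \emph{does} control $v^{\nu}$ (compare the reducible form of Proposition~\ref{nearconstestimate}), using that $\Lambda_{\q}$ is nowhere zero on a reducible critical submanifold (Lemma~\ref{nonzerolambda}), and then check that exponential decay of that combination still forces the stated decay of $\CSd(\gamma^{\tau}(t))-\CSd(\bcr)$.
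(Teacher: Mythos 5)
Your strategy is viable, but it is organized differently from the paper's proof. The paper works entirely downstairs and proves a pointwise-in-time differential inequality: since $\CSd$ is $C^2$ on $\Co_1(Y)$ and constant along the critical submanifold, one has $|\CSd(\beta+w)-\CSd(\beta)|\leq C_1\|(\beta+w)^{\nu}\|^2_{L^2_1(Y)}$, while the non-degeneracy of the normal Hessian gives $\|\mathrm{grad}\CSd(\beta+w)\|^2_{L^2}\geq C_2'\|(\beta+w)^{\nu}\|^2_{L^2_1(Y)}$ in the Coulomb slice; hence along the blown-down trajectory $\frac{d}{ds}\CSd(\check{\gamma}(s))=-\|\mathrm{grad}\CSd(\check{\gamma}(s))\|^2_{L^2}\leq-\delta\bigl(\CSd(\check{\gamma}(s))-\CSd(\beta)\bigr)$ with $\delta=C_2'/C_1$, and uniformity comes from the compactness of the blown-down critical set. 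You keep the same two ingredients (the quadratic Taylor bound and the normal non-degeneracy, the latter packaged inside Proposition~\ref{nearconstestimate}) but replace the gradient lower bound by the four-dimensional near-constant estimate and extract a multiplicative contraction per unit time, a three-annulus-style discrete argument. Both work; the paper's is shorter and gives an explicit $\delta$, yours reuses an estimate already proved and avoids the slicewise gradient inequality.

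Two repairs are needed in your contraction step, plus one clarification. First, from $g(t+1)\leq A\,(g(t)-g(t+2))$ and monotonicity you cannot conclude $g(t+1)\leq A\,(g(t)-g(t+1))$: monotonicity gives $g(t)-g(t+2)\geq g(t)-g(t+1)$, which is the wrong direction. Instead use $g(t+2)\leq g(t+1)\leq A\,(g(t)-g(t+2))$ to get $(1+A)\,g(t+2)\leq A\,g(t)$, a contraction with step two, which serves the same purpose after iteration. Second, your Taylor bound at the middle slice is asserted in the $L^2_{1/2}(Y)$ norm so that the trace estimate applies, whereas the quadratic estimate is only available in $L^2_1(Y)$; it is cleaner to integrate the slicewise bound $g(s)\leq C_1\|\check{\gamma}^{\nu}(s)\|^2_{L^2_1(Y)}$ over $s\in[t,t+2]$ and use monotonicity of $g$, which needs nothing beyond Proposition~\ref{nearconstestimate}. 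Finally, the reducible-case obstacle you anticipate does not arise for this particular lemma: $\CSd(\gamma^{\tau}(t))-\CSd(\bcr)$ depends only on the blow-down, and Definition~\ref{Morsebotts} requires the blown-down critical submanifold to be Morse-Bott, so the entire argument (yours and the paper's) runs downstairs with Proposition~\ref{nearconstestimate} as stated; the function $\Lambda_{\q}$ enters only later, when upgrading to exponential convergence in the blow-up (Theorem~\ref{modsame} and Lemma~\ref{expint}). For the uniformity clause, note that it is the blown-down critical set that is compact and consists of finitely many submanifolds — in the blow-up there may be infinitely many — but $\delta$ depends only on the blow-down, so taking the minimum over that finite set is legitimate.
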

\begin{proof}
This is analogous to the Morse case. Let $C$ be the critical submanifold $\beta$ it belongs to, and fix a $L^2_1$-compatible product chart around $\beta$. As $\CSd$ is $C^2$ on $\Co_1(Y)$ with vanishing derivative at $\beta=(B,\Phi)$, we have for some $C_1>0$ that
\begin{equation*}
|\CSd(\beta+w)-\CSd(\beta)|= |\CSd(\beta+w)-\CSd\left((\beta+w)^{\tau})\right)|\leq C_1\|(\beta+w)^{\nu}\|_{L^2_{1,B}}^2
\end{equation*}
for all $w\in T_{\beta} \Co_1(Y)$ with $[\beta+w]$ in some $L^2_1$ neighborhood $U_1$ of $[\beta]$ in $\Bo_1(Y)$ (here we use the $L^2_1$ compatibility of the local chart used to define the normal part). Then the non-degeneracy of the normal Hessian tells us that
\begin{equation*}
\|(\mathrm{grad}\CSd)^{n}(\beta+w)\|^2_{L^2}\geq C_2\|(\beta+w)^{\nu}\|^2_{L^2_1}
\end{equation*}
for $\beta+w$ in the Coulomb slice $\Sl_{1,\beta}$ (so that $(\beta+w)^{\nu}$ is also in the Coulomb slice) and $[\beta+w]$ in some $L^2_1$ neighborhood of $[\beta]$, and, after possibly restricting to a smaller neighborhood $U_2$,
\begin{equation*}
\|\mathrm{grad}\CSd(\beta+w)\|^2_{L^2}\geq C_2'\|(\beta+w)^{\nu}\|^2_{L^2_1}
\end{equation*}
As the path $[\check{\gamma}(t)]$ converges to $[\bcr]$, we can assume that $\check{\gamma}(t)$ lies in the Coulomb slice for $t\geq t_0$, so it lies in $U_1\cap U_2$ and we have that $\CSd$ satisfies the differential inequality
\begin{equation*}
\frac{d}{ds}\CSd(\check{\gamma}(s))=-\|\mathrm{grad}\CSd(\check{\gamma}(t))\|^2_{L^2}\leq -\delta(\CSd(\check{\gamma}(s))-\CSd(\beta))
\end{equation*}
where $\delta=C_2'/C_1$. This implies the exponential decay in the statement. The fact that $\delta>0$ can be chosen uniformly for every critical point follows from the compactness of the space of solutions in the blown down setting, see Corollary $10.7.4$ in the book.
\end{proof}

\vspace{0.5cm}
\begin{proof}[Proof of Theorem \ref{modsame}]
We just need to prove that for every solution $\gamma^{\tau}$ of the perturbed equations on $[0,\infty)\times Y$ converging to a Morse-Bott critical point $[\bcr]$ there exists a $\delta$ depending only in $\bcr$ and a gauge transformation $u$ such that
\begin{equation*}
u\cdot\gamma^{\tau}-\gamma_{\bcr}\in L^2_{k,A_{\bcr}, \delta}([0,\infty)\times Y).
\end{equation*}
We first focus on the case of the solutions in the blow down.
To prove this, consider the sequence of cylinders $[i-1,i+1]\times Y$. From Proposition \ref{nearconstestimate}, we see that there is an $i_0$ such that for every $i\geq i_0$ we can find a sequence of gauge transformations 
\begin{equation*}
u_i\in \G_{k+1}([i-1,i+1]\times Y)
\end{equation*}
such that if we call $\gamma_i=u_i\cdot \gamma^{\tau}$ we have
\begin{equation*}
\|\gamma_i^{\nu}\|_{L^2_{1,A_{\beta}}([i-1,i+1]\times Y)}\leq N_i
\end{equation*}
where
\begin{equation*}
N_i=C\big(\CSd(i-1)-\CSd(i+1))^{1/2}\big).
\end{equation*}
Indeed, $u_i$ is simply the gauge transformation that puts $\gamma^{\tau}$ in Coulomb-Neumann slice on the given interval. On the other hand, the norm of the tangent part $\gamma_i^{t}$ (which is gauge invariant by definition) can be estimated as follows. As in the proof of the inequality (\ref{normtan}), after passing to a smaller neighborhood $U'\subset U$, we have that for any $\varepsilon\in[0,1]$ and $i\geq j_0$ one has that the tangent part of the path satisfies the inequality
\begin{equation*}
\|\check{\gamma}_i^{t}(i-1/2+\varepsilon)-\check{\gamma}_i^{t}(i-1/2)\|_{L^2(Y)}\leq C \int_{i-1/2}^{i-1/2+\varepsilon} \|\check{\gamma}_i^{\nu}(s)\|_{L^2_{1,B}(Y)}ds.
\end{equation*}
As before, this implies the inequality for $s$ big enough
\begin{equation*}
\|\frac{d}{ds}\check{\gamma}_i^{t}(s)\|_{L^2(Y)}\leq C\|\check{\gamma}_i^{\nu}(s)\|_{L^2_{1,B}(Y)}.
\end{equation*}
Define the quantity
\begin{equation*}
M_i=\sum_{j=i}^{\infty} N_j
\end{equation*}
Suppose we have chosen an $L^2_1$-compatible product chart so that the tangent part of a configuration is gauge invariant (see Remark \ref{tangind}). Using the triangular inequality and the fact that $\tilde{\gamma}^t$ converges to zero by hypothesis, the same inequality implies a bound of the form
\begin{multline*}
\|\check{\gamma}_i^{t}(i-1/2+\varepsilon)\|_{L^2(Y)}\leq
\\
\leq \|\check{\gamma}_i^{t}(i-1/2+\varepsilon)-\check{\gamma}_i^{t}(i+1/2)\|_{L^2(Y)}+\sum_{j=i+1}^{\infty} \|\check{\gamma}_j^{t}(j-1/2)-\check{\gamma}_j^{t}(j+1/2)\|_{L^2(Y)}
\leq C M_i
\end{multline*}
for every $\varepsilon\in[0,1]$. Furthermore as the tangent part lies in a finite dimensional submanifold, this implies (after possibly restricting the neighborhood) a bound of the form
\begin{equation*}
\|\check{\gamma}_i^{t}(i-1/2+\varepsilon)\|_{L^2_{1,B}(Y)}\leq C' M_i.
\end{equation*}
By integrating we obtain
\begin{equation*}
\|\check{\gamma}_i^{t}\|_{L^2_{1,B}([i-1,i+1]\times Y)}\leq K\int \big(\|\frac{d}{ds}\check{\gamma}_i^{t}\|_{L^2(Y)}+\|\check{\gamma}_i^{t}\|_{L^2_{1,B}(Y)}\big)\leq K' M_i,
\end{equation*}
hence
\begin{equation*}
\|\check{\gamma}_i-\gamma_{\beta}\|_{L^2_{1,A_{\beta}}([i-1,i+1]\times Y)}\leq K''M_i
\end{equation*}
and finally by a bootstrapping argument
\begin{equation}\label{lknorm}
\|\check{\gamma}_i-\gamma_{\beta}\|_{L^2_{k,A_{\beta}}([i-3/4,i+3/4]\times Y)}\leq K'''M_i.
\end{equation}
The exponential decay of $N_i$ (which follows from Lemma \ref{expdecay}) implies the exponential decay for $M_i$. We can then choose a $\tilde{\delta}>0$ small enough so that
\begin{equation*}
\sum_{i=0}^{\infty} e^{\tilde{\delta} i}M_i< \infty.
\end{equation*}
Now we just need to glue carefully all the gauge transformations $u_i$ in order to obtain one that preserves the summability property. To do so, one just applies the construction in the proof of Proposition $13.6.1$ in the book, which works verbatim.
\par
With this exponential decay in hand, the case of the blow-up (which requires additional work only when the limit is a reducible configuration) follows in a similar fashion by taking also into account the function $\Lambda_{\q}$, which can be used to give bounds analogous to Lemma $13.4.6$ in the book. The only new point here is the exponential decay of the function, which is proved the following lemma.
\end{proof}

\begin{lemma}\label{expint}
Let $\gamma^{\tau}\in \Ct_{k,\mathrm{loc}}$  a solution of the perturbed Seiberg-Witten equations on $[0,\infty)\times Y$ with $\lim_{\rightarrow}[\gamma^{\tau}]=[\bcr]$, a Morse-Bott critical point. Then there is a $\delta>0$ such that the function
\begin{equation*}
f(t)=\Lambda_{\q}(t)-\Lambda_{\q}(\bcr)
\end{equation*}
satisfies the bound $|f(t)|\leq Ce^{-\delta t}$.
\end{lemma}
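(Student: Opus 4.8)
The plan is to reduce the exponential decay of $f$ to the exponential convergence of the whole trajectory $\check{\gamma}^{\tau}(t)$ to a gauge representative of the translation-invariant configuration $\gamma_{\bcr}$, and to establish the latter by combining the exponential decay estimates already in hand with a dominant-mode analysis of the spinor. Write $\check{\gamma}^{\tau}(t)=(B(t),s(t),\psi(t))$ and $\lambda_0=\Lambda_{\q}(\bcr)$. If $\bcr$ is irreducible there is nothing new: the normal part of $\check{\gamma}^{\tau}$ decays exponentially by Lemma \ref{expdecay} together with Proposition \ref{nearconstestimate}, the tangent part decays exponentially by the argument already used in the proof of Theorem \ref{modsame} (the bound $\|\check{\gamma}_i^{t}\|_{L^2_1(Y)}\le C'M_i$), and since $\Lambda_{\q}$ is a smooth function on $\Cs_k(Y)$ the claim follows. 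So assume $\bcr$ reducible. First I would check $\lambda_0>0$: from the flow equation $\frac{d}{dt}s=-\Lambda_{\q}(t)s$ we have $s(t)=s(t_0)\exp(-\int_{t_0}^{t}\Lambda_{\q}(u)\,du)$, and since $[\check{\gamma}^{\tau}(t)]\to[\bcr]$ with $\bcr$ reducible we have $s(t)\to 0$, so $\int^{\infty}\Lambda_{\q}=+\infty$; as $\Lambda_{\q}(t)\to\lambda_0$ this forces $\lambda_0\ge 0$, and $\lambda_0\neq 0$ by Lemma \ref{nonzerolambda}. Consequently it suffices to show $\|\check{\gamma}^{\tau}(t)-\gamma_{\bcr}\|_{L^2_k(Y)}\le Ce^{-\delta t}$ for a gauge representative with $\check{\gamma}^{\tau}(t)\to\gamma_{\bcr}$, since $\bcr$ is smooth and $\Lambda_{\q}$ is smooth, hence locally Lipschitz, on $\Cs_k(Y)$.

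Next I would dispose of the connection and the $s$-coordinate. The blow-down trajectory $\pi(\check{\gamma}^{\tau})=(B,s\psi)$ is asymptotic to $\beta=\pi(\bcr)$, which lies in a reducible critical submanifold $[C]$ downstairs, and by the analysis already carried out in the proof of Theorem \ref{modsame} — in particular the $L^2_k$ estimate (\ref{lknorm}) with $M_i$ decaying exponentially — it converges to $\gamma_{\beta}$ exponentially in $L^2_k$ on unit cylinders. Since $C$ lies in the reducible locus its tangent directions are purely in the connection component, so both $B(t)-B_{\bcr}$ and the spinor $s(t)\psi(t)$ decay exponentially in Sobolev norms on the slices; as $\|\psi(t)\|_{L^2(Y)}=1$ this yields $s(t)\le Ce^{-\delta t}$. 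Thus the only remaining component is the spinor deviation $\psi(t)-\psi_{\bcr}$.

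Finally — and this is the main point — I would prove that $\psi(t)\to\psi_{\bcr}$ exponentially. Writing $\Psi(t)=s(t)\psi(t)$, the flow equation reads $\frac{d}{dt}\Psi=-D_{B(t),\q}\Psi$ up to terms of size $O\big((\|B(t)-B_{\bcr}\|+\|\Psi\|)\|\Psi\|\big)$, so modulo an exponentially small relative perturbation $\Psi$ is a trajectory of the linear flow generated by $-D_{B_{\bcr},\q}$. The hypothesis that $[\Cr]$ is a Morse-Bott singularity (Definition \ref{Morsebotts}), i.e. that the normal Hessian in the blow-up is an isomorphism along $\Cr$, forces $\lambda_0$ to be an isolated eigenvalue of the self-adjoint operator $D_{B_{\bcr},\q}$ with a definite spectral gap $\delta_0>0$ to the rest of its spectrum. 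A dominant-mode analysis — entirely parallel to the treatment of non-degenerate reducible critical points in Section $13.5$ of the book — then gives $\Psi(t)=e^{-\lambda_0 t}\big(\Psi_0+O(e^{-\delta_0 t})\big)$ with $\Psi_0$ in the $\lambda_0$-eigenspace: a component along a positive eigenvalue strictly smaller than $\lambda_0$ would make $\psi(t)=\Psi(t)/\|\Psi(t)\|_{L^2(Y)}$ converge to the wrong eigenvector, contradicting $\psi(t)\to\psi_{\bcr}$. Normalizing and using $\psi(t)\to\psi_{\bcr}$ one obtains $\|\psi(t)-\psi_{\bcr}\|_{L^2_k(Y)}\le Ce^{-\delta_0 t}$; the residual motion within the $\lambda_0$-eigenspace, which is tangent to $\Cr$ and vanishes to leading order, is controlled by a bootstrap since it is driven by quadratically and exponentially small terms, and the clean separation of this slow direction from the genuinely hyperbolic normal directions is precisely what the $L^2_k$-compatible product chart of Lemma \ref{compchart} provides — together with the observation that $\Lambda_{\q}$ is locally constant along the eigen-sphere fibers of $[\Cr]$, so only the hyperbolic normal directions contribute to $f$ at first order. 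Combining the three steps gives $\|\check{\gamma}^{\tau}(t)-\gamma_{\bcr}\|_{L^2_k(Y)}\le Ce^{-\delta t}$ and hence the statement. The genuine obstacle is this last step: pinning down the \emph{rate} of convergence of the spinor when $\lambda_0$ is a degenerate eigenvalue; for a simple eigenvalue this is the standard hyperbolic estimate of the book, and the degenerate Morse-Bott case is reduced to it via the product chart.
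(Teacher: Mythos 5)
Your reduction is to a strictly stronger statement than the lemma asks for: by passing through Lipschitz continuity of $\Lambda_{\q}$ you need the \emph{full} blow-up trajectory $\check{\gamma}^{\tau}(t)$ to converge to $\gamma_{\bcr}$ exponentially in $L^2_k$, i.e. exactly the blow-up half of Theorem \ref{modsame}, which in the paper is established \emph{using} this lemma. That is admissible only if your dominant-mode analysis supplies the spinor rate independently, and this is where the genuine gap sits. Your argument that a component of $\Psi(t)=s(t)\psi(t)$ along an eigenvalue smaller than $\lambda_0$ "would make $\psi(t)$ converge to the wrong eigenvector" only rules out the wrong mode being asymptotically dominant; it produces no rate at all. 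To get $\Psi(t)=e^{-\lambda_0 t}(\Psi_0+O(e^{-\delta_0 t}))$ you would need a genuine Gronwall/ratio argument on the spectral components of $\Psi$, with the error term $O(e^{-\delta t}\|\Psi\|)$ coming from $B(t)-B_{\bcr}$ and the nonlinearity, and with care because $\lambda_0$ is a multi-dimensional eigenvalue: the component of $\psi(t)$ \emph{inside} the $\lambda_0$-eigensphere (the tangent direction to $\Cr$) is a slow, non-hyperbolic motion whose limit and rate are not controlled by the spectral gap. Deferring this to "a bootstrap" and to the product chart of Lemma \ref{compchart} does not close it: the chart is a parametrization, not an estimate, and in the paper the tangent motion is only controlled by the delicate summation $M_i=\sum_{j\ge i}N_j$ in the proof of Theorem \ref{modsame}, which in the blow-up requires the very decay of $\Lambda_{\q}$ you are trying to prove.

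The paper's proof avoids the issue entirely and never needs a rate on $\psi(t)$. It establishes the differential inequality $\frac{d}{dt}f\le -\delta|f|+Ke^{-\delta t}$ directly, using three ingredients that are already available after the blow-down analysis: the identity $\frac{d}{dt}\Lambda_{\q}=-2\|\phi'\|^2+\langle\phi,L'\phi\rangle$ (Lemma $13.4.5$ of the book), whose second term is $O(e^{-\delta t})$ by the blow-down estimate (\ref{lknorm}); the Morse-Bott lower bound $\|\phi'\|^2\ge C\|\phi^{\nu}\|^2_{L^2_{1,B_{\bcr}}}+O(e^{-\delta t})$ in a Coulomb slice; and the fact that $\psi\mapsto\mathrm{Re}\langle\psi,D_{\q,B_{\bcr}}\psi\rangle$ has the unit eigensphere as a Morse-Bott critical manifold, so that $|f(t)|\le C'\|\phi^{\nu}\|^2+O(e^{-\delta t})$ vanishes to \emph{second} order in the normal spinor direction. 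It is precisely this quadratic vanishing that makes a Lyapunov-type argument on $f$ alone possible, with no spectral-projection analysis of $\Psi$ and no control of the tangent motion; your observation that $\Lambda_{\q}$ is constant along the eigensphere is the right starting point, but it must be used at second order in a differential inequality for $f$, not at first order through Lipschitz continuity of $\Lambda_{\q}$ applied to the full trajectory.
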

\begin{proof}
As mentioned above, we can assume that $\bcr$ is reducible. The result follows if we can prove a differential inequality of the form
\begin{equation*}
\frac{d}{dt}f\leq -\delta |f| + K e^{-\delta t}
\end{equation*}
for some $K>0$. By Lemma $13.4.5$ in the book we have the identity
\begin{equation*}
\frac{d}{dt} \Lambda_{\q}(\gamma^{\tau}(t))=-2\|\phi'\|^2+\langle \phi, L' \phi\rangle.
\end{equation*}
Notice that all three terms are gauge invariant. Working in temporal gauge, we have from the proof of Corollary $13.4.8$ in the book and the inequality (\ref{lknorm}) for the blow down of the trajectory that the norm of the second summand is $O(e^{-\delta t})$. We also have the lower bound on the first term
\begin{equation*}
\|\phi'\|^2=\|(\mathrm{grad}\CSd)^{\sigma,1}(\gamma^{\tau}(t))\|\geq C\|\phi^{\nu}\|^2_{L^2_{1,B_{\bcr}}}+O(e^{-\delta t}).
\end{equation*}
To see this, we work in the Coulomb slice $\Sl^{\sigma}_{k,\gamma_{\bcr}}$ on $[t-1,t+1]$. As in Lemma $13.4.6$ in the book we deduce from the fact that $\bcr$ is Morse-Bott and the differentiability of $(\mathrm{grad}\CSd)^{\sigma,1}$ that for all $(B,r,\phi_{\bcr}+\psi)$ in a neighborhood of $\bcr$ in $\Cs_1(Y)$ we have
\begin{equation*}
\|\psi^{\nu}\|^2_{L^2_{1,B_{\bcr}}}\leq C_1\|(\mathrm{grad}\CSd)^{\sigma,1}(B,r,\phi_{\bcr}+\psi)\|^2+K(\|B-B_{\bcr}\|^2_{L^2_1(Y)}+r^2).
\end{equation*}
The bound follows because of the exponential decay of the last term as proved in the proposition above, see equation (\ref{lknorm}).
Finally using again the exponential decay of the connection component we have for a configuration $(B,r,\phi)$ in neighborhood in $\Cs_1(Y)$ of $\bcr$ the estimate
\begin{multline*}
|\Lambda_{\q}(t)-\Lambda_{\q}(\bcr)|=|\mathrm{Re}\langle \phi,D_{\q,B}\phi\rangle-\mathrm{Re}\langle \phi_{\bcr},D_{\q,B_{\bcr}}\phi_{\bcr}\rangle|\leq \\ \leq|\mathrm{Re}\langle \phi,D_{\q,B_{\bcr}}\phi\rangle-\mathrm{Re}\langle \phi_{\bcr},D_{\q,B_{\bcr}}\phi_{\bcr}\rangle|+O(e^{-\delta t})
\leq  C' \|\phi^{\nu}\|^2_{L^2_{1,B_{\bcr}}}+O(e^{-\delta t})
\end{multline*}
where the last estimate follows from the fact that the map on the unit sphere in the $L^2$ norm
\begin{align*}
\mathbb{S}(L^2_1(Y;S))&\rightarrow \mathbb{R}\\
\psi &\mapsto \mathrm{Re}\langle \psi,D_{\q,B_{\bcr}} \psi\rangle
\end{align*}
is $C^2$ and has the unit sphere of the eigenspace to which $\phi_{\bcr}$ belongs to as a Morse-Bott critical submanifold. The claimed differential inequality follows from these two inequalities.
\end{proof}

\vspace{1.5cm}
\section{Transversality}
In this section we deal with transversality for the moduli spaces of solutions on an infinite cylinder. These are treated in the same way as in the Morse case (see Chapter 14 in the book), with the only difference that we need to work in the weighted Sobolev space setting as the extended Hessian now has kernel at the limit configurations. We start by defining the analytical setup for the problem. From now on we suppose that a tame perturbation $\q_0$ such that all critical points are Morse-Bott is fixed.
\\
\par
We recall a useful trick to study differential operators on weighted Sobolev spaces, see for example \cite{Don}). We focus on the first order case as it will be useful later. Suppose we have a vector bundle $E\rightarrow Z$ pulled back from $E\rightarrow Y$ together with a family of $k$-\textsc{asafoe} operators $\{L(t)\}$ on $E$. Then the action of the differential operator
\begin{equation*}
d/dt+L(t): L^2_{k,\delta}(Z;E)\rightarrow L^2_{k-1,\delta}(Z;E)
\end{equation*} 
is conjugated via the multiplication by $f$ (the function defining the weighted Sobolev norm, see equation (\ref{weightedsp})) to the action of the differential operator
\begin{equation}\label{weightedtrick}
d/dt+L(t)+\sigma(t): L^2_{k}(Z;E)\rightarrow L^2_{k-1}(Z;E),
\end{equation}
where again $\sigma$ is defined as $-f'/f$. In particular, we reduced to the more familiar study of the family of $k$-\textsc{asafoe} operators $\{L(t)+\sigma(t)\}$ acting on the unweighted spaces, see Chapter $14$ in the book. The key point of the introduction of the weighted spaces is that for suitable choice of $\delta$ this family of operators will be hyperbolic at the ends even though the one we started with was not.
\\
\par
Consider as in the previous section two contractible open sets $[\mathfrak{U}_-],[\mathfrak{U}_+]$ of two critical submanifolds $[\Lr_-]$ and $[\Lr_+]$. Choose smooth lifts $\mathfrak{U}_-,\mathfrak{U}_+\subset\Cs_k(Y)$ contained in the Coulomb slices $\Sl^{\sigma}_{k,\bcr_{\pm}}$ for some $\bcr_{\pm}$. Fix some $\delta>0$ so that Theorem \ref{modsame} holds. We can then consider Banach manifold of paths $\tilde{\Co}^{\tau}_{k,\delta}(\mathfrak{U}_-,\mathfrak{U}_+)$. We write $\T^{\tau}_{j,\delta}$ for the $L^2_{j,\delta}$ completion of its tangent bundle. In particular the fiber $\T^{\tau}_{j,\gamma}$ at $\gamma=(A_0,s_0,\phi_0)$ with
\begin{equation*}
\ev_{\pm}(\gamma)=\bcr_{\pm}\in \mathfrak{U}_{\pm}
\end{equation*}
is given by the subset
\begin{align*}
\left\{(a,s,\phi)\mid \mathrm{Re}\langle \phi_0(t), \phi(t)\rangle_{L^2(Y)}=0\right\}
\subset L^2_{j,\mathrm{loc}}(Z;iT^*Z)\oplus L^2_{j,\mathrm{loc}}(\R,\R)\oplus L^2_{j,loc,A_0}(Z;S^+)
\end{align*}
consisting of the configurations $v$ such that there exist $v_{\pm}\in T_{\bcr_{\pm}}\mathfrak{C}_{\pm}$ with
\begin{equation*}
v-\gamma_{v_-,v_+}\in  L^2_{j,\delta}(Z;iT^*Z)\oplus L^2_{j,\delta}(\R,\R)\oplus L^2_{j,,\delta, A_0}(Z;S^+)
\end{equation*}
where we define
\begin{equation*}
\gamma_{v_-, v_+}=\beta(-t)v_-+\beta(t)v_+
\end{equation*}
for any fixed smooth cut-off function $\beta$ such that
\begin{equation*}
\beta(s) =
\begin{cases}
0 & \text{if } x \leq 0,\\
1 & \text{if } x \geq 1.
\end{cases}
\end{equation*}
It is clear that such $v_{\pm}$ are unique and we call them the \textit{evaluations} $\ev_{\pm}(v)$. We also define the Hilbert norm of $v\in \T^{\tau}_{j,\gamma}$ as
\begin{equation*}
\|v\|^2_{L^2_{j,\delta}(Z)}:=\|v-\gamma_{\ev_{+}(v),\ev_{-}(v)}\|^2_{L^2_{j,\delta}(Z)}+\|\ev_-(v)\|^2_{L^2(Y)}+\|\ev_+(v)\|^2_{L^2(Y)}.
\end{equation*}
Different choices of the function $\beta$ clearly define equivalent norms. One can also define
\begin{equation*}
\T^{\tau}_{j,\delta,0}=\{v\mid\ev_-(v)=\ev_+(v)=0\}\subset\T^{\tau}_{j,\delta},
\end{equation*}
which is a closed subspace with finite codimension $\mathrm{dim}[\Cr_-]+\mathrm{dim}[\Cr_+]$.
\\
\par
The derivative of the gauge group action, regarded as a bundle map, is given by
\begin{align*}
\mathbf{d}^{\tau}:\mathrm{Lie}(\G_{j+1,\delta})\times \tilde{\Co}^{\tau}_{k,\delta}(\bcr_-,\bcr_+)&\rightarrow \T^{\tau}_{j,\delta}\\ (\xi,\gamma)&\mapsto (-d\xi, 0,\xi\phi_0),
\end{align*}
where $\gamma=(A_0,s_0, \phi_0)$, and the image is contained in the subspace $\T^{\tau}_{j,\delta,0}$.
For a fixed configuration $\gamma$ one can define the Coulomb gauge fixing condition
\begin{equation*}
\mathrm{Coul}^{\tau}_{\gamma}:\Co^{\tau}_{k,\delta}(\mathfrak{U}_-,\mathfrak{U}_+)\rightarrow L^2_{k-1,\delta}(Z;i\R)
\end{equation*}
given by the equation
\begin{equation*}
(A_0+a, s,\phi)\mapsto-d^*a-2\sigma(t)c+iss_0\mathrm{Re}\langle i\phi_0,\phi\rangle + i |\phi_0|^2\mathrm{Re}\left(\mu_Y(\langle i\phi_0,\phi\rangle)\right)\\
\end{equation*}
Here we are considering
\begin{equation*}
a=cdt+b.
\end{equation*}
for a time dependent imaginary valued function $c$ and a family $b$ of imaginary valued one-forms on $Y$, and $\sigma$ is the function depending only on time appearing in equation (\ref{weightedtrick}).
Notice that the image is contained in $L^2_{k-1,\delta}(Z;i\R)$ because we have chosen the lifts $\mathfrak{U}_-,\mathfrak{U}_+$ to be in a Coulomb slice and the family of functions $c$ is in $L^2_j$. For all $1\leq j\leq k$, the linearization of $\mathrm{Coul}^{\tau}_{\gamma}$ extends to the operator
\begin{equation*}
\mathbf{d}^{\tau,\dagger}_{\gamma}:\T^{\tau}_{j,\delta}\rightarrow L^2_{j-1,\delta}(Z;i\R)
\end{equation*}
given by the map
\begin{equation*}
(A_0+a, s,\phi)\mapsto-d^*a-2\sigma(t)c+is_0^2\mathrm{Re}\langle i\phi_0,\phi\rangle + i |\phi_0|^2\mathrm{Re}\left(\mu_Y(\langle i\phi_0,\phi\rangle)\right)
\end{equation*}
The definition of the Coulomb slice we have chosen differs from that in the book (in which the term with the function $\sigma$ is not present) because we are working in the weighted setting. In fact, the operator
\begin{equation*}
a\mapsto-d^*a-2\sigma(t)c
\end{equation*}
is the adjoint of the operator $-d$ in the weighted norm, as it can be easily seen by applying the trick in equation \ref{weightedtrick}.
\par
Define for a configuration $\gamma$ in $\tilde{\Co}^{\tau}_k(\mathfrak{U}_-,\mathfrak{U}_+)$ the subspaces
\begin{IEEEeqnarray*}{c}
\K^{\tau}_{j,\delta,\gamma}\subset \T^{\tau}_{j,\delta,\gamma}\\
\J^{\tau}_{j,\delta,\gamma}\subset \T^{\tau}_{j,\delta,\gamma}.
\end{IEEEeqnarray*}
to be respectively the kernel of $\mathbf{d}^{\tau,\dagger}_{\gamma}$ and the image of $\mathbf{d}^{\tau}_{\gamma}$. One then has the following result, which is the analogue of Proposition $14.3.2$ in the book.
\begin{prop}\label{closedaction}
For each $\delta>$ the subspaces $\J^{\tau}_{j,\delta,\gamma}$ and $\K^{\tau}_{j,\delta,\gamma}$ are complementary at each configuration $\gamma$, so they define a smooth bundle decomposition in closed subbundles
\begin{equation*}
\T^{\tau}_{j,\delta}=\J^{\tau}_{j,\delta}\oplus \K^{\tau}_{j,\delta}.
\end{equation*}
Furthermore, $\J^{\tau}_{j,\delta}=\J^{\tau}_{0,\delta}\cap \T^{\tau}_{j,\delta}$.
\end{prop}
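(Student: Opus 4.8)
The plan is to reduce everything to the invertibility of the composite operator $\mathbf{d}^{\tau,\dagger}_{\gamma}\circ\mathbf{d}^{\tau}_{\gamma}$, exactly in the spirit of Proposition $14.3.2$ in the book; the splitting, its smoothness in $\gamma$, and the last assertion then follow by formal manipulations. First I would record the shape of this composite. Writing $\xi=i\theta$ with $\theta$ a real function on $Z$ and $-d\xi=c\,dt+b$ with $c=-\partial_t\xi$, a short computation gives, after identifying $i\R$-valued functions with real ones,
\begin{equation*}
P_{\gamma}:=\mathbf{d}^{\tau,\dagger}_{\gamma}\mathbf{d}^{\tau}_{\gamma}=\Delta_Z+2\sigma(t)\partial_t+V_{\gamma},\qquad V_{\gamma}\theta=s_0^2|\phi_0|^2\theta+|\phi_0|^2\mu_Y\big(|\phi_0|^2\theta\big),
\end{equation*}
a second order operator $L^2_{j+1,\delta}(Z;i\R)\to L^2_{j-1,\delta}(Z;i\R)$, where $\gamma=(A_0,s_0,\phi_0)$. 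Two elementary facts I would check at the outset: $\mathbf{d}^{\tau}_{\gamma}$ is injective (a $d$-closed $\xi$ is constant, hence zero in a weighted space), and $V_{\gamma}$ is a nonnegative quadratic form, since $\langle\theta,|\phi_0|^2\mu_Y(|\phi_0|^2\theta)\rangle_{L^2(Z)}=\mathrm{vol}(Y)\int_{\R}\big(\mu_Y(|\phi_0|^2\theta)\big)^2\,dt\ge 0$.

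The heart of the argument is that $P_{\gamma}$ is an isomorphism for every $\gamma$ and every $\delta>0$. For this I would apply the conjugation trick of $(\ref{weightedtrick})$ with the specific weight $f(t)=\cosh(\delta t)$, which agrees with $e^{\delta|t|}$ up to a constant for $|t|$ large and hence defines an equivalent norm. Since $\sigma=-f'/f$ one has the identity $\sigma^2-\sigma'=f''/f\equiv\delta^2$, and since $\mu_Y$ and multiplication by functions of $t$ commute with multiplication by $f$, conjugation turns $P_{\gamma}$ into the unweighted operator $fP_{\gamma}f^{-1}=\Delta_Z+\delta^2+V_{\gamma}\colon L^2_{j+1}(Z)\to L^2_{j-1}(Z)$. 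This is formally self-adjoint, elliptic, and translation-asymptotic, with limit operators $-\partial_t^2+(\Delta_Y+V_{\bcr_{\pm}}+\delta^2)$ at $t\to\pm\infty$; the $Y$-operators $\Delta_Y+V_{\bcr_{\pm}}$ are strictly positive in all cases (when $\bcr_{\pm}$ is irreducible because $s_{\pm}>0$, and when it is reducible because $\|\phi_{\pm}\|_{L^2(Y)}=1$ forces the constant functions out of the kernel of $\Delta_Y+V_{\bcr_{\pm}}$). Hence, by the cylindrical-end elliptic theory already used in the book (Chapter $14$, applied to the associated first order system in $(\theta,\partial_t\theta)$), $fP_{\gamma}f^{-1}$ is Fredholm of index $0$ between these Sobolev spaces; it is moreover positive, since $\langle\theta,fP_{\gamma}f^{-1}\theta\rangle=\|\nabla_Z\theta\|^2+\delta^2\|\theta\|^2+\langle\theta,V_{\gamma}\theta\rangle\ge\delta^2\|\theta\|^2$, hence injective, hence an isomorphism; therefore so is $P_{\gamma}$ on the weighted spaces. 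I would point out that this works for all $\delta>0$, unlike the first order linearized operator, precisely because the second order operators $\Delta_Y+V_{\bcr_{\pm}}$ are positive with no spectral gap constraint.

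With $P_{\gamma}$ invertible the rest is formal. Given $v\in\T^{\tau}_{j,\delta,\gamma}$ (here $\mathbf{d}^{\tau,\dagger}_{\gamma}v\in L^2_{j-1,\delta}$ by the Coulomb-slice choice of lifts recorded just before the Proposition), I would set $\xi=P_{\gamma}^{-1}\mathbf{d}^{\tau,\dagger}_{\gamma}v\in L^2_{j+1,\delta}(Z;i\R)$; then $v=\mathbf{d}^{\tau}_{\gamma}\xi+(v-\mathbf{d}^{\tau}_{\gamma}\xi)$ writes $v$ as a sum of an element of $\J^{\tau}_{j,\delta,\gamma}$ and an element of $\ker\mathbf{d}^{\tau,\dagger}_{\gamma}=\K^{\tau}_{j,\delta,\gamma}$, and the decomposition is unique because $P_{\gamma}$ is injective. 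The projection $\Pi_{\J^{\tau}}=\mathbf{d}^{\tau}_{\gamma}\,P_{\gamma}^{-1}\,\mathbf{d}^{\tau,\dagger}_{\gamma}$ depends smoothly on $\gamma$ (the building blocks do, and inversion of a smooth family of isomorphisms is smooth), which gives the smooth decomposition into closed subbundles. Finally, $\J^{\tau}_{j,\delta}\subseteq\J^{\tau}_{0,\delta}\cap\T^{\tau}_{j,\delta}$ is immediate, and for the reverse inclusion: if $v\in\T^{\tau}_{j,\delta}$ with $v=\mathbf{d}^{\tau}_{\gamma}\xi$ for some $\xi\in L^2_{1,\delta}$, then $P_{\gamma}\xi=\mathbf{d}^{\tau,\dagger}_{\gamma}v\in L^2_{j-1,\delta}$, so elliptic regularity for the elliptic operator $P_{\gamma}$ bootstraps $\xi$ into $L^2_{j+1,\delta}$, whence $v\in\J^{\tau}_{j,\delta}$.

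The main obstacle is the middle paragraph: pinning down $P_{\gamma}$ explicitly (in particular the form of the zeroth-order term $V_{\gamma}$), verifying the positivity statements $V_{\gamma}\ge 0$ and $\Delta_Y+V_{\bcr_{\pm}}>0$ for both irreducible and reducible limits, and invoking the correct Fredholm and regularity package for asymptotically translation-invariant elliptic operators on the cylinder. Everything else is a routine transcription of the analogous step in the book, now carried out in the weighted spaces via the conjugation $(\ref{weightedtrick})$.
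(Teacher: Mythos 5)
Your proof is correct, and it rests on the same pivot as the paper's: as in Proposition $14.3.2$ of the book, everything is reduced to the invertibility of the composite $\mathbf{d}^{\tau,\dagger}_{\gamma}\mathbf{d}^{\tau}_{\gamma}\colon L^2_{j+1,\delta}\to L^2_{j-1,\delta}$. Where you differ is in how that invertibility (and the closedness of $\J^{\tau}_{j,\delta}$) is obtained. The paper first proves closedness of the image of $\mathbf{d}^{\tau}_{\gamma}$ by a direct coercive estimate $\|\mathbf{d}^{\tau}\xi\|^2_{L^2_{j,\delta}}\geq C\|\xi\|^2_{L^2_{j+1,\delta}}$, and then gets injectivity of the composite from the fact that the one-form part of $\mathbf{d}^{\tau,\dagger}_{\gamma}$ was built to be the adjoint of $-d$ in the weighted norm, so the usual integration-by-parts argument applies; surjectivity and the closedness of the image are quoted from the book, and the argument is run on the finite-codimension subbundle $\T^{\tau}_{j,\delta,0}$. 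You instead compute the composite explicitly as the Schr\"odinger-type operator $\Delta_Z+2\sigma(t)\partial_t+V_{\gamma}$, conjugate by the weight (using $\sigma^2-\sigma'=f''/f$) to the manifestly nonnegative unweighted operator $\Delta_Z+\delta^2+V_{\gamma}$, and obtain invertibility from coercivity together with the cylindrical Fredholm/regularity package; the closedness of $\J^{\tau}_{j,\delta}$, the smoothness of the splitting, and the identity $\J^{\tau}_{j,\delta}=\J^{\tau}_{0,\delta}\cap\T^{\tau}_{j,\delta}$ then all fall out of the bounded projection $\mathbf{d}^{\tau}_{\gamma}P_{\gamma}^{-1}\mathbf{d}^{\tau,\dagger}_{\gamma}$ and elliptic regularity. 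This buys a more self-contained and explicit verification of the key step (including the correct observation that $\mathbf{d}^{\tau,\dagger}_{\gamma}v$ lies in the weighted space even when $v$ has nonzero limits, thanks to the Coulomb-slice choice of lifts), at the cost of a computation the paper avoids; the paper's route is shorter because it leans on the weighted-adjoint structure and on the book. One cosmetic point: $\cosh(\delta t)$ is not literally equal to $e^{\delta|t|}$ for large $|t|$ as the section's convention demands, but it defines an equivalent norm and the same argument runs for any admissible $f$ with $\sigma^2-\sigma'\geq 0$ (kernel elements of the conjugated operator are then constants, which are excluded from $L^2$), so nothing is lost.
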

\begin{proof}
We can restrictict our attention to the finite codimension subbundle $\T^{\tau}_{j,\delta,0,\gamma}$
We show that $\J^{\tau}_{j,\delta}$ is a closed subspace at each point. Recall that it is defined as the image of the operator
\begin{IEEEeqnarray*}{c}
\mathbf{d}^{\tau}_{\gamma}:L^2_{j+1,\delta}(Z;i\R)\rightarrow \T^{\tau}_{j,\delta,0}\\ (\xi,\gamma)\mapsto (-d\xi, 0,\xi\phi_0).
\end{IEEEeqnarray*}
We have the estimate
\begin{align*}
\|{\mathbf{d}}^{\tau}\xi\|^2_{L^2_{j,\delta}} &= \|d\xi\|^2_{L^2_{j,\delta}}+\|\xi\phi_0\|^2_{L^2_{j,\delta}}\\
& \geq\frac{1}{2}\|d\xi\|_{L^2_{j,\delta}}^2+\frac{1}{2}\int_{\R}f^2\cdot\left( \|d_Y\xi(t)\|^2_{L^2_j(Y)}+\|\xi(t)\phi_0(t)\|^2_{L^2_j(Y)}\right) dt\\
&\geq \frac{1}{2}\|d\xi\|^2_{L^2_{j,\delta}}+C\|\xi\|^2_{L^2_{\delta}}\\
& \geq C' \|\xi\|^2_{L^2_{j+1,\delta}},
\end{align*}
which shows that the image is closed. Here we used the alternative definition of the weighted norms (see equation (\ref{weightedspequiv})).
\par
The fact that the subspaces are in direct sum follows if we show that
\begin{equation*}
\mathbf{d}^{\tau,\dagger}_{\gamma}\mathbf{d}^{\tau}_{\gamma}: L^2_{j+1,\delta}(Z;i\R)\rightarrow L^2_{j-1,\delta}(Z; i\R)
\end{equation*}
is an isomorphism. This follows because we have chosen the operator $\mathbf{d}^{\tau,\dagger}_{\gamma}$ so that its part acting on one-forms is the adjoint of $-d$ on the weighted spaces. In particular this allows to proof injectivity by the usual integration by parts argument. Finally also the surjectivity and the closedness of the image follow as the proof in the book.
\end{proof}

\begin{cor}
The moduli space of configuration $\Bt_{k,\delta}([\mathfrak{U}_-],[\mathfrak{U}_+])$ is a Hilbert manifold, and the evaluation maps $\ev_{\pm}$ are smooth.
\end{cor}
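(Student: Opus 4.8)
The plan is to obtain the manifold structure from the slice construction of Chapter $14$ in the book, with Proposition \ref{closedaction} as the essential new input, and to deduce smoothness of the evaluation maps directly from the shape of the weighted norm on $\T^{\tau}_{k,\delta}$.

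First I would check that the gauge group $\G_{k+1,\delta}(Z)$ acts freely on $\tilde{\Co}^{\tau}_{k,\delta}(\mathfrak{U}_-,\mathfrak{U}_+)$: if $u\cdot\gamma=\gamma$ for $\gamma=(A_0+a,s,\phi)$, then the connection equation forces $u^{-1}du=0$, so $u$ is constant on the connected cylinder $Z$, and then $u\phi=\phi$ together with $\|\check{\phi}(t)\|_{L^2(Y)}=1$ (hence $\phi\not\equiv0$) forces $u\equiv1$; properness and smoothness of the action are proved as in the book, using the multiplication theorems for weighted Sobolev spaces recalled above. Next, for a fixed $\gamma$ I would take the Coulomb--Neumann slice through $\gamma$ to be the zero set of $\mathrm{Coul}^{\tau}_{\gamma}$. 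Its differential at $\gamma$ is $\mathbf{d}^{\tau,\dagger}_{\gamma}$, which by the proof of Proposition \ref{closedaction} is surjective onto $L^2_{k-1,\delta}(Z;i\R)$ with kernel $\K^{\tau}_{k,\delta,\gamma}$ (surjectivity because $\mathbf{d}^{\tau,\dagger}_{\gamma}\mathbf{d}^{\tau}_{\gamma}$ is an isomorphism), so the implicit function theorem makes $\Sl^{\tau}_{k,\delta,\gamma}$ a Hilbert submanifold near $\gamma$ with tangent space $\K^{\tau}_{k,\delta,\gamma}$. The splitting $\T^{\tau}_{k,\delta}=\J^{\tau}_{k,\delta}\oplus\K^{\tau}_{k,\delta}$ of Proposition \ref{closedaction}, together with the injectivity of $\mathbf{d}^{\tau}_{\gamma}$, shows that $(u,\gamma')\mapsto u\cdot\gamma'$ from $\G_{k+1,\delta}\times\Sl^{\tau}_{k,\delta,\gamma}$ to $\tilde{\Co}^{\tau}_{k,\delta}$ has invertible differential at $(1,\gamma)$; combining this with freeness and properness, a neighbourhood of $\gamma$ in the slice maps homeomorphically onto a neighbourhood of $[\gamma]$ in the quotient, and a standard argument shows that two such charts overlap smoothly (the transition is realized by a gauge transformation depending smoothly on the base point). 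This endows $\tilde{\Bo}^{\tau}_{k,\delta}([\mathfrak{U}_-],[\mathfrak{U}_+])$ with the structure of a Hilbert manifold modeled on the fibers of $\K^{\tau}_{k,\delta}$, and $\Bt_{k,\delta}([\mathfrak{U}_-],[\mathfrak{U}_+])$ is the closed subspace cut out by the gauge-invariant condition $s\geq0$, handled in these charts exactly as in the book; this is the assertion of the corollary.

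For the evaluation maps, I would observe that in any slice chart $\ev_{\pm}$ is the restriction of the map sending $\gamma'$ to the pair $(v_-,v_+)$ in the decomposition $\gamma'-\gamma_0(\acr_-,\acr_+)=\gamma_{v_-,v_+}+(\text{an }L^2_{k,\delta}\text{ remainder})$ used to define $\tilde{\Co}^{\tau}_{k,\delta}$. On each tangent space this $(v_-,v_+)$ is, by the very construction of the weighted norm (which contains the summand $\|\ev_-(v)\|^2_{L^2(Y)}+\|\ev_+(v)\|^2_{L^2(Y)}$), a bounded linear projection onto $T_{\bcr_-}\mathfrak{C}_-\oplus T_{\bcr_+}\mathfrak{C}_+$, so $\ev_{\pm}$ is smooth with values in $\mathfrak{U}_{\pm}\subset[\Lr_{\pm}]$, and its restriction to the slice submanifold remains smooth.

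The one point that requires genuine care, rather than a transcription of the Morse case, is verifying that $\mathrm{Coul}^{\tau}_{\gamma}$ (with its $\sigma(t)$ correction term) really takes values in $L^2_{k-1,\delta}(Z;i\R)$ and not merely in $L^2_{k-1,\mathrm{loc}}(Z;i\R)$, and that $\mathbf{d}^{\tau}_{\gamma}$ and its weighted adjoint are well behaved at the ends; this rests on having chosen the lifts $\mathfrak{U}_{\pm}$ inside the three-dimensional Coulomb slices $\Sl^{\sigma}_{k,\bcr_{\pm}}$ and on the exponential decay furnished by Theorem \ref{modsame}, and it is exactly parallel to the corresponding verifications in Chapter $14$ of the book.
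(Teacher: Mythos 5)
Your proposal is correct and follows essentially the same route as the paper: Hausdorffness of the quotient plus the decomposition of Proposition \ref{closedaction} make the zero set of $\mathrm{Coul}^{\tau}_{\gamma}$ a local slice providing charts for the quotient, and smoothness of $\ev_{\pm}$ is read off from their restriction to such a slice. The extra details you supply (freeness of the action, the invertible differential of the action map, and the explicit linear-projection description of $\ev_{\pm}$ in a chart) are consistent elaborations of the same argument.
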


\begin{proof}
The fact that the quotient is Hausdorff follows as Proposition $13.3.4$ in the book. The lemma above implies that the set
\begin{equation*}
\mathcal{S}^{\tau}_{k,\delta,\gamma}=(\mathrm{Coul}^{\tau}_{\gamma})^{-1}(0)
\end{equation*}
is a slice for the gauge group action. In particular, there is a neighborhood $\mathcal{U}_{\gamma}\subset\mathcal{S}^{\tau}_{k,\delta,\gamma}$ of $\gamma$ such that the restriction of the map induced by the quotient map
\begin{equation*}
\bar{\iota}:\mathcal{S}^{\tau}_{k,\delta,\gamma}\rightarrow \tilde{\Bo}^{\tau}_{k,\delta}([\mathfrak{U}_-],[\mathfrak{U}_+])
\end{equation*}
is a diffeomorphism onto its image, which is a neighborhood of $[\gamma]\in\tilde{\mathcal{B}}^{\tau}_{k,\delta}([\mathfrak{U}_-],[\mathfrak{U}_+])$. Finally, the smoothness of the evaluation maps follows from their smoothness when restricted to a local slice.
\end{proof}

\vspace{0.8cm}
We now describe the linearization of the Seiberg-Witten equations when restricted to a slice. One can define the vector bundle
\begin{equation*}
\V^{\tau}_{j,\delta}\rightarrow \Co^{\tau}_{k,\delta}(\mathfrak{U}_-,\mathfrak{U}_+)
\end{equation*}
whose fiber over $\gamma$ is the vector space
\begin{IEEEeqnarray*}{c}
\V^{\tau}_{j,\delta,\gamma}=\{(\eta, r,\psi)\mid\mathrm{Re}\langle \phi_0(t),\phi(t)\rangle_{L^2(Y)}=0\text{ for all }t\}\\
\subset L^2_{j,\delta}(Z;i\mathfrak{su}(S^+))\oplus L^2_{j,\delta}(\R;\R)\oplus L^2_{j,\delta, A_0}(Z;S^-). 
\end{IEEEeqnarray*}
As in Lemma $14.4.1$ of the book, the perturbed Seiberg-Witten equations define a smooth section $\F^{\tau}_{\q}$ of the vector bundle
\begin{equation*}
\V^{\tau}_{k-1,\delta}\rightarrow \Co^{\tau}_{k,\delta}(\mathfrak{U}_-,\mathfrak{U}_+).
\end{equation*}
As we have chosen $\delta>0$ small enough so that Theorem \ref{modsame} holds, the moduli space of trajectories $M_z([\mathfrak{U}_-],[\mathfrak{U}_+])$ arises as the quotient of the locus
\begin{equation*}
\left\{\F^{\tau}_{\q}=0\right\}
\end{equation*}
by the action of the gauge group. In order to understand its local structure we need to study the derivative of $\F^{\tau}_{\q}$. As $\V^{\tau}_{k,\delta}$ is not a trivial vector bundle, the definition of such a derivative involves a projection. In our case, we define the projection
\begin{equation*}
\Pi^{\tau}_{\gamma}:L^2_{j,\delta}(Z;i\mathfrak{su}(S^+))\oplus L^2_{j,\delta}(\R;\R)\oplus L^2_{j,\delta, A_0}\rightarrow \V^{\tau}_{j,\delta,\gamma}
\end{equation*}
obtained by applying the $L^2$ projection on each slice $\{t\}\times Y$, that is
\begin{equation*}
\Pi^{\tau}_{\gamma}(\eta,r,\psi)= (\eta,r,\Pi^{\perp}_{\phi_0(t)}\psi)
\end{equation*}
where
\begin{equation*}
\Pi^{\perp}_{\phi_0(t)}\psi=\psi-\mathrm{Re}\langle \check{\phi}_0(t),\psi(t)\rangle_{L^2(Y)}\phi_0.
\end{equation*}
Then the derivative $\mathcal{D}\F^{\tau}_{\q}$ is defined as the derivative in the ambient Hilbert space followed by the projection $\Pi^{\tau}_{\gamma}$. Because of condition $\mathrm{(iii)}$ in the definition of a tame perturbation (Definition \ref{tamepert} in Chapter $1$), the derivative extends to smooth bundle maps 
\begin{equation*}
\mathcal{D}\F^{\tau}_{\q}:\T^{\tau}_{j,\delta}\rightarrow \V^{\tau}_{j-1,\delta}
\end{equation*}
for any $j\in[-k,k]$.
\\
\par
The restriction of the operator $\mathcal{D}\F^{\tau}_{\q}$ to the finite codimension subspace $\T^{\tau}_{j,\delta,\gamma_0,0}$ can be interpreted in the following way (we will temporarily ignore the question of regularity). Let
\begin{equation*}
\check{\gamma}_0(t)=(B_0(t), r_0(t), \phi_0(t)):\R\rightarrow \Cs(Y)
\end{equation*}
be the path defined by $\gamma_0$. The codomain of the derivative can be then interpreted as sections along $\check{\gamma}_0$ of the tangent bundle $\T^{\sigma}$, by identifying an endomorphism $\eta$ of the spin bundle $S$ and with an imaginary valued one form $b$ on $Y$ via Clifford multiplication. The domain of the operator can be similarly interpreted as the space of sections $(V,c)$ along $\check{\gamma}_0$ of the bundle
\begin{equation*}
\T^{\sigma}\oplus L^2(Y;i\R)\rightarrow \Cs(Y).
\end{equation*}
Indeed a $1$-form $a$ on $\R\times Y$ can be written as $b+cdt$ with $b$ in temporal gauge and $c$ an imaginary valued function, and hence interpreted as a pair $(b,c)$ consisting of a path $b$ of $1$-forms on $Y$ and a path $c$ of functions on $Y$.
\par
Also in this case the derivative along the path involves a projection, as the vector bundle $\T^{\sigma}(Y)$ is not trivial along the path. We set
\begin{equation*}
\frac{D}{dt}V=(\frac{db}{dt}, \frac{dr}{dt}, \Pi^{\perp}_{\phi_0(t)}\frac{d\psi}{dt})
\end{equation*}
where as before $\Pi^{\perp}_{\phi_0(t)}$ is the orthogonal projection to the orthogonal complement of $\phi_0(t)$.
\par
With these identifications, one can write the operator $\mathcal{D}\F^{\tau}_{\q}$ as 
\begin{equation*}
(V,c)\mapsto \frac{D}{dt}V+\mathcal{D}(\mathrm{grad} \CSd)^{\sigma}(V)+\mathbf{d}^{\sigma}_{\gamma_0(t)}c
\end{equation*}
where $\mathcal{D}(\mathrm{grad} \CSd)^{\sigma}$ is the derivative of the vector field $(\mathrm{grad} \CSd)^{\sigma}$ and $\mathbf{d}^{\sigma}$ is the derivative of the gauge group action on $\Cs(Y)$, see Equation (\ref{devgauges}) in Chapter $1$.
\par
We then impose a Coulomb type gauge-fixing condition, namely the linearization of the Coulomb slice condition discussed above
\begin{equation*}
\mathbf{d}^{\tau,\dagger}_{\gamma_0}(V,c)=0
\end{equation*}
which can be rephrased in our new language by the condition
\begin{equation*}
\left(\frac{d}{dt}-2\sigma(t)c\right)+\mathbf{d}^{\sigma, \dagger}_{\gamma_0(t)}(V)=0
\end{equation*}
where $\mathbf{d}^{\sigma,\dagger}$ is the linearized gauge-fixing operator on $\Cs(Y)$ defined in Section $1$. The appropriate operator to study for our problem is then
\begin{equation*}
Q_{\gamma_0}=\mathcal{D}_{\gamma_0}\F^{\tau}_{\q}\oplus \mathbf{d}^{\tau,\dagger}_{\gamma_0}:
\T^{\tau}_{j,\delta,\gamma_0,0}\rightarrow \V^{\tau}_{j-1,\delta, \gamma_0}\oplus L^2_{j-1,\delta}(Z;i\R)
\end{equation*}
which can be written in path notation if $\gamma_0$ is in path notation
\begin{equation}\label{gaugefixedeq}
(V,c)\mapsto \frac{D}{dt}(V,c)+ L_{\gamma_0(t)}(V,c).
\end{equation}
Here if $\gamma_0(t)$ is $\bcr\in \Cs(Y)$ we have that
\begin{equation}\label{weightedextendedhessian}
L_{\gamma_0(t)}=\begin{bmatrix}
\mathcal{D}_{\bcr}(\mathrm{grad}\CSd)^{\sigma} & \mathbf{d}_{\bcr}^{\sigma} \\
\mathbf{d}_{\bcr}^{\sigma,\dagger} & -2\sigma(t)
\end{bmatrix}
\end{equation}
which is obtained from the extended Hessian we have encountered before in Section $1$ (equation (\ref{exthessian}) by adding the term
\begin{equation*}
-2\sigma(t):L^2_j(Y;i\mathbb{R})\rightarrow L^2_{j-1}(Y;i\mathbb{R})
\end{equation*}
the lower right entry. We call this operator the \textit{weighted extended Hessian} at time $t$, and denote it by $L_{\bcr,t}$. The proof of Lemma \ref{hessian} carries over without modifications to show the following.
\begin{lemma}\label{weighhessian}
If $\bcr\in\Cr$ is a Morse-Bott singularity, then for any $t$ the weighted extended Hessian $L_{\bcr,t}$ is Fredholm of index $0$, has real spectrum and kernel consisting exactly of $\T_{\bcr}\Cr$.
\end{lemma}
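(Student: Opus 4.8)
The plan is to run the proof of Lemma~\ref{hessian} essentially verbatim; the only new point to check is that the extra scalar entry $-2\sigma(t)$ in the lower right corner of $L_{\bcr,t}$ (equation~(\ref{weightedextendedhessian})) affects none of the three conclusions.

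First I would observe that, for $t$ fixed, $\sigma(t)$ is a real constant, so $L_{\bcr,t}$ differs from the extended Hessian $\widehat{\Hess}^{\sigma}_{\q,\bcr}$ of~(\ref{exthessian}) only by the operator acting as multiplication by $-2\sigma(t)$ on the $L^2_k(Y;i\R)$ summand and by $0$ on $\T^{\sigma}_{k,\bcr}$. After the \textsc{asafoe} substitution~(\ref{asafoetrick}) this extra term is still multiplication by a real constant, hence a bounded self-adjoint operator on $L^2_j(Y;i\R)$ for every $|j|\le k$. Consequently $L_{\bcr,t}$ is a $k$-\textsc{asafoe} operator of exactly the same type as $\widehat{\Hess}^{\sigma}_{\q,\bcr}$, and the argument of Section~$12.4$ of the book carries over word for word to show that it is Fredholm of index $0$ and has real, discrete spectrum. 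This settles the first two assertions.

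For the kernel I would evaluate at the critical point $\bcr$ and use the decomposition $\T^{\sigma}_{k,\bcr}\oplus L^2_k(Y;i\R)=\J^{\sigma}_{k,\bcr}\oplus\K^{\sigma}_{k,\bcr}\oplus L^2_k(Y;i\R)$ from Section~$1$. Just as for $\widehat{\Hess}^{\sigma}_{\q,\bcr}$, at a critical point the coupling terms between $\J^{\sigma}_{k,\bcr}$ and $\K^{\sigma}_{k,\bcr}$ vanish, so in these coordinates
\[
L_{\bcr,t}=\begin{bmatrix}
0 & 0 & \mathbf{d}_{\bcr}^{\sigma}\\
0 & \Hess^{\sigma}_{\q,\bcr} & 0\\
\mathbf{d}_{\bcr}^{\sigma,\dagger} & 0 & -2\sigma(t)
\end{bmatrix},
\]
so that $\Hess^{\sigma}_{\q,\bcr}$ is a direct summand and it is enough to prove that the complementary block $G_t=\left[\begin{smallmatrix}0 & \mathbf{d}_{\bcr}^{\sigma}\\ \mathbf{d}_{\bcr}^{\sigma,\dagger} & -2\sigma(t)\end{smallmatrix}\right]$ on $\J^{\sigma}_{k,\bcr}\oplus L^2_k(Y;i\R)$ is invertible. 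Now $G_t$ is a compact perturbation of the block $\left[\begin{smallmatrix}0 & \mathbf{d}_{\bcr}^{\sigma}\\ \mathbf{d}_{\bcr}^{\sigma,\dagger} & 0\end{smallmatrix}\right]$ (the inclusion $L^2_k(Y;i\R)\hookrightarrow L^2_{k-1}(Y;i\R)$ being compact), and the latter is invertible by Section~$12.4$ of the book; hence $G_t$ is Fredholm of index $0$. It is also injective: if $G_t(\xi_1,\xi_2)=0$ then $\mathbf{d}_{\bcr}^{\sigma}\xi_2=0$ forces $\xi_2=0$ since the linearized gauge action is injective (the action being free), and then $\mathbf{d}_{\bcr}^{\sigma,\dagger}\xi_1=0$ forces $\xi_1=0$ since the restriction of $\mathbf{d}_{\bcr}^{\sigma,\dagger}$ to $\J^{\sigma}_{\bcr}$ is an isomorphism onto $L^2(Y;i\R)$ (again by Section~$12.4$). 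An injective Fredholm operator of index $0$ is invertible, so $\ker L_{\bcr,t}=\ker\Hess^{\sigma}_{\q,\bcr}$.

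Finally I would identify $\ker\Hess^{\sigma}_{\q,\bcr}$ with $\T_{\bcr}\Cr$: the latter is contained in the kernel because $\Cr$ consists of critical points, and the induced normal Hessian $\Hess^{\sigma,n}_{\q,\bcr}\colon\N^{\sigma}_{\bcr,k}\to\N^{\sigma}_{\bcr,k-1}$ is an isomorphism by the Morse-Bott hypothesis (Definition~\ref{Morsebotts}), so there is nothing in the kernel complementary to $\T_{\bcr}\Cr$. This finishes the argument. The only step not literally identical to the unweighted case is the invertibility of the modified gauge block $G_t$, and as shown this is immediate from the injectivity of $\mathbf{d}_{\bcr}^{\sigma}$ together with the index-$0$ Fredholm property; I do not expect any genuine obstacle.
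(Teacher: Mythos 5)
Your argument is correct and follows essentially the same route as the paper, which simply observes that the proof of Lemma \ref{hessian} carries over without modification; your only genuinely new step, the invertibility of the gauge block with the extra $-2\sigma(t)$ entry via its index-zero Fredholm property plus injectivity of $\mathbf{d}^{\sigma}_{\bcr}$ and of $\mathbf{d}^{\sigma,\dagger}_{\bcr}\lvert_{\J^{\sigma}}$, is exactly the point implicit in that remark. The identification of $\ker\Hess^{\sigma}_{\q,\bcr}$ with $\T_{\bcr}\Cr$ via the Morse-Bott condition is also as in the paper, so the proposal is fine as written.
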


\vspace{0.8cm}
We are finally ready to state the basic Fredholm property for the linearized equations on the infinite cylinder. Indeed the choice of the weighted Sobolev spaces is made so that the next result holds.
\begin{prop}\label{fredholmQ}
Suppose we are given two critical submanifolds $\mathfrak{U}_-,\mathfrak{U}_+$ in Coulomb slice. Then for each $\gamma_0\in\Ct_{k,\delta}(\mathfrak{U}_-,\mathfrak{U}_+)$ the linear operator
\begin{equation*}
Q_{\gamma_0}:T^{\tau}_{j,\delta,\gamma_0}\rightarrow \V^{\tau}_{j-1,\delta, \gamma_0}\oplus L^2_{j-1,\delta}(Z;i\R)
\end{equation*}
is Fredholm for every $1\leq j\leq k$ and satisfies the G\aa rding inequality
\begin{equation*}
\|u\|_{L^2_{j,\delta}}\leq C( \|Q_{\gamma_0}u\|_{L^2_{j-1,\delta}}+C_2\|u\|_{L^2_{j-1,\delta}}).
\end{equation*}
The index of $Q_{\gamma_0}$ is independent of $j$ and $\delta>0$ sufficiently small.
\end{prop}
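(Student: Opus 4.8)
The plan is to reduce the statement to the Fredholm theory for asymptotically translation-invariant families $\frac{d}{dt}+L(t)$ of $k$-\textsc{asafoe} operators from Chapter $14$ of the book, the only new ingredient being the passage to weighted Sobolev spaces. First I would restrict to the finite-codimension subspace $\T^{\tau}_{j,\delta,\gamma_0,0}\subset\T^{\tau}_{j,\delta,\gamma_0}$, on which $Q_{\gamma_0}$ is exactly the path operator $(V,c)\mapsto\frac{D}{dt}(V,c)+L_{\gamma_0(t)}(V,c)$ of (\ref{gaugefixedeq}); since $\T^{\tau}_{j,\delta,\gamma_0}$ is obtained from this subspace by adjoining the finite-dimensional spaces $T_{\bcr_-}\Cr_-\oplus T_{\bcr_+}\Cr_+$, Fredholmness of the full operator, and its index up to the additive constant $\dim[\Cr_-]+\dim[\Cr_+]$, follows once the claim is proved for the restriction. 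Then I would apply the conjugation trick (\ref{weightedtrick}): multiplication by the weight function $f$ identifies $\frac{D}{dt}+L_{\gamma_0(t)}$, acting between the weighted spaces $L^2_{j,\delta}$ and $L^2_{j-1,\delta}$, with the operator $\frac{D}{dt}+L_{\gamma_0(t)}+\sigma(t)$ acting between the corresponding unweighted spaces along the path $\check\gamma_0$, where $\sigma=-f'/f$ equals $-\delta$ for $t\gg0$ and $+\delta$ for $t\ll0$. By Theorem \ref{modsame} the path $\gamma_0$ converges exponentially to the translation-invariant configurations $\gamma_{\bcr_\pm}$, so $\{L_{\gamma_0(t)}+\sigma(t)\}$ is an (eventually) translation-invariant family of $k$-\textsc{asafoe} operators — the $k$-\textsc{asafoe} structure being provided fibrewise by the conversion (\ref{asafoetrick}), just as for the extended Hessian — with limits at $\pm\infty$ obtained from the weighted extended Hessians $L_{\bcr_\pm,t}$ by the scalar shift $\mp\delta$.

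The reason for introducing the weighted spaces is that these limiting operators are \emph{hyperbolic} once $\delta$ is small. By Lemma \ref{weighhessian} each $L_{\bcr_\pm,t}$ has real spectrum and is Fredholm of index $0$ with kernel exactly $\T_{\bcr_\pm}\Cr_\pm$, so $0$ is an isolated point of its spectrum; moreover the gap around $0$ is bounded below uniformly over the finitely many critical submanifolds, by the compactness of the critical set (Corollary $10.7.4$ in the book). Taking $0<\delta$ smaller than this uniform gap, the shift $\mp\delta$ moves the only potentially obstructing eigenvalue off $0$, so the limits at $\pm\infty$ are invertible. With hyperbolic limits at hand, the Fredholm property and the G\aa rding inequality follow as in Chapter $14$ of the book: $\frac{D}{dt}+L_{\gamma_0(t)}+\sigma(t)$ is a first-order elliptic operator on $Z$, yielding interior estimates, while the hyperbolicity of the limits together with the exponential convergence yields the estimates near the two ends; patching gives the a priori bound, closedness of the range, and finite-dimensionality of the kernel and cokernel. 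Conjugating back by $f^{-1}$ transplants the a priori bound into the weighted norms in the stated form.

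For the independence statements, independence of $j\in[1,k]$ is elliptic regularity: by the regularity theory for $k$-\textsc{asafoe} operators (Section $12.2$ in the book) and conditions $\mathrm{(iii)}$ and $\mathrm{(vi)}$ in Definition \ref{tamepert}, every element of $\ker Q_{\gamma_0}$ is smooth, and, via the formal adjoint, every cokernel class has a smooth representative; hence the kernel and cokernel, and so the index, do not depend on $j$. Independence of $\delta$ for $\delta$ small holds because, after identifying the weighted spaces with a fixed pair of unweighted spaces, $\delta\mapsto Q_{\gamma_0}(\delta)$ is a norm-continuous family of Fredholm operators on $(0,\delta_0)$, with $\delta_0$ the uniform spectral gap above, so its index is locally constant, hence constant on the interval.

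The step I expect to be the main obstacle is checking carefully that, after the weight conjugation, the family $\{L_{\gamma_0(t)}+\sigma(t)\}$ genuinely satisfies the hypotheses under which Chapter $14$ of the book operates — in particular that the extra zeroth-order term $\sigma(t)$ and the fact that the splitting $\T^{\tau}_{j,\delta}=\J^{\tau}_{j,\delta}\oplus\K^{\tau}_{j,\delta}$ is not $L^2$-orthogonal do not spoil the $k$-\textsc{asafoe} structure (this is exactly what (\ref{asafoetrick}) is designed to handle), and that the convergence $L_{\gamma_0(t)}\to L_{\bcr_\pm,t}$ as $t\to\pm\infty$ is in the operator norm between the relevant Sobolev spaces and at an exponential rate — which is where Theorem \ref{modsame}, and in the reducible case Lemma \ref{expint}, are used.
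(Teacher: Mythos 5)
Your proposal is correct and follows essentially the same route as the paper: restricting to the finite-codimension subspace $\T^{\tau}_{j,\delta,\gamma_0,0}$, conjugating by the weight via (\ref{weightedtrick}) to obtain $\frac{d}{dt}+L(t)+\sigma(t)$ on unweighted spaces, observing that for $\delta>0$ small the limiting family becomes hyperbolic (the paper's "sufficiently small" is exactly your spectral-gap condition, made explicit after the proposition via Lemma \ref{nonzerolambda}), and then invoking the Atiyah--Patodi--Singer argument of Theorem $14.4.2$ in the book. Your additional details on the hyperbolicity of the shifted limits and on the $j$- and $\delta$-independence of the index are consistent with, and merely expand on, what the paper leaves to the cited reference.
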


Here by sufficiently small we mean that $\delta>0$ is such that Theorem \ref{modsame} holds, and is also smaller that all values $|\Lambda_{\q}(\bcr)|$ as $\bcr$ varies among the critical submanifolds (see also Lemma \ref{nonzerolambda}).
The latter condition is included to assure invariance of the index, as it will be clear from the spectral flow interpretation. 

\begin{proof}
Here again we restrict to the finite codimension subspace $T^{\tau}_{j,\delta,\gamma_0,0}$, for which we can apply Atiyah-Patodi-Singer techniques. The family $\{L(t)\}$ of weighted extended Hessians in equation \ref{weightedextendedhessian} which is not necessarily hyperbolic at the limit points. As discussed in the introduction of the section idea is to study the operators on the weighted Sobolev spaces, and use the observation of equation $(\ref{weightedtrick})$ to obtain the operator
\begin{equation*}
\frac{d}{dt}+L(t)+\sigma(t)
\end{equation*}
acting on the unweighted spaces. But $\sigma(t)=-\delta$ for $t>>0$ and $\sigma(t)=\delta$ for $t<<0$, so for $\delta>0$ chosen small enough the family $\{L(t)+\sigma(t)\}$ is hyperbolic. Hence, we can apply the Atiyah-Patodi-Singer techniques to this family of operators and the proof of the proposition follows in the exact same way as in Theorem $14.4.2$ of the book.
\end{proof}
\begin{cor}
The restriction of the bundle map $\mathcal{D}\F^{\tau}_{\q}$
\begin{equation*}
\mathcal{D}\F^{\tau}_{\q}:\K^{\tau}_{j,\delta,\gamma}\rightarrow \V^{\tau}_{j-1,\delta, \gamma}
\end{equation*}
is Fredholm and has the same index as $Q_{\gamma}$.
\end{cor}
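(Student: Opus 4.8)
The plan is to run the standard ``reduction to a slice'' argument, exactly as in the passage from Theorem $14.4.2$ to Corollary $14.4.3$ of the book. First I would use the bundle decomposition $\T^{\tau}_{j,\delta}=\J^{\tau}_{j,\delta}\oplus\K^{\tau}_{j,\delta}$ of Proposition \ref{closedaction}, where $\J^{\tau}_{j,\delta,\gamma}$ is the image of $\mathbf{d}^{\tau}_{\gamma}$ and $\K^{\tau}_{j,\delta,\gamma}=\ker\mathbf{d}^{\tau,\dagger}_{\gamma}$, together with the splitting of the codomain of $Q_{\gamma}=\mathcal{D}\F^{\tau}_{\q}\oplus\mathbf{d}^{\tau,\dagger}_{\gamma}$ as $\V^{\tau}_{j-1,\delta,\gamma}\oplus L^2_{j-1,\delta}(Z;i\R)$. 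With respect to these splittings, $Q_{\gamma}$ has the block form
\begin{equation*}
Q_{\gamma}=\begin{bmatrix}
\mathcal{D}\F^{\tau}_{\q}|_{\J^{\tau}_{j,\delta,\gamma}} & \mathcal{D}\F^{\tau}_{\q}|_{\K^{\tau}_{j,\delta,\gamma}}\\
\mathbf{d}^{\tau,\dagger}_{\gamma}|_{\J^{\tau}_{j,\delta,\gamma}} & 0
\end{bmatrix},
\end{equation*}
where the lower-right corner vanishes because $\K^{\tau}_{j,\delta,\gamma}=\ker\mathbf{d}^{\tau,\dagger}_{\gamma}$, and the lower-left corner is an isomorphism onto $L^2_{j-1,\delta}(Z;i\R)$ since $\mathbf{d}^{\tau}_{\gamma}$ is an isomorphism onto $\J^{\tau}_{j,\delta,\gamma}$ and $\mathbf{d}^{\tau,\dagger}_{\gamma}\mathbf{d}^{\tau}_{\gamma}$ is an isomorphism (proof of Proposition \ref{closedaction}).

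Next I would perform the usual block elimination. Setting $A=\mathcal{D}\F^{\tau}_{\q}|_{\J^{\tau}_{j,\delta,\gamma}}$, $B=\mathcal{D}\F^{\tau}_{\q}|_{\K^{\tau}_{j,\delta,\gamma}}$ and $C=\mathbf{d}^{\tau,\dagger}_{\gamma}|_{\J^{\tau}_{j,\delta,\gamma}}$, with $C$ invertible, post-composition of $Q_{\gamma}$ with the invertible bundle map
\begin{equation*}
\begin{bmatrix}
\mathrm{Id} & -AC^{-1}\\
0 & \mathrm{Id}
\end{bmatrix}
\end{equation*}
of $\V^{\tau}_{j-1,\delta,\gamma}\oplus L^2_{j-1,\delta}(Z;i\R)$ produces the operator
\begin{equation*}
\begin{bmatrix}
0 & B\\
C & 0
\end{bmatrix}.
\end{equation*}
Since $C$ is an isomorphism, this last operator is Fredholm if and only if $B\colon\K^{\tau}_{j,\delta,\gamma}\to\V^{\tau}_{j-1,\delta,\gamma}$ is Fredholm, and then its index equals $\mathrm{ind}\,B+\mathrm{ind}\,C=\mathrm{ind}\,B$. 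As composition with an isomorphism affects neither the Fredholm property nor the index, $Q_{\gamma}$ and $B$ are simultaneously Fredholm with the same index. The Fredholmness of $Q_{\gamma}$ is exactly Proposition \ref{fredholmQ}, so $B=\mathcal{D}\F^{\tau}_{\q}|_{\K^{\tau}_{j,\delta,\gamma}}$ is Fredholm of index $\mathrm{ind}\,Q_{\gamma}$, as claimed.

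I do not anticipate a genuine obstacle: the whole argument is bounded-operator linear algebra carried out fiberwise over $\Ct_{k,\delta}(\mathfrak{U}_-,\mathfrak{U}_+)$, once Propositions \ref{closedaction} and \ref{fredholmQ} are available. The only points needing attention are that the block entries are honestly bounded between the relevant weighted Sobolev completions (this is where condition $\mathrm{(iii)}$ of the definition of a tame perturbation is used, precisely as for $\mathcal{D}\F^{\tau}_{\q}$ itself above), and that the reduction is uniform in $\gamma$ so that one also obtains the corresponding statement for the bundle map; both are no different from the unweighted discussion in Chapter $14$ of the book.
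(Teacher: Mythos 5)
Your argument is correct and is exactly the standard slice-reduction the paper (following Corollary 14.4.3 of the book) has in mind: the splitting $\T^{\tau}_{j,\delta}=\J^{\tau}_{j,\delta}\oplus\K^{\tau}_{j,\delta}$ of Proposition \ref{closedaction}, the vanishing of $\mathbf{d}^{\tau,\dagger}_{\gamma}$ on $\K^{\tau}_{j,\delta,\gamma}$, and the invertibility of $\mathbf{d}^{\tau,\dagger}_{\gamma}|_{\J^{\tau}_{j,\delta,\gamma}}$ reduce the corollary to Proposition \ref{fredholmQ} by block elimination. No gaps; this matches the intended proof.
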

By Atiyah-Patodi-Singer techniques the index of $Q_{\gamma}$ on the subspace $T^{\tau}_{j,\delta,\gamma_0,0}$, which we denote by $\mathrm{Ind}_0 Q_{\gamma}$, is equal to the spectral flow of the family of operators
\begin{equation*}
\left\{L_{\check{\gamma}(t)}+\sigma(t)\right\},
\end{equation*}
and it should be clear from this description that it depends only on the critical submanifolds $\Cr_{\pm}$.
We then introduce the following definition.

\begin{defn}
Given critical points $\bcr_-,\bcr_+\in\Cs_k(Y)$ which have images respectively in $[\Lr_-]$ and $[\Lr_+]$, we define 
\begin{equation*}
\gr(\bcr_-,\bcr_+)=\mathrm{Ind}_0 Q_{\gamma}+\mathrm{dim}{[\Cr_+]}.
\end{equation*}
Given $[\bcr_-],[\bcr_+]\in \Bs_k(Y)$ and a relative homotopy class  $z\in\pi_1(\Bs_k(Y), [\Lr_-], [\Lr_+])$ connecting them we also define
\begin{equation*}
\gr_z([\bcr_-],[\bcr_-])=\gr(\bcr_-,\bcr_-)
\end{equation*}
for any pair of lifts $\bcr_-,\bcr_+\in \Cs_k(Y)$ such that a path connecting them defines in the quotient the given homotopy class. We call these quantities the \textit{relative gradings} between the critical points. In the similar way we define relative gradings between critical submanifolds.
\end{defn}

The interesting point of this definition is that even though the index of $Q_{\gamma}$ is not additive in the setting of weighted Sobolev spaces, the relative grading is.
\begin{lemma}\label{additivegrading}
If $\acr,\bcr$ and $\mathfrak{c}$ are three critical points, then
\begin{equation*}
\mathrm{gr}(\acr,\mathfrak{c})=\mathrm{gr}(\acr, \bcr)+\mathrm{gr}(\bcr, \mathfrak{c}).
\end{equation*}
\end{lemma}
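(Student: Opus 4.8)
The plan is to reduce the additivity of the relative grading to the additivity of a spectral flow, using the interpretation of $\mathrm{Ind}_0 Q_\gamma$ given just above the statement. Recall that, by Proposition \ref{fredholmQ} and the Atiyah-Patodi-Singer discussion following it, $\mathrm{Ind}_0 Q_\gamma$ equals the spectral flow of the family $\{L_{\check\gamma(t)}+\sigma(t)\}$ along any path $\check\gamma$ from $\acr$ to $\mathfrak c$; and $\mathrm{gr}(\acr,\mathfrak c)=\mathrm{Ind}_0 Q_\gamma+\dim[\Cr_{\mathfrak c}]$ where $[\Cr_{\mathfrak c}]$ is the critical submanifold containing $\mathfrak c$. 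First I would choose a path $\check\gamma_1$ from $\acr$ to $\bcr$ and a path $\check\gamma_2$ from $\bcr$ to $\mathfrak c$, and form their concatenation $\check\gamma$ from $\acr$ to $\mathfrak c$. The subtlety, which the remark after the statement alludes to, is that the weight $\sigma(t)$ is $+\delta$ near $-\infty$ and $-\delta$ near $+\infty$, so the family along $\check\gamma_1$ uses weight data with $\sigma=-\delta$ at its right end, whereas the family along $\check\gamma_2$ uses $\sigma=+\delta$ at its left end — the perturbed operators $L_{\bcr}+\delta$ and $L_{\bcr}-\delta$ are genuinely different (this is exactly why $\mathrm{Ind}_0 Q_\gamma$ is not additive as an index of APS-type operators on weighted spaces).

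The key step is to compute the correction. By Lemma \ref{weighhessian} the weighted extended Hessian $L_{\bcr,t}$ is Fredholm of index $0$, has real spectrum, and kernel exactly $\T_{\bcr}\Cr_{\bcr}$; and since $\delta>0$ is chosen smaller than all $|\Lambda_{\q}(\bcr)|$, no eigenvalue of $L_{\bcr}$ other than $0$ lies in $(-\delta,\delta)$ — the only eigenvalue crossed as one interpolates the weight from $+\delta$ to $-\delta$ at a fixed $\bcr$ is the zero eigenvalue, with multiplicity $\dim \T_{\bcr}\Cr_{\bcr}=\dim[\Cr_{\bcr}]$. Hence the spectral flow of the concatenated family differs from the sum of the two spectral flows precisely by $\dim[\Cr_{\bcr}]$ (coming from the eigenvalues that cross zero when one splices $\sigma=-\delta$ on the right of $\check\gamma_1$ to $\sigma=+\delta$ on the left of $\check\gamma_2$; the sign works out so that $\mathrm{Ind}_0 Q_{\check\gamma}=\mathrm{Ind}_0 Q_{\check\gamma_1}+\mathrm{Ind}_0 Q_{\check\gamma_2}-\dim[\Cr_{\bcr}]$). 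Putting this together,
\begin{IEEEeqnarray*}{rCl}
\mathrm{gr}(\acr,\mathfrak c)&=&\mathrm{Ind}_0 Q_{\check\gamma}+\dim[\Cr_{\mathfrak c}]\\
&=&\mathrm{Ind}_0 Q_{\check\gamma_1}+\mathrm{Ind}_0 Q_{\check\gamma_2}-\dim[\Cr_{\bcr}]+\dim[\Cr_{\mathfrak c}]\\
&=&\big(\mathrm{Ind}_0 Q_{\check\gamma_1}+\dim[\Cr_{\bcr}]\big)+\big(\mathrm{Ind}_0 Q_{\check\gamma_2}+\dim[\Cr_{\mathfrak c}]\big)-2\dim[\Cr_{\bcr}]+\dim[\Cr_{\bcr}],
\end{IEEEeqnarray*}
and after regrouping this is $\mathrm{gr}(\acr,\bcr)+\mathrm{gr}(\bcr,\mathfrak c)$; the bookkeeping of the $\dim[\Cr_{\bcr}]$ terms is exactly what makes the shift by $\dim[\Cr_+]$ in the definition of $\mathrm{gr}$ the right normalization. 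Independence of the choices of paths $\check\gamma_1,\check\gamma_2$ (within the given relative homotopy classes, in the version for $\mathrm{gr}_z$) follows since spectral flow is a homotopy invariant of paths of Fredholm self-adjoint operators with fixed endpoints, as already used in the book.

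The main obstacle I expect is making the splicing argument for the spectral flow fully rigorous: one must verify that near each intermediate critical point the family can be deformed so that it is literally translation-invariant on a long neck, that the only spectral crossing introduced by changing the weight from $-\delta$ to $+\delta$ there is the zero eigenvalue of $L_{\bcr}$, and that the sign of the resulting jump is $-\dim[\Cr_{\bcr}]$. All the needed inputs — Fredholmness and the spectral description of $L_{\bcr,t}$ (Lemma \ref{weighhessian}), the smallness of $\delta$ relative to $|\Lambda_{\q}(\bcr)|$, and the APS excision/gluing formula for spectral flow — are available from the excerpt and from the book (Chapter $14$), so this is a matter of careful bookkeeping rather than a new idea; it is the direct analogue of the additivity of the relative grading in the non-degenerate case, Section $14.4$ of the book, with the dimension of the critical submanifold accounting for the dimension of the kernel that was absent there.
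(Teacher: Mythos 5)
Your overall strategy is the same as the paper's: interpret $\mathrm{Ind}_0 Q_\gamma$ as the spectral flow of the weighted family, concatenate the two paths, and account for a correction at the intermediate critical point coming from $\ker L_{\bcr}\cong T_{\bcr}\Cr_{\bcr}$. However, the sign of that correction is wrong, and this is not a harmless convention issue: with your claimed formula $\mathrm{Ind}_0 Q_{\gamma''}=\mathrm{Ind}_0 Q_{\gamma_1}+\mathrm{Ind}_0 Q_{\gamma_2}-\dim[\Cr_{\bcr}]$ and the normalization $\gr=\mathrm{Ind}_0+\dim[\Cr_+]$, one gets $\gr(\acr,\mathfrak{c})=\gr(\acr,\bcr)+\gr(\bcr,\mathfrak{c})-2\dim[\Cr_{\bcr}]$, so additivity would fail whenever the intermediate critical submanifold is positive dimensional (the essential case here). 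Your displayed chain of equalities hides this: the third line equals $\mathrm{Ind}_0 Q_{\gamma_1}+\mathrm{Ind}_0 Q_{\gamma_2}+\dim[\Cr_{\mathfrak c}]$, which is not the second line, and the final "after regrouping" step drops a further $\dim[\Cr_{\bcr}]$; two sign slips conceal the inconsistency rather than resolve it.

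The correct bookkeeping, which is what the paper's proof uses, gives a \emph{plus} sign: $\mathrm{sf}(L''_t+\sigma)=\mathrm{sf}(L_t+\sigma)+\mathrm{sf}(L'_t+\sigma)+\dim\ker L_{\bcr}$. The reason is that along the concatenated cylinder the weight changes from $+\delta$ to $-\delta$ only \emph{once}, whereas each of the two shorter families already contains its own weight change based at $\bcr$ (homotope $L_{\gamma_1(t)}+\sigma(t)$ rel endpoints to "path at weight $+\delta$, then weight change at $\bcr$", and $L_{\gamma_2(t)}+\sigma(t)$ to "weight change at $\bcr$, then path at weight $-\delta$"). Each weight change at $\bcr$ is the constant family $L_{\bcr}+\sigma(t)$ whose only eigenvalues in $(-\delta,\delta)$ are the zero eigenvalue of multiplicity $\dim[\Cr_{\bcr}]$, crossed downward, contributing $-\dim[\Cr_{\bcr}]$; since the naive sum contains two such crossings and the concatenation only one, the concatenated spectral flow exceeds the sum by $+\dim[\Cr_{\bcr}]$ (your heuristic of "splicing $\sigma=-\delta$ to $\sigma=+\delta$" imagines an extra crossing that is not present in the concatenated weight). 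With the corrected sign the computation closes immediately: $\gr(\acr,\mathfrak{c})=\mathrm{sf}(L''+\sigma)+\dim[\Cr_{\mathfrak c}]=\bigl(\mathrm{sf}(L+\sigma)+\dim[\Cr_{\bcr}]\bigr)+\bigl(\mathrm{sf}(L'+\sigma)+\dim[\Cr_{\mathfrak c}]\bigr)=\gr(\acr,\bcr)+\gr(\bcr,\mathfrak{c})$, which is exactly the paper's argument.
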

\begin{proof}This is very well understood with the interpretation of the index as spectral flow. Call the submanifolds these critical points belong to $[\Cr_{\acr}],[\Cr_{\bcr}]$ and $[\Cr_{\mathfrak{c}}]$. Then the indices of $Q_{\gamma}$ and $Q_{\gamma'}$ for some paths $\gamma,\gamma'$ connecting $\acr$ to $\bcr$ and $\bcr$ to $\mathfrak{c}$ on the right hand side can be interpreted as the spectral flow of two families of $k$-\textsc{asafoe} operators
\begin{equation*}
L_t+\sigma(t) \qquad\text{and}\qquad L'_t+\sigma(t)
\end{equation*}
where $L(t)$ and $L'(t)$ are the weighted extended Hessians along a path connecting the critical points, and
\begin{equation*}
\lim_{t\rightarrow+\infty}L_t=\lim_{t\rightarrow-\infty} L'_t=L.
\end{equation*}
On the other hand, one can concatenate the two paths to obtain a third path $\gamma''$ with associated family of operators $L''(t)$. Then
\begin{multline*}
\mathrm{gr}(\acr,\mathfrak{c})=\mathrm{sf}(L''_t+\sigma(t))+\mathrm{dim}[\mathfrak{C}_{\mathfrak{a}}]\\=\left(\mathrm{sf}(L_t+\sigma(t))+\mathrm{sf}(L'_t+\sigma(t))+\mathrm{dim}(\mathrm{ker}L)\right)-\mathrm{dim}[\mathfrak{C}_{\mathfrak{a}}]=\mathrm{gr}(\acr, \bcr)+\mathrm{gr}(\bcr, \mathfrak{c}).
\end{multline*}
The result follows as $\mathrm{ker}L$ is identified with $T_{\bcr}[\mathfrak{C}_{\mathfrak{b}}]$.
\end{proof}

The ideas in the proof above can be exploited to prove the following adaptation of Lemma $14.4.6$ in the book.
\begin{lemma}
For the closed loop $z_u$ based at $[\bcr]\in\Bs_k(Y)$, we have
\begin{equation*}
\gr_{z_u}([\bcr],[\bcr])=\left([u]\cup c_1(S)\right)[Y]
\end{equation*}
where $[u]$ denotes the homotopy class of $u:Y\rightarrow S^1$, identified with an element of $H^1(Y;\mathbb{Z})$.
\end{lemma}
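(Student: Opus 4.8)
The plan is to reduce the statement to the mapping-torus computation carried out in Lemma $14.4.6$ of the book, exploiting that the loop $z_u$ lies entirely over the single critical submanifold $[\Cr]$ through $[\bcr]$, so that no genuinely Morse--Bott phenomena enter and the only modification concerns the weighted Sobolev spaces. First I would unwind the definitions: choose a lift $\gamma$ of $z_u$, i.e. a path $\check\gamma$ in $\Co_k(Y)$ from $\bcr$ to $u\cdot\bcr$; then $\gr_{z_u}([\bcr],[\bcr])=\mathrm{Ind}_0 Q_\gamma+\dim[\Cr]$, and by the Atiyah--Patodi--Singer description used in the proofs of Lemmas \ref{additivegrading} and \ref{weighhessian}, $\mathrm{Ind}_0 Q_\gamma$ is the spectral flow of the family of weighted extended Hessians $\{L_{\check\gamma(t)}+\sigma(t)\}$. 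Along the loop $\check\gamma(t)$ stays in the gauge orbit of $\bcr$, so the limiting operators are $L_{\bcr}$ at $-\infty$ and its conjugate $u L_{\bcr}u^{-1}$ at $+\infty$, with $u$ acting on the spinor factor and trivially on forms.

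The next point is that $\gr_{z_0}([\bcr],[\bcr])=0$ for the constant loop $z_0$, which is immediate from Lemma \ref{additivegrading} (forcing $\gr_{z_0}=2\gr_{z_0}$). Concretely this records that, since $\delta$ can be taken smaller than the modulus of every nonzero eigenvalue of $L_{\bcr}$, the only eigenvalues of $\{L_{\bcr}+\sigma(t)\}$ that cross zero are the $\dim \T_{\bcr}\Cr=\dim[\Cr]$ eigenvalues arising from $\ker L_{\bcr}$ (Lemma \ref{weighhessian}), contributing $-\dim[\Cr]$ to the spectral flow and so being exactly cancelled by the $\dim[\Cr]$ shift in the definition of $\gr$. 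Writing the path of operators $\{L_{\check\gamma(t)}+\sigma(t)\}$ as the concatenation of the constant-$L$, varying-$\sigma$ segment with the varying-$L$, $\sigma\equiv-\delta$ segment, we are thus reduced to computing the spectral flow of the genuine loop of $k$-\textsc{asafoe} operators $\{L_{\check\gamma(t)}\}$ based at $L_{\bcr}$, with a small negative weight resolving the kernels at the two ends.

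For the mapping-torus computation I would proceed exactly as in the book. Since $\{L_{\check\gamma(t)}\}$ satisfies $L_{\check\gamma(+\infty)}=u L_{\check\gamma(-\infty)}u^{-1}$, an excision argument identifies this spectral flow with the index of the linearized Seiberg--Witten operator on the closed $4$-manifold $W$, the mapping torus of the gauge transformation $u$ acting on the pullback of $\spin$ to $Y$; as a manifold $W\cong S^1\times Y$, and its spin$^c$ structure $\spin_W$ restricts to $\spin$ on a fibre $\{0\}\times Y$ and has monodromy $u$ around the $S^1$ factor. Because $\chi(W)=\sigma(W)=0$, the curvature--gauge block of the deformation operator has index zero and the Atiyah--Singer index theorem (the formula for $\mathrm{Ind}\,D_A^+$ in the Remark, doubled to pass from the complex to the real index) shows that the total index equals $\tfrac{1}{4} c_1(S^+_W)^2[W]$; using that the monodromy of $\det S^+_W$ around the $S^1$ factor is $u^2$, a direct cohomological computation gives $c_1(S^+_W)^2[W]=4\,([u]\cup c_1(S))[Y]$, which yields the asserted value.

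The step I expect to be the main obstacle is making this excision rigorous in the weighted setting: since the $L_{\check\gamma(t)}$ are not hyperbolic at the ends one cannot invoke the usual additivity of the index under gluing directly, but must instead glue the genuinely hyperbolic conjugated family $\{L_{\check\gamma(t)}+\sigma(t)\}$ and verify that the resulting closed-up operator on $W$ computes the same index as the untwisted Atiyah--Patodi--Singer problem there, up to precisely the $\dim[\Cr]$ correction isolated above. Given Theorem \ref{modsame}, Proposition \ref{fredholmQ} and the fact that the loop never leaves $[\Cr]$, this bookkeeping parallels Theorem $14.4.2$ and Lemma $14.4.6$ of the book and involves no new ideas; it is the only place where the argument differs from the one there.
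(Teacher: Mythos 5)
Your proposal is correct and follows essentially the route the paper intends: the paper offers no detailed argument, only the remark that the spectral-flow ideas from the proof of additivity adapt Lemma $14.4.6$ of the book, and your write-up is exactly that adaptation — interpret $\mathrm{Ind}_0 Q_\gamma$ as spectral flow of the weighted family, check that the $\ker L_{\bcr}=\T_{\bcr}\Cr$ contribution cancels the $\dim[\Cr]$ shift in the definition of $\gr$, and then invoke the mapping-torus index computation $\tfrac14 c_1(S^+_W)^2[S^1\times Y]=\left([u]\cup c_1(S)\right)[Y]$. The weighted-gluing bookkeeping you flag is indeed the only point where the argument differs from the book, and your treatment of it is consistent with Theorem \ref{modsame}, Proposition \ref{fredholmQ} and Lemma \ref{weighhessian}.
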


\vspace{0.8cm}

In general we cannot expect the operator $Q_{\gamma}$ to be surjective because of the boundary obstructedness phenomenon that arises already in the finite dimensional case (see Section $2.4$ in the book). We review this important concept in detail in what follows.

\begin{defn}
A reducible configuration $\acr\in\Cs_k(Y)$ (not necessarily a critical point) is \textit{boundary stable} if $\Lambda_{\q}(\acr)<0$ and \textit{boundary unstable} if $\Lambda_{\q}(\acr)>0$. We say that a pair of critical submanifolds $([\Lr_-],[\Lr_+])$ is \textit{boundary obstructed} if the points in $[\Lr_-]$ are boundary-stable and the points in $[\Lr_-]$ are boundary-unstable, and so we call the moduli space of trajectories $M([\Lr_-],[\Lr_+])$.
\end{defn}
To clarify the second part of the definition, notice that because of the Lemma \ref{nonzerolambda}  if a reducible critical point is boundary stable (unstable), then all the points in the critical manifold it belongs to are stable (unstable). Also, if a moduli space $M([\Lr_-],[\Lr_+])$ contains an irreducible trajectory, then $[\Lr_-]$ consists of irreducible or boundary unstable points and $[\Lr_+]$ consists of irreducible or boundary stable points (see Lemma $14.5.3$ in the book).
\\
\par
Suppose that $\gamma$ is a reducible trajectory. Then the operator $Q_{\gamma}$ decomposes as the sum of two operators $Q_{\gamma}^{\partial}$ and $Q_{\gamma}^{\nu}$, reflecting the decomposition of the involution
\begin{IEEEeqnarray*}{c}
\mathbf{i}:\tilde{\mathcal{C}}^{\tau}_{k,\delta}(\mathfrak{U}_-,\mathfrak{U}_+)\rightarrow \tilde{\mathcal{C}}^{\tau}_{k,\delta}(\mathfrak{U}_-,\mathfrak{U}_+)\\
(A,s,\phi)\mapsto (A,-s,\phi).
\end{IEEEeqnarray*}
The first operator is
\begin{equation*}
Q^{\partial}_{\gamma}=(\mathcal{D}_{\gamma}\F^{\tau}_{\q})^{\partial}\oplus \mathbf{d}^{\tau,\dagger}_{\gamma}
\end{equation*}
where
\begin{equation*}
(\mathcal{D}_{\gamma}\F^{\tau}_{\q})^{\partial}:(\T^{\tau}_{k,\delta,\gamma})^{\partial}\rightarrow (\V^{\tau}_{k,\delta,\gamma})^{\partial}
\end{equation*}
is the part invariant under the involution $\mathbf{i}$, while the second operator is
\begin{IEEEeqnarray*}{c}
Q^{\nu}_{\gamma} :L^2_{k,\delta}(\R;i\R)\rightarrow L^2_{k-1,\delta}(\R;i\R) \\ s\mapsto  \frac{ds}{dt}+\Lambda_{\q}(\check{\gamma})s.
\end{IEEEeqnarray*}
The following elementary result (which is Lemma $14.5.4$ in the book) characterizes kernel and cokernel of the operators $Q^{\nu}_{\gamma}$.
\begin{lemma}
The dimensions of the kernel and the cokernel of $Q^{\nu}_{\gamma}$ are:
\begin{itemize}
\item $1$ and $0$ if $\acr$ and $\bcr$ are boundary stable and unstable respectively;
\item $0$ and $1$ if $\acr$ and $\bcr$ are boundary unstable and stable respectively;
\item $0$ and $0$ in the other cases.
\end{itemize}
\end{lemma}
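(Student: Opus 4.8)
The plan is to reduce the lemma to an elementary analysis of the scalar first-order ODE $s'+\lambda(t)s=0$ with $\lambda(t)=\Lambda_{\q}(\check{\gamma}(t))$; note that $Q^\nu_\gamma$ is already known to be Fredholm, being the $\nu$-summand of the operator $Q_\gamma$ of Proposition~\ref{fredholmQ}, so only the explicit identification of kernel and cokernel is at stake. First I would record the two structural facts that drive the computation: by Lemma~\ref{expint} the function $\lambda$ converges exponentially fast to $\lambda_-:=\Lambda_{\q}(\acr)$ as $t\to-\infty$ and to $\lambda_+:=\Lambda_{\q}(\bcr)$ as $t\to+\infty$, and by Lemma~\ref{nonzerolambda} both limits are nonzero; moreover $\delta$ has been chosen, as in Proposition~\ref{fredholmQ}, smaller than $\min\{|\lambda_-|,|\lambda_+|\}$. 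Next I would apply the conjugation trick of equation~(\ref{weightedtrick}): multiplication by the weight $f$ conjugates $Q^\nu_\gamma: L^2_{k,\delta}\to L^2_{k-1,\delta}$ to the operator $s\mapsto s'+\mu(t)s$ on the unweighted spaces, where $\mu(t)=\lambda(t)+\sigma(t)$ with $\sigma\equiv-\delta$ for $t\gg0$ and $\sigma\equiv+\delta$ for $t\ll0$; thus $\mu(t)\to\lambda_+-\delta$ and $\mu(t)\to\lambda_-+\delta$ at the two ends, and the choice of $\delta$ guarantees these limits have the same sign as $\lambda_+$ and $\lambda_-$ respectively.

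For the kernel: every solution of $s'+\mu s=0$ has the form $s(t)=s(0)\exp\big(-\int_0^t\mu(\tau)\,d\tau\big)$, so $\dim\ker\le1$, and since $\int_0^t\mu=(\lambda_+-\delta)t+O(1)$ as $t\to+\infty$ and $\int_0^t\mu=(\lambda_-+\delta)t+O(1)$ as $t\to-\infty$ (the $O(1)$ terms coming from the exponential convergence of $\lambda$), this solution lies in $L^2$ near $+\infty$ exactly when $\lambda_+>\delta$, i.e.\ when $\bcr$ is boundary unstable, and near $-\infty$ exactly when $\lambda_-<-\delta$, i.e.\ when $\acr$ is boundary stable; when both occur the exponential decay bootstraps to an element of $L^2_k$. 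For the cokernel I would use that, $L^2$ being self-dual, the cokernel of the conjugated operator is the kernel of its formal adjoint $s\mapsto -s'+\mu(t)s$, whose solutions $s(t)=s(0)\exp\big(\int_0^t\mu(\tau)\,d\tau\big)$ lie in $L^2$ exactly when $\lambda_+<0$ (decay at $+\infty$) and $\lambda_->0$ (decay at $-\infty$), i.e.\ when $\bcr$ is boundary stable and $\acr$ boundary unstable. Since these two sign patterns are mutually exclusive and exhaust all configurations that produce a nonzero kernel or cokernel, matching against the statement gives exactly the three cases: $(\dim\ker,\dim\mathrm{coker})=(1,0)$ when $\acr$ is boundary stable and $\bcr$ boundary unstable, $(0,1)$ when $\acr$ is boundary unstable and $\bcr$ boundary stable, and $(0,0)$ when $\acr,\bcr$ are both stable or both unstable.

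Since this is the one-dimensional, constant-rank instance of Atiyah-Patodi-Singer index theory, I do not expect any genuine obstacle. The only subtlety — and the only place the argument departs from Lemma~$14.5.4$ of the book — is that in the Morse-Bott setting the nonvanishing of the limiting coefficients $\lambda_\pm$ is no longer automatic and must be invoked explicitly via Lemma~\ref{nonzerolambda}, and one must keep $\delta$ small enough that the passage to weighted spaces does not alter the signs of $\lambda_\pm$; both points were arranged in advance in the hypotheses underlying Proposition~\ref{fredholmQ}.
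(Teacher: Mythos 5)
Your argument is correct. Note that the paper does not actually prove this lemma: it simply quotes Lemma~$14.5.4$ of the book, which treats the non-degenerate case where the scalar operator $d/dt+\Lambda_{\q}(\check{\gamma})$ is hyperbolic at both ends and acts on unweighted spaces. What your write-up adds is precisely the content that the citation leaves implicit in the Morse-Bott setting: the conjugation of equation~(\ref{weightedtrick}) to pass from $L^2_{k,\delta}$ to unweighted spaces, the nonvanishing of the limits $\Lambda_{\q}(\acr),\Lambda_{\q}(\bcr)$ supplied by Lemma~\ref{nonzerolambda} (together with Lemma~\ref{expint} for convergence along the trajectory), and the standing choice of $\delta$ smaller than all $|\Lambda_{\q}|$ made after Proposition~\ref{fredholmQ}, which guarantees that the shifted limits $\lambda_\pm\mp\delta$ keep the signs of $\lambda_\pm$. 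The core of the computation — solving the scalar ODE for the kernel and identifying the cokernel with decaying solutions of the formal adjoint — is the same elementary argument as in the book, so this is the expected proof rather than a genuinely different route. Two small remarks: you have matched the paper's sign convention for boundary stable/unstable (which is opposite to the book's, but the lemma's statement is flipped accordingly, so your case-matching is consistent); and the identification of the cokernel of an operator $L^2_k\to L^2_{k-1}$ with the kernel of the formal adjoint deserves the one-line regularity remark that distributional solutions of the scalar adjoint ODE are smooth and either decay or grow exponentially at each end, so only the decaying ones can represent functionals on $L^2_{k-1}$ — routine for this one-dimensional instance, as you indicate by invoking the Atiyah-Patodi-Singer picture.
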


We introduce a first notion of transversality, which corresponds in the finite dimensional case to the condition that stable and unstable manifolds intersect transversely.

\begin{defn}\label{smalereg}
Let $\gamma$ be a solution in $M_z([\Cr_-],[\Cr_+])$. If the pair $([\Cr_-], [\Cr_+])$ is not boundary obstructed, we say that $\gamma$ is \textit{Smale-regular} if $Q_{\gamma}$ is surjective. In the boundary obstructed case, $\gamma$ must be reducible and we say that it is \textit{regular} if $Q_{\gamma}^{\partial}$ is surjective. We say that $M_z([\Cr_-],[\Cr_+])$ is \textit{Smale-regular} if it is \textit{Smale-regular} at any point.
\end{defn}

The point of the definition is that Smale-regular moduli spaces are transversely cut out smooth manifolds by the inverse function theorem, as stated in the following proposition (see Proposition $14.5.7$ in the book for a proof).
\begin{prop}\label{transversality}
Consider critical manifold $[\Cr_{\pm}]$ such that $M_z([\Cr_-],[\Cr_+])$ is Smale-regular, and set
\begin{equation*}
d=\gr_z([\Cr_-],[\Cr_+])+\mathrm{dim}[\Lr_-].
\end{equation*}
Then the moduli space $M_z([\Lr_-],[\Lr_+])$ is:
\begin{itemize}
\item a smooth $d$-manifold consisting entirely of irreducible solutions if either $[\Cr_-]$ or $[\Cr_+]$ is irreducible;
\item a smooth $d$-manifold with boundary $[\Cr_-]$ and $[\Cr_+]$ are boundary-unstable and stable respectively;
\item a smooth $d$-manifold consisting entirely of reducibles if $[\Cr_-]$ and $[\Cr_+]$ are either both boundary stable or unstable;
\item a smooth $d+1$-manifold consisting entirely of reducibles in the boundary obstructed case.
\end{itemize}
In the second case, the boundary of the moduli space consists of the reducible elements.
\end{prop}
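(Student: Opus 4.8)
The plan is to reduce the statement to a routine application of the implicit function theorem, using the Fredholm package already assembled in this section. Recall that $M_z([\Lr_-],[\Lr_+])$ is the quotient by $\G_{k+1,\delta}(Z)$ of the zero locus $\{\F^\tau_\q=0\}$ inside $\Ct_{k,\delta}(\mathfrak U_-,\mathfrak U_+)$ (Theorem \ref{modsame}), and a Coulomb slice $\mathcal S^\tau_{k,\delta,\gamma}$ through any $\gamma$ provides a local chart for $\Bt_{k,\delta}([\mathfrak U_-],[\mathfrak U_+])$. Working in such a slice, the moduli space is cut out locally by the restricted section $\F^\tau_\q\colon \mathcal S^\tau_{k,\delta,\gamma}\to\V^\tau_{k-1,\delta}$, whose linearization is the operator $\mathcal D\F^\tau_\q\colon \K^\tau_{k,\delta,\gamma}\to\V^\tau_{k-1,\delta,\gamma}$; by the corollary to Proposition \ref{fredholmQ} this is Fredholm with the same index as $Q_\gamma$, and $\mathrm{Ind}_0 Q_\gamma=\gr_z([\Cr_-],[\Cr_+])-\mathrm{dim}[\Cr_+]$ by definition, so $\mathrm{Ind}\,\mathcal D\F^\tau_\q = \gr_z([\Cr_-],[\Cr_+])+\mathrm{dim}[\Cr_-]-\mathrm{dim}[\Cr_+]+\mathrm{dim}[\Cr_+]=d$ once one also remembers that the Coulomb slice for $\Bt$ itself has tangent space $\K^\tau_{k,\delta,\gamma}$, whose "honest" local model carries the extra $\mathrm{dim}[\Cr_\pm]$ from the evaluation factors built into $\T^\tau_{k,\delta}$. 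So the first step is just this index bookkeeping, keeping careful track of which $\mathrm{dim}[\Cr_\pm]$ terms live in the evaluation directions of $\T^\tau_{k,\delta}$ versus in $Q_\gamma$ itself.

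The second step handles the non boundary-obstructed case. There Smale-regularity says $Q_\gamma$, hence $\mathcal D\F^\tau_\q|_{\K^\tau}$, is surjective at every $\gamma\in M_z$; since it is Fredholm of index $d$, the kernel has dimension $d$ at every point. By the implicit function theorem for Banach manifolds applied in the Coulomb slice, $\{\F^\tau_\q=0\}$ is near $[\gamma]$ a smooth $d$-dimensional submanifold of $\Bt_{k,\delta}$, and the evaluation maps are smooth by the corollary following Proposition \ref{closedaction}. To see which of the four listed structures occurs I would split according to the involution $\mathbf i\colon(A,s,\phi)\mapsto(A,-s,\phi)$: if either end is irreducible, unique continuation (Chapter $7$ of the book, already invoked for Theorem \ref{modsame}) forces $s\not\equiv0$, so every nearby solution is irreducible and we get a smooth closed $d$-manifold. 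If both ends are boundary stable or both boundary unstable, Lemma $14.5.3$ of the book rules out irreducible trajectories, so $M_z$ lies in the fixed-point set $\{s=0\}$ and is again a closed $d$-manifold of reducibles. If $[\Cr_-]$ is boundary-unstable and $[\Cr_+]$ boundary-stable, then $M_z$ contains both reducibles and irreducibles; one argues as in Proposition $14.5.7$ of the book that along the reducible locus the normal operator $Q^\nu_\gamma$ has one-dimensional kernel and vanishing cokernel, so the reducibles form a codimension-one submanifold and $M_z$ is a $d$-manifold with boundary exactly the reducible locus.

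The third step is the boundary-obstructed case, which is where the dimension jumps by one and which I expect to be the main obstacle to write cleanly. Here every $\gamma\in M_z$ is reducible, $Q_\gamma=Q^\partial_\gamma\oplus Q^\nu_\gamma$, and by hypothesis (regularity in the sense of Definition \ref{smalereg}) only $Q^\partial_\gamma$ is surjective; by Lemma $14.5.4$ of the book, $Q^\nu_\gamma$ has zero kernel and one-dimensional cokernel in this case. So $Q_\gamma$ itself is not surjective — its cokernel is one-dimensional — and the naive implicit function theorem does not apply to $\F^\tau_\q$ directly. The fix, exactly as in Section $14.5$ of the book, is that the reducible configuration space $\{s=0\}$ is $\mathbf i$-invariant and the equations restrict to it; on that locus the relevant linearization is precisely $Q^\partial_\gamma$, which \emph{is} surjective, so the implicit function theorem gives that $M_z\cap\{s=0\}$ is a smooth manifold of dimension $\mathrm{Ind}\,Q^\partial_\gamma+\mathrm{dim}[\Cr_-]$; since the missing $Q^\nu_\gamma$ would have contributed index $-1$, this is $d+1$. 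One then checks that the full $M_z$ coincides with this reducible locus: any irreducible solution would have $s>0$ somewhere, and a Taylor expansion of the $s$-equation $\tfrac{d}{dt}s=-\Lambda_\q(\check\gamma)s$ near the reducible, together with the sign conditions $\Lambda_\q<0$ at $[\Cr_-]$ and $\Lambda_\q>0$ at $[\Cr_+]$ (boundary stable/unstable), is incompatible with $s\to0$ at both ends — this is the content of Lemma $14.5.3$ of the book, which applies verbatim since it is a statement about the $1$-dimensional $\nu$-component only and is insensitive to the Morse--Bott directions. Hence $M_z$ is a smooth $(d+1)$-manifold consisting entirely of reducibles, completing the proof. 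The only genuinely new bookkeeping relative to the book is that $\mathrm{Ind}_0 Q_\gamma$ rather than $\mathrm{Ind}\,Q_\gamma$ enters the dimension formula, which is why the $\mathrm{dim}[\Cr_\pm]$ corrections appear; I would state this once at the start and then the four cases follow the book's argument line by line.
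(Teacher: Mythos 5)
Your proposal is correct and takes essentially the same route as the paper, which at this point simply defers to Proposition $14.5.7$ of the book: you reconstruct that argument (implicit function theorem in a Coulomb slice, the splitting $Q_\gamma=Q^{\partial}_\gamma\oplus Q^{\nu}_\gamma$ along reducibles, Lemmas $14.5.3$--$14.5.4$, and the quotient by the involution $\mathbf{i}$) together with the only genuinely new ingredient here, the Morse--Bott index bookkeeping $\mathrm{Ind}\,Q_\gamma=\mathrm{Ind}_0Q_\gamma+\dim[\Cr_-]+\dim[\Cr_+]=\gr_z([\Cr_-],[\Cr_+])+\dim[\Cr_-]=d$ on the weighted spaces. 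One cosmetic caution: the paper's stated sign convention for boundary-stable/unstable ($\Lambda_{\q}<0$ versus $\Lambda_{\q}>0$) is opposite to the book's, so when you simultaneously quote the paper's signs and the book's Lemma $14.5.4$ (one-dimensional cokernel of $Q^{\nu}_\gamma$ in the boundary-obstructed case) you should fix a single convention, though this does not affect the substance of the argument.
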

\begin{remark}
It is important to notice that the zero locus of the section $\mathfrak{F}_{\q}^{\tau}$ modulo gauge action is \textit{not} the moduli space we are interested in, but a bigger space $\tilde{M}_z([\Cr_-],[\Cr_+])$. The actual space is the quotient of this space by the involution
\begin{IEEEeqnarray*}{c}
\mathbf{i}: \tilde{M}_z([\Cr_-],[\Cr_+])\rightarrow \tilde{M}_z([\Cr_-],[\Cr_+])\\
{[}A,s,\phi]\mapsto[A,-s,\phi],
\end{IEEEeqnarray*}
see also equation (\ref{involution}). Here we use the fact that for a solution the function $s$ is always positive, always negative or constantly zero.
\end{remark}

\vspace{0.8cm}

In what follows, we will need a slightly stronger notion of transversality. When $M_z([\Cr_-],[\Cr_+])$ is Smale-regular, the evaluation maps
\begin{equation*}
\ev_{\pm}:M_z([\Cr_-],[\Cr_+])\rightarrow [\Cr_{\pm}]
\end{equation*}
are smooth maps. Given a sequence of critical submanifolds
\begin{equation*}
\mathcal{C}=\left([\Cr_i]\right)_{i=0,\dots, n}
\end{equation*}
and relative homotopy classes
\begin{equation*}
\mathbf{z}=(z_i)_{i=0,\dots,n-1}
\end{equation*}
with $z_i\in \pi_1(\Bs_k(Y),[\Cr_i],[\Cr_{i+1}])$ we can define the space $M_{\mathbf{z}}(\mathcal{C})$ consisting of $n$-uples $[\gamma_i]\in M_{z_i}([\Cr_i],[\Cr_{i+1}])$ such that
\begin{equation*}
\ev_+[\gamma_i]=\ev_-[\gamma_{i+1}]
\end{equation*}
for every $0\leq i\leq n-2$. This space comes with natural continuous evaluation maps $\ev_{\pm}$ to the critical submanifolds $[\Cr_0]$ and $[\Cr_n]$ respectively.

\begin{defn}\label{regularpert}
Suppose we are given a Morse-Bott tame perturbation $\q$ such that all the moduli spaces are Smale-regular as in Definition \ref{smalereg}. We then say that $\q$ is \textit{regular} if the following holds. For every sequence of critical submanifolds $\mathcal{C}=\left([\Cr_i]\right)_{i=0,\dots, n}$ and corresponding relative homotopy classes $\mathbf{z}$, and any other critical submanifold $[\Cr_+]$ and homotopy class $z_+\in \pi_1(\Bs_k(Y),[\Cr_n],[\Cr_+])$, the maps
\begin{IEEEeqnarray*}{c}
\ev_+:M_{\mathbf{z}}(\mathcal{C})\rightarrow [\Cr_n]\\
\ev_-: M_{z_+}([\Cr_n],[\Cr_+])\rightarrow [\Cr_n]
\end{IEEEeqnarray*}
are transverse smooth maps.
\end{defn}

Notice that unless $M_{\mathbf{z}}(\mathcal{C})$ has a natural smooth structure, the condition of being transverse is not defined. The definition needs to be interpreted in the following inductive way. Suppose that $M_{\mathbf{z}}(\mathcal{C})$ has a canonical smooth structure for which the evaluation maps are smooth (this is true in the case the sequence has length one by the Smale-regularity assumption). Then the space
\begin{equation*}
M_{(\mathbf{z},z_+)}\left([\Cr_0],\dots,[\Cr_n],[\Cr_+]\right)
\end{equation*}
has a natural smooth structure as the fibered product of the two evaluation maps, as these two are smooth transverse map by the regularity assumption. Furthermore the evaluation maps to $[\Cr_0]$ and $[\Cr_-]$ are smooth. 

\begin{remark}
Some approaches (see for example \cite{AB}) impose stronger transversality condition on the moduli spaces, as the fact that the evaluation maps are submersions. On the other hand, it is not hard to construct examples in our problem such that this property does not hold (even after small prturbation). 
\end{remark}

\vspace{0.8cm}
Before proving the main transversality result we characterize the moduli spaces between two reducible critical submanifolds $(\Cr_1,\Cr_2)$ lying over the same reducible configuration $(B,0)\in C\subset\Co_k(Y)$, corresponding to eigenvalues $\lambda_1$ and $\lambda_2$ of $D_{\q,B}$. Let
\begin{equation*}
i=
\begin{cases}
|\{\mu\in\mathrm{Spec}(D_{\q,B})| \lambda_2\leq \mu<\lambda_1\}|, & \text{if } \lambda_1\geq \lambda_2,\\
-|\{\mu\in\mathrm{Spec}(D_{\q,B})| \lambda_2> \mu\geq\lambda_1\}|, & \text{if } \lambda_1\leq \lambda_2.
\end{cases}
\end{equation*}
Of course we are counting eigenvalues with multiplicity. Notice that this definition is slightly different from the one given in Section $14.6$ of the book, and fits the general case better. We have the relative homotopy class $z_0$ joining $[\Cr_1]$ and $[\Cr_2]$ in $\Bs_k(Y)$ arising from a path connecting $\Cr_1$ and $\Cr_2$ in $\Cs_k(Y)$. The is a natural map
\begin{equation}\label{parmod}
\pi: M_{z_0}([\Cr_1][\Cr_2])\rightarrow [C]
\end{equation}
sending a trajectory to the reducible critical point it lies over. We then have the following result, see Section $14.6$ in the book. 
\begin{lemma}\label{dimreducible}
We have the following:
\begin{equation*}
\gr(\Cr_1,\Cr_2)=
\begin{cases}
2i, & \text{if } \lambda_1\text{ and }\lambda_2 \textit{ have the same sign,}\\
2i-1, & \text{if } \lambda_1\text{ is positive and }\lambda_2 \text{ is negative,}\\
2i+1 & \text{if } \lambda_1\text{ is negative and }\lambda_2 \text{ is positive.}
\end{cases}
\end{equation*}
In the last case the relative grading is a negative integer. The moduli spaces are empty when the relative grading is negative. When the relative grading is a non negative number $N$, the map in equation (\ref{parmod}) is a fibration and each fiber is diffeomorphic to the complement in the projective space $\mathbb{C}P^{N+\mathrm{dim}[\Cr_1]}$ of two hyperspaces of codimension respectively $\mathrm{dim}[\Cr_1]$ and $\mathrm{dim}[\Cr_2]$. Furthermore both evaluation maps from $M_{z_0}([\Cr_1][\Cr_2])$ are fibrations.
\end{lemma}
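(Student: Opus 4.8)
The plan is to reduce the entire statement to a piece of linear algebra on the reducible locus, following Section $14.6$ of the book but carrying along the extra parameters arising because $[C]$ may be positive-dimensional and the eigenspaces of $D_{\q,B}$ may have dimension greater than one.

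\emph{Reduction to reducible trajectories.} The locus $\{r=0\}\subset\Cs_k(Y)$ is invariant under the flow of $(\mathrm{grad}\CSd)^{\sigma}$, and on it the equations decouple: the connection part remains inside the critical submanifold $C$ and is therefore stationary, while the spinor evolves by the normalized flow
\begin{equation*}
\tfrac{d}{dt}\psi=-D_{\q,B}\psi+\Lambda_{\q}(B,0,\psi)\psi,\qquad \Lambda_{\q}(B,0,\psi)=\mathrm{Re}\langle\psi,D_{\q,B}\psi\rangle.
\end{equation*}
A Cauchy--Schwarz computation gives $\tfrac{d}{dt}\Lambda_{\q}=-2\bigl(\|D_{\q,B}\psi\|^{2}-\Lambda_{\q}^{2}\bigr)\le0$ along such a trajectory, so a reducible solution from $\Cr_1$ to $\Cr_2$ exists only if $\lambda_1\ge\lambda_2$. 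For a general (possibly irreducible) solution the blow-up coordinate satisfies $\dot r=-\Lambda_{\q}r$; integrating $\tfrac{d}{dt}\log r=-\Lambda_{\q}$ and using that $\Lambda_{\q}\to\lambda_1$ as $t\to-\infty$ and $\Lambda_{\q}\to\lambda_2$ as $t\to+\infty$ shows that asymptoticity at both ends forces $r\equiv0$ unless $\lambda_1<0<\lambda_2$, a case incompatible with $\lambda_1\ge\lambda_2$. Hence every element of $M_{z_0}([\Cr_1],[\Cr_2])$ is reducible, the map $\pi$ of $(\ref{parmod})$ is well defined, and its fibre over $[(B,0)]\in[C]$ is the moduli space of trajectories of the finite-dimensional normalized flow above joining the eigenspace $E_{\lambda_1}(D_{\q,B})$ to $E_{\lambda_2}(D_{\q,B})$.

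\emph{Structure of the moduli space.} A trajectory of the normalized flow is the projectivization of a solution of the linear flow $\dot v=-D_{\q,B}v$; an asymptotic analysis at the two ends shows that the corresponding line lies in $\bigoplus_{\lambda_2\le\mu\le\lambda_1}E_{\mu}(D_{\q,B})$ and has nonzero components in both $E_{\lambda_1}$ and $E_{\lambda_2}$, and conversely every such line yields a trajectory. Identifying a parametrized trajectory with its value at $t=0$ and dividing out by the constant gauge circle, the fibre of $\pi$ becomes the complement, in the complex projective space of $\bigoplus_{\lambda_2\le\mu\le\lambda_1}E_{\mu}(D_{\q,B})$, of the two linear subspaces cut out by the vanishing of the $E_{\lambda_1}$-- or the $E_{\lambda_2}$--component; these are the two ``hyperspaces'' of the asserted codimensions, and assembling over $[C]$ gives the global structure with $\pi$ and $\ev_{\pm}$ manifestly fibrations. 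The same explicit model shows that $M_{z_0}([\Cr_1],[\Cr_2])$ (respectively its $\partial$-part, when $\lambda_1\lambda_2<0$ and the pair is boundary obstructed) is Smale-regular: its dimension agrees with $\gr_{z_0}([\Cr_1],[\Cr_2])+\dim[\Cr_1]$ as in Proposition \ref{transversality}, and the cokernel of $Q_\gamma$ (resp.\ $Q^{\partial}_\gamma$) vanishes by the same computation that produced the model.

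\emph{Computation of the grading, and the main obstacle.} By definition $\gr(\Cr_1,\Cr_2)=\mathrm{Ind}_0Q_\gamma+\dim[\Cr_2]$, and $\mathrm{Ind}_0Q_\gamma$ is the spectral flow of the family $\{L_{\check{\gamma}(t)}+\sigma(t)\}$ of weighted extended Hessians along a reducible path $\gamma$; the involution $\mathbf{i}$ splits this family as $Q^{\partial}\oplus Q^{\nu}$. The $\nu$-part is the scalar operator $\tfrac{ds}{dt}+\Lambda_{\q}(\check{\gamma})s$, whose index, by the lemma on the kernel and cokernel of $Q^{\nu}_\gamma$, is $0$ when $\lambda_1$ and $\lambda_2$ have the same sign, $-1$ when $\lambda_1>0>\lambda_2$, and $+1$ when $\lambda_1<0<\lambda_2$. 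The $\partial$-part is a path of $k$-\textsc{asafoe} operators whose spectral flow, exactly as in the book, counts with multiplicity the eigenvalues of $D_{\q,B}$ crossed between $\lambda_2$ and $\lambda_1$; because $D_{\q,B}$ is complex linear while the Hessian is real, each crossing contributes $2$ to the spectral flow, and after combining with the $\dim[\Cr_2]$ shift this yields $2i$. Adding the $\nu$-contribution produces the three cases of the statement; since the relative grading is negative exactly when $\lambda_1<\lambda_2$, and the first step already rules out trajectories in that case, the emptiness assertion follows. The step I expect to be the genuine obstacle is precisely this last bookkeeping: disentangling the contributions of the $\nu$-direction, of the (possibly higher-dimensional) eigenspaces of $D_{\q,B}$, and of the tangent directions along $C$ to the spectral flow, so that the $2i$ versus $2i\pm1$ dichotomy and the dimension count of the second step are mutually consistent --- which is exactly the point at which the Morse-Bott situation departs from the non-degenerate treatment in the book.
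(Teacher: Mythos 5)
Your overall strategy is the same as the paper's (and as Proposition $14.6.1$ of the book, to which the paper simply defers): identify the trajectories lying over a fixed reducible $[B,0]$ with the $\C^*$-quotient of solutions $\Phi=\sum_{\mu}c_{\mu}e^{-\mu t}\phi_{\mu}$ of the translation-invariant Dirac flow with nonzero extreme coefficients, read off the fibre as a projective space minus two linear subspaces, and get the fibration statements from the fact (built into Definition \ref{Morsebotts}) that the blow-down maps $[\Cr_i]\rightarrow[C]$ are fibrations. However, as written your argument has a genuine hole exactly at the emptiness assertion in the boundary-obstructed-reversed case $\lambda_1<0<\lambda_2$. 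Your first step only shows that reducible trajectories need $\lambda_1\geq\lambda_2$ and that irreducible ones need $\lambda_1<0<\lambda_2$; it does \emph{not} exclude irreducible trajectories when $\lambda_1<0<\lambda_2$, and your later sentence ``the first step already rules out trajectories in that case'' is therefore false as an account of what was proved. The correct (and simpler) argument, which also replaces your unjustified claim that the connection component ``remains inside $C$ and is therefore stationary'', is the energy one: for the trivial class $z_0$ the total drop of $\CSd$ along any trajectory is zero, while $\CSd$ strictly decreases along any nonconstant downstairs trajectory; hence every element of $M_{z_0}([\Cr_1],[\Cr_2])$ blows down to a constant, is reducible and lies over a single point of $[C]$, and then the linear model forces $\lambda_1\geq\lambda_2$ (so emptiness whenever $\lambda_1<\lambda_2$, in particular in the negative-grading cases).

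The second gap is the grading formula itself, which you acknowledge leaving open. Your sketch is internally inconsistent: if the crossings of the complex eigenvalues of $D_{\q,B}$ in $[\lambda_2,\lambda_1)$ already contribute $2i$ to the real spectral flow, then adding $\mathrm{dim}[\Cr_2]$ as in the definition $\gr=\mathrm{Ind}_0Q_{\gamma}+\mathrm{dim}[\Cr_2]$ would give $2i+\mathrm{dim}[\Cr_2]$, not $2i$. What has to be shown is that the \emph{weighted} spectral flow of $\{L_t+\sigma(t)\}$ equals $2i-\mathrm{dim}[\Cr_2]$ in the same-sign case (and $2i-\mathrm{dim}[\Cr_2]\mp1$ in the mixed cases), the correction coming precisely from the weight $\sigma(t)$ displacing the kernel directions $T\Cr_1$ to $+\delta$ at the incoming end and $T\Cr_2$ to $-\delta$ at the outgoing end; this is the bookkeeping you flag as ``the genuine obstacle'' and never carry out. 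To be fair, the paper does not spell this computation out either (it invokes the book), but since your write-up attempts it explicitly, the $\nu$-part/$\partial$-part/weight accounting needs to be done, or the formula is not established.
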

\begin{proof}
This is proved as in Proposition $14.6.1$ in the book by characterizing the moduli spaces of trajectories lying over a given reducible solution $[B,0]$. In particular, these trajectories are identified as the quotient by the action of $\mathbb{C}^*$ of the space of solutions of the translation invariant Dirac equation
\begin{equation*}
\frac{d}{dt}\Phi(t)=-D_{\q,B}\Phi(t)
\end{equation*}
with asymptotics
\begin{align*}
\Phi(t)&\sim c_1 e^{-\lambda_1t}\qquad \text{as }t\rightarrow+\infty\\
\Phi(t)&\sim c_2 e^{-\lambda_2t}\qquad \text{as }t\rightarrow-\infty.
\end{align*}
In particular, we can write such a solution as
\begin{equation*}
\Phi=\sum_{\mu}c_{\mu}e^{-\mu t}\phi_{\mu}
\end{equation*}
where $\mu$ is an eigenvalue in the interval $[\lambda_2,\lambda_1]$, with the coefficients $c_{\lambda_1}$ and $c_{\lambda_2}$ both non zero. In particular the evaluation maps for these spaces are submersions, and the result follows because by definition also the blow down map from $[\Cr_1]$ and $[\Cr_2]$ is a fibration.
\end{proof}

\vspace{0.8cm}

We are now ready to discuss the main transversality result of the section. Recall we have introduced in Definition \ref{adapted} the notion of \textit{adapted} perturbation.

\begin{teor}\label{transversalitymain}
For a fixed Morse-Bott perturbation $\q_0$ and for $\delta>0$ small enough there is a residual subset of the space of adapted perturbations $\mathcal{P}_{\mathcal{O}}$ which consists of regular perturbations.
\end{teor}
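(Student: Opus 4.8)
The plan is to follow the standard Sard--Smale scheme, adapted to the Morse-Bott and boundary-obstructed setting, exactly as in Chapter~14 of the book but keeping track of the critical submanifolds and the weighted Sobolev spaces. First I would set up the universal moduli space: for a fixed sequence $\mathcal{C}=([\Cr_i])_{i=0,\dots,n}$ with homotopy classes $\mathbf{z}$, a further critical submanifold $[\Cr_+]$ and class $z_+$, consider the parametrized space
\begin{equation*}
\mathcal{M}=\left\{(\gamma,\q)\mid \q\in\mathcal{P}_{\mathcal{O}},\ [\gamma]\in M_{z_+}([\Cr_n],[\Cr_+])\text{ for the perturbation }\q,\ \ev_-[\gamma]\in \ev_+(M_{\mathbf z}(\mathcal C))\right\},
\end{equation*}
realized as the zero set of a section of a Banach vector bundle over $\mathcal{P}_{\mathcal{O}}\times \Ct_{k,\delta}(\mathfrak{U}_-,\mathfrak{U}_+)$ (restricted to the fiber product of evaluation maps, which by the inductive regularity hypothesis on shorter sequences is already a Banach manifold). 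The section is $\F^{\tau}_{\q}$ together with the gauge-fixing condition $\mathrm{Coul}^{\tau}_{\gamma}$ and the fiber-product constraint on $\ev_-$. The key analytic input is that $Q_{\gamma}$ (or $Q^{\partial}_{\gamma}$ in the boundary-obstructed case) is Fredholm, which is Proposition~\ref{fredholmQ}.

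The heart of the argument is showing the universal section is transverse to zero, i.e. that the full linearization (in $\gamma$ and in $\q$) is surjective at every solution. On the cokernel of $Q_\gamma$, which is finite-dimensional and consists of solutions of a formal adjoint equation, one must show the perturbation directions $\mathcal{D}_{\q}\F^{\tau}_{\q}$ span a complement. This is the classical unique-continuation-plus-density argument: a nonzero cokernel element is supported on a set of positive measure on some slice $\{t_0\}\times Y$ where the trajectory is irreducible (for the non-obstructed and irreducible cases; in the boundary-obstructed reducible case one works with $Q^\partial_\gamma$ and the conclusion is about $(\mathcal D\F^\tau_\q)^\partial$), and since the cylinder functions of Proposition~\ref{densepert} are rich enough to embed a finite-dimensional piece of $\Bo^o_k(Y)$ near that slice, one produces a perturbation pairing nontrivially with the cokernel element. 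Here the support condition \eqref{pertequal} — that $\q=\q_0$ on $\mathcal{O}$ — is harmless because no irreducible trajectory can be entirely contained in $\mathcal{O}$ (the critical points lie in $\mathcal{O}$ and a nonconstant trajectory leaves it), and by Lemma~\ref{expdecay} the relevant slice can be chosen well inside the cylinder. Once universal transversality is established, the universal moduli space $\mathcal{M}$ is a Banach manifold, the projection $\mathcal{M}\to\mathcal{P}_{\mathcal{O}}$ is Fredholm, and the Sard--Smale theorem gives a residual set of $\q$ for which the fiber is cut out transversely; intersecting over the countably many choices of $(\mathcal{C},\mathbf{z},[\Cr_+],z_+)$ (countably many because there are finitely many critical submanifolds and the homotopy classes form a torsor over $H^1(Y;\Z)$) preserves residuality. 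The induction on $n$ closes because transversality of $\ev_-$ against $\ev_+(M_{\mathbf z}(\mathcal C))$ is precisely what makes the next fiber product smooth.

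I expect the main obstacle to be the transversality computation in the \emph{boundary-obstructed} and \emph{reducible-to-reducible} cases, where $Q_\gamma$ is never surjective and one must instead argue with $Q^{\partial}_\gamma$ while simultaneously controlling the normal operator $Q^\nu_\gamma$, whose kernel/cokernel is pinned down by Lemma~14.5.4 above. In the reducible-to-reducible case the moduli spaces are the explicit projective-space complements of Lemma~\ref{dimreducible}, so there is genuinely no perturbation freedom left there; the point is that for such pairs the perturbations in $\mathcal{P}_{\mathcal{O}}$ that vanish on $K\cap(\R^n\times\mathbb{T}\times\{0\})$ (the extra density condition built into the choice of the family $\{\q_i\}$) do not move the reducible locus at all, so one only needs the $\q$-derivative to be surjective \emph{after} quotienting out the reducible directions, and the requirement in Definition~\ref{Morsebotts} that the blow-down map be a fibration guarantees the evaluation maps on these reducible moduli spaces are submersions (as noted in the proof of Lemma~\ref{dimreducible}), which is exactly what lets the fiber-product regularity go through. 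Assembling these cases carefully, with the bookkeeping of which slice to perturb near and why \eqref{pertequal} is not an obstruction, is where the real work lies; the rest is a verbatim transcription of Theorem~14.5.2 and its corollaries from the book.
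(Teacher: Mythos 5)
Your overall scheme is the paper's: a universal moduli space over $\mathcal{P}_{\mathcal{O}}$, surjectivity of the combined linearization via unique continuation plus the density of cylinder functions (Proposition \ref{densepert}), Sard--Smale, a countable intersection over chains $(\mathcal{C},\mathbf{z},[\Cr_+],z_+)$, and an inductive treatment of the fibered products matching Definition \ref{regularpert}, with the boundary-obstructed case handled through $Q^{\partial}_{\gamma}$. The gap is in the trajectories lying over a single reducible critical point downstairs, i.e.\ the moduli spaces between two critical submanifolds $[\Cr_1],[\Cr_2]$ blowing down to the same $[B,0]$. You correctly observe that there is no perturbation freedom there, but your replacement --- ``one only needs the $\q$-derivative to be surjective after quotienting out the reducible directions'' --- cannot work: the blow-down of such a trajectory is the constant reducible critical point, which lies entirely inside $\mathcal{O}$, and every $\q\in\mathcal{P}_{\mathcal{O}}$ agrees with $\q_0$ there, so the $\q$-derivative of $\F^{\tau}_{\q}$ vanishes identically along these trajectories and contributes nothing to any quotient. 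Likewise, appealing to Lemma \ref{dimreducible} for the submersivity of the evaluation maps only settles the fibered-product (regularity) part; it does not give Smale-regularity, i.e.\ surjectivity of the deformation operator for these moduli spaces, which is what is needed before the explicit projective-space description can be fed into the inductive step.

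What the paper does instead --- and what is missing from your proposal --- is to show that for these trajectories surjectivity is \emph{automatic} once $\delta>0$ is small: the adjoint of (say) the spinor part of the linearization in the weighted norm is conjugate, by the trick of equation (\ref{weightedtrick}), to $\psi\mapsto -\tfrac{d}{dt}\psi+D_{\q,B}\psi+\sigma(t)\psi$ on the unweighted spaces, and since $0\notin\mathrm{Spec}(D_{\q,B})$ by the Morse-Bott hypothesis (Lemma \ref{nonzerolambda}), the unperturbed operator $-\tfrac{d}{dt}+D_{\q,B}$ is an isomorphism, hence so is the weighted one for $\delta$ small, uniformly over the finitely many reducible critical submanifolds by compactness. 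This is precisely where the ``for $\delta>0$ small enough'' in the statement of Theorem \ref{transversalitymain} is used (beyond its role in Theorem \ref{modsame} and Proposition \ref{fredholmQ}), and your proposal never accounts for that hypothesis. Without this spectral/weight argument the Smale-regularity of the reducible-over-reducible moduli spaces is unproved, and the rest of the induction does not get off the ground for chains containing such pairs.
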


\begin{proof}
The proof, which is analogous to the one of Theorem $15.1.1$ in the book, goes through the usual construction of a universal moduli space and Sard-Smale's theorem (see Lemma $12.5.1$ in the book). We first discuss the Smale regularity for the moduli spaces.
We can just work locally with some contractible open sets $[\mathfrak{U}_-]$ and $[\mathfrak{U}_+]$, with smooth lifts in Coulomb gauge $\mathfrak{U}_-$ and $\mathfrak{U}_+$.
Define the smooth map of Banach manifolds
\begin{IEEEeqnarray*}{c}
\mathfrak{N}:\Ct_{k,\delta}(\mathfrak{U}_-,\mathfrak{U}_+)\times\mathcal{P}_{\mathcal{O}}\rightarrow \V^{\tau}_{k-1,\delta}(Z)\\
(\gamma,\q)\mapsto \F^{\tau}_{\q}(\gamma).
\end{IEEEeqnarray*}
The derivative of $\mathfrak{N}$ at a point $(\gamma,\q)$ is a map
\begin{equation*}
\mathcal{D}_{(\gamma,\q)}\mathfrak{N}: \T^{\tau}_{k,\delta,\gamma}\times T_{\q}\mathcal{P}_{\mathcal{O}}\rightarrow \V^{\tau}_{k-1,\delta,\gamma}(Z).
\end{equation*}
We first focus in the case in which $\gamma$ is an irreducible configuration, and need to show that $\mathfrak{N}$ is surjective. After we use the same trick already employed in the proof of Proposition \ref{fredholmQ} in order to reduce to a problem on the unweighted spaces, the proof of Proposition $15.1.3$ in the book works verbatim to prove that the restriction of the map
\begin{equation*}
(\mathcal{D}_{(\gamma,\q)}\mathfrak{N})_0:  \T^{\tau}_{k,\delta,0,\gamma}\times T_{\q}\mathcal{P}_{\mathcal{O}}\rightarrow \V^{\tau}_{k-1,\delta,\gamma}(Z)
\end{equation*}
is surjective. Using then the familiar strategy, we can define the universal moduli space
\begin{align*}
\mathfrak{M}_z([\mathfrak{U}_-],[\mathfrak{U}_+])&=\mathfrak{N}^{-1}(0, s_0)/\G_{k+1,\delta}\\
&\subset\Bt_{k,\delta,z}([\mathfrak{U}_-],[\mathfrak{U}_+])\times\mathcal{P}_{\mathcal{O}},
\end{align*}
which is a smooth Banach manifold and has the property that the projection to $\mathcal{P}_{\mathcal{O}}$ is Fredholm. Hence the Sard-Smale theorem provides a residual subset of regular values in $\mathcal{P}_{\mathcal{O}}$, and we restrict our attention to the subset of adapted ones. 
\par
The case of $\gamma$ reducible not projecting to a reducible critical point in the blow down is analogous to the classical case, as in this case the derivative of $\mathfrak{N}$ has the summand
\begin{equation*}
(\mathcal{D}_{(\gamma,\q)}\mathfrak{N})^{\partial}: (\T^{\tau}_{k,\delta,\gamma})^{\partial}\times T_{\q}\mathcal{P}_{\mathcal{O}}\rightarrow (\V^{\tau}_{k-1,\delta,\gamma}(Z))^{\partial},
\end{equation*}
and the result follows in an identical way using the surjectivity of the restriction
\begin{equation*}
(\mathcal{D}_{(\gamma,\q)}\mathfrak{N})_0^{\partial}:  (\T^{\tau}_{k,\delta,0,\gamma})^{\partial}\times T_{\q}\mathcal{P}_{\mathcal{O}}\rightarrow (\V^{\tau}_{k-1,\delta,\gamma}(Z))^{\partial},
\end{equation*}
whose proof is also contained in the proof of Proposition $15.1.3$ in the book. When the trajectory $\gamma$ lies over a single reducible $(B,0)$, as in the proof in the usual case we need to show that the corresponding linearized operators is surjective. Considering as an example the spinor part, its adjoint in the weighted Sobolev norm in equation (\ref{weightedsp}) can be identified (as in the proof of Proposition \ref{fredholmQ}) with the operator on the unweighted Sobolev spaces
\begin{equation*}\label{adjointsurj}
\psi\mapsto-\frac{d}{dt}\psi+D_{\q,B}\psi+\sigma(t)\psi.
\end{equation*}
Notice that this is not the adjoint in the equivalent norm in equation (\ref{weightedspequiv}).
The operator
\begin{equation*}
\psi\mapsto-\frac{d}{dt}\psi+D_{\q,B}\psi
\end{equation*}
is an isomomorphism on the unweighted spaces as zero is not in the spectrum of $D_{\q,B}$ (see Lemma \ref{nonzerolambda}). Hence for $\delta>0$ small enough (and appropriate choice of the function $f$ defining the weighted Sobolev norm) the operator in equation (\ref{adjointsurj}) will also be an isomorphism, and indeed we can find a $\delta>0$ that works for all the reducible critical submanifolds by compactness.
\par
The regularity statement follows essentially in the same way. For example in the case of a sequence of length three $[\Cr_-],[\Cr_0],[\Cr_+]$ at a point in which both trajectories $([\gamma_-],[\gamma_+])$ are irreducible. The regularity property at this point is equivalent to the fact that the map
\begin{IEEEeqnarray*}{c}
\mathfrak{N}':\Ct_{k,\delta}(\mathfrak{U}_-,\mathfrak{U}_0)\times \Ct_{k,\delta}(\mathfrak{U}_0,\mathfrak{U}_+)\times\mathcal{P}_{\mathcal{O}}\rightarrow \V^{\tau}_{k-1,\delta}(Z)\oplus \V^{\tau}_{k-1,\delta}(Z)\times \mathfrak{U}_0\times \mathfrak{U}_0\\
(\gamma_-,\gamma_+,\q)\mapsto \left(\F^{\tau}_{\q}(\gamma_-),\F^{\tau}_{\q}(\gamma_+), \ev_+(\gamma_-), \ev_-(\gamma_+)\right).
\end{IEEEeqnarray*}
is transverse to $\left(\{0\}\oplus \{0\}\right)\times\Delta$, where
\begin{equation*}
\Delta\subset \mathfrak{U}_0\times \mathfrak{U}_0
\end{equation*}
is the diagonal. This will follow if we can prove that the restrictions of the linearizations
\begin{equation*}
\mathcal{D}_{(\gamma_-,\gamma_+,\q)}\mathfrak{N}'_0:  \T^{\tau}_{k,\delta,0,\gamma_-}\times \T^{\tau}_{k,\delta,0,\gamma_+}\times T_{\q}\mathcal{P}_{\mathcal{O}}\rightarrow \V^{\tau}_{k-1,\delta,\gamma_-}(Z)\times \V^{\tau}_{k-1,\delta,\gamma_+}(Z)\times\{0\}\times\{0\}
\end{equation*}
are surjective. Even though this does not follow from the surjectivity of the maps $(\mathcal{D}_{(\gamma,\q)}\mathfrak{N})_0$ above, the proof of Proposition $15.1.3$ is readily adapted to this case using unique continuation and the properties of cylinder functions discussed regarding embedding of compact sets of the based configuration space, see Proposition \ref{densepert} in Chapter $1$. Finally, in the case $[\Cr_1]$ and $[\Cr_2]$ are critical submanifolds lying over the same reducible $[C]$ the evaluation maps on the moduli spaces lying over constant trajectories are submersions because of Lemma \ref{dimreducible}.
\end{proof}

\vspace{1.5cm}
\section{Compactness and finiteness}
In this section, which closely follows Chapter $16$ in the book, we discuss the compactess properties for moduli spaces of trajectories, and construct the space of unparametrized broken trajectories.
\\
\par
We say that a trajectory in the moduli space $M_z([\Cr_-],[\Cr_+])$ is \textit{non trivial} if it is not invariant under the action of $\R$ by translations on the infinite cylinder. This is equivalent to say that either it has distinct endpoints or if they coincide then the relative homotopy class $z$ is non trivial.
\begin{defn}
An \textit{unparametrized trajectory} connecting $[\Cr_-]$ to $[\Cr_+]$ is an equivalence class of non trivial trajectories in $M_z([\Cr_-],[\Cr_+])$ under the action of translations. We write
\begin{equation*}
\breve{M}_z([\Cr_-],[\Cr_+])
\end{equation*}
for the space of unparametrized trajectories.
\end{defn}

\begin{defn}
An \textit{unparametrized broken trajectory} joining two critical submanifolds $[\Cr_-]$ to $[\Cr_+]$ consists of the following data:
\begin{itemize}
\item an integer $n\geq 0$, the \textit{number of components};
\item an $(n+1)$-tuple of critical submanifolds $[\Cr_0],\dots, [\Cr_n]$ with $[\Cr_0]=[\Cr_-]$ and $[\Cr_n]=[\Cr_+]$, the \textit{resting submanifolds};
\item for each $1\leq i\leq n$, an unparametrized trajectory
\begin{equation*}
[\breve{\gamma}_i]\in\breve{M}_{z_i}([\Cr_{i-1}],[\Cr_i]),
\end{equation*}
the \textit{i-th} \textit{component} of the broken trajectory, with the property that
\begin{equation*}
\ev_+[\breve{\gamma}_i]=\ev_-[\breve{\gamma}_{i+1}]
\end{equation*}
and we call this critical point the $i$\textit{th} \textit{restpoint}.
\end{itemize}
The \textit{homotopy class} of the broken trajectory is the relative homotopy class of the path obtained by concatenating representatives of the classes $z_i$. We write $\breve{M}^+_z([\Cr_-],[\Cr_+])$ for the space of unparametrized trajectories in the homotopy class $z$, and write the typical element as
\begin{equation*}
[\boldsymbol{\breve{\gamma}}]=([\breve{\gamma}_1],\dots,[\breve{\gamma}_n]).
\end{equation*}
Finally, there are naturally defined evaluation maps
\begin{equation*}
\ev_{\pm}:\breve{M}^+_z([\Cr_-],[\Cr_+])\rightarrow [\Cr_{\pm}].
\end{equation*}
\end{defn}
\begin{remark}We consider also broken trajectories with $n=0$ components for bookkeping purposes. If $z$ is the class of the constant path for the submanifold $[\Cr]$, then $\breve{M}^+_z([\Cr],[\Cr])$ consists of a single point, the broken trajectory with no components.
\end{remark}

\vspace{0.8cm}

The space of unparametrized broken trajectories is topologized as follows. Consider an element
\begin{equation*}
[\boldsymbol{\breve{\gamma}}]=([\breve{\gamma}_1],\dots,[\breve{\gamma}_n])\in\breve{M}^+_z([\Cr_-],[\Cr_+]),
\end{equation*}
with $[\breve{\gamma}_i]\in\breve{M}_{z_i}([\Cr_{i-1}],[\Cr_i])$ being represented by a parametrized trajectory
\begin{equation*}
[\gamma_i]\in M_{z_i}([\Cr_{i-1}],[\Cr_i]).
\end{equation*}
Let $U_i\subset \Bt_{k,\mathrm{loc}}(Z)$ an open neighborhood of $[\gamma_i]$, and let $T\in\R^+$. We define
\begin{equation*}
\Omega=\Omega(U_1,\dots U_n,T)
\end{equation*}
to be the subset of $\breve{M}^+_z([\Cr_-],[\Cr_+])$ consisting of broken unparametrized trajectories
\begin{equation*}
[\boldsymbol{\breve{\delta}}]=([\breve{\delta}_1],\dots,[\breve{\delta}_m])
\end{equation*}
satisfying the following condition. There exists a map
\begin{equation*}
(\jmath,s):\{1,\dots n\}\rightarrow \{1,\dots,m\}\times \R
\end{equation*}
such that
\begin{itemize}
\item $[\tau^*_{s(i)}\delta_{\jmath(i)}]\in U_i$;
\item if $1\leq i_1\leq i_2\leq n$, then either $\jmath(i_1)\leq \jmath(i_2)$, or $\jmath(i_1)= \jmath(i_2)$ and $s(i_1)+T\leq s(i_2)$.
\end{itemize}
We take the sets of the form $\Omega=\Omega(U_1,\dots U_n,T)$ to be a neighborhood base for $[\breve{\gamma}]$ in $\breve{M}^+_z([\Cr_-],[\Cr_+])$.

\vspace{0.8cm}
The first goal of the section is to prove the following result (see Proposition $16.1.4$ in the book). While in the proof we will not require transversality hypothesis on the moduli spaces, the latter will be useful later in the section when discussing finiteness conditions.
\begin{teor}\label{compactness}
For any $C>0$ and critical submanifolds $[\Cr_{\pm}]$, there are only finitely many $z$ with energy $\mathcal{E}_{\q}(z)\leq C$ for which $\breve{M}^+_z([\Cr_-],[\Cr_+])$ is non-empty. Furthermore each space $\breve{M}^+_z([\Cr_-],[\Cr_+])$ is compact.
\end{teor}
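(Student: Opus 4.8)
The plan is to adapt the argument of Proposition $16.1.4$ in the book, treating the finiteness assertion and the compactness assertion separately. Both rest on the compactness theorem for the Seiberg--Witten equations on cylinders (Section $10.7$ in the book), according to which a sequence of solutions on $\R\times Y$ with uniformly bounded topological energy has, after translation, a subsequence converging in $C^\infty_{\mathrm{loc}}$ to a (possibly broken) trajectory. The only genuinely new feature in the Morse--Bott setting is that the asymptotic limits now range over the critical submanifolds rather than being isolated points, and this is precisely what the uniform exponential decay of Lemma \ref{expdecay} (together with Lemma \ref{expint} for the function $\Lambda_{\q}$ in the reducible case) and the near-constant estimate of Proposition \ref{nearconstestimate} are there to control.

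For the finiteness statement I would first use that there are only finitely many critical submanifolds (Corollary $10.7.4$ in the book together with the fact that Morse--Bott singularities are isolated) and that $\CSd$ is constant on each of them. The gradient-flow structure then supplies a uniform lower bound $\varepsilon_0>0$ for the energy of a non-trivial unparametrized trajectory between two critical submanifolds: near a critical point $\CSd$ decays exponentially, and away from the (compact) critical set $\|\mathrm{grad}\CSd\|$ is bounded below, so a trajectory of energy below $\varepsilon_0$ must be constant. Hence a broken trajectory with $\mathcal{E}_{\q}(z)\le C$ has at most $\lfloor C/\varepsilon_0\rfloor$ components. For each fixed number of components and fixed tuple of resting submanifolds, only finitely many homotopy classes $z_i$ for the individual components satisfy $\mathcal{E}_{\q}(z_i)\le C$ with non-empty moduli space — this is the content of Lemma $16.1.2$ in the book, whose proof uses only the compactness of the critical set and the fact that a gradient flow line cannot accumulate winding around the torus of reducibles without expending energy, and so carries over unchanged. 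Since $z$ is the concatenation of the $z_i$, only finitely many $z$ can occur.

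For the compactness of $\breve{M}^+_z([\Cr_-],[\Cr_+])$ I would take a sequence in it; all its members have total energy $\mathcal{E}_{\q}(z)$, so by the previous step I may pass to a subsequence with a fixed number of components and a fixed tuple of resting submanifolds. Choosing parametrized representatives of each component and applying the compactness theorem of Section $10.7$, after a further subsequence and suitable translations each component converges in $C^\infty_{\mathrm{loc}}$ to a finite chain of non-trivial trajectories. Here Lemma \ref{expdecay} (and Lemma \ref{expint}, applied to $\Lambda_{\q}$ when a reducible submanifold appears) together with Proposition \ref{nearconstestimate} show that the tails of the approximating trajectories approach their limiting critical submanifolds uniformly in the sequence parameter: this simultaneously guarantees that the limiting pieces are genuinely asymptotic to critical points lying in the expected submanifolds, that no energy leaks into the directions tangent to a critical submanifold (so the total energy of the limiting chain is still $\mathcal{E}_{\q}(z)$), and that the matching conditions $\ev_+[\breve{\gamma}_i]=\ev_-[\breve{\gamma}_{i+1}]$ pass to the limit, using the compactness of the critical submanifolds. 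The limit is therefore an unparametrized broken trajectory in $\breve{M}^+_z([\Cr_-],[\Cr_+])$, and the $C^\infty_{\mathrm{loc}}$ convergence of the pieces says exactly that the subsequence converges to it in the topology generated by the neighborhoods $\Omega(U_1,\dots,U_n,T)$ described above.

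The step I expect to be the main obstacle is the control of the trajectories near the Morse--Bott critical submanifolds: one must rule out that, along the sequence, the necks of the broken limit carry a positive amount of the flow off into the directions tangent to a critical submanifold, which would make the number of limiting pieces jump or the energy fail to be conserved. This is precisely the phenomenon that the uniform exponential-decay estimates of Lemma \ref{expdecay} and Lemma \ref{expint} and the near-constant estimate of Proposition \ref{nearconstestimate} are designed to preclude, and it replaces the use of hyperbolicity of the Hessian in the book. A secondary point to handle with care is the reducible and boundary-obstructed case, where one must track $\Lambda_{\q}$ alongside $\CSd$ and use that reducible critical submanifolds fiber over reducible configurations (Definition \ref{Morsebotts}); but for the set-theoretic compactness asserted here — as opposed to the finer corner structure of the moduli spaces — these cases are handled by the same estimates.
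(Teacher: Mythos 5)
Your outline reproduces the Morse-case skeleton correctly, but it passes over the one point the paper identifies as genuinely new, and the way you propose to handle it does not work as stated. You claim that Lemma \ref{expdecay} and Lemma \ref{expint}, together with Proposition \ref{nearconstestimate}, already guarantee that the limiting pieces ``are genuinely asymptotic to critical points.'' Those lemmas control the values of $\CSd$ and $\Lambda_{\q}$ along the trajectory, and both functions are \emph{constant} on each critical submanifold; their exponential decay therefore says nothing about whether the trajectory settles at a single point of $[\Cr]$ or drifts (``spirals'') inside a neighborhood of the submanifold while $\CSd$ converges. In the nondegenerate case this is automatic from hyperbolicity of the Hessian, but in the Morse--Bott case one must prove separately that a finite-energy solution has a unique limit point, i.e.\ that it actually belongs to some $M([\Cr_-],[\Cr_+])$ so that the endpoint-matching conditions are even defined. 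This is Lemma \ref{convergencefinite}, and its proof is the new ingredient: one introduces the $L^2$ metric on $\Bo_{k-1}(Y)$, bounds the length of the path $[\check{\gamma}(t)]$ by $\int\|\mathrm{grad}\CSd(\check{\gamma}(t))\|_{L^2}\,dt$, and shows this integral is finite by Cauchy--Schwarz on unit intervals combined with the exponential decay of $\CSd$ from Lemma \ref{expdecay}. Without this finite-length (or some Lojasiewicz-type) argument your limit configuration need not have well-defined asymptotics, and the rest of your compactness argument cannot get started. Note also the structural difference: the paper first proves the stronger compactness statement for the blown-down moduli spaces (Proposition \ref{compdownstairs}) and only then deduces the blown-up case by tracking $\Lambda_{\q}$.

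Your finiteness argument has a second, independent problem: the claimed uniform lower bound $\varepsilon_0>0$ for the energy of a nontrivial unparametrized trajectory fails in the blow-up. Trajectories joining two distinct critical submanifolds lying over the \emph{same} reducible configuration (the towers described in Lemma \ref{dimreducible}) blow down to a constant solution and hence carry zero topological energy; moreover, upstairs there are infinitely many critical submanifolds over each reducible, so ``finitely many critical submanifolds'' is only true downstairs. Consequently a bound on $\mathcal{E}_{\q}$ does not by itself bound the number of components, and this is exactly why the Morse-case proof in the book supplements the energy with the total variation of $\Lambda_{\q}$, and why the paper's proof of this theorem says the additional complication ``comes from the function $\Lambda_{\q}$,'' using Lemma \ref{expint} and the fact that $\Lambda_{\q}$ has limits at $\pm\infty$ because its value on a reducible critical submanifold depends only on the blow-down. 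Invoking $\Lambda_{\q}$ only as a decay statement at the ends does not replace this bookkeeping. (A smaller inaccuracy: for torsion $c_1(\spin)$ the functional $\CSd$ is single-valued, so $\mathcal{E}_{\q}(z)$ does not depend on $z$ at all, and the finiteness of the homotopy classes with nonempty moduli space is itself a consequence of the compactness argument rather than of an energy count per component.)
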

The proof of this proposition is essentially identical to the one in the Morse case (Proposition $16.1.4$ in the book), with just an additional detail to be fixed. We first show that a stronger compactness result holds \textit{downstairs}, i.e. for blown down trajectories. This will follow from the compactness properties for the Seiberg-Witten equations on a finite cylinder (see Chapter $5$ in the book). We will then deduce the case we are actually interested in. 
\\
\par
Suppose our critical submanifolds $[\Cr_{\pm}]$ blow down to critical submanifolds $[C_{\pm}]$ in $\Bo_k(Y)$, we introduce moduli spaces
\begin{IEEEeqnarray*}{c}
N_z([C_-],[C_+])\subset \Bo_{k,\mathrm{loc}}(Z) \\
N([C_-],[C_+])=\bigcup_z N_z([C_-],[C_+])
\end{IEEEeqnarray*}
of solutions to the perturbed equations asymptotic to $[C_-]$ and $[C_+]$ at $\pm\infty$. This comes with a blow down map
\begin{equation*}
\mathbf{\pi}:M_z([\Cr_-],[\Cr_+])\rightarrow N_z([C_-],[C_+]).
\end{equation*}
We define a trajectory in $N_z([C_-],[C_+])$ to be non-trivial if it is not invariant under translation. We can introduce for these moduli spaces the analogues of the unparametrized trajectories $\breve{N}_z([C_-],[C_+])$ and broken trajectories $\breve{N}^+_z([C_-],[C_+])$, and the disjoint union
\begin{equation*}
\breve{N}^+=\bigcup_{[C_-],[C_+]} \breve{N}^+([C_-],[C_+]).
\end{equation*}

For this space, we have the following compactness result, which differs from Proposition \ref{compactness} as it deals with the union over all critical submanifolds, see Proposition $16.2.1$ in the book.
\begin{prop}\label{compdownstairs}
For any $C>0$ there are only finitely many $[C_-],[C_+]$ and $z$ such that $\mathcal{E}_{\q}(z)\leq C$ and the space $\breve{N}^+_z([C_-],[C_+])$ is non empty. Furthermore, each $\breve{N}^+_z([C_-],[C_+])$ is compact. In other words, the space of broken trajectories in $\breve{N}^+$ with energy $\mathcal{E}_{\q}\leq C$ is compact.
\end{prop}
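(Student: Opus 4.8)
The plan is to follow the strategy of Proposition $16.2.1$ in the book, reducing the compactness of broken trajectories to the compactness results for the Seiberg-Witten equations on a \emph{finite} cylinder from Chapter $5$ of the book, which apply verbatim since they are insensitive to the (non-)degeneracy of the critical points. First I would establish the finiteness of topological types: for a broken trajectory $[\boldsymbol{\breve{\delta}}]\in \breve{N}^+$ with total energy $\mathcal{E}_{\q}\le C$, the energy of each component is bounded by $C$, and the number of components is bounded because each non-trivial component carries a definite minimal energy (this uses that the critical \emph{submanifolds} are isolated, a consequence of the Morse-Bott hypothesis, together with Lemma \ref{expdecay}, so that a trajectory staying close to a critical submanifold for a long time must be the constant one). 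Since there are only finitely many critical submanifolds (by the compactness of the critical set downstairs, Corollary $10.7.4$ in the book), and the set of relative homotopy classes with bounded energy between two fixed submanifolds is finite (the energy grows linearly in the class by the relation between $\CSd$ and the period $2\pi^2(z\cup c_1(S))[Y]$), there are only finitely many $([C_-],[C_+],z)$ to consider.

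Next I would prove that each $\breve{N}^+_z([C_-],[C_+])$ is sequentially compact. Given a sequence $[\delta_n]$ in $\breve{N}_z([C_-],[C_+])$, one applies the finite-cylinder compactness theorem on a family of compact subcylinders $[-T,T]\times Y$ with $T\to\infty$, using the uniform energy bound to extract (after gauge transformations and passing to a subsequence) a limit that converges in $L^2_k$ on compact subsets. The only genuinely new point compared to the book — the "additional detail to be fixed" mentioned in the text — is that when the limit spends time near a critical submanifold $[C']$ of positive dimension, one must control the evaluation maps: the drift of the \emph{tangent} component of the configuration along $[C']$ is governed by the normal component via the estimate \eqref{normtan} in the proof of Proposition \ref{nearconstestimate}, so the tangent part converges to a well-defined point of $[C']$ and the matching condition $\ev_+[\breve{\delta}_i]=\ev_-[\breve{\delta}_{i+1}]$ at the restpoints passes to the limit. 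This shows the limit is a broken trajectory in $\breve{N}^+_z([C_-],[C_+])$ and that the convergence is exactly convergence in the topology defined by the neighborhood base $\Omega(U_1,\dots,U_n,T)$.

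Finally, the last sentence of the proposition — that the union of all $\breve{N}^+_z([C_-],[C_+])$ with $\mathcal{E}_{\q}\le C$ is compact — follows by combining the two previous steps: it is a finite union of compact spaces, hence compact. (Here one uses that the topology on $\breve{N}^+$ restricts to the given topology on each piece, and that distinct pieces are "far apart" in the sense that a convergent sequence cannot jump between different pairs of endpoints, which again is a consequence of the isolation of the critical submanifolds.)

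I expect the main obstacle to be the control of the evaluation maps in the limiting process: in the Morse case the critical points are isolated points so there is nothing to track, whereas here one must argue that as the trajectory lingers near a positive-dimensional critical submanifold for an arbitrarily long time its tangential position stabilizes. The key input is precisely the inequality \eqref{normtan} together with the exponential decay of the normal part from Lemma \ref{expdecay} (and, at reducible limits, the exponential decay of $\Lambda_{\q}-\Lambda_{\q}(\bcr)$ from Lemma \ref{expint}), which together force the tangential drift over the infinite end to be finite and hence to converge; everything else is a routine transcription of the finite-cylinder compactness machinery of Chapter $5$ and the broken-trajectory bookkeeping of Chapter $16$ of the book.
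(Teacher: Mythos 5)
Your overall strategy coincides with the paper's: the proof is a transcription of Proposition $16.2.1$ of the book, and the genuinely new point is exactly the one you isolate, namely that a finite-energy solution whose path eventually stays near a positive-dimensional critical submanifold must have a \emph{unique} asymptotic limit (the "no spiraling" issue); in the paper this is packaged as Lemma \ref{convergencefinite}. Where you differ is in the mechanism. The paper argues chart-free: it introduces the $L^2$ distance on $\Bo_{k-1}(Y)$ (and spends some effort checking, via elliptic regularity, that this really is a metric, i.e.\ that $L^2$-limits of gauge orbits are gauge orbits), then bounds the length of the path $[\check{\gamma}(t)]$ by $\int\|\mathrm{grad}\CSd(\check{\gamma}(t))\|_{L^2}\,dt$, controls this on unit intervals by Cauchy--Schwarz through $\big(\CSd(s-1)-\CSd(s+1)\big)^{1/2}$, and sums using the exponential decay of $\CSd$ carried over from Lemma \ref{expdecay}. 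Your route instead controls the \emph{tangential drift} in compatible product charts via (\ref{normtan}) together with Proposition \ref{nearconstestimate}; this is essentially the estimate chain used in the proof of Theorem \ref{modsame}, and it does work here, provided you note two things: Lemma \ref{expdecay} gives exponential decay of $\CSd$, not of the normal part (the normal decay is then extracted through Proposition \ref{nearconstestimate}), and its differential inequality must be re-derived for a path that is merely trapped in a neighborhood of the critical submanifold, since asymptotic convergence is what you are trying to prove; also the constants must be taken uniform over the finitely many charts covering the compact critical submanifold. Your appeal to Lemma \ref{expint} and $\Lambda_{\q}$ is not needed here: Proposition \ref{compdownstairs} is the blown-down statement, and those inputs belong to Theorem \ref{compactness}.

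There is one genuine flaw in your first step: you derive the finiteness of the relative homotopy classes $z$ with $\breve{N}^+_z\neq\emptyset$ and $\mathcal{E}_{\q}(z)\le C$ from the claim that the energy grows linearly with the class through the period $2\pi^2([u]\cup c_1(S))[Y]$. When $c_1(\spin)$ is torsion and $b_1(Y)>0$ this period vanishes identically: all classes between two fixed critical submanifolds (an affine space over $H^1(Y;\Z)$, which is infinite) carry the same energy, so no energy-monotonicity argument can rule out infinitely many of them. The finiteness of classes is instead a \emph{consequence} of the convergence argument: if trajectories existed in infinitely many distinct classes with bounded energy, a subsequence would converge to a broken trajectory in $\Bo_{k,\mathrm{loc}}$, and convergence (using the path-connected, simply connected separating neighborhoods $V_{[C]}$) forces the classes of the terms to agree eventually with that of the limit, a contradiction. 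So that portion of your argument needs to be replaced by the chaining/convergence bookkeeping rather than an energy count; the bound on the number of components and on the pairs $([C_-],[C_+])$ is fine as you state it.
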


The key ingredient in the proof is the following basic lemma. As this is the only point in which the proof of Proposition \ref{compdownstairs} differs from the one of Proposition $16.2.1$ in the book, we discuss its proof in more detail.
\begin{lemma}\label{convergencefinite}
Let $[\gamma]\in \Bo_{k,\mathrm{loc}}$ be a solution of the equations with finite energy. Then $[\gamma]\in N([C_-],[C_+])$ for some critical submanifolds $[C_-],[C_+]$.
\end{lemma}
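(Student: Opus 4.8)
The plan is to show that any finite-energy solution $[\gamma]\in\Bo_{k,\mathrm{loc}}$ converges, as $t\to\pm\infty$, to (a point in) some critical submanifold, thereby landing in $N([C_-],[C_+])$. The argument is the Morse-Bott analogue of the corresponding step in the book (Lemma $13.6.1$ / the discussion in Chapter $16$), and the only genuinely new ingredient is that the limit lives in a critical \emph{submanifold} rather than at an isolated critical point, so one must rule out ``drifting'' along the critical set at infinity. First I would use the finiteness of the energy: since
\begin{equation*}
\mathcal{E}_{\q}(\gamma)=\int_{\R}\|\tfrac{d}{ds}\check{\gamma}(s)\|^2_{L^2(Y)}\,ds<\infty
\end{equation*}
(up to the usual gauge-invariant reformulation of the energy as a drop of $\CSd$ between the ends), there is a sequence $t_n\to+\infty$ with $\|\tfrac{d}{ds}\check{\gamma}(t_n)\|_{L^2(Y)}\to 0$, i.e. $\mathrm{grad}\CSd(\check{\gamma}(t_n))\to 0$. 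Using the a priori interior estimates and compactness for the Seiberg-Witten equations on finite cylinders (Chapter $5$ in the book; see also Corollary $10.7.4$), a subsequence of the translates $\tau_{t_n}^*\gamma$ converges in $C^\infty_{\mathrm{loc}}$ on $[-1,1]\times Y$ to a solution which is translation-invariant and has vanishing gradient, hence to (the constant trajectory at) a point $\alpha_+$ of the critical set; let $[C_+]$ be the critical submanifold containing $[\alpha_+]$.

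Next I would upgrade this subsequential convergence to honest convergence $[\check{\gamma}(t)]\to[C_+]$ as $t\to+\infty$. Here I would invoke the near-constant estimates of Proposition \ref{nearconstestimate} together with the exponential-decay Lemma \ref{expdecay}: once $[\check{\gamma}(t)]$ enters a small gauge-invariant neighborhood $U$ of the constant solution $\gamma_{\alpha_+}$, Proposition \ref{nearconstestimate} controls the normal part of the trajectory by the drop of $\CSd$, which by Lemma \ref{expdecay} decays exponentially; and the estimate (\ref{normtan}) relating the derivative of the tangent part to the normal part shows the tangent part is Cauchy, so it also converges. Combining, $[\check{\gamma}(t)]$ converges in $\Bs_k(Y)$ (indeed in $\Bo_k(Y)$, working downstairs) to a single point of $[C_+]$; in particular it cannot keep drifting along $[C_+]$. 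The only subtlety is to make sure the trajectory \emph{does} eventually enter such a neighborhood $U$: this follows because the set of critical submanifolds is finite (compactness of the critical set, Corollary $10.7.4$) and isolated (Morse-Bott), so there is a uniform ``gap'' $\varepsilon_0>0$, and for $t$ large the quantity $\|\mathrm{grad}\CSd(\check{\gamma}(t))\|_{L^2}$ is eventually $<\varepsilon_0$ (its square is integrable), forcing $[\check{\gamma}(t)]$ into one of the neighborhoods $\mathcal{O}_{[C]}$ and then, by the gradient-flow trapping argument of Chapter $10$, into the smaller neighborhood $U$ of a single critical submanifold.

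The same argument applied to $t\to-\infty$ produces $[C_-]$, and then by definition $[\gamma]\in N_z([C_-],[C_+])\subset N([C_-],[C_+])$ for the relative homotopy class $z$ of the path $[\check{\gamma}]$. I expect the main obstacle to be the ``no drift'' step: in the Morse case convergence to an isolated critical point is automatic once one is trapped near it, whereas here one must genuinely use the quantitative estimate (\ref{normtan}) — the tangent component has a priori no reason to stabilize — to see that the path does not wander indefinitely within the critical submanifold. Everything else is a routine transcription of the finite-cylinder compactness and the exponential-decay estimates already established in Proposition \ref{nearconstestimate} and Lemma \ref{expdecay}.
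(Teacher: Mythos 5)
Your proposal is correct in outline and isolates the right difficulty (ruling out drift along the critical submanifold at infinity), but it handles the key quantitative step by a different mechanism than the paper, and two points need tightening. After the same trapping step (finite energy forces the energy of the translates on a fixed interval to zero, so by Lemma \ref{convfinite1} and the separating property the path $[\check{\gamma}(t)]$ is eventually confined to a neighborhood $V_{[C]}$ of a single critical submanifold, and $\CSd(\check{\gamma}(t))$ converges), the paper does not use the tangent/normal chart estimates at all: it introduces the $L^2$ distance on $\Bo_{k-1}(Y)$ (and verifies it is a metric, i.e.\ that it separates gauge orbits, via a Cauchy-sequence argument for the gauge transformations), establishes exponential decay of $\CSd(\check{\gamma}(t))-\CSd([C])$ by the differential inequality underlying Lemma \ref{expdecay}, and then bounds the displacement over each unit interval by Cauchy--Schwarz,
\begin{equation*}
\int_{s-1}^{s+1}\|\mathrm{grad}\CSd(\check{\gamma}(t))\|_{L^2(Y)}\,dt\leq C\left(\CSd(\check{\gamma}(s-1))-\CSd(\check{\gamma}(s+1))\right)^{1/2},
\end{equation*}
so the square roots of exponentially decaying increments are summable, the path has finite length, and it therefore has a single limit on $[C]$. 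Your route through Proposition \ref{nearconstestimate} and the estimate (\ref{normtan}) is the mechanism of Theorem \ref{modsame} and yields the same per-interval displacement bound, but as written it is mildly circular: the trapping only confines the trajectory near the \emph{submanifold} $[C_+]$, not near the specific point $\alpha_+$, so you cannot work in a single product chart around $\alpha_+$ from the outset (that the trajectory eventually stays near one point is precisely the statement to be proved); you must let the reference critical point vary from one unit interval to the next and use compactness of $[C_+]$ for uniform constants, which is what the finite-length formulation does automatically. Also, your justification of the trapping (``$\|\mathrm{grad}\CSd(\check{\gamma}(t))\|_{L^2}$ is eventually small because its square is integrable'') is not literally valid, since square integrability permits tall thin spikes; the correct input is that the energy of the translates on unit intervals tends to zero, which is exactly the hypothesis of Lemma \ref{convfinite1}, or else one must invoke interior elliptic estimates to upgrade integral smallness to pointwise smallness.
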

Before proving this lemma, we recall a basic lemma following from the compactness properties of the moduli space on a finite cylinder (see Lemma $16.2.2$ in the book) and a useful definition (see also Definition $16.2.3$ in the book).
\begin{lemma}\label{convfinite1}
Fix a collection $\mathcal{A}$ of critical points $[\alpha]$ in $\Bo_k(Y)$ and for each of them a gauge invariant open neighborhood $U_{\alpha}\subset \Co_k(I\times Y)$ of the translation invariant configuration $\gamma_{\alpha}$ such that their union contains all translation invariant solutions. Let $C_0$ be any constant, and $I'\subset I$ any other interval of non zero length. Then there exists $\epsilon>0$ such that if $\gamma$ is a trajectory satisfying
\begin{equation*}
\mathcal{E}_{\q}(\gamma)\leq C\qquad\text{and}\qquad \mathcal{E}^{I'}_{\q}(\gamma)\leq\epsilon,
\end{equation*}
then $\gamma|_{I\times Y}\in U_{\alpha}$ for some critical point $[\alpha]$ in $\mathcal{A}$.
\end{lemma}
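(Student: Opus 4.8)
The plan is to argue by contradiction and invoke the compactness theory for the Seiberg-Witten equations on a finite cylinder, exactly as in the proof of Lemma $16.2.2$ of the book; the Morse-Bott hypothesis plays no role here. First I would assume that no such $\epsilon$ exists, producing a sequence of solutions $\gamma_n$ of the perturbed equations (each defined on a cylinder containing $I\times Y$ in its interior) with $\mathcal{E}_{\q}(\gamma_n)\le C$, with $\mathcal{E}^{I'}_{\q}(\gamma_n)\to 0$, and such that $\gamma_n|_{I\times Y}$ lies in no $U_{\alpha}$ for $[\alpha]\in\mathcal{A}$.

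Next I would apply the compactness theorem for solutions on a finite cylinder with bounded energy (Chapter $5$ in the book): after passing to a subsequence and applying gauge transformations $u_n$, the configurations $u_n\cdot\gamma_n$ converge, in the topology of $L^2_k$ convergence on compact subsets, to a solution $\gamma_{\infty}$ on $I\times Y$. Since the energy is continuous under this convergence, $\mathcal{E}^{I'}_{\q}(\gamma_{\infty})=0$, so working in temporal gauge $\check{\gamma}_{\infty}$ is constant along $I'$ and equal to a critical point $\alpha$ of $\mathrm{grad}\CSd$; hence $\gamma_{\infty}$ coincides with the translation-invariant solution $\gamma_{\alpha}$ on $I'\times Y$. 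The unique continuation property for the perturbed Seiberg-Witten equations (Chapter $7$ in the book) then upgrades this to agreement of $\gamma_{\infty}$ with $\gamma_{\alpha}$ on all of $I\times Y$. This upgrade is exactly what allows the hypothesis $\mathcal{E}_{\q}(\gamma_n)\le C$ (rather than a small bound over all of $I$) to suffice, and it is the only slightly delicate point.

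Finally, since the sets $U_{\alpha}$, $[\alpha]\in\mathcal{A}$, cover all translation-invariant solutions, we have $\gamma_{\alpha}\in U_{\beta}$ for some $[\beta]\in\mathcal{A}$; as $U_{\beta}$ is open in $\Co_k(I\times Y)$ and $\G_{k+1}$-invariant and $u_n\cdot\gamma_n\to\gamma_{\alpha}$ there, it follows that $\gamma_n|_{I\times Y}\in U_{\beta}$ for all large $n$, contradicting the choice of the $\gamma_n$. I would conclude by noting that the set of gauge-equivalence classes of translation-invariant solutions is compact (Corollary $10.7.4$ in the book), so one may take $\mathcal{A}$ finite and the resulting $\epsilon$ is uniform. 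The main obstacle is really only the unique-continuation step together with bookkeeping of the domains of definition and of the convergence topology; no new analysis beyond the book is needed.
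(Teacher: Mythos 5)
Your argument is correct and is essentially the paper's: the paper does not reprove this lemma but simply quotes it as a consequence of the finite-cylinder compactness theory (Lemma $16.2.2$ in the book), and your contradiction argument — compactness on a slightly larger cylinder, zero energy on $I'$ forcing the limit to be a constant trajectory there, unique continuation to upgrade this to all of $I\times Y$, then gauge-invariance and openness of the $U_{\alpha}$ — is exactly that proof. The Morse-Bott setting indeed changes nothing here, as you note.
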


Here by $\mathcal{E}_{\q}^{I'}(\gamma)$ we mean the (perturbed) energy of the trajectory when restricted to the interval $I'$. As the critical submanifolds are compact, we can suppose that the family $\mathcal{A}$ contains only a finite number of manifolds from each of them.

\begin{defn}Fix an interval $I$. We say that a collection $\mathcal{U}$ of gauge invariant neighborhoods $U_{\alpha}\subset \Co_k(I\times Y)$ with $\alpha\in\mathcal{A}$ as in the previous lemma has the \textit{separating property} if the following holds. There should exist neighborhoods $V_{[\alpha]}\subset \Bo_{k-1}(Y)$ of the critical submanifolds such that
\begin{itemize}
\item $V_{[\alpha]}$ and $V_{[\alpha']}$ are disjoint if $\alpha$ and $\alpha'$ do not belong to the same critical submanifold;
\item each $V_{[\alpha]}$ is path-connected and simply connected;
\item if $\gamma\in U_{\alpha}$, then $[\breve{\gamma}(t)]\in V_{[\alpha]}$ for every $t\in I$.
\end{itemize}
For a critical submanifold $[C]$ we define $V_{[C]}$ to be neighborhood of $[C]$ obtained as the union of the $V_{[\alpha]}$ with $[\alpha]$ in $[C]$.
\end{defn}

\begin{proof}[Proof of lemma \ref{convergencefinite}]
Fix an interval $I$ and a collection of neighborhoods $\mathcal{U}$ with the separating property. The finite energy condition implies that the translates $\tau^*_t(\gamma)$ are such that
\begin{equation*}
\mathcal{E}_{\q}^I(\tau^*_t\gamma) \rightarrow0\qquad\text{for }t\rightarrow+\infty
\end{equation*}
so from Lemma \ref{convfinite1} above the translate $(\tau^*_t\gamma)|_I$ belongs to $U_{\alpha_t}$ for some critical point $[\alpha_t]$ in $\mathcal{A}$. Because of the separating property we have that $[\check{\gamma}(t)]\in V_{[C]}$ for some critical submanifold $[C]$ for all $t\geq t_0$. By choosing big intervals $I$ and small neighborhoods $V_{[C]}$ this shows that the function $\CSd(\check{\gamma}(t))$ converges for $t\rightarrow +\infty$. We then need to prove that the solution has exactly one limit point on such a critical submanifold, i.e. it does not ``spiral around". This phenomenon might happen already in finite dimensions when considering the gradient flow of a smooth function, but does not happen for analytic functions (see \cite{MMR} and \cite{Don}).
\par
We define the $L^2$ metric on $\Bo_{k-1}(Y)$ given by
\begin{equation*}
d([\alpha],[\alpha'])=\mathrm{inf}\left\{ \|\alpha-u\cdot \alpha'\|_{L^2(Y)}\mid u\in \G_{k}(Y)\right\}.
\end{equation*}
To check that this is a metric we need to show that two configurations $\alpha,\alpha'\in \Co_{k-1}(Y)$ such that there is a sequence $\{u_n\}_{n\in\mathbb{N}}\subset \G_{k}(Y)$ with $u_n\cdot \alpha'$ converging to $\alpha$ in the $L^2$ norm are actually gauge equivalent. This is proved in a similar way as Proposition $9.3.1$ in the book. Indeed, we can suppose each $u_n$ is in the identity component of the gauge group, so it can be written as $e^{\xi^0_n+\xi^{\perp}_n}$. Calling $B,B'$ the connection component of the configurations, we have that
\begin{equation*}
d\xi_n^{\perp}-(B-B')\rightarrow0\text{ in }L^2.
\end{equation*}
In particular the $\xi_n^{\perp}$ form a Cauchy sequence in $L^2_1$, so they converge in the $L^2_1$ topology to a configuration $\xi^{\perp}$ which is in $L^2_{k}$ by elliptic regularity. This implies that the gauge transformations $u_n$ converge in the $L^2_1$ topology to a gauge transformation $u\in\G_{k}$. On the other hand, $u_n\cdot\alpha'$ converges in the $L^2$ norm to $u\cdot\alpha'$ (for the spinor part we use that in dimension three $L^2_1$ embeds in $L^6$ hence in $L^4$), so the latter coincides with $\alpha$.
\par
The result will follow by taking the initial interval $I$ as large as we wish and the separating neighborhood as small as we want if we show that the path $[\check{\gamma}(t)]$ has finite length in this metric. 
Let $I=[s-1,s+1]$, and suppose that $\gamma\mid_{I\times Y}\in U_{\alpha}$ for some $\alpha$ in $C$. We will suppose the configuration is in Coulomb-Neumann gauge with respect to $\gamma_{\alpha}$. After possibly restricting both family neighborhoods, we can carry over the estimates of Lemma \ref{expdecay} to the function ${\CSd}(\check{\gamma}(s))-{\CSd}(\check{\gamma}([C])$. Furthermore, because there are only finitely many neighborhoods $U_{\alpha}$ involved, we can choose a constant $C$ so that this in equality holds for all of them. As the Chern-Simons-Dirac operator $\CSd$ converges along the trajectory to $\CSd([C])$, this implies that its value along the trajectory converges exponentially fast. By applying the Cauchy-Schwarz inequality we have that
\begin{multline*}
\int_{s-1}^{s+1} \|\mathrm{grad}\CSd(\check{\gamma}(t))\|_{L^2(Y)}dt\leq C \left(\int_{s-1}^{s+1} \|\mathrm{grad}\CSd(\check{\gamma}(t))\|_{L^2(Y)}^2dt\right)^{1/2} =\\
=C\left({\CSd}(\check{\gamma}(s-1))-{\CSd}(\check{\gamma}(s+1))\right)^{1/2}.
\end{multline*}
The rightmost term is a well defined real number for $s$ big enough. The exponential convergence of $\CSd$ along the trajectory implies that the integral of $\|\mathrm{grad}\CSd(\check{\gamma}(t))\|_{L^2(Y)}$ is finite. The length of path $[\check{\gamma}(t)]$ in the $L^2$ metric is bounded above by such integral, as by the flow equations $\|\mathrm{grad}\CSd(\check{\gamma}(t))\|_{L^2(Y)}$ is exactly the $L^2$ norm of its derivative, and the result follows.
\end{proof}

\begin{proof}[Proof of Theorem \ref{compactness}]
This follows as the one in the book with modifications (to be made in the reducible case) analogous to those we discussed above for Lemma \ref{convergencefinite}. The additional complication comes from the function $\Lambda_{\q}$, which can be dealt with as in the end of Section $2$. It is useful to remark that this function has limits at both $\pm\infty$ on a given trajectory. This is because its value at a configuration $[\bcr]$ on a reducible critical submanifold depends only on its blowdown $\pi_*([\bcr])$.
\end{proof}

\vspace{1.5cm}

To get rid of the energy bound in the assumption of Proposition \ref{compactness}, we need some regularity assumptions on the moduli spaces, which assure some strong finiteness results on the set of non empty moduli spaces as in the following lemma (see Proposition $16.4.1$ and $16.4.3$ in the book).
\begin{prop}\label{finiteness}
Suppose that a regular Morse-Bott perturbation $\q_0$ has been fixed. Then for given critical submanifolds $[\Cr_-]$ and $[\Cr_+]$ there are only finitely many relative homotopy classes $z$ for which the moduli space $\breve{M}^+_z([\Cr_-],[\Cr_+])$ is non-empty. Furthermore:
\begin{itemize}
\item if $c_1(\spin)$ is torsion then for a given $[\Cr_-]$ and any $d_0$ there are only finitely many pairs $([\Cr_+],z)$ for which $\breve{M}^+_z([\Cr_-],[\Cr_+])$ is non-empty and of dimension at most $d_0$.
\item if $c_1(\spin)$ is not torsion, suppose that the perturbation has been chosen so that there are no reducible solutions (see Section $4.2$ in the book). Then there are only finitely many triples $([\Cr_-],[\Cr_+],z)$ such that the moduli space $\breve{M}^+_z([\Cr_-],[\Cr_+])$ is non-empty (without restrictions on the dimension).
\end{itemize}
\end{prop}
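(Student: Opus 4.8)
The plan is to deduce this finiteness statement, exactly as in Chapter 16 of the book, from the compactness result of Theorem \ref{compactness} together with the additivity of the relative grading (Lemma \ref{additivegrading}) and the dimension formula in Proposition \ref{transversality}. First I would record the analogue of the book's key identity relating the energy $\mathcal{E}_{\q}(z)$ of a path in a given relative homotopy class to the drop of the perturbed Chern-Simons-Dirac functional $\CSd$ and to the topological quantity $(z\cup c_1(\spin))[Y]$; since $\CSd$ is multivalued with period $2\pi^2([u]\cup c_1(S))[Y]$, the set of possible energies of trajectories from $[\Cr_-]$ to $[\Cr_+]$ is either a single value (when $c_1(\spin)$ is torsion, so the functional is genuinely $\R$-valued on the relevant cover) or a discrete set of values indexed by $H^1(Y;\Z)/\mathrm{torsion}$. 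Combined with the fact that on any non-empty $\breve{M}^+_z([\Cr_-],[\Cr_+])$ the energy is a \emph{positive} lower-bounded quantity determined by $z$, and that the relative grading $\gr_z([\Cr_-],[\Cr_+])$ is (up to the additive shift by $\dim[\Cr_{\pm}]$) controlled by the same spectral-flow data, Theorem \ref{compactness} applied with an appropriate energy bound $C$ will immediately give the finiteness of the set of $z$ for fixed $[\Cr_-],[\Cr_+]$.

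For the torsion case with a bound $d_0$ on the dimension, I would argue as follows. Regularity of $\q_0$ guarantees (via Proposition \ref{transversality}) that every non-empty moduli space is a manifold whose dimension is $\gr_z([\Cr_-],[\Cr_+])+\dim[\Cr_-]$ (or one more in the boundary-obstructed case), hence non-negativity of the dimension forces $\gr_z \geq -\dim[\Cr_-]-1$. Since there are only finitely many critical submanifolds and hence finitely many values of $\dim[\Cr_{\pm}]$, the bound $d \leq d_0$ translates into a two-sided bound on $\gr_z([\Cr_-],[\Cr_+])$. Using the additivity of the grading (Lemma \ref{additivegrading}) one relates $\gr$ to the energy: in the torsion case, the relative grading and $2/\pi^2$ times the functional drop differ by a bounded amount as one moves among critical submanifolds (the ``mod $2$'' relation, or more precisely the bound $|2\mathcal{E}_{\q}(z)/\pi^2 - \gr_z| \leq M$ for a uniform $M$ depending only on the finitely many critical submanifolds and the perturbation, established as in Section 16.4 of the book). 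Therefore a bound on $\gr_z$ gives a bound on $\mathcal{E}_{\q}(z)$, and Theorem \ref{compactness} — now applied over \emph{all} pairs $([\Cr_+],z)$ at once, which is legitimate since there are only finitely many critical submanifolds — yields the finiteness of the pairs $([\Cr_+],z)$.

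For the non-torsion case, the hypothesis that the perturbation is chosen with no reducible solutions means all critical points are irreducible and non-degenerate, so every trajectory is irreducible; here the functional $\CSd$ descends to an $\R$-valued function on the component of the configuration space determined by any fixed $z$ only up to the non-trivial period lattice, but the point is that $\mathcal{E}_{\q}(z)$ is now \emph{uniformly bounded below by a positive constant} as $z$ ranges over classes supporting a non-trivial trajectory between any two critical submanifolds (because $c_1(\spin)$ non-torsion forces $(z\cup c_1)[Y]\neq 0$ to contribute a definite energy quantum, as in the book), and the relative gradings are bounded because there are only finitely many critical submanifolds and finitely many homotopy classes with bounded energy. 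Iterating Theorem \ref{compactness} and Lemma \ref{additivegrading} over concatenations — a broken trajectory decomposes into components, each contributing at least the minimal energy quantum — gives an a priori bound on the number of components of any broken trajectory, and hence the finiteness of all triples $([\Cr_-],[\Cr_+],z)$ without any dimension restriction.

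The main obstacle, I expect, is not any single estimate but making precise and uniform the comparison between the relative grading $\gr_z$, the energy $\mathcal{E}_{\q}(z)$, and the period of $\CSd$ in the Morse-Bott setting: in the classical case this is the content of the ``mod $2$'' relation between grading and functional, but here one must account for the grading shifts $\dim[\Cr_{\pm}]$ and the fact that the spectral-flow definition of $\gr$ (Section 3) is only additive after the correction in Lemma \ref{additivegrading}. Once one checks that the constant $M$ in $|2\mathcal{E}_{\q}(z)/\pi^2 - \gr_z| \leq M$ can be taken uniform over the finite collection of critical submanifolds — which follows from compactness of the blown-down critical set (Corollary 10.7.4 in the book) and the uniformity of the exponential-decay rate $\delta$ from Lemma \ref{expdecay} — everything else is a routine adaptation of Propositions 16.4.1 and 16.4.3 of the book.
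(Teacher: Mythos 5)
Your overall scheme (a lower bound on $\gr_z$, converted into an upper bound on $\mathcal{E}_{\q}(z)$ via the Lemma $16.4.4$-type relation, then finiteness from Theorem \ref{compactness}) is the right one, but there is a genuine gap at the central step: you only bound the grading when the \emph{unbroken} moduli space $M_z([\Cr_-],[\Cr_+])$ is non-empty, whereas the statement concerns the broken space $\breve{M}^+_z([\Cr_-],[\Cr_+])$. When the latter is non-empty one only knows there is a chain of non-empty spaces $M_{z_i}([\Cr_{i-1}],[\Cr_i])$ with matching evaluations, and in the Morse-Bott setting the per-component bound coming from Proposition \ref{transversality} is $\gr_{z_i}\geq -\dim[\Cr_{i-1}]$ (one less in the boundary-obstructed case), which can be strictly negative; adding these up with Lemma \ref{additivegrading} gives a lower bound that degrades linearly with the number of components, and the number of components is not a priori bounded. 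This is exactly where the proof must use the full regularity hypothesis of Definition \ref{regularpert}, not just Smale-regularity: because $\ev_+$ on $M_{z_i}([\Cr_{i-1}],[\Cr_i])$ and $\ev_-$ on $M_{z_{i+1}}([\Cr_i],[\Cr_{i+1}])$ are transverse and have non-disjoint images, one gets $\left(\dim[\Cr_{i-1}]+\gr_{z_i}\right)+\left(\dim[\Cr_i]+\gr_{z_{i+1}}\right)\geq \dim[\Cr_i]$, and an induction over the components makes all intermediate dimensions cancel, yielding $\gr_z([\Cr_-],[\Cr_+])\geq -\dim[\Cr_-]$ independently of how many times the trajectory is broken. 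Your proposal never invokes this fibered-product transversality, so the energy bound you feed into Theorem \ref{compactness} is not actually established for broken trajectories.

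The patch you propose in the non-torsion case --- each component carries a minimal energy quantum, hence the number of components is a priori bounded --- is circular: the quantum only bounds the number of components in terms of the total energy $\mathcal{E}_{\q}(z)$, which is precisely the quantity you are trying to bound as $z$ ranges over infinitely many classes. Two smaller inaccuracies: in the torsion case there are \emph{infinitely} many critical submanifolds in the blow-up (the towers over each reducible), so ``finitely many critical submanifolds, hence finitely many values of $\dim[\Cr_{\pm}]$'' is not what closes the argument --- this is why the dimension cutoff $d_0$ and the bookkeeping function $\iota$ enter the statement and the final energy bound $\mathcal{E}_{\q}\leq C+8\pi^2(\iota[\Cr_-]-\iota[\Cr_+])$; and ``no reducible solutions'' does not make the critical points non-degenerate --- they can still be positive-dimensional irreducible Morse-Bott submanifolds, so the classical per-component argument is unavailable there as well.
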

Before giving a proof of Proposition \ref{finiteness} we recall a useful definition from the book. Suppose that a reducible critical point $\acr$ lies over the configuration $(B,0)\in \Co_k(Y)$, and corresponds to the element $\lambda\in\mathrm{Spec}(D_{\q,B})$. In this case, we define
\begin{equation*}
\iota(\acr)=
\begin{cases}
|(\mathrm{Spec}(D_{\q,B})\cap[0,\lambda)|, & \text{if }\lambda> 0,\\
1/2-|(\mathrm{Spec}(D_{\q,B})\cap[0,\lambda)|, & \text{if } \lambda<0,
\end{cases}
\end{equation*}
where of course eigenvalues are counted with multiplicity. If $\acr$ is irreducible, we set $\iota(\acr)=0$. This definition is set up so that if $[\acr]$ and $[\acr']$ are two critical points whose blow down is the same critical point $[\alpha]\in \Bo_k(Y)$, then by Lemma \ref{dimreducible}
\begin{equation*}
\gr_{z_0}([\acr],[\acr'])=2(\iota(\acr)-\iota(\acr'))
\end{equation*}
for the trivial homotopy class $z_0$. Furthermore the value of $\iota$ is constant on a critical submanifold $[\Cr]$, hence we can univocally define the value value $\iota[\Cr]$.

\begin{proof}[Proof of Proposition \ref{finiteness}]
Suppose there is $[\boldsymbol{\breve{\gamma}}]\in \breve{M}_z([\Cr_-],[\Cr_+])$, and suppose that the resting manifolds are
\begin{equation*}
[\Cr_-]=[\Cr_0], [\Cr_1],\dots, [\Cr_{n-1}],[\Cr_n]=[\Cr_+].
\end{equation*}
The fact that the moduli space $M_z([\Cr_0],[\Cr_1])$ is Smale-regular implies that
\begin{equation*}
\mathrm{dim}[\Cr_0]+\gr_{z_0}([\Cr_0],[\Cr_1])\geq 0,
\end{equation*}
where the inequality is strict in the not boundary obstructed case. The regularity condition implies that the evaluation maps
\begin{IEEEeqnarray*}{c}
\ev_+:M_z([\Cr_0],[\Cr_1])\rightarrow[\Cr_1]\\
\ev_-:M_z([\Cr_1],[\Cr_2])\rightarrow[\Cr_1]
\end{IEEEeqnarray*} 
are transverse, and as they have non disjoint image we have that
\begin{equation*}
\left(\mathrm{dim}[\Cr_0]+\gr_{z_0}([\Cr_0],[\Cr_1])\right)+\left(\mathrm{dim}[\Cr_1]+\gr_{z_1}([\Cr_1],[\Cr_2])\right)\geq \mathrm{dim}[\Cr_1].
\end{equation*}
By induction on the number of components and using the additivity of the relative grading (Lemma \ref{additivegrading}) we then prove
\begin{equation*}
\gr_{z}([\Cr_-],[\Cr_+])\geq-\mathrm{dim}[\Cr_-].
\end{equation*}
The proof of Lemma $16.4.4$ in the book is then easily adapted to our case (using the compactness of the blow down of the critical set) to show that there exists $C_0$ such that for every $[\Cr_-],[\Cr_+]$ and $z$, and any broken trajectory $[\breve{\gamma}]\in \breve{M}_z^+([\Cr_-],[\Cr_+])$, we have the energy bound
\begin{equation*}
\mathcal{E}_{\q}(\breve{\gamma})\leq C + 8\pi^2(\iota[\Cr_-]-\iota[\Cr_+]).
\end{equation*}\end{proof}

\begin{remark}
Unlike the classical case, when the spin$^c$ structure is not torsion the moduli spaces of the form $M_z([\Cr],[\Cr])$ might be not empty.
\end{remark}

\vspace{0.8cm}

The rest of the present section (and chapter) is dedicated to understand in detail the structure of the moduli spaces of unparametrized broken trajectories. We first introduce a key definition from the book.

\begin{defn}
A topological space $N^d$ is a $d$\textit{-dimensional space stratified by manifolds} if there are closed subsets
\begin{equation*}
N^d\supset N^{d-1}\supset \dots \supset N^0\supset N^{-1}=\emptyset
\end{equation*}
such that $N^d\neq N^{d-1}$ and each space $N^e\setminus N^{e-1}$ (for $0\leq e\leq d$) is either empty or homeomorphic to a manifold of dimension $e$. We call $N^e\setminus N^{e-1}$ the $e$\textit{-dimensional stratum}. We will also call stratum any union of path components of $N^e\setminus N^{e-1}$.
\end{defn}

\begin{example}\label{stratexample}
Spaces stratified by manifolds (even compact ones) allow some pathologies.
Consider the space $N^1$ obtained as the union over all $n\in\mathbb{N}$ of all circles $C_n$ with center in $(-1/n,0)$ and radius $1/n$ and the segment joining $(0,0)$ and $(1,0)$. In this case $N^0=\{(0,0), (1,0)\}$, and $N^1\setminus N^0$ is a $1$-manifold with countably many path components.
\end{example}
Consider a sequence of critical manifolds $\mathcal{C}=\left([\Cr_i]\right)_{i=0,\dots, n}$ and corresponding relative homotopy classes $\mathbf{z}=\left(z_i\right)_{i=1,\dots, n}$. We can then define the subspace
\begin{equation*}
\breve{M}_{\mathbf{z}}(\mathcal{C})
\end{equation*}
consisting of unparametrized broken trajectories such that the $i$th restpoint lies on the critical manifold $[\Cr_i]$, and the $i$th component is in the relative homotopy class $z_i$. When the perturbation is regular, this subspace has a natural manifold structure as the quotient of open set the smooth fibered product $M_{\mathbf{z}}(\mathcal{C})$ introduced in Section $4$ consisting of $n$-uples such that each component is non trivial by the action of $\R^n$ given time translations on each component. The following result is then the analogue of Proposition $16.5.2$ and $16.5.5.$ in the book, and the proofs applies verbatim.
\begin{prop}\label{codim1}
Suppose we have fixed a regular perturbation $\q$, and let $M_z([\Lr_-],[\Lr_+])$ a $d$-dimensional moduli space containing irreducibles. Then the space of broken unparametrized trajectories $\breve{M}^+_z([\Lr_-],[\Lr_+])$ is a compact $(d-1)$-dimensional space stratified by manifolds, and the top stratum consists of the irreducible part of $\breve{M}_z([\Lr_-],[\Lr_+])$. Furthermore, the $(d-2)$-dimensional stratum of $\breve{M}^+_z([\Lr_-],[\Lr_+])$ is the union of pieces of three types:
\begin{itemize}
\item strata of the form
\begin{equation*}
 \breve{M}_{(z_1,z_2)}([\Lr_-],[\Cr],[\Lr_+])
\end{equation*}
where none of the pairs of consecutive critical manifolds is boundary obstructed;
\item strata of the form
\begin{equation*}
 \breve{M}_{(z_1,z_2,z_3)}([\Lr_-],[\Cr'],[\Cr''],[\Lr_+])
\end{equation*}
where only the middle moduli space is boundary obstructed;
\item the intersection of $\breve{M}_z([\Lr_-],[\Lr_+])$ with the reducibles, in the case $M_z([\Lr_-],[\Lr_+])$ contains both reducibles and irreducibles.
\end{itemize}
The last case happens only when $[\Cr_-]$ is unstable and $[\Cr_+]$ is stable.
\end{prop}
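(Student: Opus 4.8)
The strategy is the one of Propositions $16.5.2$ and $16.5.5$ in the book: assemble compactness, the finiteness of the relevant set of broken trajectories, the dimension count coming from additivity of the relative grading, and the gluing theory near a broken trajectory. First I would record compactness: the relative homotopy class $z$ fixes the energy $\mathcal{E}_{\q}(z)$, so Theorem \ref{compactness} already gives that $\breve{M}^+_z([\Lr_-],[\Lr_+])$ is compact (Proposition \ref{finiteness} enters only in the later global statements). By Proposition \ref{transversality}, regularity of $\q$ makes $M_z([\Lr_-],[\Lr_+])$ a smooth $d$-manifold — possibly with boundary along the reducibles when $[\Lr_-]$ is boundary-unstable and $[\Lr_+]$ boundary-stable — and the free $\R$-action on nontrivial irreducible trajectories exhibits the irreducible part of $\breve{M}_z([\Lr_-],[\Lr_+])$ as a $(d-1)$-manifold, which will be the top stratum; note that when $M_z$ contains both reducibles and irreducibles the reducibles form its boundary, so the only $(d-1)$-dimensional unbroken piece is the irreducible one.

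The second step is the combinatorial stratification and its dimension count. For a chain $\mathcal{C}=([\Cr_i])_{i=0}^{n}$ with $[\Cr_0]=[\Lr_-]$, $[\Cr_n]=[\Lr_+]$ and homotopy classes $\mathbf{z}=(z_i)$, regularity (Definition \ref{regularpert}) gives the fibered product $M_{\mathbf{z}}(\mathcal{C})$ a smooth structure, and $\breve{M}_{\mathbf{z}}(\mathcal{C})$ is its quotient by the $\R^n$ of componentwise translations. Writing $d_i=\dim M_{z_i}([\Cr_{i-1}],[\Cr_i])=\gr_{z_i}([\Cr_{i-1}],[\Cr_i])+\dim[\Cr_{i-1}]+\epsilon_i$, with $\epsilon_i\in\{0,1\}$ the boundary-obstructedness correction from Proposition \ref{transversality}, each of the $n-1$ fibered products over an intermediate $[\Cr_i]$ subtracts $\dim[\Cr_i]$ and the translation quotient subtracts $n$; telescoping $\sum_i\gr_{z_i}=\gr_z([\Lr_-],[\Lr_+])$ via Lemma \ref{additivegrading} and cancelling the intermediate dimensions yields
\[
\dim\breve{M}_{\mathbf{z}}(\mathcal{C})=\gr_z([\Lr_-],[\Lr_+])+\dim[\Lr_-]+b-n=d-n+b,
\]
where $b=\sum_i\epsilon_i$ is the number of boundary-obstructed consecutive pairs. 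Since $M_z$ contains irreducibles, Lemma $14.5.3$ in the book forces $[\Lr_-]$ to be irreducible or boundary-unstable and $[\Lr_+]$ irreducible or boundary-stable, so a boundary-obstructed pair can occur only strictly between two intermediate submanifolds; in particular $n=1$ forces $b=0$ and dimension $d-1$, and the only ways to land in dimension $d-2$ are $n=2$ with $b=0$, or $n=3$ with the middle pair (and only it) boundary obstructed, or the reducible locus of $\breve{M}_z$ itself when $M_z$ is a manifold-with-boundary — exactly the three cases in the statement. Finiteness of the number of nonempty strata is Proposition \ref{finiteness}.

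The last step, and the main obstacle, is to promote this into the structure of a space stratified by manifolds, i.e. to show that the sets $N^e$ (union of strata of dimension $\le e$) are closed and that the closure of a stratum adds only strata of strictly smaller dimension. This is exactly the content of the gluing theorem (Chapters $18$ and $19$ in the book): near a broken trajectory of combinatorial type $(\mathcal{C},\mathbf{z})$, the space $\breve{M}^+_z$ is homeomorphic to a product of $\breve{M}_{\mathbf{z}}(\mathcal{C})$ with gluing regions, and along boundary-obstructed pairs the local model is not a manifold with corners but the more delicate object of Section $19.5$ in the book, reflecting the one-dimensional cokernel of $Q^{\nu}_{\gamma}$. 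The point to verify — where the Morse-Bott setting genuinely differs from the book — is that these gluing constructions are compatible with the evaluation maps into the positive-dimensional resting submanifolds, i.e. that the fibered-product smooth structures used in the count glue correctly across the gluing regions; this is precisely what the transversality built into Definition \ref{regularpert} is designed to guarantee, so the arguments of Propositions $16.5.2$ and $16.5.5$ in the book apply with only notational changes.
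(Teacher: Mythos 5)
Your proposal is correct and follows essentially the same route as the paper, which simply observes that the proofs of Propositions $16.5.2$ and $16.5.5$ in the book carry over verbatim once one has the Morse--Bott ingredients you assemble: compactness (Theorem \ref{compactness}), the regularity of the fibered products $M_{\mathbf{z}}(\mathcal{C})$ from Definition \ref{regularpert}, and the dimension count $\dim\breve{M}_{\mathbf{z}}(\mathcal{C})=d-n+b$ via Proposition \ref{transversality} and Lemma \ref{additivegrading}, together with the constraint that boundary-obstructed pairs cannot occur first, last, or adjacently. The only (harmless) deviation is your appeal to the gluing theory for the closedness of the skeleta $N^e$: at this stage compactness plus the dimension count already suffice, since limits of broken trajectories are at least as broken and further breaking never raises the dimension, and the gluing results are reserved for the finer $\delta$-structure of Theorem \ref{gluing}.
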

\vspace{0.8cm}
Finally, we discuss the case of reducible trajectories. We will write $M^{\mathrm{red}}_z([\Lr_-],[\Lr_+])$ for the subset of $M_z([\Lr_-],[\Lr_+])$ consisting of the reducible trajectories. Because of our transversality hypothesis, this is either empty, all of $M_z([\Lr_-],[\Lr_+])$, or the boundary of $M_z([\Lr_-],[\Lr_+])$ in the case $[\Lr_-]$ is boundary unstable and $[\Lr_+]$ is boundary stable. In any case we can introduce a modified relative grading given by
\begin{equation*}
\bar{\gr}_z([\Cr_-],[\Cr_+])=\gr_z([\Cr_-],[\Cr_+])-o[\Cr_-]+o[\Cr_+]
\end{equation*}
where we define
\begin{equation*}
o[\Cr]=
\begin{cases}
0, & \text{if }[\Cr]\text{ is boundary stable,}\\
1, & \text{if }[\Cr]\text{ is boundary unstable.}
\end{cases}
\end{equation*}
We can also introduce the spaces $\breve{M}^{\mathrm{red}}$ and $\breve{M}^{\mathrm{red}+}$, as the intersections of $\breve{M}$ and $\breve{M}^+$ with the reducibles. The situation for these moduli spaces is simpler, as we are essentially doing Morse theory on a closed manifold and there are no boundary obstructedness issues. For example $M^{\mathrm{red}}_z([\Cr_-],[\Cr_+])$ is always a smooth manifold without boundary, and its dimension is given by
\begin{equation*}
\bar{\gr}_z([\Cr_-],[\Cr_+])+\mathrm{dim}[\Cr_-]-1.
\end{equation*}
One also has the following result, the counterpart of Proposition $16.6.1$ in the book.

\begin{prop}
Suppose $M^{\mathrm{red}}_z([\Lr_-],[\Lr_+])$ is non empty and of dimension $d$. Then the space of unparametrized reducible trajectories $\breve{M}^{\mathrm{red}+}_z([\acr],[\bcr])$ is a compact $(d-1)$-dimensional space stratified by manifolds. The top stratum consists of $\breve{M}^{\mathrm{red}}_z([\Lr_-],[\Lr_+])$ alone, and the $(d-2)$-dimensional stratum consists of the space of unparametrized broken trajectories with exactly two components.
\end{prop}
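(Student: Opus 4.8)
The plan is to follow the proof of Proposition \ref{codim1} (equivalently Proposition $16.6.1$ of the book), exploiting the crucial simplification that on the reducible locus there is no boundary–obstruction phenomenon. A reducible trajectory is cut out by the operator $Q^{\partial}_{\gamma}$, which is the linearization of a genuine gradient flow: the flow of the Morse–Bott function obtained by restricting $\CSd$ to the reducible stratum, itself a \emph{finite–dimensional closed manifold} — the bundle over the torus $H^1(Y;i\R)/2\pi i H^1(Y;\Z)$ of flat connections whose fibre coordinate records the projectivized spinor, the spinor flow being, up to rescaling, the gradient flow of the Rayleigh quotient $\psi\mapsto\langle\psi,D_{\q,B}\psi\rangle$ on the unit sphere. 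Hence the standard Morse–Bott bookkeeping on a closed manifold applies: $M^{\mathrm{red}}_z([\Lr_-],[\Lr_+])$ is always a smooth boundaryless manifold of the dimension recorded above, and an $n$–component broken reducible trajectory has codimension $n-1$. In particular no boundary–obstructed pair of reducible critical submanifolds carries a non–empty moduli space of codimension $\leq 1$ (by Lemma \ref{dimreducible} a pair with boundary–stable source and boundary–unstable target already has strictly negative relative grading), so the anomalous three–component strata that occur in the $(d-2)$–stratum of Proposition \ref{codim1} do not arise here.

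First I would prove that $\breve{M}^{\mathrm{red}+}_z([\Lr_-],[\Lr_+])$ is compact. This is the reducible specialization of Theorem \ref{compactness}: fixing the homotopy class $z$ fixes the energy, the reducible critical set downstairs is compact, and $\Lambda_{\q}$ is bounded away from $0$ (Lemma \ref{nonzerolambda}), of finite total variation, and has well–defined limits along a reducible trajectory; so the arguments of the preceding section run verbatim to extract from any sequence a subsequence converging, in the neighbourhood–base topology, to a reducible broken trajectory, whose concatenated relative homotopy class is again $z$ because it is locally constant under this convergence. Only finitely many intermediate reducible critical submanifolds intervene, since a bound on $z$ bounds the numbers $\iota[\Cr]$.

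Next I would set up the stratification. For a regular $\q$, a chain $\mathcal{C}=([\Cr_0],\dots,[\Cr_n])$ with $[\Cr_0]=[\Lr_-]$, $[\Cr_n]=[\Lr_+]$, and a compatible tuple $\mathbf{z}$, the space $\breve{M}^{\mathrm{red}}_{\mathbf{z}}(\mathcal{C})$ — the iterated fibered product of the $\breve{M}^{\mathrm{red}}_{z_i}([\Cr_{i-1}],[\Cr_i])$ along the evaluation maps (transverse by the regularity assumption on $\q$), modulo the $\R^n$ translation action — is a smooth manifold, and additivity of $\bar{\gr}$ together with the dimension formula for $M^{\mathrm{red}}$ gives $\dim\breve{M}^{\mathrm{red}}_{\mathbf{z}}(\mathcal{C})=(d-1)-(n-1)$. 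Letting $N^{e}\subset\breve{M}^{\mathrm{red}+}_z$ be the union of the closures of the strata of dimension $\leq e$ then exhibits $\breve{M}^{\mathrm{red}+}_z$ as a compact $(d-1)$–dimensional space stratified by manifolds, whose top stratum is $\breve{M}^{\mathrm{red}}_z([\Lr_-],[\Lr_+])$ and whose $(d-2)$–dimensional stratum is the union of the two–component pieces $\breve{M}^{\mathrm{red}}_{(z_1,z_2)}([\Lr_-],[\Cr],[\Lr_+])$.

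The step I expect to require the most care — though it is ultimately routine — is verifying that these codimension–one pieces really are the boundary strata, i.e. that near a once–broken reducible trajectory $M^{\mathrm{red}}_z$ is parametrized by a single gluing parameter ranging over a half–line $[T_0,\infty)$, so that $\breve{M}^{\mathrm{red}}_{(z_1,z_2)}$ is collared in $\breve{M}^{\mathrm{red}+}_z$. This is exactly the boundary–obstruction–free case of the gluing theorems already invoked for Proposition \ref{codim1} and for Proposition $16.6.1$ of the book, and it applies with no modification once one notes, as in the first paragraph, that the exceptional combinatorics of boundary obstruction cannot occur on the reducible locus. Assembling these four points gives the statement.
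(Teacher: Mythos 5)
Your proposal follows essentially the same route as the paper, which states this result as the reducible counterpart of Proposition $16.6.1$ in the book and justifies it by observing that for reducible trajectories one is doing Morse--Bott theory on the boundary of the blown-up configuration space, a manifold without boundary, so the compactness, regularity and gluing results of the chapter apply with no boundary-obstructedness anomalies; your compactness, fibered-product/dimension and collaring steps are exactly that argument. Two of your side remarks are inaccurate, though neither is load-bearing. First, the parenthetical claim that a boundary-obstructed pair of reducible critical submanifolds carries an empty moduli space is false in general: Lemma \ref{dimreducible} only concerns critical submanifolds lying over the \emph{same} reducible configuration downstairs, and when $b_1(Y)>0$ boundary-obstructed reducible moduli spaces between submanifolds over different reducibles are typically non-empty (they are precisely what forces the obstructed gluing elsewhere in the chapter). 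The correct reason the anomalous three-component codimension-one strata of Proposition \ref{codim1} do not occur here is the one you state first: within the reducible locus the relevant operator is $Q^{\partial}_{\gamma}$, which is surjective by Smale-regularity even for boundary-obstructed trajectories, so the $\bar{\gr}$-bookkeeping gives every $n$-component broken reducible trajectory codimension exactly $n-1$, whether or not some component is obstructed. Second, the reducible locus is not a finite-dimensional closed manifold: the projectivized spinor fibre and the space of reducible connections modulo gauge are infinite-dimensional; what matters, and what your argument actually uses, is only that it has no boundary, so the infinite-dimensional compactness, transversality and gluing theorems already established apply verbatim.
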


\vspace{1.5cm}
\section{Gluing}

In this section we discuss a gluing result describing the structure of the space of unparametrized broken trajectories along a stratum. One would like these spaces to look like topological manifolds with boundary and corners. On the other hand this is in general false for the moduli space we are dealing with, and we will show that they have in general a slightly more complicated type of structure. Our characterization will be enough for the applications, and in particular to define Floer homology in the next chapter.
\par
This section consists of two parts. In the first part we state the gluing result, introducing the notion of thickened moduli space as in Chapter $19$ in the book, while in the second one we discuss the existence of stable ad unstable manifolds as in Chapter $18$. As the second part is the only part of the proof that requires some adaptations, we will describe it in quite detail.
\vspace{0.8cm}
We start by introducing a useful definition.
\begin{defn}\label{topsub}
Consider a pointed topological space $(Q,q_0)$, let $\pi:S\rightarrow Q$ be a continuous map and consider $S_0\subset \pi^{-1}(q_0)$. We say that $\pi$ is a \textit{topological submersion along} $S_0$ if for every $s_0\in S_0$ we can find a neighborhood $U\subset S$ and a neighborhood $Q'\subset Q$ of $q_0$ with a homeomorphism
\begin{equation*}
(U\cap S_0)\times Q'\rightarrow U
\end{equation*}
commuting with $\pi$.
\end{defn}

\begin{example}\label{corner}
Suppose $Q$ is $(0,\infty]^{n-1}$ and
\begin{equation*}
q_0=(\infty,\dots,\infty).
\end{equation*}
If we have a topological submersion
\begin{equation*}
\pi:S\rightarrow Q
\end{equation*}
along $\pi^{-1}(\mathbf{\infty})$ then total space $S$ is locally homeomorphic to the product $\pi^{-1}(\mathbf{\infty})\times(0,\infty]^{n-1}$, i.e. it locally looks like the product with a topological manifold with corners.
\end{example}
\vspace{0.8cm}

The following is the main gluing theorem, the counterpart of Theorem $19.4.1$ in the book. Similarly to that case the statement is quite elaborated.
\begin{teor}\label{gluing}
Consider a sequence of critical submanifolds $\mathcal{C}=\left([\Cr_i]\right)_{i=0,\dots, n}$ and corresponding relative homotopy classes $\mathbf{z}$. Suppose that the moduli space $\breve{M}_{\mathbf{z}}([\Cr_0],[\Cr_n])$ contains irreducibles, and define
\begin{equation*}
O\subset\{1,\dots, n\}
\end{equation*}
as the set of indices $i$ such thet the pair $([\Cr_{i-1}],[\Cr_i])$ is boundary obstructed.
Then there are an open neighborhood
\begin{equation*}
\breve{W}\supset \breve{M}_{z}(\mathcal{C})
\end{equation*}
inside $\breve{M}^+_{\mathbf{z}}([\Cr_0],[\Cr_n])$ and a continuous map
\begin{equation*}
\mathbf{S}: \breve{W}\rightarrow (0,\infty]^{n-1}
\end{equation*}
with the following properties.
\begin{enumerate}
\item There is a topological embedding $j$ of $\breve{W}$ into a space $E\breve{W}$ equipped with a map also denoted by $\mathbf{S}$ to $(0,\infty]^{n-1}$ such that $\mathbf{S}\circ j=\mathbf{S}$.
\item The map $\mathbf{S}:E\breve{W}\rightarrow (0,\infty]^{n-1}$ is a topological submersion along the fiber over $\boldsymbol{\infty}$.
\item The image of $j$ is the zero set of a continuous map
\begin{equation*}
\delta:E\breve{W}\rightarrow \R^O
\end{equation*}
vanishing on the fiber over $\boldsymbol{\infty}$. Hence the fiber over $\boldsymbol{\infty}$ in both $\breve{W}$ and $E\breve{W}$ is identified with the stratum $\breve{M}_{\mathbf{z}}(\mathcal{C})$.
\item If $\breve{W}^o\subset \breve{W}$ and $E\breve{W}^o\subset E\breve{W}$ are the subset where none of the components of $\mathbf{S}$ is infinite, then the restriction of $j$ to $\breve{W}^o$ is an embedding of smooth manifolds, and the restriction of $\delta$ to $E\breve{W}^o$ is transverse to zero.
\item Let $i_0\in O$ and $\delta_{i_0}$ be the corresponding component of $\delta$. Then for all $z\in E\breve{W}$ we have:\\
$\bullet$ if $i_0\geq2$ and $S_{i_0-1}(z)=\infty$ then $\delta_{i_0}(z)\geq0$\\
$\bullet$ if $i_0\leq n-1$ and $S_{i_0}(z)=\infty$ then $\delta_{i_0}(z)\leq0$.
\end{enumerate}
\end{teor}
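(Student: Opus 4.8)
The plan is to reduce the gluing theorem to the corresponding statement in the Morse case (Theorem 19.4.1 in the book), using the parametrized / fibered-product techniques of the preceding sections. First I would set up the thickened moduli space exactly as in Chapter 19 of the book: for each index $i$ one chooses, on a neighborhood of the $i$-th restpoint in $[\Cr_i]$, a finite-dimensional family of perturbations $\mathfrak{p}_i$ (obstruction-bundle data) so that the linearized gluing operator, enlarged by the images of these families, becomes surjective even at the boundary-obstructed junctions. The key technical input that lets us do this in our setting is that the extended Hessian $\widehat{\Hess}^{\sigma}_{\q,\acr}$ is Fredholm of index $0$ with kernel exactly $\T_{\acr}\Cr$ (Lemma \ref{hessian}) and that the weighted linearization $Q_{\gamma_0}$ is Fredholm with index given by spectral flow (Proposition \ref{fredholmQ}); so all the Fredholm and gluing analysis of the book carries over once we pass to the weighted Sobolev spaces $L^2_{k,\delta}$ as in Section 4. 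The regularity hypothesis (Definition \ref{regularpert}, arranged by Theorem \ref{transversalitymain}) guarantees that the relevant evaluation maps are transverse, so that $\breve{M}_{\mathbf{z}}(\mathcal{C})$ is a manifold and the fibered products used to form $\breve{W}$ and $E\breve{W}$ are smooth.

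Next I would construct the map $\mathbf{S}$ and the space $E\breve{W}$. The components $S_i$ are the gluing parameters: on a genuinely glued broken trajectory $S_i$ is (roughly) the length of the neck inserted at the $i$-th restpoint, and $S_i=\infty$ records that the corresponding junction is still broken. The space $E\breve{W}$ is the thickened moduli space: solutions of the perturbed equations on the glued cylinder where the $i$-th perturbation term ($i\in O$) is allowed to range over its finite-dimensional family, with $\delta_{i_0}$ the coordinate recording the value of that obstruction parameter. The topological-submersion property (item 2) is then the content of the gluing construction producing, near each broken configuration, a homeomorphism from $(\text{stratum})\times(0,\infty]^{n-1}$ onto a neighborhood in $E\breve{W}$, commuting with $\mathbf{S}$; this is precisely the parametrized version of the implicit-function/Newton-iteration argument of Chapter 18–19, and the estimates needed are the near-constant estimates (Proposition \ref{nearconstestimate}), the exponential decay (Lemma \ref{expdecay}) and the exponential decay of $\Lambda_{\q}$ (Lemma \ref{expint}), all established earlier. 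Items 1, 3, 4 are then formal consequences: $j$ is the inclusion of the true zero set $\{\delta=0\}$, transversality of $\delta$ on $E\breve{W}^o$ comes from regularity of the perturbation in the genuinely-glued region (no obstruction), and on the fiber over $\boldsymbol\infty$ all neck lengths are infinite so the obstruction parameters are forced to vanish.

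The only genuinely new point compared with the book — and the one I expect to be the main obstacle — is item 5, the sign conditions on the obstruction parameter $\delta_{i_0}$ at a boundary-obstructed junction when an adjacent neck has been only partially glued. The mechanism is the one-dimensional cokernel of $Q^{\nu}_{\gamma}$ coming from the function $s$ (the normal coordinate in the blow-up): at a boundary-obstructed junction $i_0$ the pair $([\Cr_{i_0-1}],[\Cr_{i_0}])$ has $[\Cr_{i_0-1}]$ boundary-stable and $[\Cr_{i_0}]$ boundary-unstable, and the obstruction $\delta_{i_0}$ is, to leading order, the pairing of the error term against the dual vector in this cokernel, which is (up to a positive constant and exponentially small corrections in the neck lengths) the difference of two boundary values of $s$. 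When $S_{i_0-1}$ is finite but $S_{i_0}=\infty$, the relevant $s$-profile on the incoming glued piece is pinned at one end and the monotonicity $\frac{d}{dt}s=-\Lambda_{\q}s$ with the known signs of $\Lambda_{\q}$ on boundary-stable/unstable critical submanifolds (Lemma \ref{nonzerolambda}, the definitions of boundary stable/unstable) forces $\delta_{i_0}$ to have a definite sign; the other case is symmetric. Concretely I would: (i) write $\delta_{i_0}$ explicitly as an $L^2$-inner product of $\mathfrak{F}^{\tau}_{\q}$ of the pre-glued configuration with the normalized cokernel element; (ii) expand this, using the gluing estimates, as the boundary term $c\,(s(b)-s(a))$ plus an error of size $O(e^{-\delta' S})$ where $S$ is the smallest finite neck length; (iii) observe that when one side is fully broken the corresponding boundary contribution is absent and the sign of the surviving term is determined by the sign of $s$ near a boundary-stable (resp. unstable) end, which is $>0$ (resp. the derivative pushes it the appropriate way). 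This is exactly the analogue of the sign analysis in Section 19.3–19.4 of the book, and I would follow that argument closely, the only change being that the relevant critical points may now lie on positive-dimensional submanifolds, which is harmless because $\Lambda_{\q}$ is locally constant on each reducible critical submanifold (this is precisely Lemma \ref{nonzerolambda}, which also guarantees the boundary-stable/unstable dichotomy is well defined component-wise).
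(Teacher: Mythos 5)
Your construction of the thickening is not the one this argument actually uses, and the substitution is not harmless. Here (following Chapter 19 of the book) $E\breve{W}$ is \emph{not} obtained by enlarging the equations with finite-dimensional obstruction-bundle perturbation families at the boundary-obstructed junctions: it is built from the \emph{extended} moduli spaces $E\tilde{M}$, i.e.\ fibered products of half-cylinder solution spaces taken only over $\partial\Bs(Y)$, so that a point of the thickening is a trajectory allowed to have a jump in the blow-up coordinate $s$ across the breaking slice, and $\delta_{i_0}$ is by definition $s([\gamma_+]|_{\{0\}\times Y})-s([\gamma_-]|_{\{0\}\times Y})$. With that definition, item (3) is tautological, and the sign conditions (5) (and Lemma \ref{codimension1}) follow from the constraint $s\geq 0$ together with the behaviour of $s$ near boundary-stable and boundary-unstable limits, exactly as in the book. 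In your framing, the statement you need for (5) --- that the cokernel pairing equals, up to a positive factor and exponentially small errors, a difference of boundary values of $s$ --- is precisely the content that the $s$-jump definition makes automatic; proving it inside an obstruction-bundle setup would essentially amount to re-deriving the book's construction, and as written it is asserted rather than proved. So the part you flag as ``the main obstacle'' is in fact not new at all once the thickening is defined correctly.

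Conversely, the step you dismiss as routine --- ``the Fredholm and gluing analysis of the book carries over once we pass to the weighted Sobolev spaces'' --- is where the genuinely new work lies in the Morse-Bott case, and your proposal has no argument for it. The Chapter 18 input needed to assemble $E\breve{W}$ and prove the submersion property (2) is a parametrization of all solutions near a \emph{constant} trajectory on $Z^T$ and $Z^\infty$, uniformly in $T$ and with $C^\infty_{\mathrm{loc}}$ convergence of the boundary-value maps as $T\to\infty$; this is Theorem \ref{stableman} here. At a Morse-Bott critical point the limiting operator is only almost hyperbolic, with kernel $T_{\acr}\Cr$, so the finite-cylinder linearizations are \emph{not} uniformly invertible: one must introduce the generalized spectral decomposition, the spaces $L^2_{k,\delta}(Z;E,H_0)$, split off the constant sections $\mathcal{H}_0$ (whose identification with $H_0$ has norm growing like $e^{\delta T}$), and prove the uniform bound on the inverse on the complement (Lemma \ref{boundedinv}), together with the delicate comparison of the true solutions with the spliced ones and the adapted product charts of Lemma \ref{cylinderchart}. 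The estimates you cite (Proposition \ref{nearconstestimate}, Lemmas \ref{expdecay}, \ref{expint}) are decay estimates for solutions and do not supply this linear-analytic uniformity, so the implicit-function/Newton iteration you invoke does not close without the content of Proposition \ref{parabstract}. As it stands, the proposal therefore both takes a route whose key sign statement is unproved and omits the argument for the step on which the whole gluing theorem rests.
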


\begin{defn}\label{thickening}
In the same setting as the theorem above, we call $E\breve{W}$ a \textit{thickening} of the moduli space $\breve{M}^+_{z}([\Cr_0],[\Cr_n])$ along the stratum $\breve{M}_{\mathbf{z}}(\mathcal{C})$.
\end{defn}

In the simplest case in which none of the intermediate pairs is boundary obstructed, the thickening coincides with the open neighborhood $\check{W}$, so the theorem tells us that along the stratum the moduli space looks like the product of the stratum and a topological manifold with corners (see Example \ref{corner}).
On the other hand, in the presence of boundary obstructedness the local structure becomes more complicated, and a good image to have in mind is that of Example \ref{stratexample}. The simplest (but central) example is the case in which the stratum consists of broken trajectories with three components, the middle one being boundary obstructed. In this case we have the following sharper statement (see Corollary $19.5.2$ in the book).
\begin{lemma}\label{codimension1}
In the setting of Theorem \ref{gluing}, suppose $n=3$ and $O=\{2\}$. Then $\delta$ is strictly positive if $S_1=\infty$ and $S_2$ is finite, and $\delta$ is strictly negative if $S_1$ is finite and $S_2$ is infinite.
\end{lemma}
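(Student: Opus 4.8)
The plan is to deduce this from Theorem \ref{gluing} by examining the single obstruction coordinate $\delta=\delta_2$ in the special case $n=3$, $O=\{2\}$, and showing that the signs coming from the fifth clause of the theorem are actually strict whenever we are genuinely on the relevant boundary face. First I would recall the general picture: $E\breve{W}$ carries a topological submersion $\mathbf{S}=(S_1,S_2):E\breve{W}\to(0,\infty]^2$ along the fiber over $\boldsymbol\infty$, and $\breve W$ is the zero locus of $\delta:E\breve{W}\to\R$. Part (5) of the theorem already gives $\delta\geq 0$ on the face $\{S_1=\infty\}$ and $\delta\leq 0$ on the face $\{S_2=\infty\}$. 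So the content to be proved is the strict inequality: on the set where $S_1=\infty$ and $S_2$ is \emph{finite} we have $\delta>0$, and symmetrically on $\{S_1\text{ finite},\ S_2=\infty\}$ we have $\delta<0$.

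The argument I would run is the gluing/pre-gluing analysis underlying Theorem \ref{gluing}, specialized to a three-component broken trajectory $[\breve\gamma_1],[\breve\gamma_2],[\breve\gamma_3]$ with the middle pair $([\Cr_1],[\Cr_2])$ boundary obstructed: by definition this means $[\Cr_1]$ is boundary-unstable and $[\Cr_2]$ is boundary-stable, so the reducible operator $Q^\nu_{\gamma_2}$ has one-dimensional cokernel and $\delta_2$ is precisely the pairing of the pre-glued approximate solution's error against the corresponding cokernel element. Concretely, $\delta_2$ measures the failure of the $\R$-valued ``$s$-component'' of the glued configuration to have the right decay; after the standard reduction to the $\tau$-model and weighted Sobolev spaces (Sections 3--4), $\delta_2$ is computed from the function $\Lambda_{\q}$ along the glued trajectory, integrated against the (unique, up to scale, positive) element of $\ker (Q^\nu)^*$. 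When $S_1=\infty$ the trajectory $[\breve\gamma_1]$ has been ``broken off'' and one is gluing only $[\breve\gamma_2]$ to $[\breve\gamma_3]$ with gluing parameter $S_2<\infty$; the boundary-obstructed gluing model of Section 19.5 of the book then gives $\delta_2$ as a leading-order term which is a strictly positive multiple of $e^{-c S_2}$ (coming from the sign of the eigenvalue crossing at the boundary-unstable end of $[\Cr_1]$, exactly as in the exponential decay estimates of Lemma \ref{expdecay} and Lemma \ref{expint}), hence strictly positive for all finite $S_2$; letting $S_2\to\infty$ recovers $\delta_2=0$ on the corner, consistently with part (3). The case $S_2=\infty$, $S_1<\infty$ is identical with the roles of the unstable/stable ends reversed, producing the strictly negative sign.

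The main obstacle, and the step I would spend the most care on, is making precise the ``leading-order term is a nonzero multiple of an exponential'' claim: this requires tracking the sign of the relevant spectral contribution of the normal Hessian at the boundary-obstructed restpoint and verifying it does not get cancelled by lower-order terms in the pre-gluing error. This is exactly the computation done for the classical case in Chapter 19 of the book (Corollary 19.5.2), and the point is that in our Morse-Bott setting the extra directions are the tangent directions $\T_{\bcr}\Cr$ to the critical submanifold, on which, by Lemma \ref{weighhessian}, the weighted extended Hessian acts as zero — so they contribute neither to the kernel of $Q^\nu$ nor to the sign of $\delta_2$, and the $1$-dimensional cokernel of $Q^\nu_{\gamma}$ is governed purely by $\Lambda_{\q}$ as in the Morse case. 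Thus once one checks that the boundary-obstructed normal directions in our problem reduce to the classical $\R$-factor picture (which they do, because boundary obstructedness only involves the reducible, $\mathbf{i}$-anti-invariant part of the equations, untouched by the Morse-Bott phenomena), the proof of Corollary 19.5.2 in the book applies verbatim to yield the strict signs. I would therefore state the lemma's proof as: apply Theorem \ref{gluing} with $n=3$, $O=\{2\}$, and invoke the sign analysis of the boundary-obstructed gluing model exactly as in Corollary 19.5.2 of the book, the only modification being the observation above that the tangent-to-$\Cr$ directions decouple.
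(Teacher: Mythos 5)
Your proposal takes essentially the same route as the paper: the paper gives no independent argument for Lemma \ref{codimension1}, but simply invokes Corollary 19.5.2 of the book, the point being (exactly as you say) that the Morse--Bott adaptations are confined to the parametrization result of Theorem \ref{stableman}, while the boundary-obstructed sign analysis lives entirely in the reducible $\mathbf{i}$-anti-invariant $\R$-direction, which is untouched by the tangent-to-$\Cr$ directions. One small slip: in a boundary-obstructed pair $([\Cr_1],[\Cr_2])$ the first submanifold is boundary-\emph{stable} and the second boundary-\emph{unstable} (you have them reversed), though this does not affect your citation-based argument or the stated signs.
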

This is an important point in the definition of \textit{codimension-$1$ $\delta$-structures}, see Definition $19.5.3$ in the book. For example, the space in Example \ref{stratexample} has such a structure along its zero dimensional stratum. As we will deal with more general objects, we defer a formal definition to Definition \ref{codimensionc}

\vspace{0.8cm}

The key notion in the construction of the thickening is that of \textit{extended moduli space} (see Section $19.2$ in the book). Forgetting about regularity issues, the moduli space $\tilde{M}([\Cr_-],[\Cr_+])$ can be seen as the fibered product of the two Hilbert manifolds 
\begin{IEEEeqnarray*}{c}
\tilde{M}(\R^{\leq}\times Y,[\Cr_-])\\
\tilde{M}(\R^{\geq}\times Y,[\Cr_+]),
\end{IEEEeqnarray*}
consisting of solutions on the negative (resp. positive) half cylinder which are asymptotic to a point in $[\Cr_-]$ (resp. $[\Cr_+]$), where the maps are the restrictions $R_{\pm}$ to the boundary $\tilde{\mathcal{B}}^{\sigma}(\{0\}\times Y)$. We have the projection to the boundary
\begin{IEEEeqnarray*}{c}
\pi^{\partial}: \tilde{\mathcal{B}}^{\sigma}(Y)\rightarrow \partial \Bs(Y)\\
{[}B,s,\phi]\mapsto [B,0,\phi]
\end{IEEEeqnarray*}
and we define the extended moduli space
\begin{equation*}
E\tilde{M}([\Cr_-],[\Cr_+])
\end{equation*}
as the fibered product of the composite maps $\pi^{\partial}\circ R_{\pm}$. One should think of an element $[\gamma]$ in $E\tilde{M}([\acr],[\bcr])$ as a trajectory defined on the whole line but having a discontinuity
\begin{equation*}
\delta=s([\gamma_+]|_{\{0\}\times Y})-s([\gamma_-]|_{\{0\}\times Y})
\end{equation*}
in the $s$ coordinate across $\{0\}\times Y$. There is no action by translation on such an element $[\gamma]$, it still has a well defined relative homotopy class $z$, and we can partition the moduli space accordingly.
The usual moduli space arises as the fiber over zero of the map
\begin{equation*}
\tilde{M}([\Lr_-],[\Lr_+])\hookrightarrow E\tilde{M}([\Lr_-],[\Lr_+])\stackrel{\delta}{\longrightarrow} \R.
\end{equation*}
If $([\Lr_-],[\Lr_+])$ is boundary obstructed then $\delta$ is identically $0$, hence the extended moduli space coincides with the usual one. In the boundary unobstructed case if $\tilde{M}([\Lr_-],[\Lr_+])$ is regular, then $E\tilde{M}([\Lr_-],[\Lr_+])$ is regular in a neighborhood of $\tilde{M}([\Lr_-],[\Lr_+])$ and contains it as a smooth codimension $1$ submanifold. In both cases, $E\tilde{M}_z([\Lr_-],[\Lr_+])$ has dimension $\mathrm{gr}_z([\Cr_-],[\Cr_+])+\mathrm{dim}[\Lr_-]+1$. 
\par
The thickening $E\breve{W}$ of a neighborhood $\breve{W}$ is then the space in which we allow the components to belong to the respective extended moduli space. From the discussion above, it follows that $\breve{W}$ sits inside it as the zero locus of all the maps $\delta$, and its codimension is exactly the number of consecutive pairs which are boundary obstructed.

\vspace{0.8cm}
An important feature is that the extended moduli space still is still equipped with continuous evaluations maps
\begin{equation*}
\ev_{\pm}: E\tilde{M}_z([\Lr_-],[\Lr_+])\rightarrow[\Cr_{\pm}],
\end{equation*}
which are smooth in a neighborhood of $\tilde{M}_z([\Lr_-],[\Lr_+])$. The following is the main additional result we will need.
\begin{prop}\label{evthickening}
Referring to the statement of Theorem \ref{gluing}, the evaluation maps $\ev_{\pm}$ on $\breve{W}$ extend to continuous evaluation maps
\begin{equation*}
\ev_{\pm}:E\breve{W}\rightarrow [\Cr_{\pm}]
\end{equation*}
which are smooth in a neighborhood of $\breve{W}^o\subset E\breve{W}^o$. Furthermore, suppose we are given three critical submanifolds $[\Cr_-],[\Cr_0]$ and $[\Cr_+]$, and sequences of critical submanifolds and relative homotopy classes between them $(\mathcal{C}_1,\mathbf{z}_1)$ and $(\mathcal{C}_2,\mathbf{z}_2)$. For $m=1,2$, let $\breve{W}_m$ the open neighborhoods of the strata $M_{z_m}(\mathcal{C}_m)$ and $E\breve{W}_m$ their thickening. Then the evaluation maps
\begin{IEEEeqnarray*}{c}
\ev_+: E\breve{W}_1\rightarrow [\Cr_0]\\
\ev_-: E\breve{W}_2\rightarrow[\Cr_0]
\end{IEEEeqnarray*}
are transverse in a neighborhood of $\breve{W}^o_1\times\breve{W}^o_2\subset E\breve{W}^o_1\times E\breve{W}^o_2$.
\end{prop}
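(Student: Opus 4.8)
The plan is to follow the gluing analysis of Chapters $18$ and $19$ of the book, adapting it to the presence of the critical submanifolds and keeping track of the evaluation maps into them. First I would dispose of the extension statement. By the construction underlying Theorem \ref{gluing}, near a stratum $\breve{M}_{\mathbf z}(\mathcal C)$ the thickening $E\breve{W}$ is parametrised by the extended moduli spaces $E\tilde M_{z_i}([\Cr_{i-1}],[\Cr_i])$ of the components, matched at the restpoints, together with the gluing parameters $\mathbf S\in(0,\infty]^{n-1}$; and each $E\tilde M_{z_i}$ carries continuous evaluation maps which are smooth near $\tilde M_{z_i}$. In these coordinates the right (resp.\ left) end of a configuration in $E\breve{W}$ is literally the right (resp.\ left) end of its last (resp.\ first) component, because the correction term produced by solving the gluing equation lies in a weighted Sobolev space and hence does not alter the asymptotic parameter of the pre-glued configuration. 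I would therefore simply \emph{define} $\ev_+$ on $E\breve{W}$ to be $\ev_+$ of the last component and $\ev_-$ to be $\ev_-$ of the first; continuity on $E\breve{W}$ and agreement with the given maps on $\breve{W}$ are then immediate, while on $E\breve{W}^o$ the gluing construction is a genuine smooth parametrisation and the projections to the end components are smooth, so $\ev_\pm$ are smooth there, and smoothness persists on a neighbourhood of $\breve{W}^o$.

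For the transversality statement I would first establish the corresponding fact for the fully broken strata, namely that $\ev_+\colon \breve{M}_{\mathbf z_1}(\mathcal C_1)\to[\Cr_0]$ and $\ev_-\colon \breve{M}_{\mathbf z_2}(\mathcal C_2)\to[\Cr_0]$ are transverse. This I would prove by induction on the lengths of $\mathcal C_1$ and $\mathcal C_2$, using the inductive construction of the smooth structures on the $\breve{M}_{\mathbf z}(\mathcal C)$ described after Definition \ref{regularpert}: splitting off the edge of $\mathcal C_2$ adjacent to $[\Cr_0]$ exhibits $\breve{M}_{\mathbf z_2}(\mathcal C_2)$ as a fibre product through which $\ev_-$ factors, and transversality of $\ev_+$ on $\breve{M}_{\mathbf z_1}(\mathcal C_1)$ against $\ev_-$ on that single edge is precisely the regularity hypothesis of Definition \ref{regularpert} applied to that sequence and that extra edge; iterating, one reassembles the full fibre product. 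The same conclusion continues to hold when the last component of $\mathcal C_1$ and the first component of $\mathcal C_2$ are allowed to range in their \emph{extended} moduli spaces, since $\tilde M_z\subset E\tilde M_z$ has codimension at most one and $\ev_\pm$ on $E\tilde M_z$ restrict to $\ev_\pm$ on $\tilde M_z$, so the images of the relevant differentials only grow.

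Finally I would propagate this from the strata to the thickenings. At a point $p_1\in E\breve{W}_1^o$ lying, in the gluing coordinates, close to a point $\hat p_1$ of $\breve{M}_{\mathbf z_1}(\mathcal C_1)$, the differential of $\ev_+$ is obtained from the differential of $\ev_+$ on the matched (extended) component data, composed with the projection onto the last-component factor; as $p_1\to\hat p_1$ this converges to the analogous differential at $\hat p_1$ computed with the extended last component, whose image contains that of $d\ev_+$ restricted to the stratum. Combining this with the corresponding statement for $\ev_-$ and with the transversality on the (thickened) strata established above, the sum of the two images is all of $T_{\ev_+(\hat p_1)}[\Cr_0]$ in the limit; since surjectivity of a sum of linear maps into a finite-dimensional space is an open condition, it follows, after possibly shrinking $\breve{W}_1$ and $\breve{W}_2$ so that every point of $E\breve{W}_m^o$ lies suitably close to the respective stratum, that $\ev_+|_{E\breve{W}_1^o}$ and $\ev_-|_{E\breve{W}_2^o}$ are transverse at every point of $\breve{W}_1^o\times\breve{W}_2^o$. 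Transversality being itself an open condition on $E\breve{W}_1^o\times E\breve{W}_2^o$ (vacuous off the locus $\ev_+=\ev_-$), it then holds on a full neighbourhood of $\breve{W}_1^o\times\breve{W}_2^o$, as claimed.

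The step I expect to be the main obstacle is the careful tracking of the gluing parametrisation of $E\breve{W}_m$ near a stratum, and in particular the point that, because the evaluation maps on the individual moduli spaces need not be submersions (cf.\ the remark following Definition \ref{regularpert}), the propagation argument must be run through the full composite-versus-composite transversality rather than the naive edge-versus-edge statement, together with the uniformity needed over the (only relatively compact) strata. This amounts to reproducing, in the Morse-Bott setting with critical submanifolds, the detailed constructions of Chapters $18$ and $19$ of the book; I do not expect any genuinely new phenomenon beyond those.
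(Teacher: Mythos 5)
Your treatment of the first claim --- extending $\ev_{\pm}$ to $E\breve{W}$ through the components of the gluing parametrization, with smoothness near $\breve{W}^o$ --- is essentially what the paper does; it simply cites the construction of the extended moduli spaces for this.

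For the transversality claim your route both misses the short argument the paper uses and contains a real gap. The proposition only asks for transversality in a neighbourhood of $\breve{W}^o_1\times\breve{W}^o_2$, and points of $\breve{W}^o_m$ are genuine unbroken solutions: $\breve{W}^o_1$ lies in $\breve{M}_{z_1}([\Cr_-],[\Cr_0])$ and $\breve{W}^o_2$ in $\breve{M}_{z_2}([\Cr_0],[\Cr_+])$ for the composite homotopy classes. Transversality of $\ev_+$ and $\ev_-$ at every such pair of points is therefore \emph{literally} the regularity hypothesis of Definition \ref{regularpert} (the evaluation maps are translation invariant, so the statement descends to the unparametrized spaces), and since the extended evaluation maps are smooth in a neighbourhood of $\breve{W}^o\subset E\breve{W}^o$, openness of transversality immediately gives the required neighbourhood: no stratum induction and no propagation from the broken fiber are needed. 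Your propagation step, by contrast, requires the differentials of $\ev_{\pm}$ on $E\breve{W}^o$ to converge, as the gluing parameters tend to infinity, to maps whose images contain the stratum differentials; but the gluing package only provides continuity of $\ev_{\pm}$ at the fiber over $\boldsymbol{\infty}$ and smoothness near $\breve{W}^o$, so this $C^1$-control up to the broken stratum is not available without redoing the estimates of Chapters $18$--$19$ in the Morse--Bott setting, and the step ``shrink $\breve{W}_m$ so that every point of $E\breve{W}^o_m$ is suitably close to the stratum'' demands a uniformity over the non-compact strata that you do not supply. As you yourself anticipated, that is exactly where the written argument breaks down --- and it is also superfluous, since the regularity assumption already furnishes transversality precisely where the statement requires it.
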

\begin{proof}
The first part follows from the construction of the extended moduli spaces, see Section $19.4$ in the book. Then the evaluation maps are transverse on $\breve{W}^o_1\times\breve{W}^o_2$ by the regularity assumptions on the moduli spaces (see Definition \ref{regularpert}), hence they are transverse in a neighborhood.
\end{proof}

\vspace{0.8cm}

To conclude the first part of this section, we just state the case of the reducible moduli spaces (see Section $19.6$ in the book). This case in simpler as we do not have to deal with boundary obstructedness phenomena.
\begin{teor}
Suppose the moduli space $M^{\mathrm{red}}_z([\Lr_-],[\Lr_+])$ is $d$-dimensional and non-empty, so that the space of unparametrized broken reducible trajectories $\breve{M}^{\mathrm{red}+}_z([\Lr_-],[\Lr_+])$ is a $(d-1)$-dimensional space stratified by manifolds. If $M'\subset\breve{M}^{\mathrm{red}+}_z([\Lr_-],[\Lr_+])$ is a component of the codimension-$e$ stratum, then along $M'$ the space $\breve{M}^{\mathrm{red}+}_z([\Lr_-],[\Lr_+])$ is the product of the stratum and a $C^0$ manifold with an $e$-dimensional corner.
\end{teor}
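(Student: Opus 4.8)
The plan is to carry out the gluing construction of Theorem \ref{gluing} entirely inside the reducible locus, where it degenerates to the case in which no boundary obstructedness appears. By the stratification already established for $\breve{M}^{\mathrm{red}+}_z([\Lr_-],[\Lr_+])$, a component $M'$ of the codimension-$e$ stratum is a component of a space $\breve{M}^{\mathrm{red}}_{\mathbf{z}}(\mathcal{C})$ for a chain of reducible critical submanifolds $\mathcal{C}=([\Lr_0],\dots,[\Lr_{e+1}])$ with $[\Lr_0]=[\Lr_-]$, $[\Lr_{e+1}]=[\Lr_+]$, together with a tuple $\mathbf{z}=(z_1,\dots,z_{e+1})$ of relative homotopy classes; the $e$ internal restpoints are exactly the places at which the broken trajectories near $M'$ get glued, so there are $e$ gluing parameters. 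For the dimension count one records that among reducibles the operators $Q^{\partial}_{\gamma_i}$ are surjective by regularity of $\q_0$ (Definitions \ref{smalereg} and \ref{regularpert}) and the evaluation maps from reducible moduli spaces lying over a fixed reducible are submersions (Lemma \ref{dimreducible}); hence the fibered products defining $\breve{M}^{\mathrm{red}}_{\mathbf{z}}(\mathcal{C})$ are transverse, $\bar{\gr}_z$ is additive along broken reducible trajectories, and $\dim M'=(d-1)-e$.

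The key simplification is that restricting to reducibles trivializes every $\delta$-map. For a reducible trajectory the $s$-coordinate is identically zero, so the discontinuity $\delta$ whose zero locus carves the ordinary moduli space out of the extended one vanishes identically for every pair $([\Lr_{i-1}],[\Lr_i])$; equivalently, the set $O$ of boundary-obstructed indices that enters Theorem \ref{gluing} is to be taken empty here, its only function being to record the nontrivial components of $\delta$. Moreover the linearized gluing problem along the reducible locus is governed by the involution-invariant operators $Q^{\partial}_{\gamma_i}$ alone, which are surjective, the normal operators $Q^{\nu}_{\gamma_i}$ never entering since we do not deform off the reducibles. Thus, after reducing the weighted-Sobolev problem to an unweighted hyperbolic one via the conjugation trick of equation (\ref{weightedtrick}) (exactly as in the proof of Proposition \ref{fredholmQ}), the half-cylinder stable/unstable manifold theory and the pre-gluing plus Newton-iteration argument of Chapters $18$ and $19$ of the book carry over verbatim in the reducible category: one builds the Hilbert manifolds of finite-energy reducible solutions on $\R^{\leq}\times Y$ and $\R^{\geq}\times Y$ asymptotic to the $[\Lr_i]$, uses uniform invertibility of the gluing operator for large gluing parameters together with the compactness of the reducible broken moduli spaces (the reducible specialization of Proposition \ref{compdownstairs}) to see that the glued solutions exhaust a neighborhood, and keeps track of the evaluation maps to the $[\Lr_i]$ as in Proposition \ref{evthickening}. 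The outcome is an open neighborhood $\breve{W}'\supset M'$ inside $\breve{M}^{\mathrm{red}+}_z([\Lr_-],[\Lr_+])$ together with a continuous map $\mathbf{S}:\breve{W}'\to(0,\infty]^{e}$ which, since $\delta\equiv 0$, is a topological submersion in the sense of Definition \ref{topsub} along $\mathbf{S}^{-1}(\boldsymbol\infty)=M'$, with no residual $\delta$-structure.

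It then remains only to read off the local model: by Example \ref{corner}, a topological submersion over $(0,\infty]^{e}$ along the fiber over $\boldsymbol\infty$ makes $\breve{W}'$ locally homeomorphic, compatibly with $\mathbf{S}$, to $M'\times(0,\infty]^{e}$, which is precisely the assertion that along $M'$ the space $\breve{M}^{\mathrm{red}+}_z([\Lr_-],[\Lr_+])$ is the product of the stratum with a $C^0$ manifold with an $e$-dimensional corner. The substantive work is all contained in the imported gluing construction, and the single point demanding genuine care — the main obstacle — is the Morse-Bott modification of the half-cylinder moduli spaces and of the long-neck estimates: because the extended Hessian at a reducible critical submanifold $\Lr$ has kernel $\T_{\acr}\Lr$ (Lemmas \ref{hessian} and \ref{weighhessian}), one must work throughout in the weighted spaces $L^2_{k,\delta}$ with $\delta$ positive but smaller than every $|\Lambda_{\q}(\bcr)|$ (cf.\ the hypotheses of Proposition \ref{fredholmQ} and Lemma \ref{nonzerolambda}), and one must establish the continuity of the gluing parametrization up to $T_i=\infty$ for this weighted setup, exactly as for Theorem \ref{gluing}. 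Once this framework is in place no new analytic input is required, since the reducible linearization is everywhere surjective and the whole argument is formally the Morse theory of a closed manifold.
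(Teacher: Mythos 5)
Your proposal is correct and follows essentially the route the paper intends: the paper simply states this result with a pointer to Section 19.6 of the book, remarking that the reducible case is free of boundary obstructedness, and the substantive new input is exactly the Morse-Bott weighted-Sobolev stable/unstable manifold theory of Theorem \ref{stableman} that the rest of the section develops. Your observations that only the surjective operators $Q^{\partial}_{\gamma_i}$ enter along the reducible locus, that the $\delta$-maps are trivial so the thickening is superfluous, and that the local product-with-corner model then follows from the topological submersion over $(0,\infty]^{e}$ match the intended argument.
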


\vspace{1.5cm}
We now turn our attention to the new analytical issues in the proof of Theorem \ref{gluing}, namely the parametrization of the space of solutions on a cylinder which are close to a given constant solution.
\\
\par
We first recall the notion of \textit{spectral decomposition} for a $k$-\textsc{asafoe} operator $L$ on a vector bundle $E_0\rightarrow Y$ (see Chapter $17$ in the book for the details). This naturally arises when studying Atiyah-Patodi-Singer boundary value problems. On the pulled-back vector bundle on the negative half cylinder $Z=\R^{\leq0}\times Y$, which we call $E$, we can consider the translation invariant operators 
\begin{equation*}
D^{\pm}=\frac{d}{dt}\mp L: L^2_1(Z;E)\rightarrow L^2(Z;E).
\end{equation*}
The restriction map to the boundary $\{0\}\times Y$ extends for every $k\geq 1$ to a continuous map
\begin{equation*}
r: L^2_k(X;E)\rightarrow L^2_{k-1/2}(Y;E_0)
\end{equation*}
which is also surjective with left inverse (see Theorem $17.1.1$ in the book). We can then define the \textit{spectral subspaces} of $L$
\begin{equation*}
H^{\pm}\subset L^2_{1/2}(Y;E_0)
\end{equation*}
obtained as the boundary values of the kernel of $D^{\pm}$. Then one has the following lemma (see Lemma $17.2.2$ in the book).
\begin{lemma}\label{spectraldec}
Suppose $L$ is a $k$-\textsc{asafoe} hyperbolic operator. Then one has the direct sum of closed subspaces
\begin{equation*}
 L^2_{1/2}(Y;E_0)=H^+\oplus H^-.
\end{equation*}
Furthermore, for each integer $1\leq j\leq k+1$, we have
\begin{equation*}
L^2_{j-1/2}(Y;E_0)=(H^+\cap L^2_{j-1/2})\oplus (H^-\cap L^2_{j-1/2}).
\end{equation*}
\end{lemma}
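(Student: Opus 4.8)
The plan is to follow the Atiyah--Patodi--Singer strategy, the only nontrivial analytic input being the invertibility of the translation-invariant operator on the full cylinder. Write $Z=\R^{\leq0}\times Y$, let $E$ denote the pullback of $E_0$, and set $D^-=d/dt+L$ and $D^+=d/dt-L$. The first ingredient is that, since $L$ is a hyperbolic $k$-\textsc{asafoe} operator, the operator
\[
d/dt+L:L^2_j(\R\times Y;E)\longrightarrow L^2_{j-1}(\R\times Y;E)
\]
is invertible for the relevant range of Sobolev exponents; this is exactly where hyperbolicity enters, and it follows from the Fourier transform in the $t$-variable together with the uniform resolvent bound $\sup_{\xi\in\R}\|(i\xi+L)^{-1}\|<\infty$, which is a consequence of the spectral properties of \textsc{asafoe} operators recalled in Section $12.2$ in the book (discrete spectrum lying in a horizontal band with only finitely many points off the real axis, and, by hyperbolicity, bounded away from $i\R$); see also Chapter $14$ in the book. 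The second ingredient is Theorem $17.1.1$ in the book: the restriction map $r\colon L^2_j(Z;E)\to L^2_{j-1/2}(Y;E_0)$ is bounded and surjective with a bounded right inverse. Finally, hyperbolicity gives that the solution operator $\psi\mapsto e^{-tL}\psi$ is bounded from $H^-$ into $L^2_j(Z;E)$, its image being the decaying part of $\ker D^-$, and similarly for $H^+$ after the change of variable below.

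Granting these, I would first note that the substitution $t\mapsto -t$ identifies $\ker D^+$ on $\R^{\leq0}\times Y$ with the space of $L^2$ solutions of $d/dt+L$ on $\R^{\geq0}\times Y$; hence $H^+$ is the space of boundary values at $\{0\}\times Y$ of $L^2_1$ solutions of $d/dt+L$ on the positive half-cylinder. To prove $L^2_{1/2}(Y;E_0)=H^++H^-$, given $\psi$ I would pick a compactly supported $u\in L^2_1(\R^{\leq0}\times Y;E)$ with $r(u)=\psi$, put $f=D^-u$, extend $f$ by zero to a compactly supported element of $L^2(\R\times Y;E)$, and let $w=(D^-)^{-1}f\in L^2_1(\R\times Y;E)$. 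Then $u-w$ restricts to an $L^2_1$ solution of $D^-$ on $\R^{\leq0}\times Y$ while $w$ restricts to an $L^2_1$ solution of $D^-$ on $\R^{\geq0}\times Y$ (since $f$ vanishes there), so $\psi=r(u-w)+r(w)$ with $r(u-w)\in H^-$ and $r(w)\in H^+$. The sum is direct: any element of $H^+\cap H^-$ would be the boundary value of an $L^2$ solution of $d/dt+L$ on all of $\R\times Y$, which vanishes by injectivity of the full-line operator, hence so does its boundary value.

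Closedness of $H^\pm$ I would deduce from the bounded solution operator: $r$ restricted to $\ker(D^-|_{Z})$ is a bounded bijection onto $H^-$ whose inverse $\psi\mapsto e^{-tL}\psi$ is bounded, so the $L^2_{1/2}$-norm on $H^-$ is equivalent to the $L^2_1(Z)$-norm on the closed subspace $\ker(D^-|_{Z})$, whence $H^-$ is closed; the same applies to $H^+$. Together with the previous paragraph, the open mapping theorem then yields the topological direct sum decomposition $L^2_{1/2}(Y;E_0)=H^+\oplus H^-$. For the finer Sobolev scale, I would simply rerun the three steps with $L^2_j$ and $L^2_{j-1}$ in place of $L^2_1$ and $L^2$, for $1\leq j\leq k+1$: the full-line operator is still invertible, the restriction map and the solution operator are still bounded between the corresponding spaces, and an $L^2$ solution of $D^\mp u=0$ has boundary value in $L^2_{j-1/2}$ if and only if it lies in $L^2_j$, by interior elliptic regularity for $d/dt\mp L$. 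Hence $H^\pm\cap L^2_{j-1/2}(Y;E_0)$ is exactly the boundary-value space at level $j$, and $L^2_{j-1/2}(Y;E_0)=(H^+\cap L^2_{j-1/2})\oplus(H^-\cap L^2_{j-1/2})$.

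The main obstacle is not conceptual but the uniformity of the estimates: one must check that the resolvent bounds for $i\xi+L$ and the exponential decay of $e^{-tL}$ on the spectral subspaces are uniform across the Sobolev exponents involved, so that the inverses and the solution operators remain bounded at every relevant level. This uniformity is precisely what the \textsc{asafoe} spectral theory of Sections $12.2$ and $14$ in the book provides, after which the argument is essentially formal and parallels the proof of Lemma $17.2.2$ there.
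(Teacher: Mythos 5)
Your proof is correct and follows essentially the same route as the proof this paper relies on: the paper does not argue the lemma itself but cites Lemma $17.2.2$ of Kronheimer--Mrowka, whose proof is exactly this Atiyah--Patodi--Singer cut-and-paste scheme — invertibility of $d/dt+L$ on the doubly infinite cylinder for hyperbolic $L$, the restriction/extension theorem, the resulting splitting of boundary values with directness from injectivity on the full line, and the finer Sobolev statement by regularity. The ingredients you defer (the uniform resolvent bound behind the full-line invertibility, the bounded right inverse of $r$, and the estimate $\|u\|_{L^2_1(Z)}\lesssim\|r(u)\|_{L^2_{1/2}}$ for decaying solutions, which is what makes your closedness step work and could alternatively be bypassed by noting that your surjectivity construction already produces bounded idempotents onto $H^{\pm}$) are all established in the cited chapters, so nothing essential is missing.
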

As the operators that arise in our setting are slightly more general, we introduce the following definition, see Lemma \ref{hessian}.
\begin{defn}
A $k$-\textsc{asafoe} operator $L$ is called \textit{almost hyperbolic} if the intersection between its spectrum and the imaginary axis is $\{0\}$, and the generalized $0$-eigenspace coincides with the kernel.
\end{defn}
If $L$ is almost hyperbolic it has finite dimensional kernel $H_0=\mathrm{Ker}L$ and for each $h\in H_0\subset L^2_k(Y;E_0)$ the translation invariant configuration $s_h$ satisfies the equation
\begin{equation*}
\left(d/dt+L\right)s_h=0.
\end{equation*}
Notice though that these sections are \textit{not} in $L^2_k(Z;E)$. One can then define a decomposition, called \textit{generalized spectral decomposition},
\begin{equation*}
L^2_{1/2}(Y;E_0)= H^+\oplus H_0\oplus H^-
\end{equation*}
where $H^+$ (respectively $H^-$) is the the positive (negative) spectral subspace of $L-\delta$ ($L+\delta$) for $\delta>0$ sufficiently small. This decomposition satisfies the obvious adaptation of Lemma \ref{spectraldec}, as it can be thought as the spectral decomposition for the hyperbolic operator $L\pm \delta$. An interesting space to study for our purposes is
\begin{equation}\label{infiniteweighted}
\begin{aligned}
L^2_{k,\delta}(Z; E, H_0)&= \left\{ s\mid s-s_h\in L^2_{k,\delta}(Z; E)\text{ for some }h\in H_0\right\}\\
&=H_0+ L^2_{k,\delta}(Z;E)\subset L^2_{k,\mathrm{loc}}(Z;E),
\end{aligned}
\end{equation}
where on the negative half cylinder we use the weight function $f(t)=e^{-\delta t}$.
This space comes with the linear map
\begin{equation*}
\Pi_0:L^2_{k,\delta}(Z; E, H_0)\rightarrow H_0
\end{equation*}
sending a configuration to its (unique) limit point. The norm of a configuration is defined by
\begin{equation*}
\|s\|^2=\|s-\Pi_0(s)\|^2_{L^2_{k,\delta}}+\|\Pi_0(s)\|^2.
\end{equation*}
We then have the following result.
\begin{lemma}\label{almostAPS}
Consider an operator of the form $D=d/dt+L$ where $L$ is $k$-\textsc{\textsc{asafoe}} and almost hyperbolic. Let
\begin{equation*}
\Pi: L^2(Y;E_0)\rightarrow H^-
\end{equation*}
the the projection with kernel $H_0\oplus H^+$.Then the operator
\begin{equation*}
D\oplus(\Pi\circ r)\oplus \Pi_0: L^2_{j,\delta}(Z;E,H_0)\rightarrow L^2_{j-1,\delta}(Z;E)\oplus (H^-\cap L^2_{j-1/2}(Y;E_0))\oplus H_0
\end{equation*}
is an isomorphism for $1\leq j\leq k$ for $\delta>0$ sufficiently small. Finally, the image of $\mathrm{ker}D$ under $r$ in $L^2_{j-1/2}(Y;E_0)$ is precisely $H^-\cap L^2_{j-1/2}(Y;E_0)\oplus H_0$.
\end{lemma}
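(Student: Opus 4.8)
The strategy is to reduce the almost hyperbolic case to the genuinely hyperbolic Atiyah-Patodi-Singer result already available (Lemma \ref{spectraldec} and, behind it, the material of Chapter $17$ in the book), by splitting off the finite-dimensional kernel $H_0$. First I would recall the generalized spectral decomposition $L^2_{1/2}(Y;E_0)=H^+\oplus H_0\oplus H^-$, where $H^{\pm}$ are the spectral subspaces of the genuinely hyperbolic operators $L\mp\delta$ for $\delta>0$ small enough that no nonzero spectrum of $L$ lies in the strip $|\mathrm{Re}\,\lambda|\le\delta$ (possible since the spectrum is discrete and meets the imaginary axis only at $0$, with generalized $0$-eigenspace equal to $\ker L$). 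The conjugation trick of equation (\ref{weightedtrick}): multiplication by the weight $f(t)=e^{-\delta t}$ identifies $D=d/dt+L$ acting on $L^2_{j,\delta}(Z;E)$ with $d/dt+(L+\sigma(t))$ on the unweighted spaces, where $\sigma(t)\to-\delta$ as $t\to-\infty$ on the negative half-cylinder, so that near the end the relevant operator is the hyperbolic $L-\delta$. For the standard APS decomposition of the hyperbolic operator $L-\delta$ the negative spectral subspace of $L-\delta$ is precisely $H^-$, and the standard result (the hyperbolic version of the lemma, Lemma $17.2.2$ and Theorem $17.1.1$ in the book) gives that $D\oplus(\Pi\circ r)$ is an isomorphism $L^2_{j,\delta}(Z;E)\to L^2_{j-1,\delta}(Z;E)\oplus(H^-\cap L^2_{j-1/2})$.

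Next I would treat the summand $H_0$. Each $h\in H_0$ gives the translation-invariant section $s_h$ with $(d/dt+L)s_h=0$; these are smooth on $Z$ but not in $L^2_{j,\delta}(Z;E)$ since they do not decay, so they genuinely enlarge the domain. Decompose an arbitrary $s\in L^2_{j,\delta}(Z;E,H_0)$ uniquely as $s=s_{\Pi_0(s)}+(s-s_{\Pi_0(s)})$ with the second term in $L^2_{j,\delta}(Z;E)$; this is exactly the definition of $\Pi_0$ and of the norm on this space. Applying $D$ kills $s_{\Pi_0(s)}$, so $D$ maps $L^2_{j,\delta}(Z;E,H_0)$ onto the same target as before, with the extra data of $\Pi_0(s)\in H_0$ now recording the limit point that $D$ forgets. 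Combining this with the hyperbolic isomorphism statement above yields that $D\oplus(\Pi\circ r)\oplus\Pi_0$ is a bijection: surjectivity because given $(\eta,h^-,h_0)$ one first solves $D u=\eta$, $(\Pi\circ r)u=h^-$ in $L^2_{j,\delta}(Z;E)$, then adds $s_{h_0}$ and notes that $\Pi_0$ of the sum is $h_0$ while $D$ and $\Pi\circ r$ are unchanged (since $s_{h_0}$ is harmonic and, at $t=0$, $r(s_{h_0})=h_0\in H_0$ lies in the kernel of $\Pi$); injectivity because if all three outputs vanish then $s=s_{h_0}$ with $h_0=\Pi_0(s)=0$, so $s=0$. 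Boundedness of the inverse is automatic from the open mapping theorem once both spaces are Banach, or can be read off from the explicit construction together with the left-inverse for the restriction map $r$ (Theorem $17.1.1$ in the book).

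Finally, for the statement about $r(\ker D)$: an element of $\ker D$ in $L^2_{j,\delta}(Z;E,H_0)$ is $u=s_{h_0}+v$ with $v\in L^2_{j,\delta}(Z;E)$, $Dv=0$; the genuinely decaying solutions $v$ of $Dv=0$ on the negative half-cylinder have boundary values filling out exactly $H^-\cap L^2_{j-1/2}(Y;E_0)$ (this is the content of the hyperbolic case, applied to $L-\delta$), and $s_{h_0}$ contributes the boundary value $h_0\in H_0$, so the image is $(H^-\cap L^2_{j-1/2})\oplus H_0$ as claimed; the sum is direct by the generalized spectral decomposition. The main obstacle, and the one point deserving genuine care rather than bookkeeping, is verifying that the weight threshold $\delta>0$ can be chosen uniformly so that the conjugated family $L+\sigma(t)$ is genuinely hyperbolic at the end and that the spectral projections $H^{\pm}$ of $L\mp\delta$ are independent of the particular small $\delta$ chosen — this is where the discreteness of the spectrum and the hypothesis that the generalized $0$-eigenspace equals $\ker L$ are really used, and it is exactly the ingredient that makes the almost hyperbolic case behave like a mild, finite-rank modification of the hyperbolic one treated in the book.
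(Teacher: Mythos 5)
Your overall route is the same as the paper's: conjugate by the weight to reduce to the genuinely hyperbolic Atiyah--Patodi--Singer statement, and then split off the finite-dimensional piece $H_0$ using the definition of $L^2_{j,\delta}(Z;E,H_0)$, the constant sections $s_{h_0}$, and the map $\Pi_0$. Your handling of the $H_0$ summand (surjectivity by adding $s_{h_0}$ after solving the decaying problem, injectivity, and the computation of $r(\mathrm{ker}\,D)$) is exactly what is needed.

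There is, however, a concrete sign error in the key reduction step, and as written that step is false. On the negative half-cylinder with weight $f(t)=e^{-\delta t}$, the conjugation of equation (\ref{weightedtrick}) gives $\sigma=-f'/f=+\delta$, so $D$ acting on $L^2_{j,\delta}$ is equivalent to $D+\delta$ acting on the unweighted spaces; the relevant hyperbolic operator at the end is $L+\delta$, not $L-\delta$ as you claim. This matters because your assertion that ``the negative spectral subspace of $L-\delta$ is precisely $H^-$'' is not true: shifting by $-\delta$ moves the generalized $0$-eigenspace to negative spectrum, so the negative spectral subspace of $L-\delta$ is $H^-\oplus H_0$. If one really conjugated to $L-\delta$, the APS boundary projection would have to include $H_0$, and the decaying solutions of $D$ in $L^2_{j,\delta}$ would appear to have boundary values in $H^-\oplus H_0$, double-counting the kernel once you add the constant sections and $\Pi_0$. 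Your two slips cancel in the intermediate statement you quote (which is correct), but the justification should read: the conjugated operator is $(D+\delta)\oplus(\Pi\circ r)$ on the unweighted spaces, and $H^-$ is, by its very definition in the generalized spectral decomposition, the negative spectral subspace of the hyperbolic operator $L+\delta$, so the standard hyperbolic result applies verbatim with the boundary condition $\Pi\circ r$. With that correction the rest of your argument, including the final identification $r(\mathrm{ker}\,D)=(H^-\cap L^2_{j-1/2})\oplus H_0$, goes through and agrees with the paper's proof.
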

\begin{proof}
Using the same trick as in Proposition \ref{fredholmQ}, the operator 
\begin{equation*}
D\oplus(\Pi\circ r)\oplus: L^2_{j,\delta}(Z;E)\rightarrow L^2_{j-1,\delta}(Z;E)\oplus (H^-\cap L^2_{j-1/2}(Y;E_0))
\end{equation*}
is equivalent to the operator
\begin{equation*}
(D+\delta) \oplus(\Pi\circ r): L^2_{j}(Z;E)\rightarrow L^2_{j-1}(Z;E)\oplus (H^-\cap L^2_{j-1/2}(Y;E_0))
\end{equation*}
for which the statement follows from Proposition $17.2.7$ in the book as $H^-$ is the negative spectral subspace for the hyperbolic operator $L+\delta$. The result then follows from the definition of the space $L^2_{j,\delta}(Z;E,H_0)$.
\end{proof}
\begin{example}\label{spectraldecmorsebott}
We are especially interested in the following situation. The Hilbert vector bundle
\begin{equation*}
\K^{\sigma}_j\rightarrow \Cs_k(Y)
\end{equation*}
carries a smooth family of operators with real spectrum, the Hessians $\{\Hess_\q^{\sigma}\}$. At a Morse-Bott critical point $\acr$, such an operator is almost hyperbolic, as it is a direct summand of the weighted extended Hessian (see Lemma \ref{weighhessian}), hence we have the (generalized) spectral decomposition
\begin{equation*}
\K^{\sigma}_{k-1/2,\acr}=\K^+\oplus T_{\acr}\Cr\oplus\K^-.
\end{equation*}
By quotienting by $T_{\acr}\Cr$, this also induces a spectral
\begin{equation}\label{spectralN}
\N^{\sigma}_{k-1/2,\acr}=\N^+_{\acr}\oplus \N^-_{\acr}.
\end{equation}
on the normal bundles.
\end{example}

\vspace{0.8cm}
Consider now a critical point $\acr\in C\subset\Cs_k(Y)$, and consider an $L^2_{k}$-compatible product chart $(\mathcal{U},\varphi)$ of a neighborhood $U$ of $\acr$. As pointed out in Remark \ref{jkcompatible}, we can consider such a chart $\varphi$ as the restriction of a $L^2_{k-1/2}$-compatible prduct chart, which we also call $\varphi$. Define
\begin{equation*}
Z^{\infty}=(\mathbb{R}^{\leq}\times Y)\amalg (\mathbb{R}^{\geq}\times Y).
\end{equation*}
Given an open neighborhood $[\mathfrak{U}]\subset [\Cr]$ of $[\acr]$ we can define the configuration space
\begin{equation*}
\Bt_{k,\delta}(Z^{\infty}, [\mathfrak{U}])= \{\gamma\in\Bt_{k,\mathrm{loc}}\mid \gamma\in\Bt_{k,\delta}(Z^{\infty}, [\bcr]) \text{ for some }[\bcr]\in [\mathfrak{U}]\}
\end{equation*}
consisting of configurations exponentially asymptotic to the same configuration $[\bcr]\in [\mathfrak{U}]$ on both ends (up to gauge equivalence). We can define the subspace
\begin{equation*}
{M}(Z^{\infty},[\mathfrak{U}])\subset\Bt_{k,\delta}(Z^{\infty}, [\mathfrak{U}])
\end{equation*}
consisting of solutions to the Seiberg-Witten equations. Given $[\bcr]\in[\mathfrak{U}]$ we will write
\begin{equation*}
{M}(Z^{\infty},[\bcr])\subset{M}(Z^{\infty},[\mathfrak{U}])
\end{equation*}
for the subset of solutions converging to $[\bcr]$. As usual, we can also introduce the versions of the moduli spaces with the tildes. Similarly, we define the finite cylinders
\begin{equation*}
Z^T=[-T,T]\times Y.
\end{equation*}
Theorem $17.3.1$ in the book and its proof are still valid in our situation, and give us the following transversality results for the moduli spaces of trajectories ${M}(Z^T)$ on finite cylinders.
\begin{prop}
For finite $T$, the space $\tilde{M}(Z^T)\subset \tilde{\mathcal{B}}^{\tau}_k(Z^T)$ is a closed Hilbert submanifold. The subset $M(Z^T)$ is a Hilbert submanifold with boundary, identified with the quotient of $\tilde{M}(Z^T)$ by the involution $\mathbf{i}$.
\end{prop}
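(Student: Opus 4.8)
The plan is to run the argument of Theorem $17.3.1$ in the book, the key observation being that on a \emph{finite} cylinder the Morse--Bott character of the critical submanifolds is irrelevant: there are no asymptotic conditions to impose, so the analytical setup is literally the one in Chapter $17$ of the book. First I would recall that, by Section $9.4$ in the book, $\tilde{\mathcal{B}}^{\tau}_k(Z^T)$ is a Hilbert manifold whose local charts are the Coulomb--Neumann slices $\mathcal{S}^{\tau}_{k,\gamma}$, and that the perturbed equations define a smooth section $\F^{\tau}_{\q}$ of the Hilbert vector bundle $\V^{\tau}_{k-1}\to\tilde{\mathcal{C}}^{\tau}_k(Z^T)$ (the finite-cylinder analogue of Lemma $14.4.1$ in the book), whose zero locus modulo the free action of $\G_{k+1}(Z^T)$ is $\tilde{M}(Z^T)$.

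The heart of the matter is transversality, and here the finite cylinder makes things easy. At any solution $\gamma$, restricting $\F^{\tau}_{\q}$ to the slice $\mathcal{S}^{\tau}_{k,\gamma}$ and combining with the linearized gauge-fixing condition gives, in path notation as in Section $3$, an operator of the shape $\frac{D}{dt}+L_{\check{\gamma}(t)}$ on $[-T,T]\times Y$ with \emph{no} boundary condition imposed at $\{\pm T\}\times Y$. Since the restriction-to-boundary map is surjective with a bounded right inverse (Theorem $17.1.1$ in the book), such an operator is automatically surjective; hence $\tilde{M}(Z^T)$ is transversely cut out, it is therefore a Hilbert submanifold of $\tilde{\mathcal{B}}^{\tau}_k(Z^T)$, and it is closed because it is the zero set of a continuous section.

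Next I would identify $M(Z^T)$. The $s$-component of a solution $\gamma=(A,s,\phi)\in\tilde{M}(Z^T)$ satisfies a linear homogeneous first-order ODE in $t$ (the second equation of the $\tau$-model), so $s$ is either identically zero or nowhere zero, and in the latter case of constant sign. The involution $\mathbf{i}\colon(A,s,\phi)\mapsto(A,-s,\phi)$ of equation (\ref{involution}) then preserves $\tilde{M}(Z^T)$, interchanges the open subsets $\{s>0\}$ and $\{s<0\}$, and has fixed-point set the reducible moduli space $\tilde{M}^{\mathrm{red}}(Z^T)=\{s\equiv0\}$; since $M(Z^T)=\tilde{M}(Z^T)\cap\{s\geq0\}$ is a fundamental domain, $M(Z^T)$ is identified with $\tilde{M}(Z^T)/\mathbf{i}$.

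Finally I would check the manifold-with-boundary structure locally. Away from the reducibles $\mathbf{i}$ acts freely, so there $M(Z^T)$ is an ordinary Hilbert manifold. At a reducible solution the linearized operator is block-diagonal for the splitting of the tangent space into its $\mathbf{i}$-invariant part (the directions with vanishing $s$-component) and its anti-invariant part (the $s$-directions): the invariant block is again surjective by the free-boundary argument applied equivariantly and cuts out $\tilde{M}^{\mathrm{red}}(Z^T)$ as a Hilbert submanifold, while the anti-invariant block is the first-order ODE operator $s\mapsto\dot{s}+\Lambda_{\q}(\check{\gamma})s$ on $[-T,T]$ with free boundary, which is surjective with one-dimensional kernel. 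Hence $\tilde{M}^{\mathrm{red}}(Z^T)$ has codimension one in $\tilde{M}(Z^T)$, and in a transverse slice the $\mathbb{R}$-factor is the $s$-direction on which $\mathbf{i}$ acts by $s\mapsto-s$; thus $M(Z^T)$ is locally modelled on $[0,\infty)\times\tilde{M}^{\mathrm{red}}(Z^T)$, a Hilbert manifold with boundary the reducible locus. The one step I expect to require genuine care is this last local normal-form analysis near reducibles --- isolating the one-dimensional anti-invariant normal direction and checking that the residual $\mathbb{Z}/2$ acts on it in the standard way; the remaining ingredients are soft elliptic theory on a compact manifold with boundary, carried over verbatim from the book.
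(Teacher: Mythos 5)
Your proposal is correct and is essentially the paper's own argument: the paper simply observes that Theorem $17.3.1$ of the book and its proof remain valid on a finite cylinder precisely because no asymptotic conditions are imposed, so the Morse--Bott nature of the critical submanifolds never enters, which is exactly what you spell out (free-boundary surjectivity of the linearized, gauge-fixed operator, sign dichotomy for $s$, and the local half-space model transverse to the reducible locus). One minor imprecision: the surjectivity of the free-boundary operator follows from the invertibility of the spectral (APS-type) boundary-value problem in Section $17.2$ of the book, not directly from the surjectivity of the restriction map in Theorem $17.1.1$, though the conclusion you draw is the standard and correct one.
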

The manifolds $Z^{\infty}$ and all $Z^T$ for $T>0$ have boundary $Y\amalg \bar{Y}$, so we have the continuous restriction maps
\begin{align*}
R: \tilde{M}(Z^{\infty},[\mathfrak{U}])&\rightarrow \tilde{\Bo}_{k-1/2}^{\sigma}(Y\amalg \bar{Y}) \\
R:\tilde{M}(Z^T,[\mathfrak{U}])&\rightarrow \tilde{\Bo}_{k-1/2}^{\sigma}(Y\amalg \bar{Y}).
\end{align*}

We will need to introduce suitable charts for the configuration spaces on the cylinders. We explain an idea that unfortunately does not immediately work. Suppose we are given a $L^2_k$-compatible product chart $(\varphi, \mathcal{U})$ which is the restriction of a $L^2_{k-1/2}$-compatible product chart. This defines a correspondence between elements in a neighborhood of zero $\tilde{\mathcal{U}}_T\subset\T^{\tau}_{k,\gamma_{\acr}}(Z^T)$ and elements in a neighborhood of $\gamma_{\acr}$ in  $\tilde{\mathcal{C}}^{\tau}_k(Z^T)$. Indeed, forgetting for a moment about regularity issues, we can think of an element of $v\in\T^{\tau}_{\gamma_{\acr}}(Z^T)$ as a path $\check{v}(t)$ with values in $\T^{\sigma}_{\acr}(Y)$ together with a path of imaginary valued $1$-forms $\check{c}(t)$ on $Y$. By suitably restricting $\tilde{\mathcal{U}}_T$, we can suppose that the configuration $\check{v}(t)$ always lies in the domain $\mathcal{U}$ of the chart (because of the continuity of the restriction maps). In a similar fashion, we can think of an element as a path $\check{\gamma}(t)$ with values in $\Cs(Y)$ together with a path of imaginary valued $1$-forms $c$ on $Y$. The identification then just sends the path $\check{v}(t)$ to the path $\varphi(\check{v}(t))$ in $\Cs(Y)$, and leaves $\check{c}(t)$ unchanged. The issue with this approach is that even though the chart we have constructed in Lemma \ref{compchart} (which has the additional desirable property of being somehow gauge invariant) is the restriction of a $L^2_{k-1/2}$-compatible product chart (see Remark \ref{jkcompatible}), the estimates on the $L^2_k$ norm of the derivative do not hold on the domain of this larger chart. We fix this issue in the next result.
\begin{lemma}\label{cylinderchart}
There exists an $L^2_k$-compatible product chart $(\varphi,\mathcal{U})$ around $\acr$ which is the restriction of an $L^2_{k-1/2}$-compatible product chart such that the construction above gives rise to a well defined chart
\begin{equation*}
\varphi: \tilde{\mathcal{U}}_T\subset\T^{\tau}_{k,\gamma_{\acr}}\rightarrow \tilde{\mathcal{C}}^{\tau}_k(Z^T)
\end{equation*}
onto a neighborhood of the constant solution $\gamma_{\acr}$. The similar statement holds on the infinite cylinder (with the weighted Sobolev spaces).
\end{lemma}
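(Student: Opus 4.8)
The plan is to fix the construction of Lemma \ref{compchart} by carrying it out directly in a Banach setup that remembers all regularities simultaneously, rather than first building an $L^2_k$-chart and then worrying whether it extends. Concretely, I would work on the affine ambient space $\A_{k-1/2}(Y)\times\R\times L^2_{k-1/2}(Y;S)$ and its finite-regularity refinements. Recall from Remark \ref{jkcompatible} that the diffeomorphism $\Psi$ and the exponential map used in Lemma \ref{compchart} are constructed via the inverse (resp. implicit) function theorem applied to maps whose linearization at the origin is the identity; the key point is that the \emph{same} map $\varphi$ solves the defining equation in every Sobolev completion $L^2_j(Y)$ with $1\le j\le k$, and also in $L^2_{k-1/2}(Y)$, because the equation (the Coulomb slice condition together with the graph equation for $\Cr$ and the gauge-exponential) is an elliptic equation with smooth coefficients. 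So the first step is to re-derive the chart $\varphi$ as a fixed map, defined a priori on an $L^2_{k-1/2}$-neighborhood of the origin, whose restriction to the $L^2_k$-neighborhood is an $L^2_k$-compatible product chart.

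The real content is the second step: upgrading the fiberwise/slicewise construction ``apply $\varphi$ to $\check v(t)$'' to a genuine chart between Hilbert manifolds of paths on $Z^T=[-T,T]\times Y$, with uniform-in-$T$ estimates (and analogously on $Z^\infty$ with weights). Here I would proceed as follows. Given the chart $\varphi$ on $Y$, define $\Phi$ on the path space by $(\Phi(v,c))(t) = \big(\varphi(\check v(t)), \check c(t)\big)$, interpreting a configuration on the cylinder in temporal-plus-Coulomb form as a path of configurations on $Y$ together with a path of imaginary $0$-forms $c$ (as in equation (\ref{temporalgauge}) and the discussion of the $\tau$-model). One must check: (i) $\Phi$ maps $L^2_k(Z^T)$-paths to $L^2_k(Z^T)$-paths, which follows from the $L^2_k$-multiplication/composition theorems on the four-manifold $Z^T$ applied to the smooth nonlinearity $\varphi$ (here one uses $2(k+1)>\dim Z=4$, i.e. $k\ge2$, so $L^2_k$ is an algebra and composition with smooth functions is bounded); (ii) the differential of $\Phi$ at a path is, slicewise, $\mathcal D_{\check v(t)}\varphi$, which is $L^2_k$-bounded above and below with constants depending only on the $L^2_k$-bounds of $\mathcal D\varphi$ on the \emph{smaller} $L^2_k$-domain $\mathcal U$ — and this is exactly why we need $\varphi$ restricted to its $L^2_k$-domain, not the larger $L^2_{k-1/2}$-domain, the point emphasized in the paragraph preceding the lemma. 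The constants are $T$-independent because the estimates are local in $t$ and the $L^2_k$ norm on $Z^T$ is (uniformly in $T$) controlled by the integral over $t$ of the $L^2_k(Y)$ norms of the $t$-derivatives up to order $k$, together with a Sobolev trace argument handling the mixed derivatives. Then the inverse function theorem gives the chart onto a neighborhood of $\gamma_{\acr}$ in $\tilde{\mathcal C}^\tau_k(Z^T)$.

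The third step handles the infinite cylinder $Z^\infty$. Here I would replace $L^2_k(Z^T)$ by the weighted space $L^2_{k,\delta}(Z^\infty)$ and by $L^2_{k,\delta}(Z^\infty;E,H_0)$ in the normal directions, using the generalized spectral decomposition of Example \ref{spectraldecmorsebott} and Lemma \ref{almostAPS}; the multiplication theorems for weighted Sobolev spaces already stated (the two propositions following equation (\ref{weightedspequiv})) give the same composition bounds, and the exponential decay of solutions (Lemma \ref{expdecay}, Lemma \ref{expint}) guarantees that honest solutions land in these weighted spaces, so the chart is adapted to the moduli problem. One subtlety: on $Z^\infty$ the limiting configuration $\bcr\in[\mathfrak U]$ varies, so one actually wants a \emph{family} of charts parametrized by $\mathfrak U$; this is obtained by letting the base configuration $\gamma_0(\acr_-,\acr_+)$ of Section 2 vary smoothly and applying the fiberwise construction relative to it, exactly as in the definition of $\Ct_{k,\delta}(\mathfrak U_-,\mathfrak U_+)$.

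I expect the main obstacle to be the uniform-in-$T$ two-sided estimate on $d\Phi$ on $Z^T$: naively composing the $L^2_k(Y)$-bounds slicewise loses control of the mixed space-time derivatives $\partial_t^a \nabla_Y^b$ with $a,b\ge1$, and one must interpolate carefully (using that $\varphi$ is the restriction of an $L^2_{k-1/2}$-chart so that half-integer trace spaces $L^2_{k-1/2}(Y)$ behave well) to see that the composition operator is still an $L^2_k(Z^T)$-isomorphism near the constant path with $T$-independent constants. This is the precise incarnation of the failure mentioned in the text — that the $L^2_k$-derivative estimates of the Lemma \ref{compchart} chart ``do not hold on the domain of the larger chart'' — and the fix is genuinely to shrink to the $L^2_k$-domain of $\varphi$ on $Y$ before taking paths, which forces one to re-examine the implicit-function-theorem construction so that this shrinking is compatible with still being the restriction of an $L^2_{k-1/2}$-compatible product chart, as asserted in the statement. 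Everything else (Hausdorffness, smoothness of the resulting chart, the analogous statements with tildes and in the $\sigma$-model) is routine once this estimate is in hand.
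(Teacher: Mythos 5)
There is a genuine gap, and it sits exactly at the point your last paragraph flags as "the main obstacle." Your fix is to "shrink to the $L^2_k$-domain of $\varphi$ on $Y$ before taking paths," but this cannot work: the restriction of an $L^2_k(Z^T)$-configuration near $\gamma_{\acr}$ to a slice $\{t\}\times Y$ is only controlled in $L^2_{k-1/2}(Y)$, so the set of paths whose slices lie in an $L^2_k$-small neighborhood of the origin is not a neighborhood of $\gamma_{\acr}$ in $\tilde{\mathcal{C}}^{\tau}_k(Z^T)$, and the map you define would not be a chart onto a neighborhood of the constant solution. What is actually needed is the opposite: two-sided $L^2_k$-bounds for $\mathcal{D}\varphi$ on an $L^2_j$-neighborhood of the origin for $j<k$ (in particular $j=k-1/2$), and this is precisely what the chart of Lemma \ref{compchart} fails to provide, no matter how it is re-derived. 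Relatedly, your step (i) invokes "$L^2_k$-composition theorems applied to the smooth nonlinearity $\varphi$," but $\varphi$ is not a Nemytskii operator: it is built from the inverse function theorem, the graph function $f$, and the gauge exponential $\mathrm{exp}\left((d^{\sigma}_{\bcr})^{-1}(v^j)\right)\cdot(\,\cdot\,)$, and the multiplication estimates for that last term with $v^j$ only $L^2_j$-small do not land back in $L^2_k$; asserting that ellipticity makes "the same map" work in all completions only reproduces Remark \ref{jkcompatible}, i.e. definedness, not the estimates.

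The paper's proof takes a different and more elementary route that you do not consider: it abandons the Lemma \ref{compchart} chart altogether and builds a new one whose only nonlinearities are harmless fiberwise. Concretely, it uses the shear
\begin{equation*}
\tilde{f}(v^t,v^n,v^j)=\left(v^t,\,v^n+f(v^t),\,v^j\right)
\end{equation*}
in the splitting $\T_{\acr}\Cr\oplus\N^{\sigma}_{k,\acr}\oplus\J^{\sigma}_{k,\acr}$, where $f$ is the graph function of $\Cr$ in the slice; since its domain $\T_{\acr}\Cr$ is finite dimensional and independent of $j$, this map is defined on an $L^2_j$-neighborhood for every $1\leq j\leq k$ with $L^2_k$-bounds on its derivative and inverse. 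It is then composed with the explicit chart $i$ of Section $18.4$ of the book, $(a,r,\psi)\mapsto(A_0+a\otimes 1,s_0+r,\phi)$ with $\check{\phi}(t)=(\check{\phi}_0+\psi(t))/\sqrt{1+\|\psi(t)\|^2}$, which is affine in the connection and involves only the slicewise $L^2(Y)$-norm of the spinor. Because both ingredients are defined fiberwise with uniform constants, the slicewise construction immediately gives the chart on $Z^T$ (uniformly in $T$, with no interpolation or mixed-derivative analysis needed) and, with the weighted norms, on $Z^{\infty}$. So the missing idea is not a refinement of the IFT construction but the observation that the problematic nonlinearity can be pushed entirely onto the finite-dimensional tangential factor of the Morse-Bott critical manifold.
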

\begin{proof}
As in the proof of Lemma \ref{compchart} we can locally identify the slice through $\acr$ with its tangent space $\T_{\acr}\Cr\oplus \N^{\sigma,\acr}_k$, and the intersection of the critical submanifold $\Cr$ with the slice with the image of the graph of a smooth function
\begin{equation*}
f: \T_{\acr}\Cr\rightarrow \N^{\sigma}_k
\end{equation*}
defined in a neighborhood of the origin. This induces the map (defined in an $L^2_k$ neighborhood of the origin)
\begin{align*}
\tilde{f}: \T^{\sigma}_{k,\acr}(Y)&\rightarrow  \T^{\sigma}_{k,\acr}(Y)\\
(v^t, v^n, v^j)&\mapsto \left(v^t, v^n+f(v^t), v^j\right)
\end{align*}
where we have identified the tangent space with the direct sum $\T_{\acr}\Cr\oplus \N^{\sigma}_{k,\acr}\oplus \J^{\sigma}_{k,\acr}$. The nice feature of this map is that as in Remark \ref{jkcompatible} it is defined for each $1\leq j\leq k$ in an $L^2_j$ neighborhood of the origin, and furthermore norms of the derivatives and their inverses in the $L^2_k$ norm is bounded in this neighborhood. Here we use the fact that $\T_{\acr}\Cr$ does not depend on $j\leq k$. This implies that the analogue of the fiberwise construction discussed above at the level of the tangent spaces induces a diffeomorphism
\begin{equation*}
\tilde{f}:\tilde{\mathcal{U}}_T\subset\T^{\tau}_{k,\gamma_{\acr}}\rightarrow\T^{\tau}_{k,\gamma_{\acr}}
\end{equation*}
from a neighborhood of the origin onto a neighborhood of the origin. The desired chart is defined by  composing this map with the chart form Section $18.4$ in the book
\begin{align*}
i:\T^{\tau}_{k,\gamma_{\acr}}&\rightarrow \tilde{\mathcal{C}}^{\tau}_k(Z^T)\\
(a,r,\psi)&\mapsto (A_0+a\otimes 1, s_0+r, \phi)
\end{align*}
where $\gamma_{\acr}=(A_0,s_0,\phi_0)$ and
\begin{equation*}
\check{\phi}(t)=\frac{\check{\phi}_0+\psi(t)}{\sqrt{1+\|\psi(t)\|^2}}.
\end{equation*}
The result follows because the chart $i$ is also defined fiberwise.
\end{proof}

\vspace{0.8cm}
This section is devoted to the proof of the following theorem. Here we use the notation
\begin{equation*}
\N=\K^{\sigma}_{k-1/2,\acr}(Y)/T_{\acr}\Cr.
\end{equation*}
From this the proof of Theorem \ref{gluing} follows with no essential modifications as in Chapter $19$ in the book.
\begin{teor}\label{stableman}
Fix an $L^2_{k-1/2}$-compatible product chart $(\mathcal{U},\varphi)$ in a neighborhood of $\acr$ as in Lemma \ref{cylinderchart}. Then there exists $T_0$ such that for all $T\geq T_0$ we can find smooth parametrizations
\begin{align*}
u(T,-): B(T_{\acr}\Cr)\times B(\N)&\rightarrow\tilde{M}(Z^T) \\ u(\infty,-): B(T_{\acr}\Cr)\times B(\N)&\rightarrow \tilde{M}(Z^{\infty})
\end{align*}
which are diffeomorphisms from a product of balls $B(T_{\acr}\Cr)\times B(\N)$ onto a neighborhood of $[\gamma_{\acr}]$. The map $u(\infty,-)$ respects the decomposition of the moduli space in the sense that for every $\xi\in B(T_{\acr}\Cr)$ one has 
\begin{align*}
u\left(\infty, (\xi,-)\right)(B(\N))&\subset M(Z^{\infty},[\varphi(\xi,0,0)]) \\
u\left(\infty, (\xi,0)\right)&=[\gamma_{\varphi(\xi,0,0)}].
\end{align*}
Furthermore, these parametrizations can be chosen so that the map
\begin{IEEEeqnarray*}{c}
\mu_T:B(T_{\acr}\Cr)\times B(\N)\rightarrow \Bs_{k-1/2}(Y\amalg \overline{Y})\\
h\mapsto Ru\left(T,h\right)
\end{IEEEeqnarray*}
is a smooth embedding of $B(T_{\acr}\Cr)\times B(\N)$ for every $T\in[T_0,\infty]$ with the following additional properties. The function
\begin{equation*}
\mu_T:[T_0,\infty)\times B(T_{\acr}\Cr)\times B(\N)\rightarrow \Bs_{k-1/2}(Y\amalg \overline{Y})
\end{equation*}
is smooth for finite $T$ and furthermore
\begin{equation*}
\mu_T\stackrel{C^{\infty}_{\mathrm{loc}}}{\longrightarrow} \mu_{\infty} \text{ as } T\rightarrow\infty.
\end{equation*}
Finally there is an $\eta>0$ independent of $T$ such that the images of the maps $u(T,-)$ can be taken to contain all solutions $[\gamma]\in M(Z^T)$ with $\|\gamma-\gamma_{\bcr}\|_{L^2_k(Z^T)}\leq\eta$ for some $\bcr\in \mathfrak{U}$.
\par
In the case that $\acr$ is reducible, the parametrizations $u(T,-)$ for $T\in(T_0,\infty]$ are equivariant for the $\mathbb{Z}/2\mathbb{Z}$ action of $\mathbf{i}$.
\end{teor}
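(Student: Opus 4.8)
\emph{Proof sketch.} The plan is to follow the argument of Chapter $18$ in the book, with two systematic modifications: the genuine spectral decomposition of the Hessian at a non-degenerate critical point is replaced by the generalized spectral decomposition of Example \ref{spectraldecmorsebott}, and all the Atiyah--Patodi--Singer analysis is carried out on weighted Sobolev spaces, so that the relevant operators become almost hyperbolic in the sense introduced before Lemma \ref{almostAPS}. First I would use the $L^2_{k-1/2}$-compatible product chart $(\mathcal{U},\varphi)$ of Lemma \ref{cylinderchart} to transport the Seiberg--Witten equations on $Z^T$ (and on $Z^{\infty}$, using the weighted spaces of equation (\ref{infiniteweighted})) together with a Coulomb--Neumann gauge-fixing condition into an equation for a pair $(V,c)$ of paths in $\T^{\sigma}_{\acr}(Y)\oplus L^2(Y;i\R)$, exactly as in the path description of the gauge-fixed operator in Section $3$, cf. equation (\ref{gaugefixedeq}). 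At the constant solution this linearizes to $\frac{D}{dt}+L$ with $L$ the weighted extended Hessian, which by Lemma \ref{weighhessian} is $k$-\textsc{asafoe} and, after removing the $\sigma(t)$ contribution, almost hyperbolic with kernel exactly $T_{\acr}\Cr$; the gauge-fixing kills the $\J^{\sigma}$-directions, so the effective linearization is controlled by the normal Hessian and its generalized spectral subspaces $\N^{\pm}_{\acr}$ of equation (\ref{spectralN}).

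For the infinite cylinder, Lemma \ref{almostAPS}, applied to the tangent and normal summands of $L$, shows that the linearized operator restricted to a slice --- together with the boundary condition that the $H^+$-component of the restriction to $\{0\}\times Y$ vanish on the positive half and the $H^-$-component vanish on the negative half --- is surjective with kernel canonically isomorphic to $T_{\acr}\Cr\oplus\N^{-}_{\acr}\oplus\N^{+}_{\acr}=T_{\acr}\Cr\oplus\N$, the $T_{\acr}\Cr$-factor recording the common limiting configuration on the two ends. A contraction-mapping argument about $\gamma_{\acr}$ then yields the smooth diffeomorphism $u(\infty,-):B(T_{\acr}\Cr)\times B(\N)\to\tilde{M}(Z^{\infty})$, and by construction $u(\infty,(\xi,-))$ lands in $M(Z^{\infty},[\varphi(\xi,0,0)])$ with $u(\infty,(\xi,0))=[\gamma_{\varphi(\xi,0,0)}]$. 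For finite $T$ the same scheme runs on $Z^T$ with Atiyah--Patodi--Singer boundary conditions at $t=\pm T$ --- here one works on ordinary Sobolev spaces, the operator being Fredholm on the compact cylinder --- producing $u(T,-)$; the uniform constant $\eta$ and the claim that the images contain all nearby solutions follow from uniform bounds on the inverse of the linearization, which are available because the critical set is compact and hence only finitely many $\acr$ are involved, cf. Theorem \ref{compactness}.

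Finally, since the restriction maps $R$ to $\Bs_{k-1/2}(Y\amalg\overline{Y})$ are smooth and the tangent and normal data are recovered from the boundary values through the spectral projections, $\mu_T=R\circ u(T,-)$ is a smooth embedding for every $T\in[T_0,\infty]$, smooth jointly in $(T,h)$ for finite $T$. The convergence $\mu_T\stackrel{C^{\infty}_{\mathrm{loc}}}{\longrightarrow}\mu_{\infty}$ is the usual neck-stretching estimate: the exponential-decay bounds of Lemma \ref{expdecay} (and Lemma \ref{expint} when $\acr$ is reducible) force solutions on $Z^T$ to converge in all derivatives on compact subsets to the corresponding solution on $Z^{\infty}$, the gauge-fixing making the limit canonical. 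The $\mathbb{Z}/2\mathbb{Z}$-equivariance when $\acr$ is reducible is automatic, because the involution $\mathbf{i}\colon(A,s,\phi)\mapsto(A,-s,\phi)$ commutes with the equations, with the gauge-fixing, and with the chart of Lemma \ref{cylinderchart}, so the contraction mapping can be carried out equivariantly. I expect the main obstacle to be keeping every estimate uniform as $T\to\infty$ while simultaneously carrying the non-trivial kernel $T_{\acr}\Cr$: one must verify that the weighted-space isomorphism of Lemma \ref{almostAPS} degenerates correctly to the finite-$T$ Atiyah--Patodi--Singer problem, and that the bundle splitting $\T_j|_{\tilde{U}}=\T_j^t\oplus\T_j^n\oplus\J^{\sigma}_j$ used to build the chart stays compatible with the spectral splitting at $\acr$ throughout the iteration.
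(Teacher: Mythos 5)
Your outline matches the paper's strategy in broad strokes (chart from Lemma \ref{cylinderchart}, gauge-fixed path equation whose linearization is $d/dt+L$ with $L$ almost hyperbolic, Lemma \ref{almostAPS} plus a contraction/inverse-function argument on the infinite cylinder), but there is a genuine gap exactly at the point you flag as "the main obstacle": the uniform-in-$T$ analysis on the finite cylinders. You propose to run the finite-$T$ problem "on ordinary Sobolev spaces, the operator being Fredholm on the compact cylinder," and to get uniform bounds on the inverse "because the critical set is compact and hence only finitely many $\acr$ are involved." Compactness of the critical set is irrelevant here: the failure of uniformity comes from the kernel directions $H_0=T_{\acr}\Cr$, whose constant extensions to $Z^T$ are not uniformly controlled as $T\to\infty$, and Fredholmness at each fixed $T$ gives no $T$-independent bound at all. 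The paper deals with this by proving an abstract statement (Proposition \ref{parabstract}) in which the finite cylinders themselves carry the $T$-dependent weighted norms $\|u\|_{L^2_{k,\delta}}=\|g_{T,\delta}u\|_{L^2_k}$, the constant sections $\mathcal{H}_0$ are split off (their identification with $H_0$ has norm growing like $e^{\delta T}$), and the inverse of $D\oplus\Pi$ is shown to be uniformly bounded only on the complement $\mathcal{E}^{T,\perp}_{\delta}$, via a patching/splicing of the infinite-cylinder inverse (Lemma \ref{boundedinv}). Even then the inverse function theorem cannot be applied naively: a direct application gives a ball of radius shrinking like $e^{-\delta T}$, so the paper first fixes $h_0$, solves for the perturbation of $s_{h_0}$ to obtain a solution on a $T$-independent ball, and only afterwards recovers joint smoothness. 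None of this is supplied or replaced by your argument, so the claimed uniform $\eta$ and the containment of all nearby solutions are not established.

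The same gap affects the convergence $\mu_T\stackrel{C^{\infty}_{\mathrm{loc}}}{\longrightarrow}\mu_{\infty}$. This is not "the usual neck-stretching estimate" from Lemma \ref{expdecay} (which concerns decay of $\CSd$ along a single solution); in the paper it is obtained by comparing the finite-cylinder solution $u_{\delta}(T,h_0,h)$ with the explicitly spliced configuration $U(T,h_0,h)$ built from the two halves of $u(\infty,h_0,h)$, and estimating the error term $(F^T,\Pi_0^{\delta},\Pi)U(T,h_0,h)$; because the projection $\Pi_0^{\delta}$ depends on the weight and the constant sections grow like $e^{\delta T}$, one must play two weights $\delta_0$ and $\delta'=\delta_0-\delta$ against each other to make the error land inside the (exponentially small) ball on which the local inverse is defined. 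Your sketch gives no mechanism producing this quantitative comparison, so the convergence statement, and with it the embedding property of $\mu_T$ uniformly up to $T=\infty$, remains unproved. The remaining points (identification of the kernel with $T_{\acr}\Cr\oplus\N$, the slice/boundary-projection setup with $\Pi^-_Y,\Pi^-_{\overline{Y}}$, and the $\mathbf{i}$-equivariance) are consistent with the paper.
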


The parametrization $u(\infty,-)$ provided by the theorem also respects the fibered product structure on $M(Z^{\infty},[\mathfrak{U}])$. This can be identified with
\begin{align*}
\big\{([\gamma_+],[\gamma_-])&\mid \lim_{t\rightarrow\infty} [\breve{\gamma}_+(t)]=\lim_{t\rightarrow-\infty}[\breve{\gamma}_-(t)]\big\}\\
&\subset M(\R^{\geq0}\times Y, [\mathfrak{U}])\times M(\R^{\leq0}\times Y, [\mathfrak{U}])
\end{align*}
and and the map $u(\infty,-)$ provides local diffeomorphisms
\begin{IEEEeqnarray*}{c}
B(T_{\acr}\Cr)\times B(\N^+)\rightarrow M(\R^{\geq0}\times Y,[\mathfrak{U}]) \\
B(T_{\acr}\Cr)\times B(\N^-)\rightarrow M(\R^{\leq 0}\times Y,[\mathfrak{U}]).
\end{IEEEeqnarray*}
We can think of the boundary values of these moduli spaces as the (local) stable and unstable manifolds in a neighborhood of $[\acr]$.
\par
\vspace{0.8cm}

The proof of Theorem \ref{stableman} follows the analogous result in the Morse setting very closely. We first discuss a more abstract version for general operators of the form $d/dt+L$ on the cylinders $Z^T$, and then apply it to our special case. Here $L$ is an almost hyperbolic $k$-\textsc{asafoe} operator. Suppose $T>2$, and consider a smooth even function $g_{T,\delta}$ on $[-T,T]$ such that
\begin{equation*}
g_{T,\delta}(t)=
\begin{cases}
e^{\delta(t+T)}\text{ if }t\leq -1 \\
e^{-\delta(t-T)}\text{ if } t\geq 1.
\end{cases}
\end{equation*} 
The function $\sigma(t)_{T,\delta}=g'_{T,\delta}(t)/g_{T,\delta}(t)$ is $\delta$ for $t\leq-1$ and $-\delta$ for $t\geq 1$, and we can choose the family so that for each $\delta>0$ it is independent of $T$ on the interval $[-1,1]$. On the finite cylinder we will consider the $L^2_{k,\delta}$ norm defined to be
\begin{equation*}
\|u\|_{L^2_{k,\delta}}=\|g_{T,\delta}\cdot u\|_{L^2_k}.
\end{equation*}
We denote this normed space by $L^2_{k,\delta}(Z^T,E)$. The new norm, which we call the weighted Sobolev norm, is obviously equivalent to the original one on each finite cylinder, but it will be useful to study the behaviour of the operators
\begin{equation*}
D=d/dt+L: L^2_{k,\delta}(Z^T;E)\rightarrow L^2_{k-1,\delta}(Z^T;E)
\end{equation*}
for $T$ going to infinity. Inside this space there is the subspace $\mathcal{H}_0$ consisting of the constant sections $s_{h_0}$ for $h_0\in H_0=\mathrm{ker}L$. It is important to remark that the canonical identification
\begin{equation}\label{hequalsh}
H_0\rightarrow \mathcal{H}_0
\end{equation}
sending an element $h_0$ to the constant section $s_{h_0}$ has norm growing exponentially in the time parameter $T$. There is the map
\begin{equation*}
\Pi_0: L^2_{k,\delta}(Z^T,E)\rightarrow \mathcal{H}_0
\end{equation*}
given by $L^2_{k,\delta}$ projection, or, equivalently $L^2_{\delta}$ projection as the elements in $\mathcal{H}_0$ are constant. We still denote by $\Pi_0$ the composition of such a map with the identification (\ref{hequalsh}). We denote the kernel of such map by $L^2_{k,\delta}(Z^T;E)^{\perp}$. On the infinite cylinder, we introduced the space $L^2_{k,\delta}(Z^{\infty};E, H_0)$ in (\ref{infiniteweighted}). In this case, we will consider the operator
\begin{equation*}
D=d/dt+L: L^2_{k,\delta}(Z^{\infty};E,H_0)\rightarrow L^2_{k-1,\delta}(Z^{\infty};E)
\end{equation*}
Suppose we are also given a bounded linear operator
\begin{equation*}
\Pi:L^2_{k-1/2}(Y\amalg\overline{Y}; E_0)\rightarrow H
\end{equation*}
for some Hilbert space $H$. This induces by restriction to the boundary (which we omit from the notation) a map
\begin{align*}
\Pi&:L^2_k(Z^T;E)\rightarrow H\\
\Pi&: L^2_k(Z^{\infty};E;H_0)\rightarrow H.
\end{align*}
For simplicity, we introduce the notations
\begin{align*}
\mathcal{E}^T_{\delta}&=L^2_{k,\delta}(Z^T; E) \\
\mathcal{F}^T_{\delta}&=L^2_{k-1,\delta}(Z^T;E)
\end{align*}
and
\begin{align*}
\mathcal{E}^{\infty}_{\delta}&=L^2_{k.\delta}(Z^{\infty};E,H_0),\\
\mathcal{F}^{\infty}_{\delta}&=L^2_{k-1,\delta}(Z^{\infty};E).
\end{align*}
The key assumption in what follows is that the linear operator
\begin{equation}\label{invertible}
(D,\Pi_0,\Pi):\mathcal{E}^{\infty}_{\delta}\rightarrow\mathcal{F}^{\infty}_{\delta}\oplus H_0\oplus H
\end{equation}
is invertible. This also implies the invertibility on weighted spaces with weight $\delta'$ sufficiently close to $\delta$. The problem we are interested in is non-linear, and in our abstract setting we suppose this non-linearity arises as a map
\begin{equation*}
\alpha: C^{\infty}(Z^T;E)\rightarrow L^2_{\mathrm{loc}}(Z^T;E)
\end{equation*} 
obtained from a map $\alpha_0:C^{\infty}(Y; E_0)\rightarrow L^2(Y;E_0)$ by restriction to slices $\{t\}\times Y$. We will assume that $\alpha$ defines a smooth map
\begin{equation*}
\alpha: L^2_k([-1,1]\times Y;E)\rightarrow L^2_{k-1}([-1,1]\times Y; E)
\end{equation*}
with $\alpha(h_0)=0$ for every $h_0\in H_0$ and $\mathcal{D}_0\alpha=0$. This implies that $\alpha$ defines smooth maps
\begin{align*}
\alpha&: \mathcal{E}^T_{\delta}\rightarrow \mathcal{F}^T_{\delta}\\
\alpha&: \mathcal{E}^{\infty}_{\delta}\rightarrow \mathcal{F}^{\infty}_{\delta}.
\end{align*}
We are then interested in the study of the maps
\begin{align*}
F^T&=D+\alpha:\mathcal{E}_{\delta}^T\rightarrow \mathcal{F}_{\delta}^T \\
F^{\infty}&=D+\alpha:\mathcal{E}^{\infty}_{\delta}\rightarrow \mathcal{F}^{\infty}_{\delta}
\end{align*}
and especially in the spaces of solutions
\begin{align*}
M(T)&=(F^T)^{-1}(0)\subset\mathcal{E}^T\\
M(\infty)&=(F^{\infty})^{-1}(0)\subset\mathcal{E}^{\infty}_{\delta}.
\end{align*}
Our abstract version of the Theorem \ref{stableman} is the following.

\begin{prop}\label{parabstract}
Suppose the hypothesis above are satisfied, and in particular that the map (\ref{invertible}) is invertible. Then for $T\geq T_0$ the sets $M(T)$ and $M(\infty)$ are Hilbert submanifolds of $\mathcal{E}^T$ and $\mathcal{E}^{\infty}$ in a neighborhood of $0$. There exist $\eta>0$ and smooth maps
\begin{equation*}
u(T,-):B_{\eta}(H_0)\times B_{\eta}(H)\rightarrow M(T)\qquad T\in[T_0,\infty]
\end{equation*}
which are diffeomorphisms onto their image and preserve the product structure, i.e. for every $T\in [T_0,\infty]$ we have
\begin{align*}
(\Pi_0, \Pi) u(T,(h_0,h))&=(h_0,h)\\
u(T, (h_0,0))&=s_{h_0}.
\end{align*}
Furthermore, for $T\in[T_0,\infty]$ the map obtained by composing with the restriction to the boundary
\begin{IEEEeqnarray*}{c}
\mu_T: B_{\eta}(H_0)\times B_{\eta}(H)\rightarrow L^2_{k-1/2}(Y\amalg\overline{Y})\\
(h_0,h)\mapsto ru(T,h_0,h)
\end{IEEEeqnarray*}
is a smooth embedding. As a function on $[T_0,\infty)\times B_{\eta}(H_0)\times B_{\eta}({H})$ the map $(T,h_0,h)\mapsto \mu_T(h_0,h)$ is smooth for finite $T$, and
\begin{equation*}
\mu_T\stackrel{C^{\infty}_{\mathrm{loc}}}{\longrightarrow} \mu_{\infty}\qquad\text{ for }T\rightarrow \infty.
\end{equation*}
Finally, there is an $\eta'>0$ independent of $T$ such that the images of the maps $u(T,-)$ contain all solutions $u\in M(T)$ with $\|u\|_{L^2_{k,\delta}}\leq \eta'$.
\end{prop}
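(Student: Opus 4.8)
The plan is to reduce the statement to a finite-dimensional implicit function theorem argument applied uniformly in $T$, following the scheme of Chapter $18$ in the book but with the weighted Sobolev setup adapted to the almost-hyperbolic case. First I would set up the linearized problem: by hypothesis the operator $(D,\Pi_0,\Pi):\mathcal{E}^{\infty}_{\delta}\to\mathcal{F}^{\infty}_{\delta}\oplus H_0\oplus H$ is invertible, and the corresponding operator on the finite cylinder $(D,\Pi_0,\Pi):\mathcal{E}^T_{\delta}\to\mathcal{F}^T_{\delta}\oplus H_0\oplus H$ is invertible for all $T\geq T_0$ with inverses bounded uniformly in $T$ (in the weighted norms). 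This uniform bound is the technical heart of the linear theory and is proven exactly as Proposition $18.3.5$/Lemma $18.4.1$ in the book: one argues by contradiction, taking a sequence $T_i\to\infty$ and sections $u_i$ with $\|u_i\|=1$ but $\|(D,\Pi_0,\Pi)u_i\|\to 0$, translating to extract a limit on $Z^{\infty}$ (using the weight functions $g_{T_i,\delta}$ which stabilize on $[-1,1]$), and contradicting invertibility of (\ref{invertible}); the presence of the kernel $H_0$ is handled by the extra factor $\Pi_0$ and the space $L^2_{k,\delta}(Z^{\infty};E,H_0)$, exactly as in Lemma \ref{almostAPS}.

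Next I would apply the inverse function theorem with uniform constants. Since $\alpha$ is smooth with $\alpha|_{H_0}=0$ and $\mathcal{D}_0\alpha=0$, and since $\alpha$ maps the weighted spaces to themselves with a quadratic estimate $\|\alpha(u)-\alpha(v)\|\leq C(\|u\|+\|v\|)\|u-v\|$ on small balls (the constant $C$ being $T$-independent because $\alpha$ acts slicewise and the weight is an isometry to the unweighted norm where the multiplication/Sobolev estimates hold), the nonlinear map $(F^T,\Pi_0,\Pi)=(D+\alpha,\Pi_0,\Pi)$ is, near $0$, a small perturbation of an invertible linear map with uniformly bounded inverse. The quantitative inverse function theorem then yields, for $T\geq T_0$, a uniform $\eta>0$ and a smooth inverse to $(F^T,\Pi_0,\Pi)$ on $B_\eta(0)\times B_\eta(H_0)\times B_\eta(H)$; restricting the first coordinate to $0$ gives the parametrization $u(T,-):B_\eta(H_0)\times B_\eta(H)\to M(T)$, which by construction satisfies $(\Pi_0,\Pi)u(T,(h_0,h))=(h_0,h)$ and, since $s_{h_0}$ solves $F^T s_{h_0}=0$ with $\Pi_0 s_{h_0}=h_0$, $\Pi s_{h_0}=0$ (this last using that $s_{h_0}$ is constant and $\Pi$ restricts to the boundary appropriately — one arranges $\Pi s_{h_0}=0$ by the choice of $\Pi$ as in the book), also $u(T,(h_0,0))=s_{h_0}$. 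That $M(T)$, $M(\infty)$ are Hilbert submanifolds near $0$ follows since $F^T$ is a submersion there (its linearization $D+\mathcal{D}\alpha$ is surjective with the claimed right inverse).

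Then I would establish the statements about $\mu_T$. The restriction-to-the-boundary map $r:L^2_k(Z^T;E)\to L^2_{k-1/2}(Y\amalg\overline Y;E_0)$ is continuous with a continuous right inverse (Theorem $17.1.1$ in the book), and composing with $u(T,-)$ gives $\mu_T$; smoothness in $(T,h_0,h)$ for finite $T$ is clear from smoothness of all the ingredients in $T$, and $\mu_T\xrightarrow{C^\infty_{\mathrm{loc}}}\mu_\infty$ follows from the corresponding convergence of the inverse operators (the linear inverses converge in $C^\infty_{\mathrm{loc}}$ because the operators $D$ on $Z^T$ converge to $D$ on $Z^\infty$ locally, together with the uniform bounds). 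That $\mu_T$ is an embedding for each $T\in[T_0,\infty]$ is proven as in Lemma $18.4.3$: on the normal factor $B_\eta(H)$ the derivative of $r\circ u(T,-)$ is injective because the image of $\ker D$ under $r$ is exactly $H^-\oplus H_0$ (Lemma \ref{almostAPS}) and $\Pi$ detects the $H^-$ part, while the tangential factor $B_\eta(H_0)$ injects because $s_{h_0}$ has boundary value $h_0$ and the $H_0$-component is recovered by $\Pi_0$; injectivity of the map itself (not just its differential) on a possibly smaller ball follows by a standard compactness/contradiction argument. Finally, the "all small solutions are in the image" clause is the uniqueness half of the inverse function theorem: given $[\gamma]\in M(Z^T)$ with $\|\gamma-\gamma_{\bcr}\|_{L^2_k}\leq\eta'$ for small enough $T$-independent $\eta'$, pushing through the chart of Lemma \ref{cylinderchart} lands in the domain where $(F^T,\Pi_0,\Pi)$ is a diffeomorphism, so $\gamma$ equals $u(T,(\Pi_0,\Pi)(\gamma))$.

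The main obstacle I expect is the uniform-in-$T$ invertibility of the linear operators with bounds in the weighted norms: one must be careful that the weight functions $g_{T,\delta}$ are chosen compatibly (independent of $T$ on $[-1,1]$, matching $e^{\pm\delta(t\mp T)}$ near the ends) so that the conjugated operators $D+\sigma_{T,\delta}(t)$ have hyperbolic limits at both ends uniformly, and that the exponentially-growing identification $H_0\to\mathcal{H}_0$ in (\ref{hequalsh}) does not spoil the estimates — this is precisely why one works with $\Pi_0$ composed with that identification and measures $\mathcal{H}_0$ intrinsically via $H_0$. Once this uniform linear estimate is in hand, the rest is the standard machinery transported verbatim from Chapters $18$–$19$ of the book, and the equivariance under $\mathbf{i}$ in the reducible case is immediate since all constructions ($D$, $\alpha$, $\Pi$, $\Pi_0$, the chart) are $\mathbf{i}$-equivariant, so the canonical inverse produced by the inverse function theorem is equivariant as well.
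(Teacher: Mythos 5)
There is a genuine gap at the central step. You claim that $(D,\Pi_0,\Pi):\mathcal{E}^T_{\delta}\to\mathcal{F}^T_{\delta}\oplus H_0\oplus H$ is invertible with inverse bounded uniformly in $T$, and that a single application of the quantitative inverse function theorem then produces a $T$-independent ball $B_\eta(H_0)\times B_\eta(H)$. This is false as stated: the constant sections $\mathcal{H}_0$ have $L^2_{k,\delta}(Z^T)$-norm growing like $e^{\delta T}$ (the identification (\ref{hequalsh}) is exponentially expanding), so the inverse of the full linearization has norm of order $e^{\delta T}$ in the $H_0$-directions, and the direct IFT argument only yields a ball whose radius shrinks exponentially in $T$ — the paper says this explicitly when it remarks that the IFT applied to the whole map $(F^T,\Pi_0,\Pi)$ works only on an exponentially small ball. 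Your closing remark shows you noticed the danger, but "measuring $\mathcal{H}_0$ intrinsically via $H_0$" does not repair it: the solution $u(T,h_0,h)$ contains $s_{h_0}$, whose norm in $\mathcal{E}^T_\delta$ is genuinely $\sim e^{\delta T}\|h_0\|$ (this is reflected in the paper's estimate $\|u(T,h_0,h)\|\leq 2C_0(e^{\delta T}\|h_0\|+\|h\|)$), and if you instead change the norm on the $\mathcal{H}_0$-summand you must redo the uniform Lipschitz estimates for $\alpha$ in the new norm. The paper's route around this is exactly the two-step argument you are missing: Lemma \ref{boundedinv} proves uniform bounded invertibility only of $P^T=D\oplus\Pi$ on the complement $\mathcal{E}^{T,\perp}_{\delta}$ (by patching an approximate inverse out of $N^{\infty}$, with the error decaying like $e^{-\delta T}$); then, for each fixed $h_0$ in a time-independent ball, one applies IFT to $\left(F^T(s_{h_0}+\,\cdot\,),\Pi\right)$ on $\mathcal{E}^{T,\perp}_{\delta}$, whose linearization differs from $d/dt+L$ by the fiberwise small term $\mathcal{D}_{h_0}\alpha$, giving the uniform $\eta$; joint smoothness in $(h_0,h)$ is recovered afterwards from the full IFT. (Your compactness/contradiction proof of the uniform linear estimate could replace the patching argument, but only for the operator restricted to $\mathcal{E}^{T,\perp}_{\delta}$ with target $\mathcal{F}^T_{\delta}\oplus H$.)

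A second, smaller gap is the convergence $\mu_T\to\mu_\infty$. Local $C^{\infty}_{\mathrm{loc}}$ convergence of the operators (and hence of inverses on compact interior regions) does not by itself control the boundary restriction, because the matching of the two half-cylinder solutions happens across the whole lengthening neck. The paper's proof compares $u_{\delta}(T,h_0,h)$ with the pre-glued section $U(T,h_0,h)=\tau^*_T u_+ +\tau^*_T u_- - h_0$ and shows the discrepancy $\xi_{\delta}(T,\cdot)$ tends to zero, which requires the quantitative decay estimates and the shift of weight $\delta=\delta_0-\delta'$; only then does $\mu_T(h)=\mu_\infty(h)+\bigl(u_-(h)|_{\{-2T\}\times Y}-h_0\bigr)+o(1)$ follow from $u_-\in L^2_{k,\delta}$. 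Your sketch needs this comparison step (or an equivalent quantitative gluing estimate) to be a proof.
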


The strategy of the proof follows the classical case. We first prove the existence of the solution $u(\infty,h_0,h)$ for $\|(h_0,h)\|$ small.
\begin{lemma}\label{parinfty}
There exist $\eta_1, C_0>0$ such that for every $(h_0,h)$ with 
\begin{equation*}
\|h_0\|,\|h\|\leq \eta_2=\eta_1/2C_0
\end{equation*}
there exists a unique $u=u(\infty,h_0,h)$ in $B_{\eta_2}(\mathcal{E}^{\infty}_{\delta})$ satisfying
\begin{IEEEeqnarray*}{c}
F^{\infty}(u)=0 \\
\Pi u=h.
\end{IEEEeqnarray*}
Furthermore the map
\begin{equation*}
u(\infty,-): B_{\eta_2}(H_0)\times B_{\eta_2}(H)\rightarrow B_{\eta_1}(\mathcal{E}^{\infty})
\end{equation*}
is smooth, sends $(h_0,0)$ to the constant section $s_{h_0}$ and satisfies
\begin{equation*}
\|u(\infty,h_0, h)\|\leq 2C_0\|(h_0,h)\|.
\end{equation*}
\end{lemma}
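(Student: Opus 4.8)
The plan is to deduce Lemma~\ref{parinfty} from the invertibility hypothesis (\ref{invertible}) by a contraction--mapping argument, following the Morse case closely. Abbreviate $P=(D,\Pi_0,\Pi)\colon\mathcal{E}^{\infty}_{\delta}\to\mathcal{F}^{\infty}_{\delta}\oplus H_0\oplus H$, which is a bounded linear isomorphism by assumption, and fix a constant $C_0$ with $\|P^{-1}\|\le C_0$. The system to be solved is $F^{\infty}(u)=0$ \emph{together with} $\Pi_0 u=h_0$ and $\Pi u=h$; the first of the two linear constraints is implicit in the statement of the lemma and is needed for $u$ to be pinned down by the pair $(h_0,h)$, since already the whole disc of constant sections $\{s_{h_0}\}$ solves $F^{\infty}=0$ and has vanishing $\Pi$. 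Because $F^{\infty}=D+\alpha$, this system is equivalent to $Pu=(-\alpha(u),h_0,h)$, i.e.\ to the fixed--point equation
\[
u=\mathcal{T}_{h_0,h}(u):=P^{-1}\bigl(-\alpha(u),\,h_0,\,h\bigr).
\]

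First I would record the contraction estimate. Since $\alpha$ extends to a smooth map $\mathcal{E}^{\infty}_{\delta}\to\mathcal{F}^{\infty}_{\delta}$ with $\alpha(0)=0$ and $\mathcal{D}_0\alpha=0$ (both inherited from the slicewise data $\alpha_0$), continuity of $\mathcal{D}\alpha$ furnishes $\eta_1>0$ with $\|\mathcal{D}_u\alpha\|\le(2C_0)^{-1}$ on $B_{\eta_1}(\mathcal{E}^{\infty}_{\delta})$, hence by the mean value inequality $\|\alpha(u)-\alpha(v)\|\le(2C_0)^{-1}\|u-v\|$ and $\|\alpha(u)\|\le(2C_0)^{-1}\|u\|$ on that ball. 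It follows that once $\|h_0\|$ and $\|h\|$ are small enough (this is where $\eta_2=\eta_1/2C_0$ and its precise relation to $\eta_1,C_0$ enter) the map $\mathcal{T}_{h_0,h}$ sends the closed ball into itself and contracts distances by a factor at most $1/2$; the Banach fixed--point theorem then gives a unique $u=u(\infty,h_0,h)$ in that ball, and substituting it into $\|u\|=\|\mathcal{T}_{h_0,h}(u)\|\le\tfrac12\|u\|+C_0\|(h_0,h)\|$ yields $\|u(\infty,h_0,h)\|\le 2C_0\|(h_0,h)\|$. Smoothness of $(h_0,h)\mapsto u(\infty,h_0,h)$ is the implicit function theorem applied to $\Phi(u,h_0,h):=u-\mathcal{T}_{h_0,h}(u)$, whose $u$--derivative $\mathrm{Id}-\mathcal{D}_u\mathcal{T}_{h_0,h}$ is invertible because $\|\mathcal{D}_u\mathcal{T}_{h_0,h}\|\le\tfrac12$ and whose dependence on the finite--dimensional parameters $(h_0,h)$ is affine. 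Finally, for the normalization one checks directly that the constant section $s_{h_0}$ satisfies $Ds_{h_0}=(\tfrac{d}{dt}+L)s_{h_0}=0$, $\alpha(s_{h_0})=\alpha_0(h_0)=0$, $\Pi_0 s_{h_0}=h_0$ by the definition of $\Pi_0$ and the identification (\ref{hequalsh}), and $\Pi s_{h_0}=0$ (this last being the compatibility of $\Pi$ with the constant sections, as for the spectral projection of Lemma~\ref{almostAPS}); thus $Ps_{h_0}=(0,h_0,0)$, so $s_{h_0}=\mathcal{T}_{h_0,0}(s_{h_0})$, and since $\|s_{h_0}\|_{\mathcal{E}^{\infty}_{\delta}}=\|h_0\|$ it lies in the relevant ball and uniqueness forces $u(\infty,h_0,0)=s_{h_0}$.

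The argument has no genuine obstacle: the analytic substance---invertibility of $(D,\Pi_0,\Pi)$ on the weighted Sobolev completions, the smoothness of $\alpha$ as a map $\mathcal{E}^{\infty}_{\delta}\to\mathcal{F}^{\infty}_{\delta}$, and the good behaviour of the norm on $\mathcal{E}^{\infty}_{\delta}=L^2_{k,\delta}(Z^{\infty};E,H_0)$---has already been established (Lemma~\ref{almostAPS} and the hypotheses preceding the proposition). The only points requiring care are the bookkeeping of the constants $\eta_1,C_0,\eta_2$ so that the fixed--point map self--maps a ball of the right size \emph{and} the output obeys the stated quantitative bound, and the (harmless) observation that the uniqueness clause must be read with the extra constraint $\Pi_0 u=h_0$ in force.
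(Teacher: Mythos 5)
Your proof is correct and follows essentially the same route as the paper: the paper simply invokes the quantitative inverse function theorem (Proposition $18.3.6$ of the book) applied to $(F^{\infty},\Pi,\Pi_0)$, using the invertibility of $(D,\Pi_0,\Pi)$ and the fact that $\alpha$ is uniformly Lipschitz with small constant near $0$, which is exactly the contraction-mapping argument you spell out, including the observation that $\alpha(s_{h_0})=0$ gives $u(\infty,(h_0,0))=s_{h_0}$. Your explicit remark that uniqueness is to be read with the constraint $\Pi_0 u=h_0$ in force (and that $\Pi$ annihilates the constant sections) is a fair and slightly more careful rendering of what the paper leaves implicit.
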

\begin{proof}This follows from the application of the inverse function theorem (Proposition $18.3.6$ in the book). The the key point is that the non linear part of the function
\begin{equation*}
\alpha: \mathcal{E}^{\infty}_{\delta}\rightarrow \mathcal{F}^{\infty}_{\delta}
\end{equation*}
is $C^1$ and has vanishing derivative at the origin so it is uniformly Lipschitz with small Lipschitz constant on small balls around zero, i.e. for any $\varepsilon>0$ there is $\eta>0$ such that for each $u,u'\in \mathcal{E}_{\delta}^{\infty}$ we have
\begin{equation*}
\|u\|,\|u'\|\leq \eta\implies \| \alpha(u)-\alpha(u')\|\leq \varepsilon \|u-u'\|.
\end{equation*}
We can then just apply the inverse function theorem to the map
\begin{equation*}
(F^{\infty},\Pi,\Pi_0):\mathcal{E}^{\infty}_{\delta}\rightarrow \mathcal{F}^{\infty}\oplus H\oplus H_0.
\end{equation*}
The fact that $u(\infty, (h_0,0))=s_{h_0}$ follows from the fact that $\alpha(s_{h_0})=0$.
\end{proof}

\vspace{0.8cm}

We then focus on the operators $F^T$ acting on finite cylinders. First we study their linearizations.
\begin{lemma}\label{boundedinv}
There exists a $T_0$ such that for all $T\geq T_0$ the operator
\begin{equation*}
P^T=D\oplus\Pi:\mathcal{E}^{T,\perp}_{\delta}\rightarrow\mathcal{F}^T_{\delta}\oplus H
\end{equation*}
is invertible. Furthermore, for $T\rightarrow \infty$ the operator norm $\|(P^T)^{-1}\|$ is bounded by a constant independent of $T$.
\end{lemma}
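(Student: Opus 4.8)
The plan is to transfer the invertibility of the infinite‑cylinder operator $(D,\Pi_0,\Pi)$ of (\ref{invertible}) to the finite cylinders $Z^T$, uniformly in $T$, by the standard gluing scheme. First I would pass to unweighted spaces: multiplication by $g_{T,\delta}$ conjugates $D=d/dt+L$ on $L^2_{k,\delta}(Z^T;E)$ to $d/dt+L+\sigma_{T,\delta}(t)$ on $L^2_k(Z^T;E)$, and since $\sigma_{T,\delta}\equiv\pm\delta$ outside $[-1,1]$ and the family $\{L+\sigma_{T,\delta}(t)\}$ is independent of $T$ there, the relevant translation‑invariant operators at the two ends are $d/dt+(L\pm\delta)$ with $L\pm\delta$ hyperbolic ($L$ being almost hyperbolic). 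Hence all the exponential‑decay estimates for the associated Atiyah–Patodi–Singer problems hold with constants independent of $T$. Throughout, $\mathcal{E}^{T,\perp}_\delta$ is the $L^2_\delta$‑orthogonal complement of the constant sections $\mathcal{H}_0$, and under restriction to the two halves (reparametrized so that the APS boundary stays at $s=0$ while the neck is pushed to $s=+\infty$) it corresponds, in the limit $T\to\infty$, to $L^2_{k,\delta}(Z^\infty;E)=\ker\Pi_0\subset\mathcal{E}^\infty_\delta$, on which $(D,\Pi)$ is invertible by (\ref{invertible}) (compare Lemma \ref{almostAPS}).

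Next I would prove a uniform a priori estimate: there are $T_0,C$ with
\[
\|u\|_{L^2_{k,\delta}(Z^T)}\le C\big(\|Du\|_{L^2_{k-1,\delta}(Z^T)}+\|\Pi u\|_H\big)
\]
for all $u\in\mathcal{E}^{T,\perp}_\delta$ and $T\ge T_0$. This is proved by contradiction and patching: suppose $T_n\to\infty$ and $u_n\in\mathcal{E}^{T_n,\perp}_\delta$ with $\|u_n\|=1$ and $\|P^{T_n}u_n\|\to 0$. Cutting $u_n$ with a partition of unity adapted to the two half‑cylinders and the neck, and combining interior elliptic estimates with the uniform exponential‑decay estimates for $d/dt+(L\pm\delta)$, one shows the $L^2_{k,\delta}$‑mass of $u_n$ in the neck tends to zero, while the restrictions to the two halves converge, along a subsequence, weakly in $L^2_k$ on compact sets to a solution $u_\infty$ of $Du_\infty=0$ lying in $L^2_{k,\delta}(Z^\infty;E)=\ker\Pi_0$ with $\Pi u_\infty=0$. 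By the injectivity half of (\ref{invertible}), $u_\infty=0$; together with the neck estimate this forces $\|u_n\|\to 0$, a contradiction. The estimate shows $P^T$ is injective with closed range and a left inverse of norm $\le C$.

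For surjectivity I would build an approximate right inverse by gluing. Given $(f,h)\in\mathcal{F}^T_\delta\oplus H$, transport $f$ to the two half‑cylinders (supported away from $\infty$), solve $(D,\Pi)v=(f,h)$ there using (\ref{invertible}); the solution $v$ decays exponentially toward $\infty$ on each half. Gluing the two pieces over the neck with a cutoff and subtracting the (exponentially small) $\Pi_0$‑component produces $u^{\mathrm{app}}\in\mathcal{E}^{T,\perp}_\delta$ with $P^Tu^{\mathrm{app}}=(f,h)+e_T$, $\|e_T\|\le Ce^{-cT}\|(f,h)\|$, the error coming only from the commutator of $D$ with the neck cutoff and from the projection correction. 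For $T$ large this error is absorbed by a Neumann series (using the uniform left inverse), so $P^T$ is onto and $(P^T)^{-1}$ exists with $\|(P^T)^{-1}\|$ bounded independently of $T$.

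The main obstacle is the contradiction/patching step: one must control the neck so that no $L^2_\delta$‑mass concentrates there or escapes along the stretching cylinder, and one must verify that the weak limits on the halves genuinely lie in the weighted space with the perpendicularity condition — the delicate point being precisely that the identification $H_0\cong\mathcal{H}_0$ in (\ref{hequalsh}) has norm growing like $e^{\delta T}$, so that ``$\Pi_0 u_n=0$'' must be exploited in the correct normalization. Apart from this, everything proceeds exactly as in the Morse case (the finite‑cylinder APS estimates of Chapter $18$ in the book), the only new feature being the nontrivial kernel $H_0=\ker L$ (equal to $\T_{\acr}\Cr$ in the application), which is handled by working on the perpendicular space and with the generalized spectral decomposition.
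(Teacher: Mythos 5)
Your overall strategy---transferring the invertibility of $(D,\Pi_0,\Pi)$ on the infinite cylinder to $Z^T$ by splicing and then a Neumann series---is the same as the paper's, but the two quantitative steps on which everything rests are asserted rather than proved, and as stated they do not go through. First, the splicing error. You transport the data by cutting it near the middle of $Z^T$, solve on the two half-infinite cylinders using (\ref{invertible}), and then glue the solutions ``over the neck'', claiming the commutator error is $O(e^{-cT})$ because the half-cylinder solutions decay exponentially toward infinity. But at the neck (i.e.\ at distance $\approx T$ from the boundary of the half-cylinder) the transported data is still supported, so the solution has no decay there yet, and the finite- and infinite-cylinder weight functions agree there (both $\approx e^{\delta T}$); neither mechanism produces smallness, and the naive bound on the commutator error is only $O(1)\|(f,h)\|$. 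The paper's construction works precisely because the gluing cutoffs $\phi_i$ are placed at $t\approx \pm T/2$, a distance $\sim T/2$ beyond the data cutoffs $\beta_i$ supported in $|t|\le 1$: there the finite-cylinder weight is $\approx e^{\delta T/2}$ while the translated infinite-cylinder weight is $\approx e^{3\delta T/2}$, and it is this mismatch that gives the norm $\sim e^{-\delta T}$ for multiplication by $d\phi_1/dt$ and hence $\|K^T\|\to 0$. Without separating the two families of cutoffs (or at least spreading the gluing cutoff over a length $\sim T$ to get an $O(1/T)$ error), your $u^{\mathrm{app}}$ is not an approximate right inverse.

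Second, your uniform a priori estimate. You claim the weighted mass of $u_n$ in the neck tends to zero ``by interior elliptic estimates combined with the exponential decay estimates for $d/dt+(L\pm\delta)$''. These operators are hyperbolic only with gap $\delta$ (the eigenvalue coming from $H_0=\mathrm{ker}\,L$ sits at distance exactly $\delta$ from the imaginary axis), so the decay they provide is at rate $e^{-\delta\,\mathrm{dist}}$, which exactly cancels the growth $e^{+\delta\,\mathrm{dist}}$ of the weight: the near-constant modes have their weighted mass concentrated in the neck and are not removed by decay estimates at all. This is not a technicality but the whole point of restricting to $\mathcal{E}^{T,\perp}_{\delta}$: only the orthogonality to $\mathcal{H}_0$, used in the normalization in which the constants have norm $\sim e^{\delta T}$ (cf.\ (\ref{hequalsh})), kills these borderline modes, and the limiting neck operator on the doubly infinite cylinder has nontrivial $L^2$ kernel corresponding exactly to $H_0$, so the contradiction argument has to be run against that kernel. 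You flag this as ``the main obstacle'' but do not resolve it, and since in your plan the injectivity of $P^T$ rests on this step, injectivity is also left unproved. Note that the paper avoids the compactness argument altogether: it identifies the kernel of $D\oplus\Pi$ on all of $\mathcal{E}^T_{\delta}$ with $\mathcal{H}_0$ directly (a mode-expansion fact), so that injectivity on the perpendicular space is immediate, and the uniform bound on $(P^T)^{-1}=\tilde N^T(1+K^T)^{-1}$ then follows from the uniformly bounded approximate right inverse alone.
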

This implies in particular that the whole linearization of the operator $F^T$ is invertible. On the other hand it is clear that the norm of the inverses grows exponentially because of the constant sections $\mathcal{H}_0$. The key point of the lemma is the fact that on the complement of this subspace the norm of the inverse is bounded.

\begin{proof}
This follows from modifying the usual patching argument (see the proof of Lemma $18.3.8$ in the book). Call $N^{\infty}$ the inverse of $P^{\infty}$. On $Z^T$, pick a smooth partition of unity $\beta_1,\beta_2$ with 
\begin{align*}
\beta_1(t)&=
\begin{cases}
1\text{ for }t\leq -1\\
0 \text{ for } t\geq 1
\end{cases}\\
\beta_2(t)&=\beta_1(-t)
\end{align*}
and smooth functions $\phi_1,\phi_2$ so that
\begin{align*}
\phi(t)&=
\begin{cases}
1\text{ for }t\leq T/2-1\\
0 \text{ for } t\geq T/2+1
\end{cases}\\
\phi_2(t)&=\phi_1(-t).
\end{align*}
We can also suppose that the non constant part of these functions does not depend on $T$. We construct an almost right inverse for $P^T$ with the following patching argument. Call $N^{\infty}$ the inverse of the linear operator
\begin{equation*}
P^{\infty}: \mathcal{E}^{\infty}_{\delta,0}\rightarrow \mathcal{F}^{\infty}_{\delta}\oplus H,
\end{equation*}
where $\mathcal{E}^{\infty}_{\delta,0}$ is the subspace of $\mathcal{E}^{\infty}_{\delta}$ consisting of configurations asymptotic to zero. We can then define
\begin{IEEEeqnarray*}{c}
\rho: \mathcal{F}_{\delta}^T\rightarrow\mathcal{F}^{\infty}_{\delta}\\
v\mapsto\rho(v)=
\begin{cases}\tau_{-T}^*\beta_1 v & \text{on }[0,\infty)\times Y \\
\tau_T^*\beta_2 v& \text{on } (-\infty,0]\times Y.
\end{cases}
\end{IEEEeqnarray*}
and
\begin{IEEEeqnarray*}{c}
\pi: \mathcal{E}^{\infty}_{\delta}\rightarrow \mathcal{E}_{\delta}^T \\
u\mapsto \phi_1\tau^*_T u_++\phi_2\tau^*_{-T}u_-
\end{IEEEeqnarray*}
where $u_+$ and $u_-$ are the parts of $u$ on the two components $\R^{\geq0}\times Y$ and $\R^{\leq0}\times Y$. We now define the map
\begin{IEEEeqnarray*}{c}
\tilde{N}^T: \mathcal{F}_{\delta}^T\oplus H\rightarrow \mathcal{E}_{\delta}^T\\
(v,h)\mapsto (\mathrm{Id}-\Pi^{\perp}_0)\pi\circ N^{\infty}(\rho(v),h)
\end{IEEEeqnarray*}
which has the property that $P^T\circ \tilde{N}^T=1+K^T$ where the operator norm of $K^T$ going to $0$.
In fact, $\tilde{N}^T(u)$ solves the equation outside the intervals $[-T/2-1,-T/2+1]$ and $[T/2-1,T/2+1]$. In the remaining to intervals, for example in $[T/2-1,T/2+1]$, the failure is described by the map
\begin{equation*}
u\mapsto  \frac{d}{dt}\phi_1\cdot\tau^*_T P^{\infty}_+\tau^*_{-T}\beta_1 u.
\end{equation*} 
This map has norm bounded above by the quantity $Ce^{-\delta T}$ for some time independent constant $C$. This is because the multiplication by $\beta_1$ (seen as a map from the finite cylinder to the infinite one) has norm bounded by a time independent constant, while the multiplication by $\frac{d}{dt}\phi_1$ (seen as a map from the infinite cylinder to the finite one) has norm decreasing as $e^{-\delta T}$. In fact weight function for the finite cylinder is approximatively $e^{\delta T}$ at zero and $e^{\delta T/2}$ at $T/2$, while the (translation of) the weight function on the infinite cylinder is $e^{\delta T}$ at zero and $e^{3\delta T/2}$ at $T/2$.  The operator $\tilde{N}^T$ has bounded norm for the same reason. The operator $P^T$ is injective as its kernel (on the whole space) consists of $\mathcal{H}_0$, so the existence of the right inverse for $T\geq T_0$ implies that it is invertible in the same range. Its inverse $N^T$ has bounded norm because $\|N^T-\tilde{N}^T\|$ is going to $0$ and the operator norm of $\tilde{N}^T$ remains bounded.
\end{proof}
We can then deduce the existence of solutions on a finite cylinder.
\begin{cor}
Let $\eta_1,\eta_2$ and $C_0$ as in Lemma \ref{parinfty}. Then there exists $T_0$ such that for every $T\in [T_0,\infty]$ and $(h_0,h)\in H_0\oplus H$ with $\|h\|\leq \eta_1$ there exists a unique $u=u(T,h_0,h)$ in $B_{\eta_2}(\mathcal{E}^T_{\delta})$ satisfying
\begin{align*}
F^T(u)&=0\\
 (\Pi_0,\Pi_0) u&=(h_0,h).
\end{align*}
The map
\begin{equation*}
u(T,-): B_{\eta_1}(H_0\oplus\tilde{H})\rightarrow B_{\eta_1}(\mathcal{E}^T)
\end{equation*}
is smooth and satisfies
\begin{equation*}
u(T,(h_0,0))=s_{h_0}.
\end{equation*}
Finally, we have the estimate
\begin{equation*}
\|u(T,h_0,h)\|\leq 2C_0(e^{\delta T}\|h_0\|+\|h\|).
\end{equation*}
\end{cor}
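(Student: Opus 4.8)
The plan is to run a parametrized contraction mapping argument on the finite cylinder, with Lemma \ref{boundedinv} supplying the uniformly invertible linear model and Lemma \ref{parinfty} being exactly the case $T=\infty$; the statement is the finite-length analogue of that lemma and follows the proof of the corresponding corollary in the book. First I would split off the constant part of $u$. Writing $u=s_{h_0}+v$ with $v\in\mathcal{E}^{T,\perp}_{\delta}$, so that $\Pi_0 u=h_0$ holds automatically, and using that $Ds_{h_0}=0$ (since $s_{h_0}\in\mathcal{H}_0=\ker L$ is $t$-independent), that $\alpha(s_{h_0})=0$ by hypothesis, and that $\Pi$ kills the boundary values of configurations in $\mathcal{H}_0$, the system $F^T(u)=0$, $\Pi u=h$ becomes
\begin{equation*}
Dv=-\alpha(s_{h_0}+v),\qquad \Pi v=h .
\end{equation*}
By Lemma \ref{boundedinv} the operator $P^T=D\oplus\Pi\colon\mathcal{E}^{T,\perp}_{\delta}\to\mathcal{F}^T_{\delta}\oplus H$ is invertible for $T\ge T_0$ with $\|(P^T)^{-1}\|\le C_0$ uniformly in $T$ (and for $T=\infty$), so the system is equivalent to the fixed-point equation $v=\Phi_{T,h_0,h}(v)$ with $\Phi_{T,h_0,h}(v)=(P^T)^{-1}\bigl(-\alpha(s_{h_0}+v),\,h\bigr)$.

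The second step is to verify that $\Phi_{T,h_0,h}$ is a $\tfrac12$-contraction of a fixed ball $B_{\eta_2}(\mathcal{E}^{T,\perp}_{\delta})$ into itself, uniformly in $T$. This is where the only genuinely new point relative to the book appears: $\|s_{h_0}\|_{\mathcal{E}^T_{\delta}}$ grows like $e^{\delta T}\|h_0\|$, so $s_{h_0}+v$ is \emph{not} confined to a small ball of $\mathcal{E}^T_{\delta}$; nonetheless $\alpha$ is assembled slicewise from $\alpha_0$ on the fixed cylinder $[-1,1]\times Y$, on which $\|s_{h_0}\|_{L^2_k}$ is just a fixed multiple of $\|h_0\|_{L^2_k(Y)}$ with no exponential factor. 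Combining this with the trace bound $\|\check v(t_0)\|_{L^2_{k-1/2}(Y)}\lesssim\|v\|_{L^2_{k,\delta}(Z^T)}$ (the weight $g_{T,\delta}$ being bounded below), the vanishing $\mathcal{D}_0\alpha=0$ and $\alpha|_{\mathcal{H}_0}\equiv0$ yield: for every $\varepsilon>0$ there is $\eta_2>0$, independent of $T$, with $\|\alpha(s_{h_0}+v)-\alpha(s_{h_0}+v')\|\le\varepsilon\|v-v'\|$ whenever $\|h_0\|,\|v\|,\|v'\|\le\eta_2$. Choosing $\varepsilon$ with $C_0\varepsilon\le\tfrac12$ makes $\Phi_{T,h_0,h}$ a contraction, and since $\Phi_{T,h_0,h}(0)=(P^T)^{-1}(0,h)$ has norm $\le C_0\|h\|$, the ball $B_{\eta_2}$ is preserved once $\|h\|\le\eta_1$ (with $\eta_1,\eta_2,C_0$ as in Lemma \ref{parinfty}). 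The Banach fixed-point theorem then gives the unique $v=v(T,h_0,h)$, hence the unique $u=s_{h_0}+v$ in the prescribed ball with the prescribed $(\Pi_0,\Pi)$-data, and $\|v\|\le 2C_0\|h\|$, whence $\|u\|\le\|s_{h_0}\|+\|v\|\le 2C_0\bigl(e^{\delta T}\|h_0\|+\|h\|\bigr)$ after absorbing fixed constants.

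Finally, smoothness of $(h_0,h)\mapsto u(T,h_0,h)$ follows from the smoothness of $\Phi_{T,h_0,h}$ in its parameters together with the uniform-contraction form of the implicit function theorem ($\alpha$ being smooth between the relevant weighted spaces and $(P^T)^{-1}$ a fixed bounded operator), and $u(T,(h_0,0))=s_{h_0}$ is immediate from the fixed-point equation with $h=0$, whose solution is $v=0$ because $\alpha(s_{h_0})=0$ and $(P^T)^{-1}(0,0)=0$. The uniformity of $T_0,\eta_1,\eta_2,C_0$ and the passage to $T=\infty$ are precisely what Lemma \ref{boundedinv} and Lemma \ref{parinfty} were designed to give, so no further work is needed there. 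The main obstacle, as flagged, is the careful treatment of the $T$-independent Lipschitz smallness of $\alpha$ on balls around $s_{h_0}$ despite the exponential blow-up of $\|s_{h_0}\|_{\mathcal{E}^T_\delta}$; everything else is a routine transcription of the finite-cylinder patching argument of Lemma \ref{boundedinv} into a nonlinear contraction, keeping the weight $e^{\delta T}$ attached to the $h_0$-slot throughout.
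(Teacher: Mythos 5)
Your proof is correct and follows essentially the same route as the paper: split off the constant section $s_{h_0}$, use Lemma \ref{boundedinv} for the $T$-uniformly invertible linear model on $\mathcal{E}^{T,\perp}_{\delta}$, exploit the slicewise nature of $\alpha$ (so that the $e^{\delta T}$ growth of $\|s_{h_0}\|$ is harmless and the Lipschitz constant is small independently of $T$), and conclude with a quantitative contraction/inverse-function argument, reading off the estimate from $\|s_{h_0}\|\lesssim e^{\delta T}\|h_0\|$ and $\|v\|\lesssim\|h\|$. The only cosmetic difference is that the paper absorbs $\mathcal{D}_{h_0}\alpha$ into the linearization before invoking the inverse function theorem, whereas you keep all of $\alpha$ in the nonlinearity and contract against $P^T$ alone — an equivalent bookkeeping choice.
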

\begin{proof}
This is again an application of the inverse function theorem, but one needs some extra care to obtain maps defined on a time independent ball in $H_0$. Consider the non-linear map
\begin{equation*}
\left(F^T(s_{h_0}+-),\Pi\right): \mathcal{E}^{T,\perp}_{\delta}\rightarrow \mathcal{F}^T_{\delta}\oplus H,
\end{equation*}
which sends $0$ to $0$ as $\alpha(h_0)=0$ and has linearization at the origin given by the operator
\begin{equation*}
\left({d}/{dt}+L+\mathcal{D}_{h_0}\alpha,\Pi\right): \mathcal{E}^{T,\perp}_{\delta}\rightarrow \mathcal{F}^T_{\delta}\oplus H.
\end{equation*}
We can find a small ball $B_{\eta_1}(H_0)$ for which this operator has norm very close to that of $\frac{d}{dt}+L$, independently of time, as the difference is given by a small operator acting fiberwise. In particular, the previous lemma tells us that the linearization is invertible and its norm is bounded by a fixed constant uniformly in time and $h_0\in B_{\eta_1}(H_0)$. Furthermore we can choose such ball small enough so that all the non-linear parts are also uniformly Lipschitz with fixed small constant on a ball of radius $\eta_2$. Then for each fixed $h_0$ the inverse function theorem provides us with a (unique) solution $u_{h_0}(T,h)\in L^2_{k,\delta}(Z^T)$ with the additional property that
\begin{equation*}
\|u_{h_0}(T,h)- s_{h_0}\|\leq C' \|h\|
\end{equation*}
for some constant $C'$ independent of $h_0$ and $T$. We can interpret this as a map
\begin{equation*}
B_{\eta}(H_0)\times B_{\eta}(H)\rightarrow L^2_{k,\delta}(Z^T).
\end{equation*} 
The smoothness of this map follows from applying the inverse function theorem to the whole map $(F^T,\Pi_0,\Pi)$. Notice that in this case the ball on which the inverse function theorem applies has size decreasing exponentially fast.
\end{proof}
Notice that the map $u$ depends on $\delta$ via the choice of the projection $\Pi_0$. To underline this we respectively denote these by $u_{\delta}$ and $\Pi_0^{\delta}$, as it will be important in the next result. We need to compare the solution $u_{\delta}(T,h_0,h)$ on the finite cylinder $Z^T$ with the solutions $u(\infty,h,h_0)$ (which are independent of $\delta$). As above we denote the two components of the solution on the infinite cylinder as $u_+(h_0,h)$ and $u_-(h_0,h)$. We then define for $(h_0,h)$ the section
\begin{equation*}
U(T,h_0,h)=\tau^*_T u_+(T,h_0,h)+\tau^*_Tu_-(T,h_0,h)-h_0,
\end{equation*}
This section is close to $u_{\delta}(T,h,h_0)$, as the next lemma shows.
\begin{lemma}For each $\eta<\eta_2$, consider the function
\begin{align*}
\xi_{\delta}(T,-): B_{\eta}(H_0)\times B_{\eta}({H})\rightarrow \mathcal{E}^{T}_{\delta}\\
 (h_0,h)\mapsto u_{\delta}(T,h_0,h)-U(T,h_0,h).
\end{align*}
Then there is $\delta>0$ so that the previous result still holds and this function converges to zero in the $C^{\infty}_{\mathrm{loc}}$ topology for $T\rightarrow \infty$.
\end{lemma}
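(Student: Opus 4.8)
The plan is to follow the argument of the comparison step in Section~$18.4$ of the book: exhibit $U(T,h_0,h)$ as a pre\-glued \emph{approximate} solution whose defect decays exponentially in $T$, correct it to a genuine solution by a contraction argument, and then identify the result with $u_\delta(T,h_0,h)$ via the uniqueness clause of the Corollary above. Throughout, $(h_0,h)$ ranges over the fixed balls $B_\eta(H_0)\times B_\eta(H)$ and every estimate is to be uniform there. First I would fix $\delta>0$ small enough that Lemma~\ref{parinfty}, Lemma~\ref{boundedinv} and the Corollary all apply; no further smallness is needed, which is consistent with the phrasing of the statement.

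\emph{Estimating the defect.} Write $U=U(T,h_0,h)=s_{h_0}+v_++v_-$, where $v_\pm$ are the two translated summands of $U$ with $s_{h_0}$ subtracted. By Lemma~\ref{parinfty} each $v_\pm$ lies in $L^2_{k,\delta}$ on the corresponding half\-cylinder, so on $Z^T$ the part of $v_\pm$ over a slab at distance $d_\pm$ from the end to which that summand is attached has $L^2_k$\-norm $\le Ce^{-\delta d_\pm}$, uniformly in $(h_0,h)$, and $d_++d_-=2T$ on every slab. Since $D$ is linear, $Ds_{h_0}=0$, $\alpha(s_{h_0})=0$, and $Dv_\pm=-\alpha(s_{h_0}+v_\pm)$ on $Z^T$ (because $u_\pm$ solve $F^\infty=0$ and $s_{h_0}$ is translation invariant), one obtains
\begin{equation*}
F^T(U)=\alpha(s_{h_0}+v_++v_-)-\alpha(s_{h_0}+v_+)-\alpha(s_{h_0}+v_-)+\alpha(s_{h_0}),
\end{equation*}
a mixed second difference of the fibrewise map $\alpha$. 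As $\alpha$ is smooth fibrewise (so $C^2$ between the relevant Sobolev spaces, with locally bounded second derivative), on each unit slab $[j-1,j]\times Y$ the right\-hand side is bounded in $L^2_{k-1}$ by $C\,\|v_+\|_{L^2_k}\|v_-\|_{L^2_k}=O(e^{-2\delta T})$; summing over the $O(T)$ slabs and weighting by $g_{T,\delta}\le e^{\delta T}$ gives $\|F^T(U)\|_{\mathcal F^T_\delta}\le C\,T^{1/2}e^{-\delta T}\to 0$. The same reasoning, using that $\Pi u_\pm$ and the limits $\Pi_0 u_\pm$ are prescribed on the infinite cylinder while the contribution of the opposite tail is exponentially small on the boundary $Y\amalg\overline Y$, yields $\|(\Pi^\delta_0,\Pi)(U)-(h_0,h)\|_{H_0\oplus H}\le Ce^{-\delta T}$. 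This step, tracking that the second\-difference structure beats the weight $g_{T,\delta}$ (which is as large as $e^{\delta T}$ exactly in the gluing region), is the one point requiring care.

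\emph{Correcting to a genuine solution.} The linearization of $(F^T,\Pi^\delta_0,\Pi)$ at $U$ is $(D+\mathcal D_U\alpha,\,\Pi^\delta_0,\,\Pi)$, and $\mathcal D_U\alpha$ is a fibrewise operator which is uniformly small in $T$ and in $(h_0,h)$, since $U$ is close to $s_{h_0}$ in the middle of $Z^T$ and close to the small solutions $u_\pm$ near the ends. Combining Lemma~\ref{boundedinv} with the defining splitting $\mathcal E^T_\delta=\mathcal H_0\oplus\mathcal E^{T,\perp}_\delta$ (so that the $\mathcal H_0$\-direction carries the $H_0$\-norm, and $\Pi$ acts there only through the bounded restriction to the boundary) shows that $(D,\Pi^\delta_0,\Pi)\colon\mathcal E^T_\delta\to\mathcal F^T_\delta\oplus H_0\oplus H$ is invertible with inverse bounded uniformly in $T$; the remainder $\alpha(U+\,\cdot\,)-\mathcal D_U\alpha$ is uniformly Lipschitz with small constant on small balls. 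Hence solving $F^T(U+\xi)=0$ subject to $(\Pi^\delta_0,\Pi)(U+\xi)=(h_0,h)$ is a contraction\-mapping problem with exponentially small data, and produces a unique small $\xi$ with $\|\xi\|_{\mathcal E^T_\delta}\le Ce^{-\delta T}$, depending smoothly on $(h_0,h)$. Since $\|U\|_{\mathcal E^T_\delta}=O(\|(h_0,h)\|)$, both $U$ and $U+\xi$ lie in the ball $B_{\eta_2}(\mathcal E^T_\delta)$ for $T$ large, so by the uniqueness clause of the Corollary above $U+\xi=u_\delta(T,h_0,h)$; thus $\xi_\delta(T,h_0,h)=\xi$ and $\|\xi_\delta(T,-)\|_{\mathcal E^T_\delta}\to 0$ uniformly on the balls.

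\emph{From weighted smallness to $C^\infty_{\mathrm{loc}}$ convergence.} On any fixed cylinder $[-R,R]\times Y$ one has $g_{T,\delta}\ge c\,e^{\delta(T-R)}$, so $\|\xi_\delta(T,-)-\Pi_0\xi_\delta(T,-)\|_{L^2_k([-R,R]\times Y)}\le e^{-\delta(T-R)}\|\xi_\delta(T,-)\|_{\mathcal E^T_\delta}\to 0$, while the constant part $\Pi_0\xi_\delta(T,-)$ has $L^2_k$\-norm on $[-R,R]\times Y$ comparable to $R^{1/2}\|\Pi_0\xi_\delta(T,-)\|_{H_0}\to 0$; hence $\xi_\delta(T,-)\to 0$ in $L^2_k$ on every compact subcylinder. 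Since $\xi_\delta(T,-)$ satisfies $D\xi_\delta=-F^T(U)-\big(\alpha(U+\xi_\delta)-\alpha(U)\big)$ with $\alpha$ a smooth fibrewise perturbation and $F^T(U)\to 0$ in weighted (hence local) Sobolev norms, interior elliptic estimates let one bootstrap this to $L^2_{k',\mathrm{loc}}$\-convergence to $0$ for every $k'$, hence to $C^\infty_{\mathrm{loc}}$\-convergence, uniformly over the balls. The main obstacle, as noted, is the defect estimate in the weighted norm; once that and the uniform invertibility from Lemma~\ref{boundedinv} are in hand, the rest is the standard gluing pattern.
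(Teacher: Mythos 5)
Your pre-gluing set-up and the second-difference estimate for $F^T(U)$ are in the right spirit, but the correction step contains a genuine gap, and it is exactly the point this lemma exists to handle. The full linearization $(D,\Pi_0^{\delta},\Pi)\colon\mathcal{E}^T_{\delta}\to\mathcal{F}^T_{\delta}\oplus H_0\oplus H$ is \emph{not} invertible with a $T$-independent bound in the weighted norm defining $\mathcal{E}^T_{\delta}$: the inverse sends $(0,h_0,0)$ essentially to the constant section $s_{h_0}$, whose $L^2_{k,\delta}(Z^T)$-norm grows like $e^{\delta T}\|h_0\|$. This is precisely why Lemma \ref{boundedinv} asserts uniform bounds only on $\mathcal{E}^{T,\perp}_{\delta}$, why the Corollary's estimate carries the factor $e^{\delta T}\|h_0\|$, and why the ball on which its inverse function theorem applies shrinks exponentially. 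Declaring that ``the $\mathcal{H}_0$-direction carries the $H_0$-norm'' is a renorming of $\mathcal{E}^T_{\delta}$ that is not uniformly equivalent to the weighted norm, and in the renormed space the fiberwise nonlinearity is no longer uniformly Lipschitz-small (a constant displacement $\mu\in H_0$ of size $r$ creates a weighted defect as large as $\epsilon e^{\delta T}r$), so your contraction constant is not uniform in $T$. For the same reason the assertion $\|U\|_{\mathcal{E}^T_{\delta}}=O(\|(h_0,h)\|)$, used to place $U+\xi$ inside the uniqueness ball $B_{\eta_2}(\mathcal{E}^T_{\delta})$, is false whenever $h_0\neq0$.

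This is not cosmetic, because your defect estimate decays only at the borderline rate: you estimate the tails with the same exponent $\delta$ as the weight, getting $\|F^T(U)\|_{\mathcal{F}^T_{\delta}}\lesssim T^{1/2}e^{-\delta T}$, and against the genuine inverse, whose norm is of order $e^{\delta T}$, this yields a correction of size $T^{1/2}$, i.e.\ no convergence at all. The paper's proof hinges on decoupling the two rates, which is why the statement allows a \emph{new} $\delta$ (your remark that ``no further smallness is needed'' misreads this): the infinite-cylinder solutions decay at the rate $\delta_0$ of the earlier results, so the unweighted defects $\zeta$ and $g$ decay like $e^{-2\delta_0T}$ (Lemma $18.3.1$ of the book), while the $H_0$-defect $\nu^{\delta}$ decays like $e^{-2\delta T}$ because the constant sections entering the projection have norm $e^{\delta T}$; measuring with the smaller weight $\delta=\delta_0-\delta'$ makes the weighted defect $O(e^{-(\delta_0+\delta')T})$, which beats the $e^{\delta T}$ norm of the local inverse of $(F^T,\Pi_0^{\delta},\Pi)$ and fits inside the $e^{-\delta T}$ radius of its domain, giving $\|\xi_{\delta}\|=O(e^{-2\delta'T})$. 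Your argument could be repaired along these lines (pin the $H_0$-component of the correction at $-\nu^{\delta}$, contract only in $\mathcal{E}^{T,\perp}_{\delta}$, and take the weight strictly smaller than the decay rate), but as written the key quantitative mechanism is missing. A secondary point: the $C^{\infty}_{\mathrm{loc}}$ convergence asserted is with respect to the parameters $(h_0,h)$, which is what Proposition \ref{parabstract} needs for $\mu_T\to\mu_{\infty}$; so one must also track the decay of the derivatives of $\zeta$, $g$, $\nu^{\delta}$ in $(h_0,h)$, as the paper does, whereas interior elliptic bootstrapping on sub-cylinders of $Z^T$ addresses a different statement.
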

\begin{proof}
Let us write
\begin{equation}\label{configsmall}
(F^T, \Pi_0^{\delta},\Pi)U(T,h_0,h)=(\zeta(h_0,h), h_0+\nu^{\delta}(h_0,h),h+g(h_0,h)).
\end{equation}
Fix a $\delta_0>0$ so that the previous results hold. The functions $\zeta$ and $g$ (which are independent of $\delta$) and their derivatives all have decay of the form
\begin{equation*}
K(h_0,h)e^{-(\delta'+\delta_0) T}
\end{equation*}
for $0<\delta'\leq\delta_0$ where $K$ is a continuous function. The proof of Lemma $18.3.1$ in the book proves the claim for the function $g$ (where we can pick $\delta'$ to be $\delta_0$), and shows that the norm of the map to the unweighted spaces
\begin{equation*}
\zeta:B_{\eta}(H_0)\times B_{\eta}(H)\rightarrow L^2_{k}(Z^T)
\end{equation*}
and its derivatives are bounded by functions of the form $K(h_0,h)e^{-2{\delta_0}T}$. In particular, its norm seen as a map with values in $L^2_{k, \delta_0-\delta'}(Z^T)$ is bounded by $K'(h_0,h)e^{-(\delta_0+\delta')T}$. For any $\delta>0$ the function $\nu^{\delta}$ has norm decreasing as $K(h_0,h)e^{-2\delta T}$. This follows from the fact that the constant sections involved in the orthogonal projection have norm growing like $e^{\delta T}$.
\par
The result then follows by picking $\delta=\delta_0-\delta'$ with $\delta'$ small enough so that the previous results still hold. Indeed from the estimates provided by inverse function theorem the linearization of the local inverse
\begin{equation*}
(F^T, \Pi^{\delta}_0,\Pi)^{-1}: B(\mathcal{F}^{T}_{\delta}\oplus H_0\oplus H)\rightarrow \mathcal{E}^{T}_{\delta}
\end{equation*}
has norm growing like $C(h_0,h)e^{\delta T}$ at each point $(0,h_0,h)$, and is defined in a ball of radius $C(h_0,h)e^{-\delta T}$. Our configuration in equation (\ref{configsmall}) belongs to this ball for $T$ big enough, and for the same reason the result follows.
\end{proof}

\vspace{0.8cm}

With this in hand, we are finally ready to complete the proof of Proposition \ref{parabstract}, following the same arguments of the book.
\begin{proof}[Proof of proposition \ref{parabstract}]
There are two things left to check: the convergence of the restriction maps $\mu_T$ and the smoothness of such a map on the product $[T_0,\infty)\times B_{\eta}(H_0)\times B_{\eta}(\tilde{H})$. For the first one, the previous proposition tells us that it is sufficient to study the convergence of
\begin{equation*}
\tilde{\mu}_T(h,h_0)=rU(T,h,h_0).
\end{equation*}
On the other hand, if $h=(h_0,\tilde{h})$, the component in $L^2_{k-1/2}(\overline{Y}; E_0)$ is the sum
\begin{equation*}
\tilde{\mu}_T(h)=u_+(h)|_{\{0\}\times Y}+u_-(h)|_{\{-2T\}\times Y}-h_0=\mu_{\infty}(h)+(u_-(h)|_{\{-2T\}\times Y}-h_0)
\end{equation*}
and the second term converges to zero in the $C^{\infty}_{\mathrm{loc}}$ topology on the ball because $u_-\in L^2_{k,\delta}(Z^{\leq0};E,H_0)$. The same argument applies to the other boundary component. Finally,
the proof of smoothness is obtained by pulling back the whole family to a fixed cylinder $Z^T$, see the book.
\end{proof}

\vspace{1cm}
We now show how to deduce Theorem \ref{stableman} from Proposition \ref{parabstract}. First of all, we will work with in the slices
\begin{align*}
\mathcal{S}^{\tau}_{k,\acr}(Z^T)&\subset \Ct_k(Z^T)\\
\mathcal{S}^{\tau}_{k,\delta,\acr}(Z^{\infty},\mathfrak{U})&\subset \Ct_{k,\delta}(Z^{\infty},\mathfrak{U})
\end{align*}
both defined by the equations
\begin{align*}
\langle a,n\rangle&=0\text{ at }\partial Z\\
-d^*a+is_0^2 \mathrm{Re}\langle i\phi_0,\phi\rangle+i|\phi_0|^2\mathrm{Re}\mu_Y\langle i\phi_0,\phi\rangle &=0
\end{align*}
where as usual we write $A=A_0+a\otimes1$. As usual, $\mathfrak{U}$ is a neighborhood of $\acr$ in the intersection of the critical set $\Cr$ and the slice $\Sl^{\sigma}_{\acr}$. On the half infinite cylinder, we are using a slice which is different from the one we adopted in Section $3$, see the discussion before Proposition \ref{closedaction}. This equation defines a slice only for a choice of $\delta>0$ small enough depending of the configuration, which is the reason why we did not choose it in that context. Nevertheless, in the present discussion we are considering the neighborhood of a single constant trajectory, and this slice fits our approach better. Using the notation of equation (\ref{gaugefixedeq}) the operator $Q_{\gamma_0}$ obtained by studying the gauge fixed Seiberg-Witten equations can be written as
\begin{equation*}
(V,c)\mapsto \frac{D}{dt}(V,c)+ L_{\gamma_0(t)}(V,c).
\end{equation*}
where $L_{\gamma_0(t)}$ is not the usual extended Hessian, and not its weighted counterpart. In particular, this family of operators is constant at the constant trajectory $\gamma_{\acr}$. We will study a small neighborhood of $[\gamma_{\acr}]$ in $M(Z^T)$ by studying solutions of the gauge-fixing equation together with the perturbed Seiberg-Witten equations. To reduce ourselves to the linear setting of Proposition \ref{parabstract}, we introduce a chart $\varphi$ induced by a suitable $L^2_k$-compatible product chart (which is the restriction of an $L^2_{k-1/2}$-compatible product chart) as in Lemma \ref{cylinderchart}. We can suppose that the neighborhood $\tilde{\mathcal{U}}_T$ of the origin on which the chart is defined contains all the configurations that have distance at most $\eta_0>0$ for a constant configuration $\gamma_{\bcr}$ where $\bcr$ is a critical point in $\mathfrak{U}$ for some constant $\eta_0$ is independent of $T$ large enough.

\par
We have the restriction map
\begin{equation*}
r:\Ct_k(Z^T)\rightarrow \Cs_{k-1/2}(\overline{Y}\amalg Y)\times L^2_{k-1/2}(\overline{Y}\amalg Y;i\R)
\end{equation*}
where the second component records the normal component of the connection at the boundary. We will use the usual decomposition at a critical point
\begin{equation*}
\T^{\sigma}_{k-1/2,\acr}= \J^{\sigma}_{k-1/2,\acr}\oplus \K^{\sigma}_{k-1/2,\acr},
\end{equation*}
where the second summand is $\T_{\acr}\mathfrak{U}\oplus\N^{\sigma}_{k-1/2,\acr}$ and we have spectral decomposition
\begin{equation*}
\N=\N^{\sigma}_{k-1/2,\acr}=\N^+\oplus \N^-
\end{equation*}
as in equation (\ref{spectralN}).
We can define the subspaces of $\T^{\sigma}_{k-1/2,\acr}\oplus L^2_{k-1/2}(Y;i\R)$
\begin{IEEEeqnarray*}{c}
H^-_Y=\{0\}\oplus\{0\}\oplus \N^-\oplus L^2_{k-1/2}(Y;i\R) \\
H^-_{\overline{Y}}=\{0\}\oplus \{0\}\oplus\N^+\oplus L^2_{k-1/2}(Y;i\R)
\end{IEEEeqnarray*}
and define the projections
\begin{IEEEeqnarray*}{c}
\Pi^-_Y: \T^{\sigma}_{k-1/2,\acr}\oplus L^2_{k-1/2}(Y;i\R)\rightarrow H^-_Y \\
\Pi^-_{\overline{Y}}: \T^{\sigma}_{k-1/2,\acr}\oplus L^2_{k-1/2}(Y;i\R)\rightarrow H^-_{\overline{Y}}
\end{IEEEeqnarray*}
with kernels respectively
\begin{IEEEeqnarray*}{c}
J^{\sigma}_{k-1/2,\acr}\oplus (T_{\acr}\mathfrak{U}\oplus \N^+)\oplus\{0\} \\
J^{\sigma}_{k-1/2,\acr}\oplus (T_{\acr}\mathfrak{U}\oplus \N^-)\oplus\{0\},
\end{IEEEeqnarray*}
and we can set 
\begin{IEEEeqnarray*}{c}{H}= H^-_{\bar{Y}}\oplus H^-_Y \\
{\Pi}=\Pi^-_{\bar{Y}}\oplus \Pi^-_Y.
\end{IEEEeqnarray*}
Finally we let $H_0=\T_{\acr}\mathfrak{U}$ and we have the map for $T=\infty$
\begin{equation*}
\Pi_0: \T^{\tau}_{k,\delta,\gamma_{\acr}}\rightarrow H_0
\end{equation*}
simply sends a configuration to its limit point. Given a path $\gamma$ in $\Ct_k(Z^T)$ such that the restrictions to the boundary lie in the domain of the chart $U$,  we are interested in the system of equations given by
\begin{IEEEeqnarray*}{c}
\F^{\tau}_{\q}\gamma=0 \\
\mathrm{Coul}^{\tau}_{\acr}\gamma=0 \\
(\Pi_0,\Pi)\circ (\varphi^{-1}\circ r)\gamma=(h_0,h)
\end{IEEEeqnarray*}
where $(h_0,{h})\in H_0\oplus {H}$. 
Hence given any element $\gamma$ lying in a small neighborhood of $\gamma_{\acr}$ so that both restrictions lie in the domain $\mathcal{U}$ of the chart $\varphi$, we can alternatively write the equations as
\begin{align*}
(Q_{\gamma_{\acr}}+\alpha)\tilde{\gamma}&=0 \\
(\Pi_0,\Pi)\circ r\circ\tilde{\gamma}&=(h_0,h)
\end{align*}
where $Q_{\gamma_{\acr}}$ is the linear part, $\alpha$ is the remainder of the terms (and defined slicewise) and $\tilde{\gamma}$ is the configuration in $\T^{\tau}_{k,\gamma_{\acr}}(Z^T)$ corresponding to $\gamma$ under the local chart. The notation is justified because $Q_{\gamma_{\acr}}$ is the linearization of the Seiberg-Witten equations with Coulomb gauge condition arising in Section $4$, because of the condition $\mathcal{D}_{0}\varphi=\mathrm{Id}$ in the definition of an $L^2_{k-1/2}$-compatible product chart.
\\
\par
We then turn to study this problem with the abstract gluing result proved above. Here neither the domain $\T^{\tau}_{k,\gamma_{\acr}}$ nor the range $\V^{\tau}_{k-1}$ are in the form required by Proposition \ref{parabstract}, but they can be converted to spaces of sections of a finite dimensional vector bundle by the same device as Section $1$. We next verify the key hypothesis of Proposition \ref{parabstract}.
\begin{lemma}The linearized equations on the infinite cylinder $Z^{\infty}$ are invertible at $0$ for $\delta>0$ small enough.
\end{lemma}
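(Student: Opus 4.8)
The plan is to recognize the linearization at the constant trajectory $\gamma_{\acr}$ as an operator of the form $d/dt+L$ with $L$ time-independent and almost hyperbolic, and then to assemble the invertibility statement on $Z^{\infty}$ out of two copies of Lemma \ref{almostAPS}, one for each of the two half-cylinders.

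First we identify the operator. Converting everything to sections of a fixed finite-dimensional bundle $E_0\to Y$ by the device of (\ref{asafoetrick}), the linearization at $\gamma_{\acr}$ of the gauge-fixed equations $\F^{\tau}_{\q}=0$, $\mathrm{Coul}^{\tau}_{\acr}=0$ is $D=d/dt+L$ on $Z^{\infty}$ with $L$ independent of $t$; at the constant trajectory $L$ agrees with the extended Hessian $\widehat{\Hess}^{\sigma}_{\q,\acr}$ of (\ref{exthessian}), up to a fiberwise compact operator which does not affect any of the spectral statements below. By Lemma \ref{hessian} (equivalently Lemma \ref{weighhessian}), $L$ is $k$-\textsc{asafoe}, Fredholm of index zero, has real spectrum, and has kernel exactly $H_0=\T_{\acr}\Cr$; hence $L$ is almost hyperbolic. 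We then fix $\delta>0$ smaller than $\min\{|\lambda| : \lambda\in\mathrm{Spec}(L),\ \lambda\neq 0\}$; since there are only finitely many critical submanifolds this single $\delta$ works for all of them at once, and we shrink it further so that Theorem \ref{modsame} also holds. This is exactly the data for which the generalized spectral decomposition of Example \ref{spectraldecmorsebott} yields the subspaces $H^-_Y$, $H^-_{\bar Y}$ and the projections $\Pi^-_Y$, $\Pi^-_{\bar Y}$, $\Pi_0$ introduced above, with $H=H^-_{\bar Y}\oplus H^-_Y$ and $\Pi=\Pi^-_{\bar Y}\oplus\Pi^-_Y$.

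Next we split $Z^{\infty}=(\R^{\geq 0}\times Y)\amalg(\R^{\leq 0}\times Y)$. A configuration in $\mathcal{E}^{\infty}_{\delta}=L^2_{k,\delta}(Z^{\infty};E,H_0)$ is a pair $(u_+,u_-)$ on the two half-cylinders sharing a common asymptotic limit $s_{h_0}$, $h_0\in H_0$, and $\Pi_0$ records this $h_0$; accordingly $(D,\Pi_0,\Pi)$ sends $(u_+,u_-)$ to $(Du_+,Du_-;\,h_0;\,\Pi^-_Y\,r u_+,\ \Pi^-_{\bar Y}\,r u_-)$. On each half-cylinder Lemma \ref{almostAPS} (which rests on Proposition $17.2.7$ of the book) applies to the almost hyperbolic operator $d/dt+L$: on $\R^{\geq 0}\times Y$ the map $(D,\Pi^-_Y,\Pi_0)$ is an isomorphism onto $L^2_{k-1,\delta}\oplus H^-_Y\oplus H_0$ --- here $H^-_Y$ is the subspace defined above, incorporating the negative normal spectral part $\N^-$ together with the gauge factor $L^2_{k-1/2}(Y;i\R)$ coming from the Coulomb-Neumann condition --- and similarly $(D,\Pi^-_{\bar Y},\Pi_0)$ on $\R^{\leq 0}\times Y$, with $\N^+$ in place of $\N^-$. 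Given target data $(v_+,v_-;h_0;h^-_Y,h^-_{\bar Y})$ we solve on each half for the unique $u_{\pm}$ having the prescribed image under $D$, the prescribed spectral boundary value, and $\Pi_0 u_{\pm}=h_0$; because the two prescribed limits are the \emph{same} $h_0$, the pair $(u_+,u_-)$ lies in $\mathcal{E}^{\infty}_{\delta}$, which gives surjectivity, while the uniqueness on each half gives injectivity. Hence $(D,\Pi_0,\Pi)\colon \mathcal{E}^{\infty}_{\delta}\to\mathcal{F}^{\infty}_{\delta}\oplus H_0\oplus H$ is invertible at $0$, and, as noted after (\ref{invertible}), invertibility then persists for all weights sufficiently close to $\delta$.

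The one point requiring genuine care --- and the sole novelty over the Morse case --- is the presence of the nontrivial kernel $H_0=\T_{\acr}\Cr$: this is why the correct domains are the spaces $L^2_{k,\delta}(\,\cdot\,;E,H_0)$ with a prescribed common limit rather than the plain weighted spaces, why one uses the almost hyperbolic statement Lemma \ref{almostAPS} instead of the strictly hyperbolic Atiyah-Patodi-Singer one, and why the single projection $\Pi_0$, shared by the two ends, appears only once in the target. The remaining bookkeeping --- matching $\N^+$ with one boundary component and $\N^-$ with the other, and checking that the gauge factor lands in the relevant spectral subspace at both ends --- is routine once the spectral decomposition of $L$ is in hand; otherwise the argument is formally identical to the corresponding results of Chapters $17$ and $18$ of the book.
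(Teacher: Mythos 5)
Your overall strategy is the right one and matches the paper's: identify the linearization at the constant trajectory as $d/dt+L$ with $L$ the extended Hessian, note via Lemma \ref{hessian}/\ref{weighhessian} that $L$ is almost hyperbolic with kernel $T_{\acr}\Cr$, work on each half-cylinder separately, and feed this into Lemma \ref{almostAPS}. But there is a genuine gap at the decisive step: Lemma \ref{almostAPS} gives invertibility of $(D,\Pi\circ r,\Pi_0)$ only when $\Pi$ is the \emph{spectral} projection $\Pi^-_L$ of $L$, i.e.\ the projection onto the boundary-value subspace $H^-_L$ with kernel $H_0\oplus H^+_L$. You apply it instead with the geometric projection $\Pi^-_Y$ onto $H^-_Y=\{0\}\oplus\{0\}\oplus\N^-\oplus L^2_{k-1/2}(Y;i\R)$, and these are not the same subspaces. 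At a critical point the extended Hessian (\ref{exthessian}) is block diagonal with respect to $\K^{\sigma}\oplus\bigl(\J^{\sigma}\oplus L^2(Y;i\R)\bigr)$, and on the second block it is the off-diagonal operator built from $\mathbf{d}^{\sigma}$ and $\mathbf{d}^{\sigma,\dagger}$; its negative spectral subspace mixes $\J^{\sigma}$ with the gauge factor and is certainly not $\{0\}\oplus L^2(Y;i\R)$. So the claim that ``$(D,\Pi^-_Y,\Pi_0)$ is an isomorphism'' on each half-cylinder is precisely what has to be proved, not an instance of Lemma \ref{almostAPS}; dismissing the discrepancy as routine bookkeeping about ``the gauge factor landing in the relevant spectral subspace'' mischaracterizes where the work lies.

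The paper closes exactly this gap. It first applies Lemma \ref{almostAPS} with the spectral projection $\Pi^-_L$, then decomposes the domain as $C\oplus K$ with $K=\ker Q_{\gamma_{\acr}}$, uses the last clause of Lemma \ref{almostAPS} to identify the image of $K$ under restriction to the boundary as $H^-_L\oplus H_0$, and invokes the argument of Lemma $17.3.3$ of the book to show that $\Pi^-_Y$ restricts to an isomorphism on $H^-_L$; a lower-triangular matrix comparison then transfers invertibility from the operator with boundary condition $\Pi^-_L$ to the one with $\Pi^-_Y$. Your proof is repairable by inserting exactly this comparison (or an equivalent argument showing $\Pi^-_Y|_{H^-_L}$ is an isomorphism), but as written the key analytic point of the lemma is assumed rather than proved. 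The remaining parts of your argument --- the choice of $\delta$ below the smallest nonzero $|\lambda|\in\mathrm{Spec}(L)$, the matching of the single limit $h_0\in H_0$ across the two ends, and the use of the spaces $L^2_{k,\delta}(\cdot;E,H_0)$ --- are consistent with the paper.
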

\begin{proof}This is essentially a parametric version of the argument that settles the Morse case. One just has to look at the operator on each component, and we will focus on the operator on $Z^{\leq 0}$ given by
\begin{equation*}\label{operQgamma}
(Q_{\gamma_{\acr}}, \Pi_Y^-\circ r, \Pi_0):\T^{\tau}_{k,\delta,\gamma_{\acr}}\rightarrow \V^{\tau}_{k-1,\delta,\gamma_{\acr}}\oplus H^-_Y\oplus H_0.
\end{equation*}
If we write $Q_{\gamma_{\acr}}=d/dt+L$ and call $H_L^{\pm}$ the spectral subspaces of the almost hyperbolic operator $L$ (which is the weighted extended Hessian at the point for $t=\infty$, see Lemma \ref{weighhessian}), Lemma \ref{almostAPS} tells us that the operator
\begin{equation*}
(Q_{\gamma_{\acr}}, \Pi_L^-\circ r, \Pi_0):\T^{\tau}_{k,\delta,\gamma_{\acr}}\rightarrow \V^{\tau}_{k-1,\delta,\gamma_{\acr}}\oplus (H^-_L\cap L^2_{k-1/2})\oplus H_0
\end{equation*}
is an isomorphism, where $\Pi^-_L$ is the negative spectral projection. If we decompose now the domain a the direct sum $C\oplus K$, where $K$ is the kernel of $Q_{\gamma_{\acr}}$, we can write the operator
\begin{equation*}
\begin{bmatrix}
Q_{\gamma_{\acr}}|_C & 0\\
x & (\Pi^-_L\circ r)\oplus \Pi_0|_K.
\end{bmatrix}
\end{equation*}
On the other hand Lemma \ref{almostAPS} tells us that the image of $K$ under $r$ in $L^2_{k-1/2}(Y;E_0)$ is exactly $H_L^-\oplus H_0$. Furthermore the proof of Lemma $17.3.3$ in the book still applies in this case to show that $\Pi_Y^-$ is an isomorphism on $H_L^-$, so the matrix 
\begin{equation*}
\begin{bmatrix}
Q_{\gamma_{\acr}}|_C & 0\\
x & (\Pi^-_Y\circ r)\oplus \Pi_0|_K.
\end{bmatrix}
\end{equation*}
also defines an invertible operator. This is exactly the operator in equation (\ref{operQgamma}).
\end{proof}
Hence Proposition \ref{parabstract} provides us a solution $\gamma=u(T,h,h_0)$ to these equations for any $T\in[T_0,\infty]$ and $\|h_0\|,\|{h}\|\leq \eta$. The boundary conditions include also the normal component of the connections at the boundary,
\begin{IEEEeqnarray*}{c}
\langle a, n\rangle=c_1\text{ on }\bar{Y}\\
\langle a, n\rangle=c_2\text{ on }Y.
\end{IEEEeqnarray*}
By restricting to boundary conditions with $c_1=c_2=0$ we get trajectories in Neumann-gauge, and hence we obtain a parametrization of the solutions in the slice
\begin{IEEEeqnarray*}{c}
u(T,-): B_{\eta_1}(T_{\acr}\mathfrak{U})\times B_{\eta_1}(\tilde{\N})\rightarrow \Sl^{\tau}_{k,\gamma_{\acr}} \\
u(\infty,-): B_{\eta_1}(T_{\acr}\mathfrak{U})\times B_{\eta_1}(\tilde{\N})\rightarrow \Sl^{\tau}_{k,\delta,\gamma_{\acr}}
\end{IEEEeqnarray*}
such that there exists $\eta_2>0$ so that for every $T\geq T_0$ the image of $u(T,-)$ contains all solutions in $\mathcal{S}^{\tau}_{k,\gamma_{\acr}}$ with $\|\gamma-\gamma_{\bcr}\|_{L^2_{k,\delta}}\leq \eta_2$ for some $\bcr\in \mathfrak{U}$. The final step to prove Theorem \ref{stableman} is to extend the result to a uniform neighborhoods in the moduli space of solutions defined by the inequality $\|\gamma-\gamma_{\bcr}\|_{L^2_{k,\delta}}\leq \eta_2$ where $\gamma$ is not necessarily in the slice. This is proved in the same fashion as Proposition $18.4.1$ in the book, with the same adaptation discussed above (in particular Lemma \ref{boundedinv}). Finally, we notice that for $\delta>0$ the norms
\begin{equation*}
\| \gamma-\gamma_{\acr}\|_{L^2_k(Z^T)}\text{ and }\| \gamma-\gamma_{\acr}\|_{L^2_{k,\delta}(Z^T)}
\end{equation*}
are equivalent on a small neighborhood $\left\{\| \gamma-\gamma_{\acr}\|_{L^2_k(Z^T)}\leq \eta_3\right\}$ of $\gamma_{\acr}$ in $M(Z^T)$ by a constant independent of time, for some constant $\eta_3$ also independent of time. This follows from the exponential decay of a solution to the Seiberg-Witten equations always close to a constant one, as discussed in Section $2$.

\vspace{1.5cm}
\section{The moduli space on a cobordism}
In this section we briefly discuss the adaptation of the theory we have developed so far to the moduli spaces of solutions to the perturbed Seiberg-Witten equation on a general manifold with (possibly disconnected) boundary. This generalizes the content of Chapter $24$ in the book. The proofs of the results we are going to state can be easily obtained from those in the Morse case, using the same techniques we have used throughout the present chapter.
\\
\par
Let $X$ be a compact connected oriented Riemannian $4$-manifold with non empty (and possibly disconnected) boundary
\begin{equation*}
Y=\amalg Y^{\alpha},
\end{equation*}
and let us suppose that the metric is cylindrical in the neighborhood of the boundary, so it contains an isometric copy of $I\times Y$ where $I=(-C,0]$, with $\partial C=\{0\}\times Y$. A spin$^c$ structure $\spin_X$ on $X$ determines a spin$^c$ structure $\spin$ on $Y$. We define the configuration space and the space of tame perturbations
\begin{align*}
\Bs_k(Y,\spin)=&\prod \Bs_k(Y^{\alpha},\spin^{\alpha})\\
\mathcal{P}(Y,\spin)=&\prod \mathcal{P}(Y^{\alpha},\spin^{\alpha}).
\end{align*}
Suppose from now on that a given tame perturbation $\q_0=\{\q_0^{\alpha}\}$ which is Morse-Bott has been fixed on the boundary.
\\
\par
Our approach to the construction of maps induced by cobordisms (and in particular of the module structures) will be different from the one in the book, and it will heavily rely on the structure of moduli spaces on manifolds with more than one end. In that case,  we will partition the boundary $Y$ in two classes: \textit{outgoing} boundary components, which will have the boundary orientation, and \textit{incoming} boundary components, with the opposite orientation. In the present section however, we will suppose that all the boundary components have the boundary orientation in order to keep the discussion as uniform as possible. Notice that the orientation reversal changes the role of boundary stable and unstable critical points on the incoming ends, which may cause some confusion with the gradings.
\\
\par

The main difference with the setup we have worked so far is that in the case of a general cobordism we cannot rely on the $\tau$ model, which is only defined for cylinders. Recall from Section $2$ in Chapter $1$ that we have the configuration space
\begin{equation*}
\Cs(X,\spin_X)=\left\{(A,s,\phi)\mid s\geq 0,\|\phi\|_{L^2(X)}=1\right\}\subset \mathcal{A}\times \R\times \Gamma(X;S^+)
\end{equation*}
and its Hilbert completion $\Cs_k(X,\spin_X)$. The quotient by the gauge group action is denoted by $\Bs_k(X,\spin_X)$ and is a Hilbert manifold with boundary. The Seiberg-Witten equations define a smooth section $\F^{\sigma}$ of the vector bundle
\begin{equation*}
\V_{k-1}^{\sigma}\rightarrow \Cs_k(X,\spin_X), 
\end{equation*}
and we introduce perturbations as follows. Pick a cut-off function $\beta$ equal to $1$ near $t=0$ and $0$ near $t=-C$, and a bump function $\beta_0$ with compact support in $(-C,0]$. If we regard these as functions on the collar $I\times Y$, any tame perturbation $\mathfrak{p}_0$ in $\mathcal{P}(Y,\spin)$ defines the section
\begin{IEEEeqnarray*}{c}
\hat{\mathfrak{p}}:\Co_k(X,\spin_X)\rightarrow \V_k \\
\hat{\mathfrak{p}}=\beta\hat{\q}_0+\beta_0\hat{\mathfrak{p}}_0,
\end{IEEEeqnarray*}
where $\q_0$ is the fixed Morse-Bott perturbation.
This section can be extended as in Section $3$ of Chapter $1$ to the blown-up setting, and we can hence define
\begin{equation*}
\F_{\mathfrak{p}}^{\sigma}=\F^{\sigma}+\hat{\mathfrak{p}}^{\sigma}:\Cs_k(X,\spin_X)\rightarrow \V_{k-1}^{\sigma}
\end{equation*}
The perturbed Seiberg-Witten equations $\F_{\hat{\mathfrak{p}}}^{\sigma}=0$ are invariant under $\G_{k+1}(X)$, and we have the moduli spaces of solutions
\begin{align*}
M(X,\spin_X)&\subset \Bs_k(X,\spin_X)\\
M(X,\spin_X)&=\left\{(A,s,\phi)\mid\F_{\hat{\mathfrak{p}}}^{\sigma}=0\right\}\big/\G_{k+1}(X).
\end{align*}
Similarly, we have the larger moduli space
\begin{equation*}
\tilde{M}(X,\spin_X)\subset \tilde{\Bo}^{\sigma}_k(X,\spin_X).
\end{equation*}
obtained by dropping the condition $s\geq 0$, and $M(X,\spin_X)$ is identified with its quotient by the involution $\mathbf{i}$ switching the sign of $s$. The unique continuation property of the Seiberg-Witten equations (see Section $10.8$ in the book) and the equivalence between the $\sigma$ and $\tau$ models on a finite cylinder tells us that there are well defined restriction maps to the cylindrical end
\begin{equation*}
M(X,\spin_X)\rightarrow M(I\times Y,\spin_X)\subset\Bt_k(I\times Y,\spin_X) 
\end{equation*}
and to the boundary
\begin{equation*} 
R: M(X,\spin_X)\rightarrow \Bs_{k-1/2}(Y,\spin).
\end{equation*}
Also, the section $\F_{\hat{\mathfrak{p}}}^{\sigma}$ is transverse to zero, and $\tilde{M}(X,\spin_X)$ and $M(X,\spin_X)$ are respectively a smooth Hilbert manifold and a smooth Hilbert manifold with boundary (see Proposition $24.3.1$ in the book).

\vspace{0.8cm}
We then turn our interest on the non-compact Riemannian manifold $X^*$ obtained by attaching cylindrical ends to $X$, namely
\begin{align*}
X^*&=X\cup_Y Z \\
Z&=[0,\infty)\times Y.
\end{align*}
On such a space we have the $L^2_{k,\mathrm{\mathrm{loc}}}$ configuration space
\begin{equation*}
\Co_{k,\mathrm{loc}}(X^*,\spin_X)=\mathcal{A}_{k,\mathrm{loc}}\times L^2_{k,\mathrm{loc}}(X^*;S^+).
\end{equation*}
As we are not dealing with a Banach space anymore, we have to define the blow-up as follows (see Section $6.1$ in the book). Define the sphere $\mathbb{S}$ as the topological manifold obtained as the quotient of $L^2_{k,\mathrm{loc}}(X^*;S^+)\setminus 0$ by the action of $\R^+$. We then define
\begin{align*}
\Cs_{k,\mathrm{loc}}(X^*,\spin_X)&=\left\{(A,\R^+\phi,\Phi)\mid\Phi\in \R^{\geq}\phi\right\}\\
&\subset \mathcal{A}_{k,\mathrm{loc}}\times\mathbb{S}\times L^2_{k,\mathrm{loc}}(X^*; S^+),
\end{align*}
which comes with a canonical blow down map $\pi$. If we call $\mathcal{O}(-1)$ the complex tautological line bundle on $\mathbb{S}$, one can define the bundle
\begin{equation*}
\V^{\sigma}_{k-1}=\mathcal{O}(-1)^*\otimes \pi^*(\V_{k-1})\rightarrow\Cs_{k,\mathrm{loc}}(X^*,\spin_X),
\end{equation*}
and its section
\begin{align*}
\F^{\sigma}:\mathcal{O}(-1)&\rightarrow  \pi^*(\V_{k-1})\\
\F^{\sigma}(A,\R^+\phi, \Phi)(\psi)&=\left(\frac{1}{2}\rho(F_{A^t}^+)-(\Phi\Phi^*)_0,D_A^+\psi\right).
\end{align*}
We can consider again the perturbed equation, given by the continuous  gauge-invariant section $\F^{\sigma}_{\mathfrak{p}}=\F^{\sigma}+\hat{\mathfrak{p}}^{\sigma}$, where the perturbing term $\hat{\mathfrak{p}}^{\sigma}$ is defined as before on the collar $I\times Y$ and is extended to be $\hat{\q}^{\sigma}_0$ on the cylindrical end $Z$. The unique continuation property tells us again that there is a restriction map
\begin{equation*}
\left\{[\gamma]\in \Bs_{k,\mathrm{loc}}(X^*,\spin_X)\mid \F^{\sigma}_{\mathfrak{p}}(\gamma)=0\right\}\rightarrow \Bt_{k,\mathrm{loc}}(Z,\spin),
\end{equation*}
and clearly this restriction satisfies the equation on the cylinder.
\begin{defn}
For a critical submanifold
\begin{equation*}
[\Lr]=\prod [\Cr_{\alpha}]\subset \Bs_k(Y,\spin),
\end{equation*}
define the moduli space
\begin{equation*}
M(X^*,\spin_X;[\Lr])\subset \Bs_{k,\mathrm{loc}}(X^*,\spin_X)
\end{equation*}
as the set of all $[\gamma]$ with $\F^{\sigma}_{\mathfrak{p}}(\gamma)=0$ and such that its restriction is asymptotic to a configuration of $[\Cr]$ on the cylindrical end $Z$.
\end{defn}
We can also consider the union over all spin$^c$ structures $\Bs_{k,\mathrm{loc}}(X^*)$, and the moduli space
\begin{equation*}
M(X^*;[\Cr])= \coprod_{\spin_X} M(X^*,\spin_X;[\Cr])\subset \Bs_{k,\mathrm{loc}}(X^*),
\end{equation*}
where we implicitly restrict ourselves to the union to the spin$^c$ structures inducing the one to which $[\Cr]$ belongs. Similarly, we introduce the set of reducible elements
\begin{equation*}
M^{\mathrm{red}}(X^*, \spin_X;[\Cr])\subset M(X^*,\spin_X;[\Cr])
\end{equation*}
as the configurations with representatives $(A,s,\phi)$ with $s=0$. The configuration space can be decomposed along the elements
\begin{equation*}
z\in\pi_0(\Bs(X;[\Cr]))
\end{equation*}
and so can the moduli space
\begin{equation*}
M(X^*;[\Cr])=\bigcup_zM_z(X^*;[\Cr]).
\end{equation*}
Also, given an element $z_1\in \pi_1(\Bs(Y);[\Cr],[\Cr'])$, we obtain by concatenation a new element
\begin{equation*}
z_1\circ z\in\pi_0(\Bs(X;[\Cr']).
\end{equation*}
Finally,  there are natural continuous evaluation maps
\begin{equation*}
\ev^{\alpha}: M(X^*;[\Cr])\rightarrow [\Cr^{\alpha}]\subset \Bs_k(Y^{\alpha},\spin^{\alpha})
\end{equation*}
to each component of the critical submanifold.
\par
\vspace{0.8cm}

The clearest way to discuss regularity in this framework is to pass to a fiber product description. Both manifolds $X$ and $Z$ have boundary $Y$, and we have well defined restriction maps
\begin{align*}
R_+: M(X,\spin_X)&\rightarrow \Bs_{k-1/2}(Y,\spin)\\
R_-:  M(Z;[\Lr]) &\rightarrow \Bs_{k-1/2}(Y,\spin).
\end{align*}
Letting $\mathrm{Fib}(R_+,R_-)$ be the fibered product of these two maps, we have the restriction map
\begin{equation*}
\rho: M(X^*,\spin_X;[\Lr])\rightarrow \mathrm{Fib}(R_+,R_-).
\end{equation*}
In fact Lemma $24.2.2$ in the book tells us that this map is in fact a homeomorphism. The following definition is the analogue of the transversality hypothesis introduced in Definition \ref{smalereg}.
\begin{defn}\label{smaleregcob}
Let consider a solution $[\gamma]$ in $M(X^*,\spin_X;[\Cr])$. If $[\gamma]$ is irreducible, we say that the moduli space is \textit{Smale-regular} at $[\gamma]$ if the maps of Hilbert manifolds
\begin{equation*}
(R_-,R_+): M(X,\spin_X)\times M(Z;[\bcr]) \rightarrow \Bs_{k-1/2}(Y,\spin)\times \Bs_{k-1/2}(Y,\spin)
\end{equation*}
is transverse to the diagonal at $\rho[\gamma]$. In the reducible case, we consider instead the map
\begin{equation*}
(R_-,R_+): M^{\mathrm{red}}(X,\spin_X)\times M^{\mathrm{red}}(Z;[\bcr]) \rightarrow \partial\Bs_{k-1/2}(Y,\spin)\times\partial \Bs_{k-1/2}(Y,\spin).
\end{equation*}
We say that $M(X^*,\spin_X;[\Cr])$ is \textit{Smale-regular} if it is regular at every point. Finally, we say that a perturbation $\hat{\mathfrak{p}}$ is Smale-regular if all the moduli spaces are Smale-regular.
\end{defn}
One has the following result (see Proposition $24.4.3$ in the book).
\begin{lemma}
Let $[\Lr]=\prod[\Lr^{\alpha}]$ be a critical submanifold. If the moduli space $M(X^*,\spin_X;[\Lr])$ is non empty and Smale-regular, then it is
\begin{itemize}
\item a smooth manifold consisting only of irreducibles, if any $[\Cr^{\alpha}]$ is irreducible;
\item a smooth manifold consisting only of reducibles, if any $[\Cr^{\alpha}]$ is boundary stable;
\item a smooth manifold with (possibly empty) boundary if all $[\Cr^{\alpha}]$ are boundary stable.
\end{itemize}
In the last case, the boundary consists of the reducible elements of the moduli space.
\end{lemma}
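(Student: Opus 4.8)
The plan is to reduce the statement to the fibre-product description recalled just above Definition \ref{smaleregcob} and then to import the local structure of the moduli spaces over the two pieces of $X^{*}$. By the analogue of Lemma $24.2.2$ in the book, the restriction map $\rho$ is a homeomorphism of $M(X^{*},\spin_{X};[\Lr])$ onto $\mathrm{Fib}(R_{+},R_{-})$, the fibre product of
\[
R_{+}\colon M(X,\spin_{X})\to\Bs_{k-1/2}(Y,\spin),\qquad R_{-}\colon M(Z;[\Lr])\to\Bs_{k-1/2}(Y,\spin),
\]
so it suffices to analyse this fibre product. Two inputs are already available: $M(X,\spin_{X})$ is a smooth Hilbert manifold with boundary, the boundary being its reducible locus $M^{\mathrm{red}}(X,\spin_{X})$ (the analogue of Proposition $24.3.1$ in the book, recalled above), and by Definition \ref{smaleregcob} the Smale-regularity hypothesis says exactly that $(R_{-},R_{+})$ is transverse to the diagonal in the irreducible case, and that the restrictions of $R_{\pm}$ to the reducible loci are transverse to the diagonal of $\partial\Bs_{k-1/2}(Y,\spin)$ in the reducible case. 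Granting these, $\mathrm{Fib}(R_{+},R_{-})$ is cut out transversely and inherits a smooth structure in the usual way; the remaining work is to pin down the structure of $M(Z;[\Lr])$ and to check that the two boundaries are compatible.

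For $M(Z;[\Lr])$ I would use Theorem \ref{stableman}. The end of $X^{*}$ is the disjoint union $\coprod_{\alpha}\R^{\geq0}\times Y^{\alpha}$, and in $\Cs_{k,\mathrm{loc}}(X^{*},\spin_{X})$ the blow-up parameter $s$ is a single nonnegative real over all of $X^{*}$, so the reducible locus $\{s=0\}$ is one condition; away from it, $M(Z;[\Lr])$ is a product of half-cylinder moduli spaces $M(\R^{\geq0}\times Y^{\alpha};[\Cr^{\alpha}])$. By Theorem \ref{stableman} and the description of the local stable manifolds following it, a neighbourhood of a translation-invariant solution in such a factor is parametrised by $B(T_{\acr}\Cr^{\alpha})\times B(\N_{\acr}^{+})$, with $\N_{\acr}^{+}$ the part of the generalized spectral decomposition (\ref{spectralN}) of the normal Hessian coming from the directions that decay along the end. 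If $[\Cr^{\alpha}]$ is irreducible the blow-down is a local diffeomorphism there and $s$ is bounded away from zero, so this factor is a boundaryless Hilbert manifold of irreducibles; this already yields the first bullet. If $[\Cr^{\alpha}]$ is reducible the $s$-direction is a one-dimensional summand of the normal bundle on which the normal Hessian acts by multiplication by $\Lambda_{\q}(\acr)$ (Lemma $12.4.3$ in the book; compare the proof of Lemma \ref{nonzerolambda}). By the analogue of Lemma $14.5.3$ in the book, for one of the two boundary types no irreducible solution on $\R^{\geq0}\times Y^{\alpha}$ can be asymptotic to $[\acr]$: such a solution would have $s\equiv0$ near the end, hence $s\equiv0$ everywhere by the unique continuation property of the Seiberg-Witten equations (Chapter $7$ in the book), so the factor consists only of reducibles and is boundaryless, which gives the second bullet. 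For the other boundary type the $s$-direction lies in $\N_{\acr}^{+}$, the constraint $s\geq0$ cuts out a closed half-space in the local model, and the factor is a Hilbert manifold with boundary its reducible locus; when every $[\Cr^{\alpha}]$ is of this type one obtains the third bullet. In all cases $M(Z;[\Lr])$ is a smooth Hilbert manifold whose reducible locus $M^{\mathrm{red}}(Z;[\Lr])$ is either all of it, or empty, or a codimension-one boundary.

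Feeding this into the fibre product gives the three conclusions, with $\mathrm{Fib}(R_{+},R_{-})$ consisting only of irreducibles, only of reducibles, or being a manifold with boundary $M^{\mathrm{red}}$. The point I expect to require the most care is the last case: one must check that $\partial M(X,\spin_{X})$ and $\partial M(Z;[\Lr])$ match up under $R_{\pm}$, so that $\mathrm{Fib}(R_{+},R_{-})$ is a manifold with boundary and not with a codimension-two corner. This works because reducibility on $X^{*}$ is the single condition $s=0$, not one condition per end, so $R_{+}$ and $R_{-}$ send their reducible loci into the common reducible stratum of $\Bs_{k-1/2}(Y,\spin)$; the transversality of the pair restricted to the reducible loci, which is part of Smale-regularity, then identifies $M^{\mathrm{red}}(X^{*},\spin_{X};[\Lr])$ with $\partial\,\mathrm{Fib}(R_{+},R_{-})$, and a collar is built in the standard way from collars of the two factors. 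The remaining verifications — smoothness of the fibre product, the inverse-function-theorem bookkeeping, and the continuity and smoothness of the evaluation maps $\ev^{\alpha}$ — then go through exactly as in the proof of Proposition $24.4.3$ in the book, using the Morse-Bott modifications (weighted Sobolev norms at the ends, normal Hessians in place of Hessians) already set up in Sections $3$--$6$ of this chapter.
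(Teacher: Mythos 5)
Your proposal is correct and follows essentially the route the paper intends, namely the proof of Proposition $24.4.3$ in the book adapted to the Morse-Bott setting: the homeomorphism with $\mathrm{Fib}(R_+,R_-)$, Smale-regularity read as transversality of the restriction maps, the structure of $M(X,\spin_X)$ and of the half-cylinder factors supplied here by Theorem \ref{stableman} and the analogue of Lemma $14.5.3$, and the observation that reducibility is governed by the single condition on the connected compact piece so that no corner strata survive in the fibered product. The only remark worth adding is that the manifold-with-boundary structure in the last case is obtained more cleanly, as in the book, by forming the fibered product of the boundaryless moduli spaces $\tilde{M}$ and quotienting by the involution $\mathbf{i}$, which replaces your hand-made collar and corner-matching argument.
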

\begin{defn}
Suppose that exactly $c+1$ of the $[\Cr^{\alpha}]$ are boundary unstable, with $c\geq 1$. Then we say that $[\Cr]$ is \textit{boundary obstructed with corank} $c$. 
\end{defn}
\vspace{0.8cm}
We can define the grading as follows. Let $[\gamma]$ be any element of $\Bs_{k,z}(X^*;[\Cr])$ and $\gamma$ a gauge representative. Suppose $[\gamma]$ is asymptotic to the critical point $[\bcr]$  and let $[\gamma_{\bcr}]$ be the corresponding constant trajectory in $\Bt_{k,\delta}(Z,\spin;[\Cr])$. We have the operator
\begin{equation*}
Q^{\sigma}_{\gamma}=\mathcal{D}_{\gamma}\F^{\sigma}_{\mathfrak{p}}\oplus \mathbf{d}^{\sigma,\dagger}_{\gamma}
\end{equation*}
on $X$ and the translation invariant operator $Q_{\gamma_{\bcr}}$ on $Z$. There are restriction maps
\begin{align*}
r_+&:\mathrm{ker}(Q^{\sigma}_{\gamma})\rightarrow L^2_{k-1/2}(Y;iT^*Y\oplus S\oplus i\R)\\
r_-&:\mathrm{ker}(Q_{\gamma_{\bcr}})\rightarrow L^2_{k-1/2}(Y;iT^*Y\oplus S\oplus i\R).
\end{align*} 
Then the operator
\begin{equation*}
r_+-r_-:\mathrm{ker}(Q^{\sigma}_{\gamma})\oplus\mathrm{ker}(Q_{\gamma_{\bcr}})\rightarrow L^2_{k-1/2}(Y;iT^*Y\oplus S\oplus i\R)
\end{equation*}
is Fredholm (see Proposition $24.3.2$ in the book), and we define $\mathrm{Ind}_z(X;[\Cr])$ to be its index. The point of the definition is that is makes sense even when the moduli space is empty.
We the define the relative grading to be
\begin{equation*}
\gr_z(X,[\Cr])=\mathrm{Ind}_z(X;[\Cr])-\mathrm{dim}[\Cr]
\end{equation*}
where the las term denotes the sum of the dimensions of the critical manifolds. This grading has the simple additivity property
\begin{equation*}
\gr_{z_1\circ z}(X;[\Cr])=\gr_z(X;[\Cr_0])+\gr_{z_1}([\Cr_0],[\Cr]),
\end{equation*}
where the last term is the sum over all components. Notice that in the case $X$ is a finite cylinder (hence $X^*$ is an infinite cylinder), the definition coincides with the usual one. This is because of the orientation convention the new index is obtained by considering the operator as acting on the (unproperly called) weighted Sobolev space on the doubly infinite cylinder defined by the function
\begin{equation*}
f(t)=e^{\delta t}.
\end{equation*}
In particular its index differs by the one we used in Section $4$ by $\mathrm{dim}[\Cr_-]$ by simple a spectral flow argument. We also have the following (see Proposition $24.4.6$ in the book).
\begin{prop}
If the moduli space $M_z(X;[\Lr])$ is non-empty and Smale-regular, its dimension is $\gr_z(X;[\Cr])+\mathrm{dim}[\Lr]$ in the boundary unobstructed case. If the moduli space is boundary obstructed with corank $c$, then its dimension is $\gr_z(X;[\Cr])+\mathrm{dim}[\Lr]+c$.
\end{prop}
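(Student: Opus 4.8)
The plan is to follow the proof of Proposition $24.4.6$ in the book, carrying out the same parametric modifications used throughout this chapter. First I would pass to the fiber product picture: by Lemma $24.2.2$ in the book the restriction map $\rho$ identifies $M_z(X;[\Lr])$ (on $X^*$) with $\mathrm{Fib}(R_+,R_-)$, so near any solution $[\gamma]$ the moduli space is the zero locus of the gauge-fixed section $\F^{\sigma}_{\mathfrak p}$ on a slice $\mathcal S^{\sigma}_{k,\gamma}\subset\Cs_{k,\mathrm{loc}}(X^*,\spin_X)$. Because $[\Lr]$ need not be non-degenerate, I would work on the cylindrical end $Z=[0,\infty)\times Y$ with the weighted norms $L^2_{k,\delta}$, $\delta>0$ small (in the range where Theorem \ref{modsame} holds and $\delta<|\Lambda_{\q}(\bcr)|$ for all critical $\bcr$); this is precisely the device that makes the linearization Fredholm, since the relevant limit operator is the weighted extended Hessian $L_{\bcr,t}$ of Lemma \ref{weighhessian}, which at a Morse--Bott critical point is almost hyperbolic with kernel $T_{\bcr}\Lr$. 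Choosing $L^2_{k-1/2}$-compatible product charts around the limit points as in Lemma \ref{cylinderchart} and invoking Theorem \ref{stableman} together with Lemma \ref{almostAPS}, a neighbourhood of $[\gamma]$ in the moduli space is cut out by a Fredholm section supplemented by the $\dim[\Lr]$ asymptotic parameters corresponding to the $T_{\bcr}\Lr$ summand of the generalized spectral decomposition (\ref{spectralN}).

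Next, in the boundary unobstructed case, Smale-regularity (Definition \ref{smaleregcob}) says exactly that the matched operator $Q^{\sigma}_{\gamma}$ on $X$ together with $Q_{\gamma_{\bcr}}$ on $Z$ is surjective, so the moduli space is a smooth manifold and its dimension equals the index of the matching operator $r_+-r_-\colon\ker Q^{\sigma}_{\gamma}\oplus\ker Q_{\gamma_{\bcr}}\to L^2_{k-1/2}(Y;iT^*Y\oplus S\oplus i\R)$. By construction this index is $\mathrm{Ind}_z(X;[\Lr])$ (Proposition $24.3.2$ in the book), and the point to check carefully --- using Lemma \ref{almostAPS} for the almost hyperbolic operator $L_{\bcr,t}$ --- is that $\ker Q_{\gamma_{\bcr}}$ already contains the $\dim[\Lr]$ constant-in-$t$ directions $T_{\bcr}\Lr$, so these do not get double counted. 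Unwinding $\gr_z(X;[\Cr])=\mathrm{Ind}_z(X;[\Lr])-\dim[\Lr]$ then gives $\dim M_z(X;[\Lr])=\gr_z(X;[\Cr])+\dim[\Lr]$.

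For the boundary obstructed case with corank $c$, every solution in the moduli space is reducible, and following Section $24.4$ in the book I would split the matching operator, compatibly with the involution $\mathbf i$, as $Q^{\partial}\oplus Q^{\nu}$. The $\nu$-part is the Dirac-type problem on $X^*$ with APS boundary data coming from the eigenvalues at the $c+1$ boundary-unstable ends; the cobordism analogue of Lemma $14.5.4$ (treated in the weighted Morse--Bott setting exactly as $Q^{\nu}_{\gamma}$ was handled in Section $4$) shows $Q^{\nu}$ has zero kernel and $c$-dimensional cokernel, so its index is $-c$. Smale-regularity here means $Q^{\partial}$ is surjective, hence the moduli space is smooth of dimension $\mathrm{ind}\,Q^{\partial}+\dim[\Lr]=(\mathrm{Ind}_z(X;[\Lr])+c)+\dim[\Lr]=\gr_z(X;[\Cr])+\dim[\Lr]+c$.

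The routine parts --- smoothness of the restricted and universal moduli spaces, the inverse function theorem argument, Fredholmness --- are as in the book and in Sections $3$--$6$ above. The main obstacle I anticipate is bookkeeping the $\dim[\Lr]$ term in the Morse--Bott setting: one must verify that the extra dimensions coming from the nontrivial kernel $T_{\bcr}\Lr$ of the weighted extended Hessian enter once and only once, and that this is consistent with the normalization of $\mathrm{Ind}_z(X;[\Lr])$ in terms of the weight function $f(t)=e^{\delta t}$ --- recall this index differs from the one used in Section $4$ by the spectral-flow term $\dim[\Lr]$ --- so that after the renormalization defining $\gr_z$ the stated formula comes out exactly. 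The identification of the corank-$c$ cokernel of $Q^{\nu}$ with the number of boundary-unstable ends is the other delicate point, but it is a direct transcription of the corresponding finite-dimensional statement in Section $2.4$ of the book.
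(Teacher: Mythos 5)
Your overall route is the one the paper itself intends: the paper offers no proof of this proposition beyond citing Proposition $24.4.6$ of the book together with the standing remark that the book's arguments carry over using the weighted-space machinery of this chapter, and your outline (the fibered-product description via Lemma $24.2.2$, the $L^2_{k,\delta}$ norms on the end so that the limiting operator is the almost hyperbolic weighted extended Hessian with kernel $T_{\bcr}\Cr$, the index of the matching operator $r_+-r_-$, and the splitting $Q^{\partial}\oplus Q^{\nu}$ with $\mathrm{ind}\,Q^{\nu}=-c$ in the corank-$c$ case) is exactly that adaptation. The unobstructed case as you write it is correct: with the convention of Lemma \ref{almostAPS} the asymptotic directions $T_{\bcr}\Cr$ already sit inside the kernel on the cylindrical end, the dimension equals $\mathrm{Ind}_z(X;[\Cr])$, and $\gr_z(X;[\Cr])=\mathrm{Ind}_z(X;[\Cr])-\dim[\Cr]$ gives the stated formula.

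Your bookkeeping in the corank-$c$ case, however, is internally inconsistent, and precisely at the point you yourself flagged as delicate. If, as in your unobstructed paragraph, the $T_{\bcr}\Cr$ directions are counted in the kernel of the matched problem, so that $\mathrm{ind}\,Q^{\partial}=\mathrm{Ind}_z(X;[\Cr])+c$, then the dimension of the (reducible) moduli space is $\mathrm{ind}\,Q^{\partial}$ itself, with no further addition of $\dim[\Cr]$; writing ``dimension $=\mathrm{ind}\,Q^{\partial}+\dim[\Cr]$'' double counts the asymptotic parameters, and your chain $(\mathrm{Ind}_z(X;[\Cr])+c)+\dim[\Cr]=\gr_z(X;[\Cr])+\dim[\Cr]+c$ only closes because you simultaneously identify $\mathrm{Ind}_z$ with $\gr_z$, which contradicts the definition you used two paragraphs earlier. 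Alternatively you may compute with the fully decaying $\delta$-weighted index, in which case $\dim[\Cr]$ is added once but then the relevant index is $\gr_z(X;[\Cr])+c$, not $\mathrm{Ind}_z(X;[\Cr])+c$. Either convention yields $\gr_z(X;[\Cr])+\dim[\Cr]+c$, but you must choose one and use it consistently; as written, two compensating slips produce the correct answer. The remaining ingredients --- $Q^{\nu}$ having trivial kernel and cokernel of dimension $c$ coming from the $c+1$ boundary-unstable ends, and Smale-regularity amounting to surjectivity of $Q^{\partial}$ --- are the book's argument and are fine.
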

\vspace{0.8cm}

In order to have a nice characterization of the compactifications of the moduli spaces we will need a stronger transversality assumption on the evaluation maps, as in Section $4$. Suppose we have  for each $\alpha$ a sequence of critical submanifolds
\begin{equation*}
\mathcal{C}^{\alpha}=\left([\Cr^{\alpha}_0],[\Cr^{\alpha}_1],\dots, [\Cr^{\alpha}_{n^{\alpha}}]\right),
\end{equation*}
and corresponding homotopy classes of relative homotopy classes $\mathbf{z}^{\alpha}$. We denote the product simply by $\mathcal{C}$ and $\mathbf{z}$.
We can then consider the space
\begin{equation*}
M_{\mathbf{z}}(X^*,\mathcal{C})
\end{equation*}
consisting of tuples $\left([\gamma_0],[\gamma^{\alpha}_i]\right)$  with
\begin{align*}
{[}\gamma_0]&\in M_{z_0}(X^*; [\Cr])\\
{[}\gamma^{\alpha}_i]&\in M_{z^{\alpha}_i}([\Cr_{i-1}^{\alpha}],[\Cr_{i}^{\alpha}])\text{ for } 1\leq i\leq n^{\alpha},
\end{align*}
such that the evaluations agree, i.e. we have
\begin{align*}
\ev^{\alpha}[\gamma_0]&=\ev_-[\gamma_1^{\alpha}],\\
\ev_+[\gamma_i^{\alpha}]&=\ev_-[\gamma_{i+1}^{\alpha}]\text{ for }i=1,\dots, n_{\alpha}-1.
\end{align*}
The space $M_{\mathbf{z}}(X^*,\mathcal{C})$ is naturally equipped with an evaluation map
\begin{equation*}
\ev:M_{\mathbf{z}}(X^*,\mathcal{C})\rightarrow \prod_{\alpha} [\Cr^{\alpha}_{n^{\alpha}}].
\end{equation*}
We then introduce the following definition, which has to be interpreted in an inductive fashion as Definition \ref{regularpert} in Section $4$.

\begin{defn}\label{regcob}
Suppose we are given a Smale-regular perturbation $\hat{\mathfrak{p}}$. We then say that ${\mathfrak{p}}$ is \textit{regular} if the following holds. For every sequence of critical manifolds $\mathcal{C}$ and corresponding relative homotopy classes $\mathbf{z}$ (see the notation above), and any other critical submanifold $[\Cr_+]=\prod [\Cr_+^{\alpha}]$ and relative homotopy class $z_+^{\alpha}\in \pi_1(\Bs_k(Y),[\Cr^{\alpha}_{n^{\alpha}}],[\Cr_+])$, the evaluations
\begin{IEEEeqnarray*}{c}
\ev:M_{\mathbf{z}}(X^*,\mathcal{C})\rightarrow \prod[\Cr_{n^{\alpha}}^{\alpha}]\\
\prod\ev_-^{\alpha}: \prod M_{z_+^{\alpha}}([\Cr_{n^{\alpha}}^{\alpha}],[\Cr_+^{\alpha}])\rightarrow\prod [\Cr_{n^{\alpha}}^{\alpha}]
\end{IEEEeqnarray*}
are transverse smooth maps.
\end{defn}

\vspace{0.8cm}
We now discuss the transversality result in the general case of a family of perturbations and metrics, which will be needed in the rest of the present work. Let $P$ be a smooth finite dimensional manifold (possibly with boundary) parametrizing a smooth family of Riemannian metrics $g^P$ on $X$, all of which contain an isometric copy of $I\times Y$. Similarly, let $\mathfrak{p}_0^P\in \mathcal{P}(Y,\spin)$ be a smooth family of perturbations, and write
\begin{equation*}
\mathfrak{p}^p=\beta(t)\q_0+\beta_0(t)\mathfrak{p}_0^p
\end{equation*}
as usual. Let $M(X^*,\spin_X;[\Cr])_p$ the moduli space corresponding to the metric $g_p$ and perturbation $\mathfrak{p}^p$, and consider the total space
\begin{align*}
M(X^*,\spin_X;[\Cr])_P&=\bigcup_p\{p\}\times M(X^*,\spin_X;[\Cr])_p\\
&\subset P\times \Bs_{k,\mathrm{loc}}(X^*,\spin_X),
\end{align*}
where the identification with a fixed spin bundle $S^{\pm}_{p_0}$ is implicitly made so that the configuration space can be thought to be metric independent. The notions of Smale-regularity and regularity readily extend to this context by considering the restriction maps and the evaluation maps from   the moduli spaces parametrized by the strata $P\setminus \partial P$ and $\partial P$ (and \textit{not} for a fixed $p$). The following is the key transversality result, which is proved as Proposition $24.4.10$ in the book with the same modifications as Proposition \ref{transversality}.
\begin{prop}\label{transversecob}
Suppose we have a fixed regular Morse-Bott pertubation $\q_0$ on $Y$. Let $g^P$ be a smooth family of metrics as above and $\mathfrak{p}^P$ a family of tame perturbations. Suppose that the family parametrized by the boundary $\partial P$ is regular. Then there is a new family of perturbations $\tilde{\mathfrak{p}}^P$ over $P$ such that
\begin{equation*}
\tilde{\mathfrak{p}}^p=\mathfrak{p}^p\text{ for all }p\in \partial P
\end{equation*}
and such that the corresponding parametrized moduli space is regular at every point.
\end{prop}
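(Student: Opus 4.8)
The strategy is to follow the proof of Proposition $24.4.10$ in the book, incorporating the modifications already made in the proof of Theorem \ref{transversalitymain}, and to apply the Sard--Smale theorem (Lemma $12.5.1$ in the book) to a suitable universal parametrized moduli space. The first step is to enlarge the class of admissible perturbations: I would fix a large Banach space $\mathcal{Q}$ of perturbations of the four-dimensional equations on $X$ whose supports lie in a fixed compact set in the interior of $X$, disjoint from the collar $I\times Y$ and the cylindrical end, so that adding such a perturbation changes neither the restriction of $\F^{\sigma}_{\mathfrak p}$ near the boundary nor its behaviour on $Z$, and hence preserves all the compactness, unique continuation and asymptotic-decay properties. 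Inside $\mathcal Q$ one takes the closed subspace $\mathcal Q_{\partial P}$ of families that vanish identically over $\partial P$; for $\mathbf q\in\mathcal Q_{\partial P}$ the perturbed family $\mathfrak p^{p}+\mathbf q^{p}$ then agrees with $\mathfrak p^{p}$ for all $p\in\partial P$ by construction.

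Next, for a product of sequences of critical submanifolds $\mathcal C$ and relative homotopy classes $\mathbf z$ as in Definition \ref{regcob}, I would form the universal parametrized moduli space
\begin{equation*}
\mathfrak M_{\mathbf z}(X^*,\mathcal C)=\bigl\{(p,\mathbf q,\,\cdots)\mid \text{the equations for }\mathfrak p^{p}+\mathbf q^{p}\text{ hold}\bigr\}\subset P\times\mathcal Q_{\partial P}\times(\cdots),
\end{equation*}
using the fibre-product description $\rho$ of $M(X^*,\spin_X;[\Cr])$ via the restriction maps $R_{\pm}$ and the iterated fibre products of evaluation maps that define $M_{\mathbf z}(X^*,\mathcal C)$ and the fibre products appearing in Definition \ref{smaleregcob}. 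The crux is to prove that this space is a smooth Banach manifold with Fredholm projection to $\mathcal Q_{\partial P}$. At an irreducible solution I would first apply the weighted-Sobolev conversion of Proposition \ref{fredholmQ} to make the linearizations at the ends Fredholm, and then the argument of Proposition $15.1.3$ in the book goes through essentially verbatim: the cokernel of the linearized equations (on the relevant finite-codimension subspace) is finite dimensional and spanned by smooth configurations, and pairing such an element against the variations in $\mathbf q$, using the unique continuation property (Section $10.8$ in the book) together with the density and embedding properties of cylinder-function perturbations (Proposition \ref{densepert}), forces it to vanish. For the iterated fibre products this is precisely the computation already carried out in the proof of Theorem \ref{transversalitymain} for length-three sequences, and the inductive reading of Definition \ref{regcob} (interpreted as in Definition \ref{regularpert}) extends it to arbitrary length.

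The reducible cases are handled exactly as in Theorem \ref{transversalitymain}: reducible solutions not lying over a reducible critical point downstairs are treated by applying the same argument to the $\partial$-invariant part $(\mathcal D\F^{\sigma}_{\mathfrak p})^{\partial}$ of the operator; for solutions lying over a single reducible configuration $(B,0)$ the relevant spinorial operator on the weighted spaces is conjugate to $-\,d/dt+D_{\q,B}+\sigma(t)$ on unweighted spaces, which is invertible for $\delta>0$ small because $0\notin\mathrm{Spec}(D_{\q,B})$ by Lemma \ref{nonzerolambda}, so no perturbation is required there, and the fibration hypothesis built into Definition \ref{Morsebotts} guarantees that the evaluation maps from these strata are submersions (compare Lemma \ref{dimreducible}). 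Once the universal space is a Banach manifold, Sard--Smale provides a residual set of $\mathbf q\in\mathcal Q_{\partial P}$ that are regular values simultaneously for the projections associated with the strata $P\setminus\partial P$ and $\partial P$; choosing such a $\mathbf q$, and as small as needed so that the moduli spaces near the already regular locus remain regular, and setting $\tilde{\mathfrak p}^{P}=\mathfrak p^{P}+\mathbf q$ yields the desired family, with $\tilde{\mathfrak p}^{p}=\mathfrak p^{p}$ on $\partial P$ automatic. The main obstacle, as already in the cylinder case, is not the Sard--Smale machinery but checking surjectivity of the linearized operator on the iterated fibre products of evaluation maps in the presence of boundary-obstructed strata and of reducible trajectories over a fixed reducible configuration; this is exactly where one must combine unique continuation, the embedding statement for compact subsets of the based configuration space from Proposition \ref{densepert}, and the fibration condition in the definition of a Morse--Bott singularity.
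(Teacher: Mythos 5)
Most of your skeleton is exactly what the paper intends: it proves this proposition by citing Proposition $24.4.10$ in the book and importing the modifications of Theorem \ref{transversalitymain} — universal parametrized moduli space, Sard--Smale over the Banach space of families fixed on $\partial P$, surjectivity of the linearization at irreducibles via unique continuation and Proposition \ref{densepert} after the weighted-Sobolev conjugation trick, the $\partial$-invariant operator for reducibles not lying over a reducible downstairs, invertibility of the spinorial operator over a reducible $(B,0)$ from Lemma \ref{nonzerolambda}, and the inductive fibered-product reading of regularity with the fibration condition of Definition \ref{Morsebotts} (Lemma \ref{dimreducible}) handling evaluation maps on the reducible strata.

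The genuine gap is your choice of perturbation space. You introduce a Banach space $\mathcal Q$ of four-dimensional perturbations supported in a compact set in the interior of $X$ and assert that adding them ``preserves all the compactness, unique continuation and asymptotic-decay properties.'' That is not automatic and is nowhere available in this part of the theory: the only four-dimensional perturbation class the paper ever constructs is the $\Pin$-equivariant \textsc{asd} class of Chapter $4$, which requires a carefully chosen definition (precompact image, smoothness conditions) and a separate compactness proof (Proposition \ref{compasd}), and even then it only perturbs the curvature component, so by itself it could not give surjectivity in the spinor directions. You also do not exhibit a density statement showing interior perturbations hit the cokernel, which is the analogue of what Proposition \ref{densepert} provides in the collar. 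The paper's (and the book's) route avoids all of this: the perturbations in the statement are of the form $\mathfrak p^{p}=\beta(t)\q_0+\beta_0(t)\mathfrak p_0^{p}$, and one achieves transversality by varying the collar term $\mathfrak p_0^{p}$ inside the already-constructed space $\mathcal P(Y,\spin)$ (restricted to families agreeing with the given one over $\partial P$), with surjectivity coming from unique continuation pushing a putative cokernel element into the collar where the cylinder-function perturbations separate points. Note also that the statement asks for a new family $\tilde{\mathfrak p}^{P}$ of perturbations of the same type; an interior four-dimensional term would take you outside that class. Replacing your $\mathcal Q$ by the space of collar families in $\mathcal P(Y,\spin)$ fixed over $\partial P$ repairs the argument and recovers the intended proof.
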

\begin{remark}
This result readily generalizes to families of metrics and perturbations parametrized by polyhedra. This will be useful in some other aspects of the theory.
\end{remark}

\vspace{0.8cm}
There is a natural way to introduce the compactification of the moduli space on a general manifold with cylindrical ends, analogous to the one for trajectories defined in Chapter $4$.
\begin{defn}
Consider a critical submanifold $[\Cr]$. A \textit{broken} $X$-\textit{trajectory} asymptotic to $[\Cr]$ consists of pairs $([\gamma_0],[\breve{\gamma}])$ where:
\begin{itemize}
\item $[\gamma_0]$ belongs to a moduli space $M_{z_0}(X^*,[\Cr_0])$;
\item $[\boldsymbol{\breve{\gamma}}]$ consists of an unparametrized broken trajectory $[\boldsymbol{\breve{\gamma}}^{\alpha}]$ in $\breve{M}^+_{z_1^{\alpha}}([\Cr_0^{\alpha}],[\Cr^{\alpha}])$ for every $\alpha$ such that its negative evaluation coincides with the evaluation of $[\gamma_0]$ at the end $Y^{\alpha}$.
\end{itemize}
The \textit{homotopy class} the broken trajectory is given by
\begin{equation*}
z=z_1\circ z_0\in \pi_0(\Bs(X,[\Cr]),
\end{equation*}
and we denote by $M^+_{z}(X^*,[\Cr])$ the space of broken $X$-trajectories in this homotopy class.
\end{defn} 
This space is topologized in a way analogous to the spaces $\breve{M}^+([\Cr_-],[\Cr_+])$, see Section $24.6$ in the book for details. Notice also that $M_z(X^*,[\Cr])$ sits inside $M^+_z(X^*,[\Cr])$ as the special case in which each $[{\gamma}^{\alpha}]$ has zero components. Furthermore, there is a natural continuous evaluation map
\begin{equation*}
\ev: M^+_{z}(X^*,[\Cr])\rightarrow [\Cr].
\end{equation*}
It is important to notice that even though when $X$ is a finite cylinder $X^*$ is an infinite cylinder, the compactification we have just constructed is quite different from the space of broken unparametrized trajectories

\begin{example}Here we deal with a simple (reducible) example. Consider a non degenerate perturbation $\q$, and two boundary stable critical points $\acr_1$ and $\acr_2$ over the same reducible critical point $\alpha$ and corresponding to the smallest and second smallest eigenvalues. Let $z_0$ the relative homotopy class corresponding to the path connecting $\acr_2$ and $\acr_1$ in $\Cs_k(Y)$. Then the moduli space
\begin{equation*}
M_{z_0}^{\mathrm{red}}([\acr_2],[\acr_1])
\end{equation*}
is identified with a copy of $\C P^1$ with two points removed, see Section $14.6$ in the book. This implies that $\breve{M}_{z_0}^{\mathrm{red}}([\acr_2],[\acr_1])$ is diffeomorphic to $S^1$, and hence coincides with $\breve{M}_{z_0}^{\mathrm{red}+}([\acr_2],[\acr_1])$. We can compare this space with its compactification given by the space of $I\times Y$-trajectories. This perturbation does not fall in the class studied above (it is translation invariant), but everything still makes perfectly sense. In particular, the compactification $M_{z_0}^{\mathrm{red}+}([\acr_2],[\acr_1])$ is identified with $S^1\times [0,1]$, where
\begin{equation*}
\{0,1\}\times S^1=M_{z_2}([\acr_2],[\acr_2])\times \breve{M}_{z_0}^{\mathrm{red}+}([\acr_2],[\acr_1])\coprod \breve{M}_{z_0}^{\mathrm{red}+}([\acr_2],[\acr_1])\times M_{z_1}([\acr_1],[\acr_1])
\end{equation*}
and $z_2$ and $z_2$ are the corresponding trivial homotopy classes.
\end{example}

Given a family of metrics $g^P$ and perturbations $\mathfrak{p}^P$ as above, we can form the parametrized space of broken $X$-trajectories
\begin{equation*}
M^+(X^*,[\Cr])_P=\bigcup_p\{p\}\times M^+(X^*,[\Cr])_p 
\end{equation*}
which can be given a natural topology using the fact that the metric and perturbation on the cylindrical part are independent of $p$. The main compactness and finiteness theorem is then the following (see Theorem $24.6.7$ in the book).
\begin{teor}\label{finitenesscob}
Suppose that the families of metric and perturbations $\{(g^p,\mathfrak{p}^p)\}_{p\in P}$ is regular. Then for each $[\Cr]$ the family of moduli spaces $M^+_z(X^*,[\Cr])_P$ is proper over $P$. For fixed $[\Cr]$, this family of moduli spaces is non empty for only finitely many components $z\in\pi_0(\Bs_k(X,[\Cr]))$.
\end{teor}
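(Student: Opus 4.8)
The plan is to follow the scheme of Theorem $24.6.7$ in the book, combining the compactness results for solutions on the compact piece $X$ (equipped with its cylindrical collar) with the compactness of the trajectory moduli spaces on the cylindrical ends established in Theorem \ref{compactness}. First I would prove properness by a sequential compactness argument, and then deduce the finiteness of the set of relevant homotopy classes from a uniform energy bound that comes out of the regularity hypothesis.

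For properness, fix $[\Cr]$ and $z$, let $K\subset P$ be compact, and take a sequence $(p_n,[\gamma_n])$ in $M^+_z(X^*,[\Cr])_P$ with $p_n\in K$. Using the compactness theorem for the perturbed Seiberg-Witten equations on the compact manifold $X$ with boundary (Chapter $24$, resting on the results of Chapter $10$ in the book), together with the compactness statement of Theorem \ref{compactness} applied to the cylindrical ends, one extracts a subsequence converging to a broken $X$-trajectory. The only genuinely new points are exactly those already dealt with for trajectories: one must control the function $\Lambda_{\q}$ along the ends so that only finitely many reducible critical submanifolds come into play (recall that $\Lambda_{\q}$ at a reducible critical point depends only on its blow-down, hence has limits at the ends), and one invokes the separating-neighborhood and finite-length argument from the proof of Lemma \ref{convergencefinite} to rule out spiralling. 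Since the metric and the perturbation are independent of $p$ on the cylindrical part, the convergence on the ends is uniform in the parameter and $K$ is compact, so the limiting broken $X$-trajectory lies over a point of $K$, which gives properness.

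For the finiteness statement, the key is a uniform upper bound on the perturbed energy $\mathcal{E}_{\q}$ of broken $X$-trajectories in $M^+_z(X^*,[\Cr])_P$. Arguing as in the proof of Proposition \ref{finiteness}, Smale-regularity and regularity of the family force every component of a broken $X$-trajectory to have non-negative dimension (with the usual correction in the boundary-obstructed case), so additivity of the relative gradings (Lemma \ref{additivegrading} together with the cobordism additivity $\gr_{z_1\circ z}(X;[\Cr])=\gr_z(X;[\Cr_0])+\gr_{z_1}([\Cr_0],[\Cr])$) yields $\gr_z(X;[\Cr])\geq -\dim[\Cr]$ whenever the moduli space is non-empty. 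The adaptation of Lemma $16.4.4$ in the book — using compactness of the blow-down of the critical set and the resulting bound on $\iota[\Cr]$ — then gives an energy estimate $\mathcal{E}_{\q}(\gamma)\leq C+8\pi^2\bigl(\iota[\Cr]-\text{const}\bigr)$, with a bound depending only on $[\Cr]$ and uniform over $P$. With this energy bound in hand, the finiteness of the set of components $z$ for which $M^+_z(X^*,[\Cr])_P$ is non-empty follows from the energy-bounded compactness statement (the analogue of Proposition \ref{compdownstairs} for the cobordism $X$), exactly as in the book. The main obstacle I anticipate is making all of the compactness and energy estimates hold simultaneously and uniformly for all $p$ in a compact subset of $P$, including points of $\partial P$ — this is where one crucially exploits that the collar metric and the cylindrical-end perturbation are fixed; a secondary, purely bookkeeping difficulty is tracking the reducible strata and the higher-corank boundary-obstructed broken $X$-trajectories, but the structural results set up above make this routine rather than conceptual.
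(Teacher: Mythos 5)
Your proposal is correct and follows exactly the route the paper intends: the paper itself gives no written proof of this theorem, merely citing Theorem $24.6.7$ of the book with the understanding that the Morse-Bott adaptations are those already developed in the chapter (Theorem \ref{compactness}, Lemma \ref{convergencefinite}, Lemma \ref{additivegrading}, and the grading/energy argument of Proposition \ref{finiteness}), which is precisely what you assemble. Your two additional observations — that the fixed cylindrical-end data gives uniformity in the parameter $p$, and that $\Lambda_{\q}$ must be controlled to confine the breaking to finitely many reducible critical submanifolds — are exactly the points the paper's earlier proofs flag as the genuinely new Morse-Bott issues, so nothing is missing.
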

\vspace{0.5cm}
We discuss the structure of the compactifications as spaces stratified by manifolds (see Proposition $24.6.10$ in the book). The space $M^+_z(X^*,[\Cr])_P$ can be written as the disjoint union of the following subspaces. For any other critical submanifold $[\Cr_0]$ and strata $M^{\alpha}$ of the stratification of $\breve{M}^+([\Cr_0^{\alpha}],[\Cr^{\alpha}])$, we consider the subspace $M'$ of pairs $([\gamma_0],[\boldsymbol{\breve{\gamma}}])$ such that
\begin{itemize}
\item $[\gamma_0]\in M(X^*,[\Cr_0])_P$;
\item $[\boldsymbol{\breve{\gamma}}^{\alpha}]\in M^{\alpha}$;
\item for each $\alpha$, the evaluations $\ev^{\alpha}[\gamma_0]$ and $\ev_-[\boldsymbol{\breve{\gamma}}^{\alpha}]$ agree.
\end{itemize}
The space $M'$ has a natural structure of smooth manifold (with boundary, if $P$ has boundary) induced by its fibered product description, as the evaluation maps are transverse by the regularity assumption. These subspaces define a decomposition of the space in smooth manifolds, and this defines a structure of stratified space, as the next result states.
For a typical element
\begin{equation*}
([\gamma_0],[\boldsymbol{\breve{\gamma}}])\in M_{z_0}(X^*,[\Cr_0])_P\times \prod_{\alpha} \breve{M}^+_{z^{\alpha}}([\Cr_0^{\alpha}],[\Cr^{\alpha}])
\end{equation*}
we write $n^{\alpha}$ for the number of components of $[\boldsymbol{\breve{\gamma}}^{\alpha}]$, and denote its $i$th component by $[{\breve{\gamma}}^{\alpha}_i]$

\begin{prop}\label{cod1cob}
Suppose a regular family of perturbations and metrics parametrized by $P$ is fixed. If $M_z(X^*,[\Cr])_P$ contains irreducible solutions and has dimension $d$, then $M^+_z(X^*,[\Cr])_P$ is a $d$-dimensional space stratified by manifolds, with top stratum the irreducible part of $M_z(X^*,[\Cr])_P$. The $(d-1)$-dimensional stratum in $M^+_z(X^*,[\Cr])_P$ consists of elements of the following types.
\begin{itemize}
\item The elements with $n^{\alpha}=1$ for exactly one index $\alpha=\alpha_*$ and all the others $n^{\alpha}$ zero. In this case neither $[\gamma^{\alpha_*}_1]$ or $[\gamma_0]$ are boundary obstructed.
\item The elements with $n^{\alpha}=2$ for exactly one index $\alpha=\alpha_*$ and all the others $n^{\alpha}$ zero. In this case, $[\gamma^{\alpha_0}_1]$ is boundary obstructed but $[\gamma^{\alpha_0}_2]$ and $[\gamma_0]$ are not.
\item The elements with $[\gamma_0]$ boundary obstructed of corank $c$. In this case $n^{\alpha}=1$ for exactly $c+1$ indices $\alpha$, and $n^{\alpha}=0$ in the other cases. If $n^{\alpha}=1$ then the trajectory $[\breve{\gamma}_1^{\alpha}]$ is not boundary obstructed.
\item The unbroken reducible solutions, if the moduli space contains both reducibles and irreducibles.
\item The unbroken irreducible solutions lying over $\partial P$, if $P$ has boundary.
\end{itemize}
In the first three cases above, if any of the moduli spaces involved contains both reducibles and irreducibles then only the irreducibles contribute to the $(d-1)$-dimensional stratum.\end{prop}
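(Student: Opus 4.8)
The plan is to follow the argument of Proposition $24.6.10$ in the book, adapting the bookkeeping to the Morse--Bott setting where critical submanifolds carry positive dimension and the relevant dimension formulas acquire the extra terms $\dim[\Cr]$ and the corank defect $c$. First I would record the decomposition of $M^+_z(X^*,[\Cr])_P$ into the subspaces $M'$ described just before the statement: these are indexed by a choice of intermediate critical submanifold $[\Cr_0]$ and, for each boundary component $\alpha$, a stratum $M^\alpha$ of $\breve{M}^+([\Cr_0^\alpha],[\Cr^\alpha])$ from the stratification provided by Proposition \ref{codim1}. Regularity of the parametrized family (Definition \ref{regcob}) guarantees that the evaluation map $\ev$ from $M_{z_0}(X^*,[\Cr_0])_P$ and the maps $\ev_-$ from the broken-trajectory strata are transverse, so each $M'$ is cut out as a transverse fibered product and inherits a canonical smooth manifold structure (with boundary when $P$ has boundary). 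That these subspaces, with their obvious closure relations, form a space stratified by manifolds is then the content of the gluing package: Theorem \ref{gluing} together with its reducible analogue, and Proposition \ref{evthickening} on transversality of evaluation maps on thickenings, show that near any point of $M'$ the space $M^+_z(X^*,[\Cr])_P$ is locally modeled on the product of $M'$ with a thickened corner region (or with the more complicated $\delta$-structure of Lemma \ref{codimension1} in the boundary-obstructed cases), exactly as in Chapter $19$ and Section $24.6$ of the book.

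The core of the argument is the dimension count that isolates the codimension-$1$ stratum. Write $d$ for $\dim M_z(X^*,[\Cr])_P$. Using the dimension formula for $M_z(X^*;[\Cr])$ recorded above (with the extra $+c$ in the boundary-obstructed corank-$c$ case), the dimension formula for the spaces $\breve{M}^+$ from Proposition \ref{codim1}, the additivity of the relative grading (Lemma \ref{additivegrading} together with its $X$-version $\gr_{z_1\circ z}(X;[\Cr])=\gr_z(X;[\Cr_0])+\gr_{z_1}([\Cr_0],[\Cr])$), and the fact that the fibered product along the evaluation maps subtracts $\sum_\alpha\dim[\Cr_0^\alpha]$, one computes the codimension of $M'$ in terms of (a) the total number $\sum_\alpha n^\alpha$ of breaks, (b) the number of boundary-obstructed components among the $[\breve{\gamma}^\alpha_i]$, and (c) the corank of $[\gamma_0]$ when it is boundary obstructed. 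Each break not involving a boundary-obstructed component costs codimension one; a single boundary-obstructed component inside a broken trajectory raises that trajectory's dimension by one and so turns a codimension-$2$ two-component break into a codimension-$1$ phenomenon; and a corank-$c$ obstruction on $X$ raises $\dim M_z(X^*,[\Cr_0])_P$ by $c$, so precisely $c+1$ unobstructed single breaks on the ends are needed to reach codimension one. Enumerating all configurations of total codimension one then yields exactly the first three bullets. The last two bullets are the familiar boundary phenomena: the reducible locus is the boundary of $M_z(X^*,[\Cr])_P$ when $[\Cr]$ is boundary unstable and the moduli space contains both reducibles and irreducibles (by the lemma following Definition \ref{smaleregcob}), and the fiber over $\partial P$ contributes its usual codimension-one piece when $P$ has boundary.

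I would finish by checking the parenthetical assertion that when any moduli space involved contains both reducibles and irreducibles only the irreducible part contributes to the $(d-1)$-stratum: the reducible part then has strictly smaller dimension, being the boundary of the component containing irreducibles, so any fibered product built from it lands in codimension at least two. The main obstacle I anticipate is not any single estimate but the careful propagation of the gluing picture through the positive-dimensional critical submanifolds and the corank-$c$ boundary obstruction on $X$: one must verify that the thickening of Theorem \ref{gluing}, combined with the transversality of evaluation maps of Proposition \ref{evthickening}, genuinely produces the claimed local structure after taking the fibered product over all ends $Y^\alpha$ simultaneously. This is where the Morse--Bott generalization diverges most from the book, and deserves the bulk of the write-up, even though the underlying mechanism is identical.
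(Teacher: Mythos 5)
Your proposal matches the paper's treatment: the paper offers no independent proof of this proposition, but sets up exactly the fibered-product decomposition into pieces $M'$ (smooth by the regularity of the evaluation maps) and refers the dimension bookkeeping and enumeration of codimension-one configurations to Proposition $24.6.10$ of the book, which is precisely the route you follow, with the correct Morse--Bott corrections (the $\dim[\Cr_0^\alpha]$ subtraction in the fibered product, the $+1$ from a boundary-obstructed middle component, and the $+c$ corank defect forcing $c+1$ single breaks). Your handling of the reducible and $\partial P$ strata and of the final parenthetical statement is also consistent with the paper's conventions, so the write-up is correct and essentially the same argument.
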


\vspace{0.8cm}
As in the case of broken trajectories, the gluing properties along a stratum involve the construction of local thickenings of the moduli spaces, which have the additional properties that the evaluation maps extend to them. The local structure along the strata is slightly more general than that of Theorem \ref{gluing} and Proposition \ref{evthickening} because of the higher corank boundary obstructed trajectories. The following is Definition $24.7.1$ in \cite{KM}, and is useful to describe the structure along the codimension one strata of the moduli space.
\begin{defn}\label{codimensionc}
Let $N$ be a $d$-dimensional space stratified by manifolds and $M^{d-1}$ a union of $(d-1)$ dimensional components. We say that $N$ has a \textit{codimension-$c$ $\delta$-structure} along $M^{d-1}$ if there is an open set $W\supset M^{d-1}$ a topological embedding $j:W\rightarrow EW$ and a map
\begin{equation*}
\mathbb{S}=(S_1,\dots, S_{c+1}):EW\rightarrow (0,\infty]^{c+1}
\end{equation*}
with the following properties:
\begin{enumerate}
\item the fiber along $\mathbb{\infty}$ is identified with $j(M^{d-1})$ and $\mathbb{S}$ is a topological submersion along it (see Definition \ref{topsub});
\item the subset $j(W)\subset EW$ is the zero set of a continuous map $\delta:EW\rightarrow \Pi^{c}$ where $\Pi^c\subset \mathbb{R}^{c+1}$ is the hyperplace $\{\delta\in \mathbb{R}^{c+1}\mid \sum\delta_i=0\}$;
\item if $e\in EW$ has $S_{i_0}=\infty$ for some index $i_0$, then $\delta_{i_0}\leq 0$ with equality only if $\mathbb{S}(e)=\mathbb{\infty}$;
\item on the subset of $EW$ where all the $S_i$ are finite, $\delta$ is smooth and transverse to zero.
\end{enumerate}
\end{defn}

The main example to have in mind is given by $EW=\{x\mid x_i\geq 0\text{ for all }i\}\subset \mathbb{R}^{c+1}$, $S_i=1/x_i$ and
\begin{equation*}
\delta_i=cx_i-\sum_{j\neq i}x_j.
\end{equation*}
In this case the zero locus of $\delta$ is given by the half-line $W\subset EW$ where all the $x_i$ are equal. With this definition the gluing theorem is then the following.
\begin{prop}\label{gluingX}
Suppose a regular family of perturbations and metrics parametrized by $P$ is fixed. Then the space $M_P(X^*;[\Cr])$ has a codimension-$c$ $\delta$-structure stratum along each codimension one stratum.
\end{prop}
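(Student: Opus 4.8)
The plan is to follow the strategy of Theorem \ref{gluing} and Proposition \ref{evthickening}, now assembled according to the stratification of Proposition \ref{cod1cob}, and to treat the higher-corank boundary-obstructed pieces by the thickening construction of Section $24.7$ in the book. First I would reduce to the local model near a point of a fixed codimension-one stratum $M'$, using the fibered-product description $\rho\colon M_z(X^*,[\Cr])\to\mathrm{Fib}(R_+,R_-)$ and its homeomorphism property (Lemma $24.2.2$ in the book). By Proposition \ref{cod1cob} such a point is of one of five types: the unbroken reducible solutions and the solutions lying over $\partial P$ contribute ordinary boundary faces, that is a codimension-$0$ $\delta$-structure in the sense of Definition \ref{codimensionc}; the strata in which exactly one end carries a single non-boundary-obstructed broken trajectory, or a boundary-obstructed trajectory flanked by two non-obstructed ones, are handled exactly as the unobstructed and corank-one cases of Theorem \ref{gluing}, where the thickening $EW$ either coincides with $W$ or is cut out of a one-dimension-larger space by a single function $\delta$ with the sign behaviour of Lemma \ref{codimension1}. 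In all these cases the local parametrizations are supplied by Theorem \ref{stableman} applied at the relevant restpoints, the compatibility with the evaluation maps needed to form the fibered products is Proposition \ref{evthickening}, and the regularity hypothesis Definition \ref{regcob} guarantees that the fibered products are transversely cut out smooth manifolds of the expected dimension.

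The essential new case is that of an element $([\gamma_0],[\boldsymbol{\breve\gamma}])$ in which $[\gamma_0]\in M_{z_0}(X^*,[\Cr_0])_P$ is boundary obstructed of corank $c$, so that exactly $c+1$ ends $Y^{\alpha}$ carry a single non-boundary-obstructed trajectory $[\breve\gamma_1^{\alpha}]$. For this I would mimic the extended moduli space construction: introduce at the $X$-end a discontinuity vector $\delta=(\delta_i)\in\Pi^{c}\subset\mathbb{R}^{c+1}$ in the $s$-coordinate across the $c+1$ distinguished collars, obtaining a space $EM(X^*,[\Cr_0])$ which is regular and of dimension one larger in each of the relevant directions, and which contains $M(X^*,[\Cr_0])$ as the common zero locus of the $\delta_i$; then glue the $c+1$ trajectory pieces with gluing parameters $S_i\in(0,\infty]$ using Theorem \ref{stableman} at each restpoint, and take the fibered product of the extended evaluation maps (which exist by Proposition \ref{evthickening}). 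The regularity assumption makes this fibered product a smooth manifold on the locus where all $S_i$ are finite, and a patching argument in the weighted Sobolev spaces — essentially Lemma \ref{boundedinv} and the corollaries following Proposition \ref{parabstract}, now with $c+1$ gluing necks — identifies the glued objects with $EW$ together with the map $\mathbf{S}=(S_1,\dots,S_{c+1})$. The topological submersion property along the fiber over $\boldsymbol\infty$ is the content of the abstract gluing theorem, and the sign condition $\delta_{i_0}\le 0$ whenever $S_{i_0}=\infty$ is read off, as in Lemma \ref{codimension1}, from the definite sign of the $s$-jump produced by sending a single neck parameter to infinity while keeping the others finite; here Lemma \ref{nonzerolambda} is what guarantees this sign is nonzero.

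Finally I would carry the whole construction over the parameter space $P$: by Proposition \ref{transversecob} the family $\{(g^p,\mathfrak{p}^p)\}$ may be taken regular, and since the metric and perturbation are $p$-independent on the cylindrical ends, all the gluing maps above run fiberwise over $P$ with smooth dependence on $p$, so that $M_P(X^*;[\Cr])$ acquires a codimension-$c$ $\delta$-structure along each codimension-one stratum, with the $\partial P$-faces contributing ordinary corners. The hard part will be the bookkeeping for the higher-corank boundary-obstructed $X$-pieces: organizing the $c+1$ discontinuity parameters into the hyperplane $\Pi^{c}$, checking that the obstruction map $\delta$ is transverse to zero where all necks are finite, and verifying the sign conditions uniformly. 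The Morse-Bott feature adds a further complication, namely that the constant trajectories being glued lie in positive-dimensional critical submanifolds, so the neck parameters must be combined with the tangent and normal directions $T_{\acr}\Cr$ exactly as in the product parametrization of Theorem \ref{stableman}, and one has to keep track of this decomposition throughout the fibered-product assembly.
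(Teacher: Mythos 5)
Your proposal assembles exactly the ingredients the paper intends for this statement, which it leaves essentially as a corollary of the machinery already developed: the stratification of Proposition \ref{cod1cob}, the fibered-product description and regularity hypothesis of Definition \ref{regcob}, the extended moduli space with $s$-discontinuities valued in $\Pi^c$ as in Section $24.7$ of the book, and the Morse-Bott gluing inputs Theorem \ref{stableman}, Proposition \ref{parabstract}/Lemma \ref{boundedinv} and Proposition \ref{evthickening}, with the sign condition coming from Lemma \ref{nonzerolambda}. This matches the paper's (implicit) argument, so the approach is correct and essentially the same; only note that the extended space has dimension $c$ (not $c+1$) larger, since the $c+1$ jumps are constrained to the hyperplane $\Pi^c$.
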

Here we have discussed the simplest possible case, but it is not hard to generalize this result to obtain a statement analogous to that of Theorem \ref{gluing}. Also as usual the case of the reducible moduli spaces is significantly easier.

\chapter{Floer homology for Morse-Bott singularities}

In this chapter we construct monopole Floer homology when the singularities of the perturbed Chern-Simons-Dirac functional are Morse-Bott. There are in literature many different approaches to the definition of the homology on a smooth finite dimensional manifold equipped with a Morse-Bott function. In the present work we will follow that of Fukaya (\cite{Fuk}), which was also introduced for gauge-theoretic purposes (see also Hutchings' lecture notes \cite{Hut}). First, he introduces a variant of the chain complex of a smooth manifold consisting of smooth maps from simplicial complexes which are transverse to a given countable collection of smooth maps. Then he defines the chain complex for the original manifold as the direct sum of the modified chain complexes of the critical submanifolds where the simplicial complexes are required to be transverse to the evaluation maps of the moduli spaces. The differential is given by the push-forward of these chains via the flow defined through a fiber product construction.
\\
\par
The choice of Fukaya's construction is mainly motivated by the fact that the transversality that we can generically achieve is not sufficient to pursue other constructions which require the evaluation maps to be submersions. Furthermore our compactified moduli spaces are not smooth compact manifolds with corners. Also, the boundary obstructedness phenomena makes other approaches unfeasible. On the other hand, Fukaya's approach can be readily adapted to our setting, and that is the content of the first two section of the present chapter. In Section $1$ we introduce the notion of \textit{abstract $\delta$-chain}, and construct a variant of the singular homology of a smooth manifold $M$ obtained by considering smooth maps from stratified spaces with nice properties into $M$ which are transverse to a given collection of smooth maps. In Section $2$ we use these in order to define the Morse-Bott chain complexes associated to a three manifold, and prove the basic properties of the invariants following Chapter $22$ of the book. In Section $3$ we construct the maps induced by a cobordism and use these to prove the invariance of the homology, which shows among the other things that the new approach gives rise to the same invariants defined in the book. It is worth noting that our definition of the module structure will follow the construction in \cite{Blo1} which fits our problem better. 
\\
\par
From now on we will work over $\ztwo$, the field with two elements.

\vspace{1.5cm}

\section{Homology of smooth manifolds via stratified spaces}

In this section we discuss an alternative construction of the homology of a smooth manifold following the treatment of \cite{Lip}. The next definition is rather awkward but should not surprise the reader, as it is made to fit the results of Theorem \ref{gluing} and Proposition \ref{gluingX} in the previous chapter.

\begin{defn}\label{abstractgeom}
A topological space $N^d$ is a \textit{$d$-dimensional abstract $\delta$-chain} if it is a stratified space
\begin{equation*}
N^d\supset N^{d-1}\supset\cdots \supset N^0\supset N^{-1}=\emptyset
\end{equation*}
of dimension $d$ with the following additional structures. We are given a finite partition of each stratum
\begin{equation*}
N^e\setminus N^{e-1}=\coprod_{i=1}^{m_e} M^e_{i}
\end{equation*}
for each $e=0,\dots d$, and we call the closure of each $M^e_i$ an $e$ dimensional \textit{face}. We denote the top stratum of a face $\Delta$ by $\mathring{\Delta}$. The set of faces satisfies the following combinatorial property: whenever a codimension $e$ face $\Delta''$ is contained in a codimension $e-2$ face $\Delta$, there are exactly two codimension $e-1$ faces contained in $\Delta$ that contain $\Delta''$.
\par
Furthermore, each pair of faces $\Delta'\subset \Delta$ has an associated finite set $N(\Delta,\Delta')$, together with a subset $O(\Delta,\Delta')\subset N(\Delta,\Delta')$, where the $O$ is for obstructed, and their local structure satisfy the following properties. There is an open neighborhood of $\breve{W}(\Delta, \Delta')$ of $\mathring{\Delta}'$ inside $\Delta$ together with a topological embedding $j$ inside a space $E\breve{W}(\Delta, \Delta')$ endowed with a topological submersion
\begin{equation*}
\mathbf{S}:E\breve{W}(\Delta, \Delta')\rightarrow (0,\infty]^{N(\Delta,\Delta')},
\end{equation*}
called the \textit{local thickening}, such that:
\begin{enumerate}
\item the map $\mathbf{S}$ is a topological submersion along the fiber over $\boldsymbol{\infty}$, and this is identified with $\mathring{\Delta}'$;
\item the image $j(\breve{W})\subset E\breve{W}$ is the zero set of a map
\begin{equation*}
\delta:E\breve{W}\rightarrow \R^{O(\Delta,\Delta')}
\end{equation*}
vanishing along the fiber of $\mathbf{S}$ over $\boldsymbol{\infty}$;
\item calling $\breve{W}^o\subset\breve{W}$ and $E\breve{W}^o\subset E\breve{W}$ the subsets where none of the components of $\mathbf{S}$ is infinite, the restriction of $j$ to $\breve{W}^o$ is a smooth embedding, and the restriction of $\delta$ to $E\breve{W}^o$ is transverse to zero.
\end{enumerate}

This collection of local thickenings is \textit{compatible} in the following sense. Whenever we have three faces $\Delta''\subset \Delta'\subset \Delta$, we have a canonical inclusion
\begin{equation*}
N(\Delta',\Delta'')\hookrightarrow N(\Delta,\Delta'')
\end{equation*}
with identifications
\begin{equation*}
O(\Delta',\Delta'')=O(\Delta,\Delta'')\cap N(\Delta',\Delta'')
\end{equation*}
and
\begin{equation*}
(0,\infty]^{N((\Delta',\Delta'')}=\{ (x_1,\dots, x_{N(\Delta,\Delta'')})\mid x_{\alpha}=\infty\text{ if } \alpha\not\in N(\Delta',\Delta'')\}\subset  (0,\infty]^{N((\Delta,\Delta'')}
\end{equation*}
so that we also have the identifications
\begin{align*}
E\breve{W}(\Delta',\Delta'')&\equiv E\breve{W}(\Delta,\Delta'')\cap\mathbf{S}^{-1}\left((0,\infty]^{N(\Delta',\Delta'')}\times \{\infty\}\right)\\
\mathbf{S}(\Delta',\Delta'')&\equiv\mathbf{S}(\Delta,\Delta'')\lvert_{E\breve{W}(\Delta',\Delta'')}: E\breve{W}(\Delta',\Delta'')\rightarrow (0,\infty]^{N(\Delta',\Delta'')}\\
\delta(\Delta',\Delta'')&\equiv\delta(\Delta,\Delta'')\lvert_{E\breve{W}(\Delta',\Delta'')}: E\breve{W}(\Delta',\Delta'')\rightarrow \R^{O(\Delta',\Delta'')}.
\end{align*}
In the last line we are implicitly stating that the restriction of $\delta(\Delta,\Delta'')$ has image contained in the subspace corresponding to $\R^{O(\Delta',\Delta'')}$ under the identifications above. Finally, whenever $\Delta'\subset\Delta$ has codimension one, for each face $\Delta$ the data above defines a codimension-$c$ $\delta$-structure along $\Delta'$ in the sense of Definition \ref{codimensionc} of Chapter $2$, in the sense that there is an identification of $\R^{O(\Delta,\Delta')}$ with the subspace $\Pi^c\subset \mathbb{R}^{c+1}$.
\end{defn}

Thinking about our moduli spaces the set $N(\Delta,\Delta')$ describes the parameters along which the trajectories in $\Delta$ can break in order to become trajectories in $\Delta'$, and the subset $O(\Delta,\Delta')$ keeps track of which of these are boundary obstructed.

\begin{example}
It is clear that any manifold with corners is an abstract $\delta$-chain by taking the thickening of the neighborhood $\breve{W}$ to be the neighborhood itself. In this case, each subset $O(\Delta,\Delta')$ is empty. Also, the disjoint union of abstract $\delta$-chains (with the obvious decomposition in faces) and each face of an abstract $\delta$-chain are again abstract $\delta$-chains. For a more interesting example any space of broken unparametrized trajectories $\breve{M}^+_z([\Cr_-],[\Cr_+])$ is an abstract $\delta$-chain, where the partition of the strata is given by fixing the resting submanifolds and the relative homotopy classes of the components. The only part which does not follow from Theorem \ref{gluing} and Corollary \ref{codimension1} is the combinatorial condition on the faces, which can be easily checked case by case. On the other hand, it is important to notice that there is not in general any similar combinatorial property for higher codimension faces.
\end{example}
\begin{remark}
We will consider two thickenings $E\breve{W}_1$ and $E\breve{W}_2$ of two neighborhoods $\breve{W}_1$ and $\breve{W}_2$ of a face to be \textit{equivalent} if they coincide (up to isomorphism respecting all the given structures) in a smaller neighborhood contained in both. In other words, we will always consider germs of thickenings. 
\end{remark}

\vspace{0.5cm}

The following result is the version of Stokes' theorem that we will need, and its proof follows with no modifications as in Section $21.3$ in the book. It is a simple generalization of the fact that the boundary of a compact $1$-dimensional manifold consists of an even number of points.

\begin{prop}\label{stokes}
Let $\Delta$ be a $1$-dimensional abstract $\delta$-chain. Then the union of its zero dimensional faces consists of an even number of points.
\end{prop}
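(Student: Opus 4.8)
The plan is to reduce the statement to the familiar fact that a compact one-manifold has an even number of boundary points, by showing that away from the zero-dimensional faces an abstract $\delta$-chain of dimension one is a compact one-manifold with boundary, whose boundary is exactly the union of the zero-dimensional faces. Since $\Delta$ is a one-dimensional stratified space, it has a top stratum $\Delta^1\setminus\Delta^0$ which is a one-manifold (without boundary), and $\Delta^0$ is a finite set of points by the finiteness of the partition. The content is entirely local near each point of $\Delta^0$: I need to show that in a neighborhood of such a point, $\Delta$ looks like a half-open interval $[0,\varepsilon)$, i.e. the point is a genuine boundary point of a topological one-manifold with boundary, and that no point of $\Delta^0$ is an interior point of such a manifold.

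First I would invoke the local structure provided by Definition \ref{abstractgeom}. Fix a zero-dimensional face $\{p\}$ contained in a one-dimensional face $\Delta'$ (if $p$ is isolated in $\Delta$ then it contributes trivially, but in fact the combinatorial condition on faces prevents this pathology in the cases of interest; in general such isolated points would have to be excluded or handled separately). By hypothesis there is a neighborhood $\breve{W}(\Delta',\{p\})$ of $\{p\}$ inside $\Delta'$, a topological embedding $j$ into a thickening $E\breve{W}(\Delta',\{p\})$, and a topological submersion $\mathbf{S}:E\breve{W}(\Delta',\{p\})\to(0,\infty]^{N(\Delta',\{p\})}$ along the fiber over $\boldsymbol\infty$, which is identified with $\{p\}$. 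Because $\{p\}$ is zero-dimensional and $\Delta'$ is one-dimensional, the submersion property forces the relevant index set $N(\Delta',\{p\})$ to have exactly one element, so $\mathbf{S}$ maps into $(0,\infty]$ and $E\breve{W}$ is locally homeomorphic to $(0,\infty]$ itself. The codimension-$c$ $\delta$-structure condition (the last clause of Definition \ref{abstractgeom}, referencing Definition \ref{codimensionc}) then says $j(\breve{W})$ is the zero set of $\delta:E\breve{W}\to\Pi^c\subset\mathbb{R}^{c+1}$; the sign condition ($\delta_{i_0}\le 0$ when $S_{i_0}=\infty$, with equality only at $\boldsymbol\infty$, together with transversality where all $S_i$ are finite) is precisely what guarantees, as in the model example $EW=\{x_i\ge 0\}\subset\mathbb{R}^{c+1}$ with $\delta_i=cx_i-\sum_{j\ne i}x_j$, that the zero locus near $p$ is a single half-line emanating from $p$. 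Hence $\breve{W}(\Delta',\{p\})$ is homeomorphic to $[0,\varepsilon)$ with $p\mapsto 0$.

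With this local model in hand, the rest is routine: remove small half-open intervals around each point of $\Delta^0$ to obtain a compact one-manifold with boundary, whose boundary circle... rather, whose boundary is in bijection with $\Delta^0$ (each removed neighborhood contributing one boundary point), and apply the classification of compact one-manifolds, or more simply the elementary parity statement cited in the excerpt (``the boundary of a compact $1$-dimensional manifold consists of an even number of points''). Therefore $|\Delta^0|$ is even. I would organize the write-up as: (i) reduce to the local analysis near a point of $\Delta^0$; (ii) extract the one-element index set and the half-line model from the $\delta$-structure, citing the model example after Definition \ref{codimensionc}; (iii) glue the local models to conclude $\Delta\setminus\Delta^0$ compactifies to a compact one-manifold with boundary $\Delta^0$; (iv) conclude by parity. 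The main obstacle I anticipate is step (ii): verifying carefully that the abstract $\delta$-structure data, with its sign and transversality conditions, genuinely forces the half-line local model and does not permit, say, a half-line meeting $p$ from ``both sides'' of the thickening or a more degenerate zero locus. This is exactly the computation carried out in the model example and in Section $21.3$ of the book, so I would lean on that; but one should check that the compatibility clauses relating $N(\Delta',\{p\})\hookrightarrow N(\Delta,\{p\})$ do not introduce extra branches when $p$ lies in higher-dimensional faces $\Delta$ as well --- in the one-dimensional setting, however, there are no such higher faces, so this complication does not arise and the proof of Section $21.3$ applies with no modification.
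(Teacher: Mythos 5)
The crux of your argument — step (ii), the claim that near each zero-dimensional face the space is locally a half-line $[0,\varepsilon)$, so that $\Delta$ is a compact $1$-manifold with boundary $\Delta^0$ — is exactly where the proposal breaks down. Your inference that $N(\Delta',\{p\})$ must be a singleton because $\{p\}$ is zero-dimensional and $\Delta'$ is one-dimensional is not valid: the submersion condition constrains the thickening $E\breve{W}$, not $\breve{W}$ itself, and $E\breve{W}$ is locally homeomorphic to $(0,\infty]^{c+1}$ with $\breve{W}=j^{-1}(\delta^{-1}(0))$ cut out by $c$ further equations, so the dimension count is consistent for every $c\geq 0$. Moreover $c\geq 1$ genuinely occurs in dimension one: the zero-dimensional stratum of a one-dimensional $\breve{M}^+_z$ contains three-component broken trajectories with a boundary-obstructed middle component (Proposition \ref{codim1}), and along these the structure is the codimension-$1$ $\delta$-structure of Theorem \ref{gluing} and Lemma \ref{codimension1}: $E\breve{W}$ is locally $(0,\infty]^2$, and $\breve{W}$ is the zero set of a single function $\delta$ which is positive on one open boundary face, negative on the other, and transverse to zero only in the interior. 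Nothing in these conditions forces $\delta^{-1}(0)$ near the corner to be a single arc ending at $p$; there may be several ends of the one-dimensional stratum limiting to $p$, so $\Delta$ need not be a manifold with boundary there and your step (iii) fails. (The model example after Definition \ref{codimensionc} happens to give one arc, but the definition only axiomatizes the sign and transversality conditions, not the model.) Two smaller slips: an isolated zero-dimensional face is not excluded by the combinatorial condition on faces, which is vacuous in dimension one, and such a point would contribute one (not zero) to the count — one must read the definition as requiring the zero-stratum to lie in the closure of the one-stratum, as it does for the moduli spaces.

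The correct local statement — which is what the argument the paper cites (Section $21.3$ of the book) actually establishes — is weaker but sufficient: near each $p\in\Delta^0$ choose a small link in $E\breve{W}$, for instance the locus where $\sum_i 1/S_i=\varepsilon$ for generic small $\varepsilon$, transverse to the one-dimensional stratum. Condition (3) of Definition \ref{codimensionc} forces $\delta_i<0$ on the face $\{S_i=\infty\}$ of the link (in corank one this is exactly the sign statement of Lemma \ref{codimension1}), and a mod two degree (or intermediate value) argument, pinned down by these boundary signs as in the model example, shows the link meets $\Delta$ in an \emph{odd} number of points. Removing the open neighborhoods bounded by these links leaves a compact $1$-manifold with boundary; its even number of boundary points is the sum over $p\in\Delta^0$ of the odd local counts, hence $|\Delta^0|$ is even. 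So your global skeleton (excise neighborhoods, invoke parity of the boundary of a compact $1$-manifold) is the right one, but the local input must be ``an odd number of boundary points at each corner,'' not ``exactly one''; as written, the proposal asserts a manifold-with-boundary structure that the $\delta$-structure formalism was introduced precisely to avoid assuming.
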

We also have the following definition.
\begin{defn}\label{isomdelta}
A homeomorphism $\varphi:\Delta_1\rightarrow\Delta_2$ between two abstract $\delta$-chains is an \textit{isomorphism} if the following hold:
\begin{itemize}
\item it maps strata to strata, and each of these maps is a diffeomorphisms;
\item it induces a bijection between faces;
\item given faces $\Delta_1''\subset\Delta_1'$ and $\Delta_2''\subset\Delta_2'$ such that $\varphi(\Delta_1')=\Delta_2'$ and $\varphi(\Delta_1'')=\Delta_2''$, the homeomorphism
\begin{equation*}
\varphi\lvert_{\breve{W}(\Delta_1',\Delta_1'')}:\breve{W}(\Delta_1',\Delta_1'')\rightarrow\breve{W}(\Delta_2",\Delta_2'')
\end{equation*}
extends to a homeomorphism
\begin{equation*}
E\varphi\lvert_{E\breve{W}(\Delta_1',\Delta_1'')}:E\breve{W}(\Delta_1',\Delta_1'')\rightarrow E\breve{W}(\Delta_2',\Delta_2'')
\end{equation*}
commuting with the maps $\mathbf{S}$ and $\delta$ and is a diffeomorphism in a neighborhood of $j(\breve{W}^o)\subset E\breve{W}^o$.
\end{itemize}
As in the previous remark, we are only interested in the germ of the extensions of the homeomorphism $\varphi$ to the thickening.
\end{defn}

\vspace{0.8cm}
To define homology, we need to define maps from a given abstract $\delta$-chain inside a smooth manifold. This is done in order to fit the properties of evaluation maps, following Proposition \ref{evthickening} in Chapter $2$.

\begin{defn}\label{geomchain}
Let $X$ be a (possibly non compact) smooth manifold without boundary. A \textit{$\delta$-chain} in $X$ is a pair $\sigma=(\Delta,f)$ where
\begin{itemize}
\item $\Delta$ is an abstract $\delta$-chain, and $f:\Delta\rightarrow X$ is a continuous map;
\item the restriction of $f$ to each stratum of $\Delta$ is a smooth map;
\item for each pair of faces $\Delta'\subset \Delta''$, the map extends to a continuous map $Ef(\Delta',\Delta'')$ on the local thickening $E\breve{W}(\Delta',\Delta'')$ which is smooth in a neighborhood of $\breve{W}(\Delta,\Delta'')$;
\item the collection of extensions to the local thickenings is compatible, in the sense that for every triple $\Delta\supset\Delta'\supset\Delta''$ the map $Ef(\Delta',\Delta'')$ is the restriction of $Ef(\Delta,\Delta'')$ under the identification of Definition \ref{abstractgeom}.
\end{itemize}
As before, we consider the germ of the extensions of the map to the thickening.
We say that two $\delta$-chains $\sigma=(\Delta,f)$ and $\sigma'=(\Delta',f')$ are \textit{equivalent} if there exists an isomorphism $\varphi:\Delta\rightarrow \Delta'$ such that $f'\circ \varphi=f$. We denote the isomorphism class of $\sigma$ by $[\sigma]=[\Delta,f]$.
\end{defn}

We also introduce the notion of transversality in our context.

\begin{defn}
We say that two $\delta$-chains $\sigma_1=(\Delta_1,f_1)$ and $\sigma_2=(\Delta_2,f_2)$ are \textit{transverse} if the following hold:
\begin{itemize}
\item the restrictions to each pair of strata $f_1\lvert_{M_1}$ and $f_2\lvert_{M_2}$ are transverse smooth maps;
\item for each pair of faces $\Delta_1'\supset \Delta_1''$ and $\Delta_2'\supset \Delta_2''$, the extensions $Ef_1(\Delta_1',\Delta_1'')$ and $Ef_2(\Delta_2',\Delta_2'')$ are transverse in a neighborhood of $\breve{W}_1^o\times \breve{W}_2^o$.
\end{itemize}
We denote their \textit{fibered product} as $\sigma_1\times\sigma_2$.
\end{defn}

We have the following easy result.
\begin{lemma}\label{stratprod}
The fibered product $\sigma_1\times\sigma_2$ has a natural structure of a $\delta$-chain in $X$.
\end{lemma}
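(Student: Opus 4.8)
\textbf{Proof plan for Lemma \ref{stratprod}.}

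The plan is to build the abstract $\delta$-chain structure on $\Delta_1\times_X \Delta_2$ directly from the two given structures, checking each clause of Definition \ref{abstractgeom} in turn. First I would describe the underlying stratified space: the transversality of the restrictions $f_1|_{M_1}$ and $f_2|_{M_2}$ to all pairs of strata guarantees that, for each pair of strata $M_1^{e_1}\subset \Delta_1$, $M_2^{e_2}\subset \Delta_2$, the fibered product $M_1^{e_1}\times_X M_2^{e_2}$ is a smooth manifold of dimension $e_1+e_2-\dim X$; these are the strata of $\sigma_1\times\sigma_2$, and the total dimension is $d_1+d_2-\dim X$. The partition into faces is the obvious product partition: a face of $\sigma_1\times\sigma_2$ is (a component of) $\Delta_1'\times_X\Delta_2'$ for faces $\Delta_i'\subset\Delta_i$, with top stratum $\mathring{\Delta}_1'\times_X\mathring{\Delta}_2'$. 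I would note that faces of $\sigma_1\times\sigma_2$ are again fibered products of $\delta$-chains (each factor a face), so the construction is self-referential in the expected way.

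Next I would assign the combinatorial data. For a pair of faces $\Delta'\subset\Delta$ of the product, say $\Delta=\Delta_1\times_X\Delta_2$ and $\Delta'=\Delta_1'\times_X\Delta_2'$ with $\Delta_i'\subset\Delta_i$, I would set $N(\Delta,\Delta')=N(\Delta_1,\Delta_1')\amalg N(\Delta_2,\Delta_2')$ and $O(\Delta,\Delta')=O(\Delta_1,\Delta_1')\amalg O(\Delta_2,\Delta_2')$. The local thickening is then the fibered product of the thickenings: using the extensions $Ef_1(\Delta_1',\Delta_1'')$ and $Ef_2(\Delta_2',\Delta_2'')$, which are transverse near $\breve W_1^o\times\breve W_2^o$ by hypothesis, one forms $E\breve W(\Delta,\Delta')$ as (a neighborhood in) $E\breve W(\Delta_1,\Delta_1')\times_X E\breve W(\Delta_2,\Delta_2')$, with $\mathbf{S}=(\mathbf{S}_1,\mathbf{S}_2)$ valued in $(0,\infty]^{N(\Delta_1,\Delta_1')}\times(0,\infty]^{N(\Delta_2,\Delta_2')}$ and $\delta=(\delta_1,\delta_2)$ valued in $\R^{O(\Delta_1,\Delta_1')}\times\R^{O(\Delta_2,\Delta_2')}$. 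That $\mathbf{S}$ is a topological submersion along the fiber over $\boldsymbol\infty$ follows because a fibered product of two topological submersions along transverse maps is again a topological submersion (one multiplies the local product charts). Properties (2) and (3) — that $j(\breve W)$ is the zero locus of $\delta$, that $\delta$ vanishes over $\boldsymbol\infty$, and that over the locus where all $S$-coordinates are finite the restriction of $j$ is a smooth embedding and $\delta$ is transverse to zero — all pass to the fibered product using the transversality of the extensions, since zero loci and transversality are preserved under transverse fibered products. Compatibility of the thickenings for triples $\Delta''\subset\Delta'\subset\Delta$ is inherited coordinatewise from the compatibility in $\sigma_1$ and $\sigma_2$, and the codimension-one case (identifying $\R^{O(\Delta,\Delta')}$ with a hyperplane $\Pi^c$) is obtained by juxtaposing the two codimension-$c_i$ $\delta$-structures into a codimension-$(c_1+c_2+1)$ one — here one must observe that when $\Delta'\subset\Delta$ has codimension one in the product, exactly one of $\Delta_1'\subset\Delta_1$, $\Delta_2'\subset\Delta_2$ has codimension one and the other is equality (or both have codimension... no: codimension one total forces exactly one factor to drop by one), so only one $O$-block is nontrivial and the hyperplane structure is literally the one coming from that factor.

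The one clause requiring genuine care is the combinatorial condition on faces: whenever a codimension-$e$ face $\Delta''$ of $\sigma_1\times\sigma_2$ sits inside a codimension-$(e-2)$ face $\Delta$, there should be exactly two codimension-$(e-1)$ faces of $\Delta$ containing $\Delta''$. Writing $\Delta=\Delta_1\times_X\Delta_2$, $\Delta''=\Delta_1''\times_X\Delta_2''$ with $\Delta_i''\subset\Delta_i$ of codimension $c_i$ (so $c_1+c_2=e$, and the ambient codimensions in $\Delta_i$ are $c_1-?$...), I would case-split on how the total codimension drop of $2$ distributes: either it is $(2,0)$, in which case the two intermediate faces come from the two intermediate faces guaranteed by the condition in $\Delta_1$ (times $\Delta_2''$); or it is $(0,2)$, symmetrically; or it is $(1,1)$, in which case the two intermediate faces are $\Delta_1'\times_X\Delta_2''$ and $\Delta_1''\times_X\Delta_2'$ where $\Delta_i'$ is the unique codimension-$(c_i-1)$ face between $\Delta_i''$ and $\Delta_i$ — uniqueness here because a codimension-one step in a $\delta$-chain face poset is unique once both endpoints are fixed with that codimension, which is exactly the $e=2$ instance of the combinatorial axiom applied in each $\Delta_i$. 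This is the step I expect to be the main obstacle: making the $(1,1)$ case airtight requires knowing that the intermediate faces in each factor are unique, and one has to be a little careful that "codimension" is measured consistently (in the factor versus in the product) and that components of fibered products of faces behave well — but since transversality keeps all the relevant fibered products equidimensional and the face posets finite, this is a finite bookkeeping argument rather than a subtle one. Having verified all clauses, I would conclude that $\sigma_1\times\sigma_2=(\Delta_1\times_X\Delta_2, f)$, with $f$ the map induced on the fibered product, is a $\delta$-chain in $X$, noting finally that the required extensions $Ef$ of this $f$ to the product thickenings are simply the fibered products of the extensions $Ef_1, Ef_2$ and inherit compatibility and the needed smoothness near $\breve W^o$ automatically.
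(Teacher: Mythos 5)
Your proposal is correct and follows essentially the same route as the paper's proof: strata of the fibered product are fibered products of strata (smooth by transversality), local thickenings are fibered products of the local thickenings with the index sets $N$ and $O$ obtained by concatenation, and all the defining properties, in particular transversality of $(\delta_1,\delta_2)$ to zero near $\breve{W}_1^o\times_X\breve{W}_2^o$, are inherited because the extensions $Ef_1,Ef_2$ extend maps that are already transverse. Your extra bookkeeping on the codimension-two face condition is a point the paper leaves implicit, and your case analysis there is the right way to check it.
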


\begin{proof}
We show that the fibered product is an abstract $\delta$-chain. The strata of the fibered product are the fibered products of the strata on each factor, and they inherit a natural smooth structure because of the transversality hypothesis. Similarly, the local thickenings of the fibered product are the fibered products of the local thickenings, and the corresponding sets of components are obtained by concatenation. It is then easy to prove that the desired property hold. For example, the map
\begin{equation*}
(\delta_1,\delta_2): E\breve{W}_1\times_X E\breve{W}_2\rightarrow \R^{O_1}\times \R^{O_2}
\end{equation*} 
is transverse to zero in a neighborhood of  $\breve{W}_1^o\times_X\breve{W}_2^o$ because the maps $Ef_1$ and $Ef_2$ are extensions of maps which are already transverse.
\end{proof}

\vspace{0.8cm}

We are now ready to define a variant of the singular chain complex of a smooth (possibly non compact) manifold $X$. Suppose we are given a \textit{countable} collection of pairs
\begin{equation*}
\mathcal{F}=\left\{\sigma_{\alpha}=(\Delta_{\alpha},f_{\alpha})\right\}
\end{equation*}
of $\delta$-chains in $X$.
A \textit{geometric $\mathcal{F}$-transverse chain} of $X$ of dimension $d$ is a $d$-dimensional $\delta$-chain $\sigma$ which is transverse to all the chains in the family $\mathcal{F}$.
We define $\tilde{C}^{\mathcal{F}}_d(X)$ to be the $\ztwo$-vector space generated by all geometric transverse chains of dimension $d$ \textit{up to isomorphism} after we quotient out by the relations
\begin{equation*}
(\Delta, f)+(\Delta',f')\sim (\Delta \amalg \Delta', f\amalg f').
\end{equation*}
\begin{remark}
Notice that this definition has some set theoretic issues, as the collection of all abstract $\delta$-chains is not a set. Nevertheless we can restrict ourselves to consider a smaller collection which form a set. For example for this section we can consider all the manifolds with corners contained in a fixed Hilbert space (with the induced smooth structure). In the rest of the work, we can consider a slightly bigger family closed under fibered products with the compactified moduli spaces of trajectories.
\end{remark}
We denote its elements by $[\sigma]=[\Delta,f]$. We can then define the linear map
\begin{IEEEeqnarray*}{c}
\tilde{\partial}: \tilde{C}^{\mathcal{F}}_d(X)\rightarrow \tilde{C}^{\mathcal{F}}_{d-1}(X)\\
{[}\Delta,f]\mapsto \sum_{\Delta'} [\Delta', f\lvert_{\Delta'}],
\end{IEEEeqnarray*}
where the sum is taken over all codimension one faces $\Delta'\subset \Delta$. This map well defined because the restriction of $f$ to each face is still a smooth transverse map by definition. It is clear from the definition of geometric stratified space (and in particular the combinatorial condition on the set of faces) that $\partial^2$ is zero, hence the pair $(\tilde{C}^{\mathcal{F}}_*(X),\partial)$ is a chain complex. The key definition from \cite{Lip} is the following.

\begin{defn}\label{smallchain}
A $d$-dimensional chain $[\sigma]\in \tilde{C}^{\mathcal{F}}_d(X)$ is called $\mathcal{F}$-\textit{small} if it has a representative $\amalg (\Delta_i,f_i)$ such that the subset $\bigcup f_i(\Delta_i)$ is contained in the image $f(\Delta)$ of a $\delta$-chain of dimension $j<d$ which is transverse to $\mathcal{F}$. We say that a chain $[\sigma]$ is \textit{negligible} if both $[\sigma]$ and $\partial [\sigma]$ are small.

\end{defn}

For example, the chain represented by the only map to the point
\begin{equation*}
[0,1]\rightarrow \ast
\end{equation*}
is negligible. Define the subspace
\begin{equation*}
N_d^{\mathcal{F}}(X)\subset \tilde{C}^{\mathcal{F}}_d(X)
\end{equation*}
as the subspace generated by all negligible chains. As $\partial^2[\sigma]=0$ is clearly small, we have that the boundary of a negligible chain is again negligible. We define then $(C^{\mathcal{F}}_*(X),\partial)$ to be the quotient of $(\tilde{C}^{\mathcal{F}}_*(X),\partial)$ by the subcomplex generated by negligible chains, and denote its homology by $H_*^{\mathcal{F}}(X)$.
\begin{remark}
It is useful to notice that $C^{\mathcal{F}}_{k}(X)$ is trivial for $k\geq \mathrm{dim}(X)+2$.
\end{remark}

The following is the main result of the present section.

\begin{prop}\label{isomhom}
The homology $H_*^{\mathcal{F}}(X)$ is canonically isomorphic to the singular homology $H_*(X;\ztwo)$.
\end{prop}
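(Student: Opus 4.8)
The plan is to compare $H_*^{\mathcal{F}}(X)$ with singular homology by a zig-zag of quasi-isomorphisms, following the strategy of \cite{Lip}. First I would introduce an intermediate complex: let $\tilde{C}_*^{\mathrm{sm}}(X)$ be the complex built from smooth singular simplices $\Delta^d \to X$ (these are in particular $\delta$-chains with empty obstruction sets, by the first example after Definition \ref{abstractgeom}), and recall the classical fact that smooth singular homology computes $H_*(X;\ztwo)$. The key point is then to construct a chain map relating $\tilde C^{\mathcal{F}}_*(X)$ to $\tilde C_*^{\mathrm{sm}}(X)$. In one direction there is an obvious inclusion of smooth singular chains that happen to be $\mathcal{F}$-transverse; in the other direction one triangulates an abstract $\delta$-chain. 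The heart of the argument is a \emph{transversality-and-triangulation lemma}: given any geometric $\mathcal{F}$-transverse chain $[\Delta,f]$, after a small perturbation (rel boundary, keeping transversality to $\mathcal{F}$) its top stratum admits a smooth triangulation refining the face structure, and this triangulation can be pushed off the lower strata so that the resulting simplicial chain, minus $[\Delta,f]$, is negligible. This is where the combinatorial condition on faces in Definition \ref{abstractgeom} (two codimension-one faces between any nested pair of codimensions differing by two) is used: it guarantees that the simplicial boundary of the triangulation agrees with $\tilde\partial$ modulo small chains.

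Concretely, the steps I would carry out are: (1) Show the natural map $s: \tilde{C}_*^{\mathrm{sm},\mathcal{F}}(X) \to C^{\mathcal{F}}_*(X)$ from $\mathcal{F}$-transverse smooth singular chains is surjective on homology. For this, given a cycle $[\sigma] = [\Delta, f]$, triangulate $\mathring\Delta$ finely (using a metric on the ambient Hilbert space and Whitehead's theorem on triangulations of smooth manifolds), perturb $f$ slightly to $f'$ to restore transversality to $\mathcal{F}$ on each open simplex while keeping $f' = f$ near the boundary, and observe that $f$ and $f'$ are joined by a negligible homotopy chain (the track of the homotopy has dimension $d+1$ but its image lies in a $d$-dimensional set, hence is small; similarly for its boundary). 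The lower strata $N^{d-1}$ contribute only small chains by induction on dimension, so $[\sigma]$ equals, in $C^{\mathcal{F}}_*(X)$, the class of an honest $\mathcal{F}$-transverse smooth simplicial cycle. (2) Show $s$ is injective on homology by the same device applied to a $(d{+}1)$-dimensional nullhomology: triangulate and perturb a bounding $\delta$-chain, correcting by negligible chains. (3) Finally identify $H_*$ of the complex of $\mathcal{F}$-transverse smooth singular chains with $H_*(X;\ztwo)$: every smooth singular chain is homologous to an $\mathcal{F}$-transverse one by a generic perturbation (Thom transversality applied to the countably many maps $f_\alpha$ and the countably many strata of their $\Delta_\alpha$ — here one uses that $\mathcal{F}$ is countable so a residual set of perturbations works simultaneously), and two $\mathcal{F}$-transverse chains that are smoothly homologous are $\mathcal{F}$-transversely homologous, again by a relative perturbation. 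This is the standard argument that a subcomplex defined by a generic transversality condition carries the full homology.

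The main obstacle I anticipate is step (1), specifically making precise the claim that a general abstract $\delta$-chain — which near a codimension-one face looks not like a manifold-with-boundary but like the pathological Example \ref{stratexample} with its $\delta$-structure — can be triangulated compatibly with all the local thickenings in a way that makes the triangulation's simplicial boundary match $\tilde\partial$ up to negligible chains. The delicate input is Proposition \ref{stokes} (Stokes for $1$-dimensional $\delta$-chains) together with an inductive argument over the faces: one triangulates the faces bottom-up, and at each stage the $\delta$-structure along codimension-one faces provides a collar-like neighborhood $\breve W$ inside the thickening $E\breve W$ on which the triangulation can be extended, the obstruction term $\delta$ being transverse to zero on $E\breve W^o$. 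One must check that the faces of the triangulating simplices that land in $N^{d-1}$ assemble into a small chain — this follows because they map into the image of a lower-dimensional $\delta$-chain, namely (a triangulation of) $N^{d-1}$ itself. I would also need to verify that all of this can be done while preserving $\mathcal{F}$-transversality, which is where the countability of $\mathcal{F}$ and the fact that the strata are finite-dimensional are essential. Once these technical points are in place, the remaining bookkeeping — that the constructed maps are chain maps, that negligible chains are carried to negligible chains, and that the isomorphism is natural (independent of the auxiliary triangulations and perturbations up to the negligible-chain relation) — is routine and follows \cite{Lip} verbatim.
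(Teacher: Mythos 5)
Your step (3) --- comparing $\mathcal{F}$-transverse chains with arbitrary ones by a generic relative perturbation --- matches the paper's final step (done via Lemma \ref{transext} and Remark \ref{transvhom}), but the heart of your argument, the ``transversality-and-triangulation lemma'' underlying steps (1)--(2), is a genuine gap. Definition \ref{abstractgeom} gives an abstract $\delta$-chain no local finiteness and no cone-like local structure: along a codimension-one face you only have a topological submersion on a thickening and a merely continuous map $\delta$, transverse to zero only where $\mathbf{S}$ is finite, and Example \ref{stratexample} exhibits a compact one-dimensional $\delta$-chain whose top stratum has countably many components accumulating on the zero-dimensional stratum. Such a space admits no triangulation compatible with the face structure using finitely many simplices, and an infinite simplicial approximation is not an element of $\tilde{C}^{\mathcal{F}}_*(X)$, whose elements are finite sums; nothing in the definition allows you to ``push the triangulation off the lower strata''. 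The paper deliberately avoids any triangulation of $\delta$-chains: the only global inputs it uses are Proposition \ref{stokes} (for the dimension axiom) and the ability to cut a given chain along preimages of transverse sphere boundaries (for excision); it then verifies the Eilenberg--Steenrod axioms for admissible pairs of manifolds, invokes the uniqueness theorem following \cite{Sch}, and finally reduces general $\mathcal{F}$ to $\mathcal{F}=\emptyset$ exactly as in your step (3).

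There is also a more local error in your negligibility bookkeeping. Smallness (Definition \ref{smallchain}) requires the image to be contained in the image of an $\mathcal{F}$-transverse $\delta$-chain of dimension strictly smaller than that of the chain in question. The track of a perturbation homotopy is a $(d+1)$-chain whose image is in general genuinely $(d+1)$-dimensional (the perturbation is ambient, as in Lemma \ref{transext}); and even when the image happens to be $d$-dimensional, the boundary of the track contains $\sigma+\sigma'$, a $d$-chain with $d$-dimensional image, so the track may be small but is never negligible. Likewise ``the simplicial chain minus $[\Delta,f]$'' is a $d$-chain with $d$-dimensional image and is not small. The correct use of the homotopy chain --- and the paper's use --- is as an honest chain exhibiting a homology between $f$ and $f'$, not as a negligible chain to be discarded. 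With that repair your step (3) stands, but steps (1)--(2) still need a replacement for the triangulation, which is precisely why the axiomatic route is the one the paper takes.
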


Before proving this result we state the main transversality result.
\begin{lemma}\label{transext}
Suppose we are given a countable family of $\delta$-chains $\mathcal{F}$. Given any $\delta$-chain $(\Delta,f)$, a natural number $k$ and $\varepsilon>0$ there exists another $\delta$-chain $(\Delta,f')$ which transverse to all the $\delta$-chains in the family $\mathcal{F}$ and is $\varepsilon$-close to the original one in the $C^k$ topology on each stratum. The map $f'$ is homotopic to $f$, and there is a chain $(\Delta\times [0,1], F)$ such that the restrictions of $F$ to $\Delta\times\{0\}$ and $\Delta\times\{1\}$ are respectively $f$ and $f'$. Finally, if $f$ is $\mathcal{F}$-small (negligible), we can choose $f'$ to be also $\mathcal{F}$-small (negligible).
\end{lemma}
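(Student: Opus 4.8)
The statement is a parametric transversality result for $\delta$-chains, and the plan is to prove it by an inductive stratum-by-stratum argument combined with the standard parametric transversality theorem (Sard--Smale, or here really just Thom's transversality theorem in finite dimensions since the chains in $\mathcal{F}$ are countable). The key organizing principle is that an abstract $\delta$-chain $\Delta$ has finitely many faces, and these are partially ordered by inclusion; I would induct on this poset starting from the lowest-dimensional faces and working upward, at each stage perturbing the map only in a neighborhood of the current face while leaving it fixed (up to a small homotopy) on the faces already handled. Concretely, I would first perturb $f$ on the zero-dimensional faces (trivially, these are points and one just moves them off the images of the $\sigma_\alpha$, which are countable unions of submanifolds of positive codimension once one is in the top stratum of each target chain), then extend the perturbation to a collar neighborhood of each higher face using a partition of unity subordinate to the open cover of $\Delta$ by the neighborhoods $\breve{W}(\Delta,\Delta')$ from Definition \ref{abstractgeom}.

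\textbf{Key steps.} First I would fix, for each pair of faces $\Delta'\subset\Delta$ and each chain $\sigma_\alpha=(\Delta_\alpha,f_\alpha)$ in $\mathcal{F}$ together with each pair of its faces, the finite list of transversality conditions that must be achieved: transversality of strata-restrictions $f|_M$ against $f_\alpha|_{M_\alpha}$, and transversality of the extensions $Ef(\Delta',\Delta'')$ against $Ef_\alpha$ in a neighborhood of $\breve{W}^o\times\breve{W}_\alpha^o$. Since all of these involve only countably many smooth maps between finite-dimensional spaces, a residual (hence dense) set of perturbations achieves all of them simultaneously; this is where Thom's parametric transversality theorem enters. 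Second, I would carry out the perturbation inductively over the faces: having arranged transversality over the union of all faces of codimension $>e$, I would extend over the codimension-$e$ faces by choosing a perturbation supported in $\breve{W}(\Delta,\Delta')\setminus(\text{smaller closed neighborhood of lower faces})$, using the collar structure encoded in the topological submersion $\mathbf{S}$ to interpolate; compactness of $\Delta$ guarantees only finitely many steps. Third, the homotopy $(\Delta\times[0,1],F)$ is built by the standard trick: $\Delta\times[0,1]$ is itself naturally an abstract $\delta$-chain (product of a $\delta$-chain with an interval, using that the interval is a manifold with corners, and invoking Lemma \ref{stratprod} in spirit), and $F$ is the straight-line homotopy in local charts, which one checks extends compatibly to the thickenings because the perturbations were chosen supported away from the singular loci where the thickening structure is nontrivial. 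Fourth, for the $\mathcal{F}$-smallness (resp. negligibility) clause: if $f$ factors through a lower-dimensional $\delta$-chain $g:\Delta''\to X$ transverse to $\mathcal{F}$, I would perturb $g$ instead (using the same inductive argument applied to $\Delta''$) and then compose, so that $f'$ automatically factors through the perturbed $g'$; for negligibility one applies this to both $f$ and a chain witnessing smallness of $\partial f$, taking care that the perturbation of $f$ restricts correctly on the boundary faces.

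\textbf{Main obstacle.} The routine part is the finite-dimensional transversality genericity; the genuinely delicate point is verifying that a perturbation achieving transversality on each stratum and each thickening \emph{can be chosen compatibly} across the whole face poset, i.e. that the perturbation over a face restricts to the perturbation already fixed on its subfaces, and that the extensions to the thickenings $E\breve{W}(\Delta,\Delta')$ glue according to the compatibility identifications of Definition \ref{abstractgeom}. This requires choosing the supports of the successive perturbations carefully (nested shrinking neighborhoods) and using the germ-level nature of the thickening data so that one only ever needs to match things in arbitrarily small neighborhoods. I expect this bookkeeping — making the local perturbations and their thickening-extensions mutually consistent — to be the heart of the argument, and it is exactly the point where one leans on the precise axioms of an abstract $\delta$-chain rather than on soft transversality theory. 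The parallel statement and proof in Section $21$ of the book (for manifolds with corners and chains without the obstructed components) provides the template, and the extra $O(\Delta,\Delta')$ data does not obstruct the argument because the maps $\delta$ are required to be transverse to zero only on the unobstructed locus $E\breve{W}^o$, where the perturbations are smooth anyway.
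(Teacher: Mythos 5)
Your route — an inductive, stratum-by-stratum perturbation glued with partitions of unity over the face poset — is genuinely different from the paper's, and it has a real gap at precisely the point the paper flags as the only delicate one: preserving $\mathcal{F}$-smallness and negligibility. By Definition \ref{smallchain}, smallness only says that the image of $f$ is \emph{contained} in the image of a lower-dimensional transverse $\delta$-chain $(\Delta'',g)$; no factorization $f=g\circ h$ is given, so your plan of "perturbing $g$ instead and then composing" has nothing to compose with. Even if a factorization happened to exist, $g'\circ h$ would be small but there is no reason for it to be transverse to $\mathcal{F}$ (the lemma demands both), and conversely a perturbation of $f$ built from bump functions supported in neighborhoods of faces of $\Delta$ will in general push the image off the image of every lower-dimensional chain, destroying smallness. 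The same difficulty doubles for negligibility, where the boundary must be controlled as well. Separately, the face/thickening compatibility bookkeeping that you correctly identify as the heart of your scheme is substantial: the perturbed map must come with compatible germs of extensions to all the thickenings $E\breve{W}(\Delta',\Delta'')$, and a locally supported modification of $f$ does not automatically produce these.

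The paper avoids both issues at once by perturbing \emph{ambiently} rather than locally: choose a smooth $F:X\times P\to X$ with $F(-,p_0)=\mathrm{id}$ and $F(x,-)$ a submersion for every $x$, and consider the family of chains $(\Delta,F(f(-),p))$, $p\in P$. Parametric transversality (Sard) then gives a generic $p$ for which this chain, its restrictions to strata, and its thickening extensions (which are just the post-compositions $F(Ef(-),p)$, so compatibility is automatic) are transverse to the countable family $\mathcal{F}$; the required homotopy $(\Delta\times[0,1],F)$ comes from a path in $P$ from $p_0$ to $p$; and smallness/negligibility is preserved because post-composition preserves image containment, with the witnessing chain $F(g(-),p)$ made transverse by including it in the same generic choice. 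If you want to salvage your approach, you would need to replace your local, domain-side perturbations by ambient ones of this kind (or supply the missing argument for smallness); as written, the smallness clause does not go through.
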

\begin{proof}
The only tricky point in the result is to preserve smallness (negligibility) in the small perturbation. Choose a smooth map
\begin{equation*}
F: X\times P\rightarrow X
\end{equation*}
where $(P,p_0)$ is a pointed smooth connected manifold such that
\begin{itemize}
\item each $x$ in $X$ the differential of the map $F(x,-)$ defined on $P$ is a submersion;
\item the map $F(-,p_0)$ is the identity.
\end{itemize}
Given a $\delta$-chain $(\Delta, f)$ we can consider the family of chains with values in $X$ parametrized by $p$ in $P$ given by $(\Delta, F(f(-),p))$. If the original chain was small (negligible) then each of these is small (negligible), and the result follows by a standard application Sard's theorem.
\end{proof}

\begin{remark}\label{transvhom}
If we suppose that our original chain $(\Delta,f)$ was already transverse, the proof shows that we can also arrange the chain $(\Delta\times[0,1],F)$ to be transverse by choosing a generic path in the manifold $P$.
\end{remark}

\vspace{0.8cm}
\begin{proof}[Proof of Proposition \ref{isomhom}]
We first consider the case in which $\mathcal{F}=\emptyset$. Following Chapter $4$ \cite{Sch}, we just need to show that the homology groups satisfy the classical Eilenberg-Steenrod axioms for homology for a restricted family of pairs of manifolds. We call a pair of smooth manifolds $(X,A)$ \textit{admissible} if $X$ is without boundary and $A$ is either a closed submanifold or a codimension zero submanifold whose boundary is a closed submanifold. It is immediate to extend the definition of the homology to the relative case, obtaining the group $H_*^{\emptyset}(X,A)$ as the homology of the quotient complex
$C^{\emptyset}_*(X)/C^{\emptyset}_*(A)$. It is also clear that a smooth map of pairs
\begin{equation*}
\varphi:(X,A)\rightarrow (Y,B)
\end{equation*}
induces a map in homology
\begin{equation*}
\varphi_*: H_*^{\emptyset}(X,A)\rightarrow H_*^{\emptyset}(Y,B)
\end{equation*}
by composition.
If fact, if $[\sigma]$ is small, so its image is contained in $f(\Delta)$ for a smaller dimensional $\delta$-chain $[\Delta,f]$, then the image of $\varphi_*[\sigma]$ is contained in $\varphi f(\Delta)$, hence is also small.
The desired result then follows once we prove that the functor $H^{\emptyset}_*$ satisfies the following axioms:
\begin{itemize}
\item the existence of a natural, long exact homology sequence for the pair;
\item the homotopy invariance;
\item the invariance under excision;
\item the dimension axiom.
\end{itemize}
The proof of the first two axioms is straightforward. To prove the dimension axiom, we need to show that every chain of dimension bigger or equal than one with values in a space consisting of a single point is negligible. This is obvious if the dimension is at least two, and in the case of a one dimensional chain it follows from Proposition \ref{stokes}. In order to prove the excision invariance we need to show that for any collection $\mathcal{U}=\{U_j\}$ of subspaces of $X$ such that their interiors form an open cover of $X$ the inclusion
\begin{equation*}
C^{\emptyset, \mathcal{U}}_*(X)\hookrightarrow C^{\emptyset}_*(X)
\end{equation*}
induces an isomorphism in homology, where the left hand side is the subspace generated by $\delta$-chain whose image is contained in some of the $U_j$. Given any $\delta$-cycle $\sigma=(\Delta,f)$, by applying Lemma \ref{transext} we can find by compactness a finite collection of closed balls $\{B_i\}$ which cover the image of $f(\Delta)$ such that the submanifolds $\{\partial B_i\}$ and $\sigma$ are transverse, and each $\{B_i\}$ is contained in the interior part of some set of $\mathcal{U}$. Setting
\begin{equation*}
\bigcup B_i\setminus\bigcup\partial  B_i=\coprod D_j
\end{equation*}
we have that
\begin{equation*}
\sum_j [f^{-1}(\bar{D}_j), f\lvert_{f^{-1}(\bar{D}_j)}]
\end{equation*}
is a cycle in $C^{\emptyset, \mathcal{U}}_*(X)$ whose image in $C^{\emptyset}_*(X)$ is homologous to $\sigma$ via the (subdivision of) $\sigma\times I$, hence the induced map in homology is surjective.
Suppose now $\sigma$ is a $\delta$-cycle in $C^{\emptyset, \mathcal{U}}_*(X)$ which is the boundary of a chain $\tau$ in $C^{\emptyset}_*(X)$. Using Lemma \ref{transext} as we did before we can perturb it to a new chain $\tilde{\tau}$ in $C^{\emptyset, \mathcal{U}}_*(X)$. Furthermore $\partial \tilde{\tau}$ is a cycle homologous to $\sigma$ in $C^{\emptyset, \mathcal{U}}_*(X)$ for perturbation small enough, so also injectivity follows.
\par
Finally the case of a non empty countable family $\mathcal{F}$ follows from the fact that the natural inclusion
\begin{equation*}
C^{\mathcal{F}}_*(X)\hookrightarrow C^{\emptyset}_*(X)
\end{equation*}
induces an isomorphism in homology. This is showed as in the proof of the excision property above by applying Lemma \ref{transext}, with the additional observation in Remark \ref{transvhom} in order to show injectivity. 
\end{proof}

\begin{remark}
In general, given any two homology theories satisfying the Eilenberg-Steenrod axioms, any isomorphism between the homologies of the point will give rise to a natural equivalence between the homology theories. In our case we have a canonical isomorphism, because the only automorphism of $\ztwo$ is the identity.
\end{remark}

\vspace{0.8cm}
Cohomology is defined in an analogous way, by considering non necessarily compact abstract $\delta$-chains $\Delta$ and \textit{proper} maps $f$ with values in our smooth manifold $X$. In particular, we define $\tilde{C}^q_{\mathcal{F}}(X)$ as the space generated by $\mathrm{dim}X-q$ dimensional such objects. The boundary of a cochain is the boundary of the respective abstract $\delta$-chain with the restriction of the maps. We can similarly define the notion of $\mathcal{F}$-small cochain, and define the cochain complex $({C}^*_{\mathcal{F}}(X), \delta)$ and its homology $H^*_{\mathcal{F}}(X)$. The analogue of Proposition \ref{isomhom} says that this is the usual singular cohomology of the space, and with this approach Poincar\'e duality for compact manifolds is tautological.
\par
The most important feature of this construction is that one can define an \textit{intersection pairing} between homology and cohomology classes
\begin{equation}\label{univcoeff}
H_q^{\mathcal{F}}(X)\otimes H^q_{\mathcal{F}'}(X)\rightarrow \ztwo.
\end{equation}
Given a homology and a cohomology class, one can represent them by a chain and a cochain transverse to each other (using Proposition \ref{transext}), and define their pairing as their intersection number. This is well defined because of Proposition \ref{stokes}.
\begin{remark}
In general one can define the intersection between classes of different dimensions, by requiring extra transversality conditions for small and negligible faces (which are vacuous in the context above), see \cite{Lip}.
\end{remark}

The following is the analogue of the universal coefficient theorem.
\begin{prop}\label{intpairing}
The intersection pairing \ref{univcoeff} is perfect.
\end{prop}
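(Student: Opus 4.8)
The statement to prove is that the intersection pairing
\[
H_q^{\mathcal{F}}(X)\otimes H^q_{\mathcal{F}'}(X)\rightarrow \ztwo
\]
is perfect. The plan is to exploit the fact, established in Proposition \ref{isomhom} and its cohomological analogue, that $H_*^{\mathcal{F}}(X)$ and $H^*_{\mathcal{F}}(X)$ are canonically identified with ordinary singular homology and cohomology of $X$ with $\ztwo$ coefficients. Once this identification is in place, the claim reduces to the statement that the classical cap/cup product intersection pairing in singular (co)homology over a field is perfect — which, for the case of a field, is a reformulation of the universal coefficient theorem together with finiteness of the relevant groups. However, to keep the argument intrinsic to our framework and to avoid chasing through the chain-level identifications of Proposition \ref{isomhom} (which are only canonical up to the choices made there), I would instead give a direct argument at the level of our chain and cochain complexes.

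First I would set up the two obvious maps induced by the intersection pairing: a map $H_q^{\mathcal{F}}(X)\to \mathrm{Hom}_{\ztwo}(H^q_{\mathcal{F}'}(X),\ztwo)$ and a map $H^q_{\mathcal{F}'}(X)\to \mathrm{Hom}_{\ztwo}(H_q^{\mathcal{F}}(X),\ztwo)$; perfectness means both are isomorphisms, and since everything is over a field it suffices (by a dimension count, using that the groups are finite-dimensional since $X$ is a manifold and $C^{\mathcal{F}}_k(X)$ vanishes for $k\geq\dim X+2$) to prove that one of them is injective and the other injective as well, or equivalently that the pairing has no left or right kernel. The key transversality input is Lemma \ref{transext} together with Remark \ref{transvhom}: given a homology class and a cohomology class, I can represent them by a $\delta$-chain and a proper $\delta$-cochain that are transverse to each other, so that their geometric intersection number is well defined, and I can homotope representatives within their classes keeping transversality, which shows the pairing descends to homology/cohomology (this is where Proposition \ref{stokes} is used: the intersection of a $1$-dimensional transverse fibered product is an even number of points). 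I would then prove non-degeneracy by a Poincaré-duality-style argument: given a nonzero class $\alpha\in H_q^{\mathcal{F}}(X)$, use the identification with singular homology to produce a singular cohomology class pairing nontrivially with it, then use Proposition \ref{transext} (applied to the cohomology complex) to realize that class by a transverse $\delta$-cochain, and check that the geometric intersection number computes the algebraic cap-evaluation pairing. The compatibility of the geometric intersection number with the algebraic pairing under the canonical isomorphisms of Proposition \ref{isomhom} is the technical heart of this step.

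The main obstacle I anticipate is precisely verifying that the \emph{geometric} intersection number defined via fibered products of $\delta$-chains and $\delta$-cochains agrees with the \emph{algebraic} pairing between singular homology and cohomology under the canonical isomorphisms; this requires showing that the fibered-product construction of Lemma \ref{stratprod} is compatible with the Eilenberg–Steenrod-axiom-based identification, and in particular that the excision and subdivision arguments used in the proof of Proposition \ref{isomhom} are natural with respect to taking fibered products with a fixed transverse cochain. A clean way around this is to reduce to a local computation: by naturality and a Mayer–Vietoris argument over a good cover of $X$ (whose existence follows from Lemma \ref{transext} exactly as in the excision step of Proposition \ref{isomhom}), it is enough to check perfectness of the pairing when $X$ is diffeomorphic to a Euclidean space, where both sides are computed by a single point-class paired with a single proper-ray-cochain and the intersection number is manifestly $1$. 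I would therefore structure the proof as: (i) well-definedness of the geometric pairing on (co)homology via Lemma \ref{transext} and Proposition \ref{stokes}; (ii) naturality under smooth maps and compatibility with Mayer–Vietoris; (iii) the base case $X\cong\R^n$; (iv) the inductive patching over a good cover to conclude perfectness in general, using finite-dimensionality to pass from injectivity of one adjoint map to bijectivity of both.
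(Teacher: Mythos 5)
The paper itself gives only a sketch here: it says the statement is proved ``in a way completely analogous to the proof of Poincar\'e duality for the singular theory by defining a cap product (with the simplices of a fixed triangulation of the manifold)'', and then explicitly declines to spell out details because only the cases $X=\mathrm{pt}$ and $X=\R P^2$ are ever used, and those can be checked by hand. Your route -- a good-cover Mayer--Vietoris induction with base case $\R^n$, after establishing well-definedness of the geometric pairing via Lemma \ref{transext} and Proposition \ref{stokes} -- is the other classical proof of duality, and for the compact (or finite-type) manifolds the paper actually needs it is a perfectly reasonable alternative. Note, though, that your plan wavers between two strategies (transporting everything to singular theory through Proposition \ref{isomhom} versus arguing intrinsically with Mayer--Vietoris), and in either version the same ``technical heart'' survives: you must check that the geometric intersection number is compatible either with the canonical isomorphisms of Proposition \ref{isomhom} or with the connecting maps of the Mayer--Vietoris ladders for $H_*^{\mathcal{F}}$ and $H^*_{\mathcal{F}'}$. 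The latter requires restricting proper $\delta$-cochains to open sets, i.e.\ the same transverse-cutting arguments as the excision step; the local reduction does not make this verification disappear, it only relocates it.

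There are also two concrete flaws in the plan as written. First, your justification of finite-dimensionality is wrong: the vanishing of $C^{\mathcal{F}}_k(X)$ for $k\geq \dim X+2$ bounds the range of gradings, not the rank of the homology in each degree, and for a noncompact manifold of infinite type $H_q(X;\ztwo)$ is genuinely infinite-dimensional. In that situation the two-sided perfectness you aim for is simply false (only the universal-coefficient direction $H^q_{\mathcal{F}'}(X)\to\mathrm{Hom}(H_q^{\mathcal{F}}(X),\ztwo)$ can be an isomorphism), so your dimension-count step ``both adjoints injective, hence both isomorphisms'' collapses, and the statement must either be interpreted in the UCT sense or restricted to the compact/finite-type case -- which is how the paper uses it. Second, and relatedly, step (iv) silently assumes a finite good cover; for an infinite cover you would need a colimit argument, and passing to colimits is exactly where the non-UCT direction breaks. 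Minor point: in the base case $\R^n$ the generator of $H^0_{\mathcal{F}'}$ is the full $n$-dimensional proper chain $(\R^n,\mathrm{id})$, not a proper ray (which would live in degree $n-1$); the intersection count of $1$ with a point class is of course still correct. If you restrict the statement to the cases the paper needs, or to finite-type manifolds, and carry out the Mayer--Vietoris compatibility honestly, your approach goes through.
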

The proof of this fact can be achieved in a way completely analogous to the proof of Poincar\'e duality for the singular theory by defining a cap product (with the simplices of a fixed triangulation of the manifold). As we will need only the case in which $X$ is a point or the real projective plane (which can be easily worked out by hand), we will not spell out the details of the proof.

\vspace{1.5cm}

\section{Floer homology}

In this section we define monopole Floer homology using the transverse $\delta$-chains discussed in the previous section. The whole construction of the invariants carries over as in Chapter $22$ of the book with only minor changes, leading to \textit{a priori} proofs of the basic properties. In particular, we will not rely on the isomorphism with the Kronheimer and Mrowka's invariants (except for the duality results), which will be discussed together with the functoriality properties of the invariants later in the chapter. This will be useful in the next chapter, when we will construct the $\Pin$-theory. We start with a definition.
\begin{defn}
A Morse-Bott perturbation $\q_0$ is \textit{admissible} if it is regular in the sense of Definition \ref{regularpert} in Chapter $2$ and in the case $c_1(\spin)$ is not torsion there are no reducible critical points. 
\end{defn}
Notice that we have implicitly fixed a Riemannian metric $g$ on $Y$. Given an admissible perturbation $\q$, we will define three versions of the Floer homology groups
\begin{equation*}
\HMt_*(Y,\spin),\qquad \HMf_*(Y,\spin),\qquad \HMb_*(Y,\spin).
\end{equation*}
As the notation suggests, these groups will not depend on the choice of the Riemannian metric $g$ and the perturbation $\q$, but we will postpone this result to the next section. The construction closely follows Fukaya's approach (\cite{Fuk}). Let $\mathsf{C}\subset \Bs_k(Y,\spin)$ denote the set of critical submanifolds of the blow-up gradient of the Chern-Simons-Dirac functional. This can be written as a disjoint union
\begin{equation*}
\mathsf{C}^o\cup\mathsf{C}^s\cup \mathsf{C}^u,
\end{equation*}
consisting respectively of irreducible, boundary-stable and boundary-unstable critical submanifolds. We have the countable family $\mathcal{F}$ of $\delta$-chains
\begin{equation*}
\left(M^+_z([\Cr],[\Cr']),\mathrm{ev}_-\right)
\end{equation*}
for each pair of critical submanifolds $[\Cr],[\Cr']$ and relative homotopy class $z$. We define the vector spaces over $\ztwo$ given by the direct sum of the chain complexes of the critical submanifolds (defined in the previous section)
\begin{IEEEeqnarray*}{c}
C^o=\bigoplus_{[\Cr]\in \mathsf{C}^o} C^{\mathcal{F}}_*([\Cr])\\
C^s=\bigoplus_{[\Cr]\in \mathsf{C}^s} C^{\mathcal{F}}_*([\Cr]) \\
C^u=\bigoplus_{[\Cr]\in \mathsf{C}^u} C^{\mathcal{F}}_*([\Cr])
\end{IEEEeqnarray*}
where the transversality condition is with respect to the family $\mathcal{F}$ defined above, and we set
\begin{IEEEeqnarray*}{c}
\check{C}=C^o\oplus C^s\\
\hat{C}=C^o\oplus C^u \\
\bar{C}=C^s\oplus C^u.
\end{IEEEeqnarray*}
\vspace{0.5cm}
As in the Morse case, the vector spaces $\check{C},\hat{C}$ and $\bar{C}$ have a grading with values in a set $\mathbb{J}$ with an action by $\Z$, which is defined as follows (see Section $22.3$ in the book for details). Given an interval $I=[t_1,t_2]$, configurations $\acr_1,\acr_2\in\Cs_k(Y,\spin)$ and perturbations $\q_1,\q_2\in\mathcal{P}$, we can consider the space $\mathcal{C}$ of pairs $(\gamma,\mathfrak{p})$ where:
\begin{itemize}
\item $\gamma\in \Ct_k(I\times Y)$ is a configurations such that the restriction to $\{t_i\}\times Y$ is gauge-equivalent to $\acr_i$ for $i=1,2$;
\item $\mathfrak{p}$ is a continuous path in the Banach space $\mathcal{P}$ with $\mathfrak{p}(t_i)=\q_1$ for $i=1,2$.
\end{itemize}
Given such a pair, we can consider the operator
\begin{IEEEeqnarray*}{c}
P_{\gamma,\mathfrak{p}}=(Q_{\gamma,\mathfrak{p}},-\Pi^+_1,\Pi^-_2)\\
\T^{\tau}_{1,\gamma}(I\times Y)\rightarrow \left(\V^{\tau}_{0,\gamma}(I\times Y)\oplus L^2(I\times Y;i\R)\right)\oplus H^+_1\oplus H^-_2
\end{IEEEeqnarray*}
where $Q_{\gamma,\mathfrak{p}}$ is the linearization of the Seiberg-Witten equations together with gauge fixing of Section $4$ in Chapter $2$ (considered on a finite cylinder) and $\Pi^+_1,\Pi^-_2$ are spectral projections (See Section $20.3$ in the book and Section $6$ of Chapter $2$). Such an operator is Fredholm, see Chapter $20$ in the book. For a fixed metric $g$ and spin$^c$ structure $\spin$ on $Y$ we define the grading set as
\begin{equation*}
\mathbb{J}(Y,\spin)=\left(\Bs_k(Y,\spin)\times\mathcal{P}\times \Z)\right)/\sim
\end{equation*}
where we identify $([\acr], \q_1, m)$ and $([\bcr],\q_2, n)$ if there exists $(\gamma,\mathfrak{p})$ connecting $([\acr],\q_1)$ to $([\bcr],\q_2)$ such that the operator $P_{\gamma,\mathfrak{p}}$ has index $n-m$. The map
\begin{equation*}
([\acr],\q,m)\mapsto ([\acr],\q,m+1)
\end{equation*}
descends to the quotient defining a $\Z$ action on $\mathbb{J}(\spin)$. For a fixed Morse-Bott perturbation $\q$, it is clear that if $[\acr]$ and $[\bcr]$ are in the same critical submanifold $[\Cr]$ then we can identify
\begin{equation*}
([\acr], \q, n)\sim ([\bcr],\q, n).
\end{equation*}
For a $\delta$-chain $[\sigma]$ in $C^{\mathcal{F}}_d([\Cr])$ we can then define its grading as
\begin{IEEEeqnarray*}{c}
\mathrm{Gr}[\sigma]=([\acr],\q,d)/\sim \\
\in \mathbb{J}(\spin),
\end{IEEEeqnarray*}
for any choice of a point $[\acr]$ in $[\Cr]$.
By the additivity of the index and grading for any path $z$ joining $[\Cr]$ to another critical submanifold $[\Cr']$ that for every $[\sigma']$ in  $C^{\mathcal{F}}_{d'}([\Cr])$
\begin{equation*}
\mathrm{Gr}[\sigma]=\mathrm{Gr}[\sigma']+\mathrm{gr}_z([\Cr],[\Cr'])+(d-d')\in\mathbb{J}(\spin).
\end{equation*}
For a $\delta$-chain $\sigma$ in a reducible critical submanifold we can also introduce a modified grading by defining
\begin{equation*}
\overline{\mathrm{Gr}}[\sigma]=
\begin{cases}
\mathrm{Gr}[\sigma], & [\sigma]\in\mathsf{C}^s\\
\mathrm{Gr}[\sigma]-1, &[\sigma]\in\mathsf{C}^u.
\end{cases}
\end{equation*}
We can decompose each of $C^o, C^s$ and $C^u$ using the grading $\mathrm{Gr}$ as a direct sum over the components $C^o_j, C^s_j$ and $C^u_j$ generated by critical points of grading $j\in\mathbb{J}(\spin)$, and then define
\begin{align*}
\check{C}_j&=C^o_j\oplus C^s_j \\
\hat{C}_j&= C^o_j\oplus C^u_j\\
\bar{C}_j&=C^s_j\oplus C^u_{j+1}
\end{align*}
(notice that the last subspace is homogeneous for the modified grading).
\begin{remark}Even though there is no distinguished element in $\mathbb{J}(\spin)$, there is a canonical map
\begin{equation*}
\mathbb{J}(\spin)\rightarrow \Z/2\Z
\end{equation*}
which we can use to define a canonical mod two grading, see Section $22.4$ in the book. Also, we can identify the grading set $\mathbb{J}(\spin)$ with a more geometric object, namely the set of homotopy classes of plane distributions on $Y$, see Chapter 28 in the book. In the case $c_1(\spin)$ is torsion, one can also define absolute $\mathbb{Q}$-gradings (see Section 28.3 in the book). We will discuss this case in the next chapter, where this additional structure will turn out to be decisive.
\end{remark}

\vspace{0.5cm}
We now define the differential. Given an $\mathcal{F}$-transverse $\delta$-chain $[\sigma]=[\Delta,f]$ in a critical submanifold $[\Cr]$, for every moduli space $\breve{M}^+_z([\Cr],[\Cr'])$ the fibered product
\begin{equation*}
\sigma\times\left(\breve{M}^+_z([\Cr],[\Cr']),\ev_-\right)
\end{equation*}
is an abstract $\delta$-chain which naturally defines $\delta$-chain in $[\Cr']$ via the evaluation map
\begin{equation*}
\ev_+:\sigma\times\breve{M}^+_z([\Cr],[\Cr'])\rightarrow [\Cr'].
\end{equation*}
Such a map is transverse to all the evaluation maps $\ev_-$ with codomain $[\Cr']$ because $\sigma$ is by definition transverse to all the fibered products of the moduli spaces, so the $\delta$-chain is again $\mathcal{F}$-transverse. For simplicity, we will denote this $\delta$-chain simply by $\sigma\times\breve{M}^+_z([\Cr],[\Cr'])$.
We can then define the operators
\begin{align*}
\partial^o_o&: C^o_{*}\rightarrow C^o_{*}\\
\partial^o_s&: C^o_{*}\rightarrow C^s_{*}\\
\partial^u_s&: C^u_{*}\rightarrow C^s_{*}\\
\partial^u_o&: C^u_{*}\rightarrow C^o_{*}
\end{align*}
defined on a generator $[\sigma]$ of $C^{\mathcal{F}}_*([\Cr])$ as
\begin{align*}
\partial^o_o[\sigma]&=\partial[\sigma] +\sum_{[\Cr']\in \mathsf{C}^o}[\sigma\times\breve{M}^+([\Cr],[\Cr'])]\\
\partial^o_s[\sigma]&= \sum_{[\Cr']\in \mathsf{C}^s}[\sigma\times\breve{M}^+([\Cr],[\Cr'])]\\
\partial^u_s[\sigma]&=\sum_{[\Cr']\in \mathsf{C}^s}[\sigma\times\breve{M}^+([\Cr],[\Cr'])]\\
\partial^u_o[\sigma]&=\sum_{[\Cr']\in \mathsf{C}^o}[\sigma\times\breve{M}^+([\Cr],[\Cr'])]
\end{align*}
where in the first two cases $[\Cr]$ consists of irreducibles while in the last two it consists of boundary unstable critical points. Notice that in the first case we also have the summand corresponding to the differential in the $\delta$-chain complex $\partial$ of the critical manifold introduced in Section $1$.

\begin{lemma}\label{wellmap}
The maps above are well defined.
\end{lemma}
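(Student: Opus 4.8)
The statement to be proved is that the four operators $\partial^o_o, \partial^o_s, \partial^u_s, \partial^u_o$ really do land in the claimed complexes, i.e. that for every generator $[\sigma] = [\Delta, f]$ of $C^{\mathcal{F}}_*([\Cr])$ the expressions on the right-hand sides are finite sums of $\mathcal{F}$-transverse $\delta$-chains in the appropriate critical submanifolds. The plan is to verify the following four things in order: (i) each individual term $\sigma \times \breve{M}^+_z([\Cr],[\Cr'])$ is well-defined as a $\delta$-chain in $[\Cr']$; (ii) this $\delta$-chain is $\mathcal{F}$-transverse, so it defines a class in $C^{\mathcal{F}}_*([\Cr'])$; (iii) for fixed $[\Cr]$ and $[\sigma]$, only finitely many pairs $([\Cr'],z)$ contribute, so the sums are finite; (iv) the resulting chains have the correct grading, so the maps respect the decomposition into graded pieces (in particular $\partial^o_s$ and $\partial^u_s$ land in $C^s$, etc.).

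For (i): since the perturbation $\q_0$ is admissible, hence regular in the sense of Definition~\ref{regularpert}, the evaluation maps $\ev_-$ on the spaces $\breve{M}^+_z([\Cr],[\Cr'])$ are the $\delta$-chains making up the family $\mathcal{F}$, and the hypothesis that $\sigma$ is $\mathcal{F}$-transverse means exactly that $f$ (together with all its thickening extensions $Ef$) is transverse to $\ev_-$ in the sense of the transversality definition for $\delta$-chains. By Lemma~\ref{stratprod} the fibered product $\sigma \times (\breve{M}^+_z([\Cr],[\Cr']),\ev_-)$ then carries a canonical structure of abstract $\delta$-chain, and composing with the other evaluation map $\ev_+$ (which is continuous, smooth on strata, and extends compatibly to the thickenings by Proposition~\ref{evthickening}) gives a $\delta$-chain in $[\Cr']$ in the sense of Definition~\ref{geomchain}. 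Here one has to check that $\ev_+$ on the fibered product satisfies the compatibility requirement on extensions to thickenings; this is immediate from the way the thickenings of a fibered product are constructed as fibered products of thickenings in the proof of Lemma~\ref{stratprod}, together with the extension property of $\ev_+$ from Proposition~\ref{evthickening}.

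For (ii): one must show $\ev_+ : \sigma \times \breve{M}^+_z([\Cr],[\Cr']) \to [\Cr']$ is transverse to every $\ev_-$ emanating from $[\Cr']$, on strata and on the thickenings near $\breve{W}^o$. This is precisely the second half of Proposition~\ref{evthickening}: given a third critical submanifold and the broken-trajectory data making up the relevant element of $\mathcal{F}$, the evaluation maps $\ev_+$ on $E\breve{W}_1$ and $\ev_-$ on $E\breve{W}_2$ are transverse in a neighborhood of $\breve{W}^o_1 \times \breve{W}^o_2$ by the regularity of the perturbation, and $\sigma$ being transverse to the fibered products of moduli spaces is exactly what is needed to iterate this. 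I would spell out the inductive bookkeeping (fibered products with a chain in $M_{\mathbf z}(\mathcal C)$ are again governed by a regularity statement) but it is formal once Proposition~\ref{evthickening} and Definition~\ref{regularpert} are in hand. For (iii), finiteness of the sums, I would invoke Proposition~\ref{finiteness}: for fixed $[\Cr]$ there are only finitely many $([\Cr'],z)$ with $\breve{M}^+_z([\Cr],[\Cr'])$ nonempty and of bounded dimension, and the dimension of the relevant face is bounded because $\sigma$ has fixed dimension $d$ and the fibered product has dimension $d + \gr_z([\Cr],[\Cr']) + \dim[\Cr'] - \dim[\Cr]$ ($-1$ in the unparametrized case), together with the fact that $C^{\mathcal{F}}_k$ vanishes for $k$ negative or larger than $\dim[\Cr']+1$; when $c_1(\spin)$ is not torsion the second bullet of Proposition~\ref{finiteness} gives an even stronger finiteness. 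Finally (iv) is the grading computation: by the additivity of $\Gr$ under fibered products with moduli spaces, $\Gr[\sigma \times \breve{M}^+_z([\Cr],[\Cr'])] = \Gr[\sigma] + \gr_z([\Cr],[\Cr']) - 1$, and one checks case by case (boundary-stable vs.\ boundary-unstable, using the shift in $\gr$ from the boundary-obstructed cases recorded in Proposition~\ref{transversality}) that this has the parity and placement demanded by the definitions of $\check C, \hat C, \bar C$.

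\textbf{Main obstacle.} The genuinely delicate point is (ii), the stability of $\mathcal{F}$-transversality under taking fibered products: one is asserting that a chain transverse to the whole countable family $\mathcal{F}$ produces, after fibering with a moduli space, a chain still transverse to all of $\mathcal{F}$, and this has to hold not just on the open strata but compatibly across all the thickenings, where the local models involve the $\delta$-maps of the boundary-obstructed gluing. Everything is engineered so that this follows from Proposition~\ref{evthickening} and the inductive structure of Definition~\ref{regularpert}, but making the induction on the number of breaks precise — and checking that "transverse to all fibered products of moduli spaces" is exactly the hypothesis that feeds the induction — is where the real content lies; the grading and finiteness steps are comparatively mechanical.
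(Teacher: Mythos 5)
Your steps (i)--(iii) follow essentially the same route as the paper: the fibered product is a $\delta$-chain by Lemma~\ref{stratprod} together with Proposition~\ref{evthickening}, its $\mathcal{F}$-transversality comes from the regularity hypothesis of Definition~\ref{regularpert}, and finiteness of the sums is obtained by letting the relative grading grow along the reducible towers (the paper quotes Lemma~\ref{dimreducible} directly where you quote Proposition~\ref{finiteness}, but the mechanism is the same: terms of large enough dimension relative to $\dim[\Cr']$ are negligible, and the remaining ones are finitely many). Your dimension formula for the fibered product is off --- since $\dim M_z=\gr_z([\Cr],[\Cr'])+\dim[\Cr]$, fibering a $d$-chain over $[\Cr]$ gives dimension $d+\gr_z$ (minus $1$ unparametrized), not $d+\gr_z+\dim[\Cr']-\dim[\Cr]$ --- but this does not affect the structure of the argument, as the dimension still grows with $\gr_z$. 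Item (iv) on gradings is not needed for this lemma and is not part of the paper's proof.

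There is, however, a genuine gap: you never check that the maps descend to the quotient. The groups $C^{\mathcal{F}}_*([\Cr])$ are by definition quotients of $\tilde{C}^{\mathcal{F}}_*([\Cr])$ by the subcomplex of negligible chains, so ``well defined'' requires verifying that if $[\sigma]$ is small (resp.\ negligible), then so is $[\sigma\times\breve{M}^+_z([\Cr],[\Cr'])]$. This is exactly the second point of the paper's proof, and it is where the transversality requirement built into Definition~\ref{smallchain} is used: the lower-dimensional witness chain $\tau$ containing the image of $\sigma$ is itself $\mathcal{F}$-transverse, hence its fibered product with $\breve{M}^+_z([\Cr],[\Cr'])$ is again a transverse $\delta$-chain, of strictly smaller dimension, whose image under $\ev_+$ contains that of $\sigma\times\breve{M}^+_z([\Cr],[\Cr'])$; one argues similarly for the boundary to pass from small to negligible. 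Without this step the operators are only defined on $\tilde{C}^{\mathcal{F}}_*$, and note that your own finiteness argument already leans on the quotient structure (discarding terms because they are ``negligible hence zero''), so the omission is not cosmetic.
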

\begin{proof}
As remarked above, the fact that the fibered product is transverse to all the evaluation maps follows from the regularity of the moduli spaces. Also, if $[\sigma]$ is negligible then also the fibered product $[\sigma\times\breve{M}^+_z([\Cr],[\Cr'])]$ is negligible, because of the transversality condition in Definition \ref{smallchain}. Finally, we want to show that all but finitely many simplices $[\sigma\times\breve{M}^+_z([\Cr],[\Cr'])]$ are negligible, so that the sums are finite. Because there are only finitely many critical submanifolds in the blow down, we can focus in the case of reducible critical submanifolds. Suppose $[\sigma\times\breve{M}^+_z([\Cr],[\Cr'])]$ with $[\Cr']$ reducible (and blowing down to $[C]$) is non empty. Then if $[\Cr'']$ is a critical submanifold also blowing down to $[C]$ and corresponding to an eigenvalue negative enough, then Lemma \ref{dimreducible} in Chapter $2$ tells us that $[\sigma\times\breve{M}^+_z([\Cr],[\Cr''])]$ has dimension bigger than $\mathrm{dim}[\Cr'']+2$, hence it is negligible.
\end{proof}

\par
Similarly, in the case when both critical submanifolds are reducible, we can define the operators
\begin{align*}
\bar{\partial}^s_s&: C^s_*\rightarrow C^s_*\\
\bar{\partial}^s_u&: C^s_*\rightarrow C^u_*\\
\bar{\partial}^u_s&: C^u_*\rightarrow C^s_*\\
\bar{\partial}^u_u&: C^u_*\rightarrow C^u_*
\end{align*}
defined on an element $[\sigma]$ of $C^{\mathcal{F}}_*([\Cr])$ as
\begin{align*}
\bar{\partial}^s_s[\sigma]&=\partial[\sigma] +\sum_{[\Cr']\in \mathsf{C}^s}[\sigma\times\breve{M}^{\mathrm{red},+}([\Cr],[\Cr'])]\\
\bar{\partial}^s_u[\sigma]&= \sum_{[\Cr']\in \mathsf{C}^u}[\sigma\times\breve{M}^{\mathrm{red},+}([\Cr],[\Cr'])]\\
\bar{\partial}^u_s[\sigma]&=\sum_{[\Cr']\in \mathsf{C}^s}[\sigma\times\breve{M}^{\mathrm{red},+}([\Cr],[\Cr'])]\\
\bar{\partial}^u_u[\sigma]&=\partial[\sigma] +\sum_{[\Cr']\in \mathsf{C}^u}[\sigma\times\breve{M}^{\mathrm{red},+}([\Cr],[\Cr'])]
\end{align*}
where in the first two cases $[\Cr]$ consists of boundary stable critical points while in the last two it consists of boundary unstable critical points. Notice that the two maps
\begin{equation*}
\partial^u_s,\bar{\partial}^u_s: C^u_*\rightarrow C^s_*
\end{equation*}
involve different moduli spaces. Finally, we define the operators
\begin{IEEEeqnarray*}{c}
\check{\partial}:\check{C}_*\rightarrow \check{C}_*\\
\hat{\partial}:\hat{C}_*\rightarrow \hat{C}_*\\
\bar{\partial}:\bar{C}_*\rightarrow \bar{C}_*
\end{IEEEeqnarray*}
respectively as
\begin{align*}
\check{\partial}=&
\left[\begin{matrix}
\partial^o_o & \partial^u_o\bar{\partial}^s_u \\
\partial^o_s & \bar{\partial}^s_s+\partial^u_s\bar{\partial}^s_u
\end{matrix}\right] \\
\hat{\partial}=&
\left[\begin{matrix}
\partial^o_o & \partial^u_o \\
\bar{\partial}^s_u\partial^o_s & \bar{\partial}^u_u+\bar{\partial}^s_u\partial^u_s
\end{matrix}\right]\\
\bar{\partial}=&
\left[\begin{matrix}
\partial^s_s & \partial^u_s \\
\partial^s_u & \partial^u_u.
\end{matrix}\right]
\end{align*}
These operators have degree $-1$, and are chain maps as stated by the next result.

\begin{prop}\label{chaincomp}
The squares $\bar{\partial}^2, \check{\partial}^2$ and $\hat{\partial}^2$ are zero as operators on $\bar{C}, \check{C}$ and $\hat{C}$.
\end{prop}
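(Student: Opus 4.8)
The plan is to follow the strategy of Section 22.4 in the book, combining the algebraic bookkeeping of the four ``downstairs'' and four ``reducible'' operators with the analytic input coming from our structure theory of the compactified moduli spaces. Concretely, $\bar\partial^2$, $\check\partial^2$ and $\hat\partial^2$ expand into sums of composites of the various $\partial^?_?$ and $\bar\partial^?_?$, and by the definition of the fibered-product differential each such composite is a signed count (over $\ztwo$, so really just a count modulo two) of $\delta$-chains of the form $\sigma\times\breve M^+_{z_1}([\Cr],[\Cr'])\times\breve M^+_{z_2}([\Cr'],[\Cr''])$ together with the internal faces $\sigma\times\tilde\partial(\text{something})$ and $\partial(\sigma)\times(\text{something})$. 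The point is that all of these terms arise as the codimension-one faces of a single $1$-parameter family of $\delta$-chains, namely the fibered products $\sigma\times\breve M^+_z([\Cr],[\Cr''])$ of $\sigma$ with the $d$-dimensional moduli spaces, so their total contribution vanishes in $C^{\mathcal F}_*$ by the Stokes-type Proposition~\ref{stokes} applied face by face.

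The key steps, in order, are as follows. First I would recall the identification of the codimension-one stratum of $\breve M^+_z([\Cr],[\Cr''])$ from Proposition~\ref{codim1}: it consists of (a) genuine two-component broken trajectories $\breve M_{(z_1,z_2)}([\Cr],[\Cr'],[\Cr''])$ through an \emph{unobstructed} intermediate submanifold, (b) three-component broken trajectories with a \emph{boundary-obstructed} middle piece $\breve M_{(z_1,z_2,z_3)}([\Cr],[\Cr'],[\Cr''],[\Cr''])$, and (c) the reducible part, when the moduli space contains both reducibles and irreducibles, which only happens when $[\Cr]$ is boundary-unstable and $[\Cr'']$ is boundary-stable. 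Second, I would take the fibered product with $\sigma$ of a moduli space of dimension one greater than needed, and write down the boundary of the resulting abstract $\delta$-chain using Lemma~\ref{stratprod} (the fibered product of a $\delta$-chain and a moduli space is again a $\delta$-chain) together with the description of its faces: its codimension-one faces are exactly $\partial\sigma\times\breve M^+$, $\sigma\times(\text{the strata (a), (b), (c) above})$. Third, I would observe that the internal differential terms $\partial\sigma$ in the definition of $\partial^o_o$ and $\bar\partial^s_s$, $\bar\partial^u_u$ exactly account for the $\partial\sigma$-faces, so that Proposition~\ref{stokes} forces the sum of the remaining contributions to be negligible, hence zero in the quotient complex $C^{\mathcal F}_*$. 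Fourth, I would carefully match the combinatorics: the unobstructed faces (a) are precisely cancelled in pairs by the two-fold composites such as $\partial^o_o\circ\partial^o_o$, $\partial^o_s\circ\partial^o_o$, $\partial^u_s\circ\bar\partial^s_u$, etc., while the obstructed faces (b) — which contribute with the \emph{wrong} count to be cancelled individually — are exactly what the composite operators $\partial^u_o\bar\partial^s_u$ and $\bar\partial^s_u\partial^u_s$ in the off-diagonal entries of $\check\partial$ and $\hat\partial$ are designed to absorb, via the sign/parity statement of Lemma~\ref{codimension1}; and the reducible faces (c) are matched against the $\bar\partial^u_s$ versus $\partial^u_s$ discrepancy. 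This is where the precise shape of the $2\times2$ matrices defining $\check\partial$, $\hat\partial$, $\bar\partial$ is forced on us.

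The main obstacle is the bookkeeping around the boundary-obstructed strata (b): unlike the unobstructed case, a face of type (b) does not come with a companion face that cancels it within a single Stokes identity, because the corresponding thickened moduli space $E\breve W$ is strictly larger than $\breve W$ and the $\delta$-map records an extra obstruction parameter (Lemma~\ref{codimension1}, Definition~\ref{codimensionc}). The resolution — exactly as in the book — is that a boundary-obstructed broken trajectory $[\Cr]\to[\Cr']\to[\Cr'']\to[\Cr''']$ with $([\Cr'],[\Cr''])$ obstructed appears as a \emph{codimension-two} phenomenon in the thickening, and the sign constraint ($\delta>0$ on one side, $\delta<0$ on the other) means that when one takes the actual zero locus $\breve W = \delta^{-1}(0)$ these faces appear with even multiplicity, or equivalently are cancelled by the composite terms threading through a boundary-stable intermediate critical submanifold. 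Making this precise requires invoking Lemma~\ref{codimension1} and the fact that along such a stratum the space is a $C^0$ manifold with a codimension-$1$ $\delta$-structure, so that the local contribution to $\partial^2$ is an even count of points by Proposition~\ref{stokes}. Once this is checked for the model case $n=3$, $O=\{2\}$, the general case follows by the same argument applied stratum by stratum, and the three identities $\bar\partial^2=\check\partial^2=\hat\partial^2=0$ drop out of assembling the block-matrix computations. Since we are working over $\ztwo$ throughout, all orientation subtleties disappear and only these parity counts matter, which is precisely why the construction goes through in our Morse--Bott setting exactly as in the non-degenerate case of the book.
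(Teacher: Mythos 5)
Your overall strategy is the paper's: expand the block-matrix composites, rewrite each term as (the internal differential of, or a fibered product with) a compactified moduli space, and cancel everything mod $2$ against the codimension-one faces of the chains $[\sigma\times\breve{M}^+_z([\Cr_-],[\Cr_+])]$ as described by Proposition \ref{codim1}, using Lemma \ref{stratprod} to know these fibered products are again $\delta$-chains. Your matching of terms in step four (unobstructed two-component faces against the double composites, boundary-obstructed three-component faces against $\partial^u_o\bar{\partial}^s_u\partial^o_s$ and $\bar{\partial}^s_u\partial^u_s$-type terms, the reducible stratum against the $\partial^u_s$ versus $\bar{\partial}^u_s$ discrepancy) is exactly what the paper's computation of the component $A=\partial^o_o\partial^o_o+\partial^u_o\bar{\partial}^s_u\partial^o_s$ does.

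Two points in your justification are off, and you should fix them. First, Proposition \ref{stokes} is not the cancellation mechanism: it concerns $1$-dimensional abstract $\delta$-chains and is used for the dimension axiom, not here. What actually makes the sum vanish is that, over $\ztwo$, every face of $[\sigma\times\breve{M}^+_z([\Cr_-],[\Cr_+])]$ appears exactly twice in the expansion --- once inside the internal term $\partial[\sigma\times\breve{M}^+_z]$ and once as an explicit composite --- while $(\partial^{\mathcal{F}})^2[\sigma]=0$ is already guaranteed by the combinatorial face axiom in Definition \ref{abstractgeom}. Second, your ``main obstacle'' paragraph is misdirected: no separate even-multiplicity or sign argument via Lemma \ref{codimension1} is needed inside this proof, and the claim that the boundary-obstructed faces ``appear with even multiplicity'' in $\delta^{-1}(0)$ is false --- by Proposition \ref{codim1} a broken trajectory with a single boundary-obstructed middle component is an honest codimension-\emph{one} stratum of $\breve{M}^+_z([\Cr_-],[\Cr_+])$, appearing once, and it is cancelled precisely against the triple composite threading through the boundary-stable and boundary-unstable intermediate submanifolds. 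The $\delta$-structure, Lemma \ref{codimension1} and Theorem \ref{gluing} are indeed where the hard work lies, but their content is entirely absorbed into establishing Proposition \ref{codim1} and the abstract $\delta$-chain structure of the compactifications; once those are granted, the proof of $\check{\partial}^2=\hat{\partial}^2=\bar{\partial}^2=0$ is the purely combinatorial mod-$2$ face count described above.
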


\begin{proof}
The proof follows from the characterization of the codimension one strata in Proposition \ref{codim1} in the same way as Proposition $22.1.4$ in the book. We spell out the details for the case of the component of $\check{\partial}^2$ given by
\begin{equation*}
A=\partial^o_o\partial^o_o+\partial^u_o\bar{\partial}^s_u\partial^o_s: C^o\rightarrow C^o.
\end{equation*}
Given an irreducible critical manifold $[\Cr_-]$ and a transverse $\delta$-chain $\sigma$ in $C_*^{\mathcal{F}}([\Cr_-])$, we have that
\begin{align*}
A[\sigma]&=(\partial^{\mathcal{F}})^2[\sigma]\\
&+ \sum_{[\Cr_+]\subset\mathsf{C}^o} 
 \partial^{\mathcal{F}}[\sigma\times \breve{M}^+([\Cr_-],[\Cr_+])]\\
 &+\sum_{[\Cr_+]\subset\mathsf{C}^o} [\partial^{\mathcal{F}}[\sigma]\times \breve{M}^+([\Cr_-],[\Cr_+])]\\
&+\sum_{[\Cr_+]\subset\mathsf{C}^o}\sum_{[\Cr_1]\in\mathsf{C}^o} [\sigma\times \breve{M}^+([\Cr_-],[\Cr_1],[\Cr_+])]\\
&+\sum_{[\Cr_+]\subset\mathsf{C}^o}\sum_{[\Cr_1]\in\mathsf{C}^s}\sum_{[\Cr_2]\in\mathsf{C}^u} [\sigma\times \breve{M}^+([\Cr_-],[\Cr_1],[\Cr_2],[\Cr_+])],
 \end{align*}
 where we used that the strata of the compactified moduli spaces are defined as the fibered products.
It is clear that the term in the first row is zero. On the other hand, the codimension one faces of the  $\delta$-chain $[\sigma\times \breve{M}^+_z([\Cr_-],[\Cr_+])]$ are by definition the fibered product of a codimension one face in one factor and the whole space in the other. Hence by Proposition \ref{codim1} in Chapter $2$ its boundary in $C^{\mathcal{F}}_*([\Cr_+])$ is given by the sum
 \begin{align*}
 &[\partial^{\mathcal{F}}[\sigma]\times \breve{M}^+_z([\Cr_-],[\Cr_+])]\\
+&\sum_{[\Cr_1]\in\mathsf{C}^o} [\sigma\times M_z^+([\Cr_-],[\Cr_1],[\Cr_+])]\\
+&\sum_{[\Cr_1]\in\mathsf{C}^s}\sum_{[\Cr_2]\in\mathsf{C}^u} [\sigma\times M_z^+([\Cr_-],[\Cr_1],[\Cr_2],[\Cr_+])]
 \end{align*}
where the last term involves boundary obstructed moduli spaces. This completes the proof.
\end{proof}

\begin{defn}We define the \textit{monopole Floer homology groups} of $Y$ as the homologies of the three graded chain complexes $(\check{C}_*,\check{\partial}), (\hat{C}_*,\hat{\partial})$ and $(\bar{C}_*,\bar{\partial})$:
\begin{IEEEeqnarray*}{c}
\HMt_*(Y,\spin)=H(\check{C}_*,\check{\partial})\\
\HMf_*(Y,\spin)=H(\hat{C}_*,\hat{\partial})\\
\HMb_*(Y,\spin)=H(\bar{C}_*,\bar{\partial}).
\end{IEEEeqnarray*}
\end{defn}
The choice of metric and perturbation $(g,\mathfrak{p})$ is implicit in our notation, and we will make it explicit when needed.

\vspace{0.8cm}
The rest of the section is dedicated to the study of the basic properties of these objects, which will follow as those in Chapter $22$ in the book in exactly the same way. We expose the main constructions (without going too deep into details) as they will be useful for our purpose in the next chapter. Before doing this, we show that in the case of a non-degenerate perturbation our construction gives the same result.

\begin{lemma}\label{sameconstruction}
Suppose the admissible perturbation $\q_0$ is chosen so that all critical points are non-degenerate. Then the three flavors of monopole Floer homology coincide as graded groups with those defined in the book.
\end{lemma}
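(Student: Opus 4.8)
The plan is to compare the Morse-Bott chain complexes constructed above, specialized to a non-degenerate admissible perturbation, with the complexes defined in Chapter $22$ of the book. The key observation is that when $\q_0$ is non-degenerate, every critical submanifold $[\Cr]$ is a single point (a $0$-dimensional manifold), so the modified singular chain complex $C^{\mathcal{F}}_*([\Cr])$ of Section $1$ reduces — after passing to homology — to $H_*(\mathrm{pt};\ztwo)=\ztwo$ concentrated in degree zero, by Proposition \ref{isomhom}. Thus I would first argue that at the level of homology the generators of $\check C$, $\hat C$, $\bar C$ are exactly the critical points (with the appropriate stable/unstable bookkeeping), matching the generators in the book. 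The gradings agree on the nose because the grading set $\mathbb{J}(Y,\spin)$ and the relative gradings $\gr_z$ were defined in Section $4$ of Chapter $2$ and Section $2$ here in a way that literally specializes to the definitions of Chapter $20$--$22$ of the book when all critical submanifolds are points, as was already noted when the grading was introduced.

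The substantive point is that the differentials agree. For a non-degenerate perturbation the moduli spaces $\breve M_z([\Cr],[\Cr'])$ with $\gr_z([\Cr],[\Cr'])+\dim[\Cr]=1$ are compact $0$-manifolds, i.e.\ finite sets of points, and the evaluation maps $\ev_\pm$ are maps to points; the fibered product $\sigma\times\breve M^+_z([\Cr],[\Cr'])$ of a generator $[\mathrm{pt}]$ of $C^{\mathcal F}_0([\Cr])$ with such a moduli space is then a disjoint union of points, whose count mod $2$ is exactly the matrix coefficient appearing in the book's definition of $\partial^o_o$, $\partial^o_s$, $\partial^u_s$, etc. Higher-dimensional moduli spaces contribute chains of dimension $\geq 1$ mapping to a point, hence negligible chains in the sense of Definition \ref{smallchain}, so they vanish in $C^{\mathcal F}_*$ of the target. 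Consequently the boundary operators $\check\partial,\hat\partial,\bar\partial$ assembled from the block matrices above reduce, on the homology of the critical-point complexes, to precisely the boundary operators of the book. The assembly into the block-triangular forms is identical in both settings, so the three homologies coincide as graded groups.

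Concretely I would organize the proof as: (i) invoke Proposition \ref{isomhom} to identify $H_*(C^{\mathcal F}_*([\Cr]))$ with $\ztwo$ in degree $0$ for each critical point, and note that the chain complex $C^{\mathcal F}_*([\mathrm{pt}])$ is chain-homotopy equivalent to $\ztwo$ in degree $0$; (ii) pick, for each critical point, a chain-homotopy equivalence to the one-point complex, and observe that the geometric differentials $\partial^o_o,\dots$ are, up to chain homotopy, given by the signed (here, mod $2$) counts of points in the $1$-dimensional moduli spaces, because all other fibered products are negligible; (iii) check that the resulting maps on homology are exactly the structure maps of the book's complexes, using the compatibility of gradings; (iv) conclude that $\check\partial$, $\hat\partial$, $\bar\partial$ descend to the book's differentials and hence the homologies agree. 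The main obstacle I expect is step (ii): making precise that one may replace each $C^{\mathcal F}_*([\Cr])$ by its homology while keeping track of how the geometric differential interacts with the chosen chain-homotopy equivalences — in other words, running the spectral-sequence / homological-perturbation argument that collapses the Morse-Bott complex of a union of points to the Morse complex. This is routine but needs to be done carefully so that the fibered-product differential is genuinely identified with a point count and not merely with something chain-homotopic to it in a way that could a priori differ; the negligibility of all positive-dimensional contributions (Definition \ref{smallchain} together with the dimension count from Lemma \ref{dimreducible} and Proposition \ref{codim1}) is what makes this work.
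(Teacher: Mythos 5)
Your argument is correct and contains the same two key ingredients as the paper's proof (critical submanifolds are points, and all positive-dimensional fibered products are negligible), but you route it through homology and a homological-perturbation step that the paper avoids entirely. The paper's observation is that the identification already holds \emph{at the chain level}: when the target is a single point, every $\delta$-chain of dimension $\geq 2$ is small together with its boundary, and every $1$-dimensional chain is negligible as well because by Proposition \ref{stokes} its boundary consists of an even number of points, hence vanishes mod $2$; so the quotient complex $C^{\mathcal F}_*([\Cr])$ is \emph{equal} to $\ztwo$ in degree $0$, not merely chain-homotopy equivalent to it. Consequently $\check C$, $\hat C$, $\bar C$ are literally the book's complexes, the only surviving fibered products are with zero-dimensional moduli spaces, and the differentials are the book's point counts on the nose. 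The step you flag as "the main obstacle'' in (ii) — choosing chain-homotopy equivalences and checking that the geometric differential is genuinely a point count rather than something only chain-homotopic to it — therefore dissolves: there is nothing to collapse, so no spectral-sequence or homological-perturbation bookkeeping is needed. Your version would still work if carried out carefully (and is the kind of argument one would need if the complexes were only homotopy equivalent to the Morse complexes), but for this statement it buys nothing over the direct chain-level identification, and it makes the conclusion "coincide as graded groups'' look weaker than what is actually true, namely coincidence of the chain complexes themselves.
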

\begin{proof}
In the case the perturbation is non degenerate the chain complex is exactly the one defined in the book. Indeed by Proposition \ref{stokes} if the manifold is a point only zero dimensional chains can be not negligible. Hence the chain complex of each critical submanifold consists of a single $\ztwo$ in degree zero, and the moduli spaces of dimension one or higher do not contribute as any fibered product is necessarily negligible.
\end{proof}

\vspace{0.8cm}

\textit{Exact sequences.} The monopole Floer homology groups should be thought as half-infinite-dimensional homology groups of the moduli space of configurations
\begin{equation*}
\left(\Bs_k(Y,\spin),\partial\Bs_k(Y,\spin)\right),
\end{equation*}
and many of the basic properties of the usual homology groups can be performed. We first state the exact sequence for the pair in homology.

\begin{prop}\label{longexact}
For any $(Y,\spin)$, there is an exact sequence
\begin{equation*}
\dots\stackrel{i_*}{\longrightarrow} \HMt_k(Y,\spin)\stackrel{j_*}{\longrightarrow} \HMf_k(Y,\spin)\stackrel{p_*}{\longrightarrow} \HMb_{k-1}(Y,\spin)\stackrel{i_*}{\longrightarrow} \HMt_{k-1}(Y,\spin)\stackrel{j_*}{\longrightarrow}\dots
\end{equation*}
\end{prop}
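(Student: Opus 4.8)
The plan is to exhibit the long exact sequence as the one induced by a short exact sequence of chain complexes, exactly as in Section~22.2 of the book. Recall that we have set $\check{C}=C^o\oplus C^s$, $\hat{C}=C^o\oplus C^u$ and $\bar{C}=C^s\oplus C^u$, with differentials given by the stated block matrices. First I would observe that $C^s$ (with the differential $\bar{\partial}^s_s$) is a subcomplex of $\check{C}$: indeed, reading off the block matrix $\check{\partial}$, the image of a chain in $C^s$ under $\check{\partial}$ is $(\partial^u_o\bar{\partial}^s_u(\text{--}),\ \bar{\partial}^s_s(\text{--})+\partial^u_s\bar{\partial}^s_u(\text{--}))$, but a chain in $C^s$ — as opposed to $C^u$ — is not in the domain of $\bar{\partial}^s_u$, so this collapses to $(0,\bar{\partial}^s_s(\text{--}))$. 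Hence the inclusion $i:C^s\hookrightarrow\check{C}$ is a chain map, and the quotient complex is $C^o$ with differential $\partial^o_o$. Symmetrically, $C^o$ (with $\partial^o_o$) is a subcomplex of $\hat{C}$, and the quotient is $C^u$ equipped with the differential $\bar{\partial}^u_u$. So I get two short exact sequences
\begin{equation*}
0\to C^s\xrightarrow{i} \check{C}\xrightarrow{j} C^o\to 0,\qquad 0\to C^o\xrightarrow{j} \hat{C}\xrightarrow{p} C^u\to 0.
\end{equation*}

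Next I would assemble these. The quotient of $\check{C}$ is $(C^o,\partial^o_o)$, which is precisely the subcomplex of $\hat{C}$; and the sub of $\hat{C}$ is that same $(C^o,\partial^o_o)$. Meanwhile $\bar{C}=C^s\oplus C^u$ fits in its own short exact sequence $0\to C^s\to\bar{C}\to C^u\to 0$ coming from the block form of $\bar{\partial}$, in which $(C^s,\bar{\partial}^s_s)$ is the subcomplex (since $\partial^u_s$ lands in $C^s$ but has domain $C^u$, the $C^s$ summand is preserved) and $(C^u,\bar{\partial}^u_u)$ is the quotient. The key algebraic point, following the book, is that the connecting homomorphism for the pair $(\check{C},C^s)$ — a map $H_*(C^o,\partial^o_o)\to H_{*-1}(C^s,\bar{\partial}^s_s)$ — together with the inclusion-induced and projection-induced maps, can be recombined so that $\HMf$ sits in the middle: concretely one checks that the composite maps fit into a single exact triangle relating $H(\check{C})=\HMt$, $H(\hat{C})=\HMf$ and $H(\bar{C})=\HMb$. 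The cleanest way to see this is to note that all three complexes are built from the same three pieces $C^o,C^s,C^u$, and the three differentials are the three natural "mapping cone"-type differentials; the exact sequence is then the standard one for the triple, with $i_*$ induced by $C^s\hookrightarrow\check{C}$ at the level of the $C^s$-summand, $j_*$ by the projections $\check{C}\to C^o\hookrightarrow\hat{C}$, and $p_*$ by $\hat{C}\to C^u\hookrightarrow\bar{C}[-1]$ (with the appropriate degree shift $C^u_{j+1}$ built into $\bar{C}_j$, which is why the grading drops by one in the $p_*$ term).

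Concretely, I would verify the following chain-level identities, which are purely formal given the block matrices: (i) $j\circ i=0$ where $i:C^s\to\check{C}$, $j:\check{C}\to C^o\hookrightarrow\hat{C}$; (ii) $p\circ j=0$ where $p:\hat{C}\to C^u\to\bar{C}$; (iii) $i\circ p=0$ (with degree shift); and (iv) exactness at each spot, which follows because the total complex $\check{C}\oplus\hat{C}\oplus\bar{C}$ with these maps is acyclic — equivalently, the mapping cone of $j_*$ computes $\HMb$ up to shift. This last identification is exactly the content of Lemma~22.2.1 and Proposition~22.2.2 in the book, and since our chain complexes have the identical block structure (the only difference from the book being that the individual groups $C^o,C^s,C^u$ are now built from $\delta$-chains on critical submanifolds rather than being $\ztwo$-vector spaces spanned by critical points), the proof carries over verbatim.

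The main obstacle, such as it is, is bookkeeping with the grading set $\mathbb{J}(Y,\spin)$ and the modified grading $\overline{\mathrm{Gr}}$: one must check that $i$, $j$, $p$ are homogeneous of the stated degrees with respect to the $\Z$-action on $\mathbb{J}(Y,\spin)$, in particular that $p_*$ lowers degree by one, which is a consequence of the definition $\bar{C}_j=C^s_j\oplus C^u_{j+1}$ together with the fact that $\overline{\mathrm{Gr}}$ shifts the $C^u$-grading down by one. Beyond that, everything is an application of the standard zig-zag lemma for short exact sequences of chain complexes, and no new analytic input beyond Proposition~\ref{chaincomp} (which guarantees $\check\partial^2=\hat\partial^2=\bar\partial^2=0$) is required. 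I would therefore present the proof as: exhibit the two short exact sequences above, invoke the zig-zag lemma, and then splice the resulting long exact sequences together using the identification of quotient and sub complexes, citing Section~22.2 of the book for the purely formal recombination.
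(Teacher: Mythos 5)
There is a genuine gap here, and it comes from a misreading of the index convention: in this paper (and in the book) the \emph{superscript} records the domain, so $\bar{\partial}^s_u$ maps $C^s_*$ to $C^u_*$ (it counts reducible trajectories from boundary-stable to boundary-unstable submanifolds — the boundary-obstructed ones), while $\partial^u_o,\partial^u_s$ have domain $C^u$. Hence for $\sigma\in C^s$ one has $\check{\partial}(0,\sigma)=\bigl(\partial^u_o\bar{\partial}^s_u\sigma,\ \bar{\partial}^s_s\sigma+\partial^u_s\bar{\partial}^s_u\sigma\bigr)$, whose $C^o$-component is in general nonzero; so $C^s$ is \emph{not} a subcomplex of $\check{C}$. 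Likewise $C^o$ is not a subcomplex of $\hat{C}$ (the differential of an irreducible chain has the component $\bar{\partial}^s_u\partial^o_s$ in $C^u$), and $C^s$ is not a subcomplex of $\bar{C}$ (because of $\bar{\partial}^s_u$). Consequently the short exact sequences $0\to C^s\to\check{C}\to C^o\to 0$ and $0\to C^o\to\hat{C}\to C^u\to 0$ do not exist, the zig-zag lemma cannot be invoked, and even the quotient differentials you name are wrong (for instance the $C^u$-corner of $\hat{\partial}$ is $\bar{\partial}^u_u+\bar{\partial}^s_u\partial^u_s$, not $\bar{\partial}^u_u$). The maps you propose — plain inclusions and projections of summands — are not chain maps, and they are not the maps $i,j,p$ of the statement.

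The maps in the exact sequence are the ones displayed immediately after the proposition, with the off-diagonal entries $\partial^u_o$, $\partial^u_s$, $\bar{\partial}^s_u$ included precisely so that $i,j,p$ intertwine $\bar{\partial},\check{\partial},\hat{\partial}$; these extra terms reflect the same boundary-obstructed breakings that already appear in the proof of Proposition \ref{chaincomp}. Exactness is then not a formal consequence of a short exact sequence of complexes: one checks that $i,j,p$ are chain maps, that the consecutive composites are chain-homotopic to zero, and one applies the algebraic argument of Section $22.2$ of the book (which identifies, up to homotopy equivalence, each of the three complexes with the cone of the preceding map and thereby produces the exact triangle); that argument transfers to the present $\delta$-chain setting because the component identities it uses follow from the same codimension-one strata analysis as Proposition \ref{chaincomp}. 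Your closing appeal to Section $22.2$ points in the right direction — this is exactly what the paper does — but the concrete argument you propose to write out (two short exact sequences plus the zig-zag lemma) is not that argument and does not work; if you want a mapping-cone formulation, it must be built from the matrices for $i,j,p$ above, not from the naive inclusions.
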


The maps in the sequence are induced by the chain maps
\begin{equation*}
i:\bar{C}_*\rightarrow \check{C}_*,\qquad j:\check{C}_*\rightarrow \hat{C}_*, \qquad p:\hat{C}_*\rightarrow \bar{C}_*
\end{equation*}
given in components by
\begin{equation*}
i=\left[\begin{matrix}
0 & \partial^u_o \\
1 & \partial^u_s
\end{matrix}\right],\qquad
j=\left[\begin{matrix}
1 & 0 \\
0 & \bar{\partial}^s_u
\end{matrix}\right],\qquad
p=\left[\begin{matrix}
\partial^o_s & \partial^u_s \\
0 & 1
\end{matrix}\right],
\end{equation*}
see Section $22.2$ in the book for details.
\vspace{0.8cm}

\textit{Filtrations.}
The chain complex we have introduced for a Morse-Bott perturbation is much bigger than the usual one for a non degenerate perturbation, as it is for example not necessarily finitely generated in each dimension. When the perturbation satisfies a stronger transversality hypothesis, we can somehow handle the problem better by means of a spectral sequence (see for example \cite{Fuk} or \cite{AB}).

\begin{defn}
Suppose the spin$^c$ structure is torsion. We say that an admissible Morse-Bott perturbation $\q$ is \textit{weakly self-indexing} if for each pair of critical submanifolds $[\Cr_-],[\Cr_+]$ and relative homotopy class $z$ connecting them such that $M_z([\Cr_-],[\Cr_+])$ is not empty we have that 
\begin{equation*}
\gr_z([\Cr_-],[\Cr_+])\geq
\begin{cases}
1 \text{ if the pair is not boundary obstructed,}\\
0 \text{ otherwise.}
\end{cases}
\end{equation*}
\end{defn}

\begin{prop}
Suppose a spin$^c$ structure with $c_1(\spin)$ torsion and a weakly self-indexing Morse-Bott perturbation $\q$ are fixed. Then there is a spectral sequence $E^*_{ji}$ where $i$ is a positive integer and $j$ is an element of $\mathbb{J}(\spin)$ such that
\begin{equation*}
E^1_{ji}=\bigoplus_{\mathrm{Gr}[\Cr]=j, [\Cr]\subset \mathsf{C}^o\cup \mathsf{C}^s} H_i([\Cr];\ztwo);
\end{equation*}
and converges to $\HMt_{i+j}(Y,\spin)$. There are similar spectral sequences for the other flavors.
\end{prop}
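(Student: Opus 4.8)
The plan is to realize the spectral sequence as the one associated to a natural filtration of the chain complexes $\check C_*,\hat C_*,\bar C_*$ by the grading $\mathrm{Gr}$ (equivalently, by the Chern--Simons--Dirac value, which under the weakly self-indexing hypothesis is monotone along nontrivial trajectories). First I would fix the grading set $\mathbb J(\spin)$ and recall that for a torsion spin$^c$ structure it carries an absolute $\mathbb Q$-grading refining the $\Z$-action (Section 28.3 in the book), so that $\mathbb J(\spin)$ becomes an ordered set; the weakly self-indexing condition says precisely that whenever $M_z([\Cr_-],[\Cr_+])$ is non-empty and the pair is not boundary obstructed, $\mathrm{Gr}[\Cr_-]>\mathrm{Gr}[\Cr_+]$ as elements of $\mathbb J(\spin)$ (and $\geq$ in the boundary-obstructed case, which is handled by the modified grading $\overline{\mathrm{Gr}}$ exactly as in the definition of $\bar\partial$). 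I would then define, for each $j\in\mathbb J(\spin)$, the subcomplex $F_j\check C_*\subset\check C_*$ spanned by the $\delta$-chains $[\sigma]$ in critical submanifolds $[\Cr]$ with $\mathrm{Gr}[\Cr]\leq j$ (using $\overline{\mathrm{Gr}}$ on the boundary-unstable summand so that $i,j,p$ and the mixed differentials respect the filtration), and similarly for $\hat C_*$ and $\bar C_*$. The point is that the only piece of $\check\partial$ that does not strictly lower $\mathrm{Gr}[\Cr]$ is the internal simplicial differential $\partial^{\mathcal F}$ of each critical submanifold, which is grading-preserving on the $[\Cr]$-index; every other term comes from a nonempty moduli space and hence lowers $\mathrm{Gr}[\Cr]$ by the weakly self-indexing hypothesis. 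Thus the associated graded complex $\mathrm{gr}_j\check C_*$ is exactly $\bigoplus_{\mathrm{Gr}[\Cr]=j,\ [\Cr]\in\mathsf C^o\cup\mathsf C^s}(C^{\mathcal F}_*([\Cr]),\partial^{\mathcal F})$.

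Next I would invoke the general machinery of spectral sequences of a filtered complex: the filtration $\{F_j\check C_*\}_{j\in\mathbb J(\spin)}$ is bounded below in each internal degree (because $C^{\mathcal F}_k([\Cr])=0$ for $k<0$ and, for a given total degree, only finitely many critical submanifolds contribute by Proposition~\ref{finiteness}) and exhaustive, so the associated spectral sequence converges to $\HMt_{i+j}(Y,\spin)$. Its $E^1$ page is the homology of the associated graded, which by Proposition~\ref{isomhom} identifies $H(C^{\mathcal F}_*([\Cr]),\partial^{\mathcal F})$ with the singular homology $H_*([\Cr];\ztwo)$; this gives exactly
\begin{equation*}
E^1_{ji}=\bigoplus_{\mathrm{Gr}[\Cr]=j,\ [\Cr]\subset\mathsf C^o\cup\mathsf C^s}H_i([\Cr];\ztwo).
\end{equation*}
The differential $d^1$ on $E^1$ is then induced by the part of $\check\partial$ that lowers $\mathrm{Gr}[\Cr]$ by the minimal possible amount, i.e.\ by counting (with their induced maps on singular homology) the moduli spaces $\breve M^+_z([\Cr],[\Cr'])$ and $\breve M^{\mathrm{red},+}_z$ of the smallest grading drop, together with the relevant compositions $\partial^u_o\bar\partial^s_u\partial^o_s$; I would simply note this rather than compute it, as the statement only asserts existence and the $E^1$ identification.

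For $\HMf$ and $\HMb$ I would run the identical argument with the filtrations on $\hat C_*$ (spanned by $[\Cr]$ with $\mathrm{Gr}\leq j$, using $\overline{\mathrm{Gr}}$ on the $\mathsf C^u$ summand so that $\hat\partial$, which has the form $\bar\partial^u_u+\bar\partial^s_u\partial^u_s$ in its lower-right corner, is filtered --- here one uses the modified grading and the boundary-obstructed clause of weakly self-indexing) and on $\bar C_*$ (which is purely reducible, so only $\breve M^{\mathrm{red},+}_z$ enter and there are no boundary-obstructedness subtleties). The resulting $E^1$ pages are $\bigoplus_{\mathrm{Gr}[\Cr]=j}H_i([\Cr];\ztwo)$ over $[\Cr]\in\mathsf C^o\cup\mathsf C^u$ and over $[\Cr]\in\mathsf C^s\cup\mathsf C^u$ respectively (the latter using $\overline{\mathrm{Gr}}$). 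The main obstacle I anticipate is the careful bookkeeping around the two flavors of grading ($\mathrm{Gr}$ vs.\ $\overline{\mathrm{Gr}}$) and around boundary-obstructed pairs: one must check that the chosen filtration is genuinely preserved by every matrix entry of $\check\partial,\hat\partial,\bar\partial$ --- in particular by the ``corner'' composites like $\partial^u_o\bar\partial^s_u\partial^o_s$ --- and that the boundary-obstructed inequality ``$\geq 0$'' in the definition of weakly self-indexing is exactly what is needed for those composites (which pass through an intermediate boundary-stable and an intermediate boundary-unstable submanifold) to be non-grading-increasing. This is routine but requires the same sign-of-grading-shift analysis as in Section~22 of the book; once it is in place, convergence and the $E^1$ computation are formal consequences of the filtered-complex spectral sequence together with Propositions~\ref{isomhom} and~\ref{finiteness}.
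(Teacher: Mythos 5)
Your proof is correct and follows essentially the same route as the paper: the paper filters $\check C_*$ by the relative grading $\gr_z([\Cr],[\Cr_0])$ to a fixed critical submanifold (well-defined since $c_1(\spin)$ is torsion), which is exactly your filtration by $\mathrm{Gr}$, notes that weak self-indexing makes the differential filtered, and gets convergence from the same finiteness argument as in Lemma \ref{wellmap}, with $E^1$ identified via Proposition \ref{isomhom}. Your extra bookkeeping about $\overline{\mathrm{Gr}}$, the boundary-obstructed composites, and the other flavors is just a more explicit rendering of what the paper leaves implicit.
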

\begin{proof}
For a fixed critical submanifold $[\Cr_0]$ there is a natural increasing filtration on the chain complex $\check{C}_*$ given by
\begin{equation*}
\mathcal{F}_k\check{C}_*=\bigoplus_{\gr_z([\Cr],[\Cr_0])\leq k} C_*^{\mathcal{F}}([\Cr]),
\end{equation*}
where the sum is taken over irreducible and boundary stable critical submanifolds. The differential respects this filtration because of the weakly self indexing hypothesis, and the spectral sequence in the proposition is exactly the one induced by this filtration. The spectral sequence converges for the same reason why the differential is well defined, see Lemma \ref{wellmap}.
\end{proof}

\begin{example}\label{S3} We study the case of $S^3$ with the round metric, see Section $22.7$ in the book. Suppose we have chosen any sufficiently small perturbation Morse-Bott perturbation $\q$ so that no irreducible critical point is introduced. The critical submanifolds consist of an infinite sequence of projective spaces all lying over the same critical reducible critical point. By the results of Lemma \ref{dimreducible} in Chapter $2$, it is clear that such a perturbation is weakly self-indexing, and for dimensional reasons the sequence collapses at the first page. Hence, we obtain again the calculation of Section $22.7$ in the book. Notice that by slightly more refined considerations one can show that the the chain complex is actually the direct sum of the single chain complexes, see the proof of Proposition \ref{positivity} in Section $4$.
\end{example}

Although they arise in many examples, not all Morse-Bott perturbations are weakly self-indexing. Indeed, it is not a generic property. To deal with the general case, we introduce another natural filtration (in the case of a torsion spin$^c$ structure) induced by the values of the perturbed Chern-Simons-Dirac functional $\CSd$. Let
these be
\begin{equation*}
\alpha_1<\cdots <\alpha_m,
\end{equation*}
and consider the \textit{energy filtration} given by
\begin{equation*}
\mathcal{G}_k\check{C}_*=\bigoplus_{\CSd([\Cr])\leq \alpha_k} C^{\mathcal{F}}_*([\Cr]),
\end{equation*}
which is well defined because the value of $\CSd$ is always decreasing along a trajectory. Notice that this filtration does not preserve gradings.
\par
This filtration turns in handy when we need to restrict our attention to $\delta$-chains which are transverse to a larger family of evaluation maps, as described in the next definition. 

\begin{defn}\label{compext}
We say that a countable family of $\delta$-chains $\mathcal{F}'$ containing the $\mathcal{F}$, the one given by the evaluation maps of the moduli spaces of broken unparametrized trajectories, is a \textit{compatible extension} if
\begin{itemize}
\item each $\sigma=(\Delta,f)$ in $\mathcal{F}'\setminus \mathcal{F}$ is transverse to all evaluation maps $\ev_+$ of the moduli spaces of trajectories;
\item for each such $\sigma$ and any moduli space $M=\breve{M}^+([\Cr],[\Cr'])$ the fibered product
\begin{equation*}
\left(\sigma\times (X,\ev_+), \ev_-\right)
\end{equation*}
is also an element of $\mathcal{F}'$.
\end{itemize}
\end{defn}
We are especially interested in the case when the family $\mathcal{F}'\setminus \mathcal{F}$ is consists of the moduli spaces on a cobordism equipped with the evaluation on the incoming end. This forms a compatible extension exactly when the perturbation is regular in the sense of Definition \ref{regcob} in Chapter $2$.

\begin{lemma}\label{moretransverse}
Consider compatible extension $\mathcal{F}'\supset\mathcal{F}$. Then the subspace defined as
\begin{equation*}
\check{C}'_*(Y,\spin)=\check{C}_*(Y,\spin)\cap\left(\bigoplus C^{\mathcal{F}'}_*([\Cr])\right).
\end{equation*}
is a subcomplex and the inclusion map is a quasi-isomorphism. The same statement holds for the other versions.
\end{lemma}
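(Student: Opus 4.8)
## Proof Proposal for Lemma \ref{moretransverse}

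\textbf{Overall strategy.} The plan is to use the energy filtration $\mathcal{G}_\bullet$ to reduce the claim to the corresponding statement for a single critical submanifold, where it becomes a purely homological fact about the complexes $C^{\mathcal{F}'}_*([\Cr]) \hookrightarrow C^{\mathcal{F}}_*([\Cr])$, both of which compute the singular homology $H_*([\Cr];\ztwo)$ by Proposition \ref{isomhom}. The key point that makes the reduction work is that the differentials $\check\partial, \hat\partial, \bar\partial$ are upper-triangular with respect to the finite energy filtration (the value of $\CSd$ strictly decreases along any non-trivial trajectory), so on each associated graded piece the differential is just the internal $\delta$-chain differential $\partial$ of the critical submanifold.

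\textbf{Step 1: $\check C'_*$ is a subcomplex.} First I would check that $\check\partial$ preserves $\check C'_*$. This is immediate from the definition of a compatible extension (Definition \ref{compext}): if $[\sigma] = [\Delta,f]$ lies in $C^{\mathcal{F}'}_*([\Cr])$, then $\sigma$ is transverse to all evaluation maps $\ev_+$ of moduli spaces of trajectories, and by the closure property of $\mathcal{F}'$ the fibered product $\sigma\times \breve M^+([\Cr],[\Cr'])$, together with its evaluation map $\ev_+$, is again an element of $\mathcal{F}'$, hence in particular a $\mathcal{F}'$-transverse $\delta$-chain. The internal boundary $\partial[\sigma]$ is also $\mathcal{F}'$-transverse since the restriction of an $\mathcal{F}'$-transverse map to a face is still transverse. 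Negligibility is preserved exactly as in Lemma \ref{wellmap}. The same argument applies verbatim to $\hat\partial$ and $\bar\partial$ using the boundary-obstructed and reducible moduli spaces as appropriate, so $\hat C'_*$ and $\bar C'_*$ are subcomplexes too.

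\textbf{Step 2: Reduction to a single critical submanifold.} Both $\check C_*$ and $\check C'_*$ carry the energy filtration $\mathcal{G}_k\check C_* = \bigoplus_{\CSd([\Cr])\le \alpha_k} C^{\mathcal{F}}_*([\Cr])$, and the inclusion $\check C'_* \hookrightarrow \check C_*$ is filtered. Since $\CSd$ strictly decreases along every non-trivial trajectory, every off-diagonal term of $\check\partial$ strictly lowers the filtration level, so the induced differential on the associated graded $\mathrm{gr}_k\check C_* = \bigoplus_{\CSd([\Cr])=\alpha_k} C^{\mathcal{F}}_*([\Cr])$ is precisely the direct sum of the internal $\delta$-chain differentials $\partial$ of those critical submanifolds. (For the boundary-stable pieces one must also be slightly careful: the reducible differentials $\bar\partial^s_s$ etc.\ involve moduli spaces $\breve M^{\mathrm{red},+}$ which again have strictly decreasing energy except for the case $M_z([\Cr],[\Cr])$ with $z$ non-trivial, but then $\CSd$ still strictly decreases by the relation $\CSD(u\cdot\alpha)-\CSD(\alpha) = 2\pi^2([u]\cup c_1(S))[Y]$; in the torsion case one orders by $\CSd$-value as listed and this term lands in a strictly lower level.) Therefore on the level of associated graded complexes the inclusion becomes
\begin{equation*}
\bigoplus_{\CSd([\Cr])=\alpha_k} \left(C^{\mathcal{F}'}_*([\Cr]),\partial\right)\hookrightarrow \bigoplus_{\CSd([\Cr])=\alpha_k} \left(C^{\mathcal{F}}_*([\Cr]),\partial\right).
\end{equation*}

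\textbf{Step 3: Quasi-isomorphism on each graded piece, then conclude.} On each summand, the inclusion $C^{\mathcal{F}'}_*([\Cr]) \hookrightarrow C^{\mathcal{F}}_*([\Cr])$ is a quasi-isomorphism: this is exactly the content of the last paragraph of the proof of Proposition \ref{isomhom}, namely that for any countable family the inclusion $C^{\mathcal{F}'}_*(X)\hookrightarrow C^{\emptyset}_*(X)$ induces an isomorphism on homology (apply it to $X=[\Cr]$ with the two families $\mathcal{F}\subset\mathcal{F}'$ and factor the inclusion through $C^{\emptyset}_*([\Cr])$, or directly rerun the transversality argument of Lemma \ref{transext} with Remark \ref{transvhom} to build chain homotopy inverses). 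Hence $\mathrm{gr}\check C'_* \to \mathrm{gr}\check C_*$ is a quasi-isomorphism. Since the filtration is finite (only finitely many values $\alpha_1<\cdots<\alpha_m$ in the torsion case; in the non-torsion case one uses that for a fixed target only finitely many sources contribute by Proposition \ref{finiteness}, so the relevant part of the filtration is again finite), a standard comparison-of-spectral-sequences argument (or a finite induction up the filtration using the five lemma on the long exact sequences of the pairs $(\mathcal{G}_k, \mathcal{G}_{k-1})$) shows that the filtered inclusion $\check C'_*\hookrightarrow \check C_*$ is itself a quasi-isomorphism. The arguments for $\hat C'_*\hookrightarrow \hat C_*$ and $\bar C'_*\hookrightarrow \bar C_*$ are identical.

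\textbf{Main obstacle.} The genuinely delicate point is the non-torsion case, where $\CSd$ is only $\R/(2\pi^2\Z)$-valued (or rather the energy filtration as stated is only defined in the torsion case) and moduli spaces $M_z([\Cr],[\Cr])$ can be non-empty (Remark after Proposition \ref{finiteness}). There one cannot literally filter by $\CSd$-value; instead I would invoke the admissibility hypothesis (no reducibles when $c_1(\spin)$ is non-torsion) together with the strong finiteness in Proposition \ref{finiteness} — only finitely many triples $([\Cr_-],[\Cr_+],z)$ give non-empty moduli spaces — to set up a finite filtration by a "level function" on the (now finite) set of critical submanifolds that is strictly decreased by every trajectory, and then run the same associated-graded argument. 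Verifying that such a level function exists, i.e.\ that the relation "there is a trajectory from $[\Cr]$ to $[\Cr']$" generates no cycles among the finitely many relevant critical submanifolds, is the one spot requiring a small separate argument.
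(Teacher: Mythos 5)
Your Step 1 and the general strategy (energy filtration, reduce to Proposition \ref{isomhom} on the associated graded) match the paper's proof in outline, but there are two genuine gaps. First, in the torsion case your claim that the associated graded differential of the energy filtration ``is precisely the direct sum of the internal $\delta$-chain differentials'' is false. The value of $\CSd$ at a blown-up reducible configuration depends only on its blow-down, so \emph{all} the eigenvalue critical submanifolds lying over a fixed reducible $[C]$ sit at the same energy level, and the reducible trajectories between distinct such submanifolds (the nonempty moduli spaces of Lemma \ref{dimreducible}) have zero energy drop. These survive in the associated graded, so the graded piece at level $\alpha_j$ is not a direct sum of single-submanifold complexes; your parenthetical only addresses $M_z([\Cr],[\Cr])$ with $z$ nontrivial, which is not the issue here. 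The paper repairs this with a \emph{second} filtration on each reducible tower, ordered by the eigenvalues, whose associated graded is then the internal differential, after which Proposition \ref{isomhom} applies; without this extra filtration your Step 3 does not go through for the reducible part.

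Second, your treatment of the non-torsion case is not just ``one spot requiring a small separate argument'': the level function you propose cannot exist, because when $c_1(\spin)$ is not torsion the moduli spaces $M_z([\Cr],[\Cr])$ with $z$ nontrivial may be nonempty (this is the Remark following Proposition \ref{finiteness}), so the relation ``there is a trajectory from $[\Cr]$ to $[\Cr']$'' has cycles and no function strictly decreasing along every trajectory exists. The paper's workaround is genuinely different: choose a regular value $\vartheta$ of the circle-valued $\CSd$, form the auxiliary complex $\check{C}^{\vartheta}_*$ whose differential only counts broken trajectories along which $\CSd$ avoids $\vartheta$ (this complex does admit an energy filtration, so the torsion argument applies to it), and then filter the full complex by a $\mathbb{Z}$-lift of the $\mathbb{Z}/d\mathbb{Z}$ grading, using the invariance of $\mathcal{E}_{\q}(z)+4\pi^2\gr_z$, so that the truncated complex appears as the first page; boundedness of this filtration, hence convergence, comes from Proposition \ref{finiteness}. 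You would need to adopt some device of this kind rather than a strictly decreasing level function.
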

\begin{proof}
It is clear from the definition that the this is a subcomplex. To prove that the inclusion in a quasi isomorphism, consider first the case when the spin$^c$ structure is torsion. We can then consider the energy filtration for both chain complexes. The inclusion map $i$ is then a map of filtered chain complexes, and our goal is to show that the induced map on the $E^1$ page
\begin{equation}\label{e1}
i_*: H(\mathcal{G}_j\check{C}'_*/\mathcal{G}_{j-1}\check{C}'_*)\rightarrow H(\mathcal{G}_j\check{C}_*/\mathcal{G}_{j-1}\check{C}_*)
\end{equation}
is an isomorphism. In fact, this implies that the map induced on the $E^{\infty}$ page is an isomorphism, hence so is the map induced in homology. We have the splitting as chain complexes
\begin{equation*}
\mathcal{G}_j\check{C}_*/\mathcal{G}_{j-1}\check{C}_*=\bigoplus_{\CSd([C])=\alpha_j}\left(\bigoplus_{\pi([\Cr])=[C]} C^{\mathcal{F}}_*([\Cr])\right)
\end{equation*}
because we are only considering the trajectories with on which $\CSd$ is constant, hence we just need to study each of the summands. In the irreducible case, $i_*$ is an isomorphism because of Proposition \ref{isomhom}. In the reducible case the chain complexes
\begin{equation*}
\bigoplus_{\pi([\Cr])=[C]} C^{\mathcal{F}'}_*([\Cr])\hookrightarrow\bigoplus_{\pi([\Cr])=[C]} C^{\mathcal{F}}_*([\Cr])
\end{equation*}
have a filtration given by the ordering of the corresponding eigenvalues. The map induced on the first page of the associated spectral sequence is again an isomorphism thanks to Proposition \ref{isomhom}, so the result follows from the same reason as above.
\par
The case of a non-torsion spin$^c$ structure is slightly more complicated because there is not an energy filtration as the Chern-Simons-Dirac functional is circle valued. To tackle this issue, we follow the nice approach of \cite{FS} by constructing an auxiliary chain complex with an absolute $\mathbb{Z}$-grading. Consider any value $\vartheta\in \mathbb{R}/(4\pi^2\mathbb{Z})$ such that there are no critical submanifolds $[\Cr]$ with $\CSd([\Cr])=\vartheta$. We define the chain complex
\begin{equation*}
\left(\check{C}^{\vartheta}_*(Y,\spin),\check{\partial}^{\vartheta}\right)
\end{equation*}
whose underlying vector space is $\check{C}(Y,\spin)$ but the differential only counts moduli spaces consisting of unparametrized broken trajectories $\breve{\boldsymbol\gamma}$ such that $\CSd$ never achieves the value $\vartheta$ along them. There are no differences in the proof of the fact that this is actually a chain complex. Furthermore on such a complex there is a well defined energy filtration, hence we can prove as above that its homology does not depend on the choice of the family $\mathcal{F}'$. The result will follow if we can find a filtration on $(\check{C}_*(Y,\spin),\check{\partial})$ such that the associated spectral sequence is convergent and the $E^1$ page of the is the homology of this modified chain complex.
\par
Let $d\in 2\mathbb{N}^+$ the generator of the image of the evaluation of $c_1(\spin)$ on $H_2(Y,\mathbb{Z})$. For a fixed critical submanifold $[\Cr_0]$ we can define the $\mathbb{Z}$-valued grading
\begin{equation*}
\mathrm{gr}^{\vartheta}([\Cr])=\mathrm{gr}_{z_{\vartheta}}([\Cr],[\Cr_0])\in \mathbb{Z},
\end{equation*}
where $z_{\theta}$ is a relative homotopy class whose image under $\CSd$ avoids $\vartheta$. This is well defined because the quantity
\begin{equation}\label{sumsame}
\mathcal{E}_{\q}(z)+4\pi^2 \mathrm{gr}_z([\Cr],[\Cr'])
\end{equation}
is independent of the homology class, see Lemma $16.4.4$ in the book. With this convention, the relative $\mathbb{Z}/d\mathbb{Z}$ grading of $\check{C}_*(Y,\spin)$ can be lifted to a (not canonical) absolute $\mathbb{Z}$-grading, which we denote by the underlined index. For $l\in\mathbb{Z}/d\mathbb{Z}$ the (increasing) filtration for $s$ congruent to $l$ modulo $d$ we set
\begin{equation*}
\mathcal{G}_s \check{C}_{l}=\bigoplus_{j\geq 0} \check{C}_{\underline{s-jd}}.
\end{equation*}
It follows from the definition of the grading $\mathrm{gr}^{\vartheta}$ and the invariance of the quantiy (\ref{sumsame}) that this is a filtration on the $\mathbb{Z}/d\mathbb{Z}$-graded chain complex, i.e.
\begin{equation*}
\check{\partial}\left(\mathcal{G}_s\check{C}_l\right)\subset \mathcal{G}_{s-1}\check{C}_{l-1}.
\end{equation*}
The $E^0$ page coincides with $(\check{C}^{\vartheta}_*(Y,\spin),\check{\partial}^{\vartheta})$, and the filtration is bounded because of the finiteness properties of the moduli spaces (Proposition \ref{finiteness} in Chapter $2$), so the associated spectral sequence converges.
\end{proof}

\vspace{0.8cm}

\textit{Cohomology and duality.} The main drawback of our approach to the Morse-Bott case is that it is not clear how to define a duality map at the chain level because of the transversality conditions needed to define the intersection product. For this reason, we will define the cohomology tautologically as the homology of $-Y$, i.e. $Y$ with the orientation reversed, and define the pairing via intersection. The fact that this pairing is perfect will follow from the invariance of the homology, as in the non-degenerate case this is simply the universal coefficient theorem.

Recall that a spin$^c$ structure $(S,\rho)$ on $Y$ defines the spin$^c$ structure $(S,-\rho)$ on $-Y$. We can identify the configuration spaces $\Co_k(Y,\spin)$ and $\Co_k(-Y,\spin)$, and if we choose the perturbation $-\q$ on $Y$ we have the relation
\begin{equation*}
\CSd(Y)=-\CSd(-Y).
\end{equation*}
We can then identify the critical submanifolds, with the notion of boundary stable and boundary unstable switched. We also have the canonical identification
\begin{equation}\label{reversor}
M_z(Y;[\Cr_-],[\Cr_+])=M_{-z}(-Y;[\Cr_+],[\Cr_-]).
\end{equation}
There is a map
\begin{align*}
o:\mathbb{J}(-Y,\spin)&\rightarrow \mathbb{J}(Y,\spin)\\
([\acr],n)& \rightarrow \left([\acr],-n-N(\Hess^{\sigma}_{\q,\acr})\right),
\end{align*}
where $N(\Hess^{\sigma}_{\q,\acr})$ is the dimension of the generalized zero eigenspace, which changes the sign of the $\mathbb{Z}$ action, i.e.
\begin{equation}\label{reversegrading}
o(j+n)=o(j)-n.
\end{equation}
We then define the cochain complexes as
\begin{align*}
\check{C}^j(Y,\spin)&=\hat{C}_{o(j)}(-Y,\spin)\\
\hat{C}^j(Y,\spin)&=\check{C}_{o(j)}(-Y,\spin)\\
\bar{C}^j(Y,\spin)&=\bar{C}_{o(j)}(-Y,\spin),
\end{align*}
where the differentials $\check{\delta}, \hat{\delta}$ and $\bar{\delta}$ are the ones induced by this identification. In particular, the relation \ref{reversegrading} implies that these differentials have degree $1$. From this, we can define the \textit{Floer cohomology groups}
\begin{align*}
\HMt^*(Y,\spin)&=H(\check{C}^*,\check{\partial})\\
\HMf^*(Y,\spin)&=H(\hat{C}^*,\hat{\partial})\\
\HMb^*(Y,\spin)&=H(\bar{C}^*,\bar{\partial}).
\end{align*}
It is important to remark that a priori the chain complexes of $Y$ and $-Y$ involve different $\delta$-chains. In fact using the identification in equation (\ref{reversor}) in the former case they are required to be transverse to the maps $\ev_-$ while in the latter case to the maps $\ev_+$.
\par
Because of the definition of the map $o$ it is natural to interpret the groups $\check{C}^*, \hat{C}^*$ and $\bar{C}^*$ as the direct sum the cochain complexes of the critical submanifolds with the natural grading. Indeed, at a critical point $\acr$ the quantity $N(\Hess^{\sigma}_{\q,\acr})$ is the dimension of the critical submanifold to which the point belongs to. Hence via intersection theory we can define a natural pairing
\begin{equation*}
\HMt_j(Y,\spin)\otimes \HMt^j(Y,\spin)\rightarrow \ztwo
\end{equation*}
or, equivalently, a linear map
\begin{equation*}
\HMt_j(Y,\spin)\rightarrow \mathrm{Hom}(\HMt^j(Y,\spin),\ztwo).
\end{equation*}
To see that this is well defined, we need to show that given a cohomology class $[\sigma]$ and a homology class $[\tau]$ we can find a cocycle $\sigma$ in $\check{C}^k$ and a cycle $\tau$ in $\check{C}_k$ representing them which are transverse, so that they have a well defined intersection number. By definition, any cocycle $\sigma$ is transverse to all the evaluation maps $\ev_+$. In particular, we can define a compatible extension $\mathcal{F}'$ for the chain complex $\check{C}_*$ computing the homology (see Definition \ref{compext}) where
\begin{equation*}
\mathcal{F}'\setminus\mathcal{F}=\left\{(\breve{M}^+([\Cr],[\Cr'])\times \sigma, \ev_-)\right\}.
\end{equation*}
The existence of a representative $\tau$ for the homology class which is transverse to $\sigma$ follows then by applying Lemma \ref{moretransverse} to the family $\mathcal{F}'$. A similar argument implies that the intersection number only depends on the classes, and not by the representatives. Finally, this pairing is perfect because of the invariance properties and the universal coefficient theorem in the non-degenerate case.

\vspace{0.8cm}

\textit{Completions.}
Let $G_*$ be an abelian group graded by the set $\mathbb{J}$ equipped with a $\mathbb{Z}$ action. Let $O_{\alpha}$ ($\alpha\in A$) be the set of free $\Z$-orbits in $\mathbb{J}$, and fix an element $j_{\alpha}\in O_{\alpha}$ for each $\alpha$. Consider the subgroups
\begin{equation*}
G_*[n]=\bigoplus_{\alpha}\bigoplus_{m\geq n} G_{j_{\alpha}-m},
\end{equation*}
which form a decreasing filtration of $G_*$. We define the \textit{negative completion} of $G_*$ as the topological group $G_{\bullet}\supset G_*$ obtained by completing with respect to this filtration (which is clearly independent of the choice of the $j_{\alpha}$). We then define the negative completions
\begin{equation*}
\HMt_{\bullet}(Y,\spin),\qquad\HMf_{\bullet}(Y,\spin),\qquad\HMb_{\bullet}(Y,\spin)
\end{equation*}
of the Floer groups defined above. These also satisfy the properties discussed above, and they are natural objects to study when dealing with functoriality properties.

\vspace{1.5cm}
\section{Invariance and functoriality}

The aim of this final section is to construct maps on monopole Floer homology induced by cobordisms. These are the essential ingredient to study invariance and functoriality.
Our definition of the chain complexes is different from the one in the book, and we cannot follow the approach there in order to construct maps induced by a cobordism $X$ and a cohomology class $u^d$ on its configuration space. On the other hand there is an alternative approach (which can be found for example in \cite{Blo1}) leading to the same result which requires the addition of some incoming cylindrical ends of the form
\begin{equation*}
(-\infty,0]\times (S^3)
\end{equation*}
to the given cobordism $W$ from $Y_-$ to $Y_+$. In this case, we will obtain maps
\begin{equation*}
\HMt_{\bullet}(W):\HMt_{\bullet}(Y_-)\otimes \HMf_{\bullet}(S^3)\otimes \dots \HMf_{\bullet}(S^3)\rightarrow \HMt_{\bullet}(Y_+),
\end{equation*}
where each factor still depends on the choice of metric and perturbation. Using these, we will be able to prove invariance and functoriality.
\par
It is important to notice that in general given a manifold with more than one incoming end $Y_1,\dots ,Y_m$ and one outgoing end $Y_+$ there is \textit{not} a well-defined induced map
\begin{equation*}
\HMt_{\bullet}(Y_1)\otimes\dots\otimes \HMt_{\bullet}(Y_m)\rightarrow \HMt_{\bullet}(Y_+),
\end{equation*}
as there are substantial issues with the combinatorics of the codimension one strata. On the other hand, it is shown in \cite{Blo2} that the combinatorics of the degeneration and gluing of moduli spaces work out in order to define the map
\begin{equation*}
\HMt_{\bullet}(Y_1)\otimes \HMf(Y_2)\otimes\dots\otimes \HMf_{\bullet}(Y_m)\rightarrow \HMt_{\bullet}(Y_+)\\
\end{equation*}
and similarly for the map
\begin{equation*}
\HMf_{\bullet}(Y_1)\otimes \HMf(Y_2)\otimes\dots\otimes \HMf_{\bullet}(Y_m)\rightarrow \HMf_{\bullet}(Y_+).
\end{equation*}
In particular, Corollary $4.15$ in that paper provides an extremely general family (with possibly more incoming and outgoing ends at the same time) in which there is a natural induced map. In our case the situation is much more simple due to the absence of irreducible solutions on $S^3$ for the right choice of metric and perturbation (see Example \ref{S3}).

\vspace{0.8cm}
Suppose we are given a cobordism $W$ from $Y_-$ to $Y_+$ and consider a metric on $W$ which is cylindrical near the boundary components. Suppose we are also given a finite number of marked points $\mathbf{p}=\{p_1,\dots, p_m\}$ in the interior. Removing a small ball around the marked point we can think of each marked point $p_i$ to give rise and incoming end $S^3_i$. We also suppose that each of these has a metric with positive scalar curvature (e.g. the round metric), and that the metric is a product near the end. We will denote the manifold obtained by adding cylindrical ends by $W^*_{\mathbf{p}}$. We will refer to the ends corresponding to the marked points as punctures. Choose on each $S^3_i$ a small Morse-Bott perturbation $\q_i$ with only one reducible solution in the blow down and no irreducible solutions. Finally choose as in Section $6$ of Chapter $2$ an additional perturbation $\hat{\mathfrak{p}}$ on the cobordism so that all moduli spaces are regular (see Definition \ref{regcob} in that section).
\par
To define the maps we need to construct the fibered products with the moduli spaces on the cobordisms, hence we need to assume more transversality conditions on the set of $\delta$-chain we are considering. For each $(m+1)$-uple of critical submanifolds in the incoming end
\begin{equation*}
[\Cr_-],[\Cr_1],\dots,[\Cr_m]
\end{equation*}
respectively in $Y_-$ and the puncture $S^3_i$ we have a countable family of smooth maps
\begin{align*}
\mathcal{F}_W&=\{(f_i,\Delta_i)\}\\
f_i: \Delta_i&\rightarrow [\Cr_-]\times \prod_{i=1}^m[\Cr_i],
\end{align*}
where $\Delta_i$ is a moduli space on the cobordism
\begin{equation*}
M^+_z([\Cr_-],[\Cr_1],\dots, [\Cr_m], W^*_{\mathbf{p}}, [\Cr_+])
\end{equation*}
and the map $f_i$ is the product of the evaluation maps $\ev_-$ to each end.
We define the vector space $C_{\bullet}^o(Y_-,\mathbf{p})$ as the subspace of
\begin{equation*}
C_{\bullet}^o(Y_-)\otimes C_{\bullet}^{u}(S^3_1)\otimes\cdots \otimes C_{\bullet}^{u}(S^3_m)
\end{equation*}
generated by tensor products of $\mathcal{F}$-transverse $\delta$-chains $[\sigma_-]\otimes[\sigma_1]\otimes \dots \otimes[\sigma_m]$ where $[\sigma_-]\in C_*^{\mathcal{F}}([\Cr_-])$ and $[\sigma_i]\in C_*^{\mathcal{F}}([\Cr_i])$ such that
\begin{align*}
\sigma_-\times \sigma_1\times\dots\times \sigma_m: \Delta_-\times \Delta_1\times \dots \times \Delta_m\rightarrow [\Cr_-]\times \prod_{i=1}^m[\Cr_i]
\end{align*}
is transverse to all the maps in the family $\mathcal{F}_W$. To simplify the notation, we will set
\begin{align*}
\mathcal{C}&=\left( [\Cr_1],\dots, [\Cr_m]\right)\\
[\boldsymbol{\sigma}]&=[\sigma_1]\otimes\dots\otimes [\sigma_m].
\end{align*}
Similarly, we can define the vector spaces
\begin{align*}
C_{\bullet}^u(Y_-,\mathbf{p})&\subset C_{\bullet}^u(Y_-)\otimes C_{\bullet}^{u}(S^3_1)\otimes\cdots \otimes C_{\bullet}^{u}(S^3_m)\\
C_{\bullet}^s(Y_-,\mathbf{p})&\subset C_{\bullet}^s(Y_-)\otimes C_{\bullet}^{u}(S^3_1)\otimes\cdots \otimes C_{\bullet}^{u}(S^3_m),
\end{align*}
defined by the same transversality hypotheses. We can then define the vector spaces
\begin{align*}
\check{C}_{\bullet}(Y_-,\mathbf{p})&=C_{\bullet}^o(Y_-,\mathbf{p})\oplus C_{\bullet}^s(Y_-,\mathbf{p})\\
\check{C}_{\bullet}(Y_-,\mathbf{p})&=C_{\bullet}^o(Y_-,\mathbf{p})\oplus C_{\bullet}^u(Y_-,\mathbf{p})\\
\check{C}_{\bullet}(Y_-,\mathbf{p})&=C_{\bullet}^s(Y_-,\mathbf{p})\oplus C_{\bullet}^u(Y_-,\mathbf{p}).
\end{align*}
The vector spaces $C_{\bullet}^{u}(S^3_i)$ are the underlying vector spaces of the chain complex $\hat{C}_{\bullet}(S^3_i)$. These complexes associated to the three sphere have a natural $\mathbb{Z}$ valued grading so that the homology is canonically identified with the graded ring $\ztwo[[U]]$, where $U$ has degree $-2$ (see Example \ref{S3}). Notice that this grading differs from the canonical absolute grading, which is obtained by a total shift of $-1$. This implies that $\check{C}_{\bullet}(Y_-,\mathbf{p})$ and its companions have a natural $\mathbb{J}(Y_-)$ grading. The next result follows using the energy filtration as in Lemma \ref{moretransverse} (which deals with the case with no punctures).
\begin{lemma}\label{Yp}
The subspace $\check{C}_*(Y_-,\mathbf{p})$ of $\check{C}_*(Y_-)\otimes \hat{C}_*(S^3_1)\otimes\cdots \otimes \hat{C_*}(S^3_m)$ is a subcomplex, and the inclusion map is a quasi-isomorphism.
\end{lemma}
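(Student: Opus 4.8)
The strategy is to reduce the statement for $\check{C}_*(Y_-,\mathbf{p})$ to the corresponding statement with no punctures, namely Lemma \ref{moretransverse}, by treating the $S^3$-factors one at a time. The key observation, as already exploited in Example \ref{S3} and the proof of Lemma \ref{moretransverse}, is that with the round metric and a small Morse-Bott perturbation on each $S^3_i$ the critical submanifolds are an infinite sequence of projective spaces all lying over a single reducible critical point, and there are no irreducible solutions; moreover the associated chain complex $\hat{C}_*(S^3_i)$ is (for dimensional reasons and the weakly self-indexing property, Lemma \ref{dimreducible}) the direct sum of the chain complexes of the individual critical submanifolds $\C P^{N}$, each of which has homology concentrated in even degrees. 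Thus the energy filtration on $S^3_i$ splits and $H_*(\hat{C}_*(S^3_i))\cong \ztwo[[U]]$ is a free $\ztwo$-module in each degree.

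First I would set up the energy filtration in the torsion case: since the perturbed Chern-Simons-Dirac functionals on $Y_-$ and on each $S^3_i$ are all (in the torsion case) real-valued and decreasing along trajectories, the tensor product $\check{C}_*(Y_-)\otimes\hat C_*(S^3_1)\otimes\cdots\otimes\hat C_*(S^3_m)$ carries a filtration by the sum of the energies, and the inclusion of $\check{C}_*(Y_-,\mathbf{p})$ is a morphism of filtered complexes because the extra transversality constraints defining $\mathcal{F}_W$ only restrict which $\delta$-chains are allowed, not the differential. On the associated graded, the differential only counts fibered products with the moduli spaces on which all the functionals are constant; these are precisely the broken trajectories internal to each critical submanifold of $Y_-$ and of each $S^3_i$ (the cobordism moduli spaces $M^+_z([\Cr_-],[\Cr_1],\dots,[\Cr_m],W^*_{\mathbf p},[\Cr_+])$ only enter through the compatible-extension family $\mathcal{F}_W$, not as differentials here). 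So on the $E^1$-page the statement becomes: for each product of critical submanifolds $[\Cr_-]\times[\Cr_1]\times\cdots\times[\Cr_m]$, the inclusion of the subcomplex of $C^{\mathcal F}_*([\Cr_-])\otimes\bigotimes_i C^{\mathcal F}_*([\Cr_i])$ consisting of tensor products transverse to the family $\mathcal{F}_W$ of evaluation maps is a quasi-isomorphism. Since $[\Cr_-]$ and the $[\Cr_i]$ are smooth compact manifolds, $C^{\mathcal F}_*$ of each computes singular homology (Proposition \ref{isomhom}), and by the K\"unneth theorem over the field $\ztwo$ the tensor product computes the singular homology of the product; the extra transversality to the countable family $\mathcal F_W$ is then handled exactly as in the proof of Proposition \ref{isomhom} (the argument showing $C^{\mathcal F}_*(X)\hookrightarrow C^{\emptyset}_*(X)$ is a quasi-isomorphism, via Lemma \ref{transext} and Remark \ref{transvhom}), now applied to the product manifold with the enlarged countable family. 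Convergence of both spectral sequences follows from the finiteness of the set of $\CSd$-values together with the finiteness results of Proposition \ref{finiteness} of Chapter $2$ and the fact that $\hat C_*(S^3_i)$ splits as observed above, so the $E^\infty$ comparison isomorphism lifts to homology.

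For the non-torsion case I would follow the same device used in the proof of Lemma \ref{moretransverse}: pass to the auxiliary complex $\check{C}^{\vartheta}_*$ cut out by requiring $\CSd$ to avoid a regular value $\vartheta$ (which carries an honest energy filtration), prove the statement there by the argument above, and then lift back using the bounded $\mathbb{Z}$-filtration $\mathcal{G}_s\check{C}_l$ built from the grading $\mathrm{gr}^{\vartheta}$, whose $E^0$-page is $\check{C}^{\vartheta}_*$ and which converges by Proposition \ref{finiteness}; the tensoring with the $S^3_i$-complexes causes no new difficulty since those complexes already carry absolute $\mathbb{Z}$-gradings. The same arguments apply verbatim to $\hat{C}_*(Y_-,\mathbf{p})$ and $\bar{C}_*(Y_-,\mathbf{p})$.

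The main obstacle I anticipate is bookkeeping rather than conceptual: verifying carefully that the subcomplex $\check{C}_*(Y_-,\mathbf{p})$ really is closed under the differential (i.e. that fibered product with $\breve{M}^+$ preserves the transversality to $\mathcal F_W$, which uses that $\mathcal F_W$ is a compatible extension in the sense of Definition \ref{compext}, together with the regularity hypothesis of Definition \ref{regcob} of Chapter $2$ so that the evaluation maps of the cobordism moduli spaces are transverse to those of the trajectory moduli spaces), and that all the relevant sums remain finite so that the filtrations are exhaustive and bounded and the spectral sequences converge — this is the point where one must invoke the finiteness statements of Proposition \ref{finiteness} and the splitting of the $S^3$-complexes, exactly as in the proof of Lemma \ref{wellmap}.
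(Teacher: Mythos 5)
Your proposal is correct and is essentially the argument the paper intends: the paper proves this lemma by exactly the energy-filtration comparison of Lemma \ref{moretransverse} (with the $\vartheta$-complex device in the non-torsion case), combined with Proposition \ref{isomhom} on the critical submanifolds, and your treatment of the subcomplex property via the compatible-extension/regularity mechanism matches the paper's setup. The only small imprecision is that on the associated graded of the energy filtration the differential is not purely internal to each critical submanifold — zero-energy trajectories between blow-up towers over the same reducible survive — but this is handled by the secondary filtration by eigenvalue ordering already present in the proof of Lemma \ref{moretransverse} that you are following.
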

In particular in many occasions we will need to restrict to a smaller class of $\delta$-chains satysfying additional transversality hypothesis, and this result (and some slight modifications of it) will tell us that this will not effect the final result.
\par
We can define the maps on the completions

\begin{IEEEeqnarray*}{c}
m^o_o:  C^o_{\bullet}(Y_-,\mathbf{p})\rightarrow C^o_{\bullet}(Y_+)\\
m^o_s: C^o_{\bullet}(Y_-, \mathbf{p})\rightarrow C^s_{\bullet}(Y_+)\\
m^u_o:  C^u_{\bullet}(Y_-, \mathbf{p})\rightarrow C^o_{\bullet}(Y_+)\\
m^u_s: C^u_{\bullet}(Y_-, \mathbf{p})\rightarrow C^s_{\bullet}(Y_+)
\end{IEEEeqnarray*}
by the formulas
\begin{IEEEeqnarray*}{c}
m^o_o([\sigma_-]\otimes[\boldsymbol{\sigma}])=\sum_{[\Cr_+]\in \mathsf{C}^o(Y_+)}\sum_{z\in\pi_0}
([\sigma_-],[\boldsymbol{\sigma}])\times {M}^+_z([\Cr_-],\mathcal{C}, W^*_{\mathbf{p}}, [\Cr_+])\\
m^o_s([\sigma_-]\otimes[\boldsymbol{\sigma}])=\sum_{[\Cr_+]\in \mathsf{C}^s(Y_+)}\sum_{z\in\pi_0}
([\sigma_-],[\boldsymbol{\sigma}])\times{M}^+_z([\Cr_-],\mathcal{C}, W^*_{\mathbf{p}}, [\Cr_+])\\
m^u_o([\sigma_-]\otimes[\boldsymbol{\sigma}])=\sum_{[\Cr_+]\in \mathsf{C}^o(Y_+)}\sum_{z\in\pi_0}
([\sigma_-],[\boldsymbol{\sigma}])\times {M}^+_z([\Cr_-],\mathcal{C}, W^*_{\mathbf{p}}, [\Cr_+]) \\
m^u_s([\sigma_-]\otimes[\boldsymbol{\sigma}])=\sum_{[\Cr_+]\in \mathsf{C}^s(Y_+)}\sum_{z\in\pi_0}
([\sigma_-],[\boldsymbol{\sigma}])\times {M}^+_z([\Cr_-],\mathcal{C}, W^*_{\mathbf{p}}, [\Cr_+]).
\end{IEEEeqnarray*}

It is important to remark that these sums might be potentially infinite, but they are nevertheless well defined on the completions of the chain complexes because of the finiteness results in Theorem \ref{finitenesscob} in Chapter $2$.
Similarly, we can define maps 

\begin{IEEEeqnarray*}{c}
\bar{m}^s_s:  C^s_{\bullet}(Y_-, \mathbf{p})\rightarrow C^s_{\bullet}(Y_+)\\
\bar{m}^s_u: C^s_{\bullet}(Y_-, \mathbf{p})\rightarrow C^u_{\bullet}(Y_+)\\
\bar{m}^u_s:  C^u_{\bullet}(Y_-, \mathbf{p})\rightarrow C^s_{\bullet}(Y_+)\\
\bar{m}^u_u: C^u_{\bullet}(Y_-, \mathbf{p})\rightarrow C^u_{\bullet}(Y_+)
\end{IEEEeqnarray*}
using the reducible moduli spaces via the sums
\begin{IEEEeqnarray*}{c}
\bar{m}^s_s([\sigma_-]\otimes[\boldsymbol{\sigma}])=\sum_{[\Cr_+]\in \mathsf{C}^s(Y_+)}\sum_{z\in\pi_0}
([\sigma_-],[\boldsymbol{\sigma}])\times {M}^{\mathrm{red}+}_z([\Cr_-],\mathcal{C}, W^*_{\mathbf{p}}, [\Cr_+])\\
\bar{m}^s_u([\sigma_-]\otimes[\boldsymbol{\sigma}])=\sum_{[\Cr_+]\in \mathsf{C}^s(Y_+)}\sum_{z\in\pi_0}
([\sigma_-],[\boldsymbol{\sigma}])\times{M}^{\mathrm{red}+}_z([\Cr_-],\mathcal{C}, W^*_{\mathbf{p}}, [\Cr_+])\\
\bar{m}^u_s([\sigma_-]\otimes[\boldsymbol{\sigma}])=\sum_{[\Cr_+]\in \mathsf{C}^u(Y_+)}\sum_{z\in\pi_0}
([\sigma_-],[\boldsymbol{\sigma}])\times {M}^{\mathrm{red}+}_z([\Cr_-],\mathcal{C}, W^*_{\mathbf{p}}, [\Cr_+]) \\
\bar{m}^u_u([\sigma_-]\otimes[\boldsymbol{\sigma}])=\sum_{[\Cr_+]\in \mathsf{C}^u(Y_+)}\sum_{z\in\pi_0}
([\sigma_-],[\boldsymbol{\sigma}])\times {M}^{\mathrm{red}+}_z([\Cr_-],\mathcal{C}, W^*_{\mathbf{p}}, [\Cr_+]). 
\end{IEEEeqnarray*}
We then put the pieces together to define the operators between the chain complexes
\begin{IEEEeqnarray*}{c}
\check{m}:  \check{C}_{\bullet}(Y_-, \mathbf{p})\rightarrow \check{C}_{\bullet}(Y_+)\\
\hat{m}:  \hat{C}_{\bullet}(Y_-, \mathbf{p})\rightarrow \hat{C}_{\bullet}(Y_+)\\
\bar{m}:  \bar{C}_{\bullet}(Y_-, \mathbf{p})\rightarrow \bar{C}_{\bullet}(Y_+)
\end{IEEEeqnarray*}
by the formulas
\begin{equation}\label{cobmaps}
\begin{aligned}
\check{m}=&
\left[
\begin{matrix}
m^o_o & m^u_o \bar{\partial}^s_u(Y_-,\mathbf{p})+\partial^u_o(Y_+)\bar{m}^s_u \\
m^o_s & \bar{m}^s_s+m^u_s\bar{\partial}^s_u(Y_-,\mathbf{p})+\partial^u_s(Y_+)\bar{m}^s_u
\end{matrix}\right] \\
\\
\hat{m}=&
\left[
\begin{matrix}
m^o_o & m^u_o \\
\bar{m}^s_u\partial^o_s(Y_-,\mathbf{p})+\bar{\partial}^s_u(Y_+)m^o_s & \bar{m}^u_u+\bar{m}^s_u\partial^u_s(Y_-,\mathbf{p})+\bar{\partial}^s_u(Y_+)m^u_s
\end{matrix}\right]\\
\\
\bar{m}=&\left[
\begin{matrix}
\bar{m}^s_s & \bar{m}^u_s\\
\bar{m}^s_u & \bar{m}^u_u
\end{matrix}\right].
\end{aligned}
\end{equation}

\vspace{0.5cm}
In the next proposition, we use the natural identification between at the homology level provided by Lemma \ref{Yp}.

\begin{prop}\label{indepmap}
The operator $\check{m}$ is a chain map, i.e. it satisfies the relation
\begin{equation*}
\check{\partial}(Y_+)\circ\check{m}=\check{m}\circ\check{\partial}(Y_-, \mathbf{p})
\end{equation*}
and the induced map in homology
\begin{equation*}
\HMt_{\bullet}(W,\mathbf{p}):\HMt_{\bullet}(Y_-,\spin_-)\otimes \ztwo[[U_1]]\otimes \cdots \otimes \ztwo[[U_m]]\rightarrow \HMt(Y_+,\spin_+)
\end{equation*}
is independent of the choice of the punctures and of the metric on $W$ (isometric to the fixed cylindrical one in a collar of the boundary and standard around the punctures) and perturbation.
The same statement holds also for the operators $\hat{m}$ and $\bar{m}$.
\end{prop}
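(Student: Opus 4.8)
The plan is to follow closely the structure of Chapter $25$ in the book, but using the ``extra incoming $S^3$ ends'' description rather than the cup product with cohomology classes on the configuration space of the cobordism. The argument splits into two halves: first, that $\check m$ (and $\hat m$, $\bar m$) are chain maps; second, that the induced maps in homology are independent of the auxiliary choices. For the chain map property, I would argue exactly as in the proof of Proposition \ref{chaincomp}, replacing the moduli spaces of broken trajectories by the moduli spaces $M^+_z([\Cr_-],\mathcal{C},W^*_{\mathbf{p}},[\Cr_+])$ of broken $X$-trajectories on the cobordism. The key input is Proposition \ref{cod1cob} of Chapter $2$, which describes the codimension-one strata of these compactified moduli spaces: they are either a boundary breaking-off at the incoming end (giving a term of $\check m\circ\check\partial(Y_-,\mathbf{p})$), a breaking-off at the outgoing end (giving $\check\partial(Y_+)\circ\check m$), or an interior degeneration which is either reducible/irreducible matching or lies over $\partial P$ (here $P$ is a point, so this does not occur), together with the boundary-obstructed correction terms which are precisely accounted for by the composite entries $\partial^u_o(Y_+)\bar m^s_u$, $\bar m^s_u\partial^o_s(Y_-,\mathbf{p})$, etc., appearing in \eqref{cobmaps}. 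Feeding this stratification into the identity ``the boundary of a $1$-dimensional abstract $\delta$-chain has an even number of points'' (Proposition \ref{stokes}), and using that the fibered products of the $\delta$-chains $[\sigma_-]\otimes[\boldsymbol\sigma]$ with these moduli spaces are again $\delta$-chains by Lemma \ref{stratprod} (thanks to the regularity of the perturbation, Definition \ref{regcob}), gives $\check\partial(Y_+)\circ\check m=\check m\circ\check\partial(Y_-,\mathbf{p})$ over $\ztwo$; finiteness of all the sums on the completions is guaranteed by Theorem \ref{finitenesscob}. The cases of $\hat m$ and $\bar m$ are identical, the latter being simpler since there are no boundary-obstructedness phenomena for reducible moduli spaces.

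For the independence statement, I would use a one-parameter family argument, exactly as in the book. Given two sets of auxiliary data $(g_0,\mathbf{p}_0,\hat{\mathfrak p}_0)$ and $(g_1,\mathbf{p}_1,\hat{\mathfrak p}_1)$ — where one may first move all punctures around along paths, adding or removing punctures by a separate neck-stretching argument using the absence of irreducibles on $S^3$ (Example \ref{S3}) — choose a path of metrics and perturbations parametrized by $P=[0,1]$ interpolating between them, with the perturbations on $Y_\pm$ and the punctures held fixed. Proposition \ref{transversecob} provides a regular such family (keeping the endpoints regular). The parametrized moduli spaces $M^+_z(X^*,\mathbf{p};[\Cr])_{[0,1]}$ then yield, via the same fibered-product construction, a chain homotopy $\check K$ between $\check m_0$ and $\check m_1$: the codimension-one strata of the $1$-parameter family of $(d{+}1)$-dimensional moduli spaces, as listed in Proposition \ref{cod1cob}, now include the two endpoint fibers over $\{0\}$ and $\{1\}$ (contributing $\check m_0$ and $\check m_1$) in addition to the breaking terms (contributing $\check\partial\check K+\check K\check\partial$) and the boundary-obstructed corrections. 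Assembling the pieces as in \eqref{cobmaps} produces $\check m_1-\check m_0=\check\partial(Y_+)\check K+\check K\check\partial(Y_-,\mathbf{p})$, so the maps agree in homology. The identification of the homology of $\check C_\bullet(Y_-,\mathbf{p})$ with $\HMt_\bullet(Y_-,\spin_-)\otimes\ztwo[[U_1]]\otimes\cdots\otimes\ztwo[[U_m]]$ is furnished by Lemma \ref{Yp} together with the computation $\HMf_\bullet(S^3)\cong\ztwo[[U]]$ of Example \ref{S3}, and is natural enough that the comparison between different choices of transverse $\delta$-chain classes (needed because different auxiliary data require transversality to different families $\mathcal{F}_W$) goes through by the quasi-isomorphism statements in Lemma \ref{Yp} and Lemma \ref{moretransverse}.

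The main obstacle I anticipate is the bookkeeping of the boundary-obstructed strata in the parametrized setting, i.e.\ checking that the composite-operator entries in \eqref{cobmaps} are exactly the ones needed so that the higher-corank $\delta$-structures of Proposition \ref{gluingX} assemble correctly into the chain-homotopy identity. Concretely, when $[\gamma_0]$ is boundary obstructed of corank $c$, the gluing picture is governed by a codimension-$c$ $\delta$-structure (Definition \ref{codimensionc}) rather than an honest corner, and one must verify — as in Section $25.3$ of the book — that the signs (here trivial, as we work over $\ztwo$) and the combinatorics of which faces are boundary obstructed work out so that the contributions telescope. This is the step where the precise matrix form of $\check m$, $\hat m$, $\bar m$ is forced, and it requires a careful case analysis paralleling the book's; since all the relevant local models have been established in Chapter $2$ (Proposition \ref{cod1cob} and Proposition \ref{gluingX}), the argument is a translation rather than a genuinely new difficulty, but it is the most error-prone part. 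Everything else — transversality, finiteness, the $\delta$-chain formalism — is already in place from the earlier sections and from Chapter $2$.
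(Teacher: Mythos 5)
Your proposal matches the paper's own proof in essentially every respect: the chain-map identity is obtained by feeding the codimension-one stratification of the compactified cobordism moduli spaces (Proposition \ref{cod1cob}) into the fibered-product/Stokes argument of Proposition \ref{chaincomp}, and independence is proved by a chain homotopy built from parametrized moduli spaces over a regular path $P=[0,1]$ of metrics and perturbations (Proposition \ref{transversecob}), with the transversality bookkeeping handled by restricting to a quasi-isomorphic subcomplex as in Lemma \ref{Yp}. The only cosmetic difference is that the paper defers the independence of the number and position of punctures (and of the weights) to the subsequent neck-stretching argument of Lemma \ref{indeppunctures}, whereas you fold that step into the same discussion; the substance is the same.
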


\begin{proof}
We first show that the map is a chain map. We will only show that the component of $\check{m}$ given by
\begin{equation*}
B= \partial^o_om^o_o+m^o_o\partial^o_o+\partial^u_o\bar{\partial}^s_um^o_s+m^u_o\bar{\partial}^s_u\partial^o_s+\partial^u_o\bar{m}^s_u\partial^o_s: C^o_{\bullet}(Y_-,\mathbf{p})\rightarrow C^o_{\bullet}(Y_+)
\end{equation*}
is zero. Consider $[\sigma_-]\otimes[\boldsymbol{\sigma}]\in C_{\bullet}^o(Y_-, \mathbf{p})$. For a given irreducible critical submanifold $[\Cr_+]\subset \Bs_k(Y_+)$, we have that the component of $B([\sigma_-]\otimes[\boldsymbol{\sigma}])$ in the summand $C_*^{\mathcal{F}}([\Cr_+])$ is given by
\begin{align*}
&\partial\left[(\sigma_-,\boldsymbol{\sigma})\times M^+([\Cr_-],\mathcal{C}, W^*_{\mathbf{p}}, [\Cr_+])\right]\\
&+\sum_{[\Cr'_+]\in \mathsf{C}^o(Y_+)}\left[(\sigma_-,\boldsymbol{\sigma}) \times M^+([\Cr_-],\mathcal{C}, W^*_{\mathbf{p}},[\Cr'_+])\right]\times \breve{M}^+([\Cr'_+],[\Cr_+])\\
&+\left[(\partial[\sigma_-],\boldsymbol{\sigma})\times M^+([\Cr_-],\mathcal{C}, W^*_{\mathbf{p}}, [\Cr_+])\right]+\left[\sum_{\mathcal{C}'}(\sigma_-, \hat{\partial}[\boldsymbol{\sigma}])\times M^+([\Cr_-],\mathcal{C}', W^*_{\mathbf{p}}, [\Cr_+])\right]\\
&+\sum_{[\Cr_-']\in \mathsf{C}^o(Y_-)}\left[\left((\sigma\times \breve{M}^+([\Cr_-],[\Cr_-']),\boldsymbol{\sigma}\right)\times M^+([\Cr_-'],\mathcal{C}, W^*_{\mathbf{p}}, [\Cr_+])\right]\\
&+\sum_{[\Cr_+']\in\mathsf{C}^s_+}\sum_{[\Cr_+'']\in\mathsf{C}^u_+}\left[(\sigma_-,\boldsymbol{\sigma})\times M^+([\Cr_-'],\mathcal{C}, W^*_{\mathbf{p}}, [\Cr_+'])\times \breve{M}^{\mathrm{red}+}([\Cr_+'],[\Cr_+''])\times \breve{M}^+([\Cr_+''],[\Cr_+])\right]\\
&+\sum_{[\Cr_-']\in\mathsf{C}^s_-}\sum_{[\Cr_-'']\in\mathsf{C}^u_-}
\left[\left(\sigma\times \breve{M}^+([\Cr_-],[\Cr_-'])\times \breve{M}^+([\Cr_-'],[\Cr_-'']), \boldsymbol{\sigma}\right)\times M^+([\Cr_-''],\mathcal{C}, W^*_{\mathbf{p}}, [\Cr_+])\right]\\
&+\sum_{[\Cr_-']\in\mathsf{C}^s_-}\sum_{[\Cr_+']\in\mathsf{C}^u_+}
\left[(\sigma\times \breve{M}^+([\Cr_-],[\Cr_-']),\boldsymbol{\sigma})\times M^+([\Cr_-'],\mathcal{C}, W^*_{\mathbf{p}}, [\Cr_+'])\times \breve{M}^+([\Cr_+'],[\Cr_+])\right].
\end{align*}
Here, in the second term of the third row we sum over the sequences $\mathcal{C}'$ that differ from $\mathcal{C}$ by at most one element (notice that for dimensional reasons very few terms actually contribute to the sum). The term $\partial^o_om^o_o$ corresponds to the first two rows and $m^o_o\partial^o_o$ corresponds to the third and fourth row. The remaining three terms correspond each to one of the last three lines. The claim follows because the first term is equal to the sum of all the others. This follows as in the proof of Proposition \ref{chaincomp} from the fact that the codimension $1$ faces of the fibered product of $\delta$-chains is given by the fibered products of a codimension $1$ face is one factor with the other factor, together with the classification of codimension one strata of the moduli space of solutions on a cobordism (see Theorem \ref{cod1cob}). Notice that the fact that the differentials in the factors corresponding to the punctures only involve reducible moduli spaces drastically simplifies the formula.
The proof in the other cases is essentially the same way, see also Lemma $25.3.6$ in the book.
\par
We then show that the map induced in homology does not depend on the choice of suitable metric and perturbations on the cobordism. Given two such choices $(g_0, \hat{\mathfrak{p}}_0)$ and $(g_1, \hat{\mathfrak{p}}_1)$, consider a path $P$ connecting them which such that the corresponding parametrized moduli spaces are regular in the sense of Definition \ref{regcob}. We will consider the chain complex
\begin{equation*}
\check{C}_{\bullet}(Y_-,\mathbf{p})_P
\end{equation*} 
which is the subchain complex of 
\begin{equation*}
\check{C}_{\bullet}(Y_-)\otimes {C}_{\bullet}^u(S^3_1)\otimes \cdots \otimes {C}_{\bullet}^u(S^3_m)
\end{equation*}
consisting of tuples of $\delta$-chains which are transverse to all evaluation maps arising from the moduli spaces with data $P$ (hence also to the ones arising from $(g_0, \hat{\mathfrak{p}}_0)$ and $(g_1, \hat{\mathfrak{p}}_1)$). The inclusion of this chain complex in each of the two subcomplexes of $\check{C}_{*}(Y_-,\mathbf{p})$ corresponding to a choice of metric and perturbation is a quasi-isomorphism (as in Lemma \ref{Yp}). Also, the two choices of metrics and perturbation give rise to two chain maps
\begin{equation*}
\check{m}_0,\check{m}_1: \check{C}_{\bullet}(Y_-,\mathbf{p})_P\rightarrow \check{C}_{\bullet}(Y_+),
\end{equation*}
and our goal is to show that these are chain homotopic. The chain homotopy is the map
\begin{equation*}
\check{m}(P): \check{C}_{\bullet}(Y_-,\mathbf{p})_P\rightarrow \check{C}_{\bullet}(Y_+)
\end{equation*}
obtained by the same formulas \ref{cobmaps} by substituting the moduli spaces on the cobordisms by the parametrized counterparts
\begin{equation*}
M^+([\Cr_-], \mathcal{C}, W^*_{\mathbf{p}},[\Cr_+])_P.
\end{equation*}
The discussion above can be carried over with the only difference in this case each moduli space has an additional codimension one face given by $M^+([\Cr_-],\mathcal{C}, W^*,[\Cr_+])_{\partial P}$. The same proof as above shows that
\begin{equation*}
\check{\partial}(Y_+) \circ\check{m}(P)+\check{m}(P)\circ\check{\partial}(Y_-,\mathbf{p})=\check{m}_0+\check{m}_1,
\end{equation*} 
which proves the result.
\end{proof}
\begin{remark}
It is important to remark that the groups we are dealing with depend on the choice of metric and perturbations on the three manifolds, even though our notation does not make that explicit. The key point of the result is that the induced map does not depend on the choice of the metric and perturbation on the cobordism.
\end{remark}
\vspace{0.8cm}

Given a cobordism $W$ from $Y_-$ to $Y_+$, we define the maps
\begin{equation*}
\HMt_{\bullet}(U^d\mid W): \HMt_{\bullet}(Y_-)\rightarrow \HMt_{\bullet}(Y_+)
\end{equation*}
as follows. Consider on $W$ marked points $\mathbf{p}=\{p_1,\dots, p_m\}$, and fix non-negative integers $\{d_i\}$ summing up to $d$. Then we set
\begin{equation*}
\HMt_{\bullet}(U^d\mid W)(x)=\HMt_{\bullet}(W,\mathbf{p})(x\otimes U_1^{d_1}\otimes \cdots \otimes U_m^{d_m})
\end{equation*}
where the map $\HMt_{\bullet}(W,\mathbf{p})$ is the above constructed above induced by the cobordism which has the marked points interpreted as incoming ends. Our next goal is to show that such a map is independent of the perturbation on the punctures, the number of punctures and the partition of $d$.

\begin{lemma}\label{indeppunctures}
The map $\HMt_{\bullet}(U^d\mid W)$ defined above and its analogues are independent of the choices made.
\end{lemma}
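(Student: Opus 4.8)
The statement asserts that the map $\HMt_\bullet(U^d\mid W)$ built from a choice of marked points $\mathbf p=\{p_1,\dots,p_m\}$, Morse-Bott perturbations $\q_i$ on the punctures $S^3_i$, a partition $d=d_1+\cdots+d_m$, and an admissible metric/perturbation on $W^*_{\mathbf p}$, is in fact independent of all of these. The overall strategy is the standard ``interpolate and read off the codimension-one faces'' argument, now executed in the language of parametrized moduli spaces and $\delta$-chains developed in Sections~2 and~3, and it splits into three independent comparisons which I would treat in sequence.

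\emph{Step 1: independence of the metric and perturbation on the cobordism.} This is already contained in Proposition~\ref{indepmap}: two admissible choices $(g_0,\hat{\mathfrak p}_0)$ and $(g_1,\hat{\mathfrak p}_1)$ are joined by a regular path $P$ (using Proposition~\ref{transversecob}), the parametrized moduli spaces $M^+_z([\Cr_-],\mathcal C,W^*_{\mathbf p},[\Cr_+])_P$ define chain maps $\check m_0,\check m_1$ which agree after the quasi-isomorphism $\check C_\bullet(Y_-,\mathbf p)_P\hookrightarrow\check C_\bullet(Y_-,\mathbf p)$ (Lemma~\ref{Yp}), and the parametrized operator $\check m(P)$ is the required chain homotopy; the $\partial P$-face of each parametrized moduli space is exactly what produces the term $\check m_0+\check m_1$ in the homotopy identity. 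So for this step I would simply cite Proposition~\ref{indepmap}.

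\emph{Step 2: independence of the perturbation $\q_i$ on a fixed set of punctures.} Since $\HMf_\bullet(S^3)=\ztwo[[U]]$ and $U^{d_i}$ is a specific homology class there, I would choose a regular path of Morse-Bott perturbations on each $S^3_i$ keeping the positive-scalar-curvature condition (so there are never irreducibles on the punctures, cf. Example~\ref{S3}), build the corresponding parametrized family of moduli spaces on $W^*_{\mathbf p}$ by another application of Proposition~\ref{transversecob}, and run the same codimension-one bookkeeping as in Step~1. The only extra point is that varying $\q_i$ changes the chain-level representative of $U^{d_i}$; but the chain homotopy invariance of $\HMf_\bullet(S^3_i)$ together with the fact that the differential in the puncture factors involves only reducible moduli spaces (which is what makes all the combinatorics in Proposition~\ref{indepmap} work) means the induced map in homology is unchanged. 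I expect this to go through verbatim once the analogue of Lemma~\ref{Yp} for the parametrized family is in hand, which follows from the energy filtration argument of Lemma~\ref{moretransverse}.

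\emph{Step 3: independence of the number of punctures and of the partition of $d$.} This is the genuinely new content and, I expect, the main obstacle. The key geometric input is the neck-stretching degeneration that creates or removes a puncture: adding a puncture $p_{m+1}$ with $d_{m+1}=0$ should not change anything because the map $\HMf_\bullet(S^3)\to\HMf_\bullet(S^3)$ induced by the cylinder $[0,1]\times S^3$ sends $1\mapsto 1$ and, more to the point, attaching a trivial $S^3$-end to $W^*_{\mathbf p}$ and capping the new end with $1\in\HMf_\bullet(S^3)$ is geometrically the identity operation on the cobordism (one recognizes $W^*_{\mathbf p}$ inside $W^*_{\mathbf p\cup\{p_{m+1}\}}$ after removing a standard ball and regluing it). Concretely I would argue as follows: capping the end $S^3_{m+1}$ with the cycle representing $1$ in $C^u_\bullet(S^3_{m+1})$, the fibered product with $M^+_z([\Cr_-],\mathcal C,[\Cr^{m+1}],W^*_{\mathbf p\cup\{p_{m+1}\}},[\Cr_+])$ degenerates, as the neck at $p_{m+1}$ is stretched, into $M^+_z([\Cr_-],\mathcal C,W^*_{\mathbf p},[\Cr_+])$ glued to $M(\mathbb R^{\geq}\times S^3,[\Cr^{m+1}])$, and the latter (on $S^3$ with PSC metric) contributes the unit. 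For the partition statement, given $W$ with a single puncture $p$ carrying $U^d$ versus two punctures $p',p''$ carrying $U^{d'},U^{d''}$ with $d'+d''=d$, I would connect the two configurations by a path of metrics on $W$ that moves $p'$ and $p''$ together and pinches them into a single puncture $p$; during this degeneration the neck joining the two $S^3$-ends to the rest becomes a single $S^3$-end, and the composition law for the $U$-maps on $S^3$ --- namely that capping $[0,1]\times S^3$ with $U^{d'}$ on one end and $U^{d''}$ on the other, i.e. the pair-of-pants product $\HMf_\bullet(S^3)\otimes\HMf_\bullet(S^3)\to\HMf_\bullet(S^3)$ is ring multiplication $U^{d'}\otimes U^{d''}\mapsto U^{d'+d''}$ --- identifies the two maps. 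The hard part is making these neck-stretching arguments rigorous within the present framework: one must upgrade the compactness/gluing package of Chapter~2 (Theorems~\ref{finitenesscob}, \ref{gluingX} and Proposition~\ref{cod1cob}) to families of metrics in which the neck length itself is the parameter, check that the degenerate limit is again a fibered product of $\delta$-chains of the required type (the combinatorial face condition of Definition~\ref{abstractgeom} must be verified for the new codimension-one stratum created by the pinch), and confirm that the $\delta$-chains from $\mathcal F_W$ remain transverse throughout the family so that Lemma~\ref{Yp} applies uniformly. Once these points are established, the same ``boundary of a 1-parameter family computes the difference of the two maps'' mechanism used in Step~1 yields the claim, using crucially that $\HMf_\bullet(S^3)$ is a ring with the $U$-action as stated in Example~\ref{S3} and that there are no irreducible solutions on $S^3$ to complicate the combinatorics.
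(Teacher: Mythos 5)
Your overall strategy is the same as the paper's: the metric/perturbation independence is quoted from Proposition~\ref{indepmap}, and the dependence on the punctures and on the partition of $d$ is removed by a neck-stretching argument in which the parametrized moduli spaces over a family of metrics (with the fiber over $T=\infty$ added) furnish a chain homotopy between the original map and a composition of a map induced by a punctured ball with a map induced by the cobordism with fewer punctures. The paper does this in one shot, stretching around a single PSC ball $B_0$ containing \emph{all} the marked points, so that the limit identifies $\check m$ with $\check m(W,p_0)\circ\hat m(B_0,\mathbf p)$; your decomposition into ``add a puncture with weight $0$'' and ``merge two punctures'' is a minor repackaging of the same degeneration and would work.

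The genuine gap is in your Step~3 endgame: you dispose of the crucial identification by appealing to ``the composition law for the $U$-maps on $S^3$,'' i.e.\ to the statement that the two-punctured ball (pair of pants) induces $U^{d'}\otimes U^{d''}\mapsto U^{d'+d''}$ on $\HMf_{\bullet}(S^3)$. At this point of the construction that statement is not available: the $\ztwo[[U]]$-module structure and the composition law (Proposition~\ref{modulestr}) are \emph{defined and proved using} the maps whose well-definedness Lemma~\ref{indeppunctures} is establishing, and Example~\ref{S3} only computes $\HMf_{\bullet}(S^3)$ as a graded group, not the effect of the punctured-ball cobordism. So invoking the ring structure here is circular, and the multiplicativity of $\hat m(B_0,\mathbf p)$ is precisely the computational content you must supply. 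The paper closes this by an explicit analysis on the punctured PSC ball: there is a unique anti-self-dual spin$^c$ connection $A$ and no irreducible solutions, so by linearity of $D_A^+$ the relevant moduli spaces are complex projective spaces with hyperplanes removed, the classes $U^{d_i}$ and $U^d$ are represented by projective subspaces of the critical submanifolds, and a dimension count (using that the $L^2$ index of $D_A^+$ vanishes by positive scalar curvature) shows the fibered product of $\prod M_i$ with this moduli space represents $U^{d_1+\cdots+d_m}$. Without this (or an equivalent) computation your argument does not go through; by contrast, the neck-stretching technicalities you flag as ``the hard part'' are handled by the already-developed compactness and gluing package for families of metrics and are comparatively routine.
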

\begin{proof}We will use a metric stretching argument to show that the map is the one induced by a single puncture and the class $u^d$ on the corresponding end. We will focus on the \textit{to} case, as the other ones are essentially identical.
\par
Because of the metric independence on the maps, we can suppose we are in the following special situation. Let $p_0$ be any point in the interior of $W$, such that there is a ball $B_0$ with positive scalar curvature around $p_0$ and metric which is a product $S^3\times (-\varepsilon,0]$ near the boundary where $S^3$ has the standard round metric. We can assume that the neighborhoods around each marked point $p_i$ we used to define the chain map $\check{m}$ are all contained in such a ball. For every $T\geq0$, we can define the Riemannian manifold
\begin{equation*}
W(T)=  B_0\cup ([0,T]\times \partial B_0)\cup (W\setminus \mathrm{int} B_0)
\end{equation*}
obtained by inserting a cylinder of length $T$ along the boundary of the ball. We can interpret this family of manifolds also as a family of metrics parametrized by $[0,\infty)$ on the fixed manifold $W$, which is always standard near the punctures $p_i$. On the additional tube $[0,T]\times \partial B_0$, we consider a non degenerate perturbation $\q_0$ translation invariant perturbation not introducing any irreducible solution. Notice that the perturbations are not supported in the boundary anymore, but the details of the construction carry out without any problem in this slightly more general setting. We can then consider the moduli spaces on the cobordism
\begin{equation*}
M([\Cr_-],\mathcal{C}, W^*_{\mathbf{p}},[\Cr_+])_{[0,\infty)}
\end{equation*}
parametrized by the family of metrics in $[0,\infty)$. We can find time independent perturbations on $B_0$ and $W\setminus\mathrm{int}B_0$ so that these moduli spaces are regular, as it follows from their fibered product description (see Proposition $26.1.3$ in the book).
\par
Even if we compactify the space fiberwise, the result might not be a compact space, simply because the base space is not. As it is usual in neck-stretching arguments (see for example Chapter $26$ in the book), there is a natural compactification obtained by adding the fiber over $T=\infty$
\begin{equation*}
M^+([\Cr_-],\mathcal{C}, W^*_{\mathbf{p}}, [\Cr_+])_{\infty}.
\end{equation*}
This is a stratified space consisting of quintuples
\begin{equation*}
\left( [\check{\boldsymbol{\gamma}}], [\gamma_B],[\check{\gamma}],[\gamma_X],[\gamma_+]\right)
\end{equation*}
where
\begin{align*}
[\check{\boldsymbol{\gamma}}]&\in \breve{M}^+\left(([\Cr_-],\mathcal{C}),([\Cr_-'],\mathcal{C}')\right)\\
{[}\gamma_B]&\in M(\mathcal{C}', B_{0,\mathbf{p}}^*, [\Cr_0])\\
{[}\check{\gamma}]&\in \breve{M}^+([\Cr_0], [\Cr_0'])\\
{[}\gamma_X]&\in M([\Cr_-'],[\Cr_0'], X^*_{p_0}, [\Cr_+'])\\
{[}\breve{\gamma}_+]&\in \breve{M}^+([\Cr_+'],[\Cr_+])
\end{align*}
are such that the evaluations on the corresponding ends coincide. Here $[\Cr_0]$ and $[\Cr_0']$ are \textit{unstable} critical submanifolds for the flow on $S^3$ with the perturbation $\q_0$. These objects also have a well defined relative homotopy class. We define the total compactified space as
\begin{equation*}
\mathcal{M}^+_z([\Cr_-],\mathcal{C}, W^*_{\mathbf{p}}, [\Cr_+])=\bigcup_{S\in[0,\infty]}\{S\}\times M^+_z([\Cr_-]\mathcal{C}, W^*_{\mathbf{p}}, [\Cr_+])_S.
\end{equation*}
The topology on the total space is defined in the same fashion as the one on the space of broken trajectories, and it is not hard to prove that this space is an abstract $\delta$-chain whose codimension one faces are given by:
\begin{itemize}
\item the moduli space $M^+_z([\Cr_-],\mathcal{C}, W, [\Cr_+])_0$;
\item the union over $S\in[0,\infty]$ of the codimension one faces of $M^+_z([\Cr_-],\mathcal{C}, W, [\Cr_+])_S$ corresponding to the same sequences of critical submanifolds and homotopy class.
\item the top strata of the fiber over $\infty$.
\end{itemize}
Regarding the last item, the analogue of Proposition $26.1.6$ in the book holds in a simpler version as there are no boundary obstructed trajectories involved, and these top strata are fibered products of the form
\begin{equation*}
M(\mathcal{C}, B_{0,\mathbf{p}}^*, [\Cr_0])\times M([\Cr_-],[\Cr_0], X^*_{p_0}, [\Cr_+]).
\end{equation*}
Using these compactifications, we can define the map
\begin{align*}
H^o_o&: C^o_{\bullet}(Y_-, \mathbf{p})\rightarrow C^o_{\bullet}(Y_+)\\
[\sigma]\otimes[\boldsymbol{\sigma}]&\mapsto \sum_{[\Cr_+]\in \mathsf{C}^o} (\sigma,\boldsymbol{\sigma})\times \mathcal{M}^+_z([\Cr_-],\mathcal{C}, W^*_{\mathbf{p}}, [\Cr_+]),
\end{align*}
and similarly the maps $H^o_s, H^u_o, H^u_s$ and using the reducible counterparts of the moduli spaces the operator $\bar{H}^s_s, \bar{H}^u_u, \bar{H}^s_u, \bar{H}^u_s$. As it should be clear, here we restrict to the subcomplex generated by tuples of $\delta$-chains transverse to all the parametrized moduli spaces, which is quasi isomorphic to the total complex following Lemma \ref{Yp}. We then define
\begin{IEEEeqnarray*}{c}
\check{H}:\check{C}_{\bullet}(Y_-, \mathbf{p})\rightarrow \check{C}_{\bullet}(Y_+)\\
\check{H}=\left[
\begin{matrix}
H^o_o & H^u_o\bar{\partial}^s_u+\partial^u_o\bar{H}^s_u\\
H^o_s & \bar{H}^s_s+H^u_s\bar{\partial}^s_u+\partial^u_s\bar{H}^s_u
\end{matrix}\right].
\end{IEEEeqnarray*}
The following identity holds, 
\begin{equation*}
\check{H}\circ\check{\partial}+\check{\partial}\circ\check{H}=\check{m}_0+\check{m}_{\infty},
\end{equation*}
where $\check{m}_0$ is the chain map induced by the original cobordism and $\check{m}_{\infty}$ is analogous map defined by using the moduli spaces $M^+([\Cr_-],\mathcal{C}, W^*_{\mathbf{p}}, [\Cr_+])_{\infty}$. It is important to notice that although the latter are is not an abstract $\delta$-chain, it consists of a union of them along codimension one faces and the whole construction carries over without any complication.  
To verify the identity, we have for example that
\begin{equation*}\partial^o_oH^o_o+\partial^u_o\bar{\partial}^s_uH^o_s+H^o_o\partial^o_o+H^u_o\bar{\partial}^s_u\partial^o_s+\partial^u_o\bar{H}^s_u\partial^o_s= (\check{m}_0)^o_o+ (\check{m}_{\infty})_o^o,
\end{equation*}
which follows as usual by the characterization of the codimension one strata. The map $\check{m}_{\infty}$ is just the composition of two chain maps, namely the map
\begin{equation*}
\hat{m}(B_0, \mathbf{p}):\hat{C}_{\bullet}(S^3_1)\otimes\cdots \otimes\hat{C}_{\bullet}(S^3_m)\rightarrow \hat{C}_{\bullet}(S^3_0)
\end{equation*}
induced by the punctured cobordism $B_0$ and the map
\begin{equation*}
\check{m}(W,p_0): \check{C}_{\bullet}(Y_-, p_0)\rightarrow \check{C}_{\bullet}(Y_+)
\end{equation*}
that defines the map induced by the cobordism $W$ with the single puncture $p_0$. We again did not mention the transversality issue that can be handled in the usual way by restricting to a quasi-isomorphic chain complex. To conclude, we just need to show that the chain map $\hat{m}(B_0,\mathbf{p})$ induces at the homology level the multiplication map
\begin{align*}
\ztwo[[U_1]]\otimes \cdots\otimes \ztwo[[U_m]]&\rightarrow \ztwo[[U]]\\
U_1^{d_1}\otimes\cdots\otimes U_m^{d_m}&\mapsto U^{d_1+\dots+d_m}.
\end{align*}
The argument is very close to that of Lemma $27.4.2$ in the book. On the cobordism $(B_0)^*_{\mathbf{p}}$ there is only one anti-self dual connection $A$ up to gauge equivalence, and no irreducible solutions because of positive scalar curvature. Each generator $U_i^{d_i}$ can be realized as a projective subspace $M_i$ of the critical submanifold $[\Cr_i]$ representing a generator in homology, and similarly $U^d$ is represented by projective subspace in a critical submanifold $[\Cr_+]$. By linearity of the Dirac operator $D_A^+$, we have that the top stratum of the moduli space
\begin{equation*}
M^+([\Cr_1],\dots, [\Cr_m], (B_0)^*_{\mathbf{p}}, [\Cr_+])
\end{equation*}
is a complex projective space with some hyperplanes removed. Furthermore, the image of the evaluation maps in
\begin{equation*}
[\Cr_1]\times\dots \times[\Cr_m]
\end{equation*}
projects to a projective space in each factor and has the same dimension as the moduli space. The positivity of the scalar curvature implies that the $L^2$ index of the operator $D_A^+$ is zero. By simple dimensional considerations that the fibered product between $\prod M_i$ and the moduli space on $B_0$ represents the class $U^d$.
\end{proof}

\vspace{0.8cm}
The aim is now to check functoriality and invariance for the construction. The first verification is trivial.
\begin{prop}\label{identitymap}
If $X$ is a trivial cylindrical cobordism from $(Y;g,\q)$ to itself, then the maps
\begin{equation*}
\HMt_{\bullet}(1\mid X), \qquad\HMf_{\bullet}(1\mid X), \qquad \HMb_{\bullet}(1\mid X)
\end{equation*}
are the identity on the Floer homology groups.
\end{prop}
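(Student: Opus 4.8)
The plan is to verify that when $X$ is the trivial cylindrical cobordism $[0,1]\times Y$ equipped with the product metric and the translation-invariant perturbation coming from $(g,\q)$ on $Y$, the chain map $\check{m}$ (and its companions $\hat{m},\bar{m}$) constructed in the previous section is chain homotopic to the identity. By Proposition \ref{indepmap} we are free to choose the metric and perturbation on $X$ conveniently, since the induced map on homology is independent of those choices. Likewise, by Lemma \ref{indeppunctures} we may assume $X$ carries no punctures, so that $\check{m}\colon\check{C}_\bullet(Y)\to\check{C}_\bullet(Y)$ is built purely from the moduli spaces $M^+_z([\Cr_-],X^*,[\Cr_+])$ on the cylinder with its product structure.

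The key step is the identification of these moduli spaces. With the product metric and translation-invariant perturbation, $X^*=\R\times Y$ is the infinite cylinder and the solutions of $\F^\sigma_{\mathfrak p}=0$ on it are exactly the trajectories of $(\mathrm{grad}\CSd)^\sigma$. Thus $M^+_z([\Cr_-],X^*,[\Cr_+])$ is the space of (possibly broken, but \emph{parametrized}, i.e.\ with a choice of splitting point) trajectories, which breaks into the product $\breve M^+_z([\Cr_-],[\Cr_+])\times\R$ on the top stratum and degenerates along the $\R$-factor. Concretely, the fiber product construction $(\sigma_-)\times M^+_z([\Cr_-],X^*,[\Cr_+])$ reproduces, for the trivial class $z_0$, the $\delta$-chain $\sigma_-$ itself together with a tautological $[0,1]$-family interpolating between $\sigma_-$ and its image under the flow; for nontrivial $z$ it reproduces the terms $\sigma_-\times\breve M^+([\Cr_-],[\Cr_+])$ appearing in the differential, now thickened by a parametrizing interval. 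Matching the $2\times 2$ block form of $\check m$ in \eqref{cobmaps} against the block form of $\check\partial$, the diagonal ``zero-energy'' contribution is the identity $\sigma_-\mapsto[\sigma_-]$ up to a negligible $[0,1]\times$ interpolation chain, and all the off-diagonal and higher-energy contributions assemble into $\check\partial\check H+\check H\check\partial$ where $\check H$ is the operator defined by pushing forward along the parametrized (non-collapsed) moduli spaces — exactly the mechanism of Proposition \ref{indepmap}. One then reads off $\check m=\mathrm{id}+\check\partial\check H+\check H\check\partial$.

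I would organize the argument as: (1) reduce to the product metric and no punctures via Propositions \ref{indepmap} and \ref{indeppunctures}; (2) identify $M^+_z([0,1]\times Y^*,[\Cr_-],[\Cr_+])$ with the $\R$-parametrized space of broken trajectories, decomposing it by whether $z=z_0$ or not and analyzing its codimension-one faces via Theorem \ref{cod1cob}; (3) observe that the $z_0$ summand, pushed forward to $[\Cr_+]=[\Cr_-]$, is the $\delta$-chain $\sigma_-$ plus a negligible interpolation (using Proposition \ref{stokes} and Definition \ref{smallchain} to see that the $[0,1]$-parametrized chain from a constant flow is negligible), so this summand contributes the identity at the chain level; (4) assemble the remaining summands into a null-homotopy $\check H$ exactly as the parametrized chain homotopy in the proof of Proposition \ref{indepmap}; (5) repeat verbatim for $\hat m$ and $\bar m$, noting the reducible case is simpler.

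The main obstacle I anticipate is step (3): making precise that the ``zero-energy'' part of the cobordism map is literally the identity rather than merely a quasi-isomorphism. The subtlety is that the fiber product $\sigma_-\times M^+_{z_0}([\Cr],X^*,[\Cr])$ is not tautologically $\sigma_-$ — it carries an extra $\R$ (or, compactified, $[0,\infty]$) parameter recording the ``neck length'' of the trivial cylinder — so one must argue that this parametrizing factor produces only a negligible chain together with its boundary being the difference of $\sigma_-$ with itself, i.e.\ that the parametrized chain $\sigma_-\times[0,\infty]$ mapped by a flow that is trivial in the limit is negligible in the sense of Definition \ref{smallchain}. This is where the invariance machinery of Section 1 (Proposition \ref{isomhom} and the behavior of small/negligible chains under fiber products, Lemma \ref{stratprod}) does the real work, and care is needed to ensure the grading bookkeeping through $\mathbb J(\spin)$ is consistent, i.e.\ that the trivial cobordism induces the grading-preserving identity on $\mathbb J$.
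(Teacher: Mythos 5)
Your reduction in steps (1)–(2) is fine, but the heart of the argument, steps (3)–(4), does not go through as written, and it misses the one observation that makes the statement true at the chain level. With the product metric and a translation-invariant perturbation, the relevant moduli spaces on $X^*=\R\times Y$ are the \emph{parametrized} trajectory spaces $M_z([\Cr_-],[\Cr_+])$. For the trivial class with $[\Cr_-]=[\Cr_+]=[\Cr]$, the zero-energy solutions are exactly the constant trajectories; translation acts trivially on them, so this component is a copy of $[\Cr]$ with both evaluation maps the identity, and the fibered product with $\sigma$ is literally $\sigma$ — there is no extra ``neck-length'' parameter and no interpolation chain between $\sigma$ and ``its image under the flow'' to dispose of. Your step (3) is therefore analyzing a component that does not exist. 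The components you do have to control are the non-translation-invariant solutions (in any class $z$), and the argument for those is not the one you sketch: both $\ev_-$ and $\ev_+$ are translation-invariant, hence on such a component they factor through the unparametrized quotient $\breve{M}_z([\Cr_-],[\Cr_+])$, which has dimension one less. Consequently every fibered product $\sigma\times M_z$ built from a non-constant component has image contained in the image of a transverse $\delta$-chain of strictly smaller dimension, and the same holds for its boundary; so these contributions are negligible in the sense of Definition \ref{smallchain} and vanish identically in the quotient complexes $C^{\mathcal{F}}_*$. This gives $\check{m}=\mathrm{id}$ (and likewise $\hat{m}=\mathrm{id}$, $\bar{m}=\mathrm{id}$) on the nose at the chain level; no chain homotopy is needed.

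Your step (4) is also not substantiated: the chain homotopy in Proposition \ref{indepmap} comes from a one-parameter family of metrics and perturbations, i.e.\ from parametrized moduli spaces over an interval $P$, whose extra codimension-one face over $\partial P$ is what produces the identity $\check{\partial}\check{H}+\check{H}\check{\partial}=\check{m}_0+\check{m}_1$. Here there is a single fixed metric, and the $\R$-direction you propose to use is internal to the moduli spaces that already define $\check{m}$ itself; pushing forward along those same spaces does not define a new operator $\check{H}$, and no identity of the form $\check{m}=\mathrm{id}+\check{\partial}\check{H}+\check{H}\check{\partial}$ is verified or even plausibly set up. So as written the proof has a genuine gap; replacing (3)–(4) by the factoring-through-$\breve{M}$ negligibility argument above closes it and in fact yields the stronger statement that the cobordism map is the identity at the level of the chain complexes $(\check{C}_*,\check{\partial})$, $(\hat{C}_*,\hat{\partial})$, $(\bar{C}_*,\bar{\partial})$.
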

\begin{proof}
For this computation we can simply consider the product cobordism $I\times Y$ from $Y$ to itself with a translation invariant perturbation. We claim that the chain map is in this case the identity. In fact the evaluation maps of a moduli space consisting of non translationally invariant solutions factors through a smaller dimensional space (the space of unparametrized trajectories), hence the fibered products with it will always be negligible. This implies that we can restrict ourselves to consider the translationally invariant moduli spaces $M_z([\Cr],[\Cr])$ (with $z$ trivial), which induce the identity map at the chain level.
\end{proof}

The verification of the composition law is not as straightforward, and also uses a metric stretching argument similar to the one in Proposition \ref{indepmap}.
\begin{prop}\label{modulestr}
Let $Y_0,Y_1$ and $Y_2$ be $3$-manifolds, and let $W_{01}$ and $W_{12}$ be cobordisms from $Y_0$ to $Y_1$ and from $Y_1$ to $Y_2$ respectively which are cylindrical near their boundaries. If $d=d_1+d_2$, we have the identities 
\begin{equation*}
\HMt_{\bullet}(u^d\mid W)=\HMt_{\bullet}(u^{d_2}\mid W_{12})\circ\HMt_{\bullet}(u^{d_1}\mid W_{01})
\end{equation*}
where $W=W_{12}\circ W_{01}$ is the composite cobordism, and similarly for the from and bar versions.
\end{prop}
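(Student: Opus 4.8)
The plan is to follow the neck-stretching argument used in the proof of Lemma \ref{indeppunctures}, adapted to the composite cobordism setting. First I would reduce to the case of a single puncture on each piece (and hence on the composite): by Lemma \ref{indeppunctures} the maps $\HMt_{\bullet}(u^{d_1}\mid W_{01})$, $\HMt_{\bullet}(u^{d_2}\mid W_{12})$ and $\HMt_{\bullet}(u^d\mid W)$ are computed from chain-level maps $\check{m}(W_{01},p)$, $\check{m}(W_{12},q)$ and $\check{m}(W,r)$ associated to cobordisms with one incoming $S^3$-end carrying the class $U^{d_1}$, $U^{d_2}$, $U^d$ respectively, all independent of the chosen metric. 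So it suffices to show the corresponding chain maps are chain homotopic (after restricting to a common quasi-isomorphic subcomplex of $\delta$-chains transverse to all parametrized moduli spaces, using Lemma \ref{Yp}). Since the maps are metric-independent, I am free to put a metric on $W = W_{12}\circ W_{01}$ which is a product $[0,T]\times Y_1$ along the gluing region, and let $T\to\infty$, interpreting this as a family of metrics on $W$ parametrized by $[0,\infty)$.

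The key step is then the usual neck-stretching compactness and gluing analysis, exactly as in Chapter $26$ of the book and as set up in Lemma \ref{indeppunctures}. I would form the parametrized moduli spaces $M^+_z([\Cr_0],\mathcal{C},W^*_{\mathbf{p}},[\Cr_2])_{[0,\infty)}$, choose (via Proposition \ref{transversecob}) time-independent perturbations on the two pieces making these regular, and then add the fiber over $T=\infty$, whose top stratum consists of fibered products
\begin{equation*}
M([\Cr_0],\mathcal{C}_{01},W_{01,\mathbf{p}}^*,[\Cr_1'])\times M([\Cr_1'],\mathcal{C}_{12},W_{12,\mathbf{p}}^*,[\Cr_2])
\end{equation*}
matched along the evaluation maps to the intermediate critical submanifold $[\Cr_1']\subset\Bs_k(Y_1)$. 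This gives a total space $\mathcal{M}^+$ which is an abstract $\delta$-chain whose codimension-one faces are: the fiber over $T=0$ (the moduli space for the glued metric), the $T$-families of codimension-one faces of the fiberwise compactifications (breaking off trajectories at the outgoing or incoming ends or at the puncture), and the top stratum of the fiber over $T=\infty$. Using $\mathcal{M}^+$ and its reducible counterpart I would assemble a chain homotopy $\check{H}$ exactly as in the proof of Lemma \ref{indeppunctures}, fitting the four building blocks $H^o_o, H^o_s, H^u_o, H^u_s$ and the reducible $\bar H$'s into the matrix formula analogous to \eqref{cobmaps}, so that
\begin{equation*}
\check{H}\circ\check{\partial}+\check{\partial}\circ\check{H}=\check{m}_0+\check{m}_{\infty},
\end{equation*}
where $\check{m}_0=\check{m}(W,r)$ and $\check{m}_{\infty}$ is the map built from the $T=\infty$ moduli spaces.

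The final step is to identify $\check{m}_{\infty}$ with the composition $\check{m}(W_{12},q)\circ\check{m}(W_{01},p)$ (after inserting the class $U^{d_1}$ on $W_{01}$ and $U^{d_2}$ on $W_{12}$, and using that the intermediate $S^3$-factors contribute trivially since there are no irreducibles on $S^3$ for the round metric, cf. Example \ref{S3}); this is the composition-of-chain-maps statement that the $T=\infty$ stratum is, by construction, a fibered product over $\Bs_{k-1/2}(Y_1)$, and the argument runs as in Lemma $26.1.6$ of the book but in the simpler form valid here because no boundary-obstructed gluing intervenes along the $Y_1$-neck (irreducible critical submanifolds of $Y_1$ are matched directly). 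Passing to homology, and using Proposition \ref{identitymap} only implicitly, gives the desired identity; the \emph{from} and \emph{bar} versions are identical, with the $\bar m$ matrix entries replaced by their reducible analogues throughout. The main obstacle I anticipate is the same combinatorial/gluing bookkeeping that makes the proof of Proposition \ref{chaincomp} and Lemma \ref{indeppunctures} delicate: one must check that the codimension-one faces of $\mathcal{M}^+$ are \emph{exactly} the terms appearing in $\check{H}\circ\check{\partial}+\check{\partial}\circ\check{H}+\check{m}_0+\check{m}_{\infty}$, with each face counted once, which relies on the classification of codimension-one strata (Proposition \ref{cod1cob}) and the gluing theorem (Proposition \ref{gluingX}) together with the combinatorial property of faces in Definition \ref{abstractgeom}; verifying that the boundary-obstructed corrections (the $\bar\partial^s_u$ terms) patch correctly across the $T$-family and the $T=\infty$ fiber is the genuinely technical point, though it is formally parallel to the book's treatment.
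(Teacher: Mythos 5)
Your overall strategy is exactly the paper's: stretch a neck $[0,S]\times Y_1$ in the composite cobordism, compactify the parametrized moduli spaces by adding a fiber over $S=\infty$ consisting of matched pairs of solutions on $(W_{01})^*$ and $(W_{12})^*$, and read off a chain homotopy between $\check m(W,\{p_1,p_2\})$ and the composition of the two chain maps, with transversality handled by restricting to a quasi-isomorphic subcomplex as in Lemma \ref{Yp} and by Proposition \ref{transversecob}. (One cosmetic point: after reducing to one puncture of weight $d_i$ on each piece, the composite carries \emph{two} punctures; the identification of the resulting map with $\HMt_{\bullet}(u^d\mid W)$ is exactly what Lemma \ref{indeppunctures} supplies, as you implicitly use.)

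There is, however, one substantive error in your execution: the claim that ``no boundary-obstructed gluing intervenes along the $Y_1$-neck (irreducible critical submanifolds of $Y_1$ are matched directly).'' The simplification coming from the absence of boundary-obstructed trajectories applies only to the $S^3$ ends at the punctures (that is what was exploited in Lemma \ref{indeppunctures}); it does not apply to $Y_1$, which is an arbitrary $3$-manifold. In the fiber over $S=\infty$ the codimension-one strata include configurations $M_{01}\times \breve M_1\times M_{12}$ in which the middle factor is a broken \emph{reducible} trajectory on $Y_1$ from a boundary-stable to a boundary-unstable critical submanifold, i.e.\ a boundary-obstructed piece sandwiched between the two cobordism moduli spaces (and similarly with breaking on $Y_0$ or $Y_2$ where the cobordism factor is the obstructed one). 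These strata force the chain homotopy to contain, besides the fibered products $K^{\ast}_{\ast}$ and $\bar K^{\ast}_{\ast}$ with the total parametrized space, the cross terms $m^u_o\bar m^s_u$ and $m^u_s\bar m^s_u$ built from the maps induced by the two cobordisms separately; this is how the homotopy $\check K$ is assembled in the paper. If you write the homotopy in the naive form you describe, modeled on the $S^3$ case, the identity $\check K\check\partial+\check\partial\check K=\check m_0+\check m_\infty$ fails, because the boundary-obstructed $Y_1$-breaking faces are not accounted for. With those extra terms inserted (and with the $T=\infty$ matching taken as a fibered product over the critical submanifolds of $Y_1$ via the evaluation maps), your argument coincides with the paper's proof.
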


\begin{proof}
The proof is essentially the same as in Chapter $26$ in the book, to which we refer for the details.
By the independence of the map, we can pick a single puncture with weight $d_i$ on each of the cobordisms. For every $S\geq0$ define the manifold
\begin{equation*}
W(S)=W_{01}\cup ([0,S]\times Y_1)\cup W_{12},
\end{equation*}
the composite cobordism with a cylinder of length $S$ inserted in the middle. The manifolds $W(S)$ can also be interpreted as a family of metrics of the fixed manifold $X$. As in the proof of Proposition \ref{indepmap}, we can compactify the union of the moduli spaces for each metric, obtaining the space
\begin{equation*}
\mathcal{M}^+([\Cr_-],[\Cr_1],[\Cr_2],W_{p_1,p_2}^*,[\Cr_+])_{[0,\infty]},
\end{equation*}
where $[\Cr_i]$ is a critical manifold on the end corresponding to $p_i$, by adding a fiber over $S=\infty$ which we call
\begin{equation*}
{M}^+([\Cr_-],[\Cr_1],[\Cr_2], X(\infty)^*_{p_1,p_2}, [\Cr_+]).
\end{equation*}
This consists of tuples of the form
\begin{equation*}
\left( [\breve{\boldsymbol\chi}_0], [\chi_{01}], [\breve{\boldsymbol\gamma}_1],[\breve{\boldsymbol\chi}_1], [\chi_{12}], [\breve{\boldsymbol\gamma}_2],[\breve{\boldsymbol\chi}_2]\right)
\end{equation*}
where 
\begin{itemize}
\item each $[\breve{\boldsymbol\chi}_i]$ is a broken trajectories on $Y_i$;
\item $[\chi_{01}]$ is a solution on $(W_{01})^*_{p_1}$, and similarly $[\chi_{12}]$ is a solution on $(W_{12})^*_{p_2}$;
\item each $[\breve{\boldsymbol\gamma}_i]$ is a broken trajectory on the end corresponding to the puncture $p_i$,
\end{itemize}
such that the evaluation maps on the corresponding ends agree. The topology on the total space is defined in a similar fashion to that on a space of broken trajectories (see Section $5$ in Chapter $2$). As before, this space is obtained by gluing abstract $\delta$-chains along their codimension one faces, and the top strata of the fiber over $\infty$ consist of objects of the form:
\begin{itemize}
\item $M_{01}\times M_{12}$;
\item $\breve{M}_0\times M_{01}\times M_{12}$;
\item $M_{01}\times \breve{M}_1\times M_{12}$;
\item $M_{01}\times M_{12}\times \breve{M}_2$
\end{itemize}
where the $\breve{M}_i$ indicate the typical moduli space on $Y_i$, and $M_{01}$ and $W_{12}$ are the typical moduli spaces on the cobordisms with prescribed asymptotic at the puncture. As usual, in the last three cases, the middle moduli space is boundary obstructed, and we use the fact that there are no boundary obstructed trajectories on the three-sphere. The proof then follows the same lines of Lemma \ref{indeppunctures}. In particular, one can define the operators
\begin{equation*}
K^o_o, K^o_s, K^u_o, K^u_s\text{ and }\bar{K}^s_s,\bar{K}^s_u,\bar{K}^u_s,\bar{K}^u_u
\end{equation*}
via the fibered products with the moduli spaces $\mathcal{M}^+([\Cr_-],[\Cr_1],[\Cr_2],W_{p_1,p_2}^*,[\Cr_+])_{[0,\infty]}$, and define the operator
\begin{IEEEeqnarray*}{c}
\check{K}:\check{C}_{\bullet}(Y_0, \{p_1,p_2\})\rightarrow \check{C}_{\bullet}(Y_2)\\
\check{K}=\left[
\begin{matrix}
K^o_o & K^u_o\bar{\partial}^s_u+m^u_o\bar{m}^s_u+\partial^u_o\bar{K}^s_u\\
K^o_s & \bar{K}^s_s+K^u_s\bar{\partial}^s_u+m^u_s\bar{m}^s_u+\partial^u_s\bar{K}^s_u
\end{matrix}\right],
\end{IEEEeqnarray*}
where the terms $m^u_o\bar{m}^s_u$ and $m^u_s\bar{m}^s_u$ are the composition of the maps induced by the two cobordisms. It is then not hard to check that this map satisfies the identity
\begin{equation*}
\check{K}\circ \check{\partial}(Y_0, \{p_1,p_2\})+\check{\partial}(Y_2)\circ\check{K}=\check{m}(W,\{p_1,p_2\})+\check{m}(W_{12},p_2)\circ{m}(W_{01},p_1),
\end{equation*}
hence defines a chain homotopy between the two chain maps in consideration (see Chapter $26$ in the book for the details). 
\end{proof}

\vspace{0.8cm}
We recollect the invariance and functoriality results we have just proved in the following corollaries.

\begin{cor}
For different choices of metrics and perturbation $(g,\q)$ the Floer homology groups are canonically isomorphic, hence they are canonically isomorphic to the ones defined in the book. These identifications also preserve the maps induced by cobordisms. 
\end{cor}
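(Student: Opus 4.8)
The plan is to deduce the corollary from the maps induced by cobordisms constructed above, following the classical argument of Chapter 26 in the book. First I would recall that for any two choices $(g_0,\q_0)$ and $(g_1,\q_1)$ of metric and admissible perturbation on $Y$ one can find a \emph{cylindrical cobordism} $X = I\times Y$ equipped with a metric and perturbation that agree with the chosen data near the two ends; this is possible precisely because the space of admissible perturbations is open and the family transversality result Proposition \ref{transversecob} lets us arrange regularity for the parametrized moduli spaces over the interval. Applying the construction of Section 3 gives chain maps inducing
\begin{equation*}
\HMt_{\bullet}(1\mid X): \HMt_{\bullet}(Y;g_0,\q_0)\rightarrow \HMt_{\bullet}(Y;g_1,\q_1),
\end{equation*}
and similarly for the other two flavors. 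By Proposition \ref{modulestr} (the composition law with $d=0$) the composite of the maps induced by $(g_0,\q_0)\rightsquigarrow(g_1,\q_1)$ and $(g_1,\q_1)\rightsquigarrow(g_0,\q_0)$ equals the map induced by the composite cobordism; and by Proposition \ref{identitymap} the map induced by a trivial cylinder over a fixed $(Y;g,\q)$ is the identity. Hence both composites are identities, and $\HMt_{\bullet}(1\mid X)$ is an isomorphism. Independence of the isomorphism from the auxiliary choices along the cobordism is exactly Proposition \ref{indepmap}; independence from the choice of path of metrics joining $(g_0,\q_0)$ to $(g_1,\q_1)$ follows because any two such paths can be joined by a homotopy, and the family transversality result extends to families parametrized by polyhedra (the remark after Proposition \ref{transversecob}), giving a chain homotopy between the two chain-level maps.

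Next I would address canonicity: the collection of isomorphisms so obtained is \emph{coherent}, i.e. the isomorphism from $(g_0,\q_0)$ to $(g_2,\q_2)$ equals the composite of those from $(g_0,\q_0)$ to $(g_1,\q_1)$ and from $(g_1,\q_1)$ to $(g_2,\q_2)$. This is again an instance of Proposition \ref{modulestr}: stretching the neck in the middle of the composite cylinder produces a chain homotopy identifying the composite chain map with the chain map of the once-stretched cylinder, and letting the neck length go to zero (or invoking Proposition \ref{indepmap}) identifies the latter with the chain map of the plain composite cylinder. Coherence lets us speak of \textit{the} Floer homology groups as a well-defined object, independent of all choices up to canonical isomorphism, and the same argument applied to the three flavors simultaneously (together with the naturality of the maps $i,j,p$ from Proposition \ref{longexact} under cobordism maps) shows the long exact sequence \eqref{exactsequencein} is preserved.

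Then I would compare with the Kronheimer--Mrowka invariants: by Lemma \ref{sameconstruction}, when $\q$ is a \emph{non-degenerate} admissible perturbation the chain complexes $(\check C_*,\check\partial)$, $(\hat C_*,\hat\partial)$, $(\bar C_*,\bar\partial)$ literally coincide with those of the book, so the homology groups agree on the nose for such $\q$. Since non-degenerate perturbations are admissible and the canonical isomorphisms above connect any admissible $(g,\q)$ to a non-degenerate one, we obtain a canonical isomorphism between the Morse-Bott-defined groups and the book's groups. Finally, to see that these identifications intertwine the cobordism maps, one notes that the maps $\HMt_{\bullet}(U^d\mid W)$ constructed here via added $S^3$-ends with the class $U^d$ agree, by Lemma \ref{indeppunctures} and a neck-stretching argument, with the single-puncture version, which in turn reduces in the non-degenerate setting to the book's definition of the cobordism maps (this is where the comparison of evaluation-map definitions with the book's cohomology-class definition enters).

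The main obstacle I anticipate is not any single step in isolation but the bookkeeping needed to make the comparison with \cite{KM} fully rigorous: the book's construction of cobordism maps uses cohomology classes $u^d$ on the configuration space of the cobordism, whereas here one uses evaluation maps from moduli spaces over $S^3$-punctured cobordisms, and matching the two requires the positive-scalar-curvature calculation on $S^3$ (Example \ref{S3}) plus a careful identification of the resulting chain homotopy classes --- essentially reproving that both recipes compute the same cap-product-type pairing. Handling the completions $\HMt_{\bullet}$ versus $\HMt_*$ consistently throughout (ensuring all the chain homotopies respect the negative completion filtration, which they do because of the finiteness results Proposition \ref{finiteness} and Theorem \ref{finitenesscob}) is the other place where care is needed, though no new ideas beyond those already in the book are required.
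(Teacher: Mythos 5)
Your argument is correct and follows essentially the same route as the paper: the canonical isomorphism is the map induced by the trivial cylinder $I\times Y$, well defined by Proposition \ref{indepmap}, an isomorphism via the composition law and Proposition \ref{identitymap}, and identified with the book's invariants through Lemma \ref{sameconstruction} in the non-degenerate case. Your extra remarks on coherence and on matching the punctured-cobordism definition of $\HMt_{\bullet}(U^d\mid W)$ with the book's $u^d$-cap-product definition simply spell out details the paper treats briefly (noting that for non-degenerate perturbations only zero-dimensional moduli spaces enter), so no gap is present.
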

\begin{proof}
The canonical isomorphism is the map induced by the trivial cobordism $I\times Y$ (with the appropriate metrics cylindrical near the boundary). Such a map is well defined because of Proposition \ref{indepmap}, and is hence an isomorphism by Lemma \ref{identitymap}. Furthermore, by Lemma \ref{sameconstruction} the construction gives the same result as the construction in the book, and in fact the maps induced by a cobordism are the same because in the case of a non degenerate perturbation only the zero dimensional moduli spaces come into definition of the differentials. \end{proof}

Recall from Definition $3.4.2$ in the book the category $\textsc{cob}$ whose objects are compact, connected, oriented $3$-manifolds and whose morphisms are isomorphism classes of cobordisms.

\begin{cor}
The Floer groups define covariant functors from the cobordism category $\textsc{cob}$ to the category of groups
\begin{align*}
\HMt_{\bullet}&:\textsc{cob}\rightarrow \textsc{group}\\
\HMf_{\bullet}&:\textsc{cob}\rightarrow \textsc{group}\\
\HMb_{\bullet}&:\textsc{cob}\rightarrow \textsc{group}.
\end{align*}
\end{cor}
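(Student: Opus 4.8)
The statement to prove is that the three assignments $Y \mapsto \HMt_{\bullet}(Y)$, $\HMf_{\bullet}(Y)$, $\HMb_{\bullet}(Y)$, together with the maps induced by cobordisms, are covariant functors from $\textsc{cob}$ to $\textsc{group}$. This means we must verify four things: (i) the assignment on objects is well-defined; (ii) the assignment on morphisms is well-defined; (iii) identity morphisms go to identity maps; (iv) composition is respected. All the hard analytic and homotopical work has already been done in the preceding results of the section, so the plan is essentially bookkeeping in the category-theoretic sense.

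The plan is as follows. First I would address well-definedness on objects: for a fixed compact connected oriented $3$-manifold $Y$, a choice of Riemannian metric $g$ and an admissible Morse-Bott perturbation $\q$ defines the Floer groups $\HMt_{\bullet}(Y;g,\q)$, etc.; the Corollary just preceding the statement (invariance) says that for two different choices the groups are \emph{canonically} isomorphic via the map induced by the trivial cobordism, and these canonical isomorphisms satisfy the cocycle condition, so we get a genuine well-defined object of $\textsc{group}$, independent of all choices. (Here we also recall that a morphism in $\textsc{cob}$ is an \emph{isomorphism class} of cobordisms; one must check that isomorphic cobordisms induce the same map, which follows from Proposition \ref{indepmap} together with the fact that the chain-level construction depends on $(W,\mathbf p)$ only through the data of the metric and perturbation, and an isometry of cobordisms carries one such choice to another.) Second I would address well-definedness on morphisms: given a morphism $[W]:\ Y_-\to Y_+$ in $\textsc{cob}$, Lemma \ref{indeppunctures} shows $\HMt_{\bullet}(U^0\mid W)=\HMt_{\bullet}(W):\HMt_{\bullet}(Y_-)\to\HMt_{\bullet}(Y_+)$ does not depend on the number of punctures, the partition of $d=0$, or the perturbations on the punctures; Proposition \ref{indepmap} shows it does not depend on the metric or perturbation on $W$; and the compatibility of these maps with the canonical identifications at the endpoints (noted in the invariance Corollary, ``These identifications also preserve the maps induced by cobordisms'') ensures the map descends to the canonically defined Floer groups.

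Third, functoriality on identities: if $W = I\times Y$ is the trivial cylindrical cobordism, Proposition \ref{identitymap} gives $\HMt_{\bullet}(1\mid X)=\mathrm{id}$ (and similarly for the other two flavors), which is exactly the statement that the identity morphism of $Y$ in $\textsc{cob}$ maps to the identity homomorphism. Fourth, functoriality on composition: given composable cobordisms $W_{01}:Y_0\to Y_1$ and $W_{12}:Y_1\to Y_2$, Proposition \ref{modulestr} (with $d=d_1=d_2=0$) gives $\HMt_{\bullet}(W_{12}\circ W_{01})=\HMt_{\bullet}(W_{12})\circ\HMt_{\bullet}(W_{01})$, again for all three flavors. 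Assembling (i)--(iv) yields the three functors. I would carry out the steps in the order objects, morphisms, identities, composition, since each later step presupposes the well-definedness established in the earlier ones. The only step requiring any genuine (as opposed to formal) care is the compatibility check in (i)--(ii): one must verify that the square relating $\HMt_{\bullet}(W)$ for two choices of endpoint data, the two ``continuation'' maps induced by trivial cobordisms, and $\HMt_{\bullet}(W)$ commutes, which is where a small stretching/gluing argument — or rather the already-proved composition law applied to $W = (I\times Y_+)\circ W\circ(I\times Y_-)$ together with Proposition \ref{identitymap} — does the work. I expect this naturality verification to be the main (and essentially the only) obstacle; everything else is a direct citation of the Propositions and Corollary established immediately above.
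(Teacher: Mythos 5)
Your proposal is correct and follows essentially the same route as the paper: the corollary is obtained by assembling Proposition \ref{indepmap} and Lemma \ref{indeppunctures} (well-definedness of the induced maps), Proposition \ref{identitymap} (identities), and Proposition \ref{modulestr} (composition), with the invariance corollary handling the dependence on metric and perturbation at the ends. The only point the paper treats slightly more explicitly is the one you flag in step (i): it resolves the ``groups defined only up to canonical isomorphism'' issue in a subsequent remark by passing to the subgroup of the product of all the $(g,\q)$-dependent groups consisting of compatible collections, which is the concrete form of your cocycle-condition observation.
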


\begin{remark}
There is a subtlety in the last result (see also the discussion after Corollary $23.1.6$ in the book). Even though the Floer groups for different choice of metric and regular triple are canonically isomorphic, they are different, and it would seem that the Floer groups define functors only to the category of groups up to canonical isomorphisms, i.e. the category where objects are families $\{G_a\}_{a\in A}$ with isomorphisms $\phi_{a_1a_2}$ satifying the compatibility condition
\begin{equation*}
\phi_{a_2a_3}\phi_{a_1a_2}=\phi_{a_1a_3}
\end{equation*}
and morphisms are collections of maps satifying the obvious compatibility relations. On the other hand there is a natural functor from this category to the category of groups: to a family $(\{G_a\}_{a\in A},\phi)$ we assign the subgroup of
\begin{equation*}
\prod_{a\in A} G_a
\end{equation*}
consisting of collections $\{g_a\}_{a\in A}$ with $g_b=\phi_{ab}g_a$ (which is isomorphic to each of the $G_a$).
\end{remark}
\vspace{0.8cm}

Finally the maps induced by cobordisms can also be used in order to define on the Floer homology groups of $Y$ a graded module structure over the ring $\ztwo[[U]]$. Indeed, given $\xi\in \HMt_{\bullet}(Y)$ one defines the cap product
\begin{IEEEeqnarray*}{c}
U^d\cap \xi = \HMt_{\bullet}(U^d\mid I\times Y)(\xi)
\end{IEEEeqnarray*}
and the analogues for the other versions. The fact that this is a module structure follows directly from Proposition \ref{modulestr}. Similarly one can define the cup product in cohomology.

\begin{remark}\label{exterioraction}
Notice that in order to make the discussion simpler we have not dealt with the whole action of the cohomology group with $\ztwo$ coefficients of the moduli space of configurations of $W$, but only with the elements of the form $U^d$. The action of the remaining terms, namely the ones of the form
\begin{equation*}
\Lambda^*(H_1(W;\Z)/\mathrm{Tor}\otimes \ztwo)
\end{equation*}
can be defined in an analogous way as follows. Given a degree one element $x$ in the exterior product, one chooses an embedded loop $\gamma$ representing it. A tubular neighborhood of the loop is diffeomorphic to $S^1\times D^3$, and we can suppose that the metric on it is the product of the standard ones. The boundary, which is a copy of $S^1\times S^2$, has positive scalar curvature, and we consider on it the spin$^c$ structure $\spin_0$ coming from the spin structure. We know from Chapter $36$ in the book that $\HMf_{\bullet}(S^1\times S^2,\spin_0)$ is canonically isomorphic to
\begin{equation*}
\ztwo[[U]]\oplus \ztwo[[U]]\{-1\},
\end{equation*}
and the group is trivial in other spin$^c$ structure (again, the actual absolute gradings are obtained by shifting by $-1$). In fact, we can fix a small perturbation so that there are only two reducible critical points downstairs and no irreducibles for the spin$^c$ structure $\spin_0$, and no solutions for the others. Call $W_{\gamma}$ the manifold obtained from $W$ by removing the neighborhood of $\gamma$. This has three boundary components, namely $Y_-$ and $S^1\times S^2$ as incoming ends and $Y_+$ as an outgoing end. The discussion of the present section then carries over without modifications to show that one can define a map
\begin{equation*}
\HMt_{\bullet}(Y_-)\otimes \HMf_{\bullet}(S^1\times S^2)\rightarrow \HMt_{\bullet}(Y_+)
\end{equation*}
by considering the moduli spaces on the manifold with cylindrical ends $W_{\gamma}^*$. The map $\HMt_{\bullet}(x\mid I\times Y)$ is then defined by evaluating this map at the element
\begin{equation*}
(0,1)\in \ztwo[[U]]\oplus \ztwo[[U]]\{-1\}=\HMf_{\bullet}(S^1\times S^2).
\end{equation*}
Similarly, we can define the action of an element
\begin{equation*}
x_1\wedge \dots \wedge x_n\otimes U^d
\end{equation*}
by deleting tubular neighborhoods of the loops $\gamma_i$ representing $x_i$ and a ball with weight $d$. The proofs that these are well defined are analogous to those discussed in the section.
\end{remark}

\chapter{$\Pin$-monopole Floer homology}

In this chapter we apply the theory we have developed in order to study a specific case arising from a natural action of conjugation on the set of spin$^c$-structures. The final product will be a set of invariants of three manifolds, called \textit{$\Pin$-monopole Floer homology}, which are the analogue of the $\Pin$-equivariant Seiberg-Witten-Floer homology recently defined by Manolescu in \cite{Man2}.
\par
The action by conjugation has been known and exploited since the early days of Seiberg-Witten theory. In order to remark this we named our first section with the same name as the corresponding section in the classical reference \cite{Mor}. On the other hand, the idea to exploit this additional symmetry of the equations at a more refined level is the key point behind Manolescu's breakthrough. In our setting, this will arise as an involution on the moduli spaces of configurations, which will in turn imply that the critical points are Morse-Bott and symmetrical in a very interesting way. Exploiting additional features at the algebraic level, and we will be able to define our new invariants building on the work of Chapter $3$.

\vspace{1.5cm}
\section{An involution in the theory}

We start with some remarks at the level of Clifford algebras (see for example \cite{Mor} for more details). The Lie group $\mathrm{Spin}(3)$ is naturally identified with the group $\mathrm{SU(2)}$ of unitary determinant one matrices. Furthermore its spin representation $S$ corresponds the space of quaternions $\mathbb{H}$ on which it acts via the natural action on the \textit{left} of $\mathrm{SU(2)}$ on the latter when thought as $\C^2$. We will always think of the complex structure on $\mathbb{H}$ as given by the multiplication on the \textit{right}, hence we have the identification of complex vector spaces
\begin{align*}
\mathbb{C}^2&\rightarrow \mathbb{H}\\
(z_1,z_2)&\mapsto z_1+jz_2.
\end{align*}
In particular, the multiplication by $j$ on the \textit{right} induces a complex antilinear isomorphism of $\mathbb{C}^2$, i.e. a complex isomorphism between $\mathbb{C}^2$ and its conjugate vector space.
\par
When we have a spin$^c$ structure $\spin=(S,\rho)$ on a three manifold $Y$, we can also consider its conjugate spin$^c$ structure $\bar{\spin}=(\bar{S},\rho)$, where $\bar{S}$ is the conjugate bundle of $S$ and the action via $\rho$ of the real one-forms is the same one. In this case, the action of $j$ induces a complex antilinear identification
\begin{equation*}
\jmath: \spin\rightarrow \bar{\spin}
\end{equation*}
which can also be extended to an action
\begin{IEEEeqnarray*}{c}
\jmath : \Co(Y,\spin)\rightarrow \Co(Y,\bar{\spin})\\
(B,\Psi)\mapsto (\bar{B}, \Psi\cdot j)
\end{IEEEeqnarray*}
where $\bar{B}$ is the conjugate connection of $B$. This also descends to an identification (still called $\jmath$) between the moduli spaces of configurations $\Bo(Y,\spin)$. Furthermore, $\jmath^2$ is the automorphism of $\Co(Y,\spin)$ given by
\begin{equation*}
(B,\Psi)\mapsto (B,-\Psi),
\end{equation*}
and therefore it acts as the identity on the moduli space of configurations $\Bo(Y,\spin)$.
\\
\par
The most interesting case of this isomorphism $\jmath$ is certainly the one in which the spin$^c$ structure $\spin$ is actually isomorphic to its conjugate, in which case we call it \textit{self-conjugate}. In this case the map $\jmath$ is an involution of the configuration space. The following result gives us a classification of such spin$^c$ structures.

\begin{prop}\label{selfconj}
Given an oriented Riemannian three manifold $Y$, there always exists a self-conjugate spin$^c$-structure $\spin_0$. Furthermore for a fixed one there is a one-to-one correspondence between:
\begin{enumerate}
\item isomorphism classes of self conjugate spin$^c$ structures $\spin$ on $Y$;
\item complex line bundles $L$ isomorphic to their conjugate line bundle $\bar{L}$;
\item the $2$-torsion of $H^2(Y;\Z)$.
\end{enumerate}
\end{prop}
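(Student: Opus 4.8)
The plan is to establish the classification by first recalling the affine structure of the space of spin$^c$ structures, then analyzing how conjugation acts on it, and finally reducing the problem to a cohomological computation.

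First I would address existence. As noted in Section 1 of Chapter 1, spin$^c$ structures on a $3$-manifold always exist. To produce a self-conjugate one, I would take the spin$^c$ structure $\spin_0$ induced by a spin structure on $Y$ (which exists since $Y$ is an oriented $3$-manifold, hence parallelizable, so $w_2(Y)=0$). For such a $\spin_0$ the spinor bundle carries a quaternionic structure, and the map $j$ on the right gives a complex antilinear identification $S\to\bar S$ compatible with $\rho$; thus $\spin_0\cong\bar\spin_0$. Concretely this is the identification $\jmath$ discussed above, specialized to the spin case.

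Next I would set up the correspondence (1)$\leftrightarrow$(2). Recall that the set $\mathrm{Spin}^c(Y)$ is an affine space over $H^2(Y;\Z)\cong\mathrm{Pic}(Y)$, with $\spin\otimes L$ denoting the action of a line bundle $L$. Conjugation acts by $\overline{\spin\otimes L}=\bar\spin\otimes\bar L$. Fixing the basepoint $\spin_0$ with $\bar\spin_0\cong\spin_0$, write an arbitrary $\spin=\spin_0\otimes L$; then $\bar\spin\cong\spin_0\otimes\bar L$, so $\spin\cong\bar\spin$ if and only if $\spin_0\otimes L\cong\spin_0\otimes\bar L$, which by freeness of the affine action is equivalent to $L\cong\bar L$. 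This gives the bijection between isomorphism classes of self-conjugate spin$^c$ structures and line bundles with $L\cong\bar L$; one should note the answer is independent of the choice of basepoint $\spin_0$ up to a (non-canonical) translation, which is why the statement says "for a fixed one."

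Finally I would establish (2)$\leftrightarrow$(3). Under the identification $\mathrm{Pic}(Y)\cong H^2(Y;\Z)$, the conjugate bundle $\bar L$ corresponds to $-c_1(L)$ (since $c_1(\bar L)=-c_1(L)$, and on a $3$-manifold a line bundle is determined up to isomorphism by its first Chern class). Hence $L\cong\bar L$ if and only if $c_1(L)=-c_1(L)$ in $H^2(Y;\Z)$, i.e. $2c_1(L)=0$, i.e. $c_1(L)$ lies in the $2$-torsion subgroup of $H^2(Y;\Z)$. Composing the two bijections yields the one-to-one correspondence between isomorphism classes of self-conjugate spin$^c$ structures and $2$-torsion elements of $H^2(Y;\Z)$. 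I expect the main (only mild) obstacle to be the bookkeeping of base-point dependence: the bijection between self-conjugate spin$^c$ structures and the $2$-torsion is an affine, not canonical, identification, and care is needed to phrase it correctly; everything else is a direct consequence of the affine-space description of $\mathrm{Spin}^c(Y)$ and the fact that line bundles on a $3$-manifold are classified by their first Chern class.
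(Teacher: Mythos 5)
Your proposal is correct and follows essentially the same route as the paper: existence via the spin$^c$ structure induced by a spin structure (which exists since $Y$ is parallelizable), classification via the affine action of line bundles with $\overline{\spin_0\otimes L}\cong\spin_0\otimes\bar L$, and the identification of bundles with $L\cong\bar L$ with the $2$-torsion of $H^2(Y;\Z)$ using $c_1(\bar L)=-c_1(L)$. The paper's proof is just a terser version of the same argument, so no further comment is needed.
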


\begin{proof}
This is achieved by the same construction as in Section $1$ of Chapter $1$, see also Proposition $1.1.1$ in the book. Because the tangent bundle of $Y$ is trivial, $Y$ admits a spin structure, and the spin$^c$ structure it defines via the spin representation is clearly self-conjugate. Furthermore, given such a structure $\spin_0=(S_0,\rho)$, for any line bundle $L$ which is isomorphic to its conjugate we have that the spin$^c$ structure
\begin{IEEEeqnarray*}{c}
S=S_0\otimes_{\C}L\\
\rho=\rho_0\otimes 1_L
\end{IEEEeqnarray*}
is again self-conjugate, and in fact every self-conjugate spin$^c$ structure arises in this way for an appropriate choice of a line bundle isomorphic to its conjugate. Finally, such line bundles are identified as those whose first Chern class is $2$-torsion, because $c_1(\bar{L})=-c_1(L)$.
\end{proof}

As we have seen every spin structure induces a self-conjugate spin$^c$ structure. By considering the Bockstein exact sequence
\begin{equation*}
\cdots\rightarrow H^1(X;\Z)\rightarrow H^1(X;\mathbb{Z}/2\mathbb{Z})\stackrel{\delta}{\rightarrow} H^2(X;\Z)\stackrel{\cdot2}{\rightarrow} H^2(X;\Z)\rightarrow\cdots
\end{equation*}
we see that the $2$-torsion classes in $H^2(X;\Z)$ are exactly the image of the Bockstein homomorphism $\delta$. At the level of spin$^c$ structures, this tells us that the self-conjugate ones are exactly those which come from a genuine spin structure. Furthermore, for a fixed spin structure $\spin_0$, the spin structures $\spin_0+x_1$ and  $\spin_0+x_2$ for $x_i\in H^1(X;\ztwo)$ determine the same spin$^c$ structure if and only if $\delta(x_1)$ coincides with $\delta(x_2)$. Hence each self-conjugate spin$^c$ structure comes from exactly $2^{b_1(Y)}$ spin structures. For example, on $S^2\times S^1$ there are two spin structures that induce the same spin$^c$ structure while $\mathbb{R}P^3$ the two spin structures induce distinct spin$^c$ structures.
\vspace{0.8cm}

Given a self-conjugate spin$^c$ structure, we can then consider it as coming from a fixed spin structure $\spin$, so that it is comes with a canonical choice of the automorphism $j$ of the spinor bundle $S$ given by right multiplication. The action of $j$ makes $S$ into a quaternionic vector bundle and we can think of this additional structure as endowing the equations with a $\Pin$-symmetry, where
\begin{equation*}
\Pin=S^1\cup j\cdot S^1\subset \mathbb{H}.
\end{equation*}
There is a unique $\jmath$-invariant (hence $\Pin$-invariant) base connection $B_0$, namely the one induced by the Levi-Civita connection on the tangent bundle and the conjugation invariant connection on the determinant line bundle. This has the additional property that the Dirac operator commutes with the action of $j$, i.e.
\begin{equation}\label{jequiv}
D_{B_0}(\Psi\cdot j)=(D_{B_0}\Psi)\cdot j.
\end{equation}
When working with a self-conjugate spin$^c$ structure, we will always suppose to have fixed this as a base connection, and we can write the action of $\jmath$ on the configuration space $\Co(Y,\spin)$ as
\begin{equation*}
\jmath\cdot(B_0+b,\Psi)=(B_0-b, \Psi\cdot j).
\end{equation*}
Furthermore, the Dirac operators
\begin{equation*}
D_B: \Gamma(S)\rightarrow \Gamma(S)
\end{equation*}
are compatible with the action of $\jmath$ in the sense that
\begin{equation}\label{conjdirac}
D_B(\Psi\cdot j)= D_{\bar{B}}\Psi\cdot j.
\end{equation}
This can be easily checked with a local computation. For a fixed orthonormal frame $\{e_i\}$, if $B=B_0+b$ one has
\begin{multline*}
D_B(\Psi\cdot j)=\sum_k \rho(e_k)(\nabla^{B_0}_{e_k}+b(e_k))(\Psi\cdot j)=\sum_k \rho(e_k)(\nabla^{B_0}_{e_k}(\Psi)\cdot j+(\Psi\cdot j)\cdot b(e_k))\\
=\sum_k \rho(e_k)(\nabla^{B_0}_{e_k}(\Psi)\cdot j-(\Psi\cdot  b(e_k))\cdot j)=D_{\bar{B}}\Psi\cdot j
\end{multline*}
where we used the fact that $b$ is purely imaginary.
\\
\par
As we have chosen the basepoint $B_0$, the Chern-Simons-Dirac functional $\CSD$ (which is well defined up to gauge because $c_1$ is torsion) is $\jmath$-invariant. Hence its gradient is $\jmath$-equivariant, and $\jmath$ also defines an involution of the set of critical points.
Because the action of $j$ on $\mathbb{C}^2$ does not have eigenvectors, a fixed point of the action of $\jmath$ on $B(Y,\spin)$ is necessarily of the form $[B,0]$, and there are $2^{b_1(Y)}$ such fixed points corresponding to the fixed points of the involution on the moduli space of flat connections
\begin{align*}
\jmath: \mathbb{T}\rightarrow \mathbb{T}\\
x\mapsto -x.
\end{align*}
These are exactly the the flat connections whose representatives are gauge equivalent to their conjugate or, equivalenty, the flat connections which have holonomy $\pm1$ around each loop.
The induced action on $\Bo(Y,\spin)$ does \textit{not} depend on the actual choice of the spin structure inducing the given one. This is a manifestation of Schur's Lemma, as such a $j$ is pointwise an isomophism of irreducible $SU(2)$-modules, hence any other such $j$ is necessarily gauge equivalent to it. This independence statement implies that the fixed points are the gauge equivalence classes of the canonical connections of the $2^{b_1(Y)}$ spin structures inducing the given one. We will refer to these connections as the \textit{spin} connections.
\\
\par
Notice that the relation (\ref{jequiv}) implies that the operator $D_{B_0}$ is quaternionic linear, hence in particular eigenspaces are always even dimensional. This is true in a more general context, as in the next lemma.
\begin{lemma}\label{quaternion}
Suppose the connection $B$ is gauge equivalent to a spin connection, hence it is gauge equivalent to its conjugate via a gauge transformation $u$, i.e. $\bar{B}=u\cdot B$. Then the right multiplication by 
\begin{equation*}
j'=ju^{-1}
\end{equation*}
determines a quaternionic structure on the eigenspaces of $D_B$, which are in particular always even dimensional. 
\end{lemma}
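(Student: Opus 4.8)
The plan is to exhibit $j'$ as a complex antilinear bundle map on $\Gamma(S)$ which commutes with $D_B$ and squares to $-\mathrm{id}$; each (finite dimensional, smooth) eigenspace of $D_B$ is then a quaternionic vector space, hence of even complex dimension. The only inputs are the identities already recorded in Section $1$, together with elementary quaternion algebra, so no analysis beyond the standard spectral theory of $D_B$ quoted in the previous section is needed.

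First I would assemble the three relevant identities: the $\mathbb{H}$-linearity of the spin Dirac operator $D_{B_0}(\Psi\cdot j)=(D_{B_0}\Psi)\cdot j$ (equation (\ref{jequiv})); the conjugation identity $D_B(\Psi\cdot j)=(D_{\bar B}\Psi)\cdot j$ (equation (\ref{conjdirac})); and the gauge covariance $D_{u\cdot B}(u\Psi)=u\,D_B\Psi$, which together with the hypothesis $\bar B=u\cdot B$ gives $D_B(u^{-1}\Phi)=u^{-1}D_{\bar B}\Phi$. Working in the quaternionic picture, $j'$ is right multiplication by the $\mathbb{H}$-valued function $j u^{-1}$, which equals $u j$ because $u$ is $S^1$-valued and $uj=j\bar u=ju^{-1}$ there; this also explains the notation. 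Combining the last two displayed identities and using $\bar{\bar B}=B$, for $\Psi\in\Gamma(S)$ one computes
\[
D_B\big(j'\Psi\big)=D_B\big((\Psi\cdot j)\,u^{-1}\big)=\big(D_{\bar B}(\Psi\cdot j)\big)u^{-1}=\big((D_B\Psi)\cdot j\big)u^{-1}=j'\big(D_B\Psi\big),
\]
so $D_B\circ j'=j'\circ D_B$. Since $j'$ is the composite of the $\mathbb{C}$-antilinear right multiplication by $j$ and the $\mathbb{C}$-linear gauge multiplication by $u^{-1}$, it is $\mathbb{C}$-antilinear; as $D_B$ is self-adjoint with real spectrum, if $D_B\Psi=\lambda\Psi$ then $D_B(j'\Psi)=j'(\lambda\Psi)=\bar\lambda\,j'\Psi=\lambda\,j'\Psi$, so $j'$ preserves $\ker(D_B-\lambda)$ (and, $u$ being smooth and $j$ constant, preserves its smooth finite-dimensional structure).

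It then remains to check $(j')^2=-\mathrm{id}$. As right multiplication by $ju^{-1}$, the square of $j'$ is right multiplication by the quaternion-valued function $(ju^{-1})(ju^{-1})=j\,(u^{-1}j)\,u^{-1}$, and $u^{-1}j=j\,\overline{u^{-1}}=ju$ since $u\in S^1$, so $(ju^{-1})(ju^{-1})=j(ju)u^{-1}=j^2=-1$. Hence $j'$ is a quaternionic structure on $\Gamma(S)$ commuting with $D_B$, each eigenspace of $D_B$ is a quaternionic vector space, and in particular is even-dimensional over $\mathbb{C}$. The main obstacle is purely bookkeeping: one must keep straight the two distinct right actions on $S\cong\mathbb{H}$ — quaternionic multiplication by $j$, which is $\mathbb{C}$-antilinear for the chosen complex structure, and gauge multiplication by $u^{-1}$, which is $\mathbb{C}$-linear — and verify that the conjugation in $uj=j\bar u$ combines with $\bar u=u^{-1}$ exactly so that the two gauge factors cancel in $(j')^2$ rather than leaving a stray $u^{\pm2}$. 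Once the conventions of Section $1$ are pinned down, both identities above are one-line computations.
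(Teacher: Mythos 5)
Your proposal is correct and follows essentially the same route as the paper: the paper's proof is the condensed version of your argument, using (\ref{conjdirac}) together with gauge covariance and $\bar{B}=u\cdot B$ to show that $\Psi\cdot j\cdot u^{-1}$ is again a $\lambda$-eigenvector, and then noting that $ju^{-1}$ anticommutes with $i$ and squares to $-1$. You merely spell out the one-line checks (the square $(ju^{-1})(ju^{-1})=-1$ and the commutation with $D_B$) that the paper leaves implicit, so there is nothing to add.
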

\begin{proof}
Suppose we have
\begin{equation*}
D_B\Psi=\Psi\cdot \lambda.
\end{equation*}
Then because of the gauge invariance of the equations, the relation (\ref{conjdirac}) and the fact that $\lambda$ is real we have the identities
\begin{equation*}
(\Psi\cdot j)\cdot \lambda=(D_B\Psi)\cdot j= D_{\bar{B}}(\Psi\cdot j)=D_{u\cdot B}(\Psi\cdot j)= D_B(\Psi\cdot j\cdot u^{-1})\cdot u,
\end{equation*}
hence
\begin{equation*}
D_B(\Psi\cdot j\cdot u^{-1})=(\Psi\cdot j\cdot u^{-1})\cdot \lambda.
\end{equation*}
The lemma then follows from the fact that $ju^{-1}$ anticommutes with the complex multiplication by $i$, and has square $-1$, so it defines a quaternionic structure on the eigenspace.
\end{proof}

The map $\jmath$ naturally induces an automorphism of the blow-up
\begin{IEEEeqnarray*}{c}
\jmath: \Cs(Y,\spin)\rightarrow \Cs(Y,\spin)\\
(B_0+b, s, \psi)\mapsto (B_0-b, s, \psi\cdot j)
\end{IEEEeqnarray*}
which descends to a fixed point free involution
\begin{equation*}
\jmath: \Bs(Y,\spin)\rightarrow \Bs(Y,\spin),
\end{equation*}
and it is immediate to check that the blown up gradient of the Chern-Simons-Dirac functional $(\mathrm{grad}\CSD)^{\sigma}$ is equivariant under this action. Furthermore, the action induces a smooth involution in the completion of the configuration spaces in each Sobolev norm.
\\
\par
The case of a four dimensional Riemannian manifold $X$ is completely analogous. In this setting, the spin representation $S=S^+\oplus S^-$ can by identified with $\mathbb{H}\oplus \mathbb{H}$, and as before there is an action from the right by quaternion multiplication by $j$, which determines an isomorphism between $S$ and its conjugate $\bar{S}$.
At a global level, everything regarding the involution $\jmath$ for a given self-conjugate spin$^c$-structure holds with the obvious translations. The only different point is that a self conjugate spin$^c$ structure does not exist unless the manifold is spin (or equivalently, the second Stiefel-Whitney class $w_2(X)$ is zero), as one can easily check using the construction of Proposition \ref{selfconj}. This involution naturally induces involutions of the various configurations spaces, and their properties are the same those described above for the configuration spaces of three manifolds.
\par
Similarly, a self conjugate spin$^c$ structure on a four manifold with boundary $X$ induces a self conjugate spin$^c$ structure on its boundary, and the quaternionic structures are compatible. The induced involutions on the configurations spaces are also compatible with the restriction maps.

\vspace{0.8cm}

In the same way the cohomology ring of $BS^1$ is important in the usual monopole Floer homology, the cohomology ring of $B\Pin$ will have a special role in our construction. In particular, it will arise as the Floer homology groups associated to $S^3$. It can be calculated by means of the Serre spectral sequence associated to the fiber bundle
\begin{equation*}
\R P^2 \hookrightarrow B\Pin\rightarrow \mathbb{H}P^{\infty}=B \mathrm{SU}(2)
\end{equation*}
coming from the identification $\mathrm{SU}(2)/\Pin=\R P^2$, which collapses at the $E^2$ page for degree reasons. The ring on which our modules will be defined is obtained from this by completion and degree reversal.
\begin{defn}\label{R}
We denote by $\Rin$ the ring
\begin{equation*}
\Rin=\ztwo[[V]][Q]/(Q^3)\\
\end{equation*}
where $V$ and $Q$ have degrees respectively $-4$ and $-1$.
\end{defn}

\vspace{1.5cm}
\section{Equivariant perturbations and Morse-Bott transversality}

As we saw in the previous section, when the spin$^c$ structure is self-conjugate, the monopole equations have an additional symmetry provided by $\jmath$, the action of the quaternionic multiplication by $j$. We would like to exploit this symmetry in order to construct a Floer chain complex with additional algebraic properties, and in order to do so we need to ensure that the perturbations are compatible with the extra action. The aim of the present section is to construct $\Pin$-equivariant tame perturbations, and use them to achieve Morse-Bott transversality in the sense of Chapter $3$.
\\
\par
From now on we suppose that the spin$^c$ structure $\spin$ is self-conjugate and fix a base spin connection $B_0$. Recall the construction of cylinder function from Section $4$ of Chapter $1$. First of all, notice that for a coclosed $1$-form $c\in\Omega(Y;i\R)$, the map
\begin{IEEEeqnarray*}{c}
r_c:\Co(Y,\spin)\rightarrow \R\\
(B_0+b\otimes 1,\Psi)\mapsto \int_Y b\wedge \ast\bar{c}
\end{IEEEeqnarray*}
has the symmetry
\begin{equation*}
r_c(\jmath\cdot(B,\Psi))=-r_c(B,\Psi).
\end{equation*}
Also, for a given smooth section $\Upsilon$ of the bundle $\mathfrak{S}\rightarrow \mathbb{T}\times Y$, one can define the $\G^o(Y)$-invariant map
\begin{align*}
\tilde{q}_{\Upsilon}:\Co(Y,\spin)&\rightarrow \mathbb{H} \\
(B,\Psi)&\mapsto q_{\Upsilon}(B,\Psi)-j\cdot\overline{q_{\Upsilon}(\jmath\cdot(B,\Psi))}\\
\end{align*}
where we recall
\begin{equation*}
q_{\Upsilon}(B,\Psi)=\int_Y \langle \Psi, \Upsilon^{\dagger}(B,\Psi)\rangle\in\mathbb{C}.
\end{equation*}
This map is equivariant under the action of $\jmath$, as it easily follows from the property
\begin{equation*}
q_{\Upsilon}(B,-\Psi)=-q_{\Upsilon}(B,\Psi).
\end{equation*}
We stress again on the fact that $j$ acts on $\mathbb{H}$ by left multiplication. Hence, given a finite collection of coclosed $1$-forms $c_1,\dots, c_{n+t}$, the first $n$ being coexact and the remaining $t$ being a basis of harmonic forms, and a collection $\Upsilon_1,\dots, \Upsilon_m$ of sections of $\mathbb{S}$, we can define as in Section $4$ of Chapter $1$ the map
\begin{IEEEeqnarray*}{c}
\tilde{p}:\Co(Y,\spin)\rightarrow \R^n\times\mathbb{T}\times\mathbb{H}^m\\
(B,\Psi)\mapsto \left(r_{c_1}(B,\Psi),\dots, r_{c_{n+t}}(B,\Psi),\dots, \tilde{q}_{\Upsilon_1}(B,\Psi),\dots, \tilde{q}_{\Upsilon_m}(B,\Psi)\right)
\end{IEEEeqnarray*}
which is invariant under $\G^o(Y)$ and equivariant under the remaining action of $\Pin$. On the right hand side, $j$ acts as minus the identity on the $\R$ and $\mathbb{T}$ factors and via right multiplication on the $\mathbb{H}^m$ factor.

\begin{defn}
We call a real valued function $f$ on $\Co(Y,\spin)$ a $\Pin$-\textit{invariant cylinder function} if it arises as $g\circ \tilde{p}$ where:
\begin{itemize}
\item the map $\tilde{p}:\Co(Y,\spin)\rightarrow \R^n\times\mathbb{T}\times\mathbb{H}^m$ is defined as above;
\item the function $g$ is a $\Pin$-invariant function on $\R^n\times\mathbb{T}\times\mathbb{H}^m$ with compact support.
\end{itemize}
\end{defn}

Because of the symmetry of the problem, the perturbed functional $\CSd=\CSD+f$ for a $\Pin$-invariant cylinder function $f$ will always have the fixed points of the involution $\jmath$ on $\mathcal{B}(Y,\spin)$ as critical points. The analogue of Proposition \ref{densepert} in Chapter $1$ also holds for $\Pin$-invariant cylinder functions, as reassumed in the following result.
\begin{prop}\label{largepert2}
If $f$ is a $\Pin$-invariant cylinder function, then its gradient
\begin{equation*}
\mathrm{grad}f: \Co(Y)\rightarrow \T_0
\end{equation*}
is a tame perturbation (in the sense of Definition \ref{tamepert} in Chapter $1$) which is also $\jmath$-equivariant.
\par
Furthermore, for every compact subset $K$ of a finite dimensional submanifold $M$ of the base configurations space $\Bo_k^o(Y)$, both $\Pin$-invariant, one can find a collection of $c_{\mu}$, $\Upsilon_{\nu}$ and a neighborhood $U\supset K$ such that
\begin{equation*}
p:\Bo_k^o(Y)\rightarrow\R^n\times\mathbb{T}\times \mathbb{H}^m
\end{equation*}
gives an embedding of $U$. Hence for any $[B,\Psi]\in\Bo_k^*(Y)$ and any non zero tangent vector $v$ there exists a $\Pin$-invariant cylinder function whose differential $\mathcal{D}_{[B,\Psi]}f(v)$ is non-zero.
\end{prop}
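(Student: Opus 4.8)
The plan is to deduce Proposition \ref{largepert2} by bootstrapping from the corresponding non-equivariant statement, Proposition \ref{densepert} of Chapter $1$, using the averaging construction furnished by the maps $r_c$ and $\tilde{q}_{\Upsilon}$. First I would verify the tameness and $\jmath$-equivariance of $\mathrm{grad}f$ for a $\Pin$-invariant cylinder function $f=g\circ\tilde p$. Tameness is not a new computation: the building blocks $r_c$ and $\tilde q_{\Upsilon}$ are built out of the same $r_c$ and $q_{\Upsilon}$ that appear in the ordinary theory, just assembled into a $\Pin$-equivariant package, so $\tilde p$ is a map of the same analytic type as the $p$ of Chapter $1$, and $f=g\circ\tilde p$ with $g$ smooth of compact support is therefore a cylinder function in the old sense (after forgetting the symmetry); hence $\mathrm{grad}f$ is tame by Proposition \ref{densepert}. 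The $\jmath$-equivariance of $\mathrm{grad}f$ follows formally: $f$ is $\jmath$-invariant because $\tilde p$ is $\Pin$-equivariant and $g$ is $\Pin$-invariant, and the formal gradient of a $\jmath$-invariant function with respect to the $\jmath$-invariant $L^2$ metric is automatically $\jmath$-equivariant. Here one uses that $\jmath$ acts isometrically on $\Co(Y,\spin)$ in each $L^2$ norm, which was noted in Section $1$.

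Next I would address the embedding statement, which is the substantive part. Given a $\Pin$-invariant compact set $K$ inside a $\Pin$-invariant finite dimensional submanifold $M\subset\Bo^o_k(Y)$, the plan is to apply the non-equivariant embedding part of Proposition \ref{densepert} to get a collection of coclosed forms and sections of $\mathbb{S}$ whose associated $p$ embeds a neighborhood $U'$ of $K$, then symmetrize. Concretely, replacing $U'$ by $U'\cap\jmath(U')$ we may assume $U'$ is $\Pin$-invariant; then I would enlarge the given collection of data by adjoining the $\jmath$-translates of every $c_\mu$ and $\Upsilon_\nu$, so that the resulting map is $\Pin$-equivariant and still separates points and tangent vectors on $U'$ (adding coordinates cannot destroy an embedding). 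The slight subtlety is that the $\jmath$-translate of a coclosed form $c$ is again a coclosed form of the same type, and the $\jmath$-translate of a section of $\mathbb{S}$ is again such a section — this uses the naturality of the constructions under conjugation established in Section $1$, together with the fact that $\jmath$ preserves the splitting of the gauge group $\G^o(Y)$ and the identification $\mathbb{T}\cong\R^t/(2\pi\Z^t)$ (it acts by $x\mapsto -x$). After this symmetrization, $\tilde p$ restricted to $U'$ is still injective with injective differential, giving the desired $\Pin$-equivariant embedding of a neighborhood $U\supset K$.

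Finally, the last assertion — that for any $[B,\Psi]\in\Bo^*_k(Y)$ and any nonzero tangent vector $v$ there is a $\Pin$-invariant cylinder function with $\mathcal{D}_{[B,\Psi]}f(v)\neq 0$ — follows by taking $K$ to be the $\Pin$-orbit of $[B,\Psi]$ together with a small $\Pin$-invariant arc through it realizing the vector $v$ (or its image in $M$), applying the embedding statement just proved to get $\tilde p$ an embedding near $K$, and then choosing $g$ a $\Pin$-invariant bump function on $\R^n\times\mathbb{T}\times\mathbb{H}^m$ whose differential is nonzero in the direction $\mathcal{D}\tilde p(v)$. One must check that such a $g$ exists, i.e.\ that $\mathcal{D}\tilde p(v)$ is not tangent to a $\Pin$-orbit in the target (else every $\Pin$-invariant $g$ would kill it); but this is exactly guaranteed by the embedding of a neighborhood of $K$, since $K$ already contains the $\Pin$-orbit direction and $v$ was chosen transverse to it. The main obstacle I anticipate is precisely this last point — carefully arranging the symmetrized model so that the $\Pin$-invariant functions on the target still see the tangent vector $v$ — which amounts to checking that the equivariant embedding can be taken to be a slice for the $\Pin$-action in the relevant range; this is where I would spend the most care, though it is ultimately a formal consequence of the freeness (on $\Bs_k$) or the explicit fixed-point description (on $\Bo_k$) of the $\jmath$-action recorded in Section $1$.
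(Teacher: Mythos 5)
Your proof is correct and, in its first and last steps, matches the paper's: tameness is reduced to the Chapter $11$ estimates for the non-equivariant building blocks, $\jmath$-equivariance of the gradient is the formal consequence of $\jmath$-invariance of $f$, and the separation of tangent vectors at an irreducible $[B,\Psi]$ rests, exactly as in the paper, on the fact that the fixed points of $\jmath$ are reducible, so the involution is free there and $\Pin$-invariant functions on the embedded model detect $v$. Where you diverge is the embedding step. The paper does not symmetrize the data at all: it applies Proposition \ref{densepert} to get an ordinary $p$ embedding a neighborhood of $K$, keeps the \emph{same} collection of $c_{\mu}$, $\Upsilon_{\nu}$, and observes that composing $\tilde{p}$ with the projection $\mathbb{H}\rightarrow\C$, $z+jw\mapsto z$, on each factor recovers $p$; hence $\tilde{p}$ is injective with injective differential wherever $p$ is, and equivariance of $\tilde{p}$ is automatic from the construction of $\tilde{q}_{\Upsilon}=q_{\Upsilon}-j\,\overline{q_{\Upsilon}\circ\jmath}$ for \emph{any} choice of data. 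Your route — adjoining $\jmath$-translates of the $c_{\mu}$ and $\Upsilon_{\nu}$ and invoking "adding coordinates cannot destroy an embedding" — reaches the same conclusion but with an unnecessary detour, and the phrasing is slightly off: equivariance is not achieved by closing the collection under $\jmath$ (the $\jmath$-translate of $r_c$ is just $-r_c$, and $q_{\Upsilon}\circ\jmath$ is not of the form $q_{\Upsilon'}$ without the conjugation that is already built into $\tilde{q}_{\Upsilon}$); rather, the equivariant recipe applied to the original data already contains the non-equivariant embedding among its coordinates, which is precisely the paper's projection argument read in the opposite direction. The paper's formulation is cleaner and also delivers the map in the exact form demanded by the statement; your symmetrization, once interpreted through the $\tilde{q}$-construction, collapses to it, so there is no gap, only an avoidable complication.
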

\begin{proof}
The formal gradient of a $\jmath$-invariant function is a $\jmath$-equivariant vector field. Furthermore, because our map $\tilde{p}$ is determined in a very simple explicit way by the map $p$ defined by means of the same family of coclosed forms and sections of $\mathbb{S}$, it is immediate to see that the estimates of Chapter $11$ in the book continue to hold with small adaptations, hence the gradient of $f$ is a tame perturbation in the usual sense. The second part is clear as if we compose the map $\tilde{p}$ with the natural projection map on each $\mathbb{H}$ factor
\begin{IEEEeqnarray*}{c}
\mathbb{H}\rightarrow \C\\
z+jw\mapsto z,
\end{IEEEeqnarray*}
we obtain the corresponding map $p$, which in turn can be made into an embedding by Proposition \ref{densepert} in Chapter $1$. The last sentence then follows because the fixed points of the action are reducible.
\end{proof}

Finally, as in the end of Chapter $1$, and in particular Proposition \ref{largepert}, we can construct a Banach space $\widetilde{\mathcal{P}}$ with a linear map
\begin{align*}
\mathfrak{D}:\widetilde{\mathcal{P}}&\rightarrow C^0(\Co(Y),\mathcal{T}_0)\\
\lambda& \mapsto \q_{\lambda}
\end{align*} 
such that every $\q_{\lambda}$ is a $\jmath$-equivariant tame perturbation and the image contains the formal gradient of each element of a countable family of $\Pin$-invariant cylinder function $\{f_i\}$ arising as follows. For every pair $(n,m)$, choose a countable collection of $(n+m)$-tuples $(c_1,\dots, c_n,\Upsilon_1,\dots, \Upsilon_m)$ which are dense in the $C^{\infty}$ topology in the space of all such $(n+m)$-tuples. Choose a countable collection of compact subsets $K\subset \R^n\times\mathbb{T}\times \mathbb{H}^m$ which is dense in the Hausdorff topology and, for each $K$, a countable collection of $\Pin$-invariant functions $g_{\alpha}(K)$ with support in $K$ which are dense (in the $C^{\infty}$ topology) in the space of $\Pin$-invariant functions with support in $K$ and such the subset
\begin{equation*}
\left\{g_{\alpha}\mid g_{\alpha}\lvert_{K\cap(\R^n\times\mathbb{T}\times\{0\})}=0\right\}
\end{equation*}
is dense (in the $C^{\infty}$ topology) in the space of $\Pin$-invariant functions with support on $K$ and vanishing on $K\cap (\R^n\times \mathbb{T}\times\{0\})$. Then the family $\{f_i\}$ is
obtained composing in all possible ways the functions arising from our choices. We call such a pair $(\widetilde{\mathcal{P}},\mathfrak{D})$ (oftenly denoted just by $\widetilde{\mathcal{P}}$) a \textit{large Banach space of $\jmath$-equivariant perturbations}.

\vspace{0.8cm}
We now focus on the transversality problem. Notice that the same proof of Lemma \ref{quaternion} applies to show that if $[B,0]$ is one of the $2^{b_1(Y)}$ fixed points of $\jmath$ on $\Bo(Y)$, then for any $\jmath$-equivariant tame perturbation $\q$ the eigenspaces of the operator $D_{B,\q}$ are naturally quaternionic vector spaces, hence their spectrum (as complex-linear operators) cannot be simple. Furthermore, the points $[B,0]$ are always critical points for the perturbed Chern-Simons-Dirac operator when the perturbing function is a $\Pin$-cylinder function, hence our critical points will never be non-degenerate in the sense of the book. In light of our discussion, it is natural to introduce the following definition (see also Proposition $12.2.5$ in the book).

\begin{defn}
Let $\acr=(B,r,\psi)\in\Cs_k(Y)$ be a critical point of the $\jmath$-invariant vector field $(\mathrm{grad}\CSd)^{\sigma}$. We say that $\acr$ is \textit{$\Pin$-non-degenerate} if the following conditions hold:
\begin{enumerate}
\item if $r\neq0$, then the corresponding point $(B,r\psi)\in \Co_k^*(Y)$ is non-degenerate in the usual sense, i.e. $\mathrm{grad}\CSd$ is transverse to the subbundle $\J_{k-1}\subset \T_{k-1}$;
\item if $r=0$, then $B\in\mathcal{A}_k$ is a non-degenerate zero of $(\mathrm{grad}\CSd)^{\mathrm{red}}$ (i.e. $(\mathrm{grad}\CSd)^{\mathrm{red}}$ is transverse to the subbundle $\J^{\mathrm{red}}_{k-1}\subset \T^{\mathrm{red}}_{k-1}$) and furthermore, if $\lambda$ is the eigenvalue of $D_{B,\q}$ corresponding to $\psi$ then $\lambda$ is not zero, and
\begin{itemize}
\item if $B$ is \textit{not} gauge equivalent to a spin connection, the $\lambda$ eigenspace has complex dimension one;
\item if $B$ is gauge equivalent to a spin connection, the $\lambda$ eigenspace has complex dimension two.
\end{itemize}
\end{enumerate}
\end{defn}

A $\Pin$-non degenerate reducible critical point $(B,0,\psi)$ with $B$ not conjugate to a spin connection is non-degenerate in the usual sense, and its gauge equivalence class is an isolated singularity in $\Bs_k(Y)$. On the other hand when the connection $B$ is spin, the eigenspace in which $\psi$ lies is two dimensional, and its image in $\Bs_k(Y)$ is a two dimensional sphere arising as the quotient by the action of $S^1$ of the unit sphere of the eigenspace. This quotient is  a Hopf fibration. Furthermore, the next result tells us that in order to study our problem we can rely on the machinery developed in Chapters $2$ and $3$.

\begin{prop}
Suppose a $\jmath$-equivariant perturbation $\q$ such that the critical points of $(\mathrm{grad}\CSd)^{\sigma}$ are $\Pin$-non-degenerate. Then the singularities are Morse-Bott.
\end{prop}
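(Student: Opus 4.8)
The plan is to verify that every critical point of $(\mathrm{grad}\CSd)^{\sigma}$ lies on a critical submanifold which is a Morse-Bott singularity in the sense of Definition \ref{Morsebotts}, so that the full critical set is a union of such submanifolds. By hypothesis each critical point is $\Pin$-non-degenerate, hence of one of three types: (i) an irreducible critical point which is non-degenerate in the usual sense; (ii) a reducible critical point $(B,0,\psi)$ with $B$ not gauge equivalent to a spin connection, eigenvalue $\lambda=\Lambda_{\q}(\acr)\neq0$ and $\lambda$-eigenspace of $D_{B,\q}$ of complex dimension one; (iii) a reducible critical point $(B,0,\psi)$ with $B$ gauge equivalent to a spin connection, $\lambda\neq0$ and $\lambda$-eigenspace of complex dimension two. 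Before treating these I would record the consequence of $\Pin$-non-degeneracy that drives everything: at every reducible zero $B$ of $(\mathrm{grad}\CSd)^{\mathrm{red}}$ one has $\ker D_{B,\q}=0$, since a unit element of the kernel would be a critical spinor with vanishing eigenvalue, contradicting the definition.

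For types (i) and (ii) the argument is a transcription of the non-degenerate analysis of Chapter $12$ of the book. A non-degenerate irreducible critical point $[\acr]$ is an isolated zero at which $\Hess^{\sigma}_{\q,\acr}$ is invertible, so $\{[\acr]\}$ is a zero-dimensional critical submanifold of irreducibles, trivially a Morse-Bott singularity. For a type (ii) point, the non-degeneracy of $B$ makes it an isolated reducible zero, so nearby reducible critical points of $(\mathrm{grad}\CSd)^{\sigma}$ have the form $(B,0,\psi')$ with $\psi'$ a unit eigenvector of $D_{B,\q}$ close to $\psi$; the $\lambda$-eigenspace being one-dimensional forces $[\psi']=[\psi]$, while $\ker D_{B,\q}=0$ prevents any irreducible critical point from bifurcating off $(B,0)$. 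Hence $\{[\acr]\}$ is an isolated critical submanifold, and the reducible block decomposition of the Hessian in the blow-up (Lemma $12.4.3$ of the book) presents $\Hess^{\sigma}_{\q,\acr}$ as the direct sum of the reducible Hessian at $B$, the multiplication $t\mapsto\lambda t$ on the $r$-line, and the spinor operator $D_{B,\q}-\lambda$, which is invertible on the spinor directions since $E_{\lambda}=\C\psi$; all three pieces are invertible, so $\{[\acr]\}$ is a Morse-Bott singularity, and its blow-down $[B,0]$ is a reducible Morse-Bott singularity downstairs by the same computation together with $\ker D_{B,\q}=0$.

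The substantive case is (iii). Here $E_{\lambda}\subset L^2_k(Y;S)$ is two-dimensional over $\C$, and I claim the critical set near $[\acr]$ is precisely the projectivization $\mathbb{P}(E_{\lambda})\cong S^2$, sitting inside the fibre sphere $\mathbb{P}(L^2_k(Y;S))$ of $\partial\Bs_k(Y)$ over the isolated non-degenerate reducible $[B,0]$: as before nearby reducible critical points are $(B,0,\psi')$ with $\psi'$ a unit eigenvector of $D_{B,\q}$, so $\psi'$ ranges over the unit sphere of $E_{\lambda}$, and $\ker D_{B,\q}=0$ excludes bifurcating irreducibles. Thus $[\Cr]=\mathbb{P}(E_{\lambda})$ is a closed connected smooth $2$-dimensional submanifold of reducibles. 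To verify it is a Morse-Bott singularity I would run the same block decomposition and identify $\ker\Hess^{\sigma}_{\q,\acr}$ with $\T_{\acr}\Cr$: the reducible-Hessian block at $B$ and the $r$-block $t\mapsto\lambda t$ are invertible, while the spinor block reduces to $D_{B,\q}-\lambda$, whose kernel on the spinor directions is $E_{\lambda}$ modulo the line $\C\psi$, that is, the tangent space of $\mathbb{P}(E_{\lambda})$ at $[\psi]$. Since the extended Hessian is Fredholm of index zero independently of the Morse-Bott hypothesis (Section $12.4$ of the book), the induced normal Hessian $\Hess^{\sigma,n}_{\q,\acr}\colon\N^{\sigma}_{\acr,k}\to\N^{\sigma}_{\acr,k-1}$ is injective, hence an isomorphism. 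The remaining conditions of Definition \ref{Morsebotts} are then immediate: $[\Cr]$ consists entirely of reducibles, its blow-down is the single point $[B,0]$, which is a reducible Morse-Bott critical submanifold by the previous paragraph, and the blow-down map $S^2\to\{[B,0]\}$ is trivially a fibration, consistent with the remark after Definition \ref{Morsebotts} that this condition is automatic over a point.

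I expect the main obstacle to be precisely the identification $\ker\Hess^{\sigma}_{\q,\acr}=\T_{\acr}\Cr$ in case (iii): one has to carry out the reducible block decomposition of the Hessian on the \emph{blown-up} configuration space with care — recalling that the decomposition $\T^{\sigma}=\J^{\sigma}\oplus\K^{\sigma}$ is not orthogonal, so that the extended Hessian is the correct object to analyse — and then match the kernel produced by the two-dimensional Dirac eigenspace with the tangent space of $\mathbb{P}(E_{\lambda})$. Everything else is either a direct adaptation of the non-degenerate analysis of the book or an elementary consequence of $\Pin$-non-degeneracy, above all the vanishing of $\ker D_{B,\q}$ at every reducible zero.
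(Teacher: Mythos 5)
Your overall structure matches the paper's proof: cases (i) and (ii) are dismissed as in the non-degenerate theory of the book, the observation that $\Pin$-non-degeneracy forces $\mathrm{Ker}\,D_{\q,B}=0$ at every reducible zero is used in the same way, and for the crucial case of a reducible over a spin connection you use exactly the paper's device — the block lower-triangular form of the Hessian on $\R\oplus K\oplus(\C\psi_0)^{\perp}$ with diagonal entries $\lambda$, $\ast d_{\q}\lvert_K$ and $D_{\q,B}-\lambda$, with $\T_{\acr}\Cr$ identified with the part of the two-dimensional $\lambda$-eigenspace complex-orthogonal to $\psi_0$. Your extra verifications (identification of the critical locus with $\mathbb{P}(E_\lambda)$, the blow-down and fibration conditions of Definition \ref{Morsebotts}) are fine, if not spelled out in the paper.

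The gap is in the very last inference of case (iii). From ``$\mathrm{Ker}\,\Hess^{\sigma}_{\q,\acr}=\T_{\acr}\Cr$ and the operator is Fredholm of index zero'' you cannot conclude that the normal Hessian is injective: $\Hess^{\sigma,n}_{\q,\acr}$ is the map induced on the quotients $\N^{\sigma}_{j}=\K^{\sigma}_{j}/\T_{\acr}\Cr$, so its kernel is $(\Hess^{\sigma}_{\q,\acr})^{-1}(\T_{\acr}\Cr)/\T_{\acr}\Cr$, and you must additionally exclude a generalized kernel, i.e. show $\mathrm{im}(\Hess^{\sigma}_{\q,\acr})\cap\T_{\acr}\Cr=0$. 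This is not automatic, precisely because the blown-up Hessian is not symmetric — the very feature you flag — and it is the condition the paper later isolates as ``almost hyperbolic'' (cf. Lemma \ref{hessian}). The paper sidesteps the issue by working directly with the quotient operator: $\lambda\neq0$ and $\ast d_{\q}\lvert_K$ invertible by $\Pin$-non-degeneracy, while $D_{\q,B}-\lambda$ — a self-adjoint operator with discrete spectrum whose $\lambda$-eigenspace is exactly $\C\psi_0\oplus\T_{\acr}\Cr$ — descends to an isomorphism of $(\C\psi_0)^{\perp}/\T_{\acr}\Cr$; the triangular structure then gives the normal Hessian isomorphism with no appeal to the index. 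In your formulation the repair is one line: the spinor diagonal block is self-adjoint, so its image is $L^2$-orthogonal to its kernel $\T_{\acr}\Cr$, and the triangular shape then forces $\Hess^{\sigma}_{\q,\acr}v\in\T_{\acr}\Cr\Rightarrow v\in\T_{\acr}\Cr$; but as written, ``Fredholm of index zero, hence the normal Hessian is injective'' does not follow.
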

\begin{proof}
We will focus on the case of a reducible critical point $\acr=(B,0,\psi)$ where $B$ is gauge equivalent to a spin connection, which is the only one which needs some adaptations from the proof in Chapter $12$ of the book. Let $\lambda$ be the eigenvalue corresponding to $\acr$. In this case, as in the proof of Lemma $12.4.3$ in the book), the extended Hessian $\widehat{\Hess}_{\q,\alpha}$ can be written as the map
\begin{align*}
(b,r,\psi,c)&\mapsto \\
&(-dc+\ast db+2\mathcal{D}_{(B_0,0)}\q^0(b,0), \lambda r,\\
&\Pi^{\perp}\big[(D_{\q,B_0}-\lambda)\psi+\rho(b)\psi_0+c\psi_0+\mathcal{D}^2\q^1_{(B_0,0)}((b,0),(0,\psi_0))\\
&+(r/2)\mathcal{D}^2\q^1_{(B_0,0)}((0,\psi_0),(0,\psi_0))\big],\\
&-d^*b+i|\psi_0|^2\mathrm{Re}\mu_Y(\langle i\psi_0,\psi\rangle)).
\end{align*}
where $\Pi^{\perp}$ is the orthogonal projection to the real-orthogonal complement of $\psi_0$. We further analyze this operator by decomposing
\begin{IEEEeqnarray*}{c}
c=i\epsilon_1+\hat{c}\\
\psi=i\epsilon_2\psi_0+\hat{\psi}
\end{IEEEeqnarray*}
where the $\epsilon_j$ are constants, $\hat{c}$ and $\hat{\psi}$ are orthogonal respectively to the constants and to $i\psi_0$ (with the real $L^2$ inner product). Exploiting $S^1$-equivariance and the fact that the configuration is a reducible critical point (hence $\psi_0$ is an eigenvector of $D_{\q,B_0}$), one can write the operator in the block lower triangular form
\begin{equation*}
\begin{bmatrix}
r \\ \epsilon_1 \\ \epsilon_2 \\ \hat{c} \\ b \\ \hat{\psi}
\end{bmatrix}
\mapsto
\begin{bmatrix}
\lambda & 0 & 0 & 0 & 0 & 0\\
\ast & 0& -1 & 0 & 0 & 0 \\
\ast & -1 & 0 & 0 & 0 & 0 \\
\ast & \ast & \ast & 0 & -d^* & 0\\
\ast & \ast & \ast & -d & *d_{\q} & 0\\
\ast & \ast & \ast & \ast & \ast & D_{\q, B_0}-\lambda
\end{bmatrix}
\begin{bmatrix}
r \\ \epsilon_1 \\ \epsilon_2 \\ \hat{c} \\ b \\ \hat{\psi}
\end{bmatrix}
\end{equation*}
where $\ast d_{\q}$ is the operator
\begin{equation*}
b\mapsto \ast db+2\mathcal{D}_{(B_0,0)}\q^0(b,0)
\end{equation*}
and $D_{\q,B_0}-\lambda$ is acting on $(\mathbb{C}\psi_0)^{\perp}$. Also, notice that the tangent space $\T_{\acr} C$ (in the sense of Section $1$ of Chapter $2$) to the critical submanifold is contained in the latter space, as it is simply the complex orthogonal inside the eigenspace.
\par
The interesting part of the operator, i.e. the one corresponding to the Hessian $\Hess^{\sigma}_{\q,\acr}$, is the operator on
\begin{equation*}
\R\oplus K\oplus (\C\psi_0)^{\perp}
\end{equation*}
(where $K$ is the space of coclosed imaginary $1$-forms) given by
\begin{equation*}
\begin{bmatrix}
\lambda & 0 & 0 \\
\ast & \ast d_{\q}|_K & 0 \\
\ast & \ast & D_{\q, B_0}-\lambda
\end{bmatrix}.
\end{equation*}
The $\Pin$-non-degeneracy of $\acr$ implies that $\lambda\neq 0$ and that $\ast d_{\q}|_K$ is an isomorphism, as it is  the restriction of the operator $\mathcal{D}_{B
}(\mathrm{grad}\CSd^{\mathrm{red}})$. On the other hand the operator, as $D_{\q,B}$ has a two dimensional $\lambda$-eigenspace spanned (over the complex numbers) by $\psi_0$ and $\T_{\acr}C$, and as $\T_{\acr}C$ is contained in $(\C\psi_0)^{\perp}$, the operator $D_{\q,B_0}-\lambda$ descends to an isomorphism on
\begin{equation*}
D_{\q,B_0}-\lambda:(\C\psi_0)^{\perp}/ \T_{\acr}C\rightarrow (\C\psi_0)^{\perp}/ \T_{\acr}C.
\end{equation*}
This is equivalent to the fact that $\Hess_{\q,\acr}^{\sigma,\nu}$ is an isomorphism, hence that the singularity is Morse-Bott.
\end{proof}
\begin{remark}
If the connection is the base one, the Dirac operator is genuinely quaternionic, hence the tangent space $\T_{\acr}C$ at a critical point $\acr=(B_0,0,\psi_0)$ is simply the complex span of $\psi_0\cdot j$.
\end{remark}

\vspace{0.8cm}
We now want to show that for a generic choice of the $\jmath$-equivariant perturbation $\q$ one can achieve $\Pin$-non-degeneracy at all critical points and regularity in the Morse-Bott sense (see Definition \ref{regularpert} in Chapter $2$). We will prove the following transversality result (see also Theorem $12.1.2$ in the book).
\begin{teor}\label{pinreg}
Let $\widetilde{\mathcal{P}}$ be a large Banach space of tame $\jmath$-equivariant perturbations. Then there is a residual subset of perturbations such that for every $\q$ in such a subset all the critical points of $(\mathrm{grad}\CSd)^{\sigma}$ are $\Pin$-non-degenerate, hence Morse-Bott, and all moduli spaces are regular.
\end{teor}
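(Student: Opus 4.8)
The plan is to follow the two-stage strategy used for Theorem $12.1.2$ in the book, combined with the Morse-Bott transversality result of Theorem \ref{transversalitymain}. In the first stage we produce a residual set of $\jmath$-equivariant perturbations for which every critical point of $(\mathrm{grad}\CSd)^{\sigma}$ is $\Pin$-non-degenerate. In the second stage, having fixed such a $\q_0$, we perturb away from the critical set by $\jmath$-equivariant perturbations adapted to $\q_0$ (in the sense of Definition \ref{adapted}) in order to make all moduli spaces regular; the two conditions are then assembled into a single residual subset of $\widetilde{\mathcal{P}}$ exactly as in the proof of Theorem \ref{transversalitymain}, using that $\Pin$-non-degeneracy is a stable condition among $\jmath$-equivariant perturbations.

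For the first stage, consider the parametrized critical set
\begin{equation*}
\mathfrak{Z}=\left\{([\alpha],\q)\mid (\mathrm{grad}\CSD+\q)^{\sigma}([\alpha])=0\right\}\subset \Bs_k(Y,\spin)\times\widetilde{\mathcal{P}},
\end{equation*}
and split the analysis, as in the proof of Theorem $12.1.2$ in the book, according to whether $[\alpha]$ is irreducible, reducible over a connection not gauge equivalent to a spin connection, or reducible over a connection gauge equivalent to a spin connection. At irreducible configurations the formal gradients of $\Pin$-invariant cylinder functions separate tangent directions by the second part of Proposition \ref{largepert2}, so the standard Sard-Smale argument produces a residual set of $\q$ for which all irreducible critical points are non-degenerate in the sense of the book. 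At a reducible $[B,0]$ the perturbing cylinder functions act on the Dirac operator through the operators $\psi\mapsto\mathcal{D}_{(B,0)}\q^1(0,\psi)$; composing with the projection $\mathbb{H}\to\mathbb{C}$ as in Proposition \ref{largepert2}, these realise a dense family of compact perturbation directions, whence for a residual set of $\q$ the zeros of $(\mathrm{grad}\CSd)^{\mathrm{red}}$ are non-degenerate and $0\notin\mathrm{Spec}(D_{B,\q})$. The genuinely new point concerns the multiplicity of the nonzero eigenvalues. If $B$ is not gauge equivalent to a spin connection, $D_{B,\q}$ is an unconstrained $k$-\textsc{asafoe} operator, and simple spectrum is residual because the operators with a repeated eigenvalue form a codimension-$3$ subset of the space $\mathrm{Op}^{sa}$ of Section $12.6$ in the book. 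If $B$ is gauge equivalent to a spin connection, the proof of Lemma \ref{quaternion} applies verbatim to $D_{B,\q}$ for $\jmath$-equivariant $\q$ and forces it to be quaternionic linear, so its eigenspaces have even complex dimension; here the relevant parameter space is the space of quaternionic-self-adjoint operators, inside which the locus with an eigenspace of complex dimension at least $4$ has codimension $5$ (modelled locally on $2\times 2$ quaternionic-Hermitian operators, of which those with a double eigenvalue form a codimension-$5$ subset). Hence for a residual set of $\q$ the nonzero eigenspaces at spin connections have complex dimension exactly $2$, the last requirement in the definition of $\Pin$-non-degeneracy. Intersecting these countably many residual conditions and invoking the preceding proposition yields a residual set $\mathcal{R}_1\subset\widetilde{\mathcal{P}}$ of perturbations with all critical points $\Pin$-non-degenerate, hence Morse-Bott.

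For the second stage fix $\q_0\in\mathcal{R}_1$. The critical set in $\Bo(Y,\spin)$ is compact, so its image in the based configuration space is a finite union of $\Pin$-invariant submanifolds; we choose a $\Pin$-invariant neighbourhood $\mathcal{O}$ of it as before and set $\widetilde{\mathcal{P}}_{\mathcal{O}}=\{\q\in\widetilde{\mathcal{P}}\mid\q|_{\mathcal{O}}=\q_0|_{\mathcal{O}}\}$, so that perturbations near $\q_0$ in $\widetilde{\mathcal{P}}_{\mathcal{O}}$ are again $\Pin$-non-degenerate with the same critical set. We then run the proof of Theorem \ref{transversalitymain} with $\mathcal{P}_{\mathcal{O}}$ replaced by $\widetilde{\mathcal{P}}_{\mathcal{O}}$. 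The equivariant restriction is harmless because $\jmath$ acts \emph{freely} on $\Bs_k(Y,\spin)$, hence on every trajectory space: given an irreducible trajectory, or a reducible trajectory not lying over a single reducible critical point, an interior point and its $\jmath$-translate are distinct, so a cylinder perturbation concentrated there may be spread $\jmath$-equivariantly without altering its germ at the point, and the surjectivity statement underlying the construction of the universal moduli space (Proposition $15.1.3$ in the book) goes through for $\widetilde{\mathcal{P}}_{\mathcal{O}}$. For reducible trajectories lying over a single reducible critical point no perturbation is needed: by Lemma \ref{dimreducible} these moduli spaces are complements of hyperplanes in projective spaces with submersive evaluation maps, hence automatically regular. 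Thus the proof of Theorem \ref{transversalitymain} provides a residual subset of $\widetilde{\mathcal{P}}_{\mathcal{O}}$ of regular perturbations, all of which are $\Pin$-non-degenerate; running this over a countable exhaustion of $\mathcal{R}_1$ assembles the desired residual subset of $\widetilde{\mathcal{P}}$.

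The main obstacle is the spectral step in the first stage: one must set up the quaternionic analogue of the book's space $\mathrm{Op}^{sa}$ of self-adjoint operators, verify the codimension-$5$ estimate for the locus of higher eigenvalue multiplicity, and check that the restricted family of $\jmath$-equivariant cylinder perturbations (after the projection $\mathbb{H}\to\mathbb{C}$) still spans enough directions in that space for Sard-Smale to apply there. Everything else is an equivariant transcription of arguments already developed in Chapters $2$ and $3$.
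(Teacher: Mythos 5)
Your skeleton does match the paper's: handle $\Pin$-non-degeneracy separately at irreducibles, at reducibles over non-spin connections, and at spin connections (your codimension-$5$ count for the non-simple locus of quaternionic self-adjoint operators is correct and is exactly the quaternionic analogue of the book's codimension-$3$ statement), and then rerun the moduli-space transversality equivariantly. But you have deferred precisely the step that is the paper's new content at the critical-point level: verifying that the $\jmath$-equivariant perturbations span enough directions at a spin connection. The paper does this by working with perturbations vanishing on the reducible locus, using the second part of Proposition \ref{largepert2} to embed the eigenspace $V$ of $D_{B_0,\q}$ \emph{$\Pin$-equivariantly} into $\mathbb{H}^m$, and then choosing a $\Pin$-invariant function $\delta g$ vanishing along $\R^n\times\mathbb{T}\times\{0\}$ whose Hessian restricted to $V$ is an arbitrary prescribed quaternionic self-adjoint endomorphism; this yields transversality of $\q\mapsto D_{B_0,\q}$ both to the kernel-dimension stratification of $\mathrm{Op}^{\mathbb{H}}$ and to the maps $F_n(L,\lambda)=L+\lambda$ parametrizing the non-simple locus, whose normal space is the traceless quaternionic self-adjoint endomorphisms of $\mathrm{ker}(L)$. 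Your proposed route ``after the projection $\mathbb{H}\to\mathbb{C}$'' discards exactly the quaternionic equivariance you need for this; in the paper that projection only serves to reduce the embedding statement of Proposition \ref{largepert2} to the non-equivariant one. A smaller omission: at the spin connections $\Pin$-invariance forces $p(B,0)=(0,0,0)$ and forces $g$ to be even along $\R^n\times\mathbb{T}\times\{0\}$, so the first-order perturbation of $(\mathrm{grad}\CSd)^{\mathrm{red}}$ is constrained there; non-degeneracy downstairs still holds because the Hessian of an even function at the origin can be an arbitrary self-adjoint operator, a point your ``dense family of compact perturbation directions'' glosses over.

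In the moduli-space step your equivariance argument misses the paper's key observation. Freeness of $\jmath$ on $\Bs_k(Y,\spin)$ only says that a slice and its $\jmath$-image are distinct points; it does not prevent the trajectory from passing through the $\jmath$-image of one of its own slices at a \emph{later time}, in which case the $\jmath$-symmetrized perturbation hits the trajectory at two different times and its two contributions to the pairing with a cokernel element could cancel — so ``without altering its germ at the point'' is not the relevant criterion. The paper rules this out by noting that if $[\check{\gamma}(t_0)]=[\jmath\cdot\check{\gamma}(t_0+T)]$ for some $T>0$, then by equivariance and uniqueness of the flow $[\check{\gamma}(t_0+2T)]=[\check{\gamma}(t_0)]$, so the trajectory is periodic and hence constant; this one-line lemma is what makes the spreading argument legitimate and is missing from your proposal. (Your handling of trajectories lying over a single spin reducible — automatic regularity, needing only hyperbolicity of $D_{\q,B}$ and the structure in Lemma \ref{dimreducible} — agrees with the paper.) Finally, your assembly is flawed: a residual subset of each $\widetilde{\mathcal{P}}_{\mathcal{O}}$, unioned over a ``countable exhaustion of $\mathcal{R}_1$'', is not residual in $\widetilde{\mathcal{P}}$, since each $\widetilde{\mathcal{P}}_{\mathcal{O}}$ is a nowhere dense closed subspace. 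The adapted-perturbation two-step is only needed in the genuinely Morse-Bott setting of Theorem \ref{transversalitymain}, where the Morse-Bott condition is non-generic; here $\Pin$-non-degeneracy is itself generic among $\jmath$-equivariant perturbations, so the paper runs the Chapter-$15$-style universal-moduli-space argument directly over $\widetilde{\mathcal{P}}$ and obtains the residual subset asserted in the statement.
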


\begin{proof}
The proof follows with few small complications in the same way as in Section $12.5$ and $12.6$ and Chapter $15$ in the book. For the $\Pin$-non-degeneracy of the critical points, it is clear that the only new issues arise when dealing with the critical points of the form $(B,0,\psi)$ where $B$ is gauge equivalent to a spin connection. We show that in fact these critical points are non-degenerate for an open dense subset of perturbations in $\widetilde{\mathcal{P}}$. From this we can then carry the proofs in Sections $12.5$ and $12.6$ in the book to achieve transversality at every critical point
\par
We first show that for a generic choice of the $\jmath$-equivariant perturbation, the operators $D_{B,\q}$ with $B$ a spin connection have two dimensional eigenspaces and zero is not an eigenvalue. This set is also open, because we are considering only finitely many gauge equivalence classes. Following then Section $12.6$, we show that if
\begin{equation*}
\widetilde{\mathcal{P}}^{\perp}\subset \widetilde{\mathcal{P}}
\end{equation*}
is the set of tame perturbations which vanish along the reducible locus of $\Co(Y)$, we can find a generic subset of perturbations in $\widetilde{\mathcal{P}}$ such that $D_{B,\q}$ has two dimensional eigenspaces. By gauge invariance we just need to show this for one of the $2^{b_1(Y)}$ spin connections, and for simplicity we only consider the case of the base one $B_0$ (as the others can be obtained by changing the quaternionic structure).
\par
In this case, we consider the space $\mathrm{Op}^{\mathbb{H}}$ consisting of self adjoint \textit{quaternionic linear} Fredholm maps
\begin{equation*}
L^2_k(Y;S)\rightarrow L^2_{k-1}(Y;S)
\end{equation*}
of the form $D_{B_0}+h$, where $h$ is a quaternionic self-adjoint operator which extends to a bounded operator on all $L^2_j$ for $j\leq k$. This space is stratified by the dimension of the kernel, and the set of operators whose (quaternionic) spectrum is not simple is a countable image of Fredholm maps with negative index
\begin{IEEEeqnarray*}{c}
F_n:\mathrm{Op}^{\mathbb{H}}_n\times \R\rightarrow \mathrm{Op}^{\mathbb{H}}\\
(L,\lambda)\mapsto L+\lambda,
\end{IEEEeqnarray*}
where $\mathrm{Op}^{\mathbb{H}}_n\subset\mathrm{Op}^{\mathbb{H}}$ is the space of operators with kernel of dimension exactly $n$. Our claim is that the map
\begin{IEEEeqnarray*}{c}
\tilde{M}: \mathcal{P}^{\perp}\rightarrow \mathrm{Op}^{\mathbb{H}}\\
\q^{\perp}\mapsto D_{B,\q^{\perp}}
\end{IEEEeqnarray*}
is transverse to both the stratification of the maps according to the dimension of the kernel and the Fredholm maps $\{F_n\}$, from which the proof follows as in Theorem $12.1.2$ in the book. Suppose $\q^{\perp}=\mathrm{grad}f$ is a be a perturbation in $\mathcal{P}^{\perp}$, where $f=g\circ \tilde{p}$ with $g$ vanishing along $\R^n\times\mathbb{T}\times\{0\}$.
If we denote by $V$ the kernel of $D_{B_0,\q}$, the tangent space to the stratification by the dimension of the kernel at the point
\begin{equation*}
D_{B_0,\q}\in \mathrm{Op}^{\mathbb{H}}\
\end{equation*}
is the kernel of the compression map from $\mathrm{Op}^{\mathbb{H}}$ to the self adjoint quaternionic operators on $V$.
Furthermore, $V$ can be regarded as a subspace of the normal bundle to $\mathcal{A}_k$ in $\Co_k(Y)$, and by Proposition \ref{largepert2} we can choose a $p$ (defined by a collection of coclosed forms and sections in the countable collection defining the large space of perturbations) whose differential embeds this into a linear subspace of
\begin{equation*}
\mathbb{H}^m\subset T_{(0,0,0)}(\R\times\mathbb{T}\times \mathbb{H}^m)
\end{equation*} 
in a $\Pin$-equivariant fashion. We have that $p(B,0)$ is necessarily
\begin{equation*}
(0,0,0)\in\R^n\times\mathbb{T}\times \mathbb{H}^m
\end{equation*}
by $\Pin$-equivariance. By choosing a $\Pin$-invariant function $\delta g$ (which can we suppose in our defining collection) on $\R\times\mathbb{T}\times \mathbb{H}^m$ vanishing along $\R\times\mathbb{T}\times \{0\}$ we can find a perturbation
\begin{equation*}
\delta\q^{\perp}=\mathrm{grad}(\delta g\circ p)\in\mathcal{P}^{\perp}
\end{equation*}
such that the Hessian of $(\delta g\circ p)\lvert_V$ is any chosen $\Pin$-equivariant (i.e. quaternionic) self-adjoint endomorphism of $V$, which proves our claim. The second claim follows with the same proof from the fact that the normal bundle to the image of $F_n$ at $L+\lambda$ is naturally isomorphic to the space of traceless, self-adjoint quaternionic endomorphisms of $\mathrm{Ker}(L)$.
\par
We then turn to the arrange that $(\mathrm{grad}\CSd)^{\mathrm{red}}$ is a transverse section downstairs at these $2^{b_1(Y)}$ points. Of course the set of perturbations in $\widetilde{\mathcal{P}}$ for which this condition holds is open. To show that it is dense, we need to study the surjectivity at a configuration $(B,0)$ with $B$ a spin connection of the map 
\begin{align*}
(\mathrm{grad}\CSd)^{\mathrm{red}}&:\mathcal{K}^{\mathrm{red}}_{k}\rightarrow\mathcal{K}^{\mathrm{red}}_{k-1}\\
b&\mapsto \ast db+2\mathcal{D}_{(B,0)}\q^0(b,0).
\end{align*}
where $\mathcal{K}^{\mathrm{red}}_{j}$ is the space of coclosed $1$-forms. As $p(B,0)$ is necessarily $(0,0,0)$ the second term involves the Hessian of the function $g$ at the origin. The requirement that $g$ is $\Pin$-invariant implies that it is an even function when restricted to $\R^n\times\mathbb{T}\times \{0\}$. The result then follows by choosing the map $p$ so that its differential is an embedding on the kernel of the linearization of $(\mathrm{grad}\CSd)^{\mathrm{red}}$ and the fact that the Hessian of an even function on $\mathbb{R}^n\times\mathbb{T}$ at the origin can be any self-adjoint linear operator.
\par
The proof of transversality for moduli spaces follows with the similar adaptations from the one in Chapter $15$ in the book and Theorem \ref{transversalitymain} in Chapter $2$. It is clear that if the path of configurations contains a point which is not reducible with the connection equivalent to a spin one then the proof of transversality in the classical case applies without any significant change. The only observation to be made in order to run the transversality argument is that if a trajectory $\gamma$ connecting two critical points is such that there exists a time $t_0$ and $T>0$ such that
\begin{equation*}
[\check{\gamma}(t_0)]=[\jmath\cdot \check{\gamma}(t_0+T)]
\end{equation*}
then the trajectory is constant (this is required in order to be able to find a perturbation that has non zero inner product with an element in the cokernel of the map defining the universal moduli space, see equation ($15.3$) in the book). On the other hand this relation implies that $[\check{\gamma}(t_0+2T)]$ is the same as $[\check{\gamma}(t_0)]$, hence the trajectory is constant.
\par
Finally, the only possible complication regards the trajectories that connect critical points $[\acr]$ and $[\bcr]$ with blow down the reducible critical point $[B,0]$ with $B$ equivalent to a spin connection, and such that the homotopy class is trivial. On the other hand, the proof of transversality in this situation in Theorem \ref{transversalitymain} in Chapter $2$ only uses only the fact that the operator $D_{\q,B}$ is hyperbolic (and not that its spectrum is simple), so it still holds in our case without any modification.
\end{proof}

\vspace{0.8cm}
The set of perturbations we have introduced is enough to define the Floer chain complexes, but is not sufficient to prove the invariance and functoriality properties. The issue is that for such perturbations, on a spin cobordism the spin connection is always a solution of the Seiberg-Witten equations (in the blow down) hence transversality might not hold simply because of index reasons. A finite dimensional example of this phenomenon is that on the circle $S^1$ there is no regular (in the sense of Morse homology) family of functions connecting $\cos \vartheta$ and $-\cos \vartheta$ which is conjugation invariant at each point. We introduce further perturbations in the blow up as follows.

\begin{defn}\label{ASDpert}
Consider a self-conjugate spin$^c$ structure $\spin$ on $X$.
For $k\geq 2$, a $k$-tame \textit{$\Pin$-equivariant \textsc{asd}-perturbation} is a map
\begin{equation*}
\hat{\omega}: \Cs(X,\spin)\rightarrow L^2(X,i\mathfrak{su}(S^+))
\end{equation*}
with the following properties:
\begin{enumerate}
\item the map $\hat{\omega}$ is gauge invariant and $\jmath$-equivariant, where $\jmath$ acts on the right hand side as the multiplication by $-1$;
\item the map extends to an element
\begin{equation*}
\hat{\omega}\in C^{\infty}\left(\Cs_k(X), L^2_{k+1}(X,i\mathfrak{su}(S^+))\right);
\end{equation*}
restricting to zero at the boundary of $X$;
\item for every integer $j\in[-k,k]$, the first derivative
\begin{equation*}
\mathcal{D}\hat{\omega}\in C^{\infty}\left( \Cs_k(X), \mathrm{Hom}(T\Cs_k(X), L^2_{k+1}(X,i\mathfrak{su}(S^+)))\right)
\end{equation*}
extends to a map
\begin{equation*}
\mathcal{D}\hat{\omega}\in C^{\infty}\left( \Cs_k(X), \mathrm{Hom}(T\Cs_j(X), L^2_j(X,i\mathfrak{su}(S^+)))\right);
\end{equation*}
\item the image of $\hat{\omega}$ is precompact in the $L^2_{k+1}$ topology.
\end{enumerate}
We say that $\hat{\omega}$ is \textit{tame} if it is tame for every $k\geq 2$.
\end{defn}

It is important to remark that these perturbations are purely $4$-dimensional and are defined in the blow up, hence also on $\Co^*(X,\spin)$ via the blow down map. The conditions we impose are very similar to those of a tame perturbations (Definition \ref{tamepert} in Chapter $1$), and in fact they are introduced for the same reasons. In particular, condition $(2)$ is require to set up the perturbed equations and condition $(4)$ has the same role as condition $(5)$ in Definition \ref{tamepert} in Chapter $1$. A slight difference is that our requirements involve the $L^2_{k+1}$ norm, which will be needed in the proofs of compactness.
\\
\par
Suppose now we are in the same setting as is Section $6$  of Chapter $2$, so our manifold $X$ has cylindrical end $Z=I\times Y$ and we are given $k$-tame perturbations $\hat{\q},\hat{\mathfrak{p}}_0$ and cut-off functions $\beta,\beta_0$. Given a $k$-tame $\Pin$-equivariant \textsc{asd}-perturbation $\hat{\omega}$ we can define the gauge invariant and $\jmath$-equivariant section
\begin{equation*}\F^{\sigma}_{\mathfrak{p},\hat{\omega}}=\F^{\sigma}+\hat{\mathfrak{p}}^{\sigma}+\hat{\omega}: \Cs_k(X,\spin_X)\rightarrow \V^{\sigma}_{k-1},
\end{equation*}
and the perturbed Seiberg-Witten equations $\F^{\sigma}_{\mathfrak{p},\hat{\omega}}=0$. The definition the moduli spaces in this context carries over without particular modifications.
\par
These moduli spaces satisfy the same properties discussed in Section $6$ of Chapter $2$. We discuss the most significant one, namely the proof of the compactness of the moduli spaces on $X$ (see Theorem $24.5.2$ in the book). Here, we write
\begin{equation*}
X_{\epsilon}=X\setminus\left( (-\epsilon,0]\times Y\right),
\end{equation*}
and we take $X'\subset X_{\epsilon}$.
\begin{prop}\label{compasd}
Let $\gamma_n$ be a sequence in $\Cs_k(X')$ of solutions to the perturbed equations $\F^{\sigma}_{\mathfrak{p},\hat{\omega}}=0$. Suppose that there is a uniform bound on the perturbed topological energy
\begin{equation*}
\mathcal{E}^{\mathrm{top}}_{\q}(\gamma_n)\leq C_1,
\end{equation*}
and that for each component $Y^{\alpha}$ of $Y$ there is a uniform upper bound
\begin{equation*}
-\Lambda_{\q}(\gamma_n\mid_{\{-\epsilon\}\times Y^{\alpha}})\leq C_2.
\end{equation*}
Then there is a sequence of gauge transformations $u_n\in \G_{k+1}(X)$ such that after passing to a subsequence the restrictions $u_n(\gamma_n)\lvert_{X'}$ converge in the topology of $\Cs_k(X')$ to a solution $\gamma\in \Cs_k(X')$.
\end{prop}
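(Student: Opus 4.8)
The plan is to follow the proof of Theorem $24.5.2$ in the book, which deduces the compactness statement from the interior elliptic estimates for the perturbed Seiberg-Witten equations of Chapter $5$ together with Uhlenbeck-type gauge fixing. The only genuinely new feature here is the presence of the $\Pin$-equivariant \textsc{asd}-perturbation $\hat{\omega}$, and the key point is that conditions $(2)$–$(4)$ in Definition \ref{ASDpert} have been set up precisely so that $\hat{\omega}$ behaves, for the purposes of these estimates, like the four-dimensional part of a tame perturbation with slightly better regularity. Since $\hat{\omega}$ enters only the curvature equation and not the Dirac equation, the spinor estimates are essentially unchanged, and the $\jmath$-equivariance plays no role in this particular argument.

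First I would establish the a priori pointwise bound on the spinor. Writing a solution $\gamma_n=(A_n,s_n,\phi_n)$ of $\F^{\sigma}_{\mathfrak{p},\hat{\omega}}=0$ in the expanded form $\tfrac12\rho(F_{A_n^t}^+)-s_n^2(\phi_n\phi_n^*)_0=-\hat{\mathfrak{p}}^{0}(\gamma_n)-\hat{\omega}(\gamma_n)$ and $D_{A_n}^+\phi_n=-\hat{\mathfrak{p}}^{1,\sigma}(\gamma_n)$, one applies the Weitzenböck formula and the maximum principle argument of Chapter $5$ in the book on the interior of $X'$. The uniform upper bound on $-\Lambda_{\q}(\gamma_n|_{\{-\epsilon\}\times Y^\alpha})$ together with the bound on $\mathcal{E}^{\mathrm{top}}_{\q}(\gamma_n)$ controls the analytic energy of $\gamma_n$ on all of $X_\epsilon$, via the standard relation between topological and analytic energy and the function $\Lambda_{\q}$ on the collar; combined with the tameness bounds for $\hat{\q},\hat{\mathfrak{p}}_0$ (condition $(5)$ of Definition \ref{tamepert}) and condition $(4)$ of Definition \ref{ASDpert} for $\hat{\omega}$, this yields a uniform bound on $\|\phi_n\|_{C^0(X')}$ and hence on $\|F_{A_n^t}^+\|_{L^2(X')}$.

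With these bounds in hand the bootstrapping proceeds as usual: cover $X'$ by small balls, use Uhlenbeck's theorem to put $A_n$ in a local Coulomb gauge on each ball, and apply elliptic regularity to the gauge-fixed equations. The perturbation contributes the term $\hat{\omega}(\gamma_n)$, whose $L^2_{k+1}$ norm is bounded (indeed precompact) by conditions $(2)$ and $(4)$ and whose derivative satisfies the bounds of condition $(3)$, so it does not obstruct the iteration; one obtains uniform $L^2_k$ bounds for suitable gauge representatives on compact subsets. Patching the local gauge transformations together as in Chapter $5$ (and Section $24.5$) of the book produces global $u_n\in\G_{k+1}(X)$ with $u_n(\gamma_n)|_{X'}$ bounded in $L^2_k(X')$; passing to a subsequence gives weak convergence to a limit $\gamma$, and the precompactness of the image of $\hat{\omega}$ (together with that of the tame perturbations) upgrades the convergence of the nonlinear terms to the strong $L^2_{k-1}$ topology, so that $\gamma$ again solves $\F^{\sigma}_{\mathfrak{p},\hat{\omega}}=0$ and, by elliptic bootstrapping one last time, the convergence $u_n(\gamma_n)|_{X'}\to\gamma$ is strong in $\Cs_k(X')$.

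The main obstacle, exactly as in the classical case, is the first step: converting the boundary control on $\Lambda_{\q}$ and the topological energy into a genuine interior bound on the spinor, since a priori the analytic energy could concentrate or $|\phi_n|$ could blow up. The hypotheses of the proposition are designed precisely to rule this out, and once $\|\phi_n\|_{C^0(X')}$ is controlled the remainder is a routine, if lengthy, adaptation of the book's argument, with condition $(4)$ of Definition \ref{ASDpert} carrying the extra weight needed to accommodate the \textsc{asd}-perturbation.
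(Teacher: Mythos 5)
There is a genuine gap: your argument only addresses convergence in the blown-down configuration space, while the statement asserts convergence in $\Cs_k(X')$. Convergence in the blow-up requires, in addition, that the \emph{normalized} spinors $\phi_n$ converge even when $\|\Phi_n\|_{L^2}\to 0$, and this is precisely where the hypothesis $-\Lambda_{\q}(\gamma_n\mid_{\{-\epsilon\}\times Y^{\alpha}})\leq C_2$ is actually used --- not, as you suggest, to control the analytic energy. The paper's proof handles this by two observations that both hinge on the fact that $\hat{\omega}$ perturbs only the curvature equation: (i) unique continuation still holds for the spinor component (if $\check{\Phi}(t)$ vanishes on a slice it vanishes identically), so one may pass to the $\tau$-model; and (ii) the differential inequality $\frac{d}{dt}\Lambda_{\q}(\check{\gamma}^{\tau}(t))\leq \zeta(\check{\gamma}(t))\,\|\mathrm{grad}\CSd(\check{\gamma}(t))+(\omega_t(\gamma),0)\|_{L^2_k(Y)}$ survives the new perturbation, so the boundary bound on $\Lambda_{\q}$ propagates to a uniform two-sided bound $|\Lambda_{\q}(\check{\gamma}_n(t))|\leq M$ along the collar, which is what controls the blow-up coordinate $s$ and the renormalized spinor. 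None of this appears in your proposal, and it cannot be absorbed into ``one last elliptic bootstrap,'' since $\hat{\omega}$ is nonlocal (it depends on the whole configuration through $u_A$ and the $L^2$ projection), so unique continuation for the full system genuinely fails --- a point the paper flags explicitly.

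Two further discrepancies in the blown-down part are worth noting. First, the quantitative heart of the paper's argument is the comparison of the perturbed and unperturbed analytic energies: writing $\rho_Z^{-1}(\hat{\omega})$ on the collar as $\frac12(dt\wedge\omega_t+\ast\omega_t)$, expanding $\mathcal{E}^{\mathrm{an}}_{\q,\hat{\omega}}$, and using condition $(4)$ together with a Peter--Paul inequality to get $\mathcal{E}^{\mathrm{an}}_{\q,\hat{\omega}}\geq \frac12\mathcal{E}^{\mathrm{an}}_{\q}-C$; only then does Lemma $10.6.1$ of the book yield the $L^2$ bounds on $F_{A_n^t}$, $\nabla_{A_n}\Phi_n$ and the $L^4$ bound on $\Phi_n$. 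Your appeal to ``the standard relation between topological and analytic energy'' skips exactly this new estimate. Second, the pointwise spinor bound via Weitzenb\"ock and the maximum principle is the Chapter $5$ (unperturbed) argument; in the perturbed setting the book, and this paper, obtain the spinor control from the energy identity rather than a maximum principle, since the tame perturbation terms are not pointwise controlled. Your instinct that conditions $(2)$--$(4)$ of the \textsc{asd}-perturbation definition and the precompactness of the image in $L^2_{k+1}$ are what make the bootstrapping go through is correct, but as written the proposal would establish at best $L^2_k$ convergence of $(A_n,\Phi_n)$ up to gauge on $X'$, not convergence in $\Cs_k(X')$.
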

\begin{proof}
We focus on the case on the specail case of a sequence of solutions $(A_n, s_n, \phi_n)$ on a cylinder $Z=I\times Y$ (where the perturbation $\mathfrak{p}$ is time independent so it is not supported in a collar). Indeed, the general case follows along the same lines of Theorem $24.5.2$ in the book once has the convergence result on cylinders. We can also assume that the sequence consists of entirely of irreducibles, as the case of a sequence of reducible solutions is easier. As we mentioned above, the perturbation $\hat{\omega}$ induces a perturbation downstairs on the irreducible locus, and write $\mathbf{\pi}(\gamma_n)=(A_n,\Phi_n)$.
\par
We introduce the perturbed analytical energy (compare Chapter $4$ in the book) on general manifold $X$ as
\begin{equation*}
\mathcal{E}^{\mathrm{an}}_{\q,\hat{\omega}}(A,\Phi)=\mathcal{E}^{\mathrm{top}}_{\q}+\|\F_{\mathfrak{p},\hat{\omega}}(A,\Phi)\|^2.
\end{equation*}
The imaginary valued $2$-form $\rho_Z^{-1}(\hat{\omega}(A,\Phi))$ is anti-self dual, so it can be written as
\begin{equation*}
\rho_Z^{-1}(\hat{\omega}(A,\Phi))=\frac{1}{2}\left( dt\wedge \omega_t(A,\Phi)+ \ast \omega_t(A,\Phi)\right)
\end{equation*}
where $\omega_t(A,\Phi)$ is a family of imaginary valued $1$-forms on $Y$ and $\ast$ is the Hodge star on the three manifold. These elements are all in $L^2_{k-1}$ because of Condition ($1$) in Definition \ref{ASDpert}. In particular we have
\begin{equation*}
\hat{\omega}(A,\Phi)=-\rho(\omega_t(A,\Phi))
\end{equation*}
as elements of $\mathfrak{sl}(S^+)\cong \mathfrak{sl}(S)$. Hence if $(A,\Phi)$ is a configuration in temporal gauge that solves $\F^{\sigma}_{\mathfrak{p},\hat{\omega}}=0$ then the path $(\check{A}(t),\check{\Phi}(t))$ is a flow line for the (partially defined) vector field
\begin{equation*}
\mathrm{grad}\CSd+(\hat{\omega}_t,0)
\end{equation*}
where $\hat{\omega}_t$ depends on the configuration on the whole cylinder. We can then write, for a configuration in temporal gauge,
\begin{align*}
\mathcal{E}^{\mathrm{an}}_{\q,\hat{\omega}}(A,\Phi)&=2(\CSd(t_1)-\CSd(t_2))+\int_{t_1}^{t_2}\|\frac{d}{dt}\check{\gamma}(t)+\mathrm{grad}\CSd+(\hat{\omega}_t,0)\|^2dt=\\
&\int_{t_1}^{t_2}\|\frac{d}{dt}\check{\gamma}\|^2+\|\mathrm{grad}\CSd+(\hat{\omega}_t,0)\|^2dt+2\int_{t_1}^{t_2}\langle \frac{d}{dt}\check{\gamma},(\hat{\omega}_t,0)\rangle.
\end{align*}
This can be written in a gauge invariant fashion as
\begin{align*}\label{analenerg}
\mathcal{E}^{\mathrm{an}}_{\q,\hat{\omega}}(A,\Phi)=\int_{t_1}^{t_2}\left\|\frac{d}{dt}\check{A}-dc\right\|^2dt+\int_{t_1}^{t_2}\left\|\frac{d}{dt}\check{\Phi}+c\Phi\right\|^2dt\\+\int_{t_1}^{t_2}\|\mathrm{grad}\CSd+(\hat{\omega}_t,0)\|^2dt+
4\int_{t_1}^{t_2} \mathrm{tr}\left((\frac{d}{dt}\check{A}-dc)\rho^{-1}_Z(\hat{\omega})\right)dt.
\end{align*}
Because of condition $(4)$ in Definition \ref{ASDpert}, the $L^2$-norm of $\hat{\omega}(A,\Phi)$ is bounded independently of $(A,\Phi)$, so by applying the Peter-Paul inequality to the last two terms we have that that
\begin{equation*}
\mathcal{E}^{\mathrm{an}}_{\q,\hat{\omega}}\geq \frac{1}{2}\mathcal{E}^{\mathrm{an}}_{\q}- C
\end{equation*}
for some constant $C$ independent of $(A,\Phi)$ and as in equation $(10.12)$ in the book
\begin{equation*}
\frac{1}{2}\mathcal{E}^{\mathrm{an}}_{\q}=\int_{t_1}^{t_2}\left\|\frac{d}{dt}\check{A}-dc\right\|^2dt+\int_{t_1}^{t_2}\left\|\frac{d}{dt}\check{\Phi}+c\Phi\right\|^2dt\\+\int_{t_1}^{t_2}\|\mathrm{grad}\CSd\|^2dt.
\end{equation*}
Then, Lemma $10.6.1$ in the book implies that for some constants $C',C''$ we have an inequality of the form
\begin{equation*}
\mathcal{E}^{\mathrm{an}}_{\q,\hat{\omega}}(A,\Phi)\geq \frac{1}{4}\int_Z\left(\frac{1}{4}|F_{A^t}|^2+|\nabla_A\Phi|^2+\frac{1}{4}(|\Phi|^2-C')^2\right)-C''(t_2-t_1).
\end{equation*}
From this we see that a bound on the topological energy implies as in the usual case an $L^2$ bound on $|F_{A_n^t}|$ and $|\nabla_{A_n}\Phi_n|$ and an $L^4$ bound on $|\Phi|$. As in the proof of Theorem $10.7.1$ in the book, after suitable gauge transformations the usual tame perturbations $\hat{\mathfrak{p}}(A_n,\Phi_n)$ have a subsequence converging in the $L^2$ topology. Furthermore, condition $(5)$ in Definition \ref{ASDpert} implies that we can pass to a subsequence for which $\hat{\omega}(A_n,\Phi_n)$ converges in the $L^2_{k+1}$ topology. Hence because of the properness of the Seiberg-Witten map we can pass to a subsequence that converges up to gauge in the interior domains in the $L^2_1$ topology (see Theorem $5.2.1$ in the book). From this we can prove convergence in the $L^2_2$ topology, using the fact that $\hat{\omega}(A_n,\Phi_n)$ converges in the $L^2_{k+1}$ topology and the standard perturbation $\hat{\mathfrak{q}}(A_n,\Phi_n)$ converges in the $L^2_1$ topology. We can continue like this to prove convergence in any interior domain in the $L^2_{k+1}$ topology.
\par
We need now to prove convergence in the blow up, so suppose we are given a sequence of irreducible solutions $(A_n,\Phi_n)$ converging in the $L^2_{k+1}$ topology so that the $L^2$ norm of $\Phi_n$ going to zero. We need to show that the renormalized spinors $\Phi_n/\|\Phi_n\|_{L^2}$ converge in the $L^2_{k+1}$ topology. Because our new perturbations only affect the connection component, the proof of Proposition $10.8.1$ in the book carries over with no modification to show that the if the spinor $\Phi_n$ restricts to zero in a slice $\{t\}\times Y$ then it is identically zero. In particular we can pass to the $\tau$-model description of the moduli spaces. The fact that $\hat{\omega}(A_n,\Phi_n)$ is convergent in the $L^2_{k+1}$ topology implies that
\begin{equation*}
t\mapsto \mathrm{grad}\CSd(\check{\gamma}_n(t))+(\hat{\omega}_t(\gamma_n),0)
\end{equation*}
is an $L^2$ path in the Hilbert space $L^2_k(Y;i T^*Y\oplus S)$ converging in the topology of $L^2$ paths. The proof of Proposition $10.9.1$ in the book carries over without modifications to show that there is a continuous positive function $\zeta$ on $\Cs_k(Y)$ such that for any solution $\gamma$ of the equations $\F_{\mathfrak{p},\hat{\omega}}(\gamma)$ we have that
\begin{equation*}
\frac{d}{dt}\Lambda_{\q}(\check{\gamma}^{\tau}(t))\leq \zeta(\check{\gamma}(t))\|\mathrm{grad}\CSd(\check{\gamma}(t))+(\omega_t(\gamma),0)\|_{L^2_k(Y)}.
\end{equation*}
Notice that the right hand side is only defined for almost every $t$ and is locally square integrable.
This is again because the additional perturbation only involves the connection components, while the proof of the inequality follows from differentiating the equation for the spinor part. Together with the upper bound on $\Lambda_{\q}$, this implies that we have a uniform bound
\begin{equation*}
|\Lambda_{\q}(\check{\gamma}_n(t))|\leq M,
\end{equation*}
so we can conclude as in the usual proof (as it again only involves manipulations with the spinor part).
\end{proof}

\begin{remark}
It is important to remark that for these perturbed equations the proof of unique continuation (see Proposition $10.8.2$ in the book) does not hold, because the perturbation in the connection component at a single time depends on the whole configuration on the cylinder. On the other hand, we will only need to use the unique continuation property for the spinor component discussed in the proof above.
\end{remark}

\vspace{0.8cm}

We now show how to construct a Banach space of such tame $\Pin$-equivariant \textsc{asd}-perturbations which is large enough to achieve transversality. We introduce a simple class of perturbations called \textit{squared projections}, and show that they define tame $\textsc{asd}$-perturbations. Consider an embedded closed ball $B^4$ together with a base point $p_0$. Consider a \textit{finite dimensional} subspace of compactly supported spinors
\begin{equation*}
V\subset C^{\infty}_c(B^4; S^+)
\end{equation*}
which is quaternionic, i.e. invariant under the action of $j$, and let $\pi_V$ be the $L^2$ orthogonal projection to the subspace $V$. On $B^4$ there is a unique spin connection $A_0$. For any other connection $A=A_0+a$, there is a unique gauge transformation $u_A$ such that $u_A\cdot A$ is in Coulomb-Neumann gauge with respect to $A_0$ and $u_A(p_0)$ is $1$. We also have
\begin{equation*}
u_A=e^{Ha}
\end{equation*}
where $H$ is a smoothing operator of order $1$. Finally, let $f$ be a compactly supported smooth real valued function in $B^4$. We then define the map \textit{squared projection} associated to the quadruple $(B^4,p_0, V,f)$ as the map
\begin{align*}
\hat{\omega}_{(B^4,p_0,V,f)}:\Cs(Z,\spin_Z) & \rightarrow L^2(X, i\mathfrak{su}(2))\\
(A,s,\phi)&\mapsto f\cdot\left(\pi_V(\phi \cdot u_A)\pi_V(\phi\cdot u_A)^*\right)_0.
\end{align*}
The main analytic result is the following.

\begin{prop}
Given a triple $(B^4,p_0,V,f)$ as above, the squared projection
\begin{equation*}
\hat{\omega}_{(B^4,p_0,V,f)}:\Cs(Z,\spin_Z)\rightarrow L^2(X, i\mathfrak{su}(2))
\end{equation*}
is a tame $\Pin$-equivariant \textsc{asd}-perturbation in the sense of Definition \ref{ASDpert}.
\end{prop}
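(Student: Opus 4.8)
The plan is to check the four conditions of Definition \ref{ASDpert} directly; all of them become soft once one makes the basic observation that $\hat{\omega}_{(B^4,p_0,V,f)}$ factors through the $L^2$-orthogonal projection $\pi_V$ onto the \emph{fixed finite-dimensional} space $V\subset C^\infty_c(B^4;S^+)$, so that after applying $\pi_V$ every quantity lives in a fixed finite-dimensional subspace, on which all Sobolev norms are equivalent. For the gauge invariance (first half of condition (1)) I would first record the transformation law of the Coulomb--Neumann gauge-fixing map: for $v$ a gauge transformation one has $u_{v\cdot A}=v(p_0)\,u_A\,v^{-1}$, since $v(p_0)u_A v^{-1}$ puts $v\cdot A$ in Coulomb--Neumann gauge and equals $1$ at $p_0$, and such a transformation is unique. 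Hence the gauge-fixed spinor $\phi\cdot u_A$, and therefore $\pi_V(\phi\cdot u_A)$ (as $\pi_V$ is complex linear), transforms only by the \emph{constant} $v(p_0)\in S^1$, which is then killed by the Hermitian squaring $\xi\mapsto(\xi\xi^*)_0$. For the $\jmath$-equivariance, I would use that the spin connection $A_0$ on $B^4$ is $\jmath$-fixed, so $\bar A=A_0-a$ and $u_{\bar A}=\overline{u_A}$; combined with the quaternion identity $j\bar z=zj$ this gives that the gauge-fixed spinor of $\jmath\cdot(A,s,\phi)$ is $(\phi\cdot u_A)\cdot j$. Since $V$ is quaternionic, right multiplication by $j$ is an $\mathbb{R}$-linear isometry preserving $V$ and $V^\perp$, so $\pi_V$ commutes with $\cdot j$; the verification then reduces to the pointwise identity $\big((\eta\cdot j)(\eta\cdot j)^*\big)_0=-(\eta\eta^*)_0$ for $\eta\in S^+$ --- the same identity that makes $\F^{\sigma}$ $\jmath$-equivariant --- which, since $\jmath$ acts as $-1$ on $L^2(X;i\mathfrak{su}(S^+))$, gives $\hat{\omega}(\jmath\cdot\gamma)=-\hat{\omega}(\gamma)$.

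For conditions (2) and (3), I would note that $a\mapsto u_A=e^{Ha}$ is smooth from $\A_k(B^4)$ into $L^2_{k+1}$ because $H$ raises regularity by one order and $L^2_{k+1}$ is a Banach algebra (as $k\ge2$, so $2(k+1)>\dim X$); then $\phi\cdot u_A\in L^2_k$, and $\pi_V(\phi\cdot u_A)$ --- i.e.\ the collection of $L^2$-pairings of $\phi\cdot u_A$ against a fixed orthonormal basis of $V$ --- depends smoothly on $(A,s,\phi)$ and lands in $V$. Consequently $\xi\mapsto f\cdot(\xi\xi^*)_0$ maps $V$ smoothly into a fixed finite-dimensional subspace of $L^2_{k+1}(X;i\mathfrak{su}(S^+))$, and since $f$ is supported in the interior of $B^4$ the output vanishes near $\partial X$; this is condition (2). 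For condition (3) the differential $\mathcal{D}\hat{\omega}$ applied to a tangent vector $(b,t,\psi)$ is built from $\mathcal{D}(e^{Ha})(b)$ and $L^2$-pairings of $\psi$ and $\phi$ against elements of $C^\infty_c$, all of which make sense for $b,\psi$ of any regularity $|j|\le k$ and again land in $V$, so $\mathcal{D}\hat{\omega}$ extends to a smooth section of $\mathrm{Hom}(T\Cs_j(X),L^2_j)$.

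Condition (4) is immediate: on the blown-up configuration space $\|\phi\|_{L^2(X)}=1$ and $|u_A|=1$ pointwise, so $\|\phi\cdot u_A\|_{L^2}=1$ and $\pi_V(\phi\cdot u_A)$ lies in the closed unit ball of the finite-dimensional space $V$, which is compact; its continuous image under $\xi\mapsto f\cdot(\xi\xi^*)_0$ in $L^2_{k+1}$ is then compact and contains the image of $\hat{\omega}$. All of this works for every $k\ge2$, which gives tameness. The one place where I expect to have to be careful is the $\jmath$-equivariance in the first paragraph: one must keep straight the convention that the complex structure on the spinors is right quaternion multiplication, verify $u_{\bar A}=\overline{u_A}$, check that $\pi_V$ genuinely commutes with $\cdot j$ using that $V$ is quaternionic, and fix the sign in $\big((\eta\cdot j)(\eta\cdot j)^*\big)_0=-(\eta\eta^*)_0$; everything else is routine bookkeeping of Sobolev multiplication and the finite-dimensionality of $V$.
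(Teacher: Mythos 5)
Your proposal is correct and follows essentially the same route as the paper: $\jmath$-equivariance via $u_{\bar A}=\overline{u_A}$ together with the quaternionic-linearity (equivalently $j$-commutation) of $\pi_V$, conditions (2)--(3) via smoothness of $a\mapsto e^{Ha}$ and Sobolev multiplication with $k\geq 2$ (only the target's $L^2$ norm mattering because of the projection to the finite-dimensional $V$), and precompactness from the image lying in the continuous image of the unit ball of $V$. You additionally make explicit the gauge-transformation law $u_{v\cdot A}=v(p_0)u_Av^{-1}$ and the sign identity $\bigl((\eta\cdot j)(\eta\cdot j)^*\bigr)_0=-(\eta\eta^*)_0$, which the paper treats as immediate.
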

\begin{proof}
Gauge invariance is clear from the definition. To see the equivariance under the $j$ action, one notices that
\begin{equation*}
u_{\bar{A}}=\bar{u}_A
\end{equation*}
hence we have
\begin{equation*}
\pi_V(\phi\cdot j\cdot u_{\bar{A}})=\pi_V(\phi\cdot u_A \cdot j)=\pi(\phi\cdot u_A)\cdot j.
\end{equation*}
In the last step we used the fact that the $L^2$ projection to the quaternionic subspace $V$ is quaternionic linear, which easily follows from the fact that
\begin{equation*}
\langle \phi_1\cdot j, \phi_2\cdot j\rangle_{L^2(X)}=\overline{\langle \phi_1,\phi_2\rangle}_{L^2(X)}.
\end{equation*}
We then prove condition $(4)$. We have for a constant $C$ independent of the configuration
\begin{multline*}
\|\hat{\omega}_{(B^4,p_0,V,f)}(A,s,\phi)\|_{L^2(X)}=\|\pi_V(\phi \cdot u_A)\|_{L^4(X)}\leq\\
\leq C \|\pi_V(\phi \cdot u_A)\|_{L^2(X)}\leq  C\|\phi\cdot u_A\|_{L^2(X)}=C.
\end{multline*}
In the second step we used the fact that $V$ is finite dimensional, so all norms are equivalent, while in the last step we used the fact that $\|\phi\|_{L^2(X)}=1$. In particular, because the image of the map is contained in a finite dimensional subspace, the map $\hat{\omega}_{(B^4,p_0,V,f)}$ is precompact in the $L^2_{k+1}$-topology.
\par
The proof of conditions $(2)$ and $(3)$ follows in the same way as the estimates in Chapter $11$ in the book. In fact, the proofs are much easier because our perturbation are defined in a purely four dimensional context, rather that being induced by three dimensional ones. The key estimate to be proven involves the derivatives of the function
\begin{equation*}
(A,s,\phi)\mapsto \phi\cdot u_A,
\end{equation*}
and in fact we only need to consider the $L^2$ norm of the target as again we are projecting to a finite dimensional subspace. Hence we are interested in bounding at a configuration $(A_0+a_0,s_0,\phi_0)$ the $L^2$ norms of the quantities
\begin{equation*}
(H\delta a_1)\cdots(H\delta a_n)e^{Ha_0}\delta\phi\quad\text{and}\quad(H\delta a_1)\cdots(H\delta a_n)e^{Ha_0}\phi_0,
\end{equation*}
where the $\delta$s indicate the corresponding tangent vectors. Because $k\geq 2$, the required bounds readily follow from the Sobolev multiplication theorem.
\end{proof}
\vspace{0.5cm}
We then turn in the construction of a Banach space of perturbations $\mathcal{P}_{\textsc{ASD}}$. Choose a countable collection of pointed balls $(B_{\alpha}, p_{\alpha})$ such that their union is the interior of $X$. Then for each $n$ we choose a countable collection of $n$-dimensional quaternionic subspaces $V_{\alpha}$ of $C^{\infty}_c(B_{\alpha},S^+)$ which is dense in the space of such subspaces. Finally choose a countable set of compactly supported smooth functions $f_{\alpha}$ which is dense in the $C^{\infty}$ topology of such space. From these we can construct a countable family $\{\hat{\omega}_i\}_{i\in \mathbb{N}}$ of squared projections. Using Floer's construction as in Section $11.6$ of the book, we prove the following result, see also Proposition \ref{largepert} in Chapter $1$ and Theorem $11.6.1$ in the book.
\begin{prop}
Given a family of squared projections $\{\hat{\omega}_i\}_{i\in \mathbb{N}}$ as above, there exists a separable Banach space $\mathcal{P}_{\textsc{asd}}$ and a linear map
\begin{align*}
\mathfrak{D}_{\textsc{asd}}:\mathcal{P}_{\textsc{asd}}&\rightarrow C^0\left(\Cs(X), L^2(X,i\mathfrak{su}(S^+))\right)\\
\lambda\mapsto \hat{\omega}_{\lambda}
\end{align*}
satisfying the following properties:
\begin{itemize}
\item for each $\lambda\in \mathcal{P}_{\textsc{asd}}$, the element $\hat{\omega}_{\lambda}$ is a tame $\textsc{asd}$ perturbation in the sense of Definition \ref{ASDpert}; 
\item the image of $\mathfrak{D}_{\textsc{asd}}$ contains all the perturbations in the collection $\{\hat{\omega}_i\}_{i\in \mathbb{N}}$;
\item for each $k\geq 2$ the map
\begin{align*}
\mathcal{P}_{\textsc{asd}}\times \Cs_k(X)&\rightarrow L^2_k(X,i\mathfrak{su}(S^+))\\
(\lambda, \gamma)&\mapsto \hat{\omega}_{\lambda}(\gamma)
\end{align*}
is a smooth map of Banach manifolds.
\end{itemize}
\end{prop}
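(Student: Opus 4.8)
The plan is to follow the construction of the large Banach space of ordinary tame perturbations in Chapter $11$ of the book (cited in the excerpt as Proposition~\ref{largepert}), adapting it to the present class of squared projections. First I would fix the countable family $\{\hat{\omega}_i\}_{i\in\mathbb{N}}$ of squared projections as in the paragraph immediately preceding the statement. Each $\hat{\omega}_i$ is associated to a quadruple $(B_{\alpha(i)},p_{\alpha(i)},V_i,f_i)$, and by the previous proposition each one is a $k$-tame $\Pin$-equivariant $\textsc{asd}$-perturbation for every $k\geq 2$; moreover, by the proof of that proposition, the relevant $C^k$ norms of $\hat{\omega}_i$ and of its derivatives on any fixed $L^2_k$ ball in $\Cs_k(X)$ are controlled by the $C^{k}$ norm of $f_i$ together with the (finite) dimension of $V_i$ and the geometry of $B_{\alpha(i)}$. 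The idea is then to choose a summable sequence of positive weights $\{\lambda_i^{0}\}$, declining fast enough, and to set $\mathcal{P}_{\textsc{asd}}$ to be the space of formal series $\lambda=\sum_i \lambda_i \hat{\omega}_i$ with $\|\lambda\|:=\sum_i (\lambda_i/\lambda_i^0)$ finite, or more precisely a weighted $\ell^1$-type norm chosen so that every element defines a convergent series in $C^{0}(\Cs(X),L^2(X;i\mathfrak{su}(S^+)))$ and in each $C^{k}$ topology on bounded subsets of $\Cs_k(X)$.

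The key steps, in order, are: (1) verify that with the weights chosen decaying sufficiently fast the map $\mathfrak{D}_{\textsc{asd}}(\lambda)=\hat{\omega}_\lambda=\sum_i\lambda_i\hat{\omega}_i$ is well defined and lands in $C^0(\Cs(X),L^2(X;i\mathfrak{su}(S^+)))$, using the uniform $L^2$ bound $\|\hat{\omega}_i(\gamma)\|_{L^2}\leq C_i$ from condition~$(4)$ in Definition~\ref{ASDpert} and the rapid decay of $\lambda_i^0$; (2) check that $\hat{\omega}_\lambda$ again satisfies each of the four conditions of Definition~\ref{ASDpert}, i.e. gauge invariance and $\jmath$-equivariance (both of which pass to infinite sums trivially since each summand has them and they are linear conditions), the $C^\infty$ regularity into $L^2_{k+1}$ of conditions~$(2)$ and~$(3)$ (here one needs the weighted norm to dominate all the $C^k$ derivative bounds for the summands, again as in Floer's argument in Section~$11.6$), and precompactness of the image in the $L^2_{k+1}$ topology (condition~$(4)$), which follows because the image of each $\hat{\omega}_i$ is precompact—indeed contained in a finite-dimensional subspace—and a suitably weighted infinite sum of precompact sets whose "sizes" are summable is again precompact; (3) identify the image of $\mathfrak{D}_{\textsc{asd}}$ with a set containing each $\hat{\omega}_i$ (take $\lambda=\lambda_i^0 e_i$, the $i$-th basis element scaled so it has norm one, noting that $\mathfrak{D}_{\textsc{asd}}$ sends it to $\lambda_i^0\hat{\omega}_i$, so after rescaling the whole family $\{\hat{\omega}_i\}$ lies in the image); (4) prove smoothness of the evaluation map $\mathcal{P}_{\textsc{asd}}\times\Cs_k(X)\rightarrow L^2_k(X;i\mathfrak{su}(S^+))$ by the same scheme: smoothness in $\lambda$ is immediate since the dependence is linear and the series converges in the right operator norm, smoothness in $\gamma$ for fixed $\lambda$ follows from the $C^\infty$ regularity of each summand and the weighted-norm domination of derivatives, and joint smoothness from the standard argument that a uniformly convergent series of smooth maps whose derivatives also converge uniformly on bounded sets is smooth.

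The main obstacle I expect is step~(2), specifically maintaining all four tameness conditions—and especially the $L^2_{k+1}$-valued $C^\infty$ regularity and precompactness—simultaneously and uniformly in $k$ when passing to the infinite sum. The point is that the weight sequence $\{\lambda_i^0\}$ must be chosen after one knows, for each $k$, bounds of the form $\|\hat{\omega}_i\|_{C^k(\text{ball of radius }R)}\leq C(i,k,R)$ for the derivative conditions of Definition~\ref{ASDpert}, and one needs a single weighting that works for all $k$ and all $R$ at once; this is handled by the now-standard diagonalization trick from Floer's construction (ordering the countably many constraints $C(i,k,R)$ and choosing $\lambda_i^0$ to beat all constraints with index $\leq i$), and the excerpt explicitly invokes ``Floer's construction as in Section $11.6$ of the book'' for precisely this. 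The precompactness condition~$(4)$ requires slightly more care than in the ordinary tame perturbation setting because it is phrased in the $L^2_{k+1}$ rather than $L^2_k$ topology, but since each $\hat{\omega}_i$ has image inside a finite-dimensional (hence automatically $L^2_{k+1}$-precompact) subspace, the tail of the series contributes arbitrarily little in $L^2_{k+1}$ norm and a finite partial sum has precompact image, so the total image is totally bounded, hence precompact. Everything else is routine bookkeeping analogous to Proposition~\ref{largepert} in Chapter~$1$.
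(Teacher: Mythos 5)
Your proposal is correct and is essentially the paper's own argument: the paper gives no detailed proof, simply invoking Floer's construction from Section 11.6 of the book (cf.\ Proposition \ref{largepert}), which is exactly the weighted-$\ell^1$ sum of the countable family $\{\hat{\omega}_i\}$ with weights chosen by diagonalization over the countably many $C^k$-norm constraints that you spell out. Your extra check that condition $(4)$ survives the infinite sum — each summand has image in a finite-dimensional subspace, so the tails are small in $L^2_{k+1}$ and the total image is totally bounded — is the only point specific to the \textsc{asd} setting, and it is handled correctly.
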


\vspace{0.8cm}

Finally we prove the following transversality result, see also Proposition \ref{transversecob} in Chapter $2$.
\begin{prop}
Suppose we have a fixed $\Pin$-non-degenerate $\jmath$-equivariant $\q$ on $Y$. Consider the perturbation
\begin{equation*}
\hat{\mathfrak{p}}+\hat{\omega}=\beta(t)\hat{\q}+\beta_0(t)(\hat{\mathfrak{p}}_0)+\hat{\omega}
\end{equation*}
as above. Then there is a residual subset of $\mathcal{P}(Y,\spin)\times \mathcal{P}_{\textsc{asd}}(X)$ such that for all pairs $(\mathfrak{p}_0, \hat{\omega})$ in this set the moduli spaces of solutions $M(X^*,[\Cr])$ are regular in the sense of Definition \ref{regcob} in Chapter $2$. The statement for families of Proposition \ref{transversecob} in Chapter $2$ also holds.
\end{prop}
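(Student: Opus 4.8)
The plan is to run the Sard--Smale argument of Proposition \ref{transversecob} in Chapter $2$ (itself a variant of Proposition $24.4.10$ in the book), carrying over the two modifications already used in Theorem \ref{transversalitymain} and Theorem \ref{pinreg}. For fixed critical submanifolds $[\Cr_\pm]$, contractible lifts $\mathfrak{U}_\pm$ in Coulomb slice and a relative homotopy class, one forms the smooth map of Banach manifolds
\[
\mathfrak{N}\colon \Cs_{k,\mathrm{loc}}(X^*,\spin_X)\times\mathcal{P}(Y,\spin)\times\mathcal{P}_{\textsc{asd}}(X)\longrightarrow \V^{\sigma}_{k-1},\qquad (\gamma,\mathfrak{p}_0,\hat\omega)\mapsto \F^{\sigma}_{\mathfrak{p},\hat\omega}(\gamma),
\]
where the perturbation is assembled from the fixed $\jmath$-equivariant $\q_0$ on the cylindrical end, the variable $\mathfrak{p}_0$ on the collar via the cut-offs $\beta,\beta_0$, and the interior perturbation $\hat\omega$, all as in Section $6$ of Chapter $2$, so that the zero set of $\mathfrak{N}$ modulo gauge is the universal moduli space. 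Using the fibered-product description $\rho\colon M(X^*,\spin_X;[\Cr])\to\mathrm{Fib}(R_+,R_-)$ (Lemma $24.2.2$ in the book) together with the Fredholm property of $Q^{\sigma}_\gamma$ and of the cylindrical operator (Proposition $24.3.2$ in the book), regularity in the sense of Definition \ref{regcob} is reduced, as usual, first to Smale-regularity of each $M(X^*,\spin_X;[\Cr])$ and then inductively to transversality of the evaluation maps of the fibered products $M_{\mathbf{z}}(X^*,\mathcal{C})$. Once $\mathcal{D}_{(\gamma,\mathfrak{p}_0,\hat\omega)}\mathfrak{N}$ is shown to be surjective at every solution, the Sard--Smale theorem yields the residual subset; restricting to adapted perturbations and iterating the fibered-product step exactly as in Proposition \ref{transversecob} gives the full statement, and the family version follows by applying the same argument to the parametrized moduli spaces of Proposition \ref{transversecob}.

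When $[\gamma]$ is irreducible, or reducible but not lying over a spin connection, surjectivity of $\mathcal{D}\mathfrak{N}$ on the relevant finite-codimension subspace is proved exactly as in the classical case (Proposition $15.1.3$ in the book), using only the variable collar perturbations $\mathfrak{p}_0$ together with unique continuation. The only additional observation, as in Theorem \ref{pinreg}, is that if along a trajectory one had $[\check\gamma(t_0)]=[\jmath\cdot\check\gamma(t_0+T)]$ for some $t_0$ and $T>0$, then $[\check\gamma(t_0+2T)]=[\check\gamma(t_0)]$, which forces the trajectory to be constant; hence away from the critical points a cokernel element can always be paired nontrivially against a collar perturbation. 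In this case the purely four-dimensional perturbations $\hat\omega$ play no role.

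The main obstacle, and the reason the $\textsc{asd}$-perturbations were introduced, is the remaining case: reducible solutions lying over a spin connection on a spin cobordism. There the spin connection is automatically a solution of the blown-down equations and $\jmath$-invariance of the collar data obstructs transversality for index reasons (the finite-dimensional model being the absence of a conjugation-invariant regular path from $\cos\vartheta$ to $-\cos\vartheta$ on $S^1$), so the $\mathfrak{p}_0$-directions alone cannot make $Q^{\partial}_\gamma$ surjective, and one must show that the remaining cokernel directions are reached by the derivative $\delta\hat\omega\mapsto\delta\hat\omega(\gamma)$ of the interior perturbation. Two facts make this possible. First, a squared projection $\hat\omega_{(B^4,p_0,V,f)}$ is independent of $s$, hence $\mathbf{i}$-invariant, so it genuinely perturbs the boundary-obstructed operator $Q^{\partial}_\gamma$, and it depends on the blown-up spinor $\phi$, so, unlike a pulled-back three-dimensional perturbation, it does not leave the reducible spin solution untouched. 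Second, its value at $\gamma=(A,s,\phi)$ equals $f\cdot\bigl(\pi_V(\phi\cdot u_A)\pi_V(\phi\cdot u_A)^*\bigr)_0$, which is nonzero near any interior point at which $\phi$ does not vanish; by the unique-continuation property for the spinor component that survives for these perturbed equations (the Remark after Proposition \ref{compasd}), $\phi$ vanishes on no slice, so squared projections realise curvature perturbations supported near any interior point. Choosing $B^4$ near a point where the cokernel element is nonzero, a finite-dimensional quaternionic $V$ containing the local restriction of $\phi$, and a suitable bump $f$, one obtains a nonzero $L^2$-pairing; establishing this density statement for squared projections — the analogue of Proposition \ref{largepert2} — is the crux of the argument, after which the Sard--Smale machinery runs as in the other cases and the parametrized statement follows verbatim.
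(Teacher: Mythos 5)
Your overall strategy is the same as the paper's: a Sard--Smale argument on a universal moduli space (the paper phrases it via the boundary-value map $(\mathfrak{M},\tilde R_+)$ of Lemma $24.4.8$ in the book and shows its differential has dense image, taking a pair $(V,v)$ orthogonal to the image, using elliptic regularity, integration by parts, and unique continuation), with the $\jmath$-equivariant collar perturbations killing the cokernel in the irreducible/spinor directions and the squared-projection \textsc{asd}-perturbations pairing nontrivially against the remaining curvature component. Your identification of the crux --- that $\hat\omega_{(B^4,p_0,V,f)}(\gamma)$ is nonzero wherever $\phi$ does not vanish, that the surviving unique continuation for the spinor guarantees such points in the interior, and that one then chooses $B^4$, $V$ and $f$ to produce a nonzero $L^2$-pairing --- is exactly the point the paper makes (and at a comparable level of detail).

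The one place where you deviate, and where your argument as written has a gap, is the case division. The paper does \emph{not} claim that collar perturbations alone handle reducible solutions whose connection is not equivalent to a spin connection: for \emph{every} reducible $\gamma$ on the cobordism it kills the spinor part of the cokernel with the equivariant collar perturbations and then invokes an \textsc{asd}-perturbation for the curvature part. Your attempt to dispense with $\hat\omega$ away from spin connections rests on importing the constancy trick from Theorem \ref{pinreg}: there, $[\check\gamma(t_0)]=[\jmath\cdot\check\gamma(t_0+T)]$ forces $[\check\gamma(t_0+2T)]=[\check\gamma(t_0)]$ and hence constancy because the path is a flow line of a fixed ($\jmath$-invariant) functional on the infinite cylinder, along which $\CSd$ is strictly decreasing unless the trajectory is constant. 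On the collar of a cobordism the perturbation is cut off by $\beta,\beta_0$, so the restriction to slices is not a flow line of a time-independent functional and this monotonicity argument does not transfer; consequently the assertion that ``the purely four-dimensional perturbations $\hat\omega$ play no role'' in that case is unjustified. This is easily repaired --- simply run the squared-projection pairing argument at all reducibles, as the paper does --- but as stated that step would not go through.
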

\begin{proof}
The proof goes as the proof of Proposition $24.4.7$ and $15.1.3$ in the book, and we quickly point out the complications. Consider first an irreducible configuration $\gamma$. As in the usual proof we need to show that the differential of the map appearing in Lemma $24.4.8$ defining the universal moduli space with boundary conditions
\begin{equation}\label{univasd}
(\mathfrak{M}, \tilde{R}_+):\Cs_k(X,\spin_X)\times \mathcal{P}(Y)\times \mathcal{P}_{\textsc{asd}}(X)\rightarrow \mathcal{V}^{\sigma}_{k-1}\times \Cs_{k-1/2}(Y,\spin)
\end{equation}
has dense image in the $L^2\times L^2_{1/2}$ topology. We restrict the study to configurations in Coulomb-Neumann gauge. Consider a configuration $(V,v)$ which is $L^2$ orthogonal to the image of the differential at a point. By elliptic regularity $V$ is in $L^2_1$ and an integration by parts argument shows that $v$ is the restriction at $\partial X$ of $V$. Because in our setting the perturbed Dirac equation still has the unique continuation property (see the proof of Proposition \ref{compasd}), the restriction of $\gamma$ to a slice $\{t\}\times Y$ on the collar is still irreducible, and we can show that $V$ vanishes by just using $\Pin$-equivariant tame perturbations. When $\gamma$ is reducible, the same argument works to show that its spinor part is vanishing. On the other hand, if the other component of $V$ does not vanish one can find a $\Pin$-equivariant $\textsc{asd}$-perturbation $\hat{\omega}$ in the defining family $\{\hat{\omega}_i\}_{i\in\mathbb{N}}$ such that
\begin{equation*}
\langle\mathfrak{m}({0, \hat{\omega}_i}(\gamma)), V\rangle_{L^2(X)}>0,
\end{equation*}
where $\mathfrak{m}$ is the differential of the map (\ref{univasd}) in the directions corresponding to the perturbations. Hence the result follows, and the case of a reducible configuration is analogous.
\end{proof}

\vspace{1.5cm}
\section{Invariant chains and Floer homology}

In this section we construct the analogue of Manolescu's recent invariants (\cite{Man2}) from the Morse homology approach, the $\Pin$ version of the Floer homology groups. This is done by exploiting the extra symmetry of the Floer chain complex that comes when the spin$^c$ structure is self-conjugate and we use a $\jmath$-equivariant perturbation.
\\
\par
Suppose from now on that a $\Pin$-non-degenerate $\jmath$-equivariant tame perturbation $\q$ which is also regular in the Morse-Bott sense as in Theorem \ref{pinreg} has been fixed. Recall from Section $1$ that we have a natural identification of a critical submanifold $[\Cr]$ corresponding to an eigenspace of $D_{\q,B}$ for a connection $B$ conjugate to a spin one with $S^2$, and the action of $\jmath$ on the moduli space of configurations is identified with the antipodal map on it. This action induces an involution (still denoted by $\jmath$) on the chain complex $C_*^{\mathcal{F}}([\Cr])$ sending the $\delta$-chain $[\sigma]=[\Delta,f]$ to the $\delta$-chain
\begin{equation*}
\jmath[\sigma]=[\Delta, \jmath\circ f].
\end{equation*}
If the family countable family of $\delta$-chains $\mathcal{F}$ is also invariant under this action then the involution is clearly a chain map on $C_*^{\mathcal{F}}([\Cr])$, i.e. we have
\begin{equation*}
\jmath\circ \partial=\partial\circ\jmath.
\end{equation*}
In particular the subspace of invariant $\delta$-chains
\begin{equation*}
C_*^{\mathcal{F}}([\Cr])^{\mathrm{inv}}=\{[\sigma]\mid \jmath[\sigma]=[\sigma]\}\subset C_*^{\mathcal{F}}([\Cr])
\end{equation*}
is a subcomplex. There is also the subcomplex $(\mathrm{id}+\jmath)\left(C_*^{\mathcal{F}}([\Cr])\right)$, which is also $\jmath$-invariant. The proof of the following lemma is straightforward.

\begin{lemma}\label{projplane}
The inclusion $(\mathrm{id}+\jmath)\left(C_*^{\mathcal{F}}([\Cr])\right)\hookrightarrow C_*^{\mathcal{F}}([\Cr])^{\mathrm{inv}}$ is a quasi-isomorphism, and the homology of both complexes is naturally $H_*(\mathbb{R}P^2)$.
\end{lemma}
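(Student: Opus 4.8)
The statement has two parts: that the inclusion $(\mathrm{id}+\jmath)\left(C_*^{\mathcal{F}}([\Cr])\right)\hookrightarrow C_*^{\mathcal{F}}([\Cr])^{\mathrm{inv}}$ is a quasi-isomorphism, and that the common homology is $H_*(\mathbb{R}P^2;\ztwo)$. The key point throughout is that we work over $\ztwo$, so that the involution $\jmath$ on the complex $C_*^{\mathcal{F}}([\Cr])$ behaves very rigidly: since $[\Cr]$ is identified $\jmath$-equivariantly with $S^2$ carrying the antipodal action (a \emph{free} action), the quotient $S^2/\jmath$ is $\mathbb{R}P^2$, and the expected answer is forced by this.

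\textbf{Step 1: reduce to a statement about an equivariant chain model of $S^2$.} By Proposition \ref{isomhom}, applied with the $\jmath$-invariant countable family $\mathcal{F}$ (which we may always enlarge to be $\jmath$-invariant without changing homology, arguing as in Lemma \ref{moretransverse}), the complex $C_*^{\mathcal{F}}([\Cr])$ computes $H_*([\Cr];\ztwo) = H_*(S^2;\ztwo)$, and the involution it carries realizes the antipodal map on $S^2$ at the homology level. I would first record the elementary homological algebra fact: for any chain complex $(D_*,\partial)$ over $\ztwo$ equipped with a chain involution $\jmath$, the short exact sequence
\begin{equation*}
0 \to (\mathrm{id}+\jmath)D_* \to D_*^{\mathrm{inv}} \to D_*^{\mathrm{inv}}/(\mathrm{id}+\jmath)D_* \to 0
\end{equation*}
has the property that the rightmost complex computes a Tate-type cohomology, and the inclusion on the left is a quasi-isomorphism precisely when this quotient is acyclic. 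So the content of the first claim is the acyclicity of $D_*^{\mathrm{inv}}/(\mathrm{id}+\jmath)D_*$ for our specific $D_* = C_*^{\mathcal{F}}([\Cr])$.

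\textbf{Step 2: prove acyclicity and identify the homology via the free action.} Here I would exploit that the geometric action of $\jmath$ on $[\Cr]=S^2$ is free, which lets us pass to a $\jmath$-invariant family of $\delta$-chains that are ``small enough'': using the transversality technology of Lemma \ref{transext} together with the fact that a free action admits invariant fine subdivisions (cover $S^2$ by small balls $B_i$ with $\jmath(B_i)\cap B_i=\emptyset$, take invariant unions $B_i\sqcup\jmath(B_i)$), any $\jmath$-invariant cycle is homologous, through $\jmath$-invariant chains, to one supported on small invariant pieces. On such pieces $\jmath$ has no fixed points, so the invariant subcomplex is (up to quasi-isomorphism) $(\mathrm{id}+\jmath)$ applied to the chains on one of the two pieces; this gives the acyclicity of the quotient, hence the first claim, and simultaneously identifies $H_*\big((\mathrm{id}+\jmath)C_*^{\mathcal{F}}([\Cr])\big)$ with the $\ztwo$-homology of the quotient space $S^2/\jmath = \mathbb{R}P^2$. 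Concretely, $(\mathrm{id}+\jmath)C_*^{\mathcal{F}}(S^2)$ is isomorphic to the subcomplex of chains in the quotient $\mathbb{R}P^2$ in the sense of Section $1$ (pushforward of a $\delta$-chain descends along the free quotient map), so by Proposition \ref{isomhom} again its homology is $H_*(\mathbb{R}P^2;\ztwo)$, which is $\ztwo$ in degrees $0,1,2$.

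\textbf{Main obstacle.} The routine parts (the homological algebra of Step 1, and the computation $H_*(\mathbb{R}P^2;\ztwo)$) are immediate; the delicate step is making Step 2 precise in the $\delta$-chain framework — specifically, checking that the pushforward of a $\jmath$-invariant $\delta$-chain along the quotient map $S^2\to\mathbb{R}P^2$ is again an admissible $\delta$-chain transverse to the image of $\mathcal{F}$, and that this pushforward induces an isomorphism of complexes $(\mathrm{id}+\jmath)C_*^{\mathcal{F}}(S^2)^{\phantom{x}}\cong C_*^{\mathcal{F}/\jmath}(\mathbb{R}P^2)$ up to negligible chains. This is where the freeness of the antipodal action is used essentially: away from fixed points the quotient map is a local diffeomorphism, so transversality and the stratified/thickening structure of Definition \ref{abstractgeom} are preserved, and one invokes Proposition \ref{isomhom} for $\mathbb{R}P^2$. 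Since the paper only needs the case $X$ a point or $\mathbb{R}P^2$, and the latter can be checked by hand as remarked after Proposition \ref{intpairing}, I would present this identification as a direct computation rather than a general quotient principle.
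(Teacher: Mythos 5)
The paper itself offers no written argument here (it declares the lemma ``straightforward''), so there is nothing to match line by line; your overall strategy — exploit that $\jmath$ acts on $[\Cr]\cong S^2$ as the free antipodal involution, so that over $\ztwo$ both the invariant and the norm subcomplex should compute $H_*(S^2/\jmath)=H_*(\mathbb{R}P^2;\ztwo)$ — is exactly the intended one, and your observation that on an invariant piece $B\sqcup\jmath B$ with $B\cap\jmath B=\emptyset$ there are no self-conjugate connected chains (freeness forces it) is the right mechanism for the acyclicity of $C_*^{\mathcal{F}}([\Cr])^{\mathrm{inv}}/(\mathrm{id}+\jmath)C_*^{\mathcal{F}}([\Cr])$.

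There is, however, a genuine gap in the step where you identify the homology with $H_*(\mathbb{R}P^2)$. You assert that $(\mathrm{id}+\jmath)C_*^{\mathcal{F}}(S^2)$ is \emph{isomorphic} to the complex of $\delta$-chains in $\mathbb{R}P^2$ via pushforward along the quotient map $\pi$. This map does not do what you want: since $\pi\circ f=\pi\circ\jmath\circ f$, the pushforward of $[\sigma]+\jmath[\sigma]$ is $2[\pi\circ\sigma]=0$ over $\ztwo$, so pushforward annihilates the entire norm subcomplex. The alternative ``descend one sheet'', $(\mathrm{id}+\jmath)x\mapsto\pi_*x$, is ill-defined precisely because of the self-conjugate generators: if $x+y$ is invariant but contains a connected self-conjugate chain $[\tau]$ (for instance $\tau=(S^2,\mathrm{id})$, which is invariant via the antipodal automorphism of its domain but does not lie in $(\mathrm{id}+\jmath)C_*$), then $\pi_*(x+y)$ contains $[\pi\circ\tau]$, which is neither zero nor small. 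Indeed $(\mathrm{id}+\jmath)C_*^{\mathcal{F}}(S^2)$ and $C_*(\mathbb{R}P^2)$ cannot be isomorphic as complexes, only quasi-isomorphic. The correct comparison map goes the other way: pull back (transfer) a $\delta$-chain $(\Delta,g)$ on $\mathbb{R}P^2$, transverse to the projected family, to the induced double cover $(\tilde\Delta,\tilde g)$ over $S^2$; this lands in the invariant subcomplex (not in general in the norm subcomplex — pullbacks of chains with nontrivial monodromy are connected self-conjugate chains), and the actual content of the lemma is that this transfer is a quasi-isomorphism. That can be proved by running your invariant-subdivision/excision argument to completion (after cutting everything into pieces of the form $B\sqcup\jmath B$, invariant chains correspond to chains in the quotient and one assembles as in the proof of Proposition \ref{isomhom}), but as written your proposal hangs the identification on the flawed pushforward, so this step needs to be replaced rather than merely tightened.
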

Similarly, if $[\bcr]$ is an isolated critical point then also the configuration $[\jmath\bcr]$ is, hence there is a natural involution $\jmath$ on the chain complex $C^{\mathcal{F}}_*([\bcr]\cup[\jmath\bcr])$ exchanging the two points. We can define again the subcomplexes
\begin{equation*}
C^{\mathcal{F}}_*([\bcr]\cup\jmath[\bcr])^{\mathrm{inv}}\text{ and }(\mathrm{id}+\jmath)\left(C^{\mathcal{F}}_*([\bcr]\cup\jmath[\bcr])\right)
\end{equation*}
(which coincide in this case) and the analogous of Lemma \ref{projplane} is obvious in this case.  Furthermore the involution $\jmath$ also gives rise to a natural isomorphism (in the sense of Definition \ref{isomdelta} in Chapter $3$) between the moduli spaces
\begin{align*}
\jmath: \breve{M}^+_z([\Cr_-],[\Cr_+])&\rightarrow \breve{M}^+_{\jmath (z)}([\jmath\Cr_-],[\jmath\Cr_+])\\
\breve{\boldsymbol\gamma}=([\breve{\gamma}_1],\dots, [\breve{\gamma}_m])&\rightarrow ([\jmath\breve{\gamma}_1],\dots, [\jmath\breve{\gamma}_m]),
\end{align*}
with the additional property that $\ev_{\pm}\circ\jmath=\jmath\circ\ev_{\pm}$. This implies in particular that
given any $\delta$-chain $[\sigma]$ in a critical submanifold $[\Cr_-]$ we have the identity
\begin{equation*}
\jmath\left([\sigma]\times \breve{M}^+_z([\Cr_-],[\Cr_+]) \right)=\jmath[\sigma]\times \breve{M}^+_{\jmath (z)}(\jmath[\Cr_-],\jmath[\Cr_+]),
\end{equation*}
hence the operators $\partial^o_o, \partial^o_s, \partial^u_o, \partial^u_s$ and $\bar{\partial}^s_s,\bar{\partial}^s_u,\bar{\partial}^u_s,\bar{\partial}^u_u$ all commute with the action of $\jmath$ as they involve these fiber products.
\\
\par
The previous discussion implies that the chain complexes $(\check{C}_*,\check{\partial})$, $(\hat{C}_*,\hat{\partial})$ and $(\bar{C}_*,\bar{\partial})$ are all equipped with an involutory chain map $\jmath$. We can then define the subcomplexes consisting of the invariant chains
\begin{equation*}
(\check{C}_*^{\mathrm{inv}},\check{\partial}),\qquad(\hat{C}^{\mathrm{inv}}_*,\hat{\partial}),\qquad (\bar{C}^{\mathrm{inv}}_*,\bar{\partial}),
\end{equation*} 
and similarly the subcomplexes 
\begin{equation*}
\left((\mathrm{id}+\jmath\right)\check{C}_*,\check{\partial}),\qquad (\left(\mathrm{id}+\jmath\right)\hat{C}_*,\hat{\partial}),\qquad(\left(\mathrm{id}+\jmath\right)\bar{C},\bar{\partial}).
\end{equation*}
The following result readily follows along the same lines of Lemma \ref{moretransverse} in Chapter $3$ using Lemma \ref{projplane}.
\begin{lemma}
The inclusion $\left(\mathrm{id}+\jmath\right)\check{C}_*\hookrightarrow \check{C}_*^{\mathrm{inv}}$ is a quasi-isomorphism, and similarly for the other variants.
\end{lemma}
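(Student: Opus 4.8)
The plan is to reduce the statement to the corresponding fact for each critical submanifold, which is exactly the content of Lemma \ref{projplane} (for the $S^2$-type reducible submanifolds and, trivially, for isolated critical points). First I would observe that all three chain complexes $\check{C}_*$, $\hat{C}_*$, $\bar{C}_*$ decompose as direct sums $\bigoplus_{[\Cr]} C_*^{\mathcal{F}}([\Cr])$ of the chain complexes of the critical submanifolds, and that the involution $\jmath$ acts on each summand either by the involution on $C_*^{\mathcal{F}}([\Cr])$ coming from the antipodal action on $S^2$ (when $[\Cr]$ is a reducible critical submanifold over a spin connection), by the isomorphism exchanging $C_*^{\mathcal{F}}([\bcr])$ with $C_*^{\mathcal{F}}(\jmath[\bcr])$ (when $[\bcr]$ and $\jmath[\bcr]$ are distinct isolated points), or trivially (on a summand fixed by $\jmath$, which cannot happen for non-degenerate critical points since the fixed locus downstairs consists of reducibles carrying $S^2$ fibers). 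Taking $\jmath$-invariants and $(\mathrm{id}+\jmath)$-images commutes with direct sums, so it suffices to prove that $(\mathrm{id}+\jmath)C_*^{\mathcal{F}}([\Cr])\hookrightarrow C_*^{\mathcal{F}}([\Cr])^{\mathrm{inv}}$ is a quasi-isomorphism summand by summand — and this is precisely Lemma \ref{projplane} in the $S^2$ case and an immediate check (both complexes coincide) in the isolated-point case.

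However, this naive summand-by-summand argument is not quite legitimate, for exactly the same reason that the inclusion $\check{C}_* \hookrightarrow \bigoplus C_*^{\mathcal{F}}([\Cr])$ is really an inclusion of a subspace of a product-like completion: the differential $\check{\partial}$ mixes the summands, and (in the non-torsion or reducible cases) infinitely many summands are involved. So the second step, which I expect to be the main obstacle, is to run the filtration argument of Lemma \ref{moretransverse} in the invariant setting. Concretely, one filters $\check{C}_*^{\mathrm{inv}}$ and $(\mathrm{id}+\jmath)\check{C}_*$ by the energy filtration $\mathcal{G}_k$ induced by the values of $\CSd$ (in the torsion case, and by the auxiliary $\mathbb{Z}/d\mathbb{Z}$-graded device of \cite{FS} in the non-torsion case, exactly as in the proof of Lemma \ref{moretransverse}); note that $\CSd$ is $\jmath$-invariant, so these filtrations are preserved by $\jmath$ and hence restrict to the invariant and the $(\mathrm{id}+\jmath)$-subcomplexes. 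On the associated graded, the differential only counts trajectories along which $\CSd$ is constant, so $\mathcal{G}_j/\mathcal{G}_{j-1}$ splits as a genuine direct sum over critical submanifolds with a fixed value of $\CSd$; in the reducible case one further filters by the ordering of eigenvalues, precisely as in Lemma \ref{moretransverse}. On each resulting summand the inclusion $(\mathrm{id}+\jmath)C_*^{\mathcal{F}}([\Cr])^{\phantom{\mathrm{inv}}}\hookrightarrow C_*^{\mathcal{F}}([\Cr])^{\mathrm{inv}}$ induces an isomorphism in homology by Lemma \ref{projplane}, so the map on the $E^1$ page of the spectral sequence of the filtration is an isomorphism, hence so is the map on $E^\infty$, hence so is the induced map in homology. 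The finiteness results of Proposition \ref{finiteness} (and the boundedness of the auxiliary filtration in the non-torsion case) guarantee the spectral sequences converge.

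For $\hat{C}_*$ and $\bar{C}_*$ the argument is identical, using the same energy filtration and the observation that each of the building-block operators $\partial^o_o,\partial^o_s,\partial^u_o,\partial^u_s$ and $\bar\partial^s_s,\bar\partial^s_u,\bar\partial^u_s,\bar\partial^u_u$ commutes with $\jmath$ (noted already above via $\ev_\pm\circ\jmath=\jmath\circ\ev_\pm$ and $\jmath$ being an isomorphism of the compactified moduli spaces), so that the matrix expressions defining $\check\partial,\hat\partial,\bar\partial$ preserve the invariant and $(\mathrm{id}+\jmath)$-subspaces. The one point requiring a small amount of care is that the spectral-sequence comparison needs the map on the associated graded to respect the further (eigenvalue-ordering) sub-filtration in the reducible summands; this is automatic since $\jmath$ preserves the blow-down critical point $[B,0]$ and permutes the eigenspaces of $D_{\q,B}$ within each eigenvalue, so it respects that ordering. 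With that, the proof is complete.
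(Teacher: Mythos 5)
Your proof is correct and follows essentially the same route the paper intends: the paper disposes of this lemma by saying it follows along the same lines as Lemma \ref{moretransverse} (the energy filtration, with the auxiliary construction in the non-torsion case) combined with Lemma \ref{projplane} on each critical submanifold, which is precisely the spectral-sequence comparison you carry out. Your preliminary remark that the naive summand-by-summand argument is insufficient, and your check that $\jmath$ preserves the filtrations and the eigenvalue ordering, are exactly the points the paper leaves implicit.
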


\vspace{0.8cm}

We are finally ready to introduce the main protagonist of the present chapter.
\begin{defn}
We define the \textit{$\Pin$-monopole Floer homology groups} of $Y$ equipped with the self-conjugate spin$^c$ structure $\spin$ denoted by
\begin{equation*}
\HSt_*(Y,\spin),\qquad \HSf_*(Y,\spin), \qquad \HSb_*(Y,\spin)
\end{equation*}
as the homology groups
\begin{align*}
\HSt_*(Y,\spin)&=H(\check{C}^{\mathrm{inv}}_*,\check{\partial})\\
\HSf_*(Y,\spin)&=H(\hat{C}^{\mathrm{inv}}_*,\hat{\partial})\\
\HSb_*(Y,\spin)&=H(\bar{C}^{\mathrm{inv}}_*,\bar{\partial}).
\end{align*}
Here again the choice of metric and perturbations is implicit in our notation.
\end{defn}
\vspace{0.5cm}
The objects we have just defined can be graded by the set $\mathbb{J}(Y,\spin)$ as in Section $2$ of Chapter $3$, and we can define their negative completions
\begin{equation*}
\HSt_{\bullet}(Y,\spin),\qquad \HSf_{\bullet}(Y,\spin), \qquad \HSb_{\bullet}(Y,\spin).
\end{equation*}
As the spin$^c$ structure is torsion, they also admit an absolute rational grading, which we will define in more detail in the next Section. Before describing and proving the properties of these invariants as in Chapter $3$, we perform an explicit calculation of these groups in the simplest possible case, which will also be central in the construction of the maps induced by cobordisms.
\begin{example}\label{HSS3}
Consider $S^3$ with the round metric and its unique spin$^c$ structure (which is obviously self-conjugate). When we consider the unperturbed monopole equations, because the metric has scalar positive curvature and there is no homology, there only one solution of the form $[B,0]$. In particular, $B$ is gauge equivalent to the spin connection $B_0$, as $[B_0,0]$ is always a critical point of the Chern-Simons-Dirac functional. In this case, the operator $D_{B_0}$ does not have simple (quaternionic) spectrum, but we can choose a small $\jmath$-equivariant perturbation such that
\begin{itemize}
\item $\q$ is $\Pin$-non-degenerate;
\item $[B,0]$ is still the only critical point.
\end{itemize}
Hence the critical submanifolds in $\Bs_k(S^3,\spin)$ consist of a doubly infinite sequence of spheres $S^2$, each corresponding to an eigenspace of $D_{\q,B}$. Again Lemma \ref{dimreducible} in Chapter $2$ tells us that the perturbation is weakly self-indexing, so we can run the associated spectral sequence which collapses at the first page for dimensional reasons. This implies that the $\Pin$-monopole Floer homology is just a direct sum of the homology groups of the invariant chains of the critical submanifolds. Following Lemma \ref{projplane} we have that:
\begin{itemize}
\item $\HSt_k(S^3,\spin)=\ztwo$ for $k$ non negative and congruent to $0,1$ or $2$ modulo $4$, and zero otherwise;
\item $\HSf_k(S^3,\spin)=\ztwo$ for $k$ negative and congruent to $1,2$ or $3$ modulo $4$, and zero otherwise;
\item $\HSb_k(S^3,\spin)=\ztwo$ for $k$ congruent to $0,1$ or $2$ modulo $4$, and zero otherwise.
\end{itemize}
Here we assigned grading zero to the zero dimensional $\delta$-chains in the first stable critical submanifold (which coincides with the absolute rational grading). In particular, up to grading shift of $-1$ the group $\HSf_{\bullet}(S^3,\spin)$ is naturally isomorphic as a graded group to the ring $\mathcal{R}$ in Definition \ref{R}.
\end{example}

\vspace{0.8cm}
The definition of the maps induced by cobordisms is essentially the same as Section $3$ in Chapter $3$, with some slight modifications to be made in order to perform a $\jmath$-invariant construction. We first focus on the case of a cobordism equipped with a self-conjugate spin$^c$ structure. In particular, given two three manifolds with self-conjugate spin$^c$ structures $(Y_{\pm},\spin_{\pm})$ and regular $\jmath$-equivariant perturbations $\q_{\pm}$, a cobordism $X$ endowed with a self-conjugate spin$^c$ structure $\spin_X$ between them and a invariant cohomology class of the form $Q^iV^n$ on the configuration space $\Bs_k(X,\spin)$, we want to define the maps
\begin{align*}
\HSf_{\bullet}(Q^iV^n\mid X,\spin_X)&: \HSf_{\bullet}(Y_-,\spin_-)\rightarrow \HSf_{\bullet}(Y_+,\spin_+)\\
\HSt_{\bullet}(Q^iV^n\mid X,\spin_X)&: \HSt_{\bullet}(Y_-,\spin_-)\rightarrow \HSt_{\bullet}(Y_+,\spin_+)\\
\HSb_{\bullet}(Q^iV^n\mid X,\spin_X)&: \HSb_{\bullet}(Y_-,\spin_-)\rightarrow \HSb_{\bullet}(Y_+,\spin_+).
\end{align*}
As in Section $3$ of the previous Chapter, this is done by considering on the cobordism a finite set of marked points $\mathbf{p}=\{p_1,\dots, p_m\}$. In this case, we also assume that the perturbation $\q_i$ on each end corresponding to each puncture is also $\jmath$-equivariant, regular and does not introduce irreducible critical points. We need to add extra $\Pin$-equivariant \textsc{asd}-perturbations on the blow up as defined in Section $2$ in order to achieve regularity of the moduli spaces. In this case completed chain complex
\begin{equation*}
\check{C}_{\bullet}(Y_-)\otimes {C}^u_{\bullet}(S^3_1)\otimes\cdots\otimes {C}^u_{\bullet}(S^3_m)
\end{equation*}
contains the subcomplex complex
\begin{equation}\label{tensorchain}
\check{C}_{\bullet}^{\mathrm{inv}}(Y_-)\otimes {C}_{\bullet}^{u,\mathrm{inv}}(S^3_1)\otimes\cdots\otimes {C}_{\bullet}^{u,\mathrm{inv}}(S^3_m)
\end{equation}
generated by elements $[\sigma]\otimes [\sigma_1]\otimes\cdots\otimes [\sigma_m]$ such that each factor is $\jmath$ invariant. Notice that is is not the subspace of the invariants of the natural action of $\jmath$ (because of the non indecomposable elements). The homology of the second chain complex is naturally identified with 
\begin{equation*}
\HSt_{\bullet}(Y_-)\otimes\HSf_{\bullet}(S^3_1)\otimes \cdots\otimes \HSf_{\bullet}(S^3_m).
\end{equation*}
Similarly there are natural isomorphisms
\begin{equation*}
\jmath: M^+_z([\Cr_-],\mathcal{C}, X^*,[\Cr_+])\rightarrow M^+_{\jmath{z}}([\jmath\Cr_-],\jmath\mathcal{C}, X^*,[\jmath\Cr_+])
\end{equation*}
commuting with each evaluation map. This last observation implies that the subcomplex $\check{C}_*(Y_-,\mathbf{p})$ defined by transversality conditions is also $\jmath$-invariant, and we can define as above its subcomplex
\begin{equation*}
\check{C}_{\bullet}(Y_-,\mathbf{p})^{\mathrm{inv}}=\check{C}_{\bullet}(Y_-,\mathbf{p})\cap\left(\check{C}_{\bullet}^{\mathrm{inv}}(Y_-)\otimes {C}_{\bullet}^{u,\mathrm{inv}}(S^3_1)\otimes\cdots\otimes {C}_{\bullet}^{u,\mathrm{inv}}(S^3_m)
\right).
\end{equation*}
It follows as in Lemma \ref{Yp} in Chapter $3$ (via Lemma \ref{projplane}) that its inclusion in the tensor product of the invariant subcomplexes is a quasi-isomorphism. The chain map $\check{m}$ satisfies the property $\jmath\circ\check{m}=\check{m}\circ\jmath$, hence it restricts to a chain map
\begin{equation*}
\check{m}^{\mathrm{inv}}: \check{C}_{\bullet}(Y_-,\mathbf{p})^{\mathrm{inv}}\rightarrow \check{C}_{\bullet}(Y_+)^{\mathrm{inv}},
\end{equation*}
which induces the map in homology
\begin{equation*}
\HSt_{\bullet}(X,\mathbf{p}):\HSt_{\bullet}(Y_-)\otimes{\Rin}\otimes \cdots\otimes {\Rin}\rightarrow \HSt_{\bullet}(Y_+).
\end{equation*}
Here we used the identification from Example \ref{HSS3} for each small regular $\jmath$-equivariant perturbation
\begin{equation*}
\HMf_{\bullet}(S^3,\q_0)\cong {\Rin}
\end{equation*}
where ${\Rin}$ is the ring in definition \ref{R}. Here again there is a grading shift.
Finally we define for any element $a\in \Rin$ the element
\begin{equation*}
\HSt_{\bullet}(a\mid X,\spin_X)(x)=\HSt_{\bullet}(X,\mathbf{p})(x\otimes a_1\otimes \cdots\otimes a_m).
\end{equation*}
where the product of the $a_i$ is $a$.
\\
\par
Having constructed these maps, we can discuss the general invariance and functoriality result, which follows along the same lines of Section$3$ in Chapter $3$. We first introduce an useful definition.
\begin{defn}
The category $\textsc{cob}_{\textsc{spin}}$ is the category whose objects are connected compact oriented $3$-manifolds with a fixed spin structure and whose objects are isomorphism classes of connected oriented cobordisms which admit a spin structure restricting to the given ones on the boundary.
\end{defn}
In the definition we consider spin structures and not self conjugate spin$^c$ structures because the composition of spin cobordisms is not necessarily spin. On the other hand it is clear from the definition that in our case morphisms compose well.
\par
As in the classical case, the key result in the proof of invariance and functoriality is the following.
\begin{teor}\label{mainHS}
The $\Pin$-monopole Floer homology groups and the maps induced by cobordisms do not depend on the choice of metric and $\jmath$-equivariant perturbation.
They define covariant functors
\begin{IEEEeqnarray*}{c}
\HSt_{\bullet}: \textsc{cob}_{\textsc{spin}}\rightarrow \textsc{mod}_{{\Rin}}\\
\HSf_{\bullet}: \textsc{cob}_{\textsc{spin}}\rightarrow \textsc{mod}_{{\Rin}}\\
\HSb_{\bullet}: \textsc{cob}_{\textsc{spin}}\rightarrow \textsc{mod}_{{\Rin}} 
\end{IEEEeqnarray*}
where $\textsc{mod}_{{\Rin}}$ is the category of graded topological ${\Rin}$-modules. More in general, if $Y_0,Y_1$ and $Y_2$ are $3$-manifolds, $X_{01}$ are $X_{12}$ are cobordisms from $Y_0$ to $Y_1$ and from $Y_1$ to $Y_2$ such that the composite $X_{12}\circ X_{01}$ admits a self conjugate spin$^c$ structure $\spin$ then
\begin{equation*}
\HSt_{\bullet}(a\mid X_{12}\circ X_{01}, \spin)=\sum_{\spin_2=\spin\lvert_{X_{12}}}\sum_{\spin_1=\spin\lvert_{X_{01}}}\HSt_{\bullet}(a_2\mid X_{12},\spin_2)\circ\HSt_{\bullet}(a_1\mid X_{01},\spin_1).
\end{equation*}
where $a_1a_2$ is $a$ in $\Rin$.
\end{teor}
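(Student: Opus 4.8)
The plan is to transport the entire invariance–functoriality package of Section~3 of Chapter~3 — Propositions~\ref{indepmap}, \ref{identitymap} and \ref{modulestr}, together with Lemma~\ref{indeppunctures} — to the $\jmath$-invariant setting, keeping track at every stage of the action of $\jmath$ so that the chain maps and chain homotopies produced there restrict to the invariant subcomplexes. The one genuinely new analytic ingredient, transversality achieved \emph{within} the class of $\jmath$-equivariant data, has already been supplied: the $\Pin$-equivariant tame perturbations of Theorem~\ref{pinreg} together with the $\Pin$-equivariant \textsc{asd}-perturbations of Section~2 (and their parametrized versions, the analogue of Proposition~\ref{transversecob} asserted there). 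Throughout one uses the quasi-isomorphism, established earlier in this section via Lemma~\ref{projplane}, between $\check C_\bullet(Y_-,\mathbf p)^{\mathrm{inv}}$ and the tensor product of the invariant subcomplexes, and its companions for $\hat C$ and $\bar C$.

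\textbf{Metric and perturbation independence.} Given two choices of metric, $\jmath$-equivariant tame perturbation and $\Pin$-equivariant \textsc{asd}-perturbation on a cobordism $X$ with marked points, connect them by a path $P$ along which the parametrized moduli spaces are regular. As with the unparametrized ones, the spaces $M^+([\Cr_-],\mathcal C,X^*_{\mathbf p},[\Cr_+])_P$ carry natural isomorphisms $\jmath$ commuting with all evaluation maps; hence the chain homotopy built from fibered products with them, exactly as in the proof of Proposition~\ref{indepmap}, commutes with $\jmath$ and restricts to $\check C_\bullet(Y_-,\mathbf p)^{\mathrm{inv}}$. This shows $\check m^{\mathrm{inv}}_0$ and $\check m^{\mathrm{inv}}_1$ are chain homotopic, and the same argument for $\hat m$, $\bar m$, and for families of metrics on $Y_\pm$ themselves, yields the canonical identifications of $\HSt_\bullet,\HSf_\bullet,\HSb_\bullet$ for all admissible choices, compatibly with cobordism maps.

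\textbf{Identity and composition.} That a trivial cylinder $I\times Y$ induces the identity is immediate from the proof of Proposition~\ref{identitymap}: only the translation-invariant moduli spaces $M_z([\Cr],[\Cr])$ (with $z$ trivial) contribute, the fibered products with non-constant trajectories being negligible, and the resulting chain map is manifestly $\jmath$-equivariant. For the composition law one runs the neck-stretching argument of Proposition~\ref{modulestr}: insert a neck $[0,S]\times Y_1$, compactify the family of moduli spaces over $S\in[0,\infty]$, and identify the top strata of the fibre over $S=\infty$ with fibered products of moduli spaces on $X_{01}$ and $X_{12}$ with intermediate broken trajectories on $Y_0,Y_1,Y_2$ and on the $S^3$-ends. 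Since the neck carries the fixed $\jmath$-equivariant perturbation, every moduli space in the compactification admits the involution $\jmath$ commuting with evaluation, so the operator $\check K$ built from these fibered products is $\jmath$-equivariant and restricts to the invariant subcomplexes, giving
\begin{equation*}
\check K\circ\check\partial+\check\partial\circ\check K=\check m(X_{12}\circ X_{01})+\check m(X_{12})\circ\check m(X_{01}).
\end{equation*}
The sum over spin$^c$ structures in the statement is the set of compatible decompositions $\spin_1\cup\spin_2=\spin$ with $\spin_1|_{Y_1}=\spin_2|_{Y_1}$ produced by the degeneration; the absence of irreducible and of boundary-obstructed trajectories on $S^3$ keeps the combinatorics of the codimension-one faces under control, exactly as in the non-equivariant case.

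\textbf{Module structure and conclusion.} By Example~\ref{HSS3} each small regular $\jmath$-equivariant perturbation identifies $\HSf_\bullet(S^3,\spin)$ with the ring $\Rin$ of Definition~\ref{R} up to a grading shift, so the maps $\HSt_\bullet(a\mid X,\spin_X)$ for $a\in\Rin$ are defined; taking $X=I\times Y$ and $a\cap\xi=\HSt_\bullet(a\mid I\times Y)(\xi)$ defines an $\Rin$-module structure, whose axioms follow from the composition law applied to $(I\times Y)\circ(I\times Y)\cong I\times Y$ together with Lemma~\ref{indeppunctures}. The same two ingredients, applied by sliding the puncture from the incoming to the outgoing end, show every cobordism map is $\Rin$-linear, giving the covariant functors into $\textsc{mod}_{\Rin}$; well-definedness on isomorphism classes of cobordisms is built into the construction. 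I expect the bulk of the work to be purely organizational — verifying equivariance of every operator and the compatibility of the quasi-isomorphisms of Lemmas~\ref{Yp} and~\ref{projplane} with all of them — while the only point demanding analysis beyond Chapter~3 is parametrized $\Pin$-equivariant transversality, which is precisely what the perturbation theory of Section~2 was designed to deliver.
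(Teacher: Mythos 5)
Your overall strategy coincides with the paper's: the proof is indeed organized as a transport of Propositions \ref{indepmap}, \ref{identitymap}, \ref{modulestr} and the neck-stretching arguments to the invariant subcomplexes, checking that every operator and chain homotopy commutes with $\jmath$, with parametrized $\Pin$-equivariant transversality supplied by Theorem \ref{pinreg} and the \textsc{asd}-perturbations of Section~2. However, there is a genuine gap at precisely the point the paper singles out as the only non-obvious verification: you claim the $\Rin$-module axioms (and hence $\Rin$-linearity of the cobordism maps) ``follow from the composition law together with Lemma~\ref{indeppunctures}.'' Lemma~\ref{indeppunctures} is a statement about the non-equivariant theory, where the punctured ball induces the multiplication $U^{d_1}\otimes\cdots\otimes U^{d_m}\mapsto U^{d_1+\cdots+d_m}$ in $\ztwo[[U]]$; it cannot determine the map induced on the invariant subcomplexes, because the interesting part of $\Rin$ is killed under the comparison with the non-equivariant theory (the coefficient map sends $Q\mapsto 0$, $V\mapsto U^2$). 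In particular, whether $Q\otimes Q$ maps to $Q^2$ or to zero, and whether the relation $Q^3=0$ is respected (e.g.\ $Q^2\otimes Q^2\mapsto 0$), is invisible to the non-equivariant computation. Note also that Example~\ref{HSS3}, which you cite, only identifies $\HSf_{\bullet}(S^3)$ with $\Rin$ as a \emph{graded group}; the ring/module structure there (Example~\ref{S3modstr}) is an output of this theorem's proof, not an input, so your argument is circular at this point.

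What is missing is the direct verification that the map $\hat m(B_0,\mathbf p)^{\mathrm{inv}}$ induced by the punctured ball realizes the multiplication of $\Rin$ on the invariant complexes. In the paper this is done by reducing (via associativity, whose stretching proof does carry over) to two punctures and checking the generating relations by hand: for $Q\otimes Q\mapsto Q^2$ one uses that the two-dimensional moduli space $M^+([\Cr_{-1}],[\Cr_{-1}],(B_0)^*_{\{p_1,p_2\}},[\Cr_{-1}])$ evaluates with degree one onto each copy of $[\Cr_{-1}]\cong S^2$, so that the mod~2 intersection structure of one-dimensional invariant chains (i.e.\ of $1$-cycles in $\mathbb{R}P^2$) forces the fibered product of representatives of $Q\otimes Q$ to consist of an odd number of $\jmath$-conjugate pairs; and for $Q^2\otimes Q^2\mapsto 0$ one uses that an invariant generator of $Q^2$ in $[\Cr_{-1}]$ is a $\jmath$-invariant zero-chain, hence an \emph{even} number of points in $S^2$, so its fibered product with the six-dimensional moduli space into $[\Cr_{-2}]$ vanishes in homology. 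These parity and degree arguments on the $S^2$ critical submanifolds are the new content beyond Chapter~3, and without them the definition of the $\Rin$-module structure and the statement that the cobordism maps land in $\textsc{mod}_{\Rin}$ are not established.
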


Here the $\Rin$-module structure is defined as in the classical case by considering the maps induced by the product cobordism $I\times Y$, see Section $3$ in Chapter $3$.

\begin{proof}
The proof follows in the same way as the ones in Section $3$ of Chapter $3$, by taking the maps induced on the tensor product of the invariant subcomplexes. The only non obvious verification is the fact in the proof of Proposition \ref{modulestr} that the map $\hat{m}(B_0,\mathbf{p})$ defined by a disk with $m$ punctures induces the multiplication
\begin{align*}
{\Rin}\otimes\cdots\otimes {\Rin}&\rightarrow {\Rin}\\
a_1\otimes\cdots\otimes a_m&\mapsto a_1\cdots a_m.
\end{align*}
To do this, one can first reduce to the case with only two punctures
\begin{equation*}
{\Rin}\otimes{\Rin}\rightarrow {\Rin}
\end{equation*}
by using the associativity property (whose metric-stretching proof carries over without complications also in the case of more punctures). We then focus on the two properties
\begin{align*}
Q\otimes Q&\mapsto Q^2\\
Q^2\otimes Q^2&\mapsto 0,
\end{align*}
as the proof of all other relations follow in a very similar way. Let $[\Cr_{-1}]$ be the critical submanifold corresponding to the first negative eigenvalue. Then the moduli space
\begin{equation*}
M^+([\Cr_{-1}],[\Cr_{-1}], (B_0)^*_{\{p_1,p_2\}},[\Cr_{-1}])
\end{equation*}
is two dimensional and the evaluation map to each critical submanifold $[\Cr_{-1}]$ is surjective and has degree one. Because of the intersection structure of one dimensional chains in $\mathbb{R}P^2$, this implies that given a representative $[\sigma_1]\otimes[\sigma_2]$ of $Q\otimes Q$ its fibered product consists on an odd number of pairs consisting of a point and its image under $\jmath$, proving the first of the properties. For the second property we know that the moduli space
\begin{equation*}
M^+([\Cr_{-1}],[\Cr_{-1}], (B_0)^*_{\{p_1,p_2\}},[\Cr_{-2}])
\end{equation*}
is six dimensional. Any generic pair of points gives rise under the fibered product to a generator of $[\Cr_{-2}]$, as it follows from the product structure in the classical case. In particular, as an invariant generator in each $[\Cr_{-1}]$ consists of a even number of points, the fibered product is zero in homology.
\end{proof}

\begin{example}\label{S3modstr}
The proof shows that
\begin{equation*}
\HSf_{\bullet}(S^3,\q_0)\cong \ztwo[[V]][Q]/(Q^3)\{-1\}
\end{equation*}
where $\deg Q=-1$ and $\deg Q=-4$ as a ${\Rin}$-module.
\end{example}

\vspace{0.8cm}

Even thought in the rest of the present work we will only make use of the equivariant constructions, it will be interesting in some developments of the theory to consider also the interaction between these new invariants and the usual ones. Suppose we are given a pair $\spin\neq\bar{\spin}$ of conjugate but non self-conjugate spin$^c$ structures. Then the involution $\jmath$ can be thought as a diffeomorphism
\begin{equation*}
\jmath: \Co(Y,\spin)\rightarrow\Co(Y,\bar{\spin})
\end{equation*}
hence one can define a chain complex with an involution $\jmath$ by taking the direct sum
\begin{equation*}
\check{C}_{\bullet}(Y,\spin)\oplus \check{C}_{\bullet}(Y,\bar{\spin})
\end{equation*}
where we pick a non-degenerate tame perturbation $\q$ on $\Co(Y,\spin)$ and its image $\jmath_*\q$ on $\Co(Y,\bar{\spin})$. Here the action of $\jmath$ sends a critical point to its image under the diffeomorphism (which is still a critical point). The homology of the invariant subcomplex, which we denote by 
\begin{equation*}
\HSt_{\bullet}(Y,[\spin]),
\end{equation*}
is naturally isomorphic to the canonically isomorphic groups
\begin{equation*}
\HMt_{\bullet}(Y,\spin)\cong \HMt_{\bullet}(Y,\bar{\spin})
\end{equation*}
Here $[\spin]$ denotes the equivalence class of $\spin$ under the involution $\jmath$ on the set of spin$^c$ structures $\mathrm{Spin}^c(Y)$ given by conjugation. This group can be thought as an ${\Rin}$-module under the extension of coefficients
\begin{IEEEeqnarray*}{c}
\ztwo[[V]][Q]/(Q^3)\rightarrow \ztwo[[U]]\\
V\mapsto V^2\\
Q\mapsto 0.
\end{IEEEeqnarray*}
This last map is the one induced in cohomology by the double cover
\begin{equation*}
BS^1\rightarrow B\Pin.
\end{equation*}
One can also define maps induced by cobordisms equipped with a pair of non self conjugate spin$^c$ structures. As an example, if $(X,\spin_X)$ is a cobordism between $(Y_{\pm},\spin_{\pm})$ with $\spin_-$ self-conjugate and $\spin_+\neq \bar{\spin}_+$, then $(X,\bar{\spin}_X)$ is a cobordism between $(Y_-,\spin_-)$ and $(Y_+,\bar{\spin}_+)$, and we obtain a map of ${\Rin}$-modules
\begin{equation*}
\HSt_{\bullet}(X;[\spin_X]): \HSt_{\bullet}(Y_-,\spin_-)\rightarrow\HSt_{\bullet}(Y_+,[\spin_+])
\end{equation*}
from the $\jmath$-equivariant chain map
\begin{IEEEeqnarray*}{c}
\check{m}(\spin)\oplus \check{m}(\bar{\spin}):\check{C}_{\bullet}(Y_-,\spin_-)\rightarrow \check{C}_{\bullet}(Y_+,\spin_+)\oplus\check{C}_{\bullet}(Y_+,\bar{\spin}_+)
\end{IEEEeqnarray*}
where the maps $\check{m}(\spin)$ and $\check{m}(\bar{\spin})$ are the ones defined in Chapter $3$. Here, we chose the perturbations on the two collars to be $\jmath$-equivariant. We can define analogous maps in all the other cases. As discussed in the introduction, we can define the total group
\begin{equation*}
\HSt_{\bullet}(Y)=\bigoplus_{[\spin]\in \mathrm{Spin}^c(Y)/\jmath} \HSt_{\bullet}(Y,[\spin]),
\end{equation*}
for which the following result holds.
\begin{teor}
The total $\Pin$-monopole Floer homology groups define a functors from the category $\textsc{cob}$ of compact connected oriented three manifolds and isomorphism classes of cobordism between them to the category of topological $\ztwo[[V]]$-modules.
\end{teor}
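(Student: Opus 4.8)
The plan is to assemble the statement from the pieces already established in this chapter, the only genuinely new ingredient being the bookkeeping over conjugacy classes of spin$^c$ structures. First I would record that each summand $\HSt_{\bullet}(Y,[\spin])$ is canonically a graded topological $\ztwo[[V]]$-module, so that $\HSt_{\bullet}(Y)$, with the direct-sum structure, is one as well. When $\spin=\bar{\spin}$ this is the $\Rin$-module structure of Theorem \ref{mainHS} restricted along the inclusion of graded rings $\ztwo[[V]]\hookrightarrow \Rin=\ztwo[[V]][Q]/(Q^3)$. When $[\spin]\neq[\bar{\spin}]$ we use the canonical identification $\HSt_{\bullet}(Y,[\spin])\cong \HMt_{\bullet}(Y,\spin)$, which is a topological $\ztwo[[U]]$-module in the sense of Chapter $3$, together with the continuous ring homomorphism $\ztwo[[V]]\to\ztwo[[U]]$, $V\mapsto U^2$, $Q\mapsto 0$, induced in cohomology by the double cover $BS^1\to B\Pin$. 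Both structures are continuous, so the direct sum is a topological $\ztwo[[V]]$-module.

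Next I would define $\HSt_{\bullet}(X)$ for a cobordism $X$ from $Y_0$ to $Y_1$. For each class $[\spin_X]\in\mathrm{Spin}^c(X)/\jmath$ the restriction maps produce classes $[\spin_X|_{Y_i}]\in\mathrm{Spin}^c(Y_i)/\jmath$, and one of three maps on the corresponding summands: the map $\HSt_{\bullet}(a\mid X,\spin_X)$ with $a=1$ of Theorem \ref{mainHS} when $\spin_X$ is self-conjugate; the map $\HMt_{\bullet}(X,\spin_X)$ of Chapter $3$ transported through the identifications above when $\spin_X$, $\spin_X|_{Y_0}$ and $\spin_X|_{Y_1}$ are all non-self-conjugate; and the ``mixed'' maps constructed at the end of Section $3$ (such as $\HSt_{\bullet}(X;[\spin_X])$) in the remaining cases, where $\spin_X$ fails to be self-conjugate but restricts to a self-conjugate structure on one boundary component. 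In every case the summand map is $\ztwo[[V]]$-linear, being either $\Rin$-linear or $\ztwo[[U]]$-linear, and these are compatible under $V\mapsto U^2$, $Q\mapsto 0$. Summing over $[\spin_X]$ gives $\HSt_{\bullet}(X):\HSt_{\bullet}(Y_0)\to\HSt_{\bullet}(Y_1)$; the a priori infinite sum is well defined on the negative completions because Theorem \ref{finitenesscob} bounds the topological energy, so only finitely many $[\spin_X]$ contribute modulo any step of the completion filtration. Independence of the metric and $\jmath$-equivariant perturbation follows summand-by-summand from Theorem \ref{mainHS} and its mixed analogues.

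Finally I would verify the functor axioms. For the trivial cylinder $I\times Y$ every spin$^c$ structure is pulled back from $Y$, so $\mathrm{Spin}^c(I\times Y)/\jmath=\mathrm{Spin}^c(Y)/\jmath$ and each summand map is the identity by Proposition \ref{identitymap}, respectively its $\Pin$-version inside Theorem \ref{mainHS}; hence $\HSt_{\bullet}(I\times Y)=\mathrm{id}$. For a composition $X=X_{12}\circ X_{01}$ one uses the Mayer--Vietoris description of $\mathrm{Spin}^c(X)$: restriction identifies it, up to an $H^1(Y_1;\Z)$-indexed redundancy that collapses after passing to $\jmath$-classes and summing, with the set of compatible pairs $(\spin_{01},\spin_{12})$ agreeing over $Y_1$, and the composition law of Theorem \ref{mainHS} with $a=1$, together with the corresponding composition laws for the mixed maps, yields $\HSt_{\bullet}(X)=\HSt_{\bullet}(X_{12})\circ\HSt_{\bullet}(X_{01})$ term by term. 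The main obstacle is exactly this last bookkeeping: one must check that gluing a self-conjugate or mixed spin$^c$ structure along $Y_1$ is matched correctly by the composition formulas in all combinations of self-conjugate/non-self-conjugate behaviour on $Y_0$, $Y_1$ and $Y_2$, and that convergence of the spin$^c$-sum on the completions is preserved under composition; both are handled as in the torsion case of Kronheimer--Mrowka's Chapters $23$ and $26$, but require genuine care here because of the extra ``mixed'' maps, which have no classical counterpart.
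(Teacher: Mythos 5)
Your proposal is correct and follows essentially the route the paper intends: the paper states this theorem without a separate proof, relying on the preceding constructions — the $\Rin$-module structure in the self-conjugate case, the identification $\HSt_{\bullet}(Y,[\spin])\cong\HMt_{\bullet}(Y,\spin)$ with the coefficient extension $\ztwo[[V]][Q]/(Q^3)\rightarrow\ztwo[[U]]$ induced by $BS^1\rightarrow B\Pin$, the mixed cobordism maps defined at the end of Section $3$, and the composition law of Theorem \ref{mainHS} summing over spin$^c$ structures — which is exactly what you assemble, including the finiteness/completion point via Theorem \ref{finitenesscob}. Your added care about the spin$^c$ bookkeeping for composites and the mixed-case composition laws matches (and slightly expands on) the level of detail the paper leaves implicit.
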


\vspace{0.8cm}

We now discuss a few additional properties that our invariants satisfy. We focus on the case of a self-conjugate spin$^c$ structure $\spin$. The following result follows as Proposition \ref{longexact} in Chapter $3$ by taking the invariant subcomplexes.

\begin{prop}
For any $(Y,\spin)$, there is an exact sequence of graded $\Rin$-modules
\begin{equation*}
\dots\stackrel{i_*}{\longrightarrow} \HSt_{*}(Y,\spin)\stackrel{j_*}{\longrightarrow} \HSf_{*}(Y,\spin)\stackrel{p_*}{\longrightarrow} \HSb_*(Y,\spin)\stackrel{i_*}{\longrightarrow} \HSt_*(Y,\spin)\stackrel{j_*}{\longrightarrow}\dots
\end{equation*}
where the maps $i_*,j_*$ and $p_*$ have degree $0,0,-1$ respectively.
\end{prop}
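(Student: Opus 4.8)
The plan is to deduce the exact sequence for the invariant theory directly from the corresponding exact sequence in ordinary monopole Floer homology (Proposition \ref{longexact} of Chapter $3$), together with the algebraic bookkeeping already set up in this section. Recall that in Chapter $3$ the long exact sequence comes from the short exact sequence of chain complexes
\begin{equation*}
0\rightarrow \bar{C}_*\stackrel{i}{\longrightarrow} \check{C}_*\stackrel{j}{\longrightarrow}\hat{C}_*\rightarrow 0
\end{equation*}
(after the appropriate identification of $\hat{C}_*$ with a quotient), where $i,j,p$ are the explicit chain maps written in components there. The first point to establish is that these three chain maps all commute with the involution $\jmath$. This follows from the same observation used throughout the section: the operators $\partial^o_o,\partial^o_s,\partial^u_o,\partial^u_s$ and $\bar{\partial}^s_s,\bar{\partial}^s_u,\bar{\partial}^u_s,\bar{\partial}^u_u$ all commute with $\jmath$ because they are built from fibered products with the moduli spaces $\breve{M}^+_z$, and $\jmath$ gives natural isomorphisms $\breve{M}^+_z([\Cr_-],[\Cr_+])\to\breve{M}^+_{\jmath(z)}([\jmath\Cr_-],[\jmath\Cr_+])$ intertwining the evaluation maps. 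Since $i,j,p$ are expressed purely in terms of these operators (and the identity), they are $\jmath$-equivariant, hence restrict to chain maps on the invariant subcomplexes $\bar{C}^{\mathrm{inv}}_*$, $\check{C}^{\mathrm{inv}}_*$, $\hat{C}^{\mathrm{inv}}_*$.

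Next I would check that restricting to invariants preserves the short exactness. Taking $\jmath$-invariants is left exact for any short exact sequence of $\ztwo[\Z/2]$-modules; exactness on the right can fail in general, but here it does not, for the following reason. The surjectivity of $j:\check{C}_*\to\hat{C}_*$ at the chain level is split (the component matrix for $j$ has a left inverse given by projection onto $C^o\oplus C^u$, which is also $\jmath$-equivariant), so $j$ restricted to invariants is still surjective with kernel exactly $i(\bar{C}^{\mathrm{inv}}_*)$. Thus
\begin{equation*}
0\rightarrow \bar{C}^{\mathrm{inv}}_*\stackrel{i}{\longrightarrow} \check{C}^{\mathrm{inv}}_*\stackrel{j}{\longrightarrow}\hat{C}^{\mathrm{inv}}_*\rightarrow 0
\end{equation*}
is a short exact sequence of chain complexes of $\ztwo$-modules. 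Its associated long exact sequence in homology is precisely the desired sequence, with $\HSt_*,\HSf_*,\HSb_*$ by definition the homologies of $\check{C}^{\mathrm{inv}}_*,\hat{C}^{\mathrm{inv}}_*,\bar{C}^{\mathrm{inv}}_*$ and the connecting homomorphism playing the role of $p_*$. The degree count for $i_*,j_*,p_*$ (namely $0,0,-1$) is inherited verbatim from the non-equivariant case, since the grading by $\mathbb{J}(Y,\spin)$ and the modified grading on $\bar{C}$ are unchanged on the invariant subcomplexes.

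Finally I would verify the $\Rin$-linearity of the three maps. The $\Rin$-module structure on all three groups is defined via the maps induced by the product cobordism $I\times Y$ with marked points, as in Theorem \ref{mainHS}. Since the cobordism maps $\check{m}^{\mathrm{inv}},\hat{m}^{\mathrm{inv}},\bar{m}^{\mathrm{inv}}$ commute with $i,j,p$ at the chain-homotopy level — this is the same compatibility already proved in the non-equivariant setting in Chapter $3$, and the chain homotopies are themselves $\jmath$-equivariant because they too are assembled from fibered products with (parametrized) moduli spaces — the induced maps in homology are $\Rin$-module homomorphisms. I expect the only mildly delicate point to be the right-exactness after taking invariants, i.e. confirming that the splitting of $j$ is genuinely $\jmath$-equivariant; but since $j$ is given by an explicit matrix whose splitting is the coordinate projection $C^o\oplus C^s\to C^o\oplus C^u$ forgetting the $C^s$ summand and injecting $C^u$, and $\jmath$ preserves each of the summands $C^o,C^s,C^u$ separately, this is immediate. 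Everything else is a direct transcription of the Chapter $3$ argument to the invariant subcomplexes, using Lemma \ref{projplane} only implicitly through the already-established identifications of the groups.
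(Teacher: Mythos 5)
There is a genuine gap, and it is at the very first step: the long exact sequence of Proposition \ref{longexact} does \emph{not} come from a short exact sequence of chain complexes
\begin{equation*}
0\rightarrow \bar{C}_*\stackrel{i}{\longrightarrow}\check{C}_*\stackrel{j}{\longrightarrow}\hat{C}_*\rightarrow 0,
\end{equation*}
and no identification of $\hat{C}_*$ with a quotient of $\check{C}_*$ makes it so. With the maps as written in components, $i(a,b)=(\partial^u_o b,\,a+\partial^u_s b)$ has kernel isomorphic to $\ker\partial^u_o$, and $j$ has second column $\bar{\partial}^s_u$, so $j$ is not surjective; already for $S^3$ with the standard perturbation (no irreducibles, $C^o=0$) one has $i=[\,1\ \ \partial^u_s\,]$ with kernel isomorphic to $C^u$ and $j=\bar{\partial}^s_u$, which is far from onto. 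Your proposed ``splitting'' of $j$ by the coordinate projection $C^o\oplus C^s\rightarrow C^o\oplus C^u$ is not a well-defined right inverse of $j$ at all (the $C^u$ summand is not in the domain, and $j$ does not act as the identity on any complement). Consequently the argument ``restrict the short exact sequence to invariants and take its long exact sequence'' has nothing to restrict: the mechanism producing the sequence in Chapter $3$ (and in Section $22.2$ of the book, to which that proof defers) is different. There, $i,j,p$ are chain maps whose consecutive composites are only chain homotopic to zero, via explicit homotopies built out of the same fibered-product operators $\partial^u_o,\partial^u_s,\bar{\partial}^s_u,\dots$, and exactness in homology is obtained from the mapping-cone/exact-triangle criterion, not from a kernel--cokernel sequence at the chain level.

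The parts of your write-up that survive are the equivariance observations: since $\jmath$ intertwines the moduli spaces $\breve{M}^+_z([\Cr_-],[\Cr_+])$ and their evaluation maps, every operator entering $i,j,p$ \emph{and} every chain homotopy entering the triangle argument commutes with $\jmath$, so the whole Chapter $3$ argument restricts verbatim to the invariant subcomplexes $\check{C}^{\mathrm{inv}}_*,\hat{C}^{\mathrm{inv}}_*,\bar{C}^{\mathrm{inv}}_*$; this is exactly how the paper proves the statement, and it is the correct replacement for your short-exact-sequence step. Your remarks on the degrees $0,0,-1$ and on $\Rin$-linearity via the $\jmath$-equivariant cobordism maps are fine as stated. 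So the fix is to drop the claimed short exact sequence entirely and instead check that the chain-level identities and homotopies used in the book's Section $22.2$ argument are $\jmath$-equivariant, then apply that algebraic lemma to the invariant subcomplexes.
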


The following result should be thought as a version of the Gysin exact sequence (in the simple case of $S^0$ bundles, i.e. double coverings).

\begin{prop}\label{gysin}
For every three manifold $Y$ and self conjugate spin$^c$ structure $\spin$, there is an exact sequence
\begin{equation*}
\cdots\rightarrow\HSt_{k+1}(Y,\spin)\stackrel{e_*}{\longrightarrow} \HSt_k(Y,\spin)\stackrel{\iota_*}{\longrightarrow} \HMt_k(Y,\spin)\stackrel{\pi*}{\longrightarrow} \HSt_k(Y,\spin)\rightarrow \HSt_{k-1}(Y,\spin)\rightarrow\cdots
\end{equation*}
The maps $e_*,\iota_*$ and $\pi_*$ are maps of $\Rin$-modules. The similar result holds for the other versions of $\Pin$-monopole Floer homology.
\end{prop}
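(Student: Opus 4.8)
The plan is to exhibit the sequence as the long exact sequence in homology of a short exact sequence of chain complexes built from the involution $\jmath$. Recall that $\HMt_*(Y,\spin)$ is the homology of $(\check{C}_*,\check{\partial})$, that $\HSt_*(Y,\spin)$ is the homology of the invariant subcomplex $(\check{C}^{\mathrm{inv}}_*,\check{\partial})$, and that $\jmath$ is an involutory chain map on $\check{C}_*$ preserving the grading (since $\jmath$ commutes with all the data entering the definition of $\mathrm{Gr}$ and the perturbation is $\jmath$-equivariant). As we work over $\ztwo$, the operator $1+\jmath$ is a degree $0$ chain map with $(1+\jmath)^2=0$, with kernel exactly $\check{C}^{\mathrm{inv}}_*$ and image exactly the subcomplex $(1+\jmath)\check{C}_*$ (whose differential is the restriction of $\check{\partial}$). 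Hence
\begin{equation*}
0\rightarrow \check{C}^{\mathrm{inv}}_*\hookrightarrow \check{C}_*\xrightarrow{1+\jmath}(1+\jmath)\check{C}_*\rightarrow 0
\end{equation*}
is a short exact sequence of graded chain complexes.

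First I would pass to the associated long exact sequence in homology. Its middle term is $\HMt_*(Y,\spin)$ by definition. For the right-hand term I would invoke the quasi-isomorphism $(1+\jmath)\check{C}_*\hookrightarrow \check{C}^{\mathrm{inv}}_*$ established earlier in the section, which identifies $H_*((1+\jmath)\check{C}_*)$ canonically with $\HSt_*(Y,\spin)$. Under these identifications the inclusion $\check{C}^{\mathrm{inv}}_*\hookrightarrow \check{C}_*$ induces $\iota_*$, the chain map $1+\jmath$ followed by the inverse of the quasi-isomorphism induces $\pi_*$ (both of degree $0$), and the connecting homomorphism, which lowers degree by one, is $e_*$. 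This yields precisely the stated exact sequence, and running the identical argument with $(\hat{C}_*,\hat{\partial})$ and $(\bar{C}_*,\bar{\partial})$ in place of $(\check{C}_*,\check{\partial})$ gives the sequences for the \emph{from} and \emph{bar} flavors.

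Finally I would verify that $e_*$, $\iota_*$ and $\pi_*$ are $\Rin$-linear, where $\Rin$ acts on $\HMt_*(Y,\spin)$ through the ring map $\Rin\rightarrow\ztwo[[U]]$ sending $V\mapsto U^2$ and $Q\mapsto 0$. Here I would use that the chain-level operators $\check{m}$ defining the module structures, namely those associated to $(I\times Y)^*_{\mathbf{p}}$ with $S^3$-punctures as in Theorem \ref{mainHS} and Example \ref{S3modstr}, all commute with $\jmath$, and that the displayed ring map is exactly the one induced on the puncture factor by the inclusion $C^{u,\mathrm{inv}}(S^3)\hookrightarrow C^u(S^3)$. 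Consequently the short exact sequence above is compatible with these operators, and together with the naturality of the identification $H_*((1+\jmath)\check{C}_*)\cong\HSt_*$ this gives the $\Rin$-linearity of all three maps. The main obstacle is precisely this last point: one must check that passing to $\ztwo$-invariants and to the coinvariant quotient is compatible with the puncture operations on $S^3$ and with the identifications of $\Rin$ and $\ztwo[[U]]$ as Floer groups of the sphere. This is routine and entirely parallel to the module-structure verifications already carried out for the maps induced by cobordisms, so I would cite those arguments rather than reproduce them.
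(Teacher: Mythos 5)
Your proposal is correct and follows essentially the same route as the paper: the short exact sequence $0\rightarrow \check{C}^{\mathrm{inv}}_*\rightarrow \check{C}_*\xrightarrow{\mathrm{id}+\jmath}(\mathrm{id}+\jmath)\check{C}_*\rightarrow 0$, its long exact sequence in homology, and the quasi-isomorphism $(\mathrm{id}+\jmath)\check{C}_*\hookrightarrow\check{C}^{\mathrm{inv}}_*$ to identify the third term with $\HSt_*$. Your extra discussion of $\Rin$-linearity via the $\jmath$-equivariance of the cobordism-induced chain maps is a detail the paper leaves implicit, but it is the expected justification and not a different argument.
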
 
\begin{proof}
There is a short exact sequence of chain complexes
\begin{equation*}
0\rightarrow \check{C}_*^{\mathrm{inv}}\hookrightarrow \check{C}_*\stackrel{\mathrm{id}+\jmath}{\longrightarrow} (\mathrm{id}+\jmath)\check{C}_*\rightarrow 0.
\end{equation*}
The sequence in the statement is the associated long exact sequence, where we use Lemma \ref{projplane} to identify the homology groups.
\end{proof}

The cohomology groups $\HSt^*(Y,\spin), \HSf^*(Y,\spin)$ and $\HSb^*(Y,\spin)$ are defined as in Section $2$ of Chapter $3$ as the groups of $-Y$ with the appropriate change in the gradings. These come with canonical intersection pairings with the corresponding homology groups, for example
\begin{equation*}
\HSt^k(Y,\spin)\otimes \HSt_k(Y,\spin)\rightarrow \ztwo,
\end{equation*}
where we consider the intersection of $\jmath$-invariant as the intersection of the corresponding (after subdivision) cycles in the quotient by the action of $\jmath$. We cannot prove that such pairing is perfect for a general three manifold, but we expect that the proof of the next result could be adapted once has a better knowledge of the reducible solutions analogous to that of Chapter $35$ in the book.

\begin{prop}\label{univcoeffspin}
Suppose $Y$ is a rational homology sphere. Then the intersection pairing is perfect.
\end{prop}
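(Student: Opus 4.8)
The plan is to reduce the statement to the classical universal coefficient theorem for the (non-equivariant) monopole Floer homology of a rational homology sphere, together with the algebraic structure imposed by the involution $\jmath$ and the Gysin-type exact sequence of Proposition \ref{gysin}. First I would recall that for a rational homology sphere $Y$ the reducible locus is a single point $[B,0]$, which is necessarily the unique spin connection; hence after choosing a small $\Pin$-non-degenerate $\jmath$-equivariant perturbation the reducible critical submanifolds are a doubly-infinite sequence of two-spheres $[\Cr_i]$, each carrying the antipodal $\jmath$-action, exactly as in Example \ref{HSS3}. In this situation the chain-level picture is completely under control: every critical submanifold is either a point (irreducible, permuted by $\jmath$ with its mirror) or an $S^2$ with the antipodal action, and on each the homology of the invariant subcomplex computes $H_*(\R P^2;\ztwo)$ by Lemma \ref{projplane}. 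The key point is that the intersection pairing between $\HSt^k(Y,\spin)$ and $\HSt_k(Y,\spin)$ is, by construction, induced by intersecting $\jmath$-invariant cycles in the quotients of the critical submanifolds by $\jmath$, i.e. in $\R P^2$'s and points; and on $\R P^2$ the $\ztwo$-intersection pairing $H_q(\R P^2)\otimes H^q(\R P^2)\to\ztwo$ is perfect by Proposition \ref{intpairing} applied to $X=\R P^2$ (which the text explicitly flags as a case one can do by hand).

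The main steps, in order, would be: (1) Fix a convenient $\jmath$-equivariant perturbation as above so that the chain complexes $\check{C}^{\mathrm{inv}}_*,\hat{C}^{\mathrm{inv}}_*,\bar{C}^{\mathrm{inv}}_*$ split, up to filtration, as direct sums over critical submanifolds of the chain complexes $(\mathrm{id}+\jmath)C^{\mathcal{F}}_*([\Cr])$ — this is where the rational homology sphere hypothesis is used, ensuring there is no torus of flat connections and that all the fibered products with positive-dimensional moduli spaces either vanish or are controlled by Lemma \ref{dimreducible}. (2) Observe that the cochain complex of $Y$ is by definition the chain complex of $-Y$ with reversed grading (as in Section $2$ of Chapter $3$), and that reversing orientation exchanges boundary-stable and boundary-unstable submanifolds but preserves the $\jmath$-action and the identification with $\R P^2$. (3) Match the chain-level intersection pairing with the direct sum of the intersection pairings on the individual critical submanifolds $[\Cr]/\jmath$; here the transversality argument already given in Section $2$ of Chapter $3$ (using compatible extensions $\mathcal{F}'\supset\mathcal{F}$ and Lemma \ref{moretransverse}, together with Lemma \ref{projplane}) shows that one may choose the representing cycles and cocycles to be transverse and that the pairing is well-defined and computed summand by summand. (4) Conclude perfection from Proposition \ref{intpairing} for a point and for $\R P^2$, since a direct sum of perfect pairings is perfect and the relevant sums are finite in each fixed degree (the grading set $\mathbb{J}(\spin)$ carries an absolute $\mathbb{Q}$-grading when $c_1(\spin)$ is torsion, and only finitely many critical submanifolds contribute in each degree by the finiteness results of Proposition \ref{finiteness}).

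The alternative, cleaner route — which I would actually prefer and present as the proof — is to invoke invariance and the classical universal coefficient theorem: by Theorem \ref{mainHS} the groups $\HSt_\bullet(Y,\spin)$ and the pairing are independent of metric and perturbation, and for a rational homology sphere one can appeal to the fact that the pairing is perfect in the limiting non-degenerate situation. Concretely, I would use the Gysin sequence of Proposition \ref{gysin} relating $\HSt_*(Y,\spin)$ to $\HMt_*(Y,\spin)$ and the $e_*$-action, together with the five lemma: the intersection pairing on $\HMt_*$ is perfect by the classical universal coefficient theorem (cited in the text as holding in the non-degenerate case), the pairing on $\HSt_*$ is compatible with the maps $e_*,\iota_*,\pi_*$ in the Gysin sequence and with the corresponding dual sequence for $-Y$, and a diagram chase then forces perfection on $\HSt_*$.

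The hard part will be step (3)/(4) of the first approach, respectively the compatibility verification in the Gysin-sequence approach: one must check carefully that the chain-level intersection pairing defined via $\jmath$-invariant cycles in the quotients interacts correctly with the connecting homomorphisms — in particular that the Euler-class map $e_*$ is, under the pairing, adjoint to the corresponding map for $-Y$, and that the self-intersection contributions coming from pairs $\{pt,\jmath\cdot pt\}$ of irreducible critical points (which contribute $0$ mod $2$) do not spoil nondegeneracy. This is precisely the kind of bookkeeping the text sidesteps by restricting to rational homology spheres, where the reducible part is a single $\R P^2$-tower and the combinatorics is transparent; for the general three-manifold one would need the detailed analysis of reducibles analogous to Chapter $35$ of the book, which is why the statement is proved only in the rational homology sphere case.
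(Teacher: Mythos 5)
Your preferred route (the Gysin sequence of Proposition \ref{gysin}, the classical universal coefficient theorem for $\HMt$, and a diagram chase) is in outline the paper's own argument, but as written it has a genuine gap in the diagram chase. In the ladder comparing the Gysin sequence for homology with the dual of the corresponding sequence for cohomology, the vertical maps are $a$ (the pairing on $\HMt_{k+1}$), $b$ (the pairing on $\HSt_{k+1}$), $F$ (the pairing on $\HSt_k$), $c$ (the pairing on $\HMt_k$), and $F$ again: the unknown map $F$ occurs twice and its neighbour $b$ is the same kind of map one degree higher, so a one-shot five lemma cannot be applied — you would be assuming what you want to prove. The argument has to be a downward induction on the grading: assuming the pairing on $\HSt_{k+1}$ is an isomorphism, one application of the four lemma (using that $a$ is epi and $b,c$ are mono) gives injectivity of $F$, and a second application (using that $b,c$ are epi and the just-established injectivity of $F$) gives surjectivity. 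This induction needs an anchor, and that is exactly where the rational homology sphere hypothesis enters: with a single reducible critical point downstairs there are only finitely many irreducible critical points, so in all sufficiently high gradings the invariant complex only sees the tower of $\R P^2$'s over the spin connection, and there the pairing is perfect by Proposition \ref{intpairing}. Your proposal never assembles this base case, and the phrase ``appeal to the fact that the pairing is perfect in the limiting non-degenerate situation'' is not available for $\HSt$: in the $\Pin$-equivariant setting the reducible critical submanifolds over a spin connection are two-spheres and can never be perturbed to non-degenerate points, so the classical universal coefficient theorem enters only through $\HMt$, i.e. through the maps $a$ and $c$.

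Your first (discarded) approach does contain the base-case ingredients, but the step you flag as hard is indeed where it fails in general: the homology of $\check{C}^{\mathrm{inv}}_*$ is not the direct sum of the homologies of the invariant complexes of the critical submanifolds, and the pairing on $\HSt$ is not a direct sum of $\R P^2$ pairings, because the differentials (fibered products with moduli spaces) mix critical submanifolds. In the paper that splitting is only invoked in the range of gradings lying above all the irreducible critical points, where it is automatic. So the correct repair of your proposal is to keep the Gysin-sequence ladder but replace the five lemma by the downward induction on the grading, anchored by the high-degree base case just described.
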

\begin{proof}
We focus on the \textit{to} version of the invariants. Because the space is a rational homology sphere, we can choose a perturbation with a single reducible critical point downstairs. For the grading high enough, the result follows from the fact that the intersection pairing on $\R P^2$ is perfect (see Proposition \ref{intpairing} in Chapter $3$), as there are only finitely many irreducible solutions. We can then proceed with the following inductive proof. We have the commutative diagram
\begin{center}
\begin{tikzpicture}
  \matrix (m) [matrix of math nodes,row sep=3em,column sep=1em,minimum width=2em]
  {
     \HMt_{k+1}(Y) & \HSt_{k+1}(Y) & \HSt_{k}(Y) & \HMt_{k}(Y) & \HSt_{k}(Y) \\
     \mathrm{Hom}\HMt^{k+1}(Y) & \mathrm{Hom}\HSt^{k+1}(Y) & \mathrm{Hom}\HSt^{k}(Y) & \mathrm{Hom}\HMt^{k}(Y) & \mathrm{Hom}\HSt^{k}(Y) \\};
  \path[-stealth]

    (m-1-1) edge  (m-1-2)
    	edge node [left] {$a$} (m-2-1)
   
    (m-1-2) edge  (m-1-3)
          edge node [left] {$b$} (m-2-2)
        (m-1-3) edge  (m-1-4)   
        edge node [left] {$F$} (m-2-3)
        (m-1-4) edge  (m-1-5)    
        edge node [left] {$c$} (m-2-4)
        (m-1-5) edge node [left] {$F$} (m-2-5)

    (m-2-1) edge  (m-2-2)
    (m-2-2) edge (m-2-3)
        (m-2-3) edge  (m-2-4)   
        (m-2-4) edge  (m-2-5)

    ;

\end{tikzpicture}
\end{center}
where the upper row is the Gysin sequence form Proposition \ref{gysin} and the lower row is the dual of the corresponding sequence for cohomology (or equivalently for $-Y$), and the vertical maps are the ones induced by the intersection pairing. In particular we know that $a$ and $c$ are isomorphism. Suppose now $b$ is also an isomorphism. Then applying the four lemma to the left part of the diagram, we obtain that $F$ is injective. Applying then the four lemma to the right part of the diagram we obtain that $F$ is also surjective, hence it is an isomorphism.
\end{proof}

\vspace{1.5cm}
\section{Some computations}

In this section we provide some basic computations of the invariants which can be performed by explicitly solving the equations as in Chapters $36$ and $37$ of the book. Before doing this, we discuss a non-vanishing result and quickly review the definition of absolute gradings.

\begin{prop}
For every three manifold $Y$ and self conjugate spin$^c$ structure $\spin$, the groups
\begin{equation*}
\HSt_*(Y,\spin),\qquad \HSf_*(Y,\spin), \qquad \HSb_*(Y,\spin)
\end{equation*}
are non-zero in infinitely many gradings.
\end{prop}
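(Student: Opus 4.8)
The plan is to reduce the non-vanishing statement to the corresponding non-vanishing statement for the ordinary monopole Floer homology groups $\HMb_*(Y,\spin)$, for which the ``bar'' group is always isomorphic up to grading shift to $\HMb_*(S^3)$ by the general structure theory, hence non-zero in infinitely many gradings. The key bridge is the Gysin-type exact sequence of Proposition \ref{gysin}, which for the ``bar'' flavour reads
\begin{equation*}
\cdots\rightarrow\HSb_{k+1}(Y,\spin)\stackrel{e_*}{\longrightarrow} \HSb_k(Y,\spin)\stackrel{\iota_*}{\longrightarrow} \HMb_k(Y,\spin)\stackrel{\pi_*}{\longrightarrow} \HSb_k(Y,\spin)\rightarrow \HSb_{k-1}(Y,\spin)\rightarrow\cdots
\end{equation*}
If $\HSb_*(Y,\spin)$ vanished in all but finitely many gradings, then for $k$ sufficiently negative (and sufficiently positive) the sequence would force $\HMb_k(Y,\spin)=0$, contradicting the fact that $\HMb_*(Y,\spin)$ is non-zero in infinitely many gradings. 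This immediately handles the ``bar'' case.

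For the ``to'' and ``from'' cases I would argue similarly but using instead the long exact sequence of the pair
\begin{equation*}
\dots\stackrel{i_*}{\longrightarrow} \HSt_{*}(Y,\spin)\stackrel{j_*}{\longrightarrow} \HSf_{*}(Y,\spin)\stackrel{p_*}{\longrightarrow} \HSb_{*-1}(Y,\spin)\stackrel{i_*}{\longrightarrow} \HSt_{*-1}(Y,\spin)\stackrel{j_*}{\longrightarrow}\dots
\end{equation*}
together with the boundedness properties inherited from the ordinary theory: since $c_1(\spin)$ is torsion, the ``to'' group $\HSt_*(Y,\spin)$ is bounded below and the ``from'' group $\HSf_*(Y,\spin)$ is bounded above (this follows from the finiteness statements of Proposition \ref{finiteness} in Chapter $2$ applied to the invariant subcomplexes, exactly as in the non-equivariant case). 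Given that $\HSb_*$ is non-zero in infinitely many (hence in arbitrarily negative) gradings and that $\HSf_*$ is bounded above, the map $p_*$ forces $\HSt_{*-1}(Y,\spin)$ to be non-zero in infinitely many negative gradings once $\HSf_*$ has run out; dually, $\HSb_*$ non-zero in arbitrarily positive gradings together with $\HSt_*$ bounded below forces $\HSf_*(Y,\spin)$ to be non-zero in infinitely many positive gradings via $p_*$ and $i_*$. Concretely: in degrees far below the bottom of $\HSt_*$ the sequence gives an isomorphism $\HSb_{k}(Y,\spin)\cong \HSt_{k}(Y,\spin)$, and in degrees far above the top of $\HSf_*$ it gives $\HSb_{k-1}(Y,\spin)\cong \HSt_{k-1}(Y,\spin)$ as well; combining this with the analogous statement relating $\HSf_*$ and $\HSb_*$ in the complementary range yields the claim for both flavours.

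The main obstacle is simply assembling the boundedness inputs cleanly: one must check that the ``bar'' group really is non-zero in infinitely many gradings, which for the equivariant theory should be extracted from Example \ref{HSS3} (where $\HSb_k(S^3,\spin)=\ztwo$ for $k$ congruent to $0,1,2$ mod $4$) via the cobordism-invariance and the fact that, as in the non-equivariant setting, the ``bar'' group of any $Y$ with torsion $\spin$ agrees up to grading shift with that of $S^3$ — this last point uses that $\bar{\partial}$ only sees reducible trajectories and that the reducible picture for a self-conjugate $\spin$ is governed by the quaternionic structure on the Dirac eigenspaces, so the reducible chain complex is, up to a shift, the same doubly-infinite string of $\R P^2$'s computing $H_*(\R P^2)$ periodically. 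Once that periodicity of $\HSb_*$ is in hand, the exact-sequence chases above are routine, so I do not expect genuine difficulty beyond writing them out.
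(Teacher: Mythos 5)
Your treatment of the \textit{bar} flavour is exactly the paper's argument, and in fact the paper disposes of all three flavours the same way: Proposition \ref{gysin} holds for the \textit{to}, \textit{from} and \textit{bar} versions alike, and exactness at $\HMt_k(Y,\spin)$ (resp.\ $\HMf_k$, $\HMb_k$) shows that if the corresponding $\Pin$-group vanished in all but finitely many gradings, then so would the classical group, contradicting the deep non-vanishing result for torsion $c_1(\spin)$ (Corollaries $35.1.3$ and $35.1.4$ in the book). So the \textit{to} and \textit{from} cases never require the long exact sequence of the pair, the boundedness of $\HSt_*$ below and $\HSf_*$ above, or any knowledge of $\HSb_*$ beyond what the Gysin sequence already delivers. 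Your alternative route for \textit{to}/\textit{from} is essentially the way Kronheimer--Mrowka deduce their own non-vanishing from the bar computation, transplanted to the $\Pin$ setting, which is legitimate in spirit but carries more obligations than you discharge.

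Concretely, three steps in your \textit{to}/\textit{from} argument do not hold as written. First, the claim that $\HSb_*(Y,\spin)$ agrees up to grading shift with $\HSb_*(S^3)$ for \emph{every} $Y$ with torsion $\spin$ is false when $b_1(Y)>0$: the paper proves this only for rational homology spheres (Proposition \ref{HShomology}), and its own computations for $S^1\times S^2$ and $T^3$ produce strictly larger bar groups, so the ``doubly-infinite string of $\R P^2$'s'' picture cannot be your source of two-sided non-vanishing. Second, ``non-zero in infinitely many (hence in arbitrarily negative) gradings'' is a non sequitur; your chase needs $\HSb$ non-zero in arbitrarily \emph{positive} degrees (where $\HSf$ has vanished, so $i_*$ is injective and feeds $\HSt$) and separately in arbitrarily \emph{negative} degrees (where $\HSt$ has vanished, giving $\HSf_k\cong\HSb_{k-1}$ — note your displayed identifications pair the wrong groups), and ``infinitely many'' gives neither direction by itself. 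This could be repaired, e.g.\ by noting that $U$ acts invertibly on $\HMb_*$ for torsion $\spin$, so $\HMb$ is non-zero in both directions, and the Gysin sequence then forces $\HSb_k\neq 0$ whenever $\HMb_k\neq 0$; but that argument is absent. Third, the boundedness inputs ($\HSt_*$ bounded below, $\HSf_*$ bounded above) are stated in the paper only for homology spheres; for general $Y$ with torsion $\spin$ they are true, but the reason is the structure of the generators (finitely many irreducible critical submanifolds, boundary-stable towers with gradings going up, boundary-unstable towers going down), not Proposition \ref{finiteness}, which concerns finiteness of non-empty moduli spaces, so this too needs to be argued rather than cited.
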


\begin{proof}
This readily follows from the Gysin exact sequence in Proposition \ref{gysin} and the deep non-vanishing result for the classical monopole Floer groups when the spin$^c$ structure has torsion first Chern class, see Corollaries $35.1.3$ and $35.1.4$ in the book.
\end{proof}

\vspace{0.8cm}

We quickly review the discussion of the absolute rational gradings for torsion spin$^c$ structures in Section $28.3$ of the book. Notice that even though we could only deal with self-conjugate spin$^c$ structures our definition of absolute grading (as the three dimensional spin cobordism group is trivial) we will consider the more general case of a non-self conjugate spin$^c$ structure, as it sometimes makes computations easier. Furthermore, this will allow to consider also the interaction with the non-equivariant counterparts of the theory. Given an integral $2$-dimensional cohomology class on a cobordism $W$ that restricts to a torsion class on the boundary, we define
\begin{equation*}
\langle c,c\rangle=(\tilde{c}\cup\tilde{c})[W,\partial W]\in\mathbb{Q}
\end{equation*}
where $\tilde{c}$ is any lift the image of $c$ in the rational cohomology $H^2(W;\mathbb{Q})$ to $H^2(W,\partial W;\mathbb{Q})$. Given now a self-conjugate spin$^c$ structure $\spin$ on $Y$, choose any four manifold $\tilde{X}$ whose boundary is $Y$ and over which the $\spin$ extends. We will think of this manifold as a cobordism $X$ from $S^3$ to $Y$ obtained by removing a ball. In the following definition, we have a fixed equivariant perturbation $\q$ on $Y$ (see also Definition $28.3.1$ in the book), and we can choose any Morse-Bott perturbation on $S^3$. Recall the definition of relative grading on a cobordism from Section $6$ in Chapter $2$.

\begin{defn}
Given a self-conjugate spin$^c$ structure $\spin$ on $Y$, let $X$ be any cobordism from $S^3$ to $Y$ as above. For a $\delta$-chain $[\sigma]$ with value in a critical submanifold $[\Cr]$, we define its \textit{absolute grading} $\mathrm{gr}^{\mathbb{Q}}([\sigma])$ as the rational number
\begin{equation*}
\mathrm{gr}^{\mathbb{Q}}([\sigma])=-\mathrm{gr}_z([\Cr_0],X,[\Cr])+\mathrm{dim}[\sigma]+\frac{1}{4}\langle c_1(S^+), c_1(S^+)\rangle- \iota(X)-\frac{1}{4}\sigma(X)\in\mathbb{Q}
\end{equation*}
where
\begin{itemize}
\item $[\Cr_0]$ is the the stable reducible critical manifold of $S^3$ corresponding to the smallest positive eigenvalue of the Dirac operator, and $z$ is any relative homotopy class;
\item $S^+$ is the spinor bundle for the corresponding spin$^c$ structure on $X$;
\item $\iota(X)$ is the characteristic number
\begin{equation*}
\frac{1}{2}(\chi(X)+\sigma(X)-b_1(Y))
\end{equation*}
from Definition $25.4.1$ in the book (in general there is an additional term that vanishes in this case).
\end{itemize}
Similarly, for a reducible critical point, we can introduce the modified absolute grading
\begin{equation*}
\bar{\mathrm{gr}}^{\mathbb{Q}}([\acr])=
\begin{cases}
\mathrm{gr}^{\mathbb{Q}}([\acr]), & [\acr]\text{ boundary stable}\\
\mathrm{gr}^{\mathbb{Q}}([\acr])-1, & [\acr]\text{ boundary unstable}.
\end{cases}
\end{equation*}
\end{defn}
Recall that the characteristic number $\iota(X)$ is an integer as it is the index of an operator, see Lemma $25.4.2$ in the book.
We summarize main features of these gradings (all from Section $28.3$ in the book) in the following proposition.
\begin{prop}\label{absgrad}
The absolute grading $\mathrm{gr}^{\mathbb{Q}}([\sigma])$ is well defined, i.e. independent of the choice of $X$ and homotopy class $z$. Its fractional part is the same as the fractional part of $(\langle c_1,c_1\rangle-\sigma(X))/4$. If $Y$ is a homology three sphere and $[\sigma]$ is a $\delta$-chain in a boundary-stable reducible critical submanifold, its grading $\mathrm{gr}([\sigma])$ is an integer with the same parity as its dimension. Finally, in any case the duality isomorphism
\begin{equation*}
\check{\omega}: \HSt_{\bullet}(-Y,\spin)\rightarrow \HSf^{\bullet}(Y,\spin)
\end{equation*}
maps elements of grading $k$ to elements of grading $-1-b_1(Y)-k$.
\end{prop}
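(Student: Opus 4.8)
The plan is to adapt the proof of the corresponding statements in Section $28.3$ of the book, the only new feature being that critical points are replaced by critical submanifolds and that the relevant indices are the relative gradings $\gr_z(X;[\Cr])$ on cobordisms introduced in Section $6$ of Chapter $2$. First I would establish well-definedness. Suppose $X_1$ and $X_2$ are two cobordisms from $S^3$ to $Y$ over which $\spin$ extends, equipped with relative homotopy classes $z_1,z_2$. Capping off the $S^3$ boundaries with balls carrying the unique spin$^c$ structure and gluing, one forms the closed four-manifold $Z=\bar{X}_1\cup_Y X_2$. Using the additivity of the relative grading $\gr_{z_1\circ z}(X;[\Cr])=\gr_z(X;[\Cr_0])+\gr_{z_1}([\Cr_0],[\Cr])$ from Chapter $2$, together with the orientation-reversal identification \ref{reversor}, the difference of the two candidate values of $\mathrm{gr}^{\mathbb{Q}}([\sigma])$ collapses to a sum of purely topological quantities associated to $Z$. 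One then checks, exactly as in the book, that this sum equals $\mathrm{Ind}\,D^+_Z-\tfrac{1}{8}(c_1(S^+_Z)^2[Z]-\sigma(Z))$, which vanishes by the Atiyah--Singer index theorem recalled in Chapter $1$. Here one invokes Novikov additivity of the signature, additivity of the characteristic number $\iota$ under gluing along a rational homology boundary, additivity of the self-intersection pairing $\langle c_1,c_1\rangle$, and the cancellation of $\dim[\Cr_0]$ inside the combination $\gr+\dim[\sigma]$; this last bookkeeping is the single point requiring care, since the base critical submanifold $[\Cr_0]\subset\Bs_k(S^3,\spin)$ is a two-sphere rather than a point.

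Granting well-definedness, the fractional part statement is immediate: $\gr_z(X;[\Cr])$ is an integer by definition, $\dim[\sigma]$ is an integer, and $\iota(X)$ is an integer by Lemma $25.4.2$ in the book, so the only non-integral contribution to $\mathrm{gr}^{\mathbb{Q}}([\sigma])$ is $\tfrac{1}{4}(\langle c_1(S^+),c_1(S^+)\rangle-\sigma(X))$. For the parity statement when $Y$ is a homology three sphere, I would choose the filling $X$ so that $c_1(S^+)$ is torsion (possible since $b_1(Y)=0$), which forces $\langle c_1,c_1\rangle\in\Z$ and reduces the fractional part to zero; integrality of $\mathrm{gr}([\sigma])$ then follows. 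The parity of $\mathrm{gr}([\sigma])$ relative to $\dim[\sigma]$ follows from Lemma \ref{dimreducible} of Chapter $2$: over a homology sphere there is a single reducible downstairs, and the relative gradings between the reducible critical submanifolds over it are even when the corresponding eigenvalues of the Dirac operator have equal sign, which combined with $\sigma(X)\equiv\langle c_1,c_1\rangle\pmod 8$ for a spin filling and a direct check on the parity of $\iota(X)$ gives the claim.

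Finally, for the duality statement, recall that by construction $\HSf^{\bullet}(Y,\spin)$ is $\HSt_{\bullet}(-Y,\spin)$ with its grading transported through the map $o:\mathbb{J}(-Y,\spin)\to\mathbb{J}(Y,\spin)$, $o([\acr],n)=([\acr],-n-N(\Hess^{\sigma}_{\q,\acr}))$. I would compute the resulting shift by evaluating $\mathrm{gr}^{\mathbb{Q}}$ of a fixed $\delta$-chain viewed once in $Y$ and once in $-Y$, using the identification \ref{reversor} of moduli spaces on cobordisms and the fact that reversing the orientation of a cobordism changes the index of the relevant Atiyah--Patodi--Singer operator by the dimensions of the kernel spaces at the two ends --- this is precisely where the $N(\Hess)$ terms enter --- while $\sigma(X)$ and $\langle c_1,c_1\rangle$ change sign and $\iota(X)$ changes by a term involving $b_1(Y)$, which is the source of the summand $-b_1(Y)$ in the answer; the remaining $-1$ is the grading shift already present in the non-equivariant duality of Chapter $3$. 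Assembling these contributions, the shift is $-1-b_1(Y)$, and it is inherited verbatim by the invariant subcomplexes since the chain-level involution $\jmath$ is compatible with the identification of $\check{C}_*(Y,\spin)$ and $\hat{C}_*(-Y,\spin)$. The main obstacle is the well-definedness step: it is there that one must verify that the additivity results of Chapter $2$ combine with classical index theory into a numerically exact identity, with all Morse-Bott dimension corrections accounted for; once that is in place, the remaining three items are essentially formal.
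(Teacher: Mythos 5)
Your proposal is essentially the intended argument: the paper gives no proof of this proposition at all, stating explicitly that these facts are ``all from Section $28.3$ in the book'' once the cobordism grading $\gr_z(X;[\Cr])$ of Chapter $2$ is substituted for the index at a nondegenerate critical point, and your sketch fleshes out exactly that citation along the same lines (gluing two fillings, additivity of the relative grading, and the closed-manifold index theorem for well-definedness; integrality of $\gr_z$, $\dim[\sigma]$ and $\iota(X)$ for the fractional-part claim; a spin filling with even unimodular intersection form for the homology-sphere case; the map $o$ and orientation reversal for the duality shift). You also correctly isolate the only genuinely new bookkeeping in the Morse--Bott setting, namely the $\dim[\Cr]$ corrections at the ends when indices are glued. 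The parity and duality steps are the sketchiest parts of your write-up (the parity argument should be pinned down by evaluating $\gr_z([\Cr_0],X,[\Cr])$ at the spin connection, where it splits as twice a complex Dirac index plus $\iota(X)$, and by invoking the evenness of relative gradings between same-sign reducible submanifolds), but the ingredients you name are the right ones, so there is no gap in the approach, only detail left to be carried out.
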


\begin{remark}
This absolute grading convention differs from the one adopted in Heegaard Floer homology (see \cite{OSd}), which can be recovered by subtracting $-b_1(Y)/2$.
\end{remark}

\vspace{0.8cm}

The next computation will be important for the construction of the integral invariants of homology spheres in the next section. We first introduce a useful notation.
\begin{defn}\label{standardL}
The \textit{standard $\Rin$-module} $\mathcal{M}$ is the graded $\Rin$-module
\begin{equation*}
\mathcal{M}=\ztwo[V^{-1},V]][Q]/(Q^3)\{-2\},
\end{equation*}
where $\ztwo[V^{-1},V]]$ denotes the ring of Laurent series in $V$.
In particular the element generating the homogeneos component of degree zero is $V^{-1}Q^2$.
\end{defn}
The grading shift is performed so that $\mathcal{M}$ agrees as a graded module with $\HSb_{\bullet}(S^3)$, see Corollary \ref{poshomology}.

\begin{prop}\label{HShomology}
Let $Y$ be a rational homology sphere, and $\spin$ a self-conjugate spin$^c$ structure on $Y$, then after grading shifts we have the isomorphism of graded $\Rin$-modules
\begin{equation*}
\HSb_{\bullet}(Y,\spin)\cong \mathcal{M}
\end{equation*}
If $Y$ is a homology sphere, the generator $V$ has even degree.
\end{prop}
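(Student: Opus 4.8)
\textbf{Proof strategy for Proposition \ref{HShomology}.}

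The plan is to reduce the computation of $\HSb_{\bullet}(Y,\spin)$ to the known structure of the classical $\bar{\mathit{HM}}$ via the Gysin sequence of Proposition \ref{gysin}, combined with the $\Rin$-module structure. First I would recall that for a rational homology sphere $Y$ with torsion spin$^c$ structure, the classical bar group $\HMb_{\bullet}(Y,\spin)$ is (up to grading shift) isomorphic as a graded $\ztwo[[U]]$-module to $\ztwo[U^{-1},U]]$; this follows from Chapter $31$ of the book together with the fact that the bar complex only sees the reducibles and is computed purely formally from the eigenvalue data of the Dirac operator, exactly as for $S^3$. Since $Y$ is a rational homology sphere one can choose a $\jmath$-equivariant perturbation with a single reducible downstairs, so the analysis of the reducible locus in the blow-up is formally identical to the case of $S^3$ treated in Example \ref{HSS3}; the only thing that changes is an overall grading shift, controlled by the spectral flow / relative grading computation. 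Thus the bar version of $\Pin$-monopole Floer homology is, by the same weakly-self-indexing spectral sequence argument as in Example \ref{HSS3}, the direct sum of the $H_*(\R P^2)$'s of the critical spheres, i.e.\ isomorphic as a graded group to $\ztwo[V^{-1},V]][Q]/(Q^3)$ up to shift.

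Next I would pin down the $\Rin$-module structure. The action of $V$ and $Q$ is by Theorem \ref{mainHS} induced by the product cobordism $I\times Y$ with marked points, and by the composition law it factors through the corresponding action on $\HSb_{\bullet}(S^3,\spin)$, i.e.\ through $\HSf_{\bullet}(S^3)\cong\Rin$ (Example \ref{S3modstr}). So the module structure on $\HSb_{\bullet}(Y,\spin)$ is precisely the free rank-one-over-$\ztwo[V^{-1},V]]$ module with the $Q^3=0$ relation, i.e.\ $\mathcal{M}$ up to shift; the key point is that on the bar level every generator is ``divisible by $V$'' in both directions, which is exactly the statement that $V$ acts invertibly, and this is forced by the direct-sum decomposition over the doubly-infinite tower of critical spheres together with Lemma \ref{dimreducible} identifying consecutive towers. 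The grading shift $\{-2\}$ in Definition \ref{standardL} is fixed by matching with $\HSb_{\bullet}(S^3)$ from Example \ref{HSS3} (generator $V^{-1}Q^2$ in degree zero there corresponds to grading $2$ of the first stable sphere's top class); for general $Y$ the shift is then determined by the absolute rational grading formula of Proposition \ref{absgrad}, and this is where I would invoke that the fractional part of the grading is $(\langle c_1,c_1\rangle-\sigma(X))/4\pmod 1$ — for a homology sphere with its unique spin structure this fractional part is $0$, and moreover the parity statement of Proposition \ref{absgrad} (gradings of chains in boundary-stable reducibles are integers with the parity of their dimension) forces the tower to sit in gradings of a fixed parity, so that the generator $V$, which shifts grading by $-4$, connects classes of the same parity and in particular $V$ lands in even degree. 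This last point gives the final sentence of the proposition.

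The main obstacle I expect is the bookkeeping of the grading shift in the non-$S^3$ case: one must verify that the relative grading $\gr_z([\Cr_0],X,[\Cr])$ appearing in the absolute grading definition, together with the correction terms $\iota(X)$, $\sigma(X)$ and $\langle c_1,c_1\rangle$, conspires so that the tower $\HSb_{\bullet}(Y,\spin)$ lands in the gradings claimed by $\mathcal{M}$ after a shift by a single well-defined rational number (for a general rational homology sphere this will be $-2 d(Y,\spin)$ up to normalization, tied to the Fr\o yshov-type invariant). Everything else — the module structure, the isomorphism type as a graded group, and the $\ztwo[V^{-1},V]]$-freeness — follows formally from Example \ref{HSS3}, the Gysin sequence, and the composition law, since the bar flavor is insensitive to irreducibles and hence to the difference between $Y$ and $S^3$ beyond the shift. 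I would therefore structure the write-up as: (i) reduce to the reducible locus; (ii) invoke Example \ref{HSS3} verbatim for the group structure; (iii) use Theorem \ref{mainHS} and Example \ref{S3modstr} for the $\Rin$-action; (iv) compute the shift and parity from Proposition \ref{absgrad}.
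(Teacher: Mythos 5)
Your proposal is correct and follows essentially the same route as the paper: reduce to the single reducible downstairs, invoke the $S^3$ computation of Example \ref{HSS3} for the graded group, obtain the $\Rin$-module structure as in Theorem \ref{mainHS} and Example \ref{S3modstr}, and deduce the parity statement from the even dimension of the representing $\delta$-chain together with Proposition \ref{absgrad}. The appeal to the Gysin sequence is superfluous but harmless, since the paper's argument never needs it here.
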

The result tells us that in this case the only interesting content of the invariant are the absolute rational gradings.

\begin{proof}
The bar version of the Floer invariants only involves reducible critical points, and in the case of a rational homology sphere $Y$ there is only $[B_0,0]$ before blowing up. Hence, we are in the same situation as Example \ref{HSS3}. The determination of module structure follows with the same proof as in Theorem \ref{mainHS}, see also Example \ref{S3modstr}. The fact that in the case of a homology sphere $v$ has even degree follows the fact that it is represented by a $\delta$-chain of even dimension and Proposition \ref{absgrad}.
\end{proof}

\vspace{0.8cm}
The result we have just stated only deals with the reducible solutions, and we do not have in general any knowledge about the irreducible ones. In particular, the only computation we can perform by hand is the case in which the manifold has positive scalar curvature or is flat. We first have the following result.
\begin{prop}\label{positivity}
If the manifold $Y$ has positive scalar curvature, then the map $j_*$ is zero, hence the long exact sequence splits into a canonical direct sum decomposition
\begin{equation*}
\HSb_{\bullet}(Y,\spin)=\HSt_{\bullet}(Y,\spin)\oplus\HSf_{\bullet}(Y,\spin)\{-1\}.
\end{equation*}
where the braces indicate the grading shift.
\end{prop}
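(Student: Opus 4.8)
The plan is to reduce the whole statement to the reducible part of the Floer complex, following the classical positive-scalar-curvature computation of Chapter $36$ in the book and then passing to the $\jmath$-invariant subcomplexes. By Theorem \ref{mainHS} the groups are independent of the metric and of the (regular, $\jmath$-equivariant) perturbation, so I would work with a metric $g$ of positive scalar curvature. The Weitzenb\"ock formula together with the a priori estimates of Chapters $4$--$5$ of the book shows that, for every sufficiently small $\jmath$-equivariant tame perturbation, the perturbed equations admit no irreducible solutions on $Y$ or on any cylinder; among such small perturbations one can still arrange $\Pin$-non-degeneracy and Morse--Bott regularity (the spinor part of the perturbation may be taken supported near the reducible locus). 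Fixing such a $\q$ we then have $\mathsf{C}^o=\emptyset$, i.e.\ $C^o_\bullet=0$.

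With $C^o_\bullet=0$ the complexes collapse to $\check{C}_\bullet=C^s_\bullet$ with differential $\bar{\partial}^s_s$, $\hat{C}_\bullet=C^u_\bullet$ with differential $\bar{\partial}^u_u$, and $\bar{C}_\bullet=C^s_\bullet\oplus C^u_\bullet$; moreover, since there are no irreducible trajectories, the non-reducible moduli spaces entering $\partial^u_o$ and $\partial^u_s$ coincide with the reducible ones, so $\partial^u_o=0$ and $\partial^u_s=\bar{\partial}^u_s$. The chain maps of Proposition \ref{longexact} therefore become $i=\begin{bmatrix}\mathrm{id}&\bar{\partial}^u_s\end{bmatrix}$, $j=\bar{\partial}^s_u$, and $p=\begin{bmatrix}\bar{\partial}^u_s\\ \mathrm{id}\end{bmatrix}$. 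Since every one of $\check{\partial},\hat{\partial},\bar{\partial}$ and every one of $i,j,p$ commutes with $\jmath$, passing to the invariant subcomplexes reduces the statement to the claim that the operator $\bar{\partial}^s_u$, which counts reducible trajectories from a boundary-stable critical submanifold to a boundary-unstable one, vanishes.

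I would prove this by a monotonicity argument for $\Lambda_{\q}$ along reducible trajectories. Over the reducible locus a trajectory has $r\equiv0$ and the unit spinor satisfies $\frac{d}{dt}\psi=-D_{\q,B}\psi+\Lambda_{\q}\psi$, so when the connection component is stationary one gets $\frac{d}{dt}\Lambda_{\q}=-2\|D_{\q,B}\psi\|^2+2\Lambda_{\q}^2\le0$ by Cauchy--Schwarz; positive scalar curvature, via Weitzenb\"ock, moreover guarantees that the Dirac operators over the flat connections have no kernel, so no critical submanifold has $\Lambda_{\q}=0$. Hence $\Lambda_{\q}$ cannot pass from the boundary-stable regime $\Lambda_{\q}<0$ to the boundary-unstable regime $\Lambda_{\q}>0$ along a trajectory, and $\bar{\partial}^s_u=0$. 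For a rational homology sphere there is a single homotopy class and this is already immediate from Lemma \ref{dimreducible}, since such a moduli space would have negative relative grading; for general $Y$ with positive scalar curvature one transcribes the structural analysis of the reducible moduli spaces over the torus of flat connections from Chapter $36$ of the book. In either case $j=0$ on the invariant complexes, so $j_*=0$; furthermore the inclusion $\iota\colon\check{C}_\bullet\to\bar{C}_\bullet$, $c\mapsto(c,0)$, is then a chain map, and $i\circ\iota=\mathrm{id}$, so it splits the short exact sequence of complexes $0\to\hat{C}_\bullet\{-1\}\xrightarrow{p}\bar{C}_\bullet\xrightarrow{i}\check{C}_\bullet\to0$. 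This yields the canonical direct sum decomposition $\HSb_{\bullet}(Y,\spin)=\HSt_{\bullet}(Y,\spin)\oplus\HSf_{\bullet}(Y,\spin)\{-1\}$; it is a decomposition of graded $\Rin$-modules because $i_*$ and $p_*$ are maps of $\Rin$-modules.

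The main obstacle is the chain-level vanishing of $\bar{\partial}^s_u$ in full generality: when $b_1(Y)>0$ the connection component of a reducible trajectory need not be stationary, so the elementary monotonicity computation no longer closes by itself, and one must either choose the perturbation on the torus of flat connections so that no reducible trajectory moves the connection, or import the fine structure of the reducible Seiberg--Witten moduli spaces on positive scalar curvature manifolds from Chapter $36$ of the book, checking that the weighted-Sobolev Morse--Bott framework of Chapter $2$ leaves these facts intact. Everything else is a routine transcription, and the $\jmath$-equivariance enters only through the observation that $\jmath$ is compatible with every moduli space and evaluation map involved.
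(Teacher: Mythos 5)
Your overall strategy, eliminating irreducibles via positive scalar curvature and then proving the chain-level vanishing of $\bar{\partial}^s_u$ so that the obvious section $c\mapsto(c,0)$ of $i$ splits the sequence, has the same shape as the paper's argument, and the final splitting step is fine (the paper instead also arranges $\bar{\partial}^u_s=0$ by choosing the perturbation as in Lemma $36.1.2$ of the book, so that $\bar{C}_*$ is literally the direct sum of $\check{C}_*$ and $\hat{C}_*$). The gap is in your key step, the vanishing of $\bar{\partial}^s_u$. Your monotonicity computation shows that $\Lambda_{\q}$ is non-increasing along reducible trajectories lying over a fixed reducible connection, and you conclude that no trajectory can run from the boundary-stable to the boundary-unstable regime. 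But with the sign convention that makes the rest of the theory consistent (and which the paper itself uses in its proof of this proposition, in Lemma \ref{dimreducible} and in Example \ref{HSS3}; the inequality in the displayed definition of boundary-stable/unstable in Chapter $2$ is a typo), boundary-stable critical submanifolds sit over \emph{positive} eigenvalues and boundary-unstable ones over \emph{negative} eigenvalues, so the trajectories counted by $\bar{\partial}^s_u$ flow from a positive eigenvalue down to a negative one, which is exactly the direction in which $\Lambda_{\q}$ decreases. These (boundary-obstructed) moduli spaces are not empty: by Lemma \ref{dimreducible}, between the critical spheres for the first positive and first negative eigenvalues over a spin connection the unparametrized moduli space is five-dimensional. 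So the emptiness claim fails; read instead with the paper's literal (mistyped) inequality, your argument only reproves the emptiness already contained in Lemma \ref{dimreducible} and says nothing about the map that actually matters.

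What is needed, and what the paper supplies, is a Morse--Bott dimension count rather than a monotonicity argument: for a $\delta$-chain $\sigma$ in a boundary-stable submanifold $[\Cr]$, the fibered product $\sigma\times\breve{M}^+([\Cr],[\Cr'])$ with $[\Cr']$ boundary-unstable is negligible for dimension reasons in every case except when $[\Cr]$ and $[\Cr']$ correspond to the first positive and first negative eigenvalues and $\sigma$ is a $0$-chain; in that remaining case there are no possible intermediate resting submanifolds, so the fibered product is a compact three-dimensional chain without boundary mapping into the two-sphere $[\Cr']$, hence negligible as well. This is the content you would have to substitute for your second-to-last paragraph. The obstacle you yourself single out (the non-stationary connection component when $b_1(Y)>0$) is real but secondary: even for a homology sphere the monotonicity argument rules out the wrong moduli spaces, so the proof does not close as written.
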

\begin{proof}
This follows in the same way as in Section $36.1$ of the book. For a $\jmath$-invariant Morse function on the torus
\begin{equation*}
\mathbb{T}=H^1(Y;\R)/ H^1(Y;\Z)
\end{equation*}
parametrizing the reducible solutions to the monopole equations, one can construct a $\Pin$-equivariant tame perturbation such that the boundary map $\bar{\partial}^u_s$ is zero (see Lemma $36.1.2$ in the book). The map $\bar{\partial}^s_u$ is also zero at the chain level. Indeed the image is always negligible for dimensional reasons unless we are considering two critical manifolds $[\Cr_0]$ and $[\Cr_{-1}]$ corresponding to the first and positive and negative eigenvalues respectively, and zero chains in the former. On the other hand, the subset of trajectories in the unparametrized moduli space $\breve{M}([\Cr_0],[\Cr_{-1}])$ converging to a given point is already compact simply because there are no possible intermediate resting points. So the $\delta$-chain defined in $[\Cr_{-1}]$ by a zero chain in $[\Cr_0]$ is three dimensional and has no boundary, so is negligible. The vanishing of this two maps implies that the chain complex $\bar{C}_*$ splits as the direct sum of $\check{C}_*$ and $\hat{C}_*$.
\end{proof}
\begin{remark}
By the characterization of the moduli spaces in Lemma \ref{dimreducible} in Chapter $2$ we can see that the chain defined above is indeed diffeomorphic to a three sphere.
\end{remark}

\begin{cor}\label{poshomology}
Suppose $Y$ is a rational homology sphere with positive scalar curvature, and $\spin$ a self-conjugate spin$^c$ structure. Then we have up to grading shifts the isomorphism of ${\Rin}$-modules
\begin{align*}
\HSb_{\bullet}(Y,\spin)&\cong \mathcal{M}\\
\HSt_{\bullet}(Y,\spin)&\cong \mathcal{M}/\Rin\cdot 1\\
\HSf_{\bullet}(Y,\spin)&\cong \Rin\{-1\}.
\end{align*}
Furthermore, if $Y$ is $S^3$, the absolute grading of the minimum non zero element in $\HSt_{\bullet}$ is $0$, and if $Y$ is the Poincar\'e homology sphere (oriented as the boundary of the plumbing along the graph $-E_8$) the absolute grading is $-2$.
\end{cor}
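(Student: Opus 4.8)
The plan is to reduce the computation to the two cases where the monopole equations can be solved explicitly, exactly as in Chapters $36$ and $37$ of the book, and then to feed the result through the algebraic machinery already established. First I would invoke Proposition \ref{positivity}: since $Y$ has positive scalar curvature the long exact sequence of Proposition \ref{gysin}'s companion (the $i_*,j_*,p_*$ sequence) splits, so that $\HSb_{\bullet}(Y,\spin)$ decomposes canonically as $\HSt_{\bullet}(Y,\spin)\oplus \HSf_{\bullet}(Y,\spin)\{-1\}$. Combined with Proposition \ref{HShomology}, which identifies $\HSb_{\bullet}(Y,\spin)$ with the standard module $\mathcal{M}$ as a graded $\Rin$-module, this pins down the other two groups once we know the map $p_*$ (equivalently $i_*$). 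By positive scalar curvature there are no irreducible critical points and the only reducible critical point downstairs is $[B_0,0]$, so the entire chain-level picture is that of Example \ref{HSS3}: a doubly-infinite sequence of spheres $S^2$, one per eigenvalue of $D_{\q,B_0}$, with the $j$-action the antipodal map, and the differential as computed by Lemma \ref{dimreducible}. The image $i_*(\HSb_{\bullet})\subset \HSt_{\bullet}$ is then, by the same reasoning as in the non-equivariant case, the submodule $\Rin\cdot 1$, giving $\HSt_{\bullet}(Y,\spin)\cong\mathcal{M}/\Rin\cdot 1$ and $\HSf_{\bullet}(Y,\spin)\cong \Rin\{-1\}$; the $\Rin$-module structure is read off exactly as in Theorem \ref{mainHS} and Examples \ref{S3modstr}, \ref{HSS3}.

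For the grading normalizations I would argue as follows. For $Y=S^3$ with the round metric this is literally Example \ref{HSS3}, where we already normalized the zero-dimensional $\delta$-chains in the first boundary-stable critical submanifold to have absolute rational grading $0$; in particular the minimal nonzero element of $\HSt_{\bullet}(S^3,\spin)$ sits in degree $0$. For the Poincaré sphere $P$ (oriented as $\partial$ of the $-E_8$ plumbing) I would compute the absolute grading using the definition from the previous section: take $X$ to be the $-E_8$ plumbing viewed as a cobordism from $S^3$ to $P$ (with a ball removed), so that $\sigma(X)=-8$, $b_1=0$, $c_1(S^+)=0$ since the spin structure extends, and $\iota(X)$ is the relevant characteristic number $\tfrac12(\chi(X)+\sigma(X)-b_1(P))$. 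Plugging these into $\mathrm{gr}^{\mathbb{Q}}([\sigma])=-\mathrm{gr}_z([\Cr_0],X,[\Cr])+\dim[\sigma]+\tfrac14\langle c_1,c_1\rangle-\iota(X)-\tfrac14\sigma(X)$, together with the fact (positive scalar curvature on $P$, by the spherical space form structure) that the reducible computation is again that of Example \ref{HSS3}, yields that the bottom of the \textit{to} tower lies in grading $-2$. The relative grading term $\mathrm{gr}_z([\Cr_0],X,[\Cr])$ is computed via the index/spectral-flow additivity of Section $6$ of Chapter $2$ together with the Atiyah--Singer contribution $\tfrac18(c_1^2-\sigma)$ from the remark after the Dirac operator discussion; on a spin four-manifold with $c_1=0$ the only surviving term is $-\sigma(X)/8=1$, and keeping careful track of the boundary-stable/unstable shift $o[\Cr]$ and the $\iota$-correction produces the claimed value $-2$.

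The main obstacle, and the only place real care is needed, is bookkeeping the grading: one has to make sure that the three different conventions in play --- the absolute rational grading $\mathrm{gr}^{\mathbb{Q}}$, the modified grading $\bar{\mathrm{gr}}^{\mathbb{Q}}$ for reducibles, and the internal $\Z$-grading used in Example \ref{HSS3} --- are all aligned, and that the grading shift in the definition of $\mathcal{M}$ (by $\{-2\}$, so that the degree-zero generator is $V^{-1}Q^2$) is consistent with the normalization $\mathrm{gr}^{\mathbb{Q}}=0$ for the bottom of the \textit{to} tower on $S^3$. This is exactly the kind of check carried out in the book for the non-equivariant invariants, and since by Proposition \ref{gysin} our groups sit in an exact sequence with the classical ones, one can also cross-check the Poincaré sphere computation against the known value of the classical Frøyshov invariant / correction term for $P$. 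The remaining items --- exactness of the split sequence, the module structure, and the identification of $\HSb_\bullet$ with $\mathcal{M}$ --- are immediate consequences of results already proved (Propositions \ref{positivity}, \ref{HShomology}, Theorem \ref{mainHS}), so no further argument is required there.
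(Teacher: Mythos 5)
Your structural argument is the same as the paper's: the corollary is deduced from Proposition \ref{positivity} (the splitting forced by positive scalar curvature), Proposition \ref{HShomology}, and the chain-level picture of Example \ref{HSS3} with the module structure from Theorem \ref{mainHS}; that part is fine, apart from a slip of wording (the submodule $\Rin\cdot 1$ is the \emph{kernel} of $i_*$, coming from the image of $p_*$, not its image — in the PSC case $i_*$ is onto since $j_*=0$). The $S^3$ normalization via Example \ref{HSS3} is also in line with the paper.

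The genuine gap is in your grading computation for the Poincar\'e sphere. You compute $\mathrm{gr}_z([\Cr_0],X,[\Cr])$ for the $-E_8$ plumbing by invoking the Atiyah--Singer contribution $\tfrac18(c_1^2-\sigma)$ and concluding that "the only surviving term is $-\sigma(X)/8=1$". That formula is stated in the paper only for $X$ closed; for a cobordism with a cylindrical end modeled on $P$ the relevant index is an APS-type index, and it picks up a spectral-asymmetry (eta-invariant) contribution of the perturbed Dirac operator of the round metric on $P$. That correction is precisely the nontrivial input — without it the quantity $\mathrm{gr}_z([\Cr_0],X,[\Cr])$ is simply not determined by $\sigma(X)$, $\chi(X)$ and $c_1$, and "keeping careful track of $o[\Cr]$ and $\iota$" cannot produce $-2$ from topology alone. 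The paper sidesteps this entirely: since the (invariant sub)complex is built from the same reducible generators, with the same absolute gradings, as the classical Morse--Bott complex computing $\HMt_{\bullet}$, the value $-2$ is imported from the known classical computation for $\Sigma(2,3,5)$ (cf.\ \cite{KMOS}), exactly as the $S^3$ value is read off from Example \ref{HSS3}. Your parenthetical "cross-check against the known correction term of $P$" is in fact the proof, not a sanity check; promote it to the main argument (or else carry out an honest APS/eta-invariant or equivariant orbifold index computation for the round Seifert metric), and the corollary is complete.
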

\begin{proof}
The computation of the gradings follows from the usual ones (see for example \cite{KMOS}), as the homology of the chain complex computes the usual invariants. 
\end{proof}
\vspace{0.8cm}

We then turn to the case of $S^1\times S^2$. This case is also tractable because of the positive scalar curvature, and in particular because of Proposition \ref{positivity} we only need to compute the bar version of the theory.
\begin{prop}\label{S2S1}
Letting $\spin$ is the only self-conjugate spin$^c$ structure on $S^1\times S^2$, up to grading shifts we have the isomorphisms of ${\Rin}$-modules
\begin{align*}
\HSb_{\bullet}(S^1\times S^2,\spin)&\cong L\{-1\}\otimes H_*(S^1;\ztwo)\\
\HSt_{\bullet}(S^1\times S^2,\spin)&\cong L\{-1\}/\Rin\cdot 1\otimes H_*(S^1;\ztwo)\\
\HSf_{\bullet}(S^1\times S^2,\spin)&\cong\Rin\{-2\}\otimes H_*(S^1;\ztwo).
\end{align*}
In particular, the minimum absolute grading of a non zero element in $\HSt_{\bullet}(S^1\times S^2,\spin)$ is $-1$.
\end{prop}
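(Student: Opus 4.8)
The plan is to reduce the computation to that of the classical monopole Floer homology of $S^1\times S^2$ (Chapter $36$ in the book) together with the Gysin-type exact sequence of Proposition \ref{gysin} and the positive-scalar-curvature splitting of Proposition \ref{positivity}. First I would fix a $\jmath$-equivariant metric (the product of the round metrics) on $S^1\times S^2$, which has positive scalar curvature, so that by Proposition \ref{positivity} it suffices to identify the bar version $\HSb_{\bullet}(S^1\times S^2,\spin)$ as a graded $\Rin$-module, after which the \textit{to} and \textit{from} versions follow from the long exact sequence splitting into $\HSb_{\bullet}=\HSt_{\bullet}\oplus\HSf_{\bullet}\{-1\}$. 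The reducible locus consists of the torus $\mathbb{T}=H^1(S^1\times S^2;i\R)/H^1(S^1\times S^2;i\Z)$, which here is a circle $S^1$; the involution $\jmath$ acts on it by $x\mapsto -x$, with the canonical spin connection $B_0$ among its two fixed points. One then picks a small $\jmath$-invariant Morse function on this circle as in Lemma $36.1.2$ in the book, giving a $\Pin$-non-degenerate perturbation whose reducible critical submanifolds (in the blow-up) are copies of $S^2$ sitting over the critical points of the Morse function downstairs, one family over each fixed point and a $\jmath$-swapped pair over the remaining critical orbit.

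Next I would run the argument of Proposition \ref{positivity}: the maps $\bar\partial^u_s$ and $\bar\partial^s_u$ vanish, the former by the choice of perturbation and the latter for dimensional/compactness reasons, so $\bar C_*$ splits. It then remains to compute the homology of $\bar C_*^{\mathrm{inv}}$ directly. Because the perturbation is weakly self-indexing (by Lemma \ref{dimreducible}, exactly as in Example \ref{HSS3} and Proposition \ref{HShomology}), the associated spectral sequence collapses, and the homology is the direct sum over reducible critical submanifolds of $H_*(\,\cdot\,)^{\mathrm{inv}}$. Using Lemma \ref{projplane}, each $S^2$-family contributes $H_*(\R P^2;\ztwo)$; organizing these by the tower structure and tracking how $V$ and $Q$ act (the latter being nontrivial between consecutive $\R P^2$-homologies in a family, by the same intersection-theoretic computation as in Theorem \ref{mainHS}), one obtains a copy of the standard module $L\{-1\}$ for each fixed point of $\jmath$ on $\mathbb{T}$, i.e. two copies, and the $\jmath$-swapped pair of critical orbits contributes after passing to invariants another factor isomorphic to $L\{-1\}$. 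Assembling these, the grading bookkeeping should yield $\HSb_{\bullet}(S^1\times S^2,\spin)\cong L\{-1\}\otimes H_*(S^1;\ztwo)$ as a graded $\Rin$-module, since $H_*(S^1;\ztwo)$ is two-dimensional and accounts precisely for the doubling coming from $b_1=1$.

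The absolute gradings are pinned down using Proposition \ref{absgrad} and the grading conventions of Section $28.3$ in the book: one computes $\mathrm{gr}^{\mathbb{Q}}$ of the zero-dimensional $\delta$-chains in the first stable reducible critical submanifold by choosing a convenient spin four-manifold $X$ with $\partial X=S^1\times S^2$ (for instance $S^1\times D^3$, with $\sigma(X)=0$ and $\iota(X)=-b_1/2$ appropriately), exactly as in the classical computation \cite{KMOS}. This fixes the grading shifts so that the minimum absolute grading of a nonzero element of $\HSt_{\bullet}(S^1\times S^2,\spin)$ is $-1$, matching the $\{-1\}$ shift in the stated module $L\{-1\}/\Rin\cdot 1\otimes H_*(S^1;\ztwo)$. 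The main obstacle I anticipate is not the identification of the underlying graded groups but the precise verification of the $\Rin$-module structure together with the grading shifts: one must check that $Q$ acts as in the classical $B\Pin$-module pattern across the $\R P^2$-towers attached to the two $\jmath$-fixed points, and that the contribution of the $\jmath$-swapped critical orbit genuinely assembles into a single $L\{-1\}$ after taking invariants rather than, say, a quotient or extension thereof; this requires the careful codimension-one face analysis (Proposition \ref{codim1}) and the fiber-product/evaluation-map transversality machinery of Chapter $3$, applied to the handful of low-dimensional reducible moduli spaces that actually contribute.
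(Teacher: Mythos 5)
There is a genuine gap, and it sits exactly where you flag your ``main obstacle.'' The paper's computation hinges on a cancellation that your outline never carries out: after choosing the $\jmath$-invariant Morse function on the circle of flat connections to have \emph{exactly two} critical points, namely the two spin connections $[B_0,0]$ (index $0$) and $[B_1,0]$ (index $1$), the only moduli spaces that could contribute differentials between distinct towers are those joining $[\Cr^1_i]$ and $[\Cr^0_i]$, and each of these consists of two copies of $\mathbb{C}P^1$ interchanged by $\jmath$ (coming from the two $\jmath$-conjugate flow lines downstairs), with evaluation maps either constant or diffeomorphisms; hence their total contribution on the invariant subcomplex vanishes, and the same argument shows the $Q$- and $V$-actions do not mix the two towers. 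You instead assert that the perturbation is weakly self-indexing and that the spectral sequence collapses ``for dimensional reasons, as in Example \ref{HSS3}''; but Example \ref{HSS3} and Lemma \ref{dimreducible} only control moduli spaces between critical submanifolds lying over the \emph{same} reducible. For $S^1\times S^2$ there are several reducibles downstairs, and the cross-tower moduli spaces are of low enough dimension to contribute, so the collapse cannot be deduced this way: the $\jmath$-pairing cancellation is the actual content of the proof, both for $\bar{\partial}$ and for the module structure, and deferring it leaves the statement unproved.

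A second problem is your choice of perturbation. You take a Morse function on the circle with critical points at the two fixed points \emph{and} an extra $\jmath$-swapped pair, and then claim each of the three resulting families contributes a copy of $L\{-1\}$ whose direct sum is $L\{-1\}\otimes H_*(S^1;\ztwo)$; this is inconsistent, since that tensor product amounts to only two copies, so with your function there must be nonzero differentials between towers (exactly as in ordinary Morse theory on $S^1$ with four critical points), contradicting the collapse you invoke. Moreover, over the non-fixed reducibles the blown-up critical submanifolds are single points (the eigenspaces are one-dimensional there), not spheres, and the invariant subcomplex of such a swapped pair has homology of $\ztwo[V^{-1},V]]$-type rather than an $\Rin$-module of type $L$. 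The clean route is the paper's: use the two-critical-point function so that only the two spin-connection towers appear, prove that the cross-tower contributions vanish on invariant chains by the $\jmath$-conjugacy of the two components (and likewise for the $Q$ and $V$ actions, as in Theorem \ref{mainHS}), and read off the lowest grading from the fact that the total, non-invariant complex computes the classical $\HMb_{\bullet}(S^1\times S^2,\spin)$; your alternative grading computation via Proposition \ref{absgrad} and $S^1\times D^3$ could also be made to work, but it is not needed.
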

\begin{proof}
This computation can be performed directly by choosing an appropriate perturbation. Because of Proposition \ref{positivity}, we can just focus on the bar version of the chain complex, and can choose as in Section $36.1$ of the book a perturbation of the form $f\circ p$ where
\begin{equation*}
p: \Bo(Y,\spin)\rightarrow \mathbb{T}
\end{equation*}
is a smooth gauge-invariant retraction equivariant for the action of $\jmath$ and $f$ is a $\jmath$-invariant Morse function with exactly two critical points, which are the reducible critical points corresponding to the two spin connections on $S^1\times S^2$, namely $[B_1,0]$ with index $1$ and $[B_0,0]$ with index $0$. It is clear then that a differential between two different critical submanifolds lying over different spin connections starts from $[B_1,0]$ and ends at $[B_0,0]$. In fact there are two flow lines on $\mathbb{T}$ connecting such two connections, and they are conjugate under the action of $\jmath$. Consider the critical submanifolds $[\Cr^0_i]$ and $[\Cr^1_i]$ the critical submanifolds corresponding to the $i$th eigenvalue of $D_{\q,B_0}$ and $D_{\q,B_1}$. Because of dimensional reasons (see Proposition \ref{positivity}), the only interesting moduli spaces when computing differentials are those of the form $\breve{M}^+([\Cr^0_i],[\Cr^1_i])$. These are to two copies of $\mathbb{C}P^1$ identified via the action of $\jmath$, and each of the evaluation maps are either constant or a diffeomorphism onto the image. This implies that their total contribution on the invariant chains is trivial, hence the only non-zero differentials are the ones in the chain complexes of the critical submanifolds, which implies the result.
\par
The ${\Rin}$-module structure is determined in a similar way as in the proof of Theorem \ref{mainHS}. In particular, the only additional thing to check in this case is that the action of $Q$ and $V$ does not send a $\delta$-chain over one reducible over the other reducible. This follows as above from the fact that there are the corresponding moduli space has two components related by the action of $\jmath$ hence their contribution on the invariant chains is zero. Finally the fact that the lowest grading of a non zero element is $-1$ follows from the fact that the total chain complex computes the usual monopole Floer homology of $S^2\times S^1$.
\end{proof}
\begin{remark}
Given this computation, we can define the map induced by a cobordism and an element in the exterior algebra $\Lambda^*(H_1(W;\Z)/\mathrm{Tor}\otimes \ztwo)$ as in Remark \ref{exterioraction} in Chapter $3$.
\end{remark}
\vspace{0.8cm}

We now study the case of the three torus $T^3$, which is more involved.
\begin{prop}
Let $\spin$ be the only self conjugate spin$^c$ structure on the three torus. We have the isomorphisms of graded $\Rin$-modules
\begin{align*}
\HSb_{\bullet}(T^3,\spin)&\cong  \mathcal{M}\{-3\}\otimes\left(H_1(T^3;\ztwo)\oplus H_2(T^3;\ztwo)\right)\\
\HSt_{\bullet}(T^3,\spin)&\cong \mathcal{M}\{-3\}\otimes\left(H_1(T^3;\ztwo)\oplus H_2(T^3;\ztwo)\right)\\
\HSf_{\bullet}(T^3,\spin)&\cong\Rin\{-3\}\otimes \left(H_1(T^3;\ztwo)\oplus H_2(T^3;\ztwo)\right).
\end{align*}
In particular the absolute grading of the minimum non zero element in $\HSt_{\bullet}(T^3,\spin)$ is $-2$.
\end{prop}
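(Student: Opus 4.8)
### Proof proposal for the computation of $\Pin$-monopole Floer homology of $T^3$

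The plan is to follow the strategy already used for $S^1\times S^2$ in Proposition \ref{S2S1}, exploiting the fact that the flat metric on $T^3$ has non-negative scalar curvature, so that the Seiberg-Witten equations have no irreducible solutions and everything is controlled by the reducible locus. Unlike the positive scalar curvature case, the scalar curvature vanishes identically, so one cannot quite invoke Proposition \ref{positivity}; instead one must work directly with a $\jmath$-invariant perturbation adapted to the torus $\mathbb{T}=H^1(T^3;\R)/H^1(T^3;\Z)\cong T^3$ of flat connections, exactly as in Chapter $37$ of the book. First I would recall from that chapter that the Dirac operator $D_{B}$ for a flat connection $B$ parametrized by $\mathbb{T}$ has kernel of constant rank (two complex dimensions at the spin connection $B_0$, where $B_0$ is the self-conjugate one, and generically rank two as well after small perturbation since $c_1(\spin)=0$ forces the index to vanish), and that the reducible critical manifold in the blow-up consists of $S^2$-bundles over submanifolds of $\mathbb{T}$. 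One then picks a $\jmath$-invariant Morse function $f$ on $\mathbb{T}$ — which necessarily has its critical points among the $2^{b_1}=8$ fixed points of $x\mapsto -x$, with one minimum, three index-one, three index-two and one index-three critical point, so the Morse homology of $\mathbb{T}$ is recovered — together with a $\Pin$-invariant cylinder perturbation refining it so that all singularities are $\Pin$-non-degenerate and the perturbation is regular in the sense of Theorem \ref{pinreg}.

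The key computational step is then to show that, after taking $\jmath$-invariant chains, the only surviving differentials are the internal differentials $\partial$ of the $\delta$-chain complexes of the critical submanifolds, exactly as in the proof of Proposition \ref{S2S1}. The point is that any moduli space of trajectories connecting critical submanifolds lying over \emph{distinct} critical points of $f$ on $\mathbb{T}$ comes in pairs exchanged by $\jmath$ (since the gradient flow lines on $\mathbb{T}$ downstairs appear in $\jmath$-conjugate pairs, none of them being $\jmath$-fixed, as a fixed flow line would force a fixed point strictly between two fixed points), and on the invariant subcomplex the contribution of a $\jmath$-conjugate pair of moduli spaces cancels. The moduli spaces lying over a single reducible (corresponding to a single critical point of $f$) are copies of complex projective spaces with hyperplanes removed, by Lemma \ref{dimreducible}, and for the same dimension-counting reasons as in Example \ref{HSS3} — weak self-indexing plus collapse of the associated spectral sequence at the first page — they contribute nothing beyond the internal $\delta$-chain differentials. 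Hence $\bar C_*^{\mathrm{inv}}$, $\check C_*^{\mathrm{inv}}$ and $\hat C_*^{\mathrm{inv}}$ all split as direct sums indexed by the critical points of $f$, with each summand being the invariant $\delta$-chain complex of a tower of $S^2$'s; by Lemma \ref{projplane} each such summand has homology $H_*(\R P^2)$, and reassembling over the eigenvalue towers gives $\mathcal{M}$ (resp. $\mathcal{M}/\Rin\cdot 1$, resp. $\Rin$) tensored with the Morse homology $H_1(T^3;\ztwo)\oplus H_2(T^3;\ztwo)$ of $\mathbb{T}$ coming from the index-one and index-two critical points of $f$; here I am using that the total chain complex (before taking invariants) computes the ordinary monopole Floer homology of $T^3$, which by Chapter $37$ of the book is supported exactly on these middle cohomology groups, so the minimum and maximum towers of $\mathbb{T}$ do not contribute. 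The $\Rin$-module structure is pinned down exactly as in Theorem \ref{mainHS} and Proposition \ref{S2S1}: one checks that $Q$ and $V$, realized by the cobordism maps of $I\times T^3$ with marked points, do not move a $\delta$-chain from a tower over one critical point of $f$ to a tower over another, again because the relevant cobordism moduli spaces split into $\jmath$-conjugate pairs.

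Finally I would fix the grading shifts and read off the absolute rational grading of the bottom nonzero element of $\HSt_{\bullet}(T^3,\spin)$. The overall shift $\{-3\}$ is determined by the absolute grading conventions of Section $28.3$ of the book applied to a suitable filling of $T^3$ (e.g. $D^2\times T^2$ or the mapping torus description), combined with the known grading of the bar group from Proposition \ref{HShomology}; concretely, the three factors of the Morse complex of $\mathbb{T}$ each shift the natural degree by $-1$ relative to the rational-homology-sphere normalization, and the $H_1\oplus H_2$ summands sit in the appropriate symmetric range. The claim that the minimal nonzero grading in $\HSt_{\bullet}$ is $-2$ then follows by matching against the ordinary computation $\HMt_{\bullet}(T^3,\spin)$ of Chapter $37$, whose bottom element has grading $-1/2$ in the book's convention, via the Gysin sequence of Proposition \ref{gysin} which relates the $\Pin$-equivariant and ordinary gradings with the expected shift. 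The main obstacle I expect is bookkeeping: carefully tracking the grading shifts through the $\R P^2$-vs-point replacement, the eigenvalue towers, and the Morse complex of $\mathbb{T}$ simultaneously, and making sure the $\jmath$-conjugate-pair cancellation argument is genuinely valid at the level of the $\delta$-chain fibered products (not just numerically), since the moduli spaces here are not manifolds with corners but abstract $\delta$-chains and one must invoke the compatibility of the isomorphisms $\jmath$ with the thickenings and evaluation maps as in Definition \ref{isomdelta}.
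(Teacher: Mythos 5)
There is a genuine gap, and it sits exactly at the step you call the ``key computational step''. Your claim that every differential between towers lying over distinct critical points of the Morse function $f$ on $\mathbb{T}$ cancels on the invariant subcomplex is false, and the paper's proof depends essentially on two such differentials being nonzero. The fact that the downstairs flow lines come in $\jmath$-conjugate pairs does not make the induced map on invariant chains vanish: the fibered product of an invariant chain with a conjugate pair of moduli spaces is $(\mathrm{id}+\jmath)$ of one contribution, which is a perfectly good (and often homologically nontrivial) invariant cycle -- e.g.\ a conjugate pair of points in a critical $S^2$ represents the generator of $H_0(\mathbb{R}P^2)$. Concretely, in the paper's computation the moduli spaces $\breve{M}^+([\Cr_i^{y_\beta}],[\Cr_i^{x}])$ (conjugate pairs of points) give a nonzero $d_1$ from $\bigoplus_\beta H_2^{\mathrm{inv}}([\Cr_i^{y_\beta}])$ onto $H_0^{\mathrm{inv}}([\Cr_i^{x}])$, and the moduli space $\breve{M}^+([\Cr_i^{w}],[\Cr_i^{x}])$ sweeps out a $2$-dimensional $\delta$-cycle $[\sigma_i]$ in $[\Cr_i^{x}]$ which is shown -- by comparing the \emph{non}-invariant spectral sequence with the rank-three ordinary group $\HMt$ of $T^3$ in the relevant grading -- to generate $H_2^{\mathrm{inv}}([\Cr_i^{x}])$, giving a nonzero $d_3$. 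If, as you propose, the invariant complex split with only internal differentials, you would get $\mathcal{M}$ (resp.\ its companions) tensored with all of $H_*(T^3;\ztwo)$, i.e.\ eight towers, not the six of $H_1\oplus H_2$; your appeal to Chapter $37$ to discard the extreme towers provides no mechanism inside your own splitting, so the proposal is internally inconsistent with the statement being proved. Relatedly, your invocation of weak self-indexing and collapse at $E^1$ cannot be right: the paper explicitly remarks that this perturbation is \emph{not} weakly self-indexing, and its spectral sequence has a $d_3$.

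A second error feeds into this. On $T^3$ the Dirac operator of a flat connection has kernel only at the flat connection with trivial holonomy, not ``constant rank two'' over $\mathbb{T}$; this is why the paper perturbs by $\CSD-(\delta/2)\|\Phi\|^2+\epsilon f$ rather than by $\epsilon f$ alone, and why the tower $[\Cr_i^{x}]$ over that point is shifted in grading by two via spectral flow. This shift is what puts $H_2^{\mathrm{inv}}([\Cr_i^{x}])$ and $H_0^{\mathrm{inv}}([\Cr_i^{x}])$ in the right degrees to be hit by the $d_3$ and $d_1$ above, and it is also what produces the stated bottom grading $-2$ of $\HSt_{\bullet}(T^3,\spin)$; without it your grading bookkeeping (and the module structure, which requires knowing that $[\sigma_i]$ is a top generator) cannot be carried out.
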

\begin{proof}

The proof follows the same line of Proposition \ref{S2S1}, with some complications due to the fact that the chain complexes we are dealing with are much bigger than that case. As in Section $34.7$ in the book, we can choose $\epsilon$ and $\delta$ very small (see Proposition $37.1.1$) so that the perturbed functional
\begin{equation}\label{perttorus}
\CSd=\CSD-(\delta/2)\|\Phi\|^2+\epsilon f
\end{equation}
has no irreducible critical points. Here $f$ is a $\jmath$-invariant function obtained via a retraction as in Proposition \ref{S2S1} by Morse function on the space of reducible solutions of the unperturbed equations $\mathbb{T}$ (which is a three torus itself) which has a standard form with eight critical points (the gauge equivalence classes of spin connections): $x$ of index $0$ (corresponding to the only class of flat connections for which the Dirac operator has kernel), three points $y^{\alpha}$ of index $1$, three points $w^{\alpha}$ of index $2$ and $z$ of index $3$. Then for each $i\in \mathbb{Z}$ we have eight critical submanifolds
\begin{align*}
&[\Cr^w_i]\\
[\Cr^{z_1}_i]\qquad &[\Cr^{z_2}_i]\qquad [\Cr^{z_3}_i]\\
[\Cr^{y_1}_i]\qquad &[\Cr^{y_2}_i]\qquad [\Cr^{y_3}_i]\\
&[\Cr^x_i]
\end{align*}
corresponding to the $i$th eigenvalue of the corresponding Dirac operators. The interesting feature is that the grading on $[\Cr^x_i]$ is shifted up by two because of the spectral flow. First of all, we make some simple considerations on the moduli spaces that arise.
\begin{enumerate}
\item The moduli spaces $\check{M}^+([\Cr_i^w],[\Cr_i^{z_{\alpha}}])$ and $\check{M}^+([\Cr_i^{z_{\alpha}}],[\Cr_i^{y_{\beta}}])$ all consist of pairs of $\mathbb{C}P^1$ related by the action of $\jmath$ as in Proposition \ref{S2S1}, each evaluation map being constant or a diffeomorphism.
\item The moduli spaces $\check{M}^+([\Cr_i^{y_{\beta}}],[\Cr_i^{x}])$ consist of pairs of points related by the action of $\jmath$.
\item the moduli spaces $\check{M}^+([\Cr_i^w],[\Cr_i^{y_{\beta}}])$ is naturally decomposed as the union of two $3$-dimensional $\delta$-cycles in $[\Cr_i^{y_{\beta}}]$ related by the action of $\jmath$. To see this, notice that the space of unparametrized flow lines for $f$ between $w$ and $y_{\beta}$ can be identified with the union of four intervals $I,J, \jmath I$ and $\jmath J$, and the action of $\jmath$ in exchanges them in pairs. The observation in point ($1$) implies that the stratum of the moduli space over each boundary point of such interval (which corresponds to a broken trajectory) is either empty or consists of a copy of $\mathbb{C}P^1$. The claim follows by taking the cycles parametrized by the unions $I\cup J$ and $\jmath I\cup\jmath J$. Notice that the possible complication that the boundary consists of pairs of conjugate $\mathbb{C}P^1$ mapping to conjugate points is not an issue because these $\delta$-chains are negligible, hence zero in our context.
\item For the same reason, the moduli spaces $\check{M}^+([\Cr_i^{z_{\alpha}}],[\Cr_i^{x}])$ can be written as the disjoint union of two one dimensional $\delta$-cycles in $[\Cr_i^{x}]$ related by the action of $\jmath$.
\item Finally, analogous considerations imply that the moduli space $\check{M}^+([\Cr_i^w],[\Cr_i^{x}])$ is a two dimensional $\delta$-cycle $[\sigma_i]$ in $[\Cr_i^{x}]$.
\end{enumerate}
Our claim is that the $\delta$-cycle $[\sigma_i]$ represents a generator of the top homology of $[\Cr_i^{x}]$. Once this is settled, the computation can be performed by running the spectral sequence associated to the energy filtration for the perturbed functional $\CSd$ (see Section $2$ in Chapter $3$). Indeed first page of this spectral is the following
\begin{center}
\begin{tikzpicture}
\matrix (m) [matrix of math nodes,row sep=0.5em,column sep=0.5em,minimum width=2em]
  {
    H_2^{\mathrm{inv}}([\Cr_i^w]) & & &\\
    H_1^{\mathrm{inv}}([\Cr_i^w]) & \bigoplus H_2^{\mathrm{inv}}([\Cr_i^{z_{\alpha}}]) & &H_2^{\mathrm{inv}}([\Cr_i^x]) \\
    H_0^{\mathrm{inv}}([\Cr_i^w]) & \bigoplus H_1^{\mathrm{inv}}([\Cr_i^{z_{\alpha}}])& \bigoplus H_2^{\mathrm{inv}}([\Cr_i^{y_{\beta}}])& H_1^{\mathrm{inv}}([\Cr_i^x])\\
    & \bigoplus H_0^{\mathrm{inv}}([\Cr_i^{z_{\alpha}}]& \bigoplus H_2^{\mathrm{inv}}([\Cr_i^{y_{\beta}}])&H_0^{\mathrm{inv}}([\Cr_i^x])\\
    & & \bigoplus H_2^{\mathrm{inv}}([\Cr_i^{y_{\beta}}])&\\};    
    ;
\end{tikzpicture}
\end{center}
Of course the $\mathrm{inv}$ indicates that we are considering the corresponding $\jmath$-invariant subcomplex of the critical submanifolds, and each of the groups is a copy of $\ztwo$. Here we focus only on the part involving the eight critical manifolds with index $i$ because the map on the first page
\begin{equation*}
d_1: H_0^{\mathrm{inv}}(([\Cr_{i+1}^{y_{1}}])\oplus H_0^{\mathrm{inv}}([\Cr_{i+1}^{y_{1}}])\oplus H_0^{\mathrm{inv}}([\Cr_{i+1}^{y_{1}}])\rightarrow H_2^{\mathrm{inv}}([\Cr_{i}^x])
\end{equation*}
is zero by symmetry considerations. Exploiting the discussion on the moduli spaces above we can conclude that there are only two non trivial differentials:
\begin{itemize}
\item on the first page, there is a differential
\begin{equation*}
d_1: H_2^{\mathrm{inv}}(([\Cr_i^{y_{1}}])\oplus H_2^{\mathrm{inv}}([\Cr_i^{y_{1}}])\oplus H_2^{\mathrm{inv}}([\Cr_i^{y_{1}}])\rightarrow H_0^{\mathrm{inv}}([\Cr_i^x])
\end{equation*}
which is non zero on each summand coming from the moduli spaces in case $(2)$;
\item on the third page, there is a differential
\begin{equation*}
d_3: H_2^{\mathrm{inv}}([\Cr_i^{w}])\rightarrow H_2^{\mathrm{inv}}([\Cr_i^x])
\end{equation*}
coming from chain $[\sigma_i]$ in case $(5)$, which we know is a generator of the latter group.
\end{itemize} 
The module structure follows in a way similar to that of Proposition \ref{S2S1}. The only additional detail is that we can find for $\beta=1,2,3$ a generator for the group in the same grading of $H_2^{\mathrm{inv}}([\Cr_i^{y_\beta}])$ involving a top generator of exactly one of the $[\Cr_i^{y_{\beta}}]$, which again follows from the claim on $[\sigma_i]$.
\par
Finally, to prove our claim about the cycle $[\sigma_i]$, we consider the energy filtration on the total complex, not just the invariant subcomplex. In particular, symmetry considerations imply that
\begin{equation*}
H_0([\Cr_i^w]),\quad H_0([\Cr_i^{x_{\alpha}}]) \text{ for }\alpha=1,2,3,\quad H_0([\Cr_i^x])
\end{equation*} 
all survive until the $E^3$ page. On the other hand, the usual monopole Floer group in the grading corresponding to the last four groups has rank three, so that the differential
\begin{equation*}
d_3: H_0^{\mathrm{inv}}([\Cr_i^{w}])\rightarrow H_0^{\mathrm{inv}}([\Cr_i^x])
\end{equation*}
is non trivial. Hence $[\sigma_i]$ is a generator of the top homology of $[\Cr_i^x]$. \end{proof}
\begin{remark}
The perturbation in the last computation is \textit{not} weakly self-indexing.
\end{remark}
\vspace{0.8cm}
We finally provide some computations for the other flat three manifolds. The computation for the Hantzsche-Wendt manifold (which is a rational homology sphere) is analogous to that of Corollary \ref{poshomology} because one can achieve transversality using a perturbation as in the case of the three torus without introducing irreducible critical points.  The remaining case are all flat three manifolds which are torus bundles over $S^1$ with $b_1(Y)=1$.
\begin{prop}
Let $Y$ be flat torus bundle over the circle, and let $\spin_0$ be the self-conjugate spin$^c$ structure corresponding to the $2$-plane field $\xi_0$ tangent to the fiber. Then we have the isomorphisms of ${\Rin}$-modules
\begin{align*}
\HSb_{\bullet}(Y,\spin_0)&\cong Q\cdot \mathcal{M}\otimes(\ztwo\oplus \ztwo\{2\})\\
\HSt_{\bullet}(Y,\spin_0)& \cong Q\cdot \mathcal{M}/\Rin\cdot Q\otimes(\ztwo\oplus \ztwo\{2\}) \\
\HSf_{\bullet}(Y,\spin_0)&\cong \Rin\cdot Q \otimes (\ztwo\{-1\}\oplus \ztwo\{1\}).
\end{align*}
\end{prop}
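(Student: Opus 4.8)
The plan is to follow the strategy of Proposition \ref{S2S1} and the computation for $T^3$, specialized to the case $b_1(Y)=1$, together with the analysis of flat torus bundles over $S^1$ from Section $37.1$ of the book. First I would set up the perturbation. The reducible solutions of the unperturbed equations for $\spin_0$ form the torus
\[
\mathbb{T}=H^1(Y;i\R)/H^1(Y;i\Z)\cong S^1,
\]
on which $\jmath$ acts by $x\mapsto -x$, the two fixed points being the gauge equivalence classes of the two spin connections $B_x$ and $B_y$ inducing $\spin_0$. Following Section $37.1$ of the book one chooses $\delta,\epsilon>0$ small enough that the perturbed functional $\CSd=\CSD-(\delta/2)\|\Phi\|^2+\epsilon f$ has no irreducible critical points, where $f=g\circ p$ with $p\colon\Bo(Y,\spin_0)\to\mathbb{T}$ a $\jmath$-equivariant retraction and $g$ a $\jmath$-invariant Morse function on the circle with exactly two critical points: $B_x$ of index $0$ and $B_y$ of index $1$. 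A key preliminary point is to record, from the flat geometry, the Dirac spectra on $B_x$ and $B_y$ and in particular which of them has nontrivial kernel (necessarily of complex dimension two, quaternionic by Lemma \ref{quaternion}); this is where the hypothesis that $\spin_0$ has underlying plane field $\xi_0$ tangent to the fiber enters.

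Next I would enumerate the critical submanifolds. For every $i\in\Z$ one obtains $[\Cr^x_i]$ and $[\Cr^y_i]$, each diffeomorphic to $S^2$ as the quotient by the residual circle action of the unit sphere of the corresponding eigenspace of $D_{\q,B_x}$, respectively $D_{\q,B_y}$ (these eigenspaces have complex dimension two since the connections are spin). As in the $T^3$ case, the absolute grading of the tower over $B_x$ is shifted relative to that over $B_y$ by the Morse index difference together with the spectral flow coming from the Dirac kernel; the total shift of $2$ produces the tensor factor $\ztwo\oplus\ztwo\{2\}$ in the final answer. By Lemma \ref{dimreducible} in Chapter $2$ the perturbation is weakly self-indexing, so the spectral sequence of the energy filtration (as in the $T^3$ computation and Section $2$ of Chapter $3$) applies and one can organize the computation page by page.

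Then I would compute the differentials on the invariant subcomplex. Differentials internal to each $[\Cr^x_i]$ or $[\Cr^y_i]$ contribute $H_*(\R P^2)$ by Lemma \ref{projplane}. The differentials running between the towers over $B_x$ and $B_y$ are governed by the two flow lines of $g$ on the circle joining $B_y$ to $B_x$, which are exchanged by $\jmath$; the associated moduli spaces are built out of copies of $\mathbb{C}P^1$ via the $\jmath$-action exactly as in Proposition \ref{S2S1}, and one determines their effect on invariant chains. The $\Rin$-module structure is pinned down as in Theorem \ref{mainHS} and Proposition \ref{S2S1} by checking that multiplication by $V$ and $Q$ does not mix the two towers. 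Assembling the surviving homology over all $i\in\Z$ gives $\HSb_{\bullet}(Y,\spin_0)$; a vanishing argument as in Proposition \ref{positivity} (arranging $\bar\partial^u_s$ and $\bar\partial^s_u$ to vanish on the invariant chains) shows $j_*=0$, so the long exact sequence splits and yields $\HSt_{\bullet}$ and $\HSf_{\bullet}$ from $\HSb_{\bullet}$. The absolute gradings are then read off using Proposition \ref{absgrad} together with the known monopole Floer homology of these flat manifolds from Chapter $37$ of the book.

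The main obstacle I anticipate is the bookkeeping at the level of invariant chains: unlike the cases of $S^1\times S^2$ and $T^3$, the answer carries the factor $Q\cdot\mathcal{M}$ rather than $\mathcal{M}$, so one of the three towers of each $H_*(\R P^2)$ must fail to survive. The crux is to identify precisely which inter-tower differential is responsible, to verify that it acts nontrivially exactly on the invariant chains while the remaining $T^3$-type mixing terms stay negligible, and to match all of this with the explicit Dirac spectral data on $B_x$ and $B_y$. This is the only place where the argument genuinely departs from the earlier flat-manifold computations.
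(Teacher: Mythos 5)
There is a genuine gap, and it lies exactly at the point you flag as the ``crux''. Your setup takes the $\jmath$-invariant Morse function on the circle of flat connections to have only two critical points, both at the spin connections, and you propose to locate the Dirac kernel at one of these two (quaternionic, two-dimensional). For the flat torus bundles with the spin$^c$ structure $\spin_0$ this is not what happens: the kernel of the Dirac operator occurs at a pair of flat connections $[A_0],[A_1]$ which are \emph{not} spin connections and which are exchanged by $\jmath$, not fixed by it. The paper's perturbation accordingly uses a Morse function with \emph{four} critical points: maxima at the two spin connections $[B_0],[B_1]$ and minima at $[A_0],[A_1]$. Over the spin connections the critical submanifolds are copies of $\mathbb{C}P^1$ (quaternionic eigenspaces), but over $[A_0],[A_1]$ the eigenvalues are simple and the critical submanifolds are single points, giving two extra towers of point-like generators interchanged by $\jmath$. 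The grading shift by $2$ between the two sphere towers is the spectral flow of $D_A-\epsilon$ along paths in the circle crossing these kernel points, not a kernel at a spin connection.

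This omission is fatal to the mechanism you need. In your two-tower picture the only inter-tower moduli spaces come in $\jmath$-conjugate pairs of $\mathbb{C}P^1$'s, exactly as in the $S^1\times S^2$ computation, and those contribute trivially to the invariant subcomplex; so your framework can only produce $\mathcal{M}\otimes(\ztwo\oplus\ztwo\{2\})$, i.e.\ three surviving towers per spin connection, and has no differential available to cut this down to $Q\cdot\mathcal{M}$. In the paper the missing cancellation comes precisely from the point towers over $[A_0]\cup[A_1]$: the moduli spaces $M^+([\Cr^0_i],[\bcr^0_{2i+1}])$ and $M^+([\Cr^0_i],[\bcr^1_{2i+1}])$ (and similarly from $[\Cr^1_i]$ to the points at eigenvalue $2i$) are single points swapped by $\jmath$, so the fibered product pairs the invariant top chain $C_2([\Cr^j_i])$ nontrivially with the invariant generator of the two-point critical set, killing one of the three $\mathbb{R}P^2$ towers over each spin connection and yielding $Q\cdot\mathcal{M}\otimes(\ztwo\oplus\ztwo\{2\})$. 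The remaining steps you outline (splitting via the vanishing of $j_*$ as in the positive-scalar-curvature case, the $\Rin$-module structure, reading off gradings) are consistent with the paper, but they rest on getting the critical-point geometry right, which your proposal does not.
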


\begin{remark}
Notice that some of these manifolds have other self-conjugate spin$^c$ structures other than the one we are considering.
\end{remark}
\begin{proof}
As in the case of the three torus, we can choose a perturbation as in equation (\ref{perttorus}) where the Morse function on the circle of flat connections $\mathbb{T}(\spin)$ is such that 
\begin{itemize}
\item $f$ is $\jmath$-invariant;
\item $f$ has two maxima corresponding to the spin connections $[B_0]$ and $[B_1]$ and two minima $[A_0]$ and $[A_1]$ corresponding to the flat connections such that the corresponding Dirac operator has kernel.
\end{itemize}
The two minima are conjugate via the involution $\jmath$. Here we set $[B_1]$ to be the spin connection such that on a path connecting it to a minimum the family of Dirac operators $D_A-\epsilon$ for $\varepsilon>0$ small has complex spectral flow $-1$. Call $[\Cr^0_i],[\Cr^1_i]$ and $[\bcr^0_i], [\bcr^1_i]$ the critical submanifolds corresponding to the $i$th eigenvalue of the respective Dirac operators. In particular the first two are copies of $\mathbb{C}P^1$ while the latter consist of single points. We have that for dimensional reasons (as in Proposition \ref{positivity}) the complex is a direct sum of pieces as in the following diagram:
\begin{center}
\begin{tikzpicture}
\matrix (m) [matrix of math nodes,row sep=1.5em,column sep=0.5em,minimum width=2em]
  {
    C_2([\Cr^0_i]) & & \\
    C_1([\Cr^0_i]) & C_0([\bcr^0_{2i+1}])\oplus C_0([\bcr^1_{2i+1}]) & \\
    C_0([\Cr^0_i]) & & C_2([\Cr^1_i])\\
    & C_0([\bcr^0_{2i}])\oplus C_0([\bcr^1_{2i}])& C_1([\Cr^1_i])\\
    & & C_0([\Cr^1_i])\\};
  \path[-stealth]

    (m-1-1) edge node [left] {$\partial^{\mathcal{F}}$}  (m-2-1)
    	edge node [above] {$\partial^s_s$} (m-2-2)
    
   (m-2-1) edge node [left] {$\partial^{\mathcal{F}}$}  (m-3-1)

    (m-3-3) edge node [left] {$\partial^{\mathcal{F}}$}  (m-4-3)
    	edge node [above] {$\partial^s_s$} (m-4-2)
    
   (m-4-3) edge node [left] {$\partial^{\mathcal{F}}$}  (m-5-3)
    
    ;
\end{tikzpicture}
\end{center}
Here we assume $i\geq 0$, and the chain complex of the critical submanifold $[\Cr^1_i]$ is shifted down by two because of the spectral flow. Each of the moduli spaces $M^+([\Cr^0_i],[\bcr^{0}_{2i+1}])$ and $M^+([\Cr^0_i],[\bcr^{1}_{2i+1}])$ consists of a single point, and these are conjugated under the action of $\jmath$. The same holds for $M^+([\Cr^1_i],[\bcr^{0}_{2i}])$ and $M^+([\Cr^1_i],[\bcr^{1}_{2i}])$, and the result readily follows.
\end{proof}

\vspace{1.5cm}
\section{Manolescu's $\beta$ invariant and the Triangulation conjecture}

In this final section we define the counterpart of Manolescu's $\beta$ invariant introduced in \cite{Man2} in our Morse-theoretic approach, and prove the main properties which are used in the disproof of the Triangulation conjecture as discussed in the Introduction. The essential ingredient is the absolute grading discussed in the previous section. The invariant $\beta$ and its companions $\alpha$ and $\gamma$ are the $\Pin$ analogue of the Froysh\o v invariant $h$ in monopole Floer homology (see \cite{Fro} and Chapter $39$ in the book) and the Heegaard Floer correction terms (\cite{OSd}). Let $Y$ be a rational homology sphere, and consider a self-conjugate spin$^c$ structure $\spin$ on it. Clearly, if $Y$ is actually a homology sphere, or more in general if it the first homology has no two-torsion, there is only one such spin$^c$-structure. As stated in Proposition \ref{HShomology}, up to grading shifts $\HSb_{\bullet}(Y,\spin)$ is isomorphic as a ${\Rin}$-module to the standard $\Pin$-module $\mathcal{M}$ from Definition \ref{standardL}. A rational homology sphere $Y$ with a self-conjugate spin$^c$ structure $\spin$ gives rise to a preferred proper homogeneous quotient of $\mathcal{M}$, namely the image of the map
\begin{equation}\label{imap}
i_*: \HSb_{\bullet}(Y,\spin)\rightarrow \HSt_{\bullet}(Y,\spin).
\end{equation}
We use these submodules to define three numerical invariants of the rational homology sphere.

\begin{defn}
For $\alpha\geq \beta\geq \gamma$ rational numbers all congruent to $\epsilon$ modulo $1$, the \textit{standard $\Pin$-module} $\mathcal{M}_{\alpha,\beta,\gamma}$ is defined to be the $\Rin$-module obtained as the quotient of (a suitably shifted) standard module $\mathcal{M}$ by a graded submodule such that
\begin{itemize}
\item the element of lowest degree of the form $Q^2V^a$ has degree $2\alpha$;
\item the element of lowest degree of the form $QV^b$ has degree $2\beta+1$;
\item the element of lowest degree of the form $V^c$ has degree $2\gamma+2$.
\end{itemize}
\end{defn}

It follows from the structure as a $\Rin$-module that the image of the map $i_*$ in equation (\ref{imap}) is a module of the form $\mathcal{M}_{\alpha,\beta,\gamma}$.

\begin{defn}For a rational homology sphere $Y$ equipped with a self-conjugate spin$^c$ structure $\spin$, the \textit{Manolescu's invariants} $\alpha(Y,\spin)$, $\beta(Y,\spin)$ and $\gamma(Y,\spin)$ are defined as the rational numbers $\alpha\geq\beta\geq\gamma$ such that
\begin{equation*}
i_*\left(\HSb_{\bullet}(Y,\spin)\right)\cong \mathcal{M}_{\alpha,\beta,\gamma}
\end{equation*}
as graded $\Rin$-modules.
\end{defn}

\vspace{0.5cm}
\begin{example}
Following Corollary \ref{poshomology}, in the case of $S^3$ the grading preserving identification
\begin{equation*}
\HSb_{\bullet}(S^3)\cong \mathcal{M}.
\end{equation*}
Moreover $i_*$ is surjective and $\HSt_{\bullet}(S^3)$ consists of the part in non-negative. Hence we have
\begin{equation*}
\alpha(S^3)=\beta(S^3)=\gamma(S^3)=0.
\end{equation*}
Similarly, the case of the Poncar\'e homology sphere $Y$ is identical up to a shift in degree by $-2$, hence
\begin{equation*}
\alpha(Y)=\beta(Y)=\gamma(Y)=-1.
\end{equation*}
More in general, whenever $Y$ is a homology sphere these three numbers are integers thanks to Proposition \ref{absgrad}.
\end{example}

\begin{remark}\label{alternbeta}
The following interpretation is closer to Manolescu's original definition. We have that $i_*$ defines an injective map on $\HSb_{\bullet}(Y,\spin)/\mathrm{im}(p_*)$ so we can also define
\begin{align*}
\alpha(Y,\spin)&=\frac{1}{2}\left(\min\{ \mathrm{gr}^{\mathbb{Q}}(x) \mid x\in i_* \ztwo[V^{-1},V]], x\neq 0\}\right)\\
\beta(Y,\spin)&=\frac{1}{2}\left(\min\{ \mathrm{gr}^{\mathbb{Q}}(x) \mid x\in i_* Q\ztwo[V^{-1},V]],x\neq 0\}-1\right)\\
\gamma(Y,\spin)&=\frac{1}{2}\left(\min\{ \mathrm{gr}^{\mathbb{Q}}(x) \mid x\in i_* Q^2\ztwo[V^{-1},V]],x\neq 0\}-2\right)
\end{align*}
under the usual identification of $\HSb_{\bullet}(Y,\spin)$ with $\mathcal{M}$.
\end{remark}

\vspace{0.8cm}
The next result is the central theorem regarding Manolescu's invariants. Recall that the Rohklin invariant of a rational homology three sphere $Y$ equipped with a spin structure $\spin$ is defined as
\begin{equation*}
\mu(Y,\spin)=\frac{1}{8}\sigma(X)\quad\mod2\Z
\end{equation*}
for any smooth oriented four manifold $X$ with boundary $Y$ which admits a spin structure restricting to the given one on the boundary. This is well defined because of the celebrated Rokhlin's theorem, and is an integer modulo two in the case $Y$ is a homology sphere (see for example \cite{Sav} for an introduction to the subject).

\begin{teor}\label{mainbeta}
Let $Y$ be a rational homology sphere, and $\spin$ a self-conjugate spin$^c$ structure. Then the Manolescu's invariants $\alpha(Y,\spin),\beta(Y,\spin)$ and $\gamma(Y,\spin)$ satisfy the inequalities
\begin{equation*}
\alpha(Y,\spin)\geq \beta(Y,\spin)\geq \gamma(Y,\spin).
\end{equation*}
Furthermore, they satisfy the following properties:
\begin{enumerate}
\item if $-Y$ denotes $Y$ with the opposite orientation, then
\begin{align*}
\alpha(-Y,\spin)&=-\gamma(Y,\spin)\\
\beta(-Y,\spin)&=-\beta(Y,\spin)\\
\gamma(-Y,\spin)&=-\alpha(Y,\spin);
\end{align*}
\item the reduction modulo two of $\alpha(Y,\spin), \beta(Y,\spin)$ and $\gamma(Y,\spin)$ are the opposite of the Rokhlin invariant $-\mu(Y,\spin)$;
\item given a smooth spin cobordism $W$ with negative definite intersection form from $(Y_0,\spin_0)$ to $(Y_1,\spin_1)$, then
\begin{align*}
\alpha(Y_1,\spin_1)&\geq \alpha(Y_0,\spin_0)+\frac{1}{8} b_2(W)\\
\beta(Y_1,\spin_1)&\geq \beta(Y_0,\spin_0)+\frac{1}{8} b_2(W)\\
\gamma(Y_1,\spin_1)&\geq \gamma(Y_0,\spin_0)+\frac{1}{8} b_2(W).
\end{align*}
\end{enumerate}
In particular the invariants $\alpha(Y,\spin),\beta(Y,\spin)$ and $\gamma(Y,\spin)$ are invariant under homology cobordism.
\end{teor}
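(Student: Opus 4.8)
The plan is to deduce Theorem \ref{mainbeta} from the structural results already at hand, namely Proposition \ref{HShomology} (identifying $\HSb_{\bullet}(Y,\spin)$ with $\mathcal{M}$ up to grading shift), Proposition \ref{absgrad} (properties of the absolute rational grading, in particular the fractional part and the duality statement), Proposition \ref{gysin} (the Gysin sequence), Theorem \ref{mainHS} (the cobordism maps are $\Rin$-module maps and compose correctly), Proposition \ref{univcoeffspin} (perfect duality pairing for rational homology spheres), and Corollary \ref{poshomology} (the computation for $S^3$). The inequality $\alpha\geq\beta\geq\gamma$ is immediate from the definition, since $i_*\left(\HSb_\bullet(Y,\spin)\right)\cong\mathcal{M}_{\alpha,\beta,\gamma}$ is by construction a quotient of $\mathcal{M}$ of the stated form and the three defining degrees are forced to be non-increasing by the $\Rin$-module structure ($Q$ has degree $-1$, so multiplying the lowest $Q^2V^a$-type element by $Q$ lands below, forcing $2\alpha\geq 2\beta+1$ up to the ordering convention; similarly for $\beta\geq\gamma$). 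I would spell this out carefully, since the grading bookkeeping (the shifts by $1$ and $2$ in the definition of $\mathcal{M}_{\alpha,\beta,\gamma}$) is exactly where sign errors creep in.

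For property (1), the orientation reversal, the plan is to use the duality identification $\check{\omega}:\HSt_{\bullet}(-Y,\spin)\xrightarrow{\sim}\HSf^{\bullet}(Y,\spin)$ from Proposition \ref{absgrad}, together with the fact (Proposition \ref{univcoeffspin}) that the intersection pairing is perfect, to transpose the map $i_*$ for $-Y$ into the map $p_*$ (or its dual) for $Y$; the long exact sequence of Proposition \ref{gysin} and the exact sequence relating $\HSt,\HSf,\HSb$ then interchange the roles of ``lowest element killed by $i_*$'' and ``highest element hit by $p_*$'', which swaps $\alpha$ and $-\gamma$ and fixes $\beta$ up to sign, using that the duality shifts grading by $-1-b_1(Y)-k=-1-k$ for a rational homology sphere. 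Here I would invoke Remark \ref{alternbeta}, which phrases $\alpha,\beta,\gamma$ as minima of gradings of elements in $i_*$ of various $Q$-divisibilities, making the transposition argument cleanest. For property (2), the reduction mod $2$, the key point is the second sentence of Proposition \ref{absgrad}: the fractional part of $\mathrm{gr}^{\mathbb{Q}}$ of any $\delta$-chain in a reducible boundary-stable critical submanifold equals the fractional part of $(\langle c_1,c_1\rangle-\sigma(X))/4$, and for a spin filling this is $-\sigma(X)/4$, whose residue mod $2$ after dividing by the appropriate factor recovers $-\mu(Y,\spin)$ by definition of the Rokhlin invariant; one then checks that $\alpha,\beta,\gamma$, being gradings of elements of $i_*\HSb_\bullet$ which (being in the image from the bar version) are represented by reducible chains, all carry this same fractional data, and that the integer/parity normalization in the homology sphere case is consistent.

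Property (3), the cobordism inequality, is the substantive part and I expect it to be the main obstacle. The plan is: given a spin cobordism $W$ from $(Y_0,\spin_0)$ to $(Y_1,\spin_1)$ with negative definite intersection form, Theorem \ref{mainHS} provides $\Rin$-module maps $\HSb_\bullet(W):\HSb_\bullet(Y_0,\spin_0)\to\HSb_\bullet(Y_1,\spin_1)$ and similarly on the ``to'' version, commuting with $i_*$; one then argues that on the bar version this map is, up to the standard $\mathcal{M}\cong\HSb_\bullet$ identification, multiplication by an invertible element of $\Rin$ up to grading shift (since both sides are $\mathcal{M}$ and the map is $\Rin$-linear and, by a neck-stretching/positivity argument on the reducibles, nonzero), and that the grading shift is exactly $\frac{1}{4}\left(c_1(S^+)^2[W]-\sigma(W)\right)-\iota(W)$ computed via the definition of the relative grading on a cobordism from Section 6 of Chapter 2. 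For a negative definite spin cobordism the characteristic vector $c_1$ vanishes on a spin structure, so $c_1(S^+)^2=0$ and the shift becomes $-\frac{1}{4}\sigma(W)-\iota(W)$, which for a negative definite $W$ equals $\frac{1}{4}b_2(W)-\iota(W)$; checking that this reduces to $\frac{1}{8}b_2(W)$ in the relevant normalization (matching the classical Fr\o yshov inequality as in Chapter 39 of the book) is the delicate computation. The inequalities for $\alpha,\beta,\gamma$ then follow because the isomorphism $\HSb_\bullet(W)$ intertwines $i_*$ for $Y_0$ with $i_*$ for $Y_1$ up to the shift, so it sends the submodule $\mathcal{M}_{\alpha(Y_0),\beta(Y_0),\gamma(Y_0)}$ into $\mathcal{M}_{\alpha(Y_1),\beta(Y_1),\gamma(Y_1)}$, forcing each of the three invariants to increase by at least the shift. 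Finally, homology cobordism invariance is the special case where $W$ and $-W$ are both negative semi-definite spin cobordisms with $b_2=0$, giving inequalities in both directions and hence equality; I would close by noting this immediately gives the properties (1)--(3) of $\beta$ used in the Introduction to disprove the Triangulation conjecture.
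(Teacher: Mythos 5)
Your overall route matches the paper's: the inequalities from the $\Rin$-module structure, property (1) via the duality isomorphism together with the perfect pairing of Proposition \ref{univcoeffspin} and the identification of $\ker i_*$ with $\mathrm{im}\, p_*$, property (2) from a congruence for the absolute grading, and property (3) from the bar-level cobordism map. However, two of the steps as you have written them contain genuine gaps. For property (2), the fractional-part statement in Proposition \ref{absgrad} only determines $\mathrm{gr}^{\mathbb{Q}}$ modulo $1$, hence $\alpha,\beta,\gamma$ modulo $\tfrac12$, which is strictly weaker than the claimed identification with $-\mu$ modulo $2$; the missing input is the congruence $\bar{\mathrm{gr}}^{\mathbb{Q}}([\sigma])\equiv -\tfrac14\sigma(X) \pmod{4\Z}$ for a spin filling, which the paper obtains by computing the relative grading through the reducible spin connection and using that the relevant Dirac index is \emph{quaternionic}, hence divisible by $4$, together with the integrality of $\iota(X)$. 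Without that divisibility you cannot conclude anything about $\alpha$ mod $2$.

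For property (3), your reduction of the key claim to ``the bar map is $\Rin$-linear and nonzero, hence multiplication by an invertible element up to shift'' is not valid: multiplication by $Q$ is a nonzero graded $\Rin$-linear endomorphism of $\mathcal{M}$ which is neither injective nor surjective, so nonvanishing does not force an isomorphism, and in any case nonvanishing on the $\jmath$-invariant complex is exactly what needs proof. The paper's argument supplies this content concretely: one first surgers $W$ to make $b_1(W)=0$ (which you omit, and which is needed for the uniqueness of the anti-self-dual spin connection on $W^*$), observes that negative definiteness allows transversality with small perturbations and \emph{without} the $\Pin$-equivariant \textsc{asd}-perturbations (so the reducible analysis survives), shows via the grading formulas and the Rokhlin congruences that each reducible critical sphere $[\Cr^0]$ on $Y_0$ pairs with exactly one $[\Cr^1]$ on $Y_1$ with $\gr_z([\Cr^0],W,[\Cr^1])=\tfrac14 b_2(W)-2$, identifies the corresponding moduli space as a single $\mathbb{C}P^1$ on which $\jmath$ acts antipodally, and then uses $\jmath$-equivariance to force both evaluation maps to be diffeomorphisms (they cannot be constant), so that generators map to generators on the invariant chains and all other components vanish for dimension reasons; this is what yields an isomorphism $\HSb_{\bullet}(W)$ of degree $\tfrac14 b_2(W)$, and then the commuting square with $i_*$ gives the three inequalities with the $\tfrac18 b_2(W)$ shift. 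Your proposal defers both the isomorphism statement and the degree computation (your formula carrying the extra $-\iota(W)$ term is not resolved), so the substantive part of (3) is missing, even though the surrounding strategy is the same as the paper's.
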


\begin{remark}
We expect our $\Pin$-monopole Floer homology invariants (and in particular these numerical invariants) to coincide with those defined in \cite{Man2}. 
\end{remark}

Restricting to the case in which $Y$ is a homology sphere, we obtain the following result. This implies Theorem \ref{triangulation} in the Introduction, see the discussion after the result.
\begin{cor}[See also \cite{Man2}] \label{beta}
There is an integer valued invariant $\beta$ associated to each homology sphere $Y$ which satisfies the following properties:
\begin{enumerate}
\item if $-Y$ is $Y$ with the opposite orientation, then $\beta(-Y)=-\beta(Y)$;
\item the reduction modulo two of $\beta(Y)$ is the Rokhlin invariant $\mu(Y)$;
\item if $W$ is a spin cobordism between two homology spheres $Y_0$ and $Y_1$ with negative definite intersection form, then
\begin{equation*}
\beta(Y_1)\geq \beta(Y_0)+\frac{1}{8}b_2(W).
\end{equation*}
\end{enumerate}
In particular, $\beta$ is invariant under homology cobordism.
\end{cor}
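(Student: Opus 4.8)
The plan is to deduce Corollary \ref{beta} directly from Theorem \ref{mainbeta}, after two preliminary observations. First, if $Y$ is an integral homology sphere then $H^1(Y;\ztwo)=0$, so by Proposition \ref{selfconj} there is a unique self-conjugate spin$^c$ structure $\spin$ on $Y$ (the one induced by the unique spin structure); I then simply set $\beta(Y):=\beta(Y,\spin)$, and likewise $\alpha(Y):=\alpha(Y,\spin)$, $\gamma(Y):=\gamma(Y,\spin)$. Second, I must check $\beta(Y)\in\Z$. By Remark \ref{alternbeta}, $2\beta(Y)+1$ equals the minimal value of $\mathrm{gr}^{\mathbb{Q}}$ on $i_*\bigl(Q\,\ztwo[V^{-1},V]]\bigr)\subset\HSt_\bullet(Y,\spin)$; by the identification $\HSb_\bullet\cong\mathcal{M}$ of Proposition \ref{HShomology} and the shape of the chain map $i$, this minimum is realized by a one-dimensional $\delta$-chain supported in a boundary-stable reducible critical submanifold (the generator of $H_1$ of the $\mathbb{RP}^2$-homology of that submanifold). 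The last clause of Proposition \ref{absgrad} then forces that grading to be an integer of odd parity, so $\beta(Y)=\tfrac12(\text{odd}-1)\in\Z$; the same argument gives $\alpha(Y),\gamma(Y)\in\Z$.

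With these in place, properties (1)–(3) of the Corollary are exactly the specializations of properties (1)–(3) of Theorem \ref{mainbeta} to the unique $\spin$, using that a spin cobordism between homology spheres automatically restricts to the unique spin structures on its ends, and that in (3) one has $b_2(W)=b_2(W)$ with no torsion corrections. The homology-cobordism invariance is then formal: a homology cobordism $W$ from $Y_0$ to $Y_1$ is a spin cobordism with $b_2(W)=0$ and (vacuously) negative-definite intersection form, so property (3) gives $\beta(Y_1)\ge\beta(Y_0)$; reversing the orientation of $W$ produces a spin cobordism $\bar W$ from $Y_1$ to $Y_0$ with the same numerical properties, whence property (3) gives $\beta(Y_0)\ge\beta(Y_1)$, and therefore equality. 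Thus, granting Theorem \ref{mainbeta}, the Corollary is pure bookkeeping.

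The substance of course lies in Theorem \ref{mainbeta} itself, which I would establish in four steps mirroring \cite{Man2}. (i) The module-theoretic setup: Proposition \ref{HShomology} identifies $\HSb_\bullet(Y,\spin)$ with a shift of $\mathcal{M}$, and the $\Rin$-module structure of $\HSt_\bullet(Y,\spin)$ forces $i_*\bigl(\HSb_\bullet(Y,\spin)\bigr)\cong\mathcal{M}_{\alpha,\beta,\gamma}$ with $\alpha\ge\beta\ge\gamma$, which is precisely the displayed chain of inequalities. (ii) Orientation reversal: combine the duality pairing of Proposition \ref{univcoeffspin} with the grading shift of Proposition \ref{absgrad} (degree $k$ goes to $-1-b_1(Y)-k$, and $b_1(Y)=0$ for rational homology spheres after the relevant normalization) and the self-duality of $\mathcal{M}$; this interchanges the $Q^2$- and $Q^0$-towers, hence swaps $\alpha$ and $-\gamma$, while fixing $-\beta$. (iii) Mod-$2$ reduction: the fractional-part statement in Proposition \ref{absgrad} equates $\mathrm{gr}^{\mathbb{Q}}\bmod 1$ with $(\langle c_1,c_1\rangle-\sigma(X))/4$; tracing through the definition of the bottom of the middle tower, its value mod $2$ is governed by $\sigma(X)/8$, which is the Rokhlin invariant. (iv) The cobordism inequality: use the $\Rin$-module map $\HSt_\bullet(W,\spin_W)\colon\HSt_\bullet(Y_0)\to\HSt_\bullet(Y_1)$ of Theorem \ref{mainHS}, show from the index/grading formula (with negative-definiteness killing the characteristic-number corrections) that on $\operatorname{im} i_*$ it shifts degree by $\tfrac14 b_2(W)$, and run a Frøyshov-type positivity argument — the analogue of the vanishing reasoning behind Proposition \ref{positivity} — to see it is nonzero on each tower.

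The main obstacle is step (iv): one needs sufficient control over the reducible trajectories on a negative-definite spin cobordism to guarantee that the induced map does not annihilate the bottom element of the middle tower, which is exactly where a genuine analytic (as opposed to algebraic) input is required. Everything else — the orientation-reversal symmetry and the mod-$2$ identification — reduces to the grading bookkeeping already packaged in Propositions \ref{absgrad} and \ref{univcoeffspin}, and the passage from Theorem \ref{mainbeta} to the Corollary is formal. Finally, as recalled in the Introduction, properties (1) and (2) together with integrality immediately yield Theorem \ref{triangulation}: if $2[Y]=0$ in $\Theta^H_3$ then $[Y]=[-Y]$, so $\beta(Y)=\beta(-Y)=-\beta(Y)$, forcing $\beta(Y)=0$ and hence $\mu(Y)\equiv\beta(Y)\equiv 0\pmod 2$, so there is no homology-cobordism class of order two with Rokhlin invariant one, and the Triangulation conjecture fails in all dimensions at least five.
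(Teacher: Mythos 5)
Your deduction of the Corollary is correct and is exactly the paper's route: one specializes Theorem \ref{mainbeta} to the unique self-conjugate spin$^c$ structure on a homology sphere, gets integrality of $\alpha,\beta,\gamma$ from the parity statement in Proposition \ref{absgrad} (as the paper notes right after defining the invariants), and obtains homology-cobordism invariance by applying property (3) to $W$ and $\bar W$. Your outline of Theorem \ref{mainbeta} itself likewise mirrors the paper's proof (duality plus Proposition \ref{univcoeffspin} for (1), the grading/Rokhlin computation for (2), and the unique-reducible, negative-definite analysis of $\HSb_{\bullet}(W)$ for (3)), so there is nothing further to add.
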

\vspace{0.5cm}
\begin{proof}[Proof of Theorem \ref{mainbeta}]
The fact that
\begin{equation*}
\alpha(Y,\spin)\geq \beta(Y,\spin)\geq \gamma(Y,\spin)
\end{equation*}
is implicit in the definition of the invariants and follows from the $\Rin$-module structure. Regarding property $(2)$, if $X$ is a spin four manifold bounding $Y$ we have (as in Lemma $28.3.2$ in the book and Proposition \ref{absgrad}) that if $[\sigma]$ is a zero dimensional $\delta$-chain is some critical submanifold, we have
\begin{equation*}
\bar{\mathrm{gr}}^{\mathbb{Q}}([\sigma])\equiv-\frac{1}{4}\sigma(X)\text{   } \mod 4\Z.
\end{equation*}
This follows from the fact that we can compute $\mathrm{gr}_z([\sigma_0],X,[\sigma])$ by means of the reducible configuration corresponding to a spin connection on the cobordism. If we choose a $\Pin$-equivariant perturbation on $S^3$ this relative grading turns out to be the sum of the (real) index of a perturbed Dirac operator (which is divisible by four as it is quaternionic) and $\iota(X)$. This readily implies that $\alpha(Y,\spin)$ is congruent to the opposite of the Rokhlin invariant modulo two, and the same proof applies to the invariants $\beta(Y,\spin)$ and $\gamma(Y,\spin)$.
\par
Property $(1)$ follows from the definitions of the invariant using the fact that if
\begin{equation*}
\HSb_k(Y,\spin)\stackrel{i_*}{\longrightarrow} \HSt_k(Y,\spin)
\end{equation*}
is injective, then by Poincar\'e duality also
\begin{equation*}
\HSb^{-2-k}(-Y,\spin)\stackrel{p^*}{\longrightarrow} \HSf^{-1-k}(-Y,\spin)
\end{equation*}
is injective (see Proposition \ref{absgrad} for the gradings), hence by the universal coefficient theorem (Proposition \ref{univcoeffspin}) the dual map
\begin{equation*}
\HSf_{-1-k}(-Y,\spin)\stackrel{p_*}{\longrightarrow}\HSb_{-2-k}(-Y,\spin)
\end{equation*}
is surjective. Here we use the alternative point of view from Remark \ref{alternbeta} and the fact that the kernel of $i_*$ is the image of $p_*$.
\par
To prove property $(3)$, we first modify the cobordism by surgery paying attention to the framings so that we obtain a spin four manifold with $b_1(W)$ zero and the same intersection form. We still call this cobordism $W$. For a fixed self-conjugate spin$^c$ structure $\spin$ on $W$ restriction to the given ones on the boundary, we can then consider the induced map
\begin{equation*}
\HSb_{\bullet}(W,\spin): \HSb_{\bullet}(Y_0,\spin_0)\rightarrow \HSb_{\bullet}(Y_1,\spin_1).
\end{equation*}
The claim is that this is an isomorphism of absolute degree $\frac{1}{4}b_2(X)$. The topological assumptions imply that on the cylindrical-end manifold $W^*$ there is a unique anti-self-dual spin$^c$ connection (up to gauge equivalence), the spin connection $A_0$. If we chose the perturbations on the two ends to be small enough, the hypothesis that the cobordism is negative definite implies that we can achieve transversality for the moduli spaces without appealing to $\Pin$-equivariant \textsc{asd}-perturbations, so that we are in the same setting of the proof of Theorem $39.1.4$ in the book. Given on each end a critical submanifold $[\Cr_0]$ and $[\Cr_1]$ lying over the unique reducible solution, it follows from the grading formulas and the fact that $\sigma(X)=-b_2(X)$ that the moduli space $M^+([\Cr^0], W^*,[\Cr^1])$ is two dimensional if and only if they have relative grading
\begin{equation*}
\mathrm{gr}_z([\Cr^0],W,[\Cr^1])=\frac{1}{4}b_2(X)-2.
\end{equation*}
It is important to remark here that given any critical manifold $[\Cr_0]$ there is \textit{exactly one} critical manifold $[\Cr^1]$ such that the above relation holds. This follows from the fact that the grading shifts of the reducible critical submanifolds is determined by the Rokhlin invariants, as discussed above. In the case this holds, the moduli space $M^+([\Cr^0], W^*,[\Cr^1])$ is a copy of $\mathbb{C}P^1$ on which $\jmath$ acts as the antipodal map. Furthermore both evaluation maps $\ev_{\pm}$ are diffeomorphisms because they are $\jmath$ equivariant hence they cannot be constant (cfr. the proof of Proposition \ref{positivity}). This proves our claim as all other maps are zero because of the usual dimensional argument. Finally, this implies that the commutative diagram of $\Rin$-modules
\begin{center}
\begin{tikzpicture}
  \matrix (m) [matrix of math nodes,row sep=2em,column sep=5em,minimum width=2em]
  {
  \HSb_{\bullet}(Y_0,\spin_0) & \HSb_{\bullet}(Y_1,\spin_1)\\
    \HSt_{\bullet}(Y_0,\spin_0) & \HSt_{\bullet}(Y_1,\spin_1)\\
   };
  \path[-stealth]
  
  (m-1-1) edge node [above] {$\HSb_{\bullet}(W,\spin)$}  (m-1-2)
    edge node [left] {$i_*$} (m-2-1)
 (m-2-1) edge node [above] {$\HSt_{\bullet}(W,\spin)$} (m-2-2)
 (m-1-2) edge node [left] {$i_*$} (m-2-2)
    ;
   
\end{tikzpicture}
\end{center}
is identified with the commutative diagram of ${\Rin}$-modules
\begin{center}
\begin{tikzpicture}
  \matrix (m) [matrix of math nodes,row sep=3em,column sep=4em,minimum width=2em]
  {
  \mathcal{M}\{\epsilon_0\} & \mathcal{M}\{\epsilon_1\}\\
    \mathcal{M}_{\alpha_0,\beta_0,\gamma_0}& \mathcal{M}_{\alpha_1,\beta_1,\gamma_1}\\
   };
  \path[-stealth]
  
  (m-1-1) edge node [above] {$x$}(m-1-2)
 (m-1-1)   [->>]edge  (m-2-1);
 \path(m-2-1) [->]edge node [above] {$x$} (m-2-2);
 \path(m-1-2) [->>]edge (m-2-2)
    ;
   
\end{tikzpicture}
\end{center}
where the $\epsilon_i$ indicate suitable grading shifts, the $x$ on the top row is an isomorphism of degree $\frac{1}{4}b_2(X)$, and the groups in the bottom rows are the images of the respective maps $i_*$. In particular the vertical arrows are surjective, and Property ($3$) follows. Finally, the invariance under homology cobordism invariance follows because if $W$ is a homology cobordism from $Y_0$ to $Y_1$ then $\bar{W}$ is a homology cobordism from $Y_1$ to $Y_0$.
\end{proof}

\bibliographystyle{alpha}
\bibliography{biblio}

\end{document}